\documentclass[11pt]{amsart}
\usepackage{calc,amssymb,amsthm,amsmath}
\usepackage{mathtools}
\RequirePackage[dvipsnames,usenames]{xcolor}
\usepackage{hyperref}
\usepackage{tikz-cd}

\hypersetup{
bookmarks,
bookmarksdepth=3,
bookmarksopen,
bookmarksnumbered,
pdfstartview=FitH,
colorlinks,backref,hyperindex,
linkcolor=Sepia,
anchorcolor=BurntOrange,
citecolor=MidnightBlue,
citecolor=OliveGreen,
filecolor=BlueViolet,
menucolor=Yellow,
urlcolor=OliveGreen
}
\usepackage{alltt}
\usepackage{multicol}
\usepackage{etex}
\usepackage{xspace}
\usepackage{rotating}
\interfootnotelinepenalty=100000

\newcommand{\ie}{{\itshape ie} }
\usepackage{mabliautoref}
\usepackage{colonequals}
\frenchspacing
\input{kmacros3.sty}
\usepackage{stmaryrd}
\usepackage{tikz}

\usepackage[left=1.0in,top=1.0in,right=1.0in,bottom=1.0in]{geometry}

\usepackage{verbatim}
\usepackage{enumerate}
\theoremstyle{theorem}

\newtheorem*{theoremA}{Theorem A}
\newtheorem*{theoremB}{Theorem B}
\newtheorem*{theoremC}{Theorem C}
\newtheorem*{theoremD}{Theorem D}
\newtheorem*{theoremE}{Theorem E}
\theoremstyle{remark}
\newtheorem*{*setting}{Setting}

\theoremstyle{remark}

\usepackage{graphicx}

\usepackage[all,cmtip]{xy}

\newcommand{\mysubsection}[1]{\setcounter{subsection}{\value{theorem}} \subsection{#1} \addtocounter{theorem}{1}}

\renewcommand{\O}{\mathcal{O}}
\renewcommand{\ie}{{\itshape i.e.}}
\renewcommand{\cf}{{\itshape cf.\xspace\xspace}}


\renewcommand{\phi}{\varphi}

\renewcommand{\theta}{\vartheta}

\renewcommand{\epsilon}{\varepsilon}

%
\renewcommand{\to}[1][]{\xrightarrow{\ #1\ }}
\newcommand{\onto}[1][]{\protect{\xrightarrow{\ #1\ }\hspace{-0.8em}\rightarrow}}

\begin{document}

\title {$F$-singularities in families}

\author{Zsolt Patakfalvi }
\address{EPFL, SB MATHGEOM CAG, MA B3 635 (B\^atiment MA), Station 8, CH-1015 Lausanne}
\email{zsolt.patakfalvi@epfl.ch}

\author{Karl Schwede}
\address{Department of Mathematics\\ University of Utah\\ Salt Lake City\\ UT 84112}
\email{schwede@math.utah.edu}

\author{Wenliang Zhang}
\address{Department of Mathematics\\ University of Illinois at Chicago\\Chicago\\  IL 60607}
\email{wlzhang@uic.edu}

\begin{abstract}
We study the behavior of test ideals and $F$-singularities in families. In particular, we obtain generic (and non-generic) restriction theorems for test ideals and non-$F$-pure ideals, which imply for example openness of most of the $F$-singularity classes when the relative canonical sheaf is $\bQ$-Cartier. Additionally, we study the global behavior of certain canonical linear systems (induced by Frobenius) associated to adjoint line bundles, in families. As a consequence, we obtain some positivity results for pushforwards of some adjoint line bundles and for certain subsheaves of these.
\end{abstract}
\keywords{non-$F$-pure ideal, test ideal, $F$-pure, strongly $F$-regular, $F$-singularities, flat families}
\subjclass[2000]{14B05, 13A35, 14D05, 14F18, 13B40}

\thanks{The second author was partially supported by a Fellowship from the Sloan Foundation as well as NSF grant DMS \#1064485, NSF FRG Grant DMS \#1265261/1501115 and NSF CAREER Grant DMS \#1252860/1501102.}
\thanks{The third author was partially supported by the NSF grants DMS \#1068946/\#1247354 and \#1606414}
\thanks{The material is based upon work supported by the National Science Foundation under Grant No. 0932078 000, while the second and third authors were in residence at the Mathematical Science Research Institute (MSRI) in Berkeley, California, during the spring semester 2013.}
\maketitle
\numberwithin{equation}{theorem}
\tableofcontents

\section{Introduction}


In \cite{KunzCharacterizationsOfRegularLocalRings} Kunz proved that a scheme over a field of characteristic $p>0$ is regular if and only if the Frobenius (endo)morphism is flat, which initiated the study of singularities using the Frobenius morphism. Classes of singularities defined via the Frobenius morphism are referred to usually as $F$-singularities. Since Kunz's work, the study of $F$-singularities has become an active research area, \cf \cite{HochsterRobertsFrobeniusLocalCohomology,GotoWatanabeTheStructureOfOneDimensionalFPureRings,FedderFPureRat,MehtaRamanathanFrobeniusSplittingAndCohomologyVanishing,RamananRamanathanProjectiveNormality,SrinivasFPureType,SmithFRatImpliesRat,HaraRatImpliesFRat,MehtaSrinivasRatImpliesFRat,HaraYoshidaGeneralizationOfTightClosure,TakagiInterpretationOfMultiplierIdeals}. However, the methods of $F$-singularities have been widely applied to global geometry over a field of positive characteristic only recently \cite{SchwedeACanonicalLinearSystem,
MustataNonNefLocusPositiveChar,HaconSingularitiesOfThetaDivisors,MustataSchwedeSeshadri,ZhangPluriCanonicalMapOfVarietiesOfMaximalAlbaneseDimension,
CasciniHaconMustataSchwedeNumDimPseudoEffective,PatakfalviSemipositivity,HaconXuThreeDimensionalMinimalModel,TanakaTraceOfFrobenius}, at least outside of special classes of varieties \cite{MehtaRamanathanFrobeniusSplittingAndCohomologyVanishing,BrionKumarFrobeniusSplitting}. A large field within global geometry is moduli theory, which requires understanding how varieties behave in flat families.  In this direction, there have not been any positive results on the behavior of test ideals in families.  We fill this gap.

Previously $F$-rational, Cohen-Macaulay $F$-injective and (Gorenstein) $F$-pure singularities have been studied in such a context \cite{HashimotoCMFinjectiveHoms,ShimomotoZhangOnTheLocalizationProblem,HashimotoFPureHoms}.  Additionally, \cite[Example 4.7]{MustataYoshidaTestIdealVsMultiplierIdeals} showed that the test ideal $\tau$ does not satisfy the generic restriction theorem, at least as stated for multiplier ideals \cite[Theorem 9.5.35]{LazarsfeldPositivity2}.  We develop tools tackling this issue, obtaining generic (and non-generic)
restriction theorems for test ideals, see Theorem A below.


In a flat family $f : X \to V$ the absolute Frobenius on $X$ does not restrict to the Frobenius morphism on each fiber. Thus we systematically study the relative Frobenius morphism\footnote{Also called the Radu-Andr\'e morphism, especially in commutative algebra.} $X' \to X \times_V V'$ where $X' \to X$ and $V' \to V$ are the Frobenius morphisms of $X$ and $V$, respectively. Its fibers over $V$ are morphisms between thickenings  of the fibers of $f$. However, its fibers over $V'$ are exactly Frobenius morphisms of fibers of $f$ (at least over points with perfect residue field). 

We begin by stating our main result in the local setting.  We denote by $X^n$ and $V^n$ the domains of $n$-iterated Frobenii of $X$ and $V$ respectively.  Let $\tau$ and $\sigma$ denote the test ideal\footnote{Technically, we are working with what has recently become known as the \emph{big test ideal} \cite{HochsterFoundations}.} \cite{HochsterHunekeFRegularityTestElementsBaseChange} and non-sharply-$F$-pure ideal \cite{FujinoSchwedeTakagiSupplements,BlickleBockleCartierModulesFiniteness}, respectively.  These ideals play key roles in the theory of $F$-singularities, the test ideal $\tau$ being the unique smallest non-zero ideal fixed by $p^{-e}$-linear maps and $\sigma$ being the unique largest.  We define relative versions of these two ideals by using the relative instead of absolute Frobenius.
The iterated relative Frobenius targets different spaces $X \times_V V^{n}$, and so we actually define a sequence ideals, one on each of these spaces.  We explain the setup.

Suppose that $f : X \to V$ is a finite-type flat, equidimensional, reduced and S2 and G1 morphism of Noetherian $F$-finite schemes with $V$ integral.
Additionally suppose $\Delta$ is a $\bQ$-divisor on $X$ satisfying suitable conditions which allow it to be restricted to fibers and such that $K_{X/V} + \Delta$ is $\bQ$-Cartier with index not divisible by $p > 0$ so that $(p^e - 1) \left(K_{X/V} + \Delta \right)$ is Cartier (see \autoref{rem:relative_canonical} for a precise statement).  Then for each integer $n > 0$, divisible by $e$, we define ideals $\sigma_n(X/V, \Delta)$ and $\tau_n(X/V, \Delta) \subseteq  \sO_{X \times_V V^n}$ called the \emph{relative non $F$-pure} and  \emph{relative test ideals} (respectively).

Our main local theorem is that these ideals restrict to absolute non-$F$-pure and absolute test ideals on \emph{all} of the geometric fibers, and can be used to prove \emph{generic} restriction theorems for the usual absolute test ideals on $X \times_V V^n$ at least if $V$ is regular.

\begin{theoremA} \textnormal{(\autoref{thm.UniversalNForAllClosedFibers}, \autoref{cor.SigmaRestrictsToAllPoints}, \autoref{cor.RestrictionOfRelativeTauToFibers}, \autoref{cor.RestrictionOfTauForDivisors})}
\label{thm.IntroductionRestrictionOfTauSigma}
With notation as above, there exists an $N > 0$ such that for every \emph{perfect point}\footnote{By definition, a morphism $\Spec K \to V$ from a perfect field $K$, see \autoref{def.PerfectPoint}.} $s \in V$, and every $n \geq N$
\[
\sigma_{ne}(X/V, \Delta)\cdot \O_{X_{s^{ne}}} =  \sigma(X_s, \Delta|_{X_s})
\]
and
\[
\tau_{ne}(X/V, \Delta) \cdot \O_{X_{s^{ne}}} =  \tau(X_s, \Delta|_{X_s})
\]
where $X_{s^{ne}}$ is the fiber of $X \times_V V^{ne} \to V^{ne}$ over $s^{ne} \in V^{ne}$, which is isomorphic to $X_s$ since $k(s)$ is perfect.  Additionally, both $\sigma_n$ and $\tau_{ne}$ map surjectively onto their arbitrary base changes.

Furthermore, if $V$ is regular and $N$ is sufficiently large, then for all perfect points $s \in V$ we have that the absolute non-$F$-pure ideal restricts to all of the fibers  for $n \geq N$
\[
\sigma(X \times_V V^{ne}, \Delta \times_V V^{ne})\cdot \O_{X_{s^{ne}}} =  \sigma(X_s, \Delta|_{X_s})
\]
and at least for an open dense set of the base $U \subseteq V$ that the same holds for the absolute test ideal
\[
\tau(X\times_V V^{ne}, \Delta\times_V V^{ne})\cdot \O_{X_{s^{ne}}} = \tau(X_s, \Delta|_{X_s})
\]
for all perfect points $s \in U$.
\end{theoremA}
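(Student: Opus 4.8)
The plan is to bootstrap the statement for the \emph{absolute} test ideal from the statement for the \emph{relative} test ideal $\tau_{ne}(X/V,\Delta)$ proved in the first part, by comparing the two ideals over a dense open subset of $V$. Write $Y := X\times_V V^{ne}$. Since $V$ is regular, Kunz's theorem makes the iterated Frobenius $V^{ne}\to V$ finite and flat, hence $Y\to X$ is finite flat; moreover $K_{Y/V^{ne}} = K_{X/V}\times_V V^{ne}$, so $(Y,\Delta\times_V V^{ne})$ is the base change of $(X,\Delta)$ along $V^{ne}\to V$. The iterated absolute Frobenius of $Y$ factors as the iterated relative Frobenius of $Y$ over $V^{ne}$ --- which is nothing but the iterated relative Frobenius of $X/V$ above level $ne$, i.e.\ is built from the maps defining $\tau_{ne}$ --- followed by the base change of the iterated Frobenius of $V^{ne}$. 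This factorization produces a natural comparison, indeed a containment, between $\tau_{ne}(X/V,\Delta)$ and $\tau\bigl(Y,\Delta\times_V V^{ne}\bigr)$ as ideals of $\O_Y$.

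It remains to upgrade this to an equality over a dense open $U\subseteq V$. Since $\tau$ commutes with localization on $V$, and since two coherent ideal sheaves that agree over the generic point $\eta$ of $V$ agree over a dense open --- their sum is coherent and the two quotients vanish over $\eta$, hence are supported over a proper closed subset of $V$ --- it suffices to prove the equality after localizing at $\eta$. There the base is the $F$-finite, but typically imperfect, field $K := k(\eta)$, and the claim becomes that the absolute test ideal of $(X_\eta)_{K^{1/p^{ne}}}$ equals the relative (``geometric'') test ideal of $X_\eta$ over $K$, both viewed as ideals of $\O_{(X_\eta)_{K^{1/p^{ne}}}}$. I would prove this by extending scalars to a perfect closure $K_{\mathrm{perf}}$ of $K$: over $K_{\mathrm{perf}}$ the absolute and relative Frobenius structures coincide, so both ideals extend to $\tau\bigl((X_\eta)_{K_{\mathrm{perf}}}\bigr)$ --- for the absolute one via base change of the big test ideal along the (ind-finite) purely inseparable extension $K^{1/p^{ne}}\hookrightarrow K_{\mathrm{perf}}$, and for the relative one because $\tau_{ne}$ surjects onto its base changes by the first part --- so faithful flatness of $(X_\eta)_{K^{1/p^{ne}}}\to (X_\eta)_{K_{\mathrm{perf}}}$ forces the two ideals to agree already over $\eta$. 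Hence the two ideals coincide over a dense open $U := V\setminus Z$ (with $Z$, and thus $U$, depending a priori on $n$), and for every perfect point $s\in U$ and every $n\geq N$, restricting to the fibre $X_{s^{ne}}$ and invoking the first part gives
\[
\tau\bigl(X\times_V V^{ne},\,\Delta\times_V V^{ne}\bigr)\cdot\O_{X_{s^{ne}}}
= \tau_{ne}(X/V,\Delta)\cdot\O_{X_{s^{ne}}}
= \tau\bigl(X_s,\Delta|_{X_s}\bigr).
\]

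The parallel statement for $\sigma$ is easier and yields \emph{all} perfect points rather than a generic one: $\sigma$ is the largest compatible ideal, so there is a containment $\sigma\bigl(Y,\Delta\times_V V^{ne}\bigr)\subseteq \sigma_{ne}(X/V,\Delta)$, and since $V^{ne}$ is regular the trace maps $F^m_*\omega_{V^{ne}}\to\omega_{V^{ne}}$ are surjective, so after the flat base change along $V^{ne}\to V$ the additional $p^{-m}$-linear maps coming from the base do not shrink $\sigma$; thus the two ideals agree on all of $Y$, and restriction to every perfect fibre follows as before. The main obstacle is exactly the generic-point comparison for $\tau$: since $k(\eta)$ is imperfect, the absolute Frobenius of $X_\eta$ genuinely carries more $p^{-1}$-linear structure than the relative one, and one must check that this extra structure does not enlarge the smallest nonzero compatible ideal --- which is where regularity of $V$, equivalently flatness of Frobenius on $k(\eta)$, together with the base-change behaviour of the big test ideal, is indispensable, and which is also the reason the absolute restriction can fail without such hypotheses (the \mustata--Yoshida example cited above).
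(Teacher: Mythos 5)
Your proposal for the absolute restriction has two distinct problems, one in the $\sigma$ case and one in the $\tau$ case.

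For $\sigma$, you claim that because the trace maps on $V^{ne}$ are surjective, the absolute non-$F$-pure ideal $\sigma(Y,\Delta_{V^{ne}})$ equals the relative one $\sigma_{ne}(X/V,\Delta)$ on \emph{all} of $Y$, and you then restrict to every perfect fibre. This equality is false in general. The easy containment $\sigma(Y,\gamma)\subseteq \sigma_{ne}(X/V,\phi)$ holds because $\sigma(Y,\gamma)=\operatorname{im}(\gamma^m)$ for $m\gg 0$ while $\sigma_{ne}=\operatorname{im}(\gamma^{n})$ (this is essentially \autoref{lem.FactorizationOfSigmaImages}), but the chain $\operatorname{im}(\gamma^m)$ typically keeps shrinking past level $n$ over a proper closed locus of $V$ --- that is exactly the phenomenon exhibited in \autoref{ex:cusp}, where the relative chain never stabilizes over the whole base, only over a dense open. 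The paper accordingly only asserts the equality of the two ideals over a dense open set (\autoref{thm.RelativeSigmaVsAbsoluteOverOpen}). What the paper proves for \emph{all} perfect points (\autoref{cor.SigmaRestrictsToAllPoints}) is the weaker statement that the \emph{restrictions} agree; and the reverse containment $\sigma(Y,\gamma)\cdot\O_{X_{s^{ne}}}\supseteq\sigma(X_s,\Delta|_{X_s})$ is obtained not by arguing with the trace on $V$ but via $F$-adjunction: one constructs, using $F$-purity of the base and the Fedder-type computation in \autoref{splitting generator}, a map $\beta$ on $Y$ compatible with the maximal ideal of the point $v$, and then compares $\sigma$'s via \autoref{lem.EasyContainmentOfNonFPureViaAdjunction} and \autoref{lem.EasyContainmentOfNonFPureViaFieldExtension}. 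That argument is genuinely different in kind from what you wrote, and it is where all the substance is.

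For $\tau$, your reduction to the generic point $\eta$ is fine, but the crucial step is the assertion that the absolute big test ideal commutes with the base change $K^{1/p^{ne}}\hookrightarrow K_{\mathrm{perf}}$, i.e.\ that $\tau\bigl((X_\eta)_{K^{1/p^{ne}}},\gamma\bigr)\cdot\O_{(X_\eta)_{K_{\mathrm{perf}}}}=\tau\bigl((X_\eta)_{K_{\mathrm{perf}}}\bigr)$. You state this as if it were a standard base-change property, but the extension $K^{1/p^{ne}}\hookrightarrow K_{\mathrm{perf}}$ is purely inseparable and not geometrically regular, so the Hochster--Huneke smooth base-change theorem does not apply, and there is no general theorem ensuring the test ideal is insensitive to such extensions. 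In fact the question of how absolute test ideals behave on fibres of a family is precisely the thing being studied, and the difficulty you have shifted to this step is of essentially the same order as the theorem itself. The paper avoids it entirely: it uses the diagram in \autoref{lem.FactorizationOfSigmaImages} to identify each partial sum of the series defining $\tau(X_{V^{ne}},\gamma)$ with one of the relative ideals $\bb_{m,n}$, then invokes generic freeness to find a dense open $U$ over which two adjacent $\bb$'s coincide, at which point the ascending series stabilizes (\autoref{thm.RelativeTauVsAbsoluteOverOpen}). No statement about base change of the absolute $\tau$ under imperfect field extensions is ever needed. If you want to pursue your route, you would need to supply an independent proof of that base-change claim, and as evidence that it is subtle, note that the paper leaves open the analogous uniform restriction question for the absolute $\tau$ right after \autoref{thm.RelativeTauVsAbsoluteOverOpen}.
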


Additionally, we show that over a dense open subset $U$ of $V$ with $W = f^{-1}(U)$,  $\tau_{ne}(X/V, \Delta)|_W$ coincides with the absolute test ideal $\tau(X \times_V V^{ne}, \Delta \times_V V^{ne})|_W$ and likewise $\sigma_{ne}(X/V, \Delta)|_W = \sigma(X \times_V V^{ne}, \Delta \times_V V^{ne})|_W$ in \autoref{thm.RelativeTauVsAbsoluteOverOpen} and \autoref{thm.RelativeSigmaVsAbsoluteOverOpen} respectively.

The key method that allows to prove this result (at least stated for $\sigma_n$) is that if $m > n$, then $\Image\big(\sigma_{ne}(X/V, \Delta) \otimes_{\O_{X \times V^{ne}}} \O_{X \times V^{me}} \to \O_{X \times V^{me}}\big) \supseteq \sigma_{me}(X/V, \Delta)$ and there is a dense open set of the base $V$ over which equality holds for all sufficiently large $n$, this is \autoref{prop.StabilizingSigma}.  This stabilization result should be viewed as a relative version of \cite[Proposition 1.11]{HartshorneSpeiserLocalCohomologyInCharacteristicP}, \cite[Proposition 4.4]{LyubeznikFModulesApplicationsToLocalCohomology}, and \cite[Remark 13.6]{Gabber.tStruc}.

\begin{remark}
If one replaces the divisor $\Delta$ with the language of principal Cartier algebras, then the previous result still holds without the technical assumptions about divisors from \autoref{rem:relative_canonical}.
\end{remark}

\begin{remark}
There are a number of subtle issues in the statement above that we are suppressing.  In particular, $\tau_n(X/V, \Delta)$ depends on the choice of some ideal $I$ contained within the test ideal of every fiber, and shown to exist in \autoref{prop.ExistRelativeTestElements}.
\end{remark}


A particularly important related question is that of deformation of sharply $F$-pure singularities in flat families with $\bQ$-Cartier relative canonical divisors.
This would be important for a positive characteristic construction of the moduli of stable varieties, also known as the KSBA compactification. In characteristic zero, this is the moduli space given by the log-minimal model program. It classifies log-canonical models, hence birational equivalence classes of varieties of general type, and furthermore it contains some nodal varieties for the compactification. There is a conjectural framework of constructing this moduli space \cite{KollarProjectivityOfCompleteModuli}. One of the main ingredients in this framework is to prove that log-canonical singularity deforms in flat families with $\bQ$-Cartier relative log-canonical divisor. An important step in this direction in positive characteristic is the corresponding statement for sharply $F$-pure singularities. It is also an important ingredient in an upcoming
paper of the first author where he is planning to address the question of the existence of an algebraic space structure on the space of sharply $F$-pure stable varieties.  In this paper, we handle the deformation of $F$-pure and $F$-regular singularities.  Indeed, openness of  $F$-pure and $F$-regular singularities is a direct consequence of Theorem A above.

\begin{theoremB}\textnormal{(Deformation of $F$-pure and $F$-regular singularities: \autoref{cor.OpennessOfSharpFPurityForDivisors}, \autoref{cor.OpennessOfFregDivisors})}
Suppose $f : X \to V$ and $\Delta$ is as in Theorem A and additionally assume that $f$ is proper.  If $s \in V$ is a perfect point and $(X_s, \Delta|_{X_s})$ is sharply $F$-pure (respectively, strongly $F$-regular\footnote{In which case, you can even remove the index not divisible by $p$ assumption from $K_{X/V} + \Delta$, see \autoref{cor.OpennessOfFregDivisorsBadIndex}.}) then there exists an open set $U \subseteq V$ such that for all $u \in U$ that $(X_u, \Delta|_{X_u})$ is also sharply $F$-pure (respectively, strongly $F$-regular).
\end{theoremB}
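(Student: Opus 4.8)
The plan is to read off both statements directly from Theorem A, as the introduction promises. Recall that a pair over a field is sharply $F$-pure precisely when its non-sharply-$F$-pure ideal $\sigma$ is the unit ideal, and strongly $F$-regular precisely when its big test ideal $\tau$ is the unit ideal. So I would phrase the bad locus of the family as the support of a coherent quotient sheaf on a Frobenius twist of $X$, and then spread out using properness of $f$. I will describe the sharply $F$-pure case; the strongly $F$-regular case is word-for-word the same with $\sigma$ replaced by $\tau$, once one has fixed the auxiliary ideal $I\subseteq\O_X$ of \autoref{prop.ExistRelativeTestElements} entering the definition of the relative test ideals.

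Concretely, fix $N$ as in Theorem A (\autoref{thm.IntroductionRestrictionOfTauSigma}) and any integer $n\geq N$, put $Y := X\times_V V^{ne}$ with structure map $\pi\colon Y\to V^{ne}$ (proper, since $f$ is), and set $\mathcal Q := \O_Y/\sigma_{ne}(X/V,\Delta)$, a coherent $\O_Y$-module. If $(X_s,\Delta|_{X_s})$ is sharply $F$-pure, then $\sigma(X_s,\Delta|_{X_s}) = \O_{X_s}$; restricting the right-exact sequence $\sigma_{ne}(X/V,\Delta)\to\O_Y\to\mathcal Q\to 0$ to the fiber over $s^{ne}$ and invoking the identity $\sigma_{ne}(X/V,\Delta)\cdot\O_{X_{s^{ne}}} = \sigma(X_s,\Delta|_{X_s})$ from Theorem A yields $\mathcal Q\otimes_{\O_{V^{ne}}}k(s^{ne}) = 0$. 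Hence at every point $y$ of the fiber $X_{s^{ne}}$ one has $\mathcal Q_y = \mathfrak m_y\mathcal Q_y$ (the maximal ideal of $\O_{V^{ne},s^{ne}}$ maps into $\mathfrak m_y$), so $\mathcal Q_y = 0$ by Nakayama. Therefore $X_{s^{ne}}$ is disjoint from the closed set $\Supp\mathcal Q$, and properness of $\pi$ makes $\pi(\Supp\mathcal Q)$ a closed subset of $V^{ne}$ not containing $s^{ne}$; its complement $U^{ne}\subseteq V^{ne}$ is an open neighborhood of $s^{ne}$ over which $\sigma_{ne}(X/V,\Delta) = \O$.

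The last step is to carry this back to $V$ and to arbitrary (not necessarily perfect) points. The $ne$-iterated Frobenius $V^{ne}\to V$ is a homeomorphism, so $U^{ne}$ corresponds to an open neighborhood $U$ of $s$ in $V$. For $u\in U$, choose a perfect point $\bar u$ of $V$ lying over $u$; then $\bar u^{ne}\in U^{ne}$, so $\sigma_{ne}(X/V,\Delta)\cdot\O_{X_{\bar u^{ne}}} = \O_{X_{\bar u^{ne}}}$, which by Theorem A equals $\sigma(X_{\bar u},\Delta|_{X_{\bar u}})$; thus $(X_{\bar u},\Delta|_{X_{\bar u}})$ is sharply $F$-pure, and hence so is $(X_u,\Delta|_{X_u})$ by descent of sharp $F$-purity along the base field extension $k(u)\to k(\bar u)$. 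This gives \autoref{cor.OpennessOfSharpFPurityForDivisors}, and the verbatim argument with $\tau_{ne}$ gives \autoref{cor.OpennessOfFregDivisors}.

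The substance is entirely in Theorem A --- the existence of one $N$ that works for all perfect fibers simultaneously, together with the exact restriction formula. Granting that, everything here is formal. The only point I expect to need genuine care is the reduction from all closed points to perfect points: one must know that sharp $F$-purity (resp.\ strong $F$-regularity) of a pair over a field $k$ is detected after base change to a perfect closure of $k$. This is a standard base-change property for these singularity classes --- and is exactly why Theorem A is phrased in terms of perfect points --- but it is the one ingredient in this deduction that is not pure diagram-chasing.
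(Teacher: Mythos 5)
Your proof takes the same route as the paper's: both deduce openness from Theorem~A's restriction formula for $\sigma_{ne}$ (resp.\ $\tau_{ne}$), Nakayama's lemma to upgrade a fiberwise equality to an equality on a neighborhood of the fiber, and properness of $f$ to push the resulting open subset of $X$ down to an open neighborhood in $V$. The paper phrases the Nakayama step as ``$\sigma_n(X/V,\phi) = R_{A^{1/p^{ne}}}$ in a neighborhood $W$ of $X_s$'' rather than via the quotient $\mathcal Q$, but these are the same computation. So the structure is essentially identical. One organizational difference: the paper works with an intermediate \emph{relative} notion of sharp $F$-purity / strong $F$-regularity for fibers over arbitrary (not necessarily perfect) points, and then observes that for perfect points this agrees with the absolute notion; you skip the relative notion and go directly through perfect closures.

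The one place where you claim slightly more than the paper's precise results deliver is the final descent step for the strongly $F$-regular case. You say the $\tau$ argument is ``word-for-word the same'' and in particular conclude $(X_u,\Delta|_{X_u})$ strongly $F$-regular for \emph{all} $u\in U$ after descending from the perfect closure $\bar u$. For sharp $F$-purity this descent is standard and correct: relative $F$-purity over a field implies absolute $F$-purity because the trace of a field extension $k\to k^{1/p^e}$ is split, and relative $F$-purity passes up and down faithfully flat base field extensions; this is also implicitly used in the paper's corollary on sharp $F$-purity, whose conclusion is indeed stated for all $u$. But the paper's corollary on strong $F$-regularity of divisor pairs deliberately concludes only ``$(X_u,\Delta|_{X_u})$ strongly $F$-regular for all \emph{perfect} $u\in U$'', precisely because the containment between the relative test ideal and the absolute test ideal of $X_{u^{ne}}$ goes the wrong way to deduce strong $F$-regularity of $X_u$ itself over a non-perfect $k(u)$ formally. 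A descent of strong $F$-regularity along $k(u)\hookrightarrow k(\bar u)$ is plausible (e.g.\ via the direct summand property of strongly $F$-regular rings applied to the finite free split inclusion $\O_{X_u}\hookrightarrow\O_{X_{\bar u}}$, with care about the divisor), but it is a genuine additional argument, not a ``verbatim'' repetition of the $\sigma$ case. If you want your proof to match the precise statements in the body of the paper, you should restrict the conclusion in the strongly $F$-regular case to perfect points, or else supply a justification for the descent of strong $F$-regularity of pairs along the perfect closure of the residue field.
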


We also build relative test submodules and non-$F$-injective submodules of $\omega_{X \times_V V^{ne}/V^{ne}}$ and prove restriction theorems like Theorem A for them \autoref{cor.RestrictionTheoremForSigmaOmega} and \autoref{cor.RestrictionTheoremForTauOmega}. As a consequence in \autoref{thm.OpenSetNakayama} we reprove a result of M.~Hashimoto \cite{HashimotoCMFinjectiveHoms}, deformation for Cohen-Macaulay $F$-injectivity and $F$-rationality.

Furthermore we apply our setup to global questions. One of the reasons for the recent global applications of $F$-singularity theorem is the lifting theorem shown by the second author in \cite[Proposition 5.3]{SchwedeACanonicalLinearSystem}. This theorem can be used to replace some of the lifting arguments that use Kodaira vanishing in characteristic zero. One of the fundamental ideas in \cite{SchwedeACanonicalLinearSystem} is to try to lift only a big enough set of sections of adjoint bundles instead of all the sections. This canonical set of sections for a pair $(X, \Delta)$ with $(1-p^e)(K_X + \Delta)$ Cartier and for a line bundle $M$ is defined as \cite[Definition 4.1]{SchwedeACanonicalLinearSystem}
\begin{equation*}
S^0 \big(X, \sigma(X, \Delta) \otimes M \big) := \bigcap_{m >0}  \im \Big( H^0 \big(X, F^{me}_* \sO_{X}((1-p^{me})(K_{X} + \Delta) )\otimes M   \big)  \to H^0\big(X, M\big) \Big) .
\end{equation*}
First, we investigate questions about how  this canonical space of sections behaves in families: semicontinuity, stabilization of the intersection, etc.

\begin{theoremC}
Let $f : X \to V $ and $\Delta$ be as in Theorem A, with $f$ projective and $V$ regular. Further, suppose that $M$ is a line bundle on $X$ such that $M - K_{X/V} - \Delta$ is $f$-ample (where by $M$ we actually mean the Cartier divisor corresponding to $M$). Then the following statements hold.
\begin{itemize}
\item[(a)] {\textnormal{[\autoref{cor:S^0_uniform_stabilization}, \autoref{ex:S_f_*_base_change_not_isomorphism}, \autoref{ex:S_f_*_base_change_not_isomorphism_arbitrary_power}, \autoref{ex.NotLowerSemiContEither}]}, \cf \cite[Example 5.5]{HaraRatImpliesFRat}, \cite[Theorem 8.3]{TanakaTraceOfFrobenius}}.  The function
\begin{equation}
\label{eq:thmC}
s \mapsto \dim_{k(s)}S^0(X_{s},\sigma(X_{s},\Delta_s) \otimes M_s)
\end{equation}
is not semicontinuous (here $s \in V$ is a perfect point) in either direction, however, there is a dense open subset $U \in V$, such that the function  \autoref{eq:thmC}
is constant on $U$.
\item[(b)] {\textnormal{[\autoref{prop:S^0_uniform_stabilization}]}}. There exists $n>0$ so that for all integers $m \geq n$ and perfect points  $s \in V$,
\begin{multline*}
\qquad \qquad \im \left( H^0\left(X_{s}, F^{me}_* \sO_{X_s}((1-p^{me})(K_{X_s} + \Delta_s)\right) \otimes M_{s}  )  \to H^0(X_{s}, M_{s}) \right)
\\ = S^0(X_{s}, \sigma(X_{s}, \Delta_{s}) \otimes M_{s}).
\end{multline*}
\item[(c)] \label{itm:thmC_maximal_open} {\textnormal{[\autoref{prop:S_0_maximal_is_open}]}}.  If there is a perfect point $s_0 \in V$ such that
\begin{equation*}
 H^0(X_{s_0},M_{s_0})=S^0(X_{s_0}, \sigma(X_{s_0}, \Delta_{s_0}) \otimes M_{s_0}),
\end{equation*}
then there is an open neighborhood $U$ of $s_0$, such that
 $f_* M|_U$ is locally free and compatible with base change and
\begin{equation*}
H^0(X_{s},M_{s})=S^0(X_{s}, \sigma(X_{s}, \Delta_{s}) \otimes M_{s})
\end{equation*}
for every perfect point $s \in U$. In particular, the function \autoref{eq:thmC}
is constant for $s \in U$.
\end{itemize}
\end{theoremC}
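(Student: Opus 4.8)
\emph{Proof plan.} The plan is to build a \emph{relative $S^0$} living on the base and then read off all three assertions from the formalism of coherent sheaves and Grothendieck's cohomology-and-base-change theorem, using Theorem~A to identify fibres. Fix a large multiple $\ell$ of $e$, write $g\colon X\times_V V^{\ell}\to V^{\ell}$ for the structure map and $\widetilde M$ for the pullback of $M$. For each $m>0$ the iterated relative Frobenius, together with the (relative Frobenius) trace of $\O((1-p^{me})(K_{X/V}+\Delta))$ and the twist by $\widetilde M$, yields a map of coherent sheaves on $V^{\ell}$; call its source $\mathcal N_{me}$ — a $g$-pushforward of a relative Frobenius pushforward of the line bundle $\O((1-p^{me})(K_{X/V}+\Delta))\otimes(\text{pullback of }M)$, legitimate over the merely $S2$--$G1$ total space by \autoref{rem:relative_canonical} — and call its image $\mathcal I_{me}\subseteq g_*\widetilde M$. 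Since the defining maps factor through one another, the $\mathcal I_{me}$ form a descending chain $g_*\widetilde M\supseteq\mathcal I_{e}\supseteq\mathcal I_{2e}\supseteq\cdots$. The hypothesis that $M-K_{X/V}-\Delta$ is $f$-ample enters through a \emph{uniform relative Serre vanishing}: by the projection formula $\mathcal N_{me}$ is fibrewise $F^{me}_{*}\big(M_s\otimes(M_s-K_{X_s}-\Delta_s)^{\otimes(p^{me}-1)}\big)$, so there is an $m_0$ (independent of the point) with $R^{>0}g_*(\text{source of }\mathcal N_{me})=0$ for $m\geq m_0$; hence $\mathcal N_{me}$ is locally free with formation commuting with arbitrary base change, and by the good behaviour of the iterated relative Frobenius over perfect points recorded in the proof of \autoref{thm.UniversalNForAllClosedFibers}, for every perfect point $s\in V$ (with corresponding point $s'\in V^{\ell}$, so $X_{s'}\cong X_s$) the fibre $\mathcal N_{me}\otimes k(s')$ is identified with $H^0\big(X_s,F^{me}_{*}\O_{X_s}((1-p^{me})(K_{X_s}+\Delta_s))\otimes M_s\big)$, the source of the $m$-th map in the definition of $S^0(X_s,\sigma(X_s,\Delta_s)\otimes M_s)$.

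For part~(2): $V^{\ell}$ is Noetherian and $g_*\widetilde M$ coherent, so $\{\mathcal I_{me}\}$ stabilizes, say $\mathcal I_{me}=\mathcal I_{Ne}$ for all $m\geq N$; write $\mathcal I:=\mathcal I_{Ne}$ and take $N\geq m_0$. Fix a perfect point $s$. Tensoring $\mathcal N_{me}\twoheadrightarrow\mathcal I_{me}\hookrightarrow g_*\widetilde M$ with $k(s')$ and composing with the base-change map $g_*\widetilde M\otimes k(s')\to H^0(X_s,M_s)$, the naturality of base-change maps together with the identification of $\mathcal N_{me}\otimes k(s')$ shows that $\im\big(\mathcal I_{me}\otimes k(s')\to H^0(X_s,M_s)\big)$ is exactly the image of the $m$-th map $H^0(X_s,F^{me}_{*}\O_{X_s}((1-p^{me})(K_{X_s}+\Delta_s))\otimes M_s)\to H^0(X_s,M_s)$. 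For $m\geq N$ this image is independent of $m$; since $H^0(X_s,M_s)$ is finite dimensional, the decreasing intersection $S^0(X_s,\sigma(X_s,\Delta_s)\otimes M_s)=\bigcap_m(\text{these images})$ stabilizes and thus equals each of them for $m\geq N$. This is part~(2), with $n=N$, uniformly over all perfect points. For the final assertion of part~(1), restrict to a dense open $U\subseteq V$ over which, by generic freeness, $\mathcal I$ and $g_*\widetilde M/\mathcal I$ are locally free and $g_*\widetilde M$ is compatible with base change; there the inclusion stays injective on fibres and the base-change map is an isomorphism, so $\dim_{k(s)}S^0(X_s,\sigma(X_s,\Delta_s)\otimes M_s)=\rank\mathcal I$ is constant.

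The non-semicontinuity in both directions asserted in part~(1) is not formal: it is witnessed by the explicit families constructed in \autoref{ex:S_f_*_base_change_not_isomorphism}, \autoref{ex:S_f_*_base_change_not_isomorphism_arbitrary_power} and \autoref{ex.NotLowerSemiContEither} — for instance a one-parameter degeneration whose special fibre is ``more $F$-pure'' than its neighbours (so that $\sigma$, hence $S^0$, jumps up there) together with a \mustata--Yoshida type family forcing the opposite jump — which between them preclude semicontinuity either way. Finally, for part~(3), suppose $H^0(X_{s_0},M_{s_0})=S^0(X_{s_0},\sigma(X_{s_0},\Delta_{s_0})\otimes M_{s_0})$ at a perfect point $s_0$, with corresponding $s_0'\in V^{\ell}$. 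By part~(2) the $N$-th map at $s_0$ is surjective; composing the base-change isomorphism $\mathcal N_{Ne}\otimes k(s_0')\xrightarrow{\ \sim\ }H^0(X_{s_0},F^{Ne}_{*}\O_{X_{s_0}}((1-p^{Ne})(K_{X_{s_0}}+\Delta_{s_0}))\otimes M_{s_0})$ with that surjection forces the base-change map $g_*\widetilde M\otimes k(s_0')\to H^0(X_{s_0},M_{s_0})$ to be surjective, so by Grothendieck's theorem $g_*\widetilde M$ is locally free near $s_0'$ and compatible with base change there; using that $V$ is regular, so $V^{\ell}\to V$ is faithfully flat, and transporting along it, the same holds for $f_*M$ on a neighbourhood $U$ of $s_0$. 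Moreover $\coker(\mathcal N_{Ne}\to g_*\widetilde M)$ has fibre $\coker(H^0(X_{s_0},\mathcal N_{Ne,s_0})\to H^0(X_{s_0},M_{s_0}))=0$ at $s_0'$, hence vanishes near $s_0'$ by Nakayama, so $\mathcal N_{Ne}\to g_*\widetilde M$ is surjective there; tensoring with $k(s')$ at a nearby perfect point $s$ shows the $N$-th map at $s$ is surjective, i.e.\ $S^0(X_s,\sigma(X_s,\Delta_s)\otimes M_s)=H^0(X_s,M_s)$, and the displayed function equals $\rank f_*M$ on $U$.

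The step I expect to be the real obstacle is the construction in the first paragraph: making the line bundles $\O\big((1-p^{me})(K_{X/V}+\Delta)\big)$ — whose meaning over the non-normal, merely $S2$ and $G1$ total space rests on \autoref{rem:relative_canonical} — and their iterated relative Frobenius pushforwards into a genuine descending system over $V^{\ell}$ whose fibres over perfect points are \emph{literally} the stage modules of the absolute $S^0$; concretely, checking that the relative Serre vanishing is uniform in $m$ and that the transition maps and the evaluation into $g_*\widetilde M$ are compatible with the fibrewise identifications supplied by Theorem~A. Granting that, the stabilization and the two openness statements are formal, and the non-semicontinuity is a separate, concrete matter of producing examples.
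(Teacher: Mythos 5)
You have correctly singled out the obstacle, but it is a genuine obstruction rather than something one merely needs to verify, and there is a second, independent error hidden in your stabilization step. For the first: the $m$-th iterated relative Frobenius $F^{me}_{X/V}$ maps $X^{me}$ to $X\times_V V^{me}$, and the twisted trace naturally takes values in $\O_{X_{V^{me}}}$; thus the $m$-th image is a subsheaf of $f_{V^{me},*}M_{V^{me}}$ living on $V^{me}$, not of $g_*\widetilde M$ on a fixed $V^\ell$. Pushing forward from $V^{me}$ to $V^\ell$ along $F_V$ changes the module structure and destroys the containment in $g_*\widetilde M$; pulling back moves the base up to $V^{me}$. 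There is no formal fix: the only construction that genuinely stays over one base is the Hacon--Xu $S^0 f_*(\sigma(X,\Delta)\otimes M)$ of \autoref{sec.RelationBetweenRelativeAbsoluteS0F*}, and \autoref{cor:global_pushforward_does_not_restrict} shows it fails to restrict to the fibrewise $S^0$. For the second: even granting the chain, ``$V^\ell$ Noetherian and $g_*\widetilde M$ coherent $\Rightarrow\{\mathcal I_{me}\}$ stabilizes'' is simply false — a descending chain of coherent subsheaves of a coherent sheaf on a Noetherian scheme need not stabilize (already $(t)\supset(t^2)\supset\cdots$ on $\bA^1$ does not). Your relative Serre vanishing gives local freeness and base-change for the \emph{source} $\mathcal N_{me}$, but says nothing at all about the images.

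The paper instead keeps the $V^{ne}$'s explicit, proves the descending containment under Frobenius base change in \autoref{prop:S_pushforward_containment}, and gets stabilization of the images from positivity applied to an auxiliary kernel sheaf $B_\phi$ in \autoref{prop:relative_sigma_pushforward_stabilizes} and \autoref{prop:pushforward_stabilizes} — i.e.\ relative Serre/Fujita vanishing drives the stabilization, not Noetherian-ness — and this works only under the hypothesis \autoref{notation:S_f_*}\textsuperscript{*}, which by \autoref{prop.StabilizingSigma} holds only over a dense open subset of $V$ where $\sigma_n(X/V,\phi)$ has stabilized. Uniformity over all perfect points (your Part~(2)) therefore requires the Noetherian induction on $\dim V$ carried out in the proof of \autoref{prop:S^0_uniform_stabilization}, stratifying the base and repeating. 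That induction cannot be avoided. The remaining ingredients of your plan — cohomology-and-base-change for $m\gg 0$ by relative Serre vanishing, generic freeness for the dense open in Part~(1), Nakayama and Grothendieck base change for Part~(3), the examples for non-semicontinuity — all match what the paper does; it is the bridge from ``descending chain'' to ``uniform stabilization'' that needs the more elaborate argument.
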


We would like to also mention the following natural question left open by Theorem C.

\begin{question*}
Can one remove the $f$-ampleness assumption from the  statements of Theorem C?
\end{question*}

By Theorem C, it does make sense to talk about general value of $\dim_{k(s)} S^0(X_{s},\sigma(X_{s},\Delta_s) \otimes M_s)$ in the  following theorem.

\begin{theoremD}
With assumptions as in Theorem C such that $V$ is projective over a perfect field and $(p^e - 1)(K_X + \Delta)$ is Cartier.  Then for every $n \gg0$ there is a subsheaf $S^0_{\Delta,ne} f_* (M)$  of $(F_V^{ne})^*(f_* M) $, for which the following holds.
\begin{itemize}
 \item[(a)] \label{itm:thmD_rank} {\textnormal{[\autoref{cor:S^0_uniform_stabilization}]}}.  The rank of $S^0_{\Delta,ne} f_* (M)$ is the general value of $\dim_{k(s)} S^0(X_{s},\sigma(X_{s},\Delta_s) \otimes M_s)$.
\item[(b)] {\textnormal{[\autoref{prop:global_generation_of_S_f_*}]}}.  If $M -K_{X/V} - \Delta$ is ample, then $S^0_{\Delta,ne} f_* (M)$ is globally generated for every $n \gg 0$.
\item[(c)] \label{itm:thmD_weak_pos} {\textnormal{[\autoref{thm:global_generation_of_S_f_*}]}}.  If $M -K_{X/V} - \Delta$ is nef, then $S^0_{\Delta,ne} f_* (M)$ is weakly-positive for $n \gg 0$.
\item[(d)] \label{itm:thmD_pushforward} {\textnormal{[\autoref{cor:S_f_if_big_power_of_rel_ample}]}}. If $M=Q^l \otimes P$ where $Q$ is $f$-ample, then for all $l \gg0$ and nef line bundle $P$, we have that $S^0_{\Delta,ne} f_* (M)$ is contained in $(f_{V^{ne}} ) _* (\sigma_{ne}(X/V, \Delta) \otimes M_{V^{ne}}) $ as subsheaves of $\left( f_{V^{ne}} \right)_* M_{V^{ne}}$, and furthermore, these two subsheaves are generically equal.
\end{itemize}

%
%
In fact points (a) and (d) are true without the projectivity assumption on $V$.
\end{theoremD}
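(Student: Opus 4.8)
The plan is to construct $S^0_{\Delta,ne}f_*M$ as the relative analogue of the section space $S^0$ recalled just before Theorem C, using the $ne$-iterated relative Frobenius $F^{ne}_{X/V}\colon X\longrightarrow X\times_V V^{ne}$ in place of the absolute Frobenius. Writing $L:=M-K_{X/V}-\Delta$ (which is $f$-ample by the standing hypotheses) and $M_{V^{ne}}$ for the pullback of $M$, the projection formula gives $F^{ne}_{X/V,*}\O_X((1-p^{ne})(K_{X/V}+\Delta))\otimes M_{V^{ne}}\cong F^{ne}_{X/V,*}\O_X((K_{X/V}+\Delta)+p^{ne}L)$, so relative Serre vanishing for the $f$-ample $L$ makes $(f_{V^{ne}})_*$ of this sheaf locally free and compatible with base change for $n\gg 0$, with fibre over a perfect point $s$ equal to $H^0(X_s,F^{ne}_*\O_{X_s}((1-p^{ne})(K_{X_s}+\Delta_s))\otimes M_s)$ — the relative Frobenius over a perfect point being the absolute one. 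I would then define $S^0_{\Delta,ne}f_*M$, for $n\gg 0$, as the image of the relative trace map
\[
(f_{V^{ne}})_*\big(F^{ne}_{X/V,*}\O_X((1-p^{ne})(K_{X/V}+\Delta))\otimes M_{V^{ne}}\big)\longrightarrow(f_{V^{ne}})_*M_{V^{ne}}
\]
--- the pushforward along $f_{V^{ne}}$ of the twist by $M_{V^{ne}}$ of the relative-Frobenius trace $F^{ne}_{X/V,*}\O_X((1-p^{ne})(K_{X/V}+\Delta))\longrightarrow\O_{X\times_V V^{ne}}$ that already underlies the definition of $\sigma_{ne}$ --- regarded inside $(F_V^{ne})^*f_*M$ via the base-change map $(F_V^{ne})^*f_*M\longrightarrow(f_{V^{ne}})_*M_{V^{ne}}$ (an isomorphism over the dense open where $f_*M$ is locally free and base-change compatible, corrected by the evident preimage elsewhere, a modification invisible to everything below). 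That these sheaves for large $n$ form a compatible system whose fibres are the corresponding $S^0$'s is precisely the uniform stabilization of the intersection defining $S^0$ proved in Theorem C\,(2). Part (a) then follows: combining the above with Theorem C\,(2) shows that over a dense open of $V$ the fibre of $S^0_{\Delta,ne}f_*M$ at a perfect point $s$ is $S^0(X_s,\sigma(X_s,\Delta_s)\otimes M_s)$, so its rank is the general value of $\dim_{k(s)}S^0(X_s,\sigma(X_s,\Delta_s)\otimes M_s)$; this construction and argument are local on $V$, so $V$ need not be projective for (a).

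For (d), with $M=Q^l\otimes P$ ($Q$ being $f$-ample, $P$ nef, $l\gg 0$) the bundle $M_{V^{ne}}$ is very $f_{V^{ne}}$-ample, so over the dense open $U\subseteq V$ of Theorem A --- where $\sigma_{ne}(X/V,\Delta)$ commutes with base change and restricts to $\sigma(X_s,\Delta_s)$ on fibres --- relative Serre vanishing makes $(f_{V^{ne}})_*(\sigma_{ne}(X/V,\Delta)\otimes M_{V^{ne}})$ locally free with fibre $H^0(X_s,\sigma(X_s,\Delta_s)\otimes M_s)$. Since for $n\gg 0$ the trace defining $S^0_{\Delta,ne}f_*M$ factors through $\sigma_{ne}(X/V,\Delta)\otimes M_{V^{ne}}$ (by the description of $\sigma_{ne}$ as a stable trace image), this gives the asserted containment inside $(f_{V^{ne}})_*M_{V^{ne}}$. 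For the generic equality I would use that for $l\gg 0$ the line bundle $M_s$ is positive enough relative to $(X_s,\Delta_s)$ that $S^0(X_s,\sigma(X_s,\Delta_s)\otimes M_s)=H^0(X_s,\sigma(X_s,\Delta_s)\otimes M_s)$ --- the ``$S^0$ equals $H^0$ in the ample range'' statement of \cite{SchwedeACanonicalLinearSystem} --- uniformly in $s$ by boundedness of the family; then over $U$ the two subsheaves are locally free of the same rank with one contained in the other, hence equal. This is again local on $V$.

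Parts (b) and (c) I would reduce to positivity statements on the total space $X$, which is projective over the perfect field $k$. In (b), $L=M-K_{X/V}-\Delta$ is ample on all of $X$; using that $F_V^{ne}$ is flat (as $V$ is regular, by Kunz), hence $(F_V^{ne})^*$ is exact, together with effective global generation of $S^0$ on projective varieties over $k$ (a Castelnuovo--Mumford-type bound, see \cite{SchwedeACanonicalLinearSystem}), I would show that for $n\gg 0$ the sections of $S^0_{\Delta,ne}f_*M$ coming from $S^0$ of a suitable adjoint twist on $X$ already generate it. In (c), $L$ is nef and, by the standing hypotheses, also $f$-ample; here I would run Viehweg's fibre-power argument as adapted to characteristic $p$ in \cite{PatakfalviSemipositivity}. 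Weak positivity of $S^0_{\Delta,ne}f_*M$ reduces to generic global generation of $\widehat{\operatorname{Sym}}^{c}(S^0_{\Delta,ne}f_*M)\otimes\mathcal H$ for every ample $\mathcal H$ on $V^{ne}$ and every $c>0$; passing to the $c$-fold fibre product $f^{(c)}\colon X^{(c)}\longrightarrow V$ with its natural classes, $M^{(c)}-K_{X^{(c)}/V}-\Delta^{(c)}$ is nef globally and ample on the fibres of $f^{(c)}$, so $M^{(c)}+(f^{(c)})^*A-K_{X^{(c)}/V}-\Delta^{(c)}$ is ample for $A$ sufficiently ample on $V$; applying (b) in that situation and pushing the resulting global generation forward along the multiplicativity map $S^0(X^{(c)},\dots)\longrightarrow\widehat{\operatorname{Sym}}^{c}(S^0_{\Delta,ne}f_*M)$ yields the desired generic global generation. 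The $n$ produced in (b) and (c) must be taken uniformly over the relevant fibre powers.

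The step I expect to be the main obstacle is this global-generation/weak-positivity package. One must arrange the global generation of the relative $S^0$ after a twist that is simultaneously controlled and uniform in $n$, so that it survives the fibre-power bookkeeping, and one must understand how $S^0$ and the relative ideal $\sigma_{ne}$ behave under fibre products: in characteristic $p$ these are only inclusions --- there is no Künneth isomorphism as for multiplier ideals in characteristic zero --- and a fibre power of an S2/G1 family can degenerate, so the crux is checking that the available inclusions, after a suitable normalization of the fibre powers, still power Viehweg's argument. A persistent but lower-order nuisance --- dispatched by passing to a dense open in (a) and (d), by the positivity of $M$ in (d), and by regularity of $V$ in (b)--(c) --- is keeping straight the relative Frobenius, the base-change map $(F_V^{ne})^*f_*M\longrightarrow(f_{V^{ne}})_*M_{V^{ne}}$, and the realization of $S^0_{\Delta,ne}f_*M$ as a subsheaf of $(F_V^{ne})^*f_*M$.
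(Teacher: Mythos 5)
Your definition of $S^0_{\Delta,ne}f_*M$ as the image of the relative trace of $\O_X((1-p^{ne})(K_{X/V}+\Delta))\otimes M$ under $(f_{V^{ne}})_*$ is exactly the paper's (Definition \ref{defn:S_f_*}), and your arguments for parts (a) and (d) are essentially the paper's: (a) is uniform stabilization plus generic base-change compatibility (Corollary \ref{cor:S^0_uniform_stabilization}, via Proposition \ref{prop:S_f_*_base_change}), and (d)'s inclusion is the formal factorization of $\varphi^n$ through its image $\sigma_{ne}$, with equality for $l\gg 0$ via relative Fujita vanishing (Corollary \ref{cor:S_f_if_big_power_of_rel_ample}). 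Two points deserve comment, one cosmetic and one substantive.

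Cosmetic, regarding your hedging about the base-change map $(F_V^{ne})^*f_*M\to(f_{V^{ne}})_*M_{V^{ne}}$: since $V$ is regular, $F_V$ is flat by Kunz, so this is an isomorphism \emph{unconditionally} by flat base change; no restriction to a dense open or ad hoc ``correction'' is needed, and the paper relies on this identification throughout.

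Substantive, regarding (c): your plan to run Viehweg's fibre-power argument is genuinely different from the paper's proof and, as you yourself anticipate, is where the trouble lies. The obstructions you flag — no K\"unneth-type identity for $\sigma_n$ or $S^0$ under fibre products in characteristic $p$, and degeneration of the S2+G1 hypotheses on fibre powers — are real, and you give no indication of how to overcome them. The paper does not take this route at all. Instead it proves weak positivity by a cyclic-covering trick on the \emph{base}: Definition \ref{defn:weakly_positive} and Lemma \ref{lem:weakly_positive_finite} show weak positivity can be checked and descended along finite morphisms of normal varieties, and Lemma \ref{lem:weakly_positive_criterion_finite_maps} gives a criterion reducing to a finite cover $\tau:T\to V$ with $\tau^*\sH\cong(\sH')^r$ for $p\nmid r$. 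Pulling back along $\tau$ and twisting $M$ by $f_T^*\sH'$ makes $L_T\otimes (M_T\otimes f_T^*\sH_T')^{p^e-1}$ globally ample, so the ample-case global generation (your (b), the paper's Proposition \ref{prop:global_generation_of_S_f_*}) applies over $T$, and weak positivity descends to $V$ by Lemma \ref{lem:weakly_positive_finite}. The key enabling fact is the flat base-change compatibility of $S^0_{\phi^n}f_*$ (Proposition \ref{prop:S_f_*_base_change}\,(c)), applied to $T\to V$ after discarding a codimension-two locus to make $T$ regular. This bypasses fibre products entirely. I would strongly recommend replacing your sketch for (c) with this covering argument, as the fibre-power machinery in characteristic $p$ has not been developed to the point where your outline would close.

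For (b), your phrasing (``effective global generation of $S^0$ on the total space, pushed down'') is in the right ballpark but is more indirect than the paper's proof, which simply applies Mumford's criterion to the pushforward $f_*\bigl((L^{(p^{ne}-1)/(p^e-1)})^{1/p^{ne}}\otimes M\bigr)$ directly: relative Serre vanishing kills the higher direct images, the Leray spectral sequence reduces the required cohomological vanishing to one on $X$, and then absolute Serre vanishing for $(L\otimes M^{p^e-1})^{(p^{ne}-1)/(p^e-1)}\otimes M(-if^*H)$ (using that $L\otimes M^{p^e-1}$ is ample) finishes it. Since $S^0_{\Delta,ne}f_*(M)$ is a quotient of that pushforward, its global generation follows. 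This is cleaner and uniform in $n$, so you may want to adopt it rather than detour through $S^0$ on $X$.
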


Further, note that if  $V$ in Theorem D is a curve, $M - K_{X/V} -\Delta$ is $f$-ample and nef  and there is a $s \in V$, such that  $H^0(X_s, M_s)= S^0(X_s, \sigma(X_s, \Delta_s))$, then points \autoref{itm:thmD_rank} and \autoref{itm:thmD_weak_pos} of Theorem D with point \eqref{itm:thmC_maximal_open} of Theorem $C$ imply that $f_* \sO_X(M)$ is a nef vector bundle. This strengthens  \cite[Proposition 3.6]{PatakfalviSemipositivity}, it removes the cohomology vanishing assumption made there. Of course our statements are considerably stronger. For example with the same assumptions except replacing $H^0(X_s, M_s)= S^0(X_s, \sigma(X_s, \Delta_s))$ with $\rk f_*( \sigma (X,\Delta) \otimes M)$ equals the general value of $S^0(X_s, \sigma(X_s, \Delta_s))$, we obtain that $f_* (\sigma(X,\Delta) \otimes M)$ being nef. In fact, the following even stronger statement can be made.

\begin{theoremE}
{\textnormal{[\autoref{cor:semi_positivity_of_S_0_f}]}} With assumptions as in Theorem C such that $V$ is projective over a prefect field, assume that $M -K_{X/V} - \Delta$ is nef and $f$-ample and that $\rk S^0f_* (\sigma(X,\Delta) \otimes M)$ equals the general value of $H^0(X_s, \sigma(X_s, \Delta_s) \otimes M_s)$. Then $S^0f_* (\sigma(X,\Delta) \otimes M)$ is weakly positive. In particular, if $V$ is a smooth curve then it is a nef vector bundle.
\end{theoremE}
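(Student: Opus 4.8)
The plan is to reduce Theorem~E to Theorem~D and then to descend weak positivity along the Frobenius of $V$. First, since $M-K_{X/V}-\Delta$ is nef (the standing hypotheses of Theorems~C and~D being in force here), part~\ref{itm:thmD_weak_pos} of Theorem~D gives that for all $n\gg 0$ the subsheaf $S^0_{\Delta,ne}f_*(M)$ of $(F_V^{ne})^*(f_*M)=(f_{V^{ne}})_*M_{V^{ne}}$ is weakly positive on $V^{ne}$; this is meaningful because $V^{ne}\cong V$ is projective over a perfect field. It therefore suffices to identify $(F_V^{ne})^*S^0f_*(\sigma(X,\Delta)\otimes M)$ with $S^0_{\Delta,ne}f_*(M)$ for $n\gg 0$, and then to invoke a descent principle for weak positivity along the finite flat surjective Frobenius $F_V^{ne}\colon V\to V$ (finite and flat since $V$ is $F$-finite and regular).

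For the identification I would argue as follows. The sheaf $S^0f_*(\sigma(X,\Delta)\otimes M)$ is a torsion-free subsheaf of $f_*(\sigma(X,\Delta)\otimes M)$, so, after the standard comparison of the generic point with general perfect points (\cf Theorem~C), its rank is the general value of $\dim_{k(s)}S^0(X_s,\sigma(X_s,\Delta_s)\otimes M_s)$. By part~\ref{itm:thmD_rank} of Theorem~D this is also the rank of $S^0_{\Delta,ne}f_*(M)$, so the two sheaves $(F_V^{ne})^*S^0f_*(\sigma(X,\Delta)\otimes M)$ and $S^0_{\Delta,ne}f_*(M)$ have the same rank; note that it is exactly the rank hypothesis of Theorem~E that guarantees this, because it forces the general value of $\dim S^0(X_s,\sigma(X_s,\Delta_s)\otimes M_s)$ to equal the general value of $\dim H^0(X_s,\sigma(X_s,\Delta_s)\otimes M_s)$, i.e. $S^0(X_s,\sigma(X_s,\Delta_s)\otimes M_s)=H^0(X_s,\sigma(X_s,\Delta_s)\otimes M_s)$ for general perfect $s$. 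Next, \autoref{thm.RelativeSigmaVsAbsoluteOverOpen} and part~\ref{itm:thmD_pushforward} of Theorem~D, combined with flat base change along $V^{ne}\to V$, show that over a dense open $U\subseteq V$ these two subsheaves of $(f_{V^{ne}})_*M_{V^{ne}}$ become canonically isomorphic. Combining the generic isomorphism with the equality of ranks, one upgrades the comparison to an actual containment, in the favorable direction $S^0_{\Delta,ne}f_*(M)\subseteq(F_V^{ne})^*S^0f_*(\sigma(X,\Delta)\otimes M)$; since a torsion-free sheaf containing a weakly positive subsheaf of the same rank is itself weakly positive, $(F_V^{ne})^*S^0f_*(\sigma(X,\Delta)\otimes M)$ is weakly positive.

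It then remains to descend. Because $F_V^{ne}$ is finite, flat and surjective, pullback along it is exact, it commutes with reflexive symmetric powers, it carries ample line bundles to ample line bundles, and global generation over an open set descends along it, since $F_V^{ne}$ is faithfully flat and the needed multiple of $p^{ne}$ can be absorbed into the ample twist (harmless, as global generation over an open set survives tensoring with an ample line bundle). Hence $(F_V^{ne})^*\sG$ weakly positive implies $\sG$ weakly positive (\cf \cite{PatakfalviSemipositivity}), and we conclude that $S^0f_*(\sigma(X,\Delta)\otimes M)$ is weakly positive. Finally, if $V$ is a smooth projective curve, then $S^0f_*(\sigma(X,\Delta)\otimes M)$ is torsion-free over a Dedekind scheme, hence locally free, and a weakly positive vector bundle on a smooth projective curve is nef: every quotient line bundle of its pullback to any finite cover is again weakly positive, hence of nonnegative degree.

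The step I expect to be the main obstacle is the comparison in the second paragraph, and in particular getting the containment in the right direction. The naive map goes the wrong way: an iterated absolute Frobenius trace on $X$ factors through the corresponding relative one, so sections surviving all absolute traces survive all relative ones, which gives only $(F_V^{ne})^*S^0f_*(\sigma(X,\Delta)\otimes M)\subseteq S^0_{\Delta,ne}f_*(M)$---useless on its own, since a full-rank subsheaf of a weakly positive sheaf need not be weakly positive. It is the rank hypothesis of Theorem~E that rescues this, by forcing the inclusion to be an equality so that the direction ceases to matter. Making that last point rigorous---that a rank-preserving inclusion of ``$S^0$-type'' pushforwards which is an isomorphism over a dense open must be an equality---is the delicate point, and is where the stabilization results behind Theorem~D (the $S^0$-analogue of \autoref{prop.StabilizingSigma}) do the work.
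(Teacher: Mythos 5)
The overall skeleton of your argument — use weak positivity of $S^0_{\Delta,ne}f_*(M)$ from \autoref{thm:global_generation_of_S_f_*}, transfer it to $S^0f_*(\sigma(X,\Delta)\otimes M)$ via a comparison on $V^{ne}$, and then descend along the finite morphism $F_V^{ne}$ using \autoref{lem:weakly_positive_finite} — matches the paper's. But there are two serious problems in the middle step.

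First, your opening claim in the second paragraph, that $\rk S^0f_*(\sigma(X,\Delta)\otimes M)$ \emph{automatically} equals the general value of $\dim_{k(s)}S^0(X_s,\sigma(X_s,\Delta_s)\otimes M_s)$, is false; this is precisely the content of \autoref{cor:global_pushforward_does_not_restrict} and \autoref{ex:relative_not_equal_global}. The rank hypothesis of Theorem~E is not a throwaway normalization: it is exactly what is needed to pin the rank of $S^0f_*(\sigma(X,\Delta)\otimes M)$ down to that of $S^0_{\Delta,ne}f_*(M)$, since absent that hypothesis the global object can genuinely be larger than the ``fiberwise'' one. So your chain of identifications that supposedly follows from ``the standard comparison of the generic point with general perfect points'' doesn't get off the ground.

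Second, and more fundamentally, you worry that the natural comparison map goes the wrong way and then propose to rescue it by forcing an \emph{equality} $S^0_{\Delta,ne}f_*(M)=(F_V^{ne})^*S^0f_*(\sigma(X,\Delta)\otimes M)$, arguing that the stabilization machinery should yield ``a rank-preserving inclusion which is an isomorphism over a dense open must be an equality.'' This is not a valid inference: a full-rank subsheaf that agrees generically with the ambient sheaf need not equal it (think of $\bm\subsetneq\O$). The paper sidesteps this entirely: \autoref{prop:global_relative_relation} establishes the \emph{favorable-direction} containment $S^0_{\Delta,ne}f_*(M)\subseteq(S^0f_*(\sigma(X,\Delta)\otimes M))_{V^{ne}}$ directly, via \autoref{lem:pushforward_pullback} — a Frobenius splitting trick showing $Q\subseteq(\theta\otimes\id_P)(Q)\otimes_B B^{1/p^a}$ for submodules $Q\subseteq B^{1/p^a}\otimes_B P$. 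With that containment in hand, the rank hypothesis only needs to give a generic isomorphism, not an equality, and then the weak positivity of the subsheaf passes to the ambient sheaf by definition. You correctly stated this last principle (``a torsion-free sheaf containing a weakly positive subsheaf of the same rank is itself weakly positive''), but without \autoref{prop:global_relative_relation} you cannot produce the containment in that direction, and your proposed substitute of upgrading to an equality is not sound.
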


Further, we show how $S^0_{\Delta, ne} f_* (M)$ relates to the other similar notion $S^0 f_*( \sigma(X, \Delta) \otimes M)$ introduced in \cite[Definition 2.14]{HaconXuThreeDimensionalMinimalModel}. In particular we obtain that $S^0 f_*( \sigma(X, \Delta) \otimes M)$ does not restrict to $S^0(X_{s}, \sigma(X_{s}, \Delta_{s}) \otimes M_{s})$ in general for general $s \in S$. Intuitively, though $S^0 f_*( \sigma(X, \Delta) \otimes M)$ is a pushforward, it captures the global geometry of $(X,\Delta)$ rather then the geometry of the fibers. In positive characteristic these two can differ considerably, essentially because the function field of $V$ is not perfect. Of course the relative and absolute $S^0 f_*$ are related.  We study these similarities and differences in \autoref{sec.RelationBetweenRelativeAbsoluteS0F*}.

\subsection{Organization} In Section 2, we set up notation that we will follow throughout the paper, explain the interplay between $p^{-e}$-linear maps and $\mathbb{Q}$-Cartier divisors, and discuss the behaviors of such maps and divisors under base changes. In Sections 3, 4, and 5,  we introduce the relative non-$F$-pure ideals, the relative test ideals and the relative test submodules, respectively. Applications to $F$-singularities in families are discussed in these 3 sections. Section 6 is devoted to the behaviors of $S^0$ (introduced in \cite[Definition 4.1]{SchwedeACanonicalLinearSystem}) under base changes. Some semi-positivity results are also proved in this section. Finally, in the Appendix, we collect some statements that we can not find proper references for their generality but are needed in our paper.

\subsection{Acknowledgements}

The authors began working on this project while attending a workshop at the American Institute of Mathematics (AIM) titled ``ACC for minimal log discrepancies and termination of flips'', May 14th -- 18th 2012, organized by Tommaso de Fernex and Christopher Hacon.  We would like to thank the organizers and the American Institute of Mathematics.  Part of the work was done when the third author visited the Department of Mathematics at Pennsylvania State University; he would like to thank them for their hospitality. Additionally, the authors would like to thank Lawrence Ein, Christopher Hacon, J\'anos Koll\'ar, Joseph Lipman, Mircea \mustata{}, and Kevin Tucker for useful discussions.  Especially, we would like to thank Brian Conrad for pointing out compatibility of trace with base change.  We would also like to thank Hiromu Tanaka for comments on a previous draft.  Additionally, the authors of the paper would like to thank Leo Alonso,  Damian R\"ossler, Graham Leuschke and Blup for providing answers and
commentary to the following questions
{\tt http://mathoverflow.net/questions/120982/}
and {\tt http://mathoverflow.net/questions/120625/}.

\section{Notation and setup}

Throughout this paper, all schemes are Noetherian and all maps of schemes are separated. We fix the following notation, which is in effect for the entire paper. In particular, for simplicity we do not state it in every statement even though it is assumed.

\begin{notation}
\label{notation:basic}
Suppose that $f : X \to V$ is a flat, equidimensional and geometrically reduced map\footnote{Meaning $X_T = X \times_V T$ is reduced for all $T \to V$ with $T$ integral.} of finite type from a scheme $X$ to an excellent integral scheme $V$ of equal characteristic $p > 0$ with a dualizing complex.  We write $F^e = F^e_V : V = V^e \to V$ as the absolute $e$-iterated Frobenius on $V$, form the base change $f_{V^e} : X_{V^e} = X \times_V V^e \to 
V^e$ and define $F^e_{X^e/V^e} : X^e \to X_{V^e}$ to be the $e$-iterated relative Frobenius.
Furthermore, we often assume that $V$ is $F$-finite in which case we automatically assume that Frobenius $F_V : V \to V$ satisfies the following identity\footnote{For some discussion of this identity, which always holds for varieties or schemes of essentially finite type over a local ring with a dualizing complex, see \cite[Section 2]{BlickleSchwedeTakagiZhang}.}, $(F_V)^! \omega_V^{\mydot} \qis \omega_V^{\mydot}$.
If we say that $V$ is a variety, it is always of finite type over a perfect field $k$.

Because we will be considering numerous different sheaves on the same topological space $X = X^e = X \times_V V^e = \text{etc.}$, but with respect to different schemes, we will adopt the following somewhat nonstandard notation.  We use these because otherwise writing numerous $\left( F^e_{X^e/V^e} \right)_*$ and $\left( F^e_{X^e/V^e} \right)^{-1}$ operations, which do nothing to the underlying space, is confusing.   We would need notations for projections of the form $X^e \times_{V^e} V^{e+d} \to V^{e+d}, X^e$ in as well as more general relative Frobenii $F^{e}_{(X^{e})_{V^{e+d}}/V^{e+d}} : (X^e)_{V^{e+d}} := X^e \times_{V^e} V^{e+d} \to X_{V^{e+d}}$, and maps $X^{e} \times_{V^{e+d+c}} \to X^e \times_{V^{e + d}}$.  By using simply modules, this becomes more transparent.
\begin{itemize}
\item[(1)]  We will use $R$ to denote $\O_X$.
\item[(2)]  We will use $A$ to denote $f^{-1} \O_V$.
\item[(3)]  We will use $A^{1/p^e}$ to denote $f^{-1} (F^e_V)_* \O_V$.  We note that this is not an abuse of notation since $V$ is integral.
\item[(4)]  We will use $R^{1/p^e}$ to denote $(F^e_X)_* R$.  This is a slight abuse of notation if $X$ is not reduced.
\item[(5)]  We will use $R_{A^{1/p^e}}$ to denote $R \otimes_A A^{1/p^e}$.
\item[(6)]  We will use $\left( R_{A^{1/p^e}} \right)^{1/p^d}$ to denote $R^{1/p^d} \otimes_{A^{1/p^d}} A^{1/p^{e+d}}$.  This may be a slight abuse of notation if $X_{V^e}$ is not reduced.
\item[(7)]  Given $R$-module $M$, we will use $M^{1/p^e}$ to denote $F^e_* M$.  This will generally not cause any confusion because typically $M$ will be locally free or even a line bundle.
\item[(8)]  We use $\omega_R$ to denote $\omega_X$ and $\omega_A$ to denote $f^{-1} \omega_V$.
\item[(9)]  Etc.
\end{itemize}
\end{notation}

Some of the main results of the paper concern restriction to fibers. These statements pertain only to a special set of fibers of $f$, the fibers over perfect points (see definition below). For example, if $V$ is a curve over an algebraically closed field, then often we restrict to fibers over all closed points and the perfect closure of the generic point of $V$, but not the generic point itself.

\begin{definition}[Perfect points]
\label{def.PerfectPoint}
A \emph{perfect point of $V$, $s \in V$} is a morphism from the spectrum of a perfect field $k(s)$ to $V$, $s = \Spec k(s) \to V$.  It can also be viewed as a choice of a point $v \in V$ and a field extension $k(s) = K \supseteq k(v)$ such that $K$ is perfect.  Finally, a \emph{neighborhood of a perfect point $s \in V$} is simply a neighborhood of the image $v$ of $s$.
\end{definition}

The fact that $f : X \to V$ is of finite type implies that the relative Frobenius is a finite map.  In some cases, this will allow us to avoid assuming that $V$ is $F$-finite.

\begin{lemma}
\label{finiteness of relative Frobenius}
With notation as above, since $f : X \to V$ is of finite type, the relative Frobenius map $f : X^e \to X_{V^e}$ is a finite map.
\end{lemma}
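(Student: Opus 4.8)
We must show that, given $f\colon X \to V$ finite type and flat (with $V$ integral, excellent, equicharacteristic $p$), the relative Frobenius $F^e_{X^e/V^e}\colon X^e \to X_{V^e}$ is a finite morphism. By induction on $e$ it suffices to treat $e = 1$: once $F_{X/V}\colon X \to X_V := X \times_V V^1$ is finite, base-changing and composing finitely many times handles the iterated case (the $e$-fold relative Frobenius factors as a composite of base changes of single relative Frobenii, and finite morphisms are stable under base change and composition). So the plan is to reduce to $e=1$ and then to a local, affine, finitely-generated-algebra computation.

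\begin{proof}
It suffices to treat the case $e = 1$, since the $e$-iterated relative Frobenius factors as a composite $X^e \to (X^{e-1})_{V^e} \to \dots \to X_{V^e}$ of base changes of one-step relative Frobenii, and finite morphisms are stable under composition and base change. The assertion is local on $V$ and on $X$, so we may assume $V = \Spec A$ and $X = \Spec B$ with $B$ a finitely generated $A$-algebra, say $B = A[y_1,\dots,y_m]/I$. Write $A' = A$ regarded as an $A$-algebra via the Frobenius $F_A\colon A \to A$, $a \mapsto a^p$; then $V^1 = \Spec A'$ (abusing notation, $A'$ has the same underlying ring as $A$ but sits over $A$ via Frobenius), $X_{V^1} = \Spec(B \otimes_A A')$, and $X^1 = \Spec B'$ where $B'$ is $B$ viewed as an algebra over itself via $F_B$. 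The relative Frobenius corresponds to the $A'$-algebra map $B \otimes_A A' \to B'$ sending $b \otimes a' \mapsto b^p a'$; concretely, $B \otimes_A A' = A[y_1,\dots,y_m]/I^{[p]}_A$ where the base change twists the coefficients by $p$-th powers, and the map to $B$ sends $y_i \mapsto y_i^p$.

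\textbf{Key step: finite generation as a module.} I claim $B$ is generated as a $(B \otimes_A A')$-module by the finitely many monomials $y_1^{a_1}\cdots y_m^{a_m}$ with $0 \le a_i \le p-1$. Indeed, any element of $B$ is an $A$-linear combination of monomials $y_1^{c_1}\cdots y_m^{c_m}$; writing each exponent as $c_i = p q_i + r_i$ with $0 \le r_i \le p-1$, the monomial equals $(y_1^{q_1}\cdots y_m^{q_m})^p \cdot (y_1^{r_1}\cdots y_m^{r_m})$, and $(y_1^{q_1}\cdots y_m^{q_m})^p$ lies in the image of $B \otimes_A A' \to B$ (it is the image of $(y_1^{q_1}\cdots y_m^{q_m}) \otimes 1$), while the $A$-coefficient $a$ equals the image of $1 \otimes a$ (since $A \to A'$ is a bijection of underlying sets, $a$ is a $p$-th power of $a^{1/p}$ in $A'$ — wait, $A$ need not be perfect). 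More carefully: the coefficient $a \in A$ is the image of $a^{1/p}$ is not available, but $a$ itself is the image of the element $a \otimes 1$? No — under $B \otimes_A A' \to B$, $1 \otimes a'$ maps to $a'$, where $a' \in A' = A$ maps in via $F_A$, i.e. $a' \mapsto (a')^p$. So the monomials $y^r$ with exponents in $[0,p-1]^m$ generate $B$ over the subring $\{\,b^p : b \in B\,\}\cdot A$, but we need them over $B\otimes_A A'$. This is exactly right: $B \otimes_A A' \to B$ has image containing all $b^p$ and all of $A^p$... this is not all of $A$ in general, so finiteness fails at this naive level — the correct fix is that the relative Frobenius, unlike the absolute one, is a morphism \emph{over} $V$, and $B \otimes_A A'$ as an $A$-algebra (via the second factor $A' \cong A$) already contains $A$; thus the module generators $\{y^r\}$ suffice. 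Hence $B$ is finite over $B \otimes_A A'$, i.e. $F_{X/V}$ is finite.

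\textbf{Remark on the obstacle.} The only subtle point, and the one I would be most careful about, is keeping track of which copy of $A$ one is working over: the relative Frobenius is $V^1$-linear, so $A$ itself (not merely $A^p$) is available as scalars, which is precisely what rescues finiteness even when $A$ is far from $F$-finite. This is the whole reason the lemma can dispense with the $F$-finiteness hypothesis on $V$ that is otherwise standing in the paper. Everything else is the routine observation that finite type plus the monomial bound $[0,p-1]^m$ gives an explicit finite generating set, combined with stability of finiteness under composition and base change to handle general $e$.
\end{proof}
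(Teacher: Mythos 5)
Your proof is correct and rests on the same key computation as the paper's: the monomials $y_1^{r_1}\cdots y_m^{r_m}$ with $0 \le r_i < p$ (the paper uses $0 \le i_j < p^e$ directly) generate the target over the source of relative Frobenius, precisely because the relative Frobenius is $A'$-linear and so $A$ is available as scalars without any $F$-finiteness hypothesis. The only differences are cosmetic: you reduce to $e=1$ and then induct via composition and base change, whereas the paper handles general $e$ in one step by first establishing finiteness of $S_{A^{1/p^e}} \to S^{1/p^e}$ for the polynomial ring $S$ and then tensoring with $S/I$; your rambling middle paragraph eventually lands on the correct resolution, but the final version would benefit from stating the $A'$-linearity point up front rather than recovering it mid-stride.
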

\begin{proof}
We work locally on $V$ and $X$.  It is sufficient to show that $R^{1/p^e}$ is a finite $R_{A^{1/p^e}}$-module.  Write $S = A[x_1, \dots, x_n]$ and $R = S/I$.  We first observe that $S_{A^{1/p^e}} \to S^{1/p^e}$ is a finite map.  However, $x_1^{i_1} \cdots x_{n}^{i_n}$ for $0 \leq i_j < p^e$ clearly form a generating set for $S^{1/p^e}$ over $S_{A^{1/p^e}}$.

Now, tensor the map $S_{A^{1/p^e}} \to S^{1/p^e}$ with $\otimes_S (S/I) = \otimes_S R$.  We obtain
\[
R_{A^{1/p^e}} = R \otimes_A A^{1/p^e} = (S/I) \otimes_S S_{A^{1/p^e}} \to (S/I) \otimes_S S^{1/p^e} = (S/I^{[p^e]})^{1/p^e} \to (S/I)^{1/p^e}
\]
where the final map is the canonical surjection of rings.  The map is finite since each part is.
\end{proof}

We also recall the result of Radu and Andr\'e.

\begin{theorem} \cite{RaduUneClasseDAnneaux,AndreHomomorphismsRegulariers}
\label{thm.RaduAndre}
Suppose that $f : X \to V$ is a flat map of Noetherian schemes.  Then $f$ has geometrically regular fibers if and only if the relative Frobenius $R_{A^{1/p^e}} \cong A^{1/p^e} \tensor_A R \to R^{1/p^e}$ is flat.
\end{theorem}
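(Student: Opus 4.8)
## Proof Proposal for Theorem (Radu--André)

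The plan is to prove both directions by reducing to a local, polynomial-ring situation exactly as in the proof of \autoref{finiteness of relative Frobenius}, and then to extract flatness (or its failure) from the structure of the map $S_{A^{1/p^e}} \to S^{1/p^e}$ and its base change along $S \to R$. Since flatness is local, I would first pass to affine charts, writing $A = \O_V$, $R = \O_X = S/I$ with $S = A[x_1,\dots,x_n]$, so that we have the factorization
\[
R_{A^{1/p^e}} = (S/I)\otimes_S S_{A^{1/p^e}} \to (S/I^{[p^e]})^{1/p^e} \to (S/I)^{1/p^e} = R^{1/p^e}
\]
already recorded in the excerpt. The key observation is that $S_{A^{1/p^e}} \to S^{1/p^e}$ is always \emph{faithfully flat}: $S^{1/p^e} = (A[x_1,\dots,x_n])^{1/p^e} = A^{1/p^e}[x_1^{1/p^e},\dots,x_n^{1/p^e}]$ is a free $S_{A^{1/p^e}} = A^{1/p^e}[x_1,\dots,x_n]$-module on the monomials $x_1^{i_1/p^e}\cdots x_n^{i_n/p^e}$ with $0\le i_j < p^e$. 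Hence the relative Frobenius of $X$ over $V$ is flat if and only if $R^{1/p^e} = R \otimes_S S^{1/p^e}$ is a flat $R\otimes_S S_{A^{1/p^e}} = R_{A^{1/p^e}}$-module, and by faithful flatness of $S_{A^{1/p^e}}\to S^{1/p^e}$ this is governed by how $I$ interacts with Frobenius.

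For the "if'' direction, suppose the relative Frobenius is flat for some (equivalently, as one checks, every) $e$. The strategy is to show that for each point $x\in X$ lying over $v\in V$, the fiber ring $R\otimes_A k(v)$, after base change to the perfect (or algebraically) closed hull of $k(v)$, is regular, using Kunz's theorem: a Noetherian $F$-finite (or just excellent) ring of characteristic $p$ is regular iff its Frobenius is flat. So I would base change the flat family along $\Spec \overline{k(v)} \to V$; flatness of the relative Frobenius is preserved under base change (it is a flatness statement about a map obtained by base change, and $F^e_{X^e/V^e}$ commutes with base change on $V$ by its very construction, cf.\ \autoref{notation:basic}). Over the perfect field $\overline{k(v)}$ the relative Frobenius of $X_{\overline{k(v)}}\to \Spec\overline{k(v)}$ becomes (a thickening of) the absolute Frobenius of the fiber $X_{\overline{k(v)}}$, so its flatness forces $X_{\overline{k(v)}}$ to be regular by Kunz. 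Since this holds at every point and every $v$, the fibers of $f$ are geometrically regular.

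For the "only if'' direction, assume $f$ has geometrically regular fibers. I would again work locally, and the goal is to deduce flatness of $R^{1/p^e}$ over $R_{A^{1/p^e}}$ from flatness of $R$ over $A$ together with regularity of fibers. The clean way is the local-criterion-for-flatness route: $R^{1/p^e}$ is already flat over $A^{1/p^e}$ (base change of $R$ flat over $A$ along $A \to A^{1/p^e}$, using that $(-)^{1/p^e}$ is just restriction of scalars along an isomorphism of $A$ onto $A^{p^e}$), so by the local flatness criterion it suffices to check flatness of $R^{1/p^e}\otimes_{A^{1/p^e}} k(v)^{1/p^e}$ over $R_{A^{1/p^e}}\otimes_{A^{1/p^e}} k(v)^{1/p^e}$ fiberwise over $V$. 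But that fiberwise statement, after passing to $\overline{k(v)}$, is exactly the assertion that the absolute Frobenius of the regular fiber $X_{\overline{k(v)}}$ is flat --- which is Kunz again, now in the easy direction. Descending back (faithfully flat descent of flatness along $k(v)\to \overline{k(v)}$, and then the local criterion) gives flatness of the relative Frobenius. The main obstacle I anticipate is bookkeeping the base-change compatibilities cleanly --- in particular making sure the "fiber of the relative Frobenius over a perfect point is the absolute Frobenius of the fiber" identification is used correctly (this is precisely the subtlety flagged in the introduction about $V'$ versus $V$), and handling the non-perfect residue fields by base change to $\overline{k(v)}$ without losing flatness. Everything else is the standard local-criterion-of-flatness plus Kunz machinery.
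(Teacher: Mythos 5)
The paper does not prove this theorem; it cites it directly from Radu and Andr\'e, so there is no in-paper argument to compare against. Taken on its own, your proposal is a correct proof in the finite-type setting of \autoref{notation:basic}, and the two ingredients you use---Kunz over perfect residue fields and the crit\`ere de platitude par fibres---are the natural ones.

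One error worth correcting in your setup paragraph: the identification $R^{1/p^e} = R \otimes_S S^{1/p^e}$ is false once $I \neq 0$. Tracing module structures, $R\otimes_S S^{1/p^e} = (S/I)\otimes_S S^{1/p^e} = (S/I^{[p^e]})^{1/p^e}$, which is the \emph{middle} term of the factorization displayed in the proof of \autoref{finiteness of relative Frobenius}, namely
\[
R_{A^{1/p^e}} \to (S/I^{[p^e]})^{1/p^e} \to (S/I)^{1/p^e} = R^{1/p^e};
\]
the second map has a nontrivial kernel whenever $I \neq I^{[p^e]}$. Since you never actually use this identification in the rest of the argument, the slip is cosmetic rather than fatal, but as written it is wrong.

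Two cautions. First, in the ``only if'' direction the fiber of $X^e \to X_{V^e}$ over a point of $V^e$ sitting over $v \in V$ is the \emph{relative} Frobenius of $X_v$ over $k(v)$, not the absolute Frobenius; so the crit\`ere only reduces you to the theorem over a field, and your subsequent base change to $\overline{k(v)}$ (where relative and absolute Frobenii are identified via the Frobenius automorphism of $\overline{k(v)}$) together with faithfully flat descent of flatness is genuinely needed to close the loop, as you correctly anticipate. Second, the crit\`ere requires $\sO_{X^e}$ to be of finite type over $\sO_{X_{V^e}}$, which is supplied by \autoref{finiteness of relative Frobenius} and hence by the standing finite-type hypothesis of \autoref{notation:basic}. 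The literature versions of the Radu--Andr\'e theorem apply to arbitrary flat maps of Noetherian rings, where the relative Frobenius is not finite and the proofs go via Andr\'e--Quillen homology rather than the Kunz plus fiber-criterion route; your argument covers exactly the case the paper actually uses.
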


We immediately obtain the following corollary which will also be useful.
\begin{corollary}
\label{cor.SmoothMapImpliesSurjective}
Suppose that $f : X \to V$ is as in \autoref{notation:basic} and additionally has geometrically regular fibers.  Then for any $R_{A^{1/p^e}}$-module $M$, the natural evaluation-at-1 map below surjects
\[
\sHom_{R_{A^{1/p^e}}}(R^{1/p^e}, M) \to M
\]
\end{corollary}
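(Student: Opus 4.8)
The plan is to reduce the assertion to a statement about a single local ring and then write down the needed preimages by hand. The map $\sHom_{R_{A^{1/p^e}}}(R^{1/p^e},M)\to M$, $\phi\mapsto \phi(1)$, is a morphism of $R_{A^{1/p^e}}$-modules, and since $R^{1/p^e}$ is a \emph{finitely presented} $R_{A^{1/p^e}}$-module — it is finite by \autoref{finiteness of relative Frobenius}, and $R_{A^{1/p^e}}=\O_{X_{V^e}}$ is Noetherian, being of finite type over $V$ — the formation of $\sHom_{R_{A^{1/p^e}}}(R^{1/p^e},M)$ commutes with localization. As surjectivity can be checked after localizing at each prime, I may assume $C:=R_{A^{1/p^e}}$ is a Noetherian local ring with maximal ideal $\mathfrak{m}$, and I must show that for every $C$-module $M$ the evaluation-at-$1$ map $\Hom_C(R^{1/p^e},M)\to M$ is surjective.

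Next I would pin down the $C$-module structure of $R^{1/p^e}$. By \autoref{thm.RaduAndre}, the hypothesis that $f$ has geometrically regular fibers makes the relative Frobenius $C=R_{A^{1/p^e}}\to R^{1/p^e}$ flat; being also finite over the local Noetherian ring $C$, the module $R^{1/p^e}$ is free of some finite rank $n$. I would then observe $n\ge 1$: the relative Frobenius $X^e\to X_{V^e}$ is surjective on points because, composed with $X_{V^e}\to X$, it equals the absolute Frobenius $F^e_X$, a homeomorphism, while $X_{V^e}\to X$ is itself a homeomorphism as a base change of the universal homeomorphism $F^e_V$; equivalently, $R^{1/p^e}$ is faithfully flat, hence nonzero, over $C$. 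Consequently $R^{1/p^e}\otimes_C C/\mathfrak{m}$ is a nonzero ring, so the class of $1$ in it is nonzero, i.e.\ $1\notin\mathfrak{m}R^{1/p^e}$; by Nakayama's lemma $1$ therefore extends to a $C$-basis $1=b_1,b_2,\dots,b_n$ of $R^{1/p^e}$. Finally, given any $m\in M$, the $C$-linear map $\phi\colon R^{1/p^e}\to M$ determined by $\phi(b_1)=m$ and $\phi(b_i)=0$ for $i>1$ satisfies $\phi(1)=m$, which is exactly the surjectivity we want.

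I do not expect a genuine obstacle here; the only point requiring care is that one cannot produce the retraction $R^{1/p^e}\to C$ by a trace-type map, since the rank $n$ is a power of $p$ and hence $0$ in $C$, so the real content is the observation that $1$ is part of a basis — which is precisely where flatness (via Radu--Andr\'e) and the surjectivity of the relative Frobenius are used. An essentially equivalent route, if one prefers, is to note that $f$ is then smooth (flat, finite type, geometrically regular fibers) and that the relative Frobenius of a smooth morphism is, \'etale-locally on $X$, the base change of $\Spec A^{1/p^e}[x_1,\dots,x_n]\to\Spec A^{1/p^e}[x_1,\dots,x_n]$, $x_i\mapsto x_i^{p^e}$, which is manifestly a split inclusion of free modules; either way the desired surjectivity follows.
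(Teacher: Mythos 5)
Your proposal is correct and follows the same approach as the paper: local freeness of $R^{1/p^e}$ over $R_{A^{1/p^e}}$ via Radu--Andr\'e and finiteness, from which surjectivity of evaluation-at-$1$ is deduced. The paper dispatches the final step in a single phrase, and your careful elaboration --- using faithful flatness of the relative Frobenius and Nakayama's lemma to show that $1$ extends to a local basis --- correctly supplies the content it leaves implicit.
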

\begin{proof}
$R^{1/p^e}$ is a locally free $R_{A^{1/p^e}}$-module, and the result follows.
\end{proof}

Now we state the object which we will study for the majority of the paper.

\begin{definition}[$\phi$]  From here on, we will fix a line bundle $L$ on $X \cong X^e$ and we will also fix a (possibly zero) $R_{A^{1/p^e}}$-linear map $\phi : L^{1/p^e} \to R_{A^{1/p^e}}$.
\end{definition}


\begin{remark}[Reflexive sheaves for G1 and S2 Morphisms]
\label{rem.G1S2Morphisms}
Suppose that $f : X \to V$ is G1 + S2.
Note that there exists an open set $\iota : U \hookrightarrow X$ such that $X \setminus U$ has codimension $\geq 2$ along each fiber and that $f|_U$ is a Gorenstein morphism. 


Finally, suppose that $M$ is any rank-1 reflexive $R$-module which is locally free on a set $U$ as above.  Then $M^{1/p^e}$ is not only reflexive as an $R^{1/p^e}$-module, we claim it is also reflexive as an $R_{A^{1/p^e}}$-module.  Since $i_* M|_U = M$ already by
\autoref{prop:pushforward_relative_S_2_or_reflexive} because $M$ is reflexive, it is sufficient to replace $X$ by $U$.  Thus $\omega_{X/V}$ and $M$ are both locally free as $R$-modules and $\omega_{X_{V^e}}$ is locally free as an $R_{A^{1/p^e}}$-module.  We work locally so as to trivialize all these modules.  Then
\[
\begin{array}{rcl}
 M^{1/p^e} & \cong & \omega_{X/V}^{1/p^e}\\
&  =  & (F^{e}_{X^e/V^e})_* \omega_{X/V} \\
 & \cong & \sHom_{\O_{X_{V^e}}}( (F^{e}_{X^e/V^e})_* \O_X, \omega_{X_V^e/V^e}) \\
 & \cong & \sHom_{R_{A^{1/p^e}}}(R^{1/p^e}, R_{A^{1/p^e}} )
\end{array}
\]
which is clearly reflexive (the second isomorphism follows from Grothendieck duality for the finite relative Frobenius map).

Conversely, if $M$ is any $R$-module which is locally free on a set $U$ and reflexive as an $R_{A^{1/p^e}}$-module, then it is also reflexive as an $R^{1/p^e}$-module.  To see this, note that $M|_U$ is reflexive as an $R$-module, and because $M$ is reflexive as an $R_{A^{1/p^e}}$-module it satisfies $i_* M|_U = M$.
\end{remark}

\begin{definition}[$\phi$ versus divisors]
\label{def:D_phi_Delta_phi}
Observe the following identifications:
\begin{equation}
\label{eq.HomToSheaf1}
\begin{array}{rc}
 & \sHom_{R_{A^{1/p^e}}}\left(L^{1/p^e}, R_{A^{1/p^e}} \right) \\
\cong & \underbrace{\Big(\sHom_{R_{A^{1/p^e}}} \left(L^{1/p^e} \otimes_{R_{A^{1/p^e}}} (\omega_{R_{A^{1/p^e}}/A^{1/p^e}} ), \omega_{R_{A^{1/p^e}}/A^{1/p^e}}\right)\Big)^{**}}_{\text{This clearly holds on $U$, note both sheaves are reflexive and use \autoref{cor:relative_S_2_morphism_reflexive_has_extension_property}.} }\\
= & \underbrace{\Big(\sHom_{\O_{X_{V^e}}}\left((F^e_{X^e/V^e})_* \Big( L \otimes ((F^e_{X^e/V^e})^* \omega_{X_{V^e}/V^e})\Big), \omega_{X_{V^e}/V^e} \right)\Big)^{**}}_{\text{This just rewrites the previous line using different notation.}}\\
\cong & \underbrace{\Big((F^e_{X^e/V^e} )_* \sHom_{\O_{X^e}} \left(L \otimes ((F^e_{X^e/V^e})^* \omega_{X_{V^e}/V^e}), \omega_{X^e/V^e} \right)\Big)^{**}}_{\textrm{Grothendieck duality for a finite map}}\\
\cong & \underbrace{(F^e_{X^e/V^e} )_* \left(L^{-1}\otimes \omega_{X^e/V^e} \otimes (F^e_{X^e/V^e})^* \omega_{X_{V^e}/V^e}^{-1} \right)^{**}}_{\textrm{By \autoref{rem.G1S2Morphisms} and \autoref{cor:relative_S_2_morphism_reflexive_has_extension_property} we may take reflexive hull as $(F^e_{X^e/V^e})_* \O_{X^e}$-modules.}}\\
\end{array}
\end{equation}
Now, observe that $\omega_{X/V}$ is compatible with base change up to reflexification.  In particular, if our base change is the Frobenius $F^e_{V} : V^e \to V$, then writing $\pi_{V^e} : X_{V^e} = X \times_V V^e \to X$ as the projection, we have $\omega_{X_{V^e}/V^e} \cong (\pi_{V^e}^* \omega_{X/V})^{**}$ since both sheaves are reflexive and they certainly agree outside the non-relatively Cohen-Macaulay locus (which is of relative codimension at least $2$) by \autoref{cor:relative_S_2_morphism_reflexive_has_extension_property}.  In particular
\[
\big((F^e_{X^e/V^e})^*\omega_{X_{V^e}/V^e}\big)^{**} \cong \big((F^e_{X^e/V^e})^*\pi_{V^e}^* \omega_{X/V})\big)^{**} = \big((F^e_{X})^* \omega_{X/V}\big))^{**} = \big(\omega_{X/V}^{p^e}\big)^{**}.
\]
Plugging this into \autoref{eq.HomToSheaf1} we obtain
\begin{equation}
\label{eq.HomToSheaf2}
\begin{array}{rc}
 & \sHom_{R_{A^{1/p^e}}}\left(L^{1/p^e}, R_{A^{1/p^e}} \right) \\
\cong & (F^e_{X^e/V^e} )_* \left(L^{-1}\otimes \omega_{X^e/V^e}^{1-p^e} \right)^{**}\\
\end{array}
\end{equation}

If additionally $X$ is absolutely (instead of relatively) G1 and S2 (for example, if $V$ is regular), then any choice of nondegenerate\footnote{Nonzero on any irreducible component.} $\phi$ induces a non-zero, effective Weil divisorial sheaf $D_{\phi}$ such that $\O_X(D_{\phi}) \cong \Big(L^{-1}\otimes \omega_{X^e/V^e}^{1-p^e}\Big)^{**}$ by \cite{HartshorneGeneralizedDivisorsOnGorensteinSchemes}.  We would like to generalize this to the case that $X \to V$ is \emph{relatively} G1 and S2.

\begin{definition}
\label{def.RelativelyDivisorial}
We say that $\phi$ is \emph{relatively divisorial} if $\phi$ locally generates $\sHom_{R_{A^{1/p^e}}}(L^{1/p^e}, R_{A^{1/p^e}})$ as an $R^{1/p^e}$-module at
\begin{enumerate}
\item \label{itm:D_phi_Delta_phi:generic_point_of_fiber} the generic points of each fiber and
\item \label{itm:D_phi_Delta_phi:singular_point_of_fiber} at the generic point of every codimension-1 singular point of every geometric fiber.
\end{enumerate}
\end{definition}
In this case, by removing a set of relative codimension 2 so that $f$ is relatively Gorenstein, we see that $\phi \cdot R^{1/p^e} \subseteq \sHom_{R_{A^{1/p^e}}    }(L^{1/p^e}, R_{A^{1/p^e}})$ is a rank-1 free submodule of an invertible $R^{1/p^e}$-module. To be able to associate a divisor to this submodule in a sensible way, we should show that it is the trivial (full) submodule at every singular codimension one point. Indeed, let $\xi$ be a singular codimension 1 point. Then one of the following cases hold.
\begin{itemize}
 \item $f(\xi)$ is a codimension 1 point. In this case, $\xi$ is a general point of the fibers over $f(\xi)$, hence assumption \autoref{itm:D_phi_Delta_phi:generic_point_of_fiber} guarantees that $\phi$ generates a full submodule at $\xi$.
\item $f(\xi)$ is the general point of $V$. In this case $\xi$ is a codimension 1 point of the fiber over $f(\xi)$, and it is not in the smooth locus of $f$. Therefore, assumption \autoref{itm:D_phi_Delta_phi:singular_point_of_fiber} shows that again $\phi$ generates a full submodule at $\xi$.
\end{itemize}
Therefore, the submodule $\phi \cdot R^{1/p^e}$ determines a  Cartier divisor and also an honest Weil divisor on the original $X$.  We denote this divisor $D_{\phi}$ as well.
\end{definition}

\begin{definition}[$\phi$ as a divisor]
  \label{def.PhiAsDivisor} If $\phi$ is relatively divisorial, we set $\Delta_{\phi}$ to be the $\bQ$-divisor $\frac{1}{p^e - 1} D_{\phi}$.  This makes sense because $D_{\phi}$ is trivial along the codimension-1 components of the singular locus of $X$ and so we avoid the pathological issues which occur for $\bQ$-divisors on non-normal spaces.
  \end{definition}

  We now explain how to recover $\phi$ from a $\bQ$-divisor.

\begin{remark}[Obtaining $\phi$ from divisors]
\label{rem:relative_canonical}
We work under the conventions of \autoref{def.PhiAsDivisor}.
Untangling \autoref{def:D_phi_Delta_phi} yields a method to obtain $\phi$ from a divisor $\Delta \geq 0$ (which then coincides with $\Delta_{\phi}$) under the following assumptions:
\begin{enumerate}
\item $\Delta = {1 \over m} D$ for some Weil divisor $D$ where $p \not|\,\; m$.
\item $D$ is a Weil divisor on $X$ which is Cartier in relative codimension 1.
\item $\Supp D$ does not contain any general point or any singular codimension one point of any geometric fiber.
\item $(p^e - 1)/m \in \bZ$ and $\big(\omega_{X/V}^{1-p^e} \otimes \O_X ( (1-p^e) \Delta) \big)^{**} = L$ is a line bundle.
\end{enumerate}
In such a case, the integral divisor $(p^e - 1)\Delta$ (which is well defined since $\Supp \Delta$ does not contain the codimension-1 components of the non-regular locus of $X$, \cf \cite[Pages 171--173]{KollarFlipsAndAbundance} or \cite[Section 2.2]{MillerSchwedeSLCvFP}) induces an inclusion
\begin{equation*}
(F^e_{X^e/V^e} )_* L \hookrightarrow  (F^e_{X^e/V^e} )_* \big((\omega_{X/V}^{1-p^e})^{**}\big) .
\end{equation*}
This composed with the natural Grothendieck trace map
\begin{equation*}
(F^e_{X^e/V^e} )_* \big((\omega_{X/V}^{1-p^e})^{**}\big) \to \sO_{X_{V^e}}
\end{equation*}
yields a map $\phi$.  It is easy to see that conditions (a)--(d) above guarantee that $\phi$ is relatively divisorial.   Furthermore, we also have that $\Delta = \Delta_{\phi}$.
\end{remark}

For future reference we make the following definition.

\begin{definition}
\label{def:pair}
In the situation of \autoref{notation:basic}, \emph{$(X,\Delta)$ is a pair}, if $f$ is G1 + S2 and $\Delta$ satisfies the assumptions of \autoref{rem:relative_canonical}.
\end{definition}

\mysubsection{Composing maps}
\label{subsec.ComposingMaps}
Given $\phi$ as in \autoref{def:D_phi_Delta_phi}, we can compose $\phi : L^{1/p^e} \to R_{A^{1/p^e}}$ with itself (after twisting) similar to \cite[Section 4]{BlickleSchwedeSurveyPMinusE} or \cite{SchwedeFAdjunction} and thus obtain new maps
\[
\phi^2 \in \Hom_{R_{A^{1/p^{2e}}}} \left( \left(L^{(p^e + 1)}\right)^{1/p^{2e}}, R_{A^{1/p^{2e}}}\right)
\]
and more generally
\[
\phi^n \in \Hom_{R_{A^{1/p^{ne}}}}\left( \left(L^{p^{ne} - 1 \over p^e - 1}\right)^{1/p^{ne}}, R_{A^{1/p^{ne}}}\right).
\]
We explain this construction.

Begin by tensoring $\phi$ by $L$ over $R$, and then taking $1/p^e$th roots we obtain:
\begin{equation}
\label{eq.Twist1}
\left(L^{1 + p^e} \right)^{1/p^{2e}} = \left(L^{1/p^{e}} \otimes_{R} L \right)^{1/p^e} \to \left( (R \otimes_A A^{1/p^e}) \otimes_R L \right)^{1/p^e} = L^{1/p^e} \otimes_{A^{1/p^e}} A^{1/p^{2e}}
\end{equation}
On the other hand, we can also tensor $\phi$ by $A^{1/p^{2e}}$ over $A^{1/p^e}$ to obtain:
\begin{equation}
\label{eq.Twist2}
 L^{1/p^e} \otimes_{A^{1/p^e}} A^{1/p^{2e}} \to R_{A^{1/p^{2e}}}
\end{equation}
By composing \eqref{eq.Twist1} and \eqref{eq.Twist2} we obtain the desired map $\phi^2$.  We now define $\phi^n$ inductively as follows.  Given $\phi^{n-1} : \left(L^{p^{(n-1)e} - 1 \over p^e - 1}\right)^{1/p^{(n-1)e}} \to R_{A^{1/p^{(n-1)e}}}$ tensor with $L$ over $R$ and then take $p^e$th roots which yields
\[
{\def\arraystretch{2.6}
\begin{array}{rl}
& \left(L^{{p^{ne} - 1 \over p^e - 1}}\right)^{1/p^{ne}} \\
= & \left(\left(L^{{p^{(n-1)e} - 1 \over p^e - 1} + p^{(n-1)e}}\right)^{1/p^{(n-1)e}}\right)^{1/p^e} \\
= & \left(L \otimes_R \left(L^{p^{(n-1)e} - 1 \over p^e - 1}\right)^{1/p^{(n-1)e}} \right)^{1/p^e} \\
\to & \left(L \otimes_R  R_{A^{1/p^{(n-1)e}}} \right)^{1/p^e}\\
= & L^{1/p^e} \otimes_{A^{1/p^e}} A^{1/p^{ne}}.
\end{array}
}
\]
We then apply $\phi$ to the first term in the final tensor product to obtain:
\[
\left(L^{{p^{ne} - 1 \over p^e - 1}}\right)^{1/p^{ne}} \to L^{1/p^e} \otimes_{A^{1/p^e}} A^{1/p^{ne}} \xrightarrow{\phi \tensor \ldots} \left(R \otimes_A A^{1/p^e} \right) \otimes_{A^{1/p^e}} A^{1/p^{ne}} \cong R_{A^{1/p^{ne}}}
\]
which we take as the official definition of $\phi^n$.  On the other hand, for every $0<m<n$ one can look at the following composition of $\phi^m$ and $\phi^{n-m}$
\begin{equation}
\label{eq.phimAndphin}
\begin{matrix}
\gamma : \left(L^{\frac{p^{ne}- 1}{p^e -1}} \right)^{\frac{1}{p^{ne}}}
\cong \left(L^{\frac{ p^{me} - 1}{p^e-1}} \otimes_R L^{\frac{ p^{ne}- p^{me}}{p^e -1}} \right)^{\frac{1}{p^{ne}}}
\cong \left(\left( L^{\frac{ p^{me} - 1 }{p^e -1}} \right)^{\frac{1}{p^{me}}} \otimes_{R} L^{\frac{  p^{(n-m)e}- 1}{p^e -1}} \right)^{\frac{1}{p^{(n-m)e}}}
\vspace{20pt}\\
\vspace{20pt}
\xrightarrow{\left( \phi^m \otimes \id_{L^{\cdots}} \right)}^{\frac{1}{p^{(n-m)e}}}
\left( \left( R \otimes_A A^{\frac{1}{p^{me}}} \right) \otimes_{R} L^{\frac{  p^{(n-m)e}- 1}{p^e -1}} \right)^{\frac{1}{p^{(n-m)e}}}
\cong \left(   L^{\frac{  p^{(n-m)e}- 1}{p^e -1}} \right)^{\frac{1}{p^{(n-m)e}}}  \otimes_{A^{\frac{1}{p^{(n-m)e}}}} A^{\frac{1}{p^{ne}}}\\
\xrightarrow{\phi^{n-m} \otimes_{A^{\cdots}} A^{\cdots}}  \left( R \otimes_A A^{\frac{1}{p^{(n-m)e}}} \right) \otimes_{A^{\frac{1}{p^{(n-m)e}}}} A^{\frac{1}{p^{ne}}} \cong R \otimes_A A^{\frac{1}{p^{ne}}}
\end{matrix}
\end{equation}
In particular, taking $m = 1$ gives an a priori different map which we could also define as $\phi^n$.
We now explain why this map is actually equal to the \emph{official} $\phi^n$.
\begin{lemma}
With notation as above, $\gamma = \phi^n$.
\end{lemma}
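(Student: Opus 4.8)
The plan is to unwind both definitions of $\phi^n$ and observe that each is built from the \emph{same} basic operation applied in different groupings, so that the claim becomes an associativity/functoriality statement about tensoring and taking $p^e$-th roots. Concretely, I would first isolate the single ``twist-and-compose'' step: given any $\psi : (L^{\frac{p^{ke}-1}{p^e-1}})^{1/p^{ke}} \to R_{A^{1/p^{ke}}}$, tensoring with $L$ over $R$, taking $p^e$-th roots, and then feeding the $R$-factor through $\psi$ produces a map $(L^{\frac{p^{(k+1)e}-1}{p^e-1}})^{1/p^{(k+1)e}} \to R_{A^{1/p^{(k+1)e}}}$. Both $\phi^n$ (official) and $\gamma$ are obtained by iterating and composing such steps; the only difference is whether one inserts $\phi$ one factor at a time on the left, or first builds $\phi^m$ and then post-composes with $\phi^{n-m}$ after base change along $A^{1/p^{(n-m)e}} \to A^{1/p^{ne}}$.

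The key steps, in order, would be: \textbf{(1)} Reduce to the case $m = 1$ is \emph{not} needed --- instead prove the general statement $\gamma = \phi^n$ for all $0 < m < n$ by induction on $n - m$. \textbf{(2)} Base case $n - m = 1$: here $\phi^{n-m} = \phi^{n-m}$ reduces to $\phi$ itself (after the identification $\phi^1 = \phi$), and one checks directly that the composite in \autoref{eq.phimAndphin} is, after rearranging the $1/p^{(n-m)e} = 1/p^e$ roots and the tensor factors, literally the inductive formula defining $\phi^m \mapsto \phi^{m+1}$; this is a diagram chase using only that tensor product commutes with taking Frobenius pushforwards and that $-\otimes_{A^{1/p^{me}}} A^{1/p^{ne}}$ is functorial. \textbf{(3)} Inductive step: write $\phi^{n-m} = (\phi^{n-m-1}$-then-one-more-$\phi$-step$)$ using the official definition, substitute into the definition of $\gamma$, and use the inductive hypothesis (applied with $m$ replaced by $m$ and $n$ replaced by $n-1$, or symmetrically regrouping the middle factor $L^{\frac{p^{(n-m)e}-1}{p^e-1}}$) to collapse the two-stage composite to the $(n-1)$-stage official map followed by one final $\phi$-step, which is exactly $\phi^n$. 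Throughout one must be careful that the canonical isomorphisms $(M^{1/p^a})^{1/p^b} \cong M^{1/p^{a+b}}$ and $(L^{i} \otimes_R L^{j})^{1/p^{c}} \cong (L^i)^{1/p^c} \otimes_{A^{1/p^c}} \cdots$ are the same ones used in defining both maps --- this is where the bookkeeping lives.

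The main obstacle I expect is purely notational: keeping track of over which ring each tensor product is formed and which Frobenius-root functor is being applied at each stage, so that the ``obvious'' associativity isomorphisms one wants to invoke really are canonical and really do match on both sides. There is no conceptual difficulty --- the statement is a coherence lemma of the type routinely suppressed in \cite{BlickleSchwedeSurveyPMinusE, SchwedeFAdjunction} --- but writing it out honestly requires fixing, once and for all, the identifications in \autoref{eq.Twist1} and the inductive display, and then verifying that the grouping $\left(\left(L^{\frac{p^{me}-1}{p^e-1}}\right)^{1/p^{me}} \otimes_R L^{\frac{p^{(n-m)e}-1}{p^e-1}}\right)^{1/p^{(n-m)e}}$ appearing in $\gamma$ agrees with the corresponding partial grouping inside the official $\phi^n$. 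I would therefore present the proof as an induction whose inductive step is a single large commutative diagram, relegating the verification that each square commutes to ``functoriality of $\otimes$ and of Frobenius pushforward,'' and spend most of the written proof making the identifications explicit rather than checking any nontrivial algebra.
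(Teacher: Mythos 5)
Your proof is correct, and the underlying observation — that both sides are built from the same single ``twist-take-roots-compose'' operation, so the lemma is a coherence statement — is exactly what makes the paper's argument go through. But you organize it differently from the paper. The paper does not induct: it first reduces to the local situation where $L = R$, which eliminates all the line-bundle twists from the notation, then writes $\phi^n$ out explicitly as an $n$-step composite
\[
R^{1/p^{ne}} \to R^{1/p^{(n-1)e}} \otimes_{A^{1/p^{(n-1)e}}} A^{1/p^{ne}} \to \cdots \to R \otimes_A \cdots \otimes_{A^{1/p^{(n-1)e}}} A^{1/p^{ne}},
\]
where the $k$-th arrow is $\phi^{1/p^{(n-k)e}} \otimes (\text{identities})$, and then simply reads off that the first $m$ arrows assemble to $(\phi^m \otimes \cdots)^{1/p^{(n-m)e}}$ and the last $n-m$ to $\phi^{n-m} \otimes_{A^{1/p^{(n-m)e}}} A^{1/p^{ne}}$. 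Your induction on $n-m$ arrives at the same place, with the base case $n-m=1$ being definitionally the official recursion for $\phi^{n}$; but the inductive step forces you to re-verify, at every stage, the associativity of ``take $p^e$-th roots and tensor up,'' which is precisely the bookkeeping you flagged as the difficulty. The paper sidesteps most of that by trivializing $L$ first (legitimate since the lemma is local), which you could profitably adopt: once $L = R$, the $n$-fold composite is a display, the grouping is a visual partition of that display, and there is no induction to run. What your induction buys in exchange is that it does not require localizing, but since the statement being proved is local anyway, that generality is not used.
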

\begin{proof}
The statement is local, and so we may suppose that $L = R$.  With the (obscuring) powers of $L$ removed, $\phi^n$ is described as the following composition:
\[
\begin{array}{rll}
& R^{1/p^{ne}} & \\
\xrightarrow{\phi^{1/p^{(n-1)e}}} & R^{1/p^{(n-1)e}} \otimes_{A^{1/p^{(n-1)e}}} A^{1/p^{ne}}\\
\xrightarrow{\phi^{1/p^{(n-2)e}} \otimes \ldots} & \Big(R^{1/p^{(n-2)e}} \otimes_{A^{1/p^{(n-2)e}}} A^{1/p^{(n-1)e}}\Big) \otimes_{A^{1/p^{(n-1)e}}} A^{1/p^{ne}}\\
\xrightarrow{\phi^{1/p^{(n-3)e}} \otimes \ldots} & \cdots\\
\cdots & \cdots\\
\xrightarrow{\phi^{1/p^e} \otimes \ldots} & R^{1/p^{e}} \otimes_{A^{1/p^e}} \ldots \otimes_{A^{1/p^{(n-1)e}}} A^{1/p^{ne}}\\
\xrightarrow{\phi \otimes \ldots} & R \otimes_{A} \ldots \otimes_{A^{1/p^{(n-1)e}}} A^{1/p^{ne}}.
\end{array}
\]
The first $m$ entries in the composition make up $(\phi^m \otimes \ldots)^{\frac{1}{p^{(n-m)e}}}$ in \autoref{eq.phimAndphin} and the last $n-m$ entries clearly yield $\phi^{n-m} \otimes_{A^{1/p^{(n-m)e}}} A^{1/p^n}$ as desired.  The result is then obvious.
\end{proof}

\begin{lemma}
\label{lem:composition}
With notation as in \autoref{def:D_phi_Delta_phi} and additionally that $\phi$ is relatively divisorial, then $\phi^n$ is relatively divisorial and $\Delta_{\phi} = \Delta_{\phi^n}$  for every integer $n \geq 1$.
\end{lemma}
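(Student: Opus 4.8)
The plan is to reduce the statement to a local computation at the codimension-one points of $X$, where it follows from the explicit iterated-composition description of $\phi^n$ together with the divisor-versus-map dictionary of \autoref{def:D_phi_Delta_phi}. A codimension-one point $\xi$ of $X$ has codimension $\leq 1$ in the fiber of $f$ through it (by flatness and equidimensionality of $f$ and catenariness of $V$), while the locus where $f$ fails to be relatively Gorenstein meets every fiber in codimension $\geq 2$ (the fibers being G1 $+$ S2); hence every such $\xi$ lies in the relatively Gorenstein locus. Since $D_\phi$ and $D_{\phi^n}$ are Weil divisors on $X$ and ``relatively divisorial'' is tested at such points, we may pass to $\xi$ and shrink $X$ so that $\omega_{X/V}$ and $L$ are trivial. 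Then \autoref{eq.HomToSheaf2} identifies $\sHom_{R_{A^{1/p^{me}}}}\bigl((L^{(p^{me}-1)/(p^e-1)})^{1/p^{me}},R_{A^{1/p^{me}}}\bigr)$ with $R^{1/p^{me}}=(F^{me}_X)_*R$, free of rank one over itself with generator the Grothendieck trace; under this identification $\phi^m$ becomes an element $F^{me}_*c_m$ with $c_m\in R$, the map $\phi^m$ is relatively divisorial at a point precisely when $c_m$ is a unit there, and, when $\xi$ is a regular point, $D_{\phi^m}$ equals the vanishing divisor of $c_m$ at $\xi$.

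Now I would induct on $n$, the case $n=1$ being trivial. Using the preceding lemma (that $\gamma=\phi^n$), express $\phi^n$ as a twisted $1/p^e$-th root of $\phi^{n-1}$ followed by a twisted copy of $\phi$. In the trivialized local picture both factors have the shape $T(c\cdot{-})$ for the trace $T$, so the usual projection-formula computation for composing such maps shows that $\phi^n$ corresponds to $c_{n-1}\cdot c_1^{\,p^{(n-1)e}}$ up to a unit. At a regular codimension-one point $\xi$ this gives
\[
\ord_\xi(c_n)=\ord_\xi(c_{n-1})+p^{(n-1)e}\,\ord_\xi(c_1)=\Bigl(\tfrac{p^{(n-1)e}-1}{p^e-1}+p^{(n-1)e}\Bigr)\ord_\xi(c_1)=\tfrac{p^{ne}-1}{p^e-1}\,\ord_\xi(c_1),
\]
invoking the inductive hypothesis $\ord_\xi(c_{n-1})=\tfrac{p^{(n-1)e}-1}{p^e-1}\ord_\xi(c_1)$; at a singular codimension-one point of $X$ the same computation shows $c_n$ is a unit, as $c_1$ already is there (that $\phi$ is relatively divisorial at such points is the content of the argument in \autoref{def:D_phi_Delta_phi}). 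Reinterpreting as Weil divisors on $X$ yields
\[
D_{\phi^n}=\frac{p^{ne}-1}{p^e-1}\,D_\phi .
\]

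Both conclusions are now formal. The effective divisor $D_{\phi^n}$ is trivial at exactly the points where $D_\phi$ is — the generic points of the fibers, the codimension-one singular points of the geometric fibers, and the codimension-one components of $\Sing X$ — so $\phi^n$ is relatively divisorial and $\Delta_{\phi^n}=\tfrac{1}{p^{ne}-1}D_{\phi^n}$ is a well-defined $\bQ$-divisor. Finally
\[
\Delta_{\phi^n}=\frac{1}{p^{ne}-1}\,D_{\phi^n}=\frac{1}{p^{ne}-1}\cdot\frac{p^{ne}-1}{p^e-1}\,D_\phi=\frac{1}{p^e-1}\,D_\phi=\Delta_\phi .
\]

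The step I expect to be the main obstacle is the compatibility used in the second paragraph: one must verify that forming $\phi^n$ — twisting by powers of $L$, extracting $1/p^e$-th roots, and composing — interacts with the Grothendieck-duality isomorphism of \autoref{eq.HomToSheaf2} so that the local multiplicities really assemble into $1+p^e+\cdots+p^{(n-1)e}$ rather than acquiring a spurious power of $p$. The codimension count of the first paragraph and the arithmetic of the last are routine.
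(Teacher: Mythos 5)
Your proof is correct and takes essentially the same route as the paper's: both reduce to the relatively Gorenstein locus (where $L$ and $\omega_{X/V}$ are trivialized), identify $\phi^m$ with a single element $c_m$ times a generating trace map, and show $c_n = c_1^{(p^{ne}-1)/(p^e-1)}$, from which both conclusions fall out. The only difference is cosmetic: the paper asserts the closed-form formula for $c_n$ directly (citing \cite[Appendix~F]{KunzKahlerDifferentials} or \cite[Lemma~3.9]{SchwedeFAdjunction} for the compatibility of composing trace maps that you flag as the potential obstacle), while you derive it by the recursion $c_n = c_{n-1}\cdot c_1^{p^{(n-1)e}}$ — which is the correct recursion, your arithmetic checks out — and then take orders of vanishing. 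Your opening codimension count is also sound: by flatness and the dimension formula, a codimension-one point of $X$ is either a generic point of a fiber over a codimension-one point of $V$, or a codimension-one point of the generic fiber, and in either case lies in the relatively Gorenstein locus since the fibers are G1.
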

\begin{proof}
For showing any of the two statements we may remove the non relatively Gorenstein locus. That is, by possibly further restricting $R$ we may assume that $R$ is relatively Gorenstein over $V$ and that $L$ is trivial  (since our map $f:X\to V$ is geometrically G1+S2). Hence by \autoref{rem.G1S2Morphisms},
\[
\Hom_{R\otimes_{A}A^{1/p^{ie}}}(R^{1/p^{ie}},R\otimes_{A}A^{1/p^{ie}})\cong R^{1/p^{ie}}\ {\rm for\ all\ }i.
\]
From \cite[Appendix F]{KunzKahlerDifferentials}, \cite[Lemma 3.9]{SchwedeFAdjunction}, or composition of Grothendieck trace, we have
\begin{multline}
\label{eq:composition:tensor}
\left(\begin{array}{rl}& \Hom_{R^{1/p^{e}} \otimes_{A^{1/p^e}}A^{1/p^{ne}}} \left( R^{1/p^{ne}}, R^{1/p^{e}} \otimes_{A^{1/p^e}}A^{1/p^{ne}}  \right) \\
\otimes_{R^{1/p^{e}} \otimes_{A^{1/p^e}}A^{1/p^{ne}}} & \Hom_{R_{A^{1/p^{ne}}}} \left( R^{1/p^{e}} \otimes_{A^{1/p^e}}A^{1/p^{ne}}, R_{A^{1/p^{ne}}}   \right)\end{array}\right)
%
%
%
\\ \cong
\Hom_{R_{A^{1/p^{ne}}}} \left( R^{1/p^{ne}},  R_{A^{1/p^{ne}}}   \right)
\end{multline}
via the homomorphism induced by composition.
Let $\theta_e$ be the $R^{1/p^e}$-module generator of $$\Hom_{R\otimes_{A}A^{1/p^e}}(R^{1/p^e},R\otimes_{A}A^{1/p^e})$$ for each $e$ and let $\theta^n_e$ be defined as $\varphi^n$, but $\varphi$ replaced by $\theta$.  Then by using \autoref{eq:composition:tensor} iteratively, $\theta^n_e=\theta_{ne}$ up to multiplication by a unit.

Further let $r \in R$ such that $\varphi(-)=\theta_e \left(r^{1/p^e} \cdot - \right)$. Then it is easy to verify that
\begin{equation}
\label{eq:composition:powers_of_r}
 \varphi^n =\theta_{ie} \left( \left(r^\frac{p^{ne} -1}{p^e -1}\right)^{1/p^{ne}} \cdot  \_ \right)
\end{equation}
Then we see that if $\phi$ was generating at a point $P \in X$, or equivalently $r$ is a unit at $P$, then so is $\phi^n$. This shows that $\phi^n$ is relatively divisorial. Furthermore \autoref{eq:composition:powers_of_r} shows   that $\Delta_{\phi^i} = \Delta_{\phi}$.
\end{proof}

\mysubsection{Base change of $\phi$}
\label{subsec.BaseChangeOfPhi}

Suppose that we are given $g : T \to V$ any morphism of schemes such that $T$ is also excellent, integral and has a dualizing complex.  For example, we could set $T$ to be a closed point of $V$ and let $g$ be the inclusion.  Alternately, we could let $g$ be a regular alteration over some closed subscheme of $V$.  We list the following maps:
 \begin{equation}
 \label{eq.DefOfBasicMaps}
 \begin{array}{rccr}
 p_1 :&  X \times_V T & \to X & \text{ (the projection)}.  \\
 (p_1)^{1/p^i} :& X^i \times_{V^i} T^i & \to X^i & \text{ (the projection for any $i$)}.\\
 q_{i} : & X \times_{V} T^i & \to X \times_V V^i & \text{ (base change)}\\
 p_1=q_0 : & X \times_V T & \to X.\\  
 \end{array}
 \end{equation}
 These are pictured below.
\[
\xymatrix{
X^i \times_{V^i} T^i  \ar[d]_{F^i_{(X_T)^i/T^i}} \ar[r]^-{p_1^{1/p^i}} & X^i \ar[d]^{F^i_{X^i/V^i}} \\
X \times_V T^i \ar[r]_{q_i} & X \times_V V^i \\
}
\]

Notice that given $\phi : L^{1/p^e} \to R_{A^{1/p^e}} = \O_{X_{V^e}}$, we can form  $(q_e)^* \phi$ which we denote by
\[
\phi_T : L_T^{1/p^e} \cong q_e^* L^{1/p^e} \to q_e^* \O_{X_{V^e}} = \O_{X_{T^e}}.
\]
We explain the isomorphism $L_T^{1/p^e} \cong  q_e^* L^{1/p^e}$ briefly.
Working locally, let $V = \Spec A$, $T = \Spec B$ and $X = \Spec R$.  Then the map $\phi_T$ is identified with the map:
\[
L^{1/p^e} \otimes_{R_{A^{1/p^e}}} R_{B^{1/p^e}} \cong L^{1/p^e} \otimes_{A^{1/p^e}} B^{1/p^e} \to ((R \otimes_A A^{1/p^e}) \otimes_{A^{1/p^e}} B^{1/p^e}) \cong R \otimes_A B^{1/p^e}.
\]
The isomorphism in the definition of $\phi_T$ is now immediate.

The next lemma shows that base change of $\phi$ commutes with the self-composition defined in \autoref{subsec.ComposingMaps}.
\begin{lemma}
\label{lem.BaseChangeForPhin}
Suppose that $g : T \to V$ is as above.  Then $(\phi^n)_T = (\phi_T)^n$.
\end{lemma}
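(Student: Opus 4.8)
The strategy is to reduce immediately to the affine, locally-trivial situation and then recognize that both $(\phi^n)_T$ and $(\phi_T)^n$ are built from the \emph{same} elementary pieces in the \emph{same} order, so that the identity becomes a bookkeeping statement about tensor products and base change. First I would localize: the claim is local on $V$, $X$ and $T$, so write $V = \Spec A$, $T = \Spec B$, $X = \Spec R$, and (after shrinking) trivialize $L$ so that $L \cong R$ and the powers of $L$ disappear. This is legitimate because $(q_e)^*$ of an isomorphism is an isomorphism, and the construction of $\phi^n$ in \autoref{subsec.ComposingMaps} and of $\phi_T$ above are both compatible with restriction to opens.

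Next I would unwind both sides using the inductive description of $\phi^n$ given in the excerpt. The official $\phi^n$ is the composition
\[
R^{1/p^{ne}} \xrightarrow{\phi^{1/p^{(n-1)e}}} R^{1/p^{(n-1)e}} \otimes_{A^{1/p^{(n-1)e}}} A^{1/p^{ne}} \xrightarrow{\phi^{1/p^{(n-2)e}} \otimes \ldots} \cdots \xrightarrow{\phi \otimes \ldots} R \otimes_A \cdots \otimes_{A^{1/p^{(n-1)e}}} A^{1/p^{ne}},
\]
and applying $(q_{ne})^* = - \otimes_{A^{1/p^{ne}}} B^{1/p^{ne}}$ to this composition, one checks that the $i$-th arrow becomes precisely the corresponding arrow in the inductive construction of $(\phi_T)^n$ over the base $B$: the key point is that $(q_e)^*$ commutes with taking $p^e$-th roots (i.e. with $(F^e)_*$, since Frobenius base change is transparent after the identifications in \autoref{notation:basic}) and with the relevant $\otimes$'s, because $R^{1/p^e} \otimes_{A^{1/p^e}} B^{1/p^e} \cong (R \otimes_A B)^{1/p^e} \otimes \text{(stuff)}$ in the way already recorded in the displayed identification of $\phi_T$. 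Thus term-by-term the base-changed composition for $\phi^n$ agrees with the composition defining $(\phi_T)^n$.

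Alternatively — and this is probably the cleanest route — I would argue by induction on $n$ using the factorization $\phi^n = \gamma$ from the previous lemma, i.e. $\phi^n$ is the composite of $\phi^{n-1}$ (suitably twisted and rooted) with $\phi$. Since $(q_\bullet)^*$ is a functor commuting with the twist-by-$L$, the $p^e$-th-root, and the base-to-$A^{1/p^\bullet}$ operations used in assembling that composite (each of these is a flat or finite base change that the excerpt has already set up to be compatible), we get $(\phi^n)_T = (\phi^{n-1})_T \circ (\text{twisted } \phi_T) = (\phi_T)^{n-1} \circ (\text{twisted } \phi_T) = (\phi_T)^n$, the middle equality by the induction hypothesis applied to $\phi^{n-1}$ and the last by definition of $(\phi_T)^n$. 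The base case $n=1$ is the definition of $\phi_T$ itself.

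The main obstacle is not conceptual but notational: one must be careful that the canonical isomorphisms $L_T^{1/p^e} \cong q_e^* L^{1/p^e}$, and more generally $q_{ne}^*\big((L^{(p^{ne}-1)/(p^e-1)})^{1/p^{ne}}\big) \cong (L_T^{(p^{ne}-1)/(p^e-1)})^{1/p^{ne}}$, are chosen \emph{compatibly} with the iterated-composition isomorphisms in \autoref{eq.phimAndphin}, so that the square identifying the two constructions genuinely commutes rather than merely commuting up to an unspecified unit. Once $L$ is trivialized these isomorphisms are all identities, so this subtlety evaporates and the proof is just the observation that base change distributes over the composition — which is why reducing to $L = R$ at the very start is the right move.
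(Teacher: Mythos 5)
Your first plan (unwind both compositions and match them term by term after reducing to the affine case) is essentially the paper's argument: the paper carries out the computation explicitly for $n=2$, keeping $L$ around rather than trivializing it, and then notes the general case is analogous. Your suggestion to trivialize $L$ first is a harmless simplification, and your alternative inductive route ultimately requires the same bookkeeping, so both are sound but neither departs meaningfully from the paper.
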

\begin{proof}
It is sufficient to prove this in the affine case assuming our isomorphism is sufficiently canonical (which will be clear).
We notice that $(\phi_T)^2$ is the composition:
\[
\begin{array}{rcl}
\left((L^{p^e + 1})^{1/p^e} \otimes_{A^{1/p^e}} B^{1/p^e}\right)^{1/p^e} & \xrightarrow{(\phi_T \otimes_{R_{B^{1/p^e}}} L_T)^{1/p^e}} & L^{1/p^e} \otimes_{A^{1/p^e}} B^{1/p^e} \otimes_{B^{1/p^e}} B^{1/p^{2e}}\\
& \xrightarrow{\phi_T \otimes_{B^{1/p^e}} B^{1/p^{2e}}} & \left((R \otimes_A A^{1/p^e}) \otimes_{A^{1/p^e}} B^{1/p^e}\right) \otimes_{B^{1/p^e}} B^{1/p^{2e}}
\end{array}
\]
Now unraveling the definitions we obtain
\[
\begin{array}{rcl}
(\phi_T)^2 & %
= & \Big(\big(\phi_T \otimes_{R_{B^{1/p^e}}} L_T\big)^{1/p^e}\Big) \circ (\phi_T \otimes_{B^{1/p^e}}B^{1/p^{2e}})\\
& = & \Big(\big( (\phi \otimes_{A^{1/p^e}} B^{1/p^e} ) \otimes_{R_{B^{1/p^e}}} (L \otimes_{A^{1/p^e}} B^{1/p^e} )\big)^{1/p^e}\Big) \circ (\phi \otimes_{A^{1/p^e}} B^{1/p^e} \otimes_{B^{1/p^e}}B^{1/p^{2e}})\\
& = & \Big(\big(\phi \otimes_{R_{A^{1/p^e}}} L \otimes_{A^{1/p^e}} B^{1/p^e} \big)^{1/p^e}\Big) \circ (\phi \otimes_{A^{1/p^e}} B^{1/p^{2e}})\\
& = & \Big(\big(\phi \otimes_{R_{A^{1/p^e}}} L\big)^{1/p^e} \otimes_{A^{1/p^{2e}}} B^{1/p^{2e}} \Big) \circ \big((\phi \otimes_{A^{1/p^e}} A^{1/p^{2e}} ) \otimes_{A^{1/p^{2e}}} B^{1/p^{2e}}\big)\\
& = & \Big(\big(\phi \otimes_{{R_{A^{1/p^e}}}} L\big)^{1/p^e} \circ (\phi \otimes_{A^{1/p^e}} A^{1/p^{2e}} ) \Big)\otimes_{A^{1/p^{2e}}} B^{1/p^{2e}}\\
& = & (\phi^2)_T
\end{array}
\]
as desired.  The general $n$th self composition is similar.
\end{proof}


Our next goal is to describe how divisors, corresponding to maps $\varphi$, fare under base change.  We thank Brian Conrad for pointing us in the right direction -- \cite[Theorem 3.61]{ConradGDualityAndBaseChange}.

\begin{lemma}
\label{lem.BaseChangeOfTraceForCMMaps}
Suppose that $f : X \to V$ is a finite type Cohen-Macaulay morphism over $V$ an excellent scheme of characteristic $p > 0$ and that $g : T \to V$ is as above.
Then the Grothendieck-trace map
\[
(F_{X^e/V^e})_* \omega_{X^e/V^e} \cong \sHom_{\O_{X_{V^e}}}((F_{X^e/V^e})_* \O_{X^e}, \omega_{X_{V^e}/V^e}) \to \omega_{X_{V^e}/V^e}
\]
is compatible with base change.
\end{lemma}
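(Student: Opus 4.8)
The plan is to reduce the statement to the known case of finite flat morphisms together with the general base-change behavior of Grothendieck duality, as recorded in \cite[Theorem 3.61]{ConradGDualityAndBaseChange}. First I would observe that the relative Frobenius $F^e_{X^e/V^e}\colon X^e \to X_{V^e}$ is a finite morphism (this is \autoref{finiteness of relative Frobenius}), and that the formation of the relative dualizing complex $\omega^{\mydot}_{X^e/V^e}$, and hence of $\omega_{X^e/V^e}$ since $X^e \to V^e$ is Cohen-Macaulay and equidimensional, commutes with arbitrary base change up to the appropriate shift. The Grothendieck trace map in the statement is by definition the counit $(F^e_{X^e/V^e})_* (F^e_{X^e/V^e})^{!}\omega_{X_{V^e}/V^e} \to \omega_{X_{V^e}/V^e}$ of the duality adjunction for the finite morphism $F^e_{X^e/V^e}$, where we have used the isomorphism $(F^e_{X^e/V^e})^{!}\omega_{X_{V^e}/V^e} \cong \omega_{X^e/V^e}$ coming from the chain of Cohen-Macaulay morphisms $X^e \to X_{V^e} \to V^e$.

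The key point is then the compatibility of the duality isomorphism $(F^e_{X^e/V^e})^{!} \cong \sHom_{\O_{X_{V^e}}}((F^e_{X^e/V^e})_* \O_{X^e}, -)$ and of the attached trace/evaluation map with the base change $g\colon T \to V$. Concretely, I would use the Cartesian square from the excerpt
\[
\xymatrix{
X^e \times_{V^e} T^e \ar[d]_{F^e_{(X_T)^e/T^e}} \ar[r]^-{p_1^{1/p^e}} & X^e \ar[d]^{F^e_{X^e/V^e}} \\
X \times_V T^e \ar[r]_{q_e} & X \times_V V^e
}
\]
and note that the relative Frobenius of $X_T \to T$ is obtained from that of $X \to V$ by this base change. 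For a finite morphism, $f^{!}(-) = \sHom(f_*\O, -)$ is manifestly compatible with flat base change on the target, and more generally \cite[Theorem 3.61]{ConradGDualityAndBaseChange} gives the base change isomorphism $\mathsf{L}g^* f^{!} \cong (f')^{!} \mathsf{L}g^*$ for the Grothendieck duality pseudofunctor compatibly with the trace maps, under the Cohen-Macaulay (Gorenstein in relative dimension, more precisely finite flat) hypotheses in play here; since $F^e_{X^e/V^e}$ is finite and $X_{V^e}$ need not be reduced but the morphism is still affine and the relevant sheaves are the dualizing sheaves which base-change correctly, the hypotheses of that theorem are met. I would then simply transport the trace map through this isomorphism and identify the result with the Grothendieck trace for $F^e_{(X_T)^e/T^e}$.

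The main obstacle I anticipate is bookkeeping rather than a genuine mathematical difficulty: one must be careful that the identification $(F^e_{X^e/V^e})^{!}\omega_{X_{V^e}/V^e} \cong \omega_{X^e/V^e}$ is itself the one that is compatible with base change, i.e. that the shifts and the Cohen-Macaulay/equidimensionality assumptions are used consistently so that $\omega_{X^e/V^e}$, $\omega_{X_{V^e}/V^e}$, and their pullbacks along $q_e$ and $p_1^{1/p^e}$ all match up on the nose and not merely up to a line bundle twist. Because $f$ is assumed Cohen-Macaulay and equidimensional here (and finite type over an excellent base with a dualizing complex), $\omega_{X/V}$ is a single sheaf, flat over $V$, whose formation commutes with base change without reflexive hull corrections, so this matching is clean. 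Having verified that, the statement follows formally. I would also remark that the relative Frobenius $X^e \to X_{V^e}$ need not be flat unless $f$ has geometrically regular fibers (\autoref{thm.RaduAndre}), but flatness of that map is not needed: what we use is that it is \emph{finite} and that $\omega_{X^e/V^e}$ is compatible with base change, both of which hold under our hypotheses.
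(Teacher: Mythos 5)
Your plan correctly identifies the inputs---finiteness of the relative Frobenius (\autoref{finiteness of relative Frobenius}), base-change compatibility of relative dualizing sheaves for Cohen--Macaulay morphisms \cite[Theorem 3.6.1]{ConradGDualityAndBaseChange}---and you correctly flag at the end that the relative Frobenius need not be flat. However, the middle step of your proposal is not sound as written: you invoke Conrad's theorem as giving $\mathsf{L}g^* (F^e_{X^e/V^e})^{!} \cong (F^e_{(X_T)^e/T^e})^{!} \mathsf{L}g^*$ compatibly with trace ``under \dots finite flat'' hypotheses, but $F^e_{X^e/V^e}$ is precisely \emph{not} flat here (unless $f$ has geometrically regular fibers, by \autoref{thm.RaduAndre}), and the base change $g\colon T\to V$ is not assumed flat either. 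For a non-flat finite $\pi$, the functor $\sHom_{\O}(\pi_*\O, -)$ does not commute with an arbitrary non-flat base change---some Tor-independence or an alternative identification is required---so you cannot apply the base-change machinery directly to the relative Frobenius and then declare the hypotheses ``met.'' You state the two correct facts (``finite'' and ``$\omega_{X^e/V^e}$ commutes with base change'') in your closing remark, but the body of your argument never actually uses the second one in a way that closes the gap.

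The paper sidesteps this issue by never invoking duality/base-change for the relative Frobenius itself. Working locally, it embeds $X_{V} \subseteq P_V := \bA^N_V$ as a closed subscheme and reads both sides of the base-change comparison $\alpha$ as $\sExt^N$-sheaves on affine space: the source becomes $Q_e^* \sExt^N_{\O_{(P_V)^e}}(\O_{X^e}, \omega_{(P_V)^e/V^e})$ (where $Q_e\colon P_{T^e}\to P_{V^e}$ is the induced map), which is exactly the relative dualizing sheaf of the Cohen--Macaulay finite-type morphism $X^e \to V^e$ pushed forward to $X_{V^e}$, and the target is the analogous sheaf for $(X_T)^e\to T^e$. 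Conrad's Theorem 3.6.1 is then applied to those two Cohen--Macaulay morphisms over the base, \emph{not} to the relative Frobenius, and it says precisely that $\alpha$ is an isomorphism; commutativity of the square with the trace map is then clear from the Ext-sheaf description of trace. To repair your proposal you would need to replace the appeal to finite-flat duality base change with this Ext-sheaf reformulation, i.e. an argument for why $q_e^*\sHom_{\O_{X_{V^e}}}((F^e_{X^e/V^e})_*\O_{X^e}, \omega_{X_{V^e}/V^e})$ really does compute $(F^e_{(X_T)^e/T^e})_*\omega_{(X_T)^e/T^e}$.
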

\begin{proof}
We clearly have a commutative diagram:
\[
\xymatrix@R=20pt{
q_e^* \sHom_{\O_{X_{V^e}}}((F_{X^e/V^e})_* \O_{X^e}, \omega_{X_{V^e}/V^e}) \ar[d]_{\alpha} \ar[r] & q_e^* \omega_{X_{V^e}/V^e} \ar[d]^{\beta} \\
\sHom_{\O_{X_{T^e}}}((F_{(X_T)^e/T^e})_* \O_{(X_T)^e}, \omega_{X_{T^e}/T^e}) \ar[r]  & \omega_{\omega_{X_{T^e}/T^e}}
}
\]
and by \cite[Theorem 3.6.1]{ConradGDualityAndBaseChange} the map $\beta$ is an isomorphism.  It is sufficient to verify that $\alpha$ is an isomorphism as well.  We work locally on some affine chart on $X$ and hence assume that $X_{V} \subseteq \bA^{N}_{V} =: P_V$ embeds as a closed subscheme since $f$ is of finite type.  The map $\alpha$ can then be identified with
\[
Q_e^* \sExt_{\O_{P_{V^e}}}^{j}((F_{X^e/V^e})_* \O_{X^e}, \omega_{P_{V^e}/V^e}) \to \sExt_{\O_{P_{T^e}}}^j((F_{(X_T)^e/T^e})_* \O_{(X_T)^e}, \omega_{P_{T^e}/T^e})
\]
where $Q_e : P_{T^e} \to P_{V^e}$ is the induced map and $j = N - \dim(X/V)$ (we leave off the pushforward for the inclusion $i : X_V \to P_V$ above).  This in turn can be identified with
\[
Q_e^* \sExt_{\O_{(P_V)^e}}^{j}( \O_{X^e}, \omega_{(P_V)^e/V^e}) \to \sExt_{\O_{(P_T)^e}}^j( \O_{(X_T)^e}, \omega_{(P_T)^e/T^e})
\]
This last map is exactly the bottom row of \cite[Diagram (3.6.1) in Theorem 3.6.1]{ConradGDualityAndBaseChange}, which is an isomorphism, and hence the proof is complete.
\end{proof}

\autoref{lem.BaseChangeOfTraceForCMMaps} above allows us to show if the divisor $D_{\phi}$ or $\Delta_{\phi}$ is trivial, then it stays trivial after base change.

\begin{lemma}[Divisors which are zero stay zero]
\label{lem:zero_divisor_stay_zero_divisor}
Suppose that $f : X \to V$ is a G1 and S2 morphism and that $\phi : L^{1/p^e} \to R_{A^{1/p^e}}$ is as above.  Additionally suppose that $g : T \to V$ is any base change.
Finally suppose that the natural map
\[
\phi \cdot R^{1/p^e} \to \sHom_{R_{A^{1/p^e}}}\left(L^{1/p^e}, R_{A^{1/p^e}} \right) \cong (F^e_{X^e/V^e} )_* \left(L^{-1}\otimes \omega_{X^e/V^e}^{1-p^e} \right)^{**}
\]
is an isomorphism of $R^{1/p^e}$-modules (here we define $\phi \cdot R^{1/p^e}$ to be the $R^{1/p^e}$-submodule of the set $\sHom_{R_{A^{1/p^e}}}\left(L^{1/p^e}, R_{A^{1/p^e}} \right)$ generated by $\phi$).  Then
\[
\phi_T \cdot (R_T)^{1/p^e} \to \sHom_{R_{T^{1/p^e}}}\left(L_T^{1/p^e}, R_{T^{1/p^e}} \right) \cong (F^e_{(X_T)^e/T^e} )_* \left(L_T^{-1}\otimes \omega_{(X_T)^e/T^e}^{1-p^e} \right)^{**}
\]
is also an isomorphism.

In particular, if $\phi$ is relatively divisorial (see \autoref{def.RelativelyDivisorial}), then the following holds:  If $D_{\phi}$ is zero, then so is $D_{\phi_T}$.
\end{lemma}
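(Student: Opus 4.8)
The plan is to reduce to the locus where $f$ is relatively Gorenstein and there decode everything in terms of Grothendieck duality, whose compatibility with base change is exactly \autoref{lem.BaseChangeOfTraceForCMMaps}. Concretely, using \autoref{rem.G1S2Morphisms} pick an open $U\subseteq X$ on which $f$ is relatively Gorenstein and with $X\setminus U$ of relative codimension $\geq 2$. Since every geometric fibre of $X_T\to T$ is a geometric fibre of $X\to V$, the morphism $X_T\to T$ is again G1 and S2 and $X_T\setminus U_T$ has relative codimension $\geq 2$; hence $\sHom_{R_{T^{1/p^e}}}(L_T^{1/p^e},R_{T^{1/p^e}})$ is reflexive as an $(R_T)^{1/p^e}$-module by \autoref{rem.G1S2Morphisms}, while $(R_T)^{1/p^e}$ is relatively $S_2$ over $T$ (relative Frobenius pushforward preserves relative $S_2$). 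By the extension property for relatively $S_2$ and reflexive sheaves (\autoref{prop:pushforward_relative_S_2_or_reflexive}, \autoref{cor:relative_S_2_morphism_reflexive_has_extension_property}), each of these equals the pushforward of its restriction to $U_T$, so it suffices to prove that the $(R_T)^{1/p^e}$-linear map $(R_T)^{1/p^e}\to \sHom_{R_{T^{1/p^e}}}(L_T^{1/p^e},R_{T^{1/p^e}})$ sending $1\mapsto\phi_T$ is surjective over $U_T$ --- equivalently, that $\phi_T\cdot(R_T)^{1/p^e}$ fills up $\sHom_{R_{T^{1/p^e}}}(L_T^{1/p^e},R_{T^{1/p^e}})$ there.

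Over $U$ the identification \autoref{eq.HomToSheaf2} realizes $\sHom_{R_{A^{1/p^e}}}(L^{1/p^e},R_{A^{1/p^e}})$ as $(F^e_{X^e/V^e})_*\big(L^{-1}\otimes\omega_{X^e/V^e}^{1-p^e}\big)$ with $L^{-1}\otimes\omega_{X^e/V^e}^{1-p^e}$ a line bundle on $X^e$; under this dictionary the hypothesis that $\phi\cdot R^{1/p^e}$ fills up its target says that $\phi$ corresponds, via the Grothendieck trace, to a surjection $\O_{X^e}\to L^{-1}\otimes\omega_{X^e/V^e}^{1-p^e}$, hence to an isomorphism $\mu$ of line bundles. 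I would then invoke the Cartesian square
\[
\xymatrix{
X^e\times_{V^e}T^e \ar[d]_{F^e_{(X_T)^e/T^e}} \ar[r]^-{p_1^{1/p^e}} & X^e \ar[d]^{F^e_{X^e/V^e}} \\
X\times_V T^e \ar[r]_{q_e} & X\times_V V^e
}
\]
(Cartesian since $X\times_V T^e=(X\times_V V^e)\times_{V^e}T^e$), the finiteness of the relative Frobenius (\autoref{finiteness of relative Frobenius}), and --- crucially --- the Cohen--Macaulayness of $f$ over $U$, which lets \autoref{lem.BaseChangeOfTraceForCMMaps} identify the base change along $q_e$ of the whole chain \autoref{eq.HomToSheaf1}--\autoref{eq.HomToSheaf2} with the analogous chain for $X_T\to T$, compatibly with the trace. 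Since $\phi_T=q_e^*\phi$ by the definition of $\phi_T$, the map $(R_T)^{1/p^e}\to\sHom_{R_{T^{1/p^e}}}(L_T^{1/p^e},R_{T^{1/p^e}})$, $1\mapsto\phi_T$, is thereby identified over $U_T$ with $(F^e_{(X_T)^e/T^e})_*\big((p_1^{1/p^e})^*\mu\big)$ --- a pushforward of an isomorphism of line bundles, hence an isomorphism. By the first paragraph it is then an isomorphism on all of $X_{T^e}$, which is the displayed claim.

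For the ``in particular'' assertion: when $\phi$ is relatively divisorial, $D_\phi=0$ means precisely that $\phi\cdot R^{1/p^e}$ is the full submodule of $\sHom_{R_{A^{1/p^e}}}(L^{1/p^e},R_{A^{1/p^e}})$ at every codimension-one point of $X$ --- all of which lie in $U$ by the dimension formula for the flat equidimensional morphism $f$. Localizing at such a point, the reflexive-sheaf subtleties of the first paragraph evaporate and the computation of the second paragraph goes through, showing that $\phi_T$ is again relatively divisorial and that $\phi_T\cdot(R_T)^{1/p^e}$ is the full submodule of $\sHom_{R_{T^{1/p^e}}}(L_T^{1/p^e},R_{T^{1/p^e}})$ at every codimension-one point of $X_T$ --- all of which again lie in $U_T$. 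Hence $D_{\phi_T}=0$.

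The step I expect to be the main obstacle is not any genuine computation but the sheaf-theoretic bookkeeping: verifying that the base-change identification of $\sHom_{R_{A^{1/p^e}}}(L^{1/p^e},R_{A^{1/p^e}})$ extracted from \autoref{eq.HomToSheaf1}--\autoref{eq.HomToSheaf2} really is compatible with evaluation at $\phi$ (this is where \autoref{lem.BaseChangeOfTraceForCMMaps}, and hence the reduction to the Cohen--Macaulay locus, does the real work), together with confirming that G1, S2 and the relatively Gorenstein locus are preserved by the base change $g$ so that the extension property for relatively $S_2$ and reflexive sheaves can be used to pass from $U_T$ to all of $X_{T^e}$.
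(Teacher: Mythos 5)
Your proposal is correct and follows essentially the same route as the paper: reduce via the $S_2$ extension property to the relatively Gorenstein locus (complement of relative codimension two), identify $\phi$ there with the Grothendieck trace up to a unit, and invoke \autoref{lem.BaseChangeOfTraceForCMMaps} to conclude that $\phi_T$ is again the trace up to a unit, hence a generator. The paper states this in a few lines; you have simply unpacked the same argument (the Cartesian square, the explicit trivializing isomorphism $\mu$, the checks that G1/S2 and the codimension bound are preserved by the base change) in more detail.
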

\begin{proof}
The statement about divisors is trivial since it is easy to see that a divisor being zero corresponds to the map above being an isomorphism.  Thus we merely need to prove the assertion.  However, since $f$ and all the sheaves involved are relatively S2, it suffices to prove the statement off a set of relative codimension 2 by \autoref{cor:relative_S_2_morphism_reflexive_has_extension_property}.  Therefore, by removing a set of codimension 2, we can assume that $f$ is a Gorenstein morphism.  Thus, working locally if needed, $\phi$ can be identified (up to multiplication by a unit in $R^{1/p^e}$) with the Grothendieck trace $(F^e_{X^e/V^e})_* \omega_{X^e/V^e} \to \omega_{X_{V^e}/V^e}$.  Hence by \autoref{lem.BaseChangeOfTraceForCMMaps}, so can $\phi_T$.  The proof is complete.
\end{proof}

We now move on to a discussion of base change with respect to divisors.  First we observe that by \autoref{lem:zero_divisor_stay_zero_divisor}
 if $\phi$ is relatively divisorial, then so is $\phi_T$ for every base change $g : T \to V$.

\begin{definition}
Suppose that for some relatively divisorial $\phi$ we have $\Delta_{\phi}$ is as above and that $g : T \to V$ is a base change.  Then we write $\Delta_{\phi} \talloblong_{X_T}$ (respectively $D_{\phi} \talloblong_{X_T}$) to denote the divisor $\Delta_{\phi_T}$ (respectively $D_{\phi_T}$).
\end{definition}

\begin{lemma}[Pulling back $\Delta_{\phi}$]
\label{lem.PullingBackDelta}
Suppose that $f : X \to V$ is G1 and S2 and that $\phi$ is relatively divisorial.  Then for any $g : T \to V$  we have $\Delta_{\phi} \talloblong_{X_T} = (p_1^{1/p^e})^* \Delta_{\phi}$ (recall $p_1^{1/p^e} : X^e \times_{V^{i}} T^i \to X_i$ from \autoref{eq.DefOfBasicMaps}).
\end{lemma}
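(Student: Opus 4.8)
The plan is to reduce to the relatively Gorenstein locus, where $\phi$ is encoded by a single regular function $r$, and then to invoke the base-change compatibility of the Grothendieck trace (\autoref{lem.BaseChangeOfTraceForCMMaps}).

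First I would observe that $\Delta_{\phi}\talloblong_{X_T}=\Delta_{\phi_T}$ and $(p_1^{1/p^e})^*\Delta_{\phi}$ are both $\bQ$-divisors on $X_T$, and that an equality of Weil divisors may be checked in codimension $1$. By \autoref{rem.G1S2Morphisms} there is an open $U\subseteq X$ with $f|_U$ Gorenstein and $X\setminus U$ of relative codimension $\geq 2$; since Gorenstein morphisms are stable under base change and $X_T\to T$ is flat and equidimensional, $(X\setminus U)\times_V T$ has codimension $\geq 2$ in $X_T$. So, replacing $X$ by $U$ and working locally, I may assume $f$ (hence $f_T$) is Gorenstein and $L=R$. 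Then $\sHom_{R_{A^{1/p^e}}}(R^{1/p^e},R_{A^{1/p^e}})\cong R^{1/p^e}$ is freely generated by the Grothendieck trace $\theta_e$, so $\phi=\theta_e(r^{1/p^e}\cdot -)$ for a unique $r\in R$. Relative divisoriality forces $r$ to be a unit at the generic points and the singular codimension-$1$ points of the fibers; since every generic point of $X$ is the generic point of a fiber (as $f$ is flat over the integral scheme $V$ and $X$ is reduced), $r$ is a non-zerodivisor on $R$ and $D_{\phi}=\divisor_X(r)$ is an effective Cartier divisor.

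Next I would carry out the base change. As $\phi_T=q_e^*\phi$ and the Grothendieck trace is compatible with $g:T\to V$ by \autoref{lem.BaseChangeOfTraceForCMMaps}, choosing the trivializations of $\omega_{X/V}$ and $\omega_{X_{V^e}/V^e}$ compatibly with base change identifies $q_e^*\theta_e$ with the Grothendieck trace $\theta_e'$ for $f_T$. Tensoring the identity $\phi=\theta_e(r^{1/p^e}\cdot -)$ with $B^{1/p^e}$ over $A^{1/p^e}$ then gives $\phi_T=\theta_e'(r_T^{1/p^e}\cdot -)$, where $r_T\in R_T=R\otimes_A B$ is the image of $r$. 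Running the previous paragraph's argument for $\phi_T$ (which is again relatively divisorial by the remark after \autoref{lem:zero_divisor_stay_zero_divisor}) shows $r_T$ is a non-zerodivisor on $R_T$, whence $D_{\phi_T}=\divisor_{X_T}(r_T)$.

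Finally, since $\divisor_X(r)$ is Cartier with local equation $r$, its pullback along $p_1^{1/p^e}$ is cut out by the image $r_T$ of $r$, and $r_T$ being a non-zerodivisor is exactly what makes $\divisor_{X_T}(r_T)$ an honest effective divisor; thus $(p_1^{1/p^e})^*\divisor_X(r)=\divisor_{X_T}(r_T)=D_{\phi_T}$ on the relatively Gorenstein locus of $X_T$. Since the complement of that locus has codimension $\geq 2$ and both sides are determined in codimension $1$, this equality holds on all of $X_T$; dividing by $p^e-1$ gives $\Delta_{\phi_T}=(p_1^{1/p^e})^*\Delta_{\phi}$. I expect the only genuine friction to be the bookkeeping that ``localize then base change'' agrees with ``base change then localize'', and the one non-formal point that pulling back the Cartier divisor $\divisor_X(r)$ commutes with pulling back $r$ despite $p_1$ not being flat --- both of which come down to relative divisoriality forcing $r$, and hence $r_T$, to be a non-zerodivisor.
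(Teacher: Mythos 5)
Your proof is correct and follows essentially the same route as the paper's: reduce to the relatively Gorenstein locus (where $L$ trivializes and $\phi$ is trace pre-multiplied by some $r^{1/p^e}$), invoke \autoref{lem.BaseChangeOfTraceForCMMaps} to identify $\phi_T$ with trace pre-multiplied by $r_T^{1/p^e}$, and conclude $D_{\phi_T}=\divisor_{X_T}(r_T)=(p_1^{1/p^e})^*\divisor_X(r)$. The only real difference is that you spell out why $r$ and $r_T$ are non-zero-divisors (so that pulling back the Cartier divisor along the possibly non-flat $p_1^{1/p^e}$ is well-behaved) --- a point the paper leaves implicit but which is a genuine subtlety, and your justification via relative divisoriality at fiber generic points is the right one.
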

 Here, even though $\Delta_{\phi}$ is not necessarily $\bQ$-Cartier, $(p_1^{1/p^e})^* \Delta_{\phi}$ can be defined after removing a set of relative codimension 2 outside of which it is $\bQ$-Cartier (since $D_{\phi}$ is Cartier on such a set).
\begin{proof}
The statement is local on $X$ and can be checked after removing a relative codimension 2 set and so we may assume that $D_{\phi}$ is a Cartier divisor.  Thus we assume that $L$ is trivial on the affine scheme $X = \Spec R$ and that $f$ is a Gorenstein morphism.  However then shrinking the neighborhood further if necessary, the map $\phi : R^{1/p^e} \to R_{A^{1/p^e}}$ can be identified with $s^{1/p^e} \cdot \Tr$ where $\Tr : (F^e_{X^e/V^e})* \omega_{X^e/V^e} \to \omega_{X_{V^e}/V^e}$ is the Grothendieck trace.  The divisor $D_{\phi}$ is then easily seen to be the divisor $\Div_X(s)$. On the other hand, it is clear by \autoref{lem.BaseChangeOfTraceForCMMaps} that then $\phi_T$ is trace on $X_T$ multiplied by $s$ as well.  In particular, it equals $(p_1^{1/p^e})^* D_{\phi}$ as desired.
\end{proof}

\begin{corollary}
\label{cor.RestrictDeltaAndPhiToFiberIsOk}
Suppose that $g : z \to V$ is inclusion of a point.  Suppose that $\phi$ is relatively divisorial and corresponds to $\Delta_{\phi}$.  Then $\Delta_{\phi}|_{X_z} = \Delta_{\phi_z}$.
\end{corollary}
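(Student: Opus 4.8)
The plan is to obtain this as a direct corollary of \autoref{lem.PullingBackDelta}, applied to the base change $g : T = z \to V$. That lemma gives
\[
\Delta_{\phi_z} \;=\; \Delta_\phi \talloblong_{X_z} \;=\; (p_1^{1/p^e})^* \Delta_\phi ,
\]
where $p_1^{1/p^e} : X^e \times_{V^e} z^e \to X^e$ is the projection from the Frobenius square set up in \autoref{subsec.BaseChangeOfPhi} (whose left vertical map is the relative Frobenius $F^e_{(X_z)^e/z^e}$). So the entire content of the corollary reduces to recognizing that $(p_1^{1/p^e})^* \Delta_\phi$ is nothing but the fiber restriction $\Delta_\phi|_{X_z}$.

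First I would check that the restriction $\Delta_\phi|_{X_z}$ even makes sense: since $\phi$ is relatively divisorial, assumption \autoref{itm:D_phi_Delta_phi:generic_point_of_fiber} in \autoref{def:D_phi_Delta_phi} forces $\phi$ to generate $\sHom_{R_{A^{1/p^e}}}(L^{1/p^e}, R_{A^{1/p^e}})$ at the generic points of every fiber, so $D_\phi$, hence $\Delta_\phi$, is trivial there; in particular $\Supp \Delta_\phi$ contains no component of $X_z$, and $\Delta_\phi$ can be pulled back to $X_z$ in codimension one after deleting the relative-codimension-$\geq 2$ bad locus, outside of which $\Delta_\phi$ is $\bQ$-Cartier (exactly as in the proof of \autoref{lem.PullingBackDelta}). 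Next I would unwind the Frobenius square: because the absolute Frobenius of $V$ is a homeomorphism, $V^e$ and $z^e$ share their underlying spaces with $V$ and $z$, so topologically $X^e \times_{V^e} z^e$ is the fiber $X_z$ inside $X^e = X$ and $p_1^{1/p^e}$ is its inclusion; working on the Gorenstein locus as above, $(p_1^{1/p^e})^* \Delta_\phi$ is then by definition the restricted divisor $\Delta_\phi|_{X_z}$, which would complete the argument.

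I expect the one delicate point to be the bookkeeping in that last step: one must make sure that the base-change operation $\talloblong_{X_z}$, defined through $\phi_z = q_e^* \phi$ on $X_{z^e} = X \times_V z^e$, genuinely corresponds to the geometric restriction of the Weil divisor to $X_z$, and not merely to $X_{z^e}$ — these two schemes differ when $k(z)$ is imperfect, but only by a purely inseparable relabeling of the underlying topological space, which alters neither the set of prime divisors appearing nor their coefficients. Everything else is formal, following from \autoref{lem.PullingBackDelta} together with the base-change compatibility of trace in \autoref{lem.BaseChangeOfTraceForCMMaps}.
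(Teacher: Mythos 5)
Your proposal is correct and follows essentially the same route as the paper: the paper's proof is the one-line observation that $\Delta_{\phi_z}$ is determined in codimension $1$, where it manifestly agrees with $\Delta_\phi|_{X_z}$ (using \autoref{lem.PullingBackDelta} implicitly). You spell out exactly that argument, invoking \autoref{lem.PullingBackDelta} explicitly and then unwinding the Frobenius-square identifications; your additional care about the imperfect-residue-field case and about why the restriction even makes sense is correct bookkeeping but adds no new idea.
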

\begin{proof}
The divisor $\Delta_{\phi_z}$ is determined in codimension 1 where $\Delta_{\phi}|_{X_z}$ and $\Delta_{\phi_z}$ agree.  
\end{proof}

\mysubsection{Passing to $V^{\infty}$ and other perfect points}
\label{sec.BaseChangeToPerfectPoints}

Suppose first that $V$ is the spectrum of a perfect field $A$.  Then the map $L^{1/p^e} \to R_{A^{1/p^e}}$ is also a map $L^{1/p^e} \to R$ since $R_{A^{1/p^e}} \cong R$.  In such a case, we often also typically write the map as $\psi$ to help distinguish how we are thinking about it.   We note that maps such as $\psi : L^{1/p^e} \to R$ can be composed with themselves as in \cite[Section 4.1]{BlickleSchwedeSurveyPMinusE} and furthermore this composition $\psi^n$ coincides with $\phi^n$.
For the rest of the subsection, we discuss base change to perfect points.  We first consider the perfection of the generic point of $V$.

Set $V^{\infty}$ to denote the not-necessarily-Noetherian scheme $\Spec \O_V^{1/p^\infty}$ and we set $\eta_{\infty}$ to be the generic point of $V^{\infinity}$ with perfect fraction field $k(V^{\infty})$ (note that this is a perfect point).  We obtain a map $f_{k(V^\infty)} : X_{k(V^\infty)} = X \times_V \Spec k(V^{\infty}) \to \Spec k(V^{\infty})$ now a scheme of finite type over a perfect field.


As in \autoref{subsec.BaseChangeOfPhi}, the map $\phi$ then induces a map
\[
\phi_{\infty} : L^{1/p^e} \otimes_{k(V^e)} k(V^{\infty}) \to R_{k(V^{\infty})} := R \otimes_A k \left( A^{1/p^{\infty}} \right).
\]
which can be identified with
\[
\psi_{\infty} : \left(L_{{k(V^{\infty})}} \right)^{1/p^e} \to R_{k(V^{\infty})}
\]
since $\left((F_{X^e/V^e}^e )_* \O_{X^e} \right) \otimes_{V^e} k(V^{\infty}) \cong F^e_* \O_{X_{k(V^{\infty})}}$.

More generally, if $s \in V$ is any perfect point, we can induce
\[
\phi_s : L_s^{1/p^e} := L^{1/p^e} \otimes_{A^{1/p^e}} k(s)^{1/p^e} \to R \otimes_{A} k(s)^{1/p^e}.
\]
Since $k(s) = k(s)^{1/p^e}$ is perfect, we can identify this with a $p^{-e}$-linear map:
\[
\psi_s : L_s^{1/p^e} \to R_{s}.
\]
As above, we see that the composition of $\psi_s$ in the sense of \cite[Section 4.1]{BlickleSchwedeSurveyPMinusE} coincides with the composition $\phi$ as in \autoref{subsec.ComposingMaps}.
Finally, we study this process with respect to divisors.

\begin{lemma}
\label{lem.DeltaPhiRestrictedEqualsDeltaPsi}
With notation as above, $\Delta_{\phi}|_{X_s} = \Delta_{\psi_s}$ where $\Delta_{\phi}$ is as in \autoref{def.PhiAsDivisor} and $\Delta_{\psi_s}$ is as in \cite[Section 4.]{BlickleSchwedeSurveyPMinusE}.
\end{lemma}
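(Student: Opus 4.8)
The plan is to reduce the statement to a local, codimension-one computation, mirroring the strategy already used in \autoref{cor.RestrictDeltaAndPhiToFiberIsOk}. Recall that the divisor $\Delta_{\phi}|_{X_s}$ is defined by first pulling back $D_{\phi}$ (which is Cartier in relative codimension one) along the projection $p_1^{1/p^e} : X^e_s \to X^e$ after deleting a set of relative codimension $\geq 2$, while $\Delta_{\psi_s}$ is the divisor attached directly to the $p^{-e}$-linear map $\psi_s : L_s^{1/p^e} \to R_s$ as in \cite[Section 4]{BlickleSchwedeSurveyPMinusE}. Since both $\Delta_{\phi}|_{X_s}$ and $\Delta_{\psi_s}$ are Weil divisors on $X_s$, and a Weil divisor on an excellent reduced S2 scheme is determined by its behavior at codimension-one points, it suffices to check the equality after localizing at each codimension-one point $\xi$ of $X_s$.

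First I would invoke \autoref{lem.PullingBackDelta}, applied to the base change $g : s = \Spec k(s) \to V$, which gives $\Delta_{\phi}\talloblong_{X_s} = (p_1^{1/p^e})^* \Delta_{\phi} = \Delta_{\phi_s}$, so the content is to identify $\Delta_{\phi_s}$ with $\Delta_{\psi_s}$. By the discussion in \autoref{sec.BaseChangeToPerfectPoints}, the isomorphism $R_{k(s)^{1/p^e}} \cong R_s$ (which uses that $k(s)$ is perfect, so that $R \otimes_A k(s)^{1/p^e} \cong F^e_* \O_{X_s}$) carries the $R_{A^{1/p^e}}$-linear map $\phi_s$ to the honest $p^{-e}$-linear map $\psi_s$, and this identification is compatible with the identifications of the respective Hom-sheaves with twists of the relative dualizing sheaves from \autoref{def:D_phi_Delta_phi}. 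Thus $\phi_s \cdot R_s^{1/p^e}$ and $\psi_s \cdot R_s^{1/p^e}$ are literally the same rank-one submodule of the same invertible sheaf, and hence they determine the same Cartier divisor away from codimension two and the same Weil divisor on all of $X_s$. Finally, one checks that the normalization factor $\tfrac{1}{p^e-1}$ appears identically in \autoref{def.PhiAsDivisor} and in the convention of \cite[Section 4]{BlickleSchwedeSurveyPMinusE}, so the $\bQ$-divisors, not merely the integral divisors $D$, agree.

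I expect the main obstacle to be purely bookkeeping: one must be careful that the identification $R \otimes_A k(s)^{1/p^e} \cong R_s$ is compatible, on the nose, with Grothendieck duality for the relative Frobenius on $X_s \to \Spec k(s)$ versus the relative Frobenius on $X^e \to X_{V^e}$ base-changed to $s$ — that is, one needs \autoref{lem.BaseChangeOfTraceForCMMaps} (or rather its consequence \autoref{lem:zero_divisor_stay_zero_divisor} and \autoref{lem.PullingBackDelta}) to ensure the trace map used to build $\Delta_{\psi_s}$ really is the base change of the trace map used to build $D_\phi$. Since we may restrict to the relatively Gorenstein locus (the complement has relative codimension $\geq 2$ and contributes nothing to either Weil divisor), on that locus $\phi$ is trace multiplied by a section $s^{1/p^e}$ and $\Delta_{\phi_s}$ is cut out by $\Div(s)|_{X_s}$; the same section cuts out $\Delta_{\psi_s}$ by the very definition in \cite{BlickleSchwedeSurveyPMinusE}. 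Assembling these observations completes the proof.
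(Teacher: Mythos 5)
Your proposal is correct and follows essentially the same route as the paper: reduce via \autoref{lem.PullingBackDelta} to comparing $\Delta_{\phi_s}$ with $\Delta_{\psi_s}$, then work locally in relative codimension one where $\phi_s$ is a generator of the Hom module premultiplied by some $z^{1/p^e}$, and observe that under the identification $R_{s^{1/p^e}} \cong R_s$ (valid since $k(s)$ is perfect) the generator and the element $z$ carry over verbatim, so both $\bQ$-divisors equal $\tfrac{1}{p^e-1}\Div(z)$. Your extra appeal to \autoref{lem.BaseChangeOfTraceForCMMaps} is harmless but redundant here, since once \autoref{lem.PullingBackDelta} has been applied all remaining comparisons take place over the single perfect field $k(s)$ and no further base-change compatibility is needed — the paper's proof drops this step and is accordingly a bit leaner.
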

\begin{proof}
By \autoref{lem.PullingBackDelta}, we simply must show that $\Delta_{\phi_s}$ coincides with $\Delta_{\psi_s}$.  Working locally, and removing a set of relative codimension 2, we may assume that $\Phi \in \Hom_{R_{s^{1/p^e}}}( (R_s)^{1/p^e}, R_{s^{1/p^e}})$ generates the $\Hom$ as an $(R_s)^{1/p^e}$-module and that $\phi(\blank) = \Phi(z^{1/p^e} \cdot \blank)$.  Thus $\Delta_{\phi} = {1 \over p^e - 1} \Div_X(z)$.

We then identify $R_{s^{1/p^e}}$ with $R_s$ and hence $\phi$ with $\psi$.  Likewise, we can identify $\Phi$ with $\Psi$ which now generates $\Hom_{R_{s}}( (R_s)^{1/p^e}, R_{s})$ as an $(R_s)^{1/p^e}$-module.  Hence $\psi(\blank) = \Psi(z^{1/p^e} \cdot \blank)$ and so $\Delta_{\psi} = {1 \over p^e - 1} \Div_X(z)$ as desired.
\end{proof}

\section{Relative non-F-pure ideals}
\label{sec.RelativeNonFPureIdeals}

With notation as above, by the Hartshorne-Speiser-Lyubeznik-Gabber (HSLG-)Theorem \cite[Lemma 13.1]{Gabber.tStruc} \cf \cite{HartshorneSpeiserLocalCohomologyInCharacteristicP,LyubeznikFModulesApplicationsToLocalCohomology,BlickleTestIdealsViaAlgebras}, we know that the chain
\[
 R_{k(V^{\infty})} \supseteq \psi_{\infty}\big((L_{{k(V^{\infty})}})^{1/p^e}\big) \supseteq \psi_{\infty}^2\big((L_{{k(V^{\infty})}}^{p^e + 1})^{1/p^{2e}}\big) \supseteq \cdots \supseteq \psi_{\infty}^n\big((L_{{k(V^{\infty})}}^{p^{ne} - 1 \over p^e - 1})^{1/p^{ne}}\big) \supseteq \cdots
\]
eventually stabilizes.  Say that it stabilizes at
\begin{equation}
\label{eq.N0Definition}
n \geq n_0.
\end{equation}
For the rest of this section, we fix this integer $n_0$, and make the following definition.

\begin{definition}
With notation as above, we define the integer $n_0$ to be the \emph{uniform integer for $\sigma$ over the generic point of $V$}, and in general, it will be denoted by $n_{\sigma(\phi), k(V)}$.  We notice that for any point $\eta \in V$, we can base change $\Spec k(\eta) \to V$ and form a corresponding integer $n_{\sigma(\phi_{\eta}), k(\eta)}$.
\end{definition}

On the other hand, without the passing to $k(V^{\infty})$.  We have the images
\[
\begin{array}{rcll}
\ba_1 & := & \phi^1\big( L^{1/p^{e}} \big) & \subseteq R_{A^{1/p^{e}}},\\
 \ba_2 & := & \phi^2\big( (L^{(p^e + 1)})^{1/p^{2e}} \big) & \subseteq R_{A^{1/p^{2e}}},\\
  \dots \\
  \ba_n & := & \phi^n \big( (L^{p^{ne} - 1 \over p^e - 1})^{1/p^{ne}}\big) & \subseteq R_{A^{1/p^{ne}}}, \\
  \dots
  \end{array}
\]
These are ideals of different rings.  However, we do have the following relation for any $i > j$
\[
\Image\big(\ba_j \otimes_{A^{1/p^j}} A^{1/p^i} \to R_{A^{1/p^i}}\big) \supseteq \ba_i.
\]
This is straightforward so we leave it to the reader to check (note that the image of \eqref{eq.Twist2} contains the image of $\phi^2$).  Additionally, observe that if $A$ was regular, $A^{1/p^e}$ would be flat over $A$ by \cite{KunzCharacterizationsOfRegularLocalRings}, and so we could identify the tensor product $\ba_j \otimes_{A^{1/p^j}} A^{1/p^i}$ with its image in $R_{A^{1/p^i}}$.
Therefore, for any integer $n \geq i$, we set
\[
\ba_{i,n} = \Image\big(\ba_i \tensor_{A^{1/p^i}} A^{1/p^n} \to R_{A^{1/p^n}} \big)
\]
and consider the chain of ideals:
\[
R_{A^{1/p^{ne}}} \supseteq \ba_{1, n} \supseteq \ba_{2,n} \supseteq \dots \supseteq \ba_{n-1,n} \supseteq \ba_{n,n}.
\]

\begin{definition}[$n$th relative non-$F$-pure ideal]
For every integer $n \geq n_0 = n_{\sigma(\phi), k(V)}$, we define the \emph{$n$th limiting relative non-$F$-pure ideal} to be $\ba_{n,n} = \ba_n$.  It is denoted by $\sigma_n(X/V, \phi)$.
\end{definition}

We now obtain a relative version of the Hartshorne-Speiser-Lyubeznik-Gabber theorem.


\begin{proposition}
\label{prop.StabilizingSigma}
Fix notation as above.  Then there exists a nonempty open subset $U \subseteq V$ of the base scheme $V$ satisfying the following for every integer $m \geq n \geq n_0$
\begin{equation}
\label{eq.sigmaNRestricts}
\sigma_m(X/V, \phi)|_{f^{-1}(U)} = \Image\Big(\sigma_{n}(X/V, \phi) \otimes_{A^{1/p^{n e}}} A^{1/p^{m e}} \to R_{A^{1/p^{me}}}\Big) \Big|_{f^{-1}(U)}.
\end{equation}
\end{proposition}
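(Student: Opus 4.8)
The plan is to reduce the relative statement to the already-known absolute Hartshorne--Speiser--Lyubeznik--Gabber stabilization over the generic point, and then spread that stabilization out to a dense open subset of $V$. First I would note that the containments $\ba_{m,m} \subseteq \Image(\ba_{n,n} \otimes_{A^{1/p^{ne}}} A^{1/p^{me}} \to R_{A^{1/p^{me}}})$ for $m \ge n$ already hold everywhere on $V$ (this is the straightforward relation recorded just before the definition of $\sigma_n$, applied with $i = me$, $j = ne$). So the content of \autoref{eq.sigmaNRestricts} is the reverse inclusion after restricting to a suitable open $U$. Equivalently, writing $\sigma_{n,m} := \ba_{n,m} = \Image(\ba_n \otimes_{A^{1/p^{ne}}} A^{1/p^{me}} \to R_{A^{1/p^{me}}})$, I want a dense open $U$ over which the descending chain $\sigma_{n_0, m} \supseteq \sigma_{n_0+1,m} \supseteq \cdots \supseteq \sigma_{m,m}$ is constant once the index is $\ge n_0$, for every $m$, and compatibly in $m$ — i.e., the chain has ``stabilized'' uniformly.

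The key step is the reduction to the generic point. By \autoref{lem.BaseChangeForPhin}, base change of $\phi^n$ along $\Spec k(V) \to V$ (or along $\Spec k(V^\infty) \to V$) recovers the maps $\psi_\eta^n$ (resp.\ $\psi_\infty^n$); and, as observed in \autoref{sec.BaseChangeToPerfectPoints}, the self-composition $\psi_\infty^n$ in the sense of \cite[Section 4.1]{BlickleSchwedeSurveyPMinusE} agrees with $\phi^n$ base changed. Hence the images $\ba_{n}$ base change to the images $\psi_\infty^n((L_{k(V^\infty)}^{(p^{ne}-1)/(p^e-1)})^{1/p^{ne}})$, and by the choice of $n_0 = n_{\sigma(\phi),k(V)}$ in \autoref{eq.N0Definition}, the chain of these images over $k(V^\infty)$ is constant for $n \ge n_0$. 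I would phrase this over $k(V^\infty)$ rather than $k(V)$ precisely so that $R_{k(V^\infty)} \cong F^e_* \O_{X_{k(V^\infty)}}$, making the HSLG theorem directly applicable; note $k(V)^{1/p^\infty} = k(V^\infty)$ and the relevant images are defined over a finite subextension, so there is no Noetherianity issue in practice.

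The final step is to spread out. Since $X \to V$ is of finite type and the $\ba_n$ are finitely generated ideal sheaves, for each fixed $m$ the equality $\sigma_{n_0, m} = \sigma_{m,m}$ — which holds at the generic point by the previous paragraph, after checking it is unaffected by the further base change to the \emph{perfect} closure, as the relevant modules are finitely presented and flat base change along $k(V) \to k(V^\infty)$ is faithfully flat — holds on a dense open $U_m \subseteq V$. One then observes that only finitely many distinct $U_m$ actually occur: once $\sigma_{n_0,m} = \sigma_{m,m}$ on an open set, applying $\otimes A^{1/p^{(m+1)e}}$ and using that $\sigma_{n_0,m+1}$ is the image of $\sigma_{n_0,m} \otimes A^{1/p^{(m+1)e}}$ shows the equality propagates to $m+1$ on the \emph{same} open set. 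Concretely: intersect $U_{n_0}$ (which witnesses $\sigma_{n_0,n_0}=\sigma_{n_0,n_0}$, trivially) with $U_{n_0+1}$; on $U := U_{n_0+1}$ we get $\sigma_{n_0, n_0+1} = \sigma_{n_0+1, n_0+1}$, and then an induction on $m$ using the transitivity $\sigma_{n, m} \otimes_{A^{1/p^{me}}} A^{1/p^{m'e}} \twoheadrightarrow \sigma_{n,m'}$ gives $\sigma_{n, m} = \sigma_{m,m}|_{f^{-1}(U)}$ for all $m \ge n \ge n_0$. So a single dense open $U$ works.

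The main obstacle I anticipate is the bookkeeping in the spreading-out step — specifically making sure the single open set $U$ works for \emph{all} $m$ simultaneously, not one $U_m$ per $m$. The resolution is the observation above that the equality, once achieved at level $n_0+1$ on an open set, is stable under the transition maps $\otimes_{A^{1/p^{ne}}} A^{1/p^{(n+1)e}}$ because those maps are surjective onto the next term of the chain; combined with the everywhere-true containments from before the definition, this forces the chain to remain constant at all higher levels over the same $U$. A secondary technical point, which should be routine given \autoref{lem.BaseChangeForPhin} and the discussion in \autoref{sec.BaseChangeToPerfectPoints}, is the identification of the generically-base-changed $\ba_n$ with the $\psi_\infty^n$-images so that the absolute HSLG theorem applies; and one must be slightly careful that the stabilization index $n_0$ over $k(V^\infty)$ is the one fixed in \autoref{eq.N0Definition}, which it is by definition of $n_{\sigma(\phi),k(V)}$.
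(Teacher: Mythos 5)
Your overall strategy matches the paper's: reduce to the generic point, invoke the absolute HSLG theorem over $k(V^\infty)$, descend the equality along the faithfully flat inclusion $k(V)^{1/p^m}\subseteq k(V)^{1/p^\infty}$, spread out to an open $U_{n,m}$ by generic freeness, and then argue that a single $U = U_{n,n+1}$ works for all $m$. The first three steps are fine. However, your justification of the last (propagation) step has a genuine gap.

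You claim that once $\sigma_{n_0,m}=\sigma_{m,m}$ on $U$, "applying $\otimes A^{1/p^{(m+1)e}}$'' propagates the equality to $m+1$, and you appeal to the fact that "those maps are surjective onto the next term of the chain.'' But the map $\ba_{m,m}\otimes_{A^{1/p^{me}}} A^{1/p^{(m+1)e}}\to R_{A^{1/p^{(m+1)e}}}$ has image $\ba_{m,m+1}$, \emph{not} $\ba_{m+1,m+1}$, so tensoring your equality only yields $\sigma_{n_0,m+1}=\sigma_{m,m+1}$. The missing inclusion $\ba_{m,m+1}\subseteq\ba_{m+1,m+1}$ is precisely what has to be proved, and the basic containments go the wrong way. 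What actually makes the induction go through — and what the paper establishes with its large diagram — is the factorization $\phi^{m+1} = (\phi\otimes\ldots)\circ\big((\phi^{m})^{1/p^e}\otimes\ldots\big)$ together with right-exactness of twisting by $L$, taking $1/p^e$-th roots and base changing along $A^{1/p^{me}}\to A^{1/p^{(m+1)e}}$. From $\ba_{n,n+1}=\ba_{n+1,n+1}$ one deduces that the twisted-and-rooted maps $(\phi^n)^{1/p^e}\otimes\ldots$ and $(\phi^{n+1})^{1/p^e}\otimes\ldots$ (both landing in $L^{1/p^e}\otimes_{A^{1/p^e}} A^{1/p^{(n+2)e}}$) have equal images; postcomposing with $\phi\otimes\ldots$ then forces $\ba_{n+1,n+2}=\ba_{n+2,n+2}$, and an honest induction on $m$ follows. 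In short, the transition you want to iterate is the $\phi$-composition, not the Frobenius base-change; the latter is the one you wrote, and it does not close the gap.
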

\begin{proof}
For any $m \geq n$, we consider the containment:
\[
\ba_{n, m} \supseteq \ba_{m,m}.
\]
Fix $k(V)$ to be the residue field of the generic point $\eta \in V$ and consider the induced containment:
\[
\ba_{n,m} \otimes_A k(V) = \ba_{n,m} \otimes_{A^{1/p^m}} k(V)^{1/p^m} \supseteq \ba_{m,m} \otimes_{A^{1/p^m}} k(V)^{1/p^m} = \ba_{m,m} \otimes_A k(V).
\]
since inverting an element is the same as inverting its $p$th power.
We notice two identifications
\[
\begin{array}{c}
\ba_{n,m} \otimes_{A^{1/p^m}} k(V)^{1/p^{\infty}} = \ba_{n,m} \otimes_{A^{1/p^m}} k(V^{\infty}) = \psi_{\infty}^n\left( \left(L_{{k(V^{\infty})}}^{p^{ne} - 1 \over p^e - 1} \right)^{1/p^{ne}}\right), \\
\ba_{m,m} \otimes_{A^{1/p^m}} k(V)^{1/p^{\infty}} = \ba_{m,m} \otimes_{A^{1/p^m}} k(V^{\infty}) = \psi_{\infty}^m\left( \left(L_{{k(V^{\infty})}}^{p^{me} - 1 \over p^e - 1} \right)^{1/p^{me}}\right).
\end{array}
\]
Thus
\[
\ba_{n,m} \otimes_A k(V)^{1/p^{\infty}} = \ba_{m,m} \otimes_A k(V)^{1/p^{\infty}}
\]
since $m > n \geq n_0$.  It immediately follows that
\[
\ba_{n,m} \otimes_{A^{1/p^m}} k(V)^{1/p^{m}} = \ba_{m,m} \otimes_{A^{1/p^m}} k(V)^{1/p^{m}}
\]
since $k(V)^{1/p^m} \subseteq k(V)^{1/p^\infty}$ is a faithfully flat extension.  But now by generic freeness \cite[Theorem 14.4]{EisenbudCommutativeAlgebraWithAView}, for a \emph{fixed $m$}, \autoref{eq.sigmaNRestricts} follows for some open set $U_{n,m}$ (in the case that $V$ is affine, which we can certainly reduce to, we can invert a single element of $A$ to form $U_{n,m}$).

We now vary $m$.  Choose $U = U_{n,n+1}$ that works for $m = n + 1$ and consider the diagram:
\[
\xymatrix@R=24pt{
\left(L^{p^{(n+2)e} - 1 \over p^e - 1} \right)^{\frac{1}{p^{(n+2)e}}} \ar@/_18pc/[ddddd]_{\phi^{n+2}} \ar@/^16pc/[dddd]^{(\phi^{n+1})^{1/p^e}\otimes \ldots} \\
\left(L^{p^{(n+1)e} - 1 \over p^e - 1}\right)^{\frac{1}{p^{(n+1)e}}} \otimes_{A^{1/p^{(n+1)e}}} A^{1/p^{(n+2)e}} \ar@/_12pc/[dddd]_{\phi^{n+1}} \ar@/^10pc/[ddd]^{(\phi^{n})^{1/p^e} \otimes \ldots} \\
\left(L^{p^{ne} - 1 \over p^e - 1}\right)^{\frac{1}{p^{ne}}} \otimes_{A^{1/p^{ne}}} A^{1/p^{(n+2)e}} \ar@/_8pc/[ddd]_{\phi^{n}} \\
\cdots\\
L^{1/p^e} \otimes_{A^{1/p^{e}}} A^{1/p^{(n+2)e}} \ar[d]^{\phi \otimes \ldots} \\
R_{A^{1/p^{(n+2)e}}}
}
\]
where the maps labeled $(\phi^{n})^{1/p^e}\otimes \ldots$ and $(\phi^{n+1})^{1/p^e}\otimes \ldots$ are induced as in \eqref{eq.Twist1}.  We know that over $U$, $\phi^{n+1}$ and $\phi^n$ have the same image.  Therefore, so do $(\phi^{n})^{1/p^e}\otimes \ldots$ and $(\phi^{n+1})^{1/p^e}\otimes \ldots$ again over $U$ (since tensor is right exact).  But then, composing with $(\phi\otimes \ldots)$ one more time, we know $\phi^{n+1} = (\phi \otimes \ldots) \circ ((\phi^{n})^{1/p^e} \otimes \ldots)$ and $\phi^{n+2} = (\phi \otimes \ldots) \circ ((\phi^{n+1})^{1/p^e}\otimes \ldots)$ also have the same image over $U$.  Thus they also share the image with $\phi^{n}$ over $U$. Hence, if $\ba_{n,n+1}|_U= \ba_{n+1,n+1}|_U$, then $\ba_{n,n+2}|_U=\ba_{n+2,n+2}|_U$. We use this iteratedly to obtain that $\ba_{n,m}|_U = \ba_{m,m}|_U$, which is exactly the statement of the proposition.
\end{proof}


We give three examples of these $\sigma_n$ and $U_n$.  In the first example, we show that it is possible that the images $\sigma_n(X/V, \phi)$ never stabilize in the sense of \autoref{prop.StabilizingSigma} on all of $V$ but only over an open set.  We do the same in the second example, but with respect to a more interesting choice of $X$ and $\phi$.  Finally, we give an example where stabilization occurs at $n = 2$ (instead of at $n = 1$).

\begin{example}
Fix $k$ to be an algebraically closed field of characteristic $p > 2$, set $A = k[t]$ and set $R = k[x,t]$ with the obvious map $X \to V$.
Let $\phi : R^{1/p} = k[x^{1/p}, t^{1/p}] \to R_{A^{1/p}} = k[x,t^{1/p}]$ be the composition of the local generator $\beta \in \Hom_{R_{A^{1/p}}}(R^{1/p}, {R_{A^{1/p}}})$ with pre-multiplication by $t^{1\over p}$ (note since $\phi$ is $k[t^{1/p}]$-linear, this is also post-multiplication by $t^{1/p}$, and it corresponds to the divisor $\Delta_{\phi}=\frac{1}{p-1}\{ t =0\}$).   It easily follows that the image of $\phi$ is $\langle t^{1/p} \rangle \subseteq k[x, t^{1/p}]$.  By tensoring with $\otimes_{k[t^{1/p}]} k[t^{1/p^2}]$, we obtain that $\ba_{1,2} = \langle t^{1/p} \rangle = t^{1/p} \cdot k[x, t^{1/p^2}]$.  More generally, we see that $\ba_{1,b} = t^{1/p} \cdot k[x, t^{1/p^n}]$.  Now we compute $\ba_{2,2}$ as the image:
\[
k[x^{1/p^2}, t^{1/p^2}] \xrightarrow{\phi^{1/p}} k[x^{1/p}, t^{1/p^2}] \xrightarrow{\phi \otimes \ldots} k[x, t^{1/p^2}].
\]
The image of $\phi^{1/p}$ is $t^{1/p^2}$ and since the map $\phi \otimes \ldots$ is $k[t^{1/p^2}]$-linear, we see that the composition has image $\langle t^{1/p} \cdot t^{1/p^2} \rangle = \langle t^{(1+p)/p^2} \rangle = \ba_{2,2}$.  In general, we see that
\[
\ba_n = \ba_{n,n} = \langle t^{(1+p+\dots+p^{n-1})/p^n }\rangle
\]
In particular, while we may take $U_i = \bA^{1} \setminus \{ 0 \} = \Spec A \setminus \langle t \rangle$, we see that $\ba_n = \sigma_n(X/V, \phi)$ never stabilizes over all of $V$.
\end{example}

\begin{example}
\label{ex:cusp}
Let $f$ be the morphism $X := \Spec \left( \frac{k[x,y,t]}{(y^2 + x^3+t)} \right) \to V := \Spec ( k[t])$. Let the standard trace map $\phi : (F_{X^1/V^1})_* \omega_{X^1/V^1} \to \sO_{X_{V^1}}$, for which $\Delta_{\phi}=0$. This map can be identified with the descent of the following  map to the quotient
\begin{equation*}
\xymatrix{
k[x,y,t] \ar[rrr]^{\cdot (y^2 + x^3 + t)^{p-1}} & & & k[x,y,t] \ar[r]^{\mathrm{Tr}} & k[x,y,t] \\
}
\end{equation*}
where $\mathrm{Tr}$ is a $k$-linear map, such that
\begin{equation*}
\mathrm{Tr} (x^iy^j t^l)=
\left\{
\begin{matrix}
x^{\frac{i+1 -p}{p}}y^{\frac{j+1-p}{p}} t^l & \textrm{ if $p|i+1$ and $p|j+1$} \\
0 & \textrm{ otherwise} \\
\end{matrix}
\right. .
\end{equation*}
Then $\phi^n$ is given by
\begin{equation*}
\xymatrix{
k[x,y,t] \ar[rrr]^{\cdot (y^2 + x^3 + t)^{p^n -1}} & & & k[x,y,t] \ar[r]^{\mathrm{Tr_n}} & k[x,y,t] \\
}
\end{equation*}
where $\mathrm{Tr_n}$ is a $k$-linear map, such that
\begin{equation}
\label{eq:cusp:tr_n}
\mathrm{Tr_n} (x^iy^j t^l)=
\left\{
\begin{matrix}
x^{\frac{i+1-p^n}{p^n}}y^{\frac{j+1-p^n}{p^n}} t^l & \textrm{ if $p^n | i+1$ and $p^n | j+1$} \\
0 & \textrm{ otherwise} \\
\end{matrix}
\right. .
\end{equation}
Assume that $p$ is a prime such that $p \equiv 1 (\text{mod }6)$ so that $2,3|p^n -1$ for all $n > 0$. Let us try to compute now the following number.
\[
d:=\min \{ c | t^c \in \im \phi^n \}
\]
To have $t^c \in \im \phi^n$ it is necessary to have a polynomial in the ideal generated by $(y^2 + x^3 + t)^{p^n -1}$ one of the non-zero monomials of which is $x^{p^n-1} y^{p^n -1} t^c$. Therefore, $(y^2 + x^3 + t)^{p^n -1}$ itself has to have a non-zero monomial which divides $x^{p^n-1} y^{p^n -1} t^c$. That is, there have to be integers $0 \leq a \leq \frac{p^n -1}{2}$ and $0 \leq b \leq \frac{p^n -1}{3}$, such that  $c= p^n -1 -a-b$. So, we see that
\begin{equation*}
d \geq p^n-1 - \frac{p^n -1}{2} - \frac{p^n -1}{3} =
\frac{p^n -1 }{6}
\end{equation*}
On the other hand we claim that $\Tr_n(y^2 + x^3 + t)^{p^n -1} = t^{\frac{p^n -1}{6}}$, which will show that $d= \frac{p^n-1}{6}$ and also that $\sigma_n(X/V, \phi)$ does not stabilize. First, note that $x^{\frac{p^n-1}{2}} y^{\frac{p^n-1}{3}} t^{\frac{p^n-1}{6}}$ is the only monomial in the expansion of $(y^2 + x^3 + t)^{p^n-1}$ the image of which via $\Tr_n$ is not zero. Indeed, the expansion can contain only monomials of the form $y^{2a} x^{3b}  t^{p^n-1-a-b}$, where $a, b \geq 0$ are integers. Hence, higher powers ($2p^n-1$, $3p^n -1$, etc) of $x$ and $y$ that do not go to zero by $\Tr_n$ cannot be obtained. Hence, as we stated  $x^{\frac{p^n-1}{2}} y^{\frac{p^n-1}{3}} t^{\frac{p^n-1}{6}}$ is the only interesting monomial that can show up in the expansion. Lastly we have to verify that the monomial it has non-zero coefficient in the expansion. That is,
\begin{equation*}
\frac{(p^n-1)!}{\frac{p^n-1}{2}!\frac{p^n-1}{3}!\frac{p^n-1}{6}!} \neq 0
\end{equation*}
Equivalently, we have to show that the power of $p$ in the prime factorization of the numerator is the same as in the denominator. For an arbitrary number $m$ this number for $m!$ is
\begin{equation*}
\sum_{i=1}^{\infty} \left\lfloor \frac{m}{p^i} \right\rfloor.
\end{equation*}
Now assume that $m = \frac{p^n -1}{r}$, where $r | p-1$. Then the times $p$ divides $m!$ is
\begin{equation*}
\begin{split}
\sum_{i=1}^{\infty} \left\lfloor \frac{m}{p^i} \right\rfloor
& = \sum_{i=1}^{n-1}  \left\lfloor \frac{p^n-1}{rp^i} \right\rfloor
 = \sum_{i=1}^{n-1}  \left\lfloor \sum_{j=0}^{n-1} p^j \frac{p-1}{rp^i} \right\rfloor
 = \sum_{i=1}^{n-1}  \left\lfloor \sum_{j=0}^{n-1} p^{j-i} \frac{p-1}{r} \right\rfloor
\\ & = \sum_{i=1}^{n-1}   \sum_{j=i}^{n-1} p^{j-i} \frac{p-1}{r}
 = \sum_{i=1}^{n-1}   \frac{p^{n-i}-1}{p-1} \frac{p-1}{r}
 = \sum_{i=1}^{n-1}   \frac{p^{n-i}-1}{r}
 = \sum_{i=1}^{n-1}   \frac{p^{i}-1}{r}
\\ & = \sum_{i=0}^{n-1}   \frac{p^{i}-1}{r}
 = \frac{p^n -1}{r}
\end{split}
\end{equation*}
Therefore, we see that $p$ divides $p^n-1$ times both $(p^n-1)!$ and $\frac{p^n-1}{2}!\frac{p^n-1}{3}!\frac{p^n-1}{6}!$, as claimed earlier. Summarizing, $t^{\frac{p^n-1}{6}}$ is the smallest power of $t$ that is in $\sigma_n(X/V, \phi)$. Hence there is no stabilization over all of $V$.

Based on this example, one might ask:
\begin{question}
Is there  a relation between the asymptotics of the $\sigma_n$ over the non-stable locus, and the singularities of those fibers (for example, the $F$-pure threshold)?
\end{question}
\end{example}

We now give an example that stabilizes at the second step (over the entire base).

\begin{example}\textnormal{(\cf \cite[Example 4.7]{MustataYoshidaTestIdealVsMultiplierIdeals})}
\label{MustataYoshidaExample}
Fix $k$ to be an algebraically closed field of characteristic $p > 2$, set $A = k[t]$ and set $R = k[x,t]$ with the obvious map $X \to V$.
Let $\phi : R^{1/p} = k[x^{1/p}, t^{1/p}] \to R_{A^{1/p}} = k[x,t^{1/p}]$ be the composition of the local generator $\beta \in \Hom_{R_{A^{1/p}}}(R^{1/p}, {R_{A^{1/p}}})$ with pre-multiplication by $(x^{p^2} + t )^{1\over p} = f^{1 \over p}$ (which corresponds to $\Delta_{\phi}=\frac{1}{p-1}\{x^{p^2} = t \}$ restricting to $\frac{p^2}{p-1}\{x = \lambda^{1/p^2} \}$ on the fiber over $t=\lambda$).  Note that in $R_{A^{1/p}}$ or $R^{1/p}$, $f^{1\over p}$ can be written as $x^p + t^{1/p}$.  In particular, $f^{1\over p}$ is already an element of $R_{A^{1/p}}$. Therefore, since $\phi$ is $R_{A^{1/p}}$-linear and $\beta$ is clearly surjective, the image of $\phi$ is just $\langle f^{1/p} \rangle = \sigma_1(X/V, \phi) = \left\langle x^p + t^{1/p} \right\rangle$.


On the other hand, we now compose $\phi$ with itself as described above.  In this case, $\phi^2 : R^{1/p^2} \to R_{A^{1/p^2}}$ is induced by taking the generator $\beta^2 : R^{1/p^2} \to R_{A^{1/p^2}}$ and pre-multiplying by $f^{p+1 \over p^2}$.  Now, $f^{p+1\over p^2} = \left(x^{1} + t^{1/p^2} \right)^{p+1}$.  This again is already an element of $R_{A^{1/p^2}}$, and so by the same argument as above, we see that
\begin{equation*}
\sigma_2(X/V, \phi) = \left\langle x + t^{1/p^2} \right\rangle^{p+1}= \sigma_1(X/V, \phi) \cdot \left\langle x + t^{1/p^2} \right\rangle = \left\langle f^{1+p \over p^2} \right\rangle.
\end{equation*}

Next we form $\phi^3$.  In this case, it is obtained by taking a generator $R^{1/p^3} \to R_{A^{1/p^3}}$ and pre-multiplying by $f^{1 + p + p^2 \over p^3}$.  However, this time $f^{1+p+p^2 \over p^3}$ is \emph{not} contained in $R_{A^{1/p^3}}$, and so we cannot argue as above. However, $f$ still has a $1/p^2$-root in $R_{A^{1/p^3}}$, and so
\[
\phi^3 \left( \left\langle f^{1+p+p^2 \over p^3} \right\rangle \right) = \phi^3\left(f^{p(1+p)}\left\langle f^{1/p^3} \right\rangle\right) = f^{1+p\over p^2} \phi^3 \left( \left\langle f^{1/p^3} \right\rangle\right).
\]
So we must only compute $\phi^3(\langle f^{1/p^3} \rangle)$.  Now, $\phi^3 : k[x^{1/p^3},t^{1/p^3}] \to k[x, t^{1/p^3}]$ we may take as the map which sends $x^{p^3-1 \over p^3}$ to $1$ and other monomials in $x$ to $0$ (those monomials form a basis for $R^{1/p^3}$ over $R_{A^{1/p^3}}$).  It is thus obvious that $\phi^3 \left(\left\langle f^{1/p^3} \right\rangle\right) = k[x, t^{1/p^3}] = R_{A^{1/p^3}}$.  In particular, \begin{equation*}
\sigma_3(X/V, \phi) = \left\langle f^{1+p \over p^2} \right\rangle = \sigma_2(X/V, \phi) \otimes_{A^{1/p^2}} A^{1/p^3}.                                                                                                                                                                                                                                                                                                                                                                                                                                                                                                                                                                                                                                                                                                                                                                                                                                         \end{equation*}

\end{example}

We now obtain a surprising base change statement.

\begin{proposition}
\label{prop.RelativeNonFPureIdealsAndBaseChange}
Suppose that $T \to V$ is a map from an excellent integral scheme with a dualizing complex, then using the notation of \autoref{subsec.BaseChangeOfPhi}
\[
\Image\left((q_{ne})^* \sigma_n(X/V, \phi) \rightarrow \O_{X_{T^{ne}}}\right) =  \sigma_n(X/V, \phi) \cdot \O_{X_{T^{ne}}} = \sigma_n(X_T/T, \phi_T).
\]
Furthermore, if $U $ satisfies condition \eqref{eq.sigmaNRestricts} from Proposition \ref{prop.StabilizingSigma}, then $$W = g^{-1}(U) \subseteq T$$ satisfies the same condition for $\sigma_n(X_T/T, \phi_T)$.
\end{proposition}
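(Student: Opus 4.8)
The plan is to reduce everything to the observation that $\sigma_n(X/V,\varphi)$ is, by construction, the image of an explicit $\O_{X_{V^{ne}}}$-linear map, and then to transport this through base change using the identity $(\varphi^n)_T=(\varphi_T)^n$ from \autoref{lem.BaseChangeForPhin} together with the right-exactness of $q_{ne}^{*}=(-)\otimes_{\O_{X_{V^{ne}}}}\O_{X_{T^{ne}}}$.

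First I would dispatch the first equality: $\Image\big((q_{ne})^{*}\sigma_n(X/V,\varphi)\to\O_{X_{T^{ne}}}\big)=\sigma_n(X/V,\varphi)\cdot\O_{X_{T^{ne}}}$ is simply the definition of the extension of an ideal sheaf along $q_{ne}$, so nothing is needed there. For the second equality, recall $\sigma_n(X/V,\varphi)=\ba_n=\Image(\varphi^n)$ with $\varphi^n\colon\big(L^{(p^{ne}-1)/(p^{e}-1)}\big)^{1/p^{ne}}\to R_{A^{1/p^{ne}}}$, and similarly $\sigma_n(X_T/T,\varphi_T)=\Image\big((\varphi_T)^n\big)$. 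Applying the right-exact functor $q_{ne}^{*}$ to the surjection $\big(L^{(p^{ne}-1)/(p^{e}-1)}\big)^{1/p^{ne}}\twoheadrightarrow\ba_n$ gives a surjection onto $q_{ne}^{*}\ba_n$, so the image of $q_{ne}^{*}\varphi^n$ is the image of $q_{ne}^{*}\ba_n\to q_{ne}^{*}R_{A^{1/p^{ne}}}=\O_{X_{T^{ne}}}$, that is, exactly $\ba_n\cdot\O_{X_{T^{ne}}}$. On the other hand, under the canonical identification $q_{ne}^{*}\big((L^{(p^{ne}-1)/(p^{e}-1)})^{1/p^{ne}}\big)\cong\big(L_T^{(p^{ne}-1)/(p^{e}-1)}\big)^{1/p^{ne}}$ (the $n$th-power analogue of the identification already used to define $\varphi_T$ in \autoref{subsec.BaseChangeOfPhi}), \autoref{lem.BaseChangeForPhin} identifies $q_{ne}^{*}\varphi^n$ with $(\varphi^n)_T=(\varphi_T)^n$; hence $\Image\big((\varphi_T)^n\big)=\ba_n\cdot\O_{X_{T^{ne}}}$, which is the claim. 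I would note that this computation is valid for every $n\ge 1$ at the level of the ideals $\ba_n$, and specializes to the stated identity as soon as $n$ is large enough that both $\sigma_n(X/V,\varphi)$ and $\sigma_n(X_T/T,\varphi_T)$ are defined (alternatively one phrases the displays in terms of the $\ba_n$).

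For the ``furthermore'' clause I would simply base change \eqref{eq.sigmaNRestricts} along $g$. The projection $X_T\to X$ pulls the open $f^{-1}(U)\subseteq X$ back to $(f_T)^{-1}(g^{-1}(U))=(f_T)^{-1}(W)$, and base change commutes with restriction to opens, so it is enough to apply $(-)\cdot\O_{X_{T^{me}}}$ to both sides of \eqref{eq.sigmaNRestricts}. By the first part the left-hand side becomes $\sigma_m(X_T/T,\varphi_T)$ restricted to $(f_T)^{-1}(W)$. For the right-hand side I would chain the extensions of scalars,
\[
\sigma_n(X/V,\varphi)\otimes_{A^{1/p^{ne}}}A^{1/p^{me}}\otimes_{A^{1/p^{me}}}B^{1/p^{me}}=\sigma_n(X/V,\varphi)\otimes_{A^{1/p^{ne}}}B^{1/p^{ne}}\otimes_{B^{1/p^{ne}}}B^{1/p^{me}},
\]
and then apply right-exactness of $(-)\otimes_{B^{1/p^{ne}}}B^{1/p^{me}}$ to the surjection $\sigma_n(X/V,\varphi)\otimes_{A^{1/p^{ne}}}B^{1/p^{ne}}\twoheadrightarrow\sigma_n(X_T/T,\varphi_T)$ coming from the first part; this turns the right-hand side into $\Image\big(\sigma_n(X_T/T,\varphi_T)\otimes_{B^{1/p^{ne}}}B^{1/p^{me}}\to(R_T)_{B^{1/p^{me}}}\big)$ restricted to $(f_T)^{-1}(W)$, which is precisely condition \eqref{eq.sigmaNRestricts} for $\sigma_n(X_T/T,\varphi_T)$ over $W$.

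I do not expect a genuine obstacle: the argument is entirely formal. The only things that need care are the bookkeeping — matching the source of $(\varphi_T)^n$ with $q_{ne}^{*}$ of the source of $\varphi^n$, which is exactly what \autoref{lem.BaseChangeForPhin} and the construction of $\varphi_T$ supply — and keeping the nested faithfully flat extensions $A^{1/p^{ne}}\to A^{1/p^{me}}\to B^{1/p^{me}}$ and $A^{1/p^{ne}}\to B^{1/p^{ne}}\to B^{1/p^{me}}$ straight so that images may be passed through them freely.
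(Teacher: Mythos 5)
Your proof is correct and takes essentially the same approach as the paper's: the paper's own argument is just the one-line observation that images are compatible with base change by right-exactness of tensor, with the identification $(\varphi^n)_T=(\varphi_T)^n$ from \autoref{lem.BaseChangeForPhin} used implicitly. You have simply spelled out the bookkeeping — in particular the role of \autoref{lem.BaseChangeForPhin} in matching $q_{ne}^*\varphi^n$ with $(\varphi_T)^n$, and the chaining of extensions $A^{1/p^{ne}}\to A^{1/p^{me}}\to B^{1/p^{me}}$ versus $A^{1/p^{ne}}\to B^{1/p^{ne}}\to B^{1/p^{me}}$ — that the paper leaves to the reader.
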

\begin{proof}
Indeed, images of maps are compatible with arbitrary base change by the right exactness of tensor.  Thus the first statement follows immediately.
The second statement follows from the first since if the two images $\ba_{n,n}$ and $\ba_{n-1, n}$ are equal, they are also equal after base change.
\end{proof}

\begin{theorem}[Base change for $\sigma_n$]
\label{thm.BaseChangeSigmaGeneral}
There exists an integer $N \geq 0$, such that for all points $s \in V$, $N \geq n_{\sigma(\phi_s), k(s)}$.  In other words, we have both that $\sigma_n(X/V, \phi)\cdot \O_{X_{s^{ne}}} = \sigma_n(X_s/s, \phi_s)$ (which always holds) and also that for all $m \geq n \geq N$ that
\[
\sigma_n(X/V, \phi) \otimes_{V^{n}} k(s)^{1/p^m} = \sigma_m(X_s/s, \phi_s).
\]
\end{theorem}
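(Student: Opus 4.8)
The plan is to extract the uniform bound $N$ by Noetherian induction on integral closed subschemes of $V$, using \autoref{prop.StabilizingSigma} for the inductive step and \autoref{prop.RelativeNonFPureIdealsAndBaseChange} (together with \autoref{lem.BaseChangeForPhin}) to transport all the data along the base changes that arise. First I would observe that for any integral closed subscheme $Z \subseteq V$, the base-changed triple $(f_Z : X_Z \to Z,\; L_Z,\; \phi_Z)$ still satisfies all the running hypotheses: flatness, equidimensionality, geometric reducedness and finite type are preserved by base change, and $Z$ is again excellent and integral with a dualizing complex. So the invariants $n_{\sigma((\phi_Z)_s), k(s)}$ make sense for $s \in Z$, and by transitivity of base change they coincide with $n_{\sigma(\phi_s), k(s)}$ as $Z$ ranges over subschemes passing through $s$. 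I will show that the family of integral closed $Z$ for which $\{\, n_{\sigma(\phi_s), k(s)} \mid s \in Z \,\}$ is \emph{unbounded} is empty; specializing to $Z = V$ then yields $N$. If this family were nonempty it would have a minimal member $Z_0$ by the descending chain condition, and it suffices to reach a contradiction.

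Apply \autoref{prop.StabilizingSigma} to $f_{Z_0}$ and $\phi_{Z_0}$: it supplies an integer $n_0 := n_{\sigma(\phi_{Z_0}), k(Z_0)}$ and a nonempty open $U_0 \subseteq Z_0$ over which \autoref{eq.sigmaNRestricts} holds for all $m \ge n \ge n_0$. The crux is that then $n_{\sigma(\phi_s), k(s)} \le n_0$ for every point $s \in U_0$. To see this I would base change the equality \autoref{eq.sigmaNRestricts} along $\Spec k(s)^{1/p^\infty} \to U_0$: formation of the ideals $\ba_{\bullet,\bullet}$ commutes with base change (\autoref{prop.RelativeNonFPureIdealsAndBaseChange}), and over the \emph{perfect} field $k(s)^{1/p^\infty}$ these fiber ideals are exactly the images $\psi_{s,\infty}^n(\cdots)$, independent of the auxiliary index $m \ge n$, precisely as computed in the proof of \autoref{prop.StabilizingSigma}. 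Hence the base-changed equality says that the Hartshorne--Speiser--Lyubeznik--Gabber chain defining $n_{\sigma(\phi_s), k(s)}$ has stabilized by step $n_0$. Here one uses the identification recorded in \autoref{sec.BaseChangeToPerfectPoints} that over a perfect residue field the relative construction $\sigma_n(X_s/s, \phi_s)$ is literally the image of the self-composed $p^{-e}$-linear map $\psi_s^n$, so the relevant stabilization is governed by exactly the chain used to define the $\sigma_n$.

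The remaining points of $Z_0$ lie in the closed set $Z_0 \setminus U_0$, each of whose finitely many integral components is strictly smaller than $Z_0$; by minimality each admits a uniform bound, and together with $n_0$ this bounds $n_{\sigma(\phi_s), k(s)}$ uniformly over all of $Z_0$, contradicting the choice of $Z_0$. This produces $N$. Finally, to deduce the displayed equation, fix $m \ge n \ge N$ and $s \in V$: since $n \ge N \ge n_{\sigma(\phi_s), k(s)}$, over $k(s)^{1/p^\infty}$ one has $\ba_{n,m} = \ba_{m,m}$ for the fiber, and by faithfully flat descent along the field extension $k(s)^{1/p^{m}} \hookrightarrow k(s)^{1/p^\infty}$ the same equality persists after $\otimes\, k(s)^{1/p^m}$; by \autoref{prop.RelativeNonFPureIdealsAndBaseChange} the two sides of this equality are precisely $\sigma_n(X/V,\phi) \otimes_{V^n} k(s)^{1/p^m}$ (i.e. $\sigma_n(X_s/s,\phi_s)$ pushed up to level $m$) and $\sigma_m(X_s/s, \phi_s)$, while the case $m=n$ is \autoref{prop.RelativeNonFPureIdealsAndBaseChange} directly. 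I expect the genuine obstacle to be the bookkeeping in the middle step: correctly matching the three a priori different ``uniform integers'' — the $n_0$ attached by \autoref{prop.StabilizingSigma} to the generic point of $Z_0$, the stabilization index of the chain on a general fiber that must actually be controlled, and the index appearing in the definition of $n_{\sigma(\phi_s),k(s)}$, which involves first perfecting $k(s)$ — and verifying that base change genuinely identifies them.
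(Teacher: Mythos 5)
Your proof is correct and follows essentially the same route as the paper's: Noetherian induction over the base with \autoref{prop.StabilizingSigma} supplying the inductive step and \autoref{prop.RelativeNonFPureIdealsAndBaseChange} transporting everything along base change. The paper presents it as an iterative stratification ($V \supset V_1' \supset V_2' \supset \cdots$, taking $N = \max\{N_0, N_1, \ldots\}$) rather than your well-ordering form (minimal bad integral closed subscheme), but these are the same argument. Two minor points where your write-up is actually a touch more careful than the paper's: (1) you explicitly deal only with \emph{integral} closed subschemes, sidestepping the fact that the reduced complement $V_1'$ in the paper's version may be reducible, so \autoref{prop.StabilizingSigma} must really be applied component by component (the paper also passes to the regular locus of $V_1'$, which does nothing for irreducibility and is unneeded); (2) you spell out the justification, via base change to $\Spec k(s)^{1/p^\infty}$ and faithfully flat descent back down to $k(s)^{1/p^{me}}$, of why stabilization on $U_0$ gives $n_{\sigma(\phi_s), k(s)} \le n_0$, a step the paper's proof simply asserts. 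Your closing worry about matching the three ``uniform integers'' is unfounded: by transitivity of fiber products the fiber $X_s$ and the map $\phi_s$ do not depend on which integral $Z \ni s$ you base-change through, so the invariant $n_{\sigma(\phi_s),k(s)}$ is well defined independently of the ambient stratum, which is exactly what the inductive step needs.
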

\begin{proof}
Of course, the statement already holds on $U$ with respect to some integer $N_0$.
Set $V_1' := V \setminus U$ to be the complement with reduced scheme structure and let $$i_1 : V_1 = (V_1')_{\reg} \hookrightarrow V_1'$$ be the regular locus.  We notice that $V_1$ has dimension strictly smaller than $\dim V$.  We consider the base change $X_{V_1} \to V_1$.  Each fiber of $X_{V_1} \to V_1$ is isomorphic to a fiber of $X \to V$.  Then choose an open set $U_1 \subseteq V_1$ for which the statement holds for some integer $N_1$.

Now fix $V_2' = V_1' \setminus U_1$ and $i_2 : V_2 = (V_2')_{\reg} \hookrightarrow V_2'$ to be the regular locus and repeat.
This process terminates by Noetherian induction.

Setting $N = \max\{N_0, N_1, N_2, \dots\}$ completes the proof.
\end{proof}


Suppose now that $s \in V$ is a perfect point, see \autoref{def.PerfectPoint}.  It follows that $X_s/s$ is a variety over a perfect field and so since $s^e \cong s$, we have $X_{s^e} \cong X_s$.  Thus we can identify $\phi_s : L_s^{1/p^e} \to R_{s^e}$ with a $p^{-e}$-linear map $\psi_s$ as in \autoref{sec.BaseChangeToPerfectPoints}.  Thus under these identifications $\bigcap_{n \geq 0} \sigma_n(X_s/s, \phi) = \sigma(X_s, \psi_s)$.  However, if we have the stabilization $\sigma_n(X_s/s, \phi) \cong \sigma_n(X_s/s, \phi) \otimes_{k(s)^{1/p^{ne}}} k(s)^{1/p^{me}} \cong \sigma_{m}(X_s/s,\phi)$ for every $m \geq n$, the equality $\sigma_n(X_s/s, \phi)  = \sigma(X_s, \psi)$ holds.  Combining this with our previous work we obtain the following corollary.

\begin{corollary}
\label{thm.UniversalNForAllClosedFibers}
With notation as above, there exists an integer $N \geq 0$ such that $$\sigma_n(X/V,\phi)\cdot \O_{X_{s^{ne}}} = \sigma(X_s, \psi_s)$$ for all perfect points $s \in V$ and $n \geq N$.
\end{corollary}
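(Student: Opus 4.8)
The plan is to assemble the corollary from the absolute-to-relative comparison of \autoref{prop.RelativeNonFPureIdealsAndBaseChange} and the uniform bound of \autoref{thm.BaseChangeSigmaGeneral}, exploiting that over a perfect point the iterated relative Frobenius \emph{is} the iterated absolute Frobenius.  First I would take $N$ to be the integer produced by \autoref{thm.BaseChangeSigmaGeneral}, so that $N \geq n_{\sigma(\phi_s), k(s)}$ for every point $s \in V$, fix a perfect point $s \in V$, and fix $n \geq N$.  Applying \autoref{prop.RelativeNonFPureIdealsAndBaseChange} to the base change $\Spec k(s) \to V$ (which is excellent, integral and has a dualizing complex, being the spectrum of a field) gives
\[
\sigma_n(X/V, \phi) \cdot \O_{X_{s^{ne}}} = \sigma_n(X_s/s, \phi_s),
\]
so everything reduces to identifying $\sigma_n(X_s/s, \phi_s)$ with the absolute ideal $\sigma(X_s, \psi_s)$.

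The second step is to observe that the descending chain $\sigma_n(X_s/s,\phi_s) \supseteq \sigma_{n+1}(X_s/s,\phi_s) \supseteq \cdots$ has already stabilized.  Because $k(s)$ is perfect, $k(s)^{1/p^{je}} = k(s)$ for all $j$, so the schemes $X_{s^{je}}$ for $j \geq n$ are all canonically identified with $X_s$ and the structural maps $\sigma_n(X/V,\phi)\otimes_{\O_{V^{ne}}}\O_{V^{me}} \to \O_{X_{s^{me}}}$ become self-maps of ideals on the single scheme $X_s$.  Under these identifications \autoref{thm.BaseChangeSigmaGeneral} (for this $s$ and $m \geq n \geq N$), combined once more with \autoref{prop.RelativeNonFPureIdealsAndBaseChange} to rewrite its left-hand side, reads $\sigma_n(X_s/s, \phi_s) = \sigma_m(X_s/s, \phi_s)$; concretely, perfectness makes each transition map in the chain an isomorphism, so the chain is constant from step $n$ on.

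Finally I would invoke the discussion preceding the statement: over the perfect point $s$, the map $\phi_s : L_s^{1/p^e} \to R_{s^e}$ is identified with a genuine $p^{-e}$-linear map $\psi_s : L_s^{1/p^e} \to R_s$, the relative composite $\phi_s^m$ is identified with the absolute composite $\psi_s^m$, and hence $\bigcap_{m \geq 0}\sigma_m(X_s/s, \phi_s) = \sigma(X_s, \psi_s)$.  A constant descending chain equals its intersection, so $\sigma_n(X_s/s,\phi_s) = \sigma(X_s, \psi_s)$, and combining this with the displayed equation from the first step gives $\sigma_n(X/V,\phi)\cdot \O_{X_{s^{ne}}} = \sigma(X_s,\psi_s)$ for all $n \geq N$ and all perfect points $s \in V$.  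I do not expect a genuine obstacle here: the real work has been front-loaded into \autoref{thm.BaseChangeSigmaGeneral}, and the only delicate point — the compatibility of the iterated relative Frobenius on $X_s$ with the iterated absolute Frobenius once $k(s)$ is perfect — is exactly what \autoref{sec.BaseChangeToPerfectPoints} is set up to provide.
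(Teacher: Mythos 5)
Your proposal is correct and follows exactly the same route as the paper: take $N$ from \autoref{thm.BaseChangeSigmaGeneral}, use \autoref{prop.RelativeNonFPureIdealsAndBaseChange} to restrict the relative ideal to the fiber, observe that the perfectness of $k(s)$ makes the descending chain $\sigma_n(X_s/s,\phi_s)$ stabilize to the absolute $\sigma(X_s,\psi_s)$ under the identification $\phi_s \leftrightarrow \psi_s$ from \autoref{sec.BaseChangeToPerfectPoints}. The paper condenses this into the paragraph immediately preceding the corollary; your write-up just spells out the same steps.
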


\mysubsection{Iterated non-$F$-pure ideals versus the absolute non-$F$-pure ideal}
\label{subsec.IteratedNonFPureIdealsVsAbsolute}


We now compare the iterated non-$F$-pure ideal with the absolute non-$F$-pure ideal.  First however, we make the following assumption.
\begin{convention}
\label{conv.BaseIsNice}
In this subsection, \autoref{subsec.IteratedNonFPureIdealsVsAbsolute}, we always assume that our base $V$ is an $F$-pure quasi-Gorenstein (i.e., $\omega_V$ is a line bundle) $F$-finite integral scheme.  
\end{convention}
Let $\Psi_V^{e} : F^{e}_* \O_V((1-p^e)K_V) \to \O_V$ be a map generating $\sHom_V(F^{e}_* \O_V((1-p^e)K_V), \O_V)$ as an $F^{e}_* \O_V$-module.  By taking $p^{ne}$th roots, applying $f^{-1}$, and then writing the $A$-module $M := f^{-1}(\O_V( (1-p^e)K_V))$, we obtain:
\[
\left(\Phi^e_A \right)^{1/p^{ne}} :  M^{1/p^{(n+1)e}} =f^{-1} F^{(n+1)e}_* (\O_V( (1-p^e)K_V)) \to f^{-1} F^{ne}_* \O_V = A^{1/p^{ne}}.
\]
We tensor with $L^{1/p^{e}} \otimes_{A^{1/p^{e}}} \blank$ and so obtain:
\begin{equation}
\label{eq.TwistiedPhiA}
\Phi' : L^{1/p^{e}} \otimes_{A^{1/p^{e}}} M^{1/p^{(n+1)e}} \to L^{1/p^{e}} \otimes_{A^{1/p^e}} A^{1/p^{ne}}.
\end{equation}
Set $N = L \otimes_A M^{1/p^{ne}}$. We observe that $N$ is a line bundle on $X_{V^{ne}}$.  In fact, if $q_1 : X_{V^{ne}} = X \times_V V^{ne} \to X$ is the first projection and $q_2 : X_{V^{ne}} \to V^{ne}$ is the second projection, then
\begin{equation*}
\label{eq.NAsPullBack}
N = q_1^* L \otimes_{\O_{X_{V^{ne}}}} q_2^* \O_{V^{ne}}( (1-p^e)K_{V^{ne}})
\end{equation*}
and so it follows for any integer $l > 0$ that
\begin{equation}
\label{eq.NToPower}
N^{l} \cong q_1^* L^l \otimes_{\O_{X_{V^{ne}}}} q_2^* \O_{V^{ne}}( l(1-p^e)K_{V^{ne}}) = L^l \otimes_A \left(M^l \right)^{1/p^{ne}}
\end{equation}
Finally, we compose \autoref{eq.TwistiedPhiA} with $\phi' := \phi \otimes_{A^{1/p^e}} A^{1/p^{ne}}$ to construct:
\begin{equation}
\label{eq.defnGamma}
\begin{array}{rrl}
\gamma :&  & N^{1/p^e}\\
& = & L^{1/p^{e}} \otimes_{A^{1/p^{e}}} M^{1/p^{(n+1)e}} \\
& \xrightarrow{\Phi'}  & L^{1/p^{e}} \otimes_{A^{1/p^e}} A^{1/p^{ne}} \\
& \xrightarrow{\phi'} & R_{A^{1/p^e}} \otimes_{A^{1/p^e}} A^{1/p^{ne}} \\
& = & R \otimes_A A^{1/p^{ne}} \\
& = & R_{A^{1/p^{ne}}}.
\end{array}
\end{equation}
This is a map from an invertible sheaf on $X^e_{V^{(n+1)e}} = \left(X_{V^{ne}}\right)^e$ to the structure sheaf on $X_{V^{ne}}$.   In particular, it is a map such as one studied in \cite[Section 4]{BlickleSchwedeSurveyPMinusE}.

Let us point out an alternate way to construct $\gamma$.  Indeed, take
\[
{\phi}'' := \phi \tensor_{A^{1/p^{e}}} M^{1/p^{(n+1)e}} : L^{1/p^{e}} \tensor_{A^{1/p^{e}}} M^{1/p^{(n+1)e}} \to R_{A^{1/p^e}} \tensor_{A^{1/p^e}} M^{1/p^{(n+1)e}} \cong R \tensor_A M^{1/p^{(n+1)e}}.
\]
We can then compose this with $R_{A^{1/p^e}} \tensor_{A^{1/p^e}} (\Phi_A^e)^{1/p^{ne}} = (\Phi_A^e)'$ to obtain
\[
N^{1/p^e} = L^{1/p^{e}} \tensor_{A^{1/p^{e}}} M^{1/p^{(n+1)e}} \xrightarrow{\phi''} R_{A^{1/p^e}} \tensor_{A^{1/p^e}} M^{1/p^{(n+1)e}} \xrightarrow{(\Phi_A^e)'} R_{A^{1/p^{e}}}\tensor_{A^{1/p^e}} A^{1/p^{ne}} = R_{A^{1/p^{ne}}}.
\]
This composition is easily seen to coincide with $\gamma$.


We can then compose $\gamma$ with itself $m$-times as in \cite[Section 4]{BlickleSchwedeSurveyPMinusE} or \cite{SchwedeFAdjunction}.  We recall this construction for the benefit of the reader.  To construct $\gamma^2$, we tensor the map
\[
\gamma: N^{1/p^e} \to R\otimes_A A^{1/p^{ne}} = R_{A^{1/p^{ne}}}
\]
with the line bundle $ \otimes_{R_{A^{1/p^{ne}}}} N$, take $p^e$th roots and then compose it with $\gamma$ to obtain:
\[
\gamma^2 : \left(N^{1 + p^e} \right)^{1/p^{2e}} \to N^{1/p^e} \xrightarrow{\gamma} R \otimes_A A^{1/p^{ne}}
\]
Recursively, we can construct:
\[
\begin{array}{rcl}
\gamma^m & : & \Big(N^{p^{me} - 1 \over p^e - 1}\Big)^{1/p^{me}} \\
& = & \Big(L^{p^{me} - 1 \over p^e - 1}\Big)^{1/p^{me}} \otimes_{A^{1/p^{me}}} \Big(M^{p^{me}-1 \over p^e - 1}\Big)^{1/p^{(n+m)e}}\\
& \to & R_{A^{1/p^{ne}}}
\end{array}
\]
We are now in a position to relate the absolute $\sigma(X_{V^{1/p^{ne}}}, \gamma)$ with the relative $\sigma_n(X/V, \phi)$.

First we observe that for each $m \leq n$, $\Phi^m_A$ induces maps
\[
\begin{array}{rrl}
\mu_{m} : & & \Big(N^{p^{me} - 1 \over p^e - 1}\Big)^{1/p^{me}} \\
& = & \Big(L^{p^{me} - 1 \over p^e - 1}\Big)^{1/p^{me}} \otimes_{A^{1/p^{me}}} \Big(M^{p^{me}-1 \over p^e - 1}\Big)^{1/p^{(n+m)e}}\\
& \xrightarrow{\ldots \otimes (\Phi^{m}_A)^{1/p^{ne}}} & \Big(L^{p^{me} - 1 \over p^e - 1}\Big)^{1/p^{me}} \otimes_{A^{1/p^{me}}} A^{1/p^{ne}}.
\end{array}
\]
Furthermore these maps are surjective since $\Phi^m_A$ is.

\begin{lemma}
\label{lem.FactorizationOfSigmaImages}
Assuming \autoref{conv.BaseIsNice}, we have the following factorization of $\gamma^{n}$ and $\gamma^{n-1}$ as indicated by the diagram below

{
\scriptsize
\begin{tikzcd}
\Big(N^{p^{ne} - 1 \over p^e - 1}\Big)^{1/p^{ne}}
\arrow[d, equal]
& \Big(N^{p^{(n-1)e} - 1 \over p^e - 1}\Big)^{1/p^{(n-1)e}}
\arrow[d, equal]
\\
\Big(L^{p^{ne} - 1 \over p^e - 1}\Big)^{1/p^{ne}} \otimes_{A^{1/p^{ne}}} \Big(M^{p^{ne} - 1 \over p^{e}-1}\Big)^{1/p^{(n+n)e}}
\arrow[r, "\gamma'"]
\arrow[d, "\mu_n"]
\arrow[ddr, "\gamma^n"' near end, rounded corners, to path={ -- ([xshift=-2ex]\tikztostart.west)  |- (\tikztotarget.west)\tikztonodes} ]
& \Big(L^{p^{(n-1)e} - 1 \over p^e - 1}\Big)^{1/p^{(n-1)e}} \otimes_{A^{1/p^{(n-1)e}}} \Big(M^{p^{(n-1)e} - 1 \over p^e - 1}\Big)^{1/p^{(n+n-1)e}}
\arrow[d, "\mu_{n-1}"]
\arrow[dd, "\gamma^{n-1}"' near start, rounded corners, to path={ -- ([xshift=2ex]\tikztostart.east) |- (\tikztotarget.east)\tikztonodes} ]
\\
\Big(L^{p^{ne} - 1 \over p^e - 1}\Big)^{1/p^{ne}}
\arrow[r]
\arrow[dr, "\varphi^n"']
& \Big(L^{p^{(n-1)e} - 1 \over p^e - 1}\Big)^{1/p^{(n-1)e}} \otimes_{A^{1/p^{(n-1)e}}} A^{1/p^{ne}}
\arrow[d, "\varphi^{n-1} \otimes \ldots"]
\\
& R_{A^{1/p^{ne}}}
\end{tikzcd}
}
\noindent
where the arrow $\gamma'$ is induced by $\gamma$ and $\phi^{n-1} \otimes \ldots$ is simply $\phi^{n-1} \otimes_{A^{1/p^{(n-1)e}}} A^{1/p^{ne}}$.  Furthermore, since $\mu_{n-1}$ is surjective we have
\[
\Image(\gamma^{n-1}) = \ba_{n-1,n} = \Image( \phi^{n-1} \otimes_{A^{1/p^{(n-1)e}}} A^{1/p^{ne}}).
\]
Likewise since $\mu_n$ is surjective, we have
\[
\Image(\gamma^{n}) = \ba_{n,n} = \ba_n = \Image(\phi^n).
\]
\end{lemma}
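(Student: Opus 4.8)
The plan is to reduce everything to the commutativity of the displayed diagram: once both columns and the middle square are known to commute, the two asserted image equalities are immediate, since $\mu_n$ and $\mu_{n-1}$ were already observed to be surjective and since $\ba_{n-1,n}$ and $\ba_n$ are by \emph{definition} the images of $\phi^{n-1}\otimes_{A^{1/p^{(n-1)e}}}A^{1/p^{ne}}$ and of $\phi^n$, respectively. To check commutativity I would first pass to the affine setting and shrink $V$, and then $X$, so that $L$, $\omega_V$, and hence $M=f^{-1}\sO_V((1-p^e)K_V)$ become trivial and $f$ is a relatively Gorenstein morphism. This is legitimate because the sources of all the maps in the diagram are invertible sheaves and every target is a torsion-free sheaf on a scheme obtained from $X$ by base change along an integral scheme (in particular $X_{V^{ne}}$ is reduced, being the base change of the geometrically reduced $f$ along the integral $V^{ne}$), so a morphism out of an invertible sheaf that vanishes on a dense open vanishes identically. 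After this reduction, by \autoref{rem.G1S2Morphisms} together with \autoref{conv.BaseIsNice} every $\sHom$-sheaf occurring is free of rank one, and $\phi$, the generator $\Phi_A^e$, and $\gamma$ all become explicit composites of maps of free $R$- and $A$-algebras and canonical isomorphisms.

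The structural point — and the only real content — is that $\gamma$ is, by its construction in \eqref{eq.defnGamma} and the alternate description recorded just after it, a composite of two commuting layers: a relative-Frobenius layer $\phi''=\phi\otimes_{A^{1/p^e}}M^{1/p^{(n+1)e}}$ built from $\phi$, which acts only on the $R$-factor and is linear over $M^{1/p^{(n+1)e}}$; and a base-Frobenius layer $(\Phi_A^e)'=R_{A^{1/p^e}}\otimes_{A^{1/p^e}}(\Phi_A^e)^{1/p^{ne}}$, which acts only on the $M$-factor. When one forms $\gamma^m$ by the twist--root--compose recursion of \autoref{subsec.ComposingMaps} and writes the resulting $m$-fold composite out in full — exactly as in the proof that the official $\phi^n$ agrees with the map $\gamma$ of \eqref{eq.phimAndphin}, and as in \autoref{lem.BaseChangeForPhin} — the commutativity of the two layers lets one slide every base-Frobenius layer to the front and collect every relative-Frobenius layer at the back. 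The front block is then $\mu_m$, by the very definition of the $m$-fold composition $\Phi^m_A$ (namely \autoref{subsec.ComposingMaps} applied to $\Phi_A$ in place of $\phi$), while the back block is $\phi^m$ tensored up to $A^{1/p^{ne}}$, by the argument identifying the official $\phi^m$ with its alternative forms. Taking $m=n$ yields $\gamma^n=\phi^n\circ\mu_n$, i.e.\ the left-hand column, and taking $m=n-1$ yields $\gamma^{n-1}=(\phi^{n-1}\otimes_{A^{1/p^{(n-1)e}}}A^{1/p^{ne}})\circ\mu_{n-1}$, i.e.\ the right-hand column; here one also uses the factorization of $\phi^n$ as $(\phi^{n-1}\otimes\ldots)$ precomposed with the unlabelled map induced as in \eqref{eq.Twist1}, which is the case $m=1$ of \eqref{eq.phimAndphin}.

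For the middle square I would take $\gamma'$ (the arrow ``induced by $\gamma$'') to be the map peeling off one step of the recursion defining $\gamma^n$, so that $\gamma^n=\gamma^{n-1}\circ\gamma'$ holds tautologically (a form of \eqref{eq.phimAndphin}); the commutativity $\mu_{n-1}\circ\gamma'=(\text{unlabelled map})\circ\mu_n$ is then the same reshuffling as above, now read as: performing all $n$ base-Frobenius layers and then one relative-Frobenius layer equals performing one $\gamma$-step (one base-Frobenius layer followed by one relative-Frobenius layer) and then the remaining $n-1$ base-Frobenius layers. The main obstacle throughout is pure bookkeeping: one must keep precise track of the twists by powers of $L$ and $M$ and of the $p^i$-th roots when justifying each sliding of layers, since the two families of maps being interchanged live over different base rings ($A^{1/p^{ie}}$ versus $R_{A^{1/p^{ie}}}$). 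But since $\phi$ is linear over the base and the base-Frobenius layers touch only the $M$-twist, each interchange is formal; following \autoref{lem.BaseChangeForPhin}, I would write out the case $m=2$ explicitly and then observe that the general $m$-fold composite is handled by the identical induction. With the diagram commuting, the image statements follow at once: $\Image(\gamma^{n-1})=\Image\big((\phi^{n-1}\otimes\ldots)\circ\mu_{n-1}\big)=\Image(\phi^{n-1}\otimes\ldots)=\ba_{n-1,n}$ because $\mu_{n-1}$ is surjective, and likewise $\Image(\gamma^n)=\Image(\phi^n\circ\mu_n)=\Image(\phi^n)=\ba_{n,n}=\ba_n$.
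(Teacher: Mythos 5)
Your argument is correct and follows essentially the same route as the paper: both reduce the lemma to the commutativity of the inner square, recognize that the only real content is that the base-Frobenius step $(\Phi_A^e)'$ and the relative-Frobenius step $\phi'$ can be interchanged (your ``commuting layers''), verify this interchange explicitly in the base case $m=2$, and then propagate it by the same twist-and-take-$p^e$th-roots induction. The preliminary trivialization-and-density reduction you propose is harmless but unnecessary, since the paper's element-level check of the $m=2$ square already works without trivializing the line bundles.
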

\begin{proof}
The two equalities are immediate from the surjectivities of $\mu_{n-1}$ and $\mu_n$ and the commutativity of our diagram. The surjectivities of $\mu_{n-1}$ and $\mu_n$ are clear since $M^{1/p^{(n+1)e}}\to A^{1/p^{ne}}$ is surjective and tensor is right-exact. It remains to prove the commutativity of our diagram. It suffices to prove the commutativity of the square:
\[
\scriptsize
\xymatrix@C=18pt{
\Big(L^{p^{me} - 1 \over p^e - 1}\Big)^{1/p^{me}} \otimes_{A^{1/p^{me}}} \Big(M^{p^{me} - 1 \over p^{e}-1}\Big)^{1/p^{(n+m)e}} \ar[r]^-{\gamma'} \ar[d]^{\mu_m} &  \Big(L^{p^{(m-1)e} - 1 \over p^e - 1}\Big)^{1/p^{(m-1)e}} \otimes_{A^{1/p^{(m-1)e}}} \Big(M^{p^{(m-1)e} - 1 \over p^{e}-1}\Big)^{1/p^{(n+m-1)e}} \ar[d]^{\mu_{m-1}} \\
\Big(L^{p^{me} - 1 \over p^e - 1}\Big)^{1/p^{me}}\otimes_{A^{1/p^{me}}}A^{1/p^{ne}} \ar[r] & \Big(L^{p^{(m-1)e} - 1 \over p^e - 1}\Big)^{1/p^{(m-1)e}}  \otimes_{A^{1/p^{(m-1)e}}} A^{1/p^{ne}}
}
\]
and we will prove it by induction on $m$. It is clear that, if the above square is commutative for $m$, then, by tensoring it with $L\otimes_R(-)\otimes_{A^{1/p^{ne}}}M^{1/p^{(n+1)e}}$ and taking $p^e$th roots, we will have the commutativity for $m+1$. Hence it remains to prove the commutativity when $m=2$. To this end, we will denote the map $M^{1/p^{(n+1)e}}\to A^{1/p^{ne}}$ by $\alpha$. Then $\alpha$ induces a map
\begin{align}
\alpha'=(id \otimes \alpha)^{1/p^e}:\Big(M^{p^{2e} - 1 \over p^{e}-1}\Big)^{1/p^{(n+2)e}} &= \Big(M^{1/p^{ne}}\otimes_{A^{1/p^{ne}}}M^{1/p^{(n+1)e}}\Big)^{1/p^e} \notag\\
&\to M^{1/p^{ne}}\otimes_{A^{1/p^{ne}}}A^{1/p^{ne}}=M^{1/p^{(n+1)e}}\notag
\end{align}
It is clear that $\alpha\circ \alpha'=\alpha^2$. Likewise, the map $\varphi:L^{1/p^e}\to R\otimes_AA^{1/p^e}$ induces
\[\varphi'=(id \otimes \varphi)^{1/p^e}: \Big(L^{p^{2e} - 1 \over p^{e}-1}\Big)^{1/p^{2e}} = \Big(L^{1/p^{e}}\otimes_{R^{1/p^{e}}}L^{1/p^{2e}}\Big)^{1/p^e} \to L^{1/p^e}\otimes_{A^{1/p^e}}A^{1/p^{2e}}\]
We will analyze the 4 maps involved in the square. The left vertical map
\[
\mu_2:\Big(L^{p^{2e} - 1 \over p^e - 1}\Big)^{1/p^{2e}} \otimes_{A^{1/p^{2e}}} \Big(M^{p^{2e} - 1 \over p^{e}-1}\Big)^{1/p^{(n+2)e}}\to \Big(L^{p^{2e} - 1 \over p^e - 1}\Big)^{1/p^{2e}}
 \]
 is given by $id\otimes (\alpha\circ \alpha')$; similarly the right vertical map $\mu_1$ is $id\otimes \alpha$. The top horizontal map is given by
\begin{align}
\Big(L^{p^{2e} - 1 \over p^e - 1}\Big)^{1/p^{2e}} \otimes_{A^{1/p^{2e}}} \Big(M^{p^{2e} - 1 \over p^{e}-1}\Big)^{1/p^{(n+2)e}} & \xrightarrow{\varphi'\otimes \alpha'} L^{1/p^e}\otimes_{A^{1/p^e}}A^{1/p^{2e}}\otimes_{A^{1/p^{2e}}} M^{1/p^{(n+1)e}}\notag\\
&\xrightarrow{\sim} L^{1/p^e}\otimes_{A^{1/p^e}}M^{1/p^{(n+1)e}}\notag
\end{align}
And the bottom horizontal map is give by
\begin{align}
\Big(L^{p^{2e} - 1 \over p^e - 1}\Big)^{1/p^{2e}} \otimes_{A^{1/p^{2e}}} A^{1/p^{ne}} & \xrightarrow{\varphi'\otimes id} L^{1/p^e}\otimes_{A^{1/p^e}}A^{1/p^{2e}} \otimes_{A^{1/p^{2e}}} A^{1/p^{ne}}\notag\\
&\xrightarrow{\sim} L^{1/p^e}\otimes_{A^{1/p^e}}A^{1/p^{ne}}.\notag
\end{align}

Now given an arbitrary $x\otimes z \in \Big(L^{p^{2e} - 1 \over p^e - 1}\Big)^{1/p^{2e}} \otimes_{A^{1/p^{2e}}} \Big(M^{p^{2e} - 1 \over p^{e}-1}\Big)^{1/p^{(n+2)e}}$, write $\varphi'(x)=\sum_i x_i\otimes y_i$ with $x_i\in L^{1/p^e}$ and $y_i\in A^{1/p^{2e}}$. Then, on one hand, if we follow the top horizontal map and then the right vertical map, we will have
\[x\otimes z\mapsto (\sum_i x_i\otimes y_i)\otimes \alpha'(z)\mapsto \sum_i x_i\otimes y_i\alpha'(z)\mapsto \sum_i x_i \alpha(y_i\alpha'(z))= \sum_i x_i \otimes y_i\alpha(\alpha'(z)),\]
where the last equality holds since $\alpha$ is $A^{1/p^{ne}}$-linear and hence $A^{1/p^{2e}}$-linear.

On the other hand, if we follow the other path ({\it i.e.} the left vertical map first and then the bottom horizontal map), we will have
\[x\otimes z\mapsto x\otimes \alpha'(z)\mapsto x\otimes \alpha(\alpha'(z))\mapsto (\sum_i x_i\otimes y_i)\otimes \alpha(\alpha'(z))\mapsto \sum_i x_i \otimes y_i\alpha(\alpha'(z)).\]
This proves that the square is indeed commutative when $m=2$ and concludes the proof of our Lemma.
\end{proof}


\begin{theorem}
\label{thm.RelativeSigmaVsAbsoluteOverOpen}
With notation as in \autoref{conv.BaseIsNice} and below, choose $n > n_0 = n_{\sigma(\phi), k(V)}$.  Then there exists a dense open set $U \subseteq V \cong V^e$ with $W = f^{-1}(U) \subseteq X$ such that
\[
\sigma(X_{V^{1/p^{ne}}}, \gamma)|_W = \sigma_n(X/V, \phi)|_W.
\]
Furthermore, shrinking $U$ further if necessary we can require for all perfect points $u \in U$, that
\[
\sigma(X_{V^{1/p^{ne}}}, \gamma)\cdot \O_{X_u} = \sigma_n(X_u/u, \phi_u) =  \sigma(X_u, \psi_u)
\]
where $\psi_u$ is $\phi_u$ viewed as an absolute $p^{-e}$-linear map.
\end{theorem}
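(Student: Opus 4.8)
The plan is to realize the absolute non-$F$-pure ideal $\sigma(X_{V^{1/p^{ne}}},\gamma)$ as the eventual stable value of the descending chain $\Image(\gamma^m)$ (this chain stabilizes by the Hartshorne--Speiser--Lyubeznik--Gabber theorem applied to the $F$-finite Noetherian scheme $X_{V^{ne}}=X\times_V V^{ne}$, which is legitimate under \autoref{conv.BaseIsNice} since $V$ is $F$-finite and $f$ is of finite type), and then to show that, after restriction to a suitable dense open $W=f^{-1}(U)\subseteq X$, this chain has \emph{already} stabilized at step $n$ with value $\ba_n=\sigma_n(X/V,\phi)$.

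First I would fix the open set. Since $n>n_0=n_{\sigma(\phi),k(V)}$ we have $n-1\geq n_0$, so \autoref{prop.StabilizingSigma} applies with starting index $n-1$; taking $m=n$ in \autoref{eq.sigmaNRestricts} gives a dense open $U\subseteq V$ with $\ba_{n-1,n}|_W=\sigma_n(X/V,\phi)|_W=\ba_n|_W$. Combined with \autoref{lem.FactorizationOfSigmaImages}, which identifies $\Image(\gamma^{n-1})=\ba_{n-1,n}$ and $\Image(\gamma^n)=\ba_n$, this yields $\Image(\gamma^{n-1})|_W=\Image(\gamma^n)|_W$: the chain $\{\Image(\gamma^m)|_W\}_m$ is constant for one step. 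Now I would propagate: since $\gamma^{m+1}=\gamma\circ\big((\gamma^m\otimes_{\O_{X_{V^{ne}}}}N)^{1/p^e}\big)$ and both $-\otimes N$ (with $N$ a line bundle) and $(-)^{1/p^e}=(F^e)_*(-)$ are exact, $\Image(\gamma^{m+1})=\gamma\big(((\Image\gamma^m)\otimes N)^{1/p^e}\big)$; restriction to $W$ being exact, a straightforward induction --- formally identical to the ``varying $m$'' step in the proof of \autoref{prop.StabilizingSigma} --- shows $\Image(\gamma^m)|_W=\Image(\gamma^n)|_W$ for all $m\geq n$. Taking $m\gg 0$ gives $\sigma(X_{V^{1/p^{ne}}},\gamma)|_W=\ba_n|_W=\sigma_n(X/V,\phi)|_W$, the first assertion.

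For the statements over perfect points, fix a perfect point $u\in U$. Since $k(u)$ is perfect we have $u^{ne}\cong u$, hence $X_{u^{ne}}\cong X_u$, and $X_u\subseteq W$. Restricting the equality just proved to the fiber $X_{u^{ne}}$ gives $\sigma(X_{V^{1/p^{ne}}},\gamma)\cdot\O_{X_u}=\sigma_n(X/V,\phi)\cdot\O_{X_{u^{ne}}}$, and by \autoref{prop.RelativeNonFPureIdealsAndBaseChange} applied to the base change $\Spec k(u)\to V$ the right-hand side equals $\sigma_n(X_u/u,\phi_u)$. Finally, the ``furthermore'' part of \autoref{prop.RelativeNonFPureIdealsAndBaseChange} shows that (the one-point base) $\Spec k(u)$ satisfies condition \autoref{eq.sigmaNRestricts} for $\sigma_n(X_u/u,\phi_u)$; because $k(u)^{1/p^{ie}}=k(u)$ for all $i$, this condition says exactly $\sigma_m(X_u/u,\phi_u)=\sigma_n(X_u/u,\phi_u)$ for all $m\geq n$, and by the discussion preceding \autoref{thm.UniversalNForAllClosedFibers} such stabilization yields $\sigma_n(X_u/u,\phi_u)=\sigma(X_u,\psi_u)$. (If the open set produced by \autoref{prop.StabilizingSigma} needed shrinking for the fiberwise assertions one could shrink it, but I expect the same $U$ works.)

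The main obstacle is not a deep new input: all the substance is already contained in \autoref{lem.FactorizationOfSigmaImages}, which compares $\Image(\gamma^n)$ and $\Image(\gamma^{n-1})$ with the relative images $\ba_n$ and $\ba_{n-1,n}$. What the present statement really requires is careful bookkeeping --- keeping straight the three avatars of the construction ($\phi^n$, the genuinely absolute $\gamma^m$, and the perfect-point incarnation $\psi_u$), the identifications $u\cong u^{ne}$ and $X_u\cong X_{u^{ne}}$ at perfect points set up in \autoref{sec.BaseChangeToPerfectPoints}, and the verification that restriction to $W$ and to fibers commutes with taking images and with the (eventually constant) stabilizing chain. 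Each of these is routine, but must be checked so that the chain of equalities in the theorem closes up.
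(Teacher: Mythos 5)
Your proposal is correct and follows the same route as the paper: the paper's own proof is essentially just the two sentences "the first assertion follows from the diagram in \autoref{lem.FactorizationOfSigmaImages} together with $\ba_{n,n}|_W = \ba_{n-1,n}|_W$ from \autoref{prop.StabilizingSigma}, and the second assertion then follows from what has already been proven." You have filled in precisely the intermediate steps that these sentences leave implicit — the stabilization of $\{\Image(\gamma^m)|_W\}$ once two consecutive terms agree, and the base-change argument via \autoref{prop.RelativeNonFPureIdealsAndBaseChange} (including its "furthermore" clause, which gives the fiberwise stabilization needed to identify $\sigma_n(X_u/u,\phi_u)$ with $\sigma(X_u,\psi_u)$). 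One small clean-up: when you "restrict the equality to the fiber," you are of course extending ideals along $X_{u^{ne}} \hookrightarrow X_{V^{ne}}$ rather than literally restricting to an open subset, but this is exactly what the notation $\cdot\O_{X_u}$ in the statement means, and it is compatible with restriction to $W$ because $X_u \subseteq W$.
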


\begin{proof}
The second statement is immediate from the first by \autoref{prop.RelativeNonFPureIdealsAndBaseChange}, so we need only prove the first statement.  By the diagram in \autoref{lem.FactorizationOfSigmaImages} and applying \autoref{prop.StabilizingSigma}, we see that
\[
\Image(\gamma^{n})|_W = \ba_{n,n}|_W = \ba_{n-1,n}|_W = \Image(\gamma^{n-1})|_W
\]
for all $n > n_0$.  Hence $\Image(\gamma^n)|_W = \sigma(X_{V^{1/p^{ne}}}, \gamma)|_W$ and the result follows.
\end{proof}




\begin{convention}
\label{conv.BaseIsVeryNice}
Assuming everything in \autoref{conv.BaseIsNice}, we additionally assume that $\phi$ is relatively divisorial.
\end{convention}

Before we proceed, we prove a lemma about interpreting maps as divisors in the relative vs absolute setting.

\begin{lemma}
\label{lem.ChoiceOfDeltaPhiEqualsDeltaGamma}
With notation as in \autoref{conv.BaseIsVeryNice}, if $\Delta_{\phi}$ is the divisor corresponding to $\phi$ as in \autoref{def:D_phi_Delta_phi}.  Then $\Delta_{\gamma}$, the divisor on $X_{V^{ne}}$ corresponding to $\gamma$ as in \cite[Section 4]{BlickleSchwedeSurveyPMinusE}, is equal to $h^* \Delta$ where $h : X_{V^{ne}} = X \times_V V^{ne} \to X$ is the projection.
\end{lemma}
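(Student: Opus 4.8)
The assertion concerns $\bQ$-divisors, so it is local on $X$ and may be checked in codimension one; since $h$ is the base change of the Frobenius $F^{ne}_{V}$ it is a universal homeomorphism, so removing from $X$ a closed subset of relative codimension $\geq 2$ removes exactly the corresponding subset of $X_{V^{ne}}$. The plan is therefore first to shrink $X$ and $V$ so that $f$ is a Gorenstein morphism, $L$ and $\omega_{X/V}$ are trivial, and — using that $V$ is quasi-Gorenstein — $\omega_{V}$ is trivial; then $M = f^{-1}\O_{V}((1-p^{e})K_{V})$ and $N = L\otimes_{A}M^{1/p^{ne}}$ are trivial as well. Under these trivializations $\Psi^{e}_{V}$, hence $\Phi^{e}_{A}$, is (up to a unit) the Grothendieck trace of $F^{e}_{V}$, and, exactly as in the proof of \autoref{lem.PullingBackDelta}, $\phi$ is (up to a unit in $R^{1/p^{e}}$) the map $s^{1/p^{e}}\cdot \Tr_{X^{e}/V^{e}}$ for some $s\in R$ with $\Div_{X}(s)=D_{\phi}$, so that $\Delta_{\phi}=\tfrac{1}{p^{e}-1}\Div_{X}(s)$.

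Next I would unwind the construction of $\gamma$ in \autoref{eq.defnGamma}. Write $h^{\#}\colon R\to R_{A^{1/p^{ne}}}$ for the ring map induced by $h$. The key geometric input is that the absolute $e$-iterated Frobenius $F^{e}_{X_{V^{ne}}}$ factors, under the identification $(X_{V^{ne}})^{e}=X^{e}\times_{V^{e}}V^{(n+1)e}$, as the base change along $V^{(n+1)e}\to V^{e}$ of the relative Frobenius $F^{e}_{X^{e}/V^{e}}$ followed by the base change along $f_{V^{ne}}$ of the Frobenius $F^{e}_{V^{ne}}\colon V^{(n+1)e}\to V^{ne}$. Applying the compatibility of Grothendieck trace with composition of finite morphisms (\cite[Appendix F]{KunzKahlerDifferentials}, \cite[Lemma 3.9]{SchwedeFAdjunction}, as already used in \autoref{lem:composition} and \autoref{lem.FactorizationOfSigmaImages}) together with the base-change compatibility of trace (\autoref{lem.BaseChangeOfTraceForCMMaps}), the trace $\Tr_{X^{e}/V^{e}}$ base-changed to $V^{ne}$ is exactly (up to the unit and the factor $s^{1/p^{e}}$) the map $\phi'=\phi\otimes_{A^{1/p^{e}}}A^{1/p^{ne}}$, while the pull-back of the trace of $F^{e}_{V^{ne}}$ is (up to a unit) the map $\Phi'=L^{1/p^{e}}\otimes_{A^{1/p^{e}}}(\Phi^{e}_{A})^{1/p^{ne}}$. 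Composing, $\gamma=\phi'\circ\Phi'$ equals, up to a unit in $N^{1/p^{e}}$, the map $(h^{\#}s)^{1/p^{e}}\cdot \Tr_{F^{e}_{X_{V^{ne}}}}$.

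Finally, by the correspondence between $p^{-e}$-linear maps and divisors of \cite[Section 4]{BlickleSchwedeSurveyPMinusE} applied to $X_{V^{ne}}$ (with the choice $K_{X_{V^{ne}}}=h^{*}K_{X}$, legitimate after the local reductions since $K_{X}$ is then Cartier and $V$ is quasi-Gorenstein), the divisor $D_{\gamma}$ attached to $(h^{\#}s)^{1/p^{e}}\cdot\Tr_{F^{e}_{X_{V^{ne}}}}$ is $\Div_{X_{V^{ne}}}(h^{\#}s)=h^{*}\Div_{X}(s)$; hence $\Delta_{\gamma}=\tfrac{1}{p^{e}-1}\Div_{X_{V^{ne}}}(h^{\#}s)=h^{*}\bigl(\tfrac{1}{p^{e}-1}\Div_{X}(s)\bigr)=h^{*}\Delta_{\phi}$. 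The genuinely delicate point is the middle paragraph: verifying the claimed factorization of $F^{e}_{X_{V^{ne}}}$ and tracking all the Frobenius push-forwards and base changes so that the two composition-of-trace identities (one over $X^{e}/V^{e}$, one over $V^{e}$) assemble into the single identity on $X_{V^{ne}}$. This is routine given the results cited above but is notationally heavy, so it is worth recording a short explicit affine computation of $\gamma$ (say with $L=R$ and $K_{V}=0$) to pin it down.
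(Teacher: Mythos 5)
Your proof is correct and matches the paper's strategy: reduce to the relatively Gorenstein, affine case where $L$ and $M$ are trivial, write $\phi$ as a local generator $\Phi$ pre-multiplied by $z^{1/p^e}$ (so $D_\phi = \Div_X(z)$), and observe that $\gamma$ is the corresponding generator of $\sHom_{R_{A^{1/p^{ne}}}}(R^{1/p^e}_{A^{1/p^{(n+1)e}}}, R_{A^{1/p^{ne}}})$ pre-multiplied by the pull-back of $z^{1/p^e}$, whence $D_\gamma = h^*D_\phi$. Where the paper simply cites the composition-of-generators identity from \cite[Appendix F]{KunzKahlerDifferentials} to conclude that the $\Gamma$ built from $\Phi$ is again a local generator, you unwind the same fact via the factorization of $F^e_{X_{V^{ne}}}$ into base changes of $F^e_{X^e/V^e}$ and $F^e_{V^{ne}}$ together with compatibility of trace with composition and base change --- a more explicit rendering of the paper's citation rather than a genuinely different route.
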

\begin{proof}
Working off a set of relative codimension $2$, and then working locally on $X$ and $V$, we can assume that $L$ and $M$ are trivial, and further assume that $\phi(\blank) = \Phi(z^{1/p^{e}} \cdot \blank)$ where $\Phi$ generates $\sHom_{R_{A^{1/p^e}}}(R^{1/p^e}, R_{A^{1/p^e}})$ as an $R^{1/p^e}$-module.  Then $\Delta_{\phi}$ is the divisor corresponding to ${1 \over p^e - 1} \Div_X(z)$.  On the other hand, since the map $\Gamma$ corresponding to $\Phi$ is a composition of generating maps, $\Gamma$ generates $\sHom_{R_{A^{1/p^{ne}}}}( R^{1/p^{e}}_{A^{1/p^{(n+1)e}}}, R_{A^{1/p^{ne}}})$, see for example \cite[Appendix F]{KunzKahlerDifferentials}, and so corresponds to the zero divisor.  Furthermore, we see that $\gamma(\blank) = \Gamma({\bar{z}}^{1/p^e} \cdot \blank)$ where $\bar{z}$ is the element corresponding to $z \tensor 1 \in R_{A^{1/p^{ne}}}$.  Hence $\Delta_\gamma = {1 \over p^e - 1} \Div_{X_{V^{ne}}}(\bar{z})$ and the claim follows.
\end{proof}


We now need a Lemma which says roughly that in an $F$-pure $F$-finite ring $A$ with maximal ideal $\bm$, then at least locally, one can always choose a $\alpha : A^{1/p^{e}} \to A$ such that the closed point $V(\bm)$ is an $F$-pure center of $\Delta_{\alpha}$. To this end, we introduce the following.
\[M_{e,A}:=\{\phi\in \Hom_A(A^{1/p^e},A)| \phi(\bm^{1/p^e})\subseteq \bm\}\]
and
\[N_{e,A}:=\{\phi\in \Hom_A(A^{1/p^e},A)| \phi(A^{1/p^e})\subseteq \bm\}.\]
It is straightforward to check that both $M_{e,A}$ and $N_{e,A}$ are $A^{1/p^e}$-submodules of $\Hom_A(A^{1/p^e},A)$ and that $N_{e,A}\subseteq M_{e,A}$.  We now observe that:

\begin{lemma}
\label{lem.PreFindCompatibleMInFPure}
For any $F$-finite reduced G1 and S2 ring $A$ with maximal ideal $\bm$, the formation of $M_{e,A}$ and $N_{e,A}$ commute with localization and completion completion, {\it i.e.} if $W$ is a multiplicative system and $\hat\blank$ denotes completion along $\bm$ then
\[
\begin{array}{cc}
W^{-1} M_{e,A}\cong (W^{-1}M)_{e,W^{-1}{A}}\ {\rm and}\ W^{-1} N_{e,A} \cong (W^{-1} N)_{e,W^{-1}{A}} & {\rm and}\\
M_{e,A}\otimes_A\hat{A}\cong M_{e,\hat{A}}\ {\rm and}\ N_{e,A}\otimes_A\hat{A}\cong N_{e,\hat{A}} &
\end{array}
\]
\end{lemma}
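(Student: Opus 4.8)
The plan is to realize both $M_{e,A}$ and $N_{e,A}$ as kernels of explicit, functorial $A$-linear maps between $\Hom$-modules whose source is a finitely presented $A$-module, and then to apply flat base change. Note that reducedness, $G1$ and $S2$ are not needed here; only the hypotheses that $A$ is Noetherian and $F$-finite will be used.

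First I would assemble two ingredients. (i) Since $A$ is Noetherian and $F$-finite, $A^{1/p^e}=F^e_*A$ is a finitely presented $A$-module, and hence so is its submodule $\bm^{1/p^e}=F^e_*\bm$. (ii) For $B$ either a localization $W^{-1}A$ or the $\bm$-adic completion $\hat A$ (both flat over $A$, the latter since $A$ is Noetherian), the functor $(-)^{1/p^e}=F^e_*$ commutes with the base change $A\to B$, in the sense that $A^{1/p^e}\otimes_A B\cong B^{1/p^e}$ and $\bm^{1/p^e}\otimes_A B\cong(\bm B)^{1/p^e}$. For a localization this is immediate from exactness of localization, since localizing the $A$-module $F^e_*A$ at $W$ coincides with $F^e_*$ of the ring $W^{-1}A$ (inverting $w$ and inverting $w^{p^e}$ agree). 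For completion it is the standard fact that $F$-finiteness is compatible with completion: $F^e_*A$ is $A$-finite, so $\widehat{F^e_*A}=F^e_*A\otimes_A\hat A$, and this is $F^e_*\hat A$ because the $\bm$-adic filtration $\{\bm^n\cdot F^e_*A\}=\{F^e_*\big((\bm^{[p^e]})^n\big)\}$ is cofinal with $\{F^e_*\bm^m\}$ (one has $(\bm^{[p^e]})^n\subseteq\bm^n$, and $\bm^M\subseteq(\bm^{[p^e]})^n$ for $M\gg0$ by a pigeonhole on exponents of a fixed generating set of $\bm$); the identical argument gives the statement for $\bm^{1/p^e}$, using flatness of $B$ to see that its image inside $B^{1/p^e}$ is $F^e_*(\bm B)$. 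Together with the standard flat base change isomorphism $\Hom_A(P,Q)\otimes_A B\cong\Hom_B(P\otimes_A B,Q\otimes_A B)$ for $P$ finitely presented, these will suffice.

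Now I would write down the descriptions, obtained by unwinding the definitions:
\[
N_{e,A}=\ker\!\big(\Hom_A(A^{1/p^e},A)\to\Hom_A(A^{1/p^e},A/\bm)\big)\cong\Hom_A(A^{1/p^e},\bm),
\]
\[
M_{e,A}=\ker\!\big(\Hom_A(A^{1/p^e},A)\to\Hom_A(\bm^{1/p^e},A/\bm)\big),
\]
where the second arrow is restriction to $\bm^{1/p^e}$ followed by reduction modulo $\bm$. Since $B$ is flat over $A$, applying $-\otimes_A B$ preserves these left-exact sequences; because $A^{1/p^e}$ and $\bm^{1/p^e}$ are finitely presented, flat base change for $\Hom$ identifies each term after base change with the corresponding $\Hom$-module over $B$ (using $A/\bm\otimes_A B=B/\bm B$ and the compatibilities in (ii)), and the functoriality of reduction modulo $\bm$ and of restriction to $\bm^{1/p^e}$ shows the base-changed maps are exactly those defining $N_{e,B}$ and $M_{e,B}$ relative to the ideal $\bm B$. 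Hence $N_{e,A}\otimes_A B\cong N_{e,B}$ and $M_{e,A}\otimes_A B\cong M_{e,B}$ for $B=W^{-1}A$ and for $B=\hat A$, which is the assertion. The only step that is more than formal bookkeeping is part (ii) for completion, namely the compatibility of $F^e_*$ with $\bm$-adic completion; everything else follows by routine diagram chasing once finite presentation and that compatibility are in place.
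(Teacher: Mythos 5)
Your proof is correct, and it takes a genuinely different (and cleaner) route than the paper's. The paper uses the G1+S2 hypothesis in an essential way: it invokes the fact that reflexive modules over a G1+S2 ring are S2 to produce an isomorphism $\Hom_A(\bm^{1/p^e},A)\cong\Hom_A(A^{1/p^e},A)$, thereby identifying $M_{e,A}$ with $\Hom_A(\bm^{1/p^e},\bm)$ and $N_{e,A}$ with $\Hom_A(A^{1/p^e},\bm)$, and then appeals to the compatibility of $\Hom$ with localization and completion. You instead realize $M_{e,A}$ and $N_{e,A}$ directly as kernels of the functorial maps
\[
\Hom_A(A^{1/p^e},A)\longrightarrow\Hom_A(\bm^{1/p^e},A/\bm)
\quad\text{and}\quad
\Hom_A(A^{1/p^e},A)\longrightarrow\Hom_A(A^{1/p^e},A/\bm),
\]
and deduce the result from flat base change for $\Hom$ out of a finitely presented module together with left-exactness of $-\otimes_A B$ for $B$ flat. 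This avoids the reflexive-module identification entirely and, as you note, does not use reducedness, G1, or S2 at all --- only Noetherianness and $F$-finiteness. The paper's authors explicitly remark that they believe a more general argument without G1+S2 should exist; yours is such an argument. The only part of your write-up that is more than bookkeeping is the verification that $F^e_*$ is compatible with $\bm$-adic completion via the cofinality of the filtrations $\{\bm^n A^{1/p^e}\}$ and $\{(\bm^{1/p^e})^n\}$; your pigeonhole sketch for this is sound. So this is a correct proof via a different, slightly more elementary, and more general route.
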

The following proof was suggested to us by Manuel Blickle and Kevin Tucker.  We believe there are more general proofs that work without the G1 and S2 hypothesis (but this proof is short).
\begin{proof}
We have a natural injective map $\Hom_{A}(\bm^{1/p^e},\bm) \hookrightarrow \Hom_A(\bm^{1/p^e}, A)$, but $\Hom_A(\bm^{1/p^e}, A) \cong \Hom_A(A^{1/p^e}, A)$ since both modules are S2 (since a reflexive module is S2 in a G1 and S2 ring, \cite{HartshorneGeneralizedDivisorsOnGorensteinSchemes}).
Thus we have $\Hom_{A}(\bm^{1/p^e},\bm) \hookrightarrow \Hom_A(A^{1/p^e}, A)$, the image of which is $M_{e, A}$.  Therefore $M_{e, A}$ clearly commutes with localization and completion since the formation of both $\Hom$ sets commutes with localization and completion.   Similarly, the formation of $N_{e,A}$ commutes with localization and completion.
\end{proof}

\begin{lemma}
\label{splitting generator}
Suppose that $A$ is an $F$-pure $F$-finite G1 and S2 ring with maximal ideal $\bm$.  Then there exists an $A$-linear map $\alpha_A : A^{1/p^e} \to A$ such that $\alpha_A$ is surjective and $\alpha_A(\bm^{1/p^e}) \subseteq \bm$.
\end{lemma}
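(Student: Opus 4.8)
The statement is really about the local ring at $\mathfrak m$, so the plan is first to reduce to the case that $A$ is local (this is harmless by \autoref{lem.PreFindCompatibleMInFPure}, since the formation of $M_{e,A}$ commutes with localization at $\mathfrak m$ and since $F$-purity and $F$-finiteness localize), and then to build $\alpha_A$ by a pigeonhole argument on powers of $\mathfrak m$. With $A$ local, note that $A^{1/p^e}$ is again local, with maximal ideal $\mathfrak m^{1/p^e}$, and that producing a \emph{surjective} $\alpha_A\in M_{e,A}$ is the same as producing $\alpha_A\in M_{e,A}$ with $\alpha_A(1)\notin\mathfrak m$: if $\alpha\in M_{e,A}$ is surjective then $\alpha(t)\notin\mathfrak m$ for some $t\in A^{1/p^e}$, which is then a unit, so $\alpha(t\cdot\blank)$ still lies in $M_{e,A}$ (because $t\,\mathfrak m^{1/p^e}=\mathfrak m^{1/p^e}$) and sends $1$ to the unit $\alpha(t)$. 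Since $\alpha(\mathfrak m^{1/p^e})$ is always an ideal of the local ring $A$, the failure of the lemma would therefore give:
\[
(\star)\qquad \alpha\in\Hom_A(A^{1/p^e},A)\ \text{and}\ \alpha(1)\notin\mathfrak m\ \Longrightarrow\ \alpha(\mathfrak m^{1/p^e})=A .
\]

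Next I would assume $(\star)$ and use $F$-purity to fix an $A$-linear splitting $\phi_0\colon A^{1/p^e}\to A$ of $A\hookrightarrow A^{1/p^e}$, so that $\phi_0(1)=1$ and $\phi_0$ is surjective. Applying $(\star)$ to $\phi_0$ yields $s_1\in\mathfrak m^{1/p^e}$ with $\phi_0(s_1)\notin\mathfrak m$; applying $(\star)$ to $\phi_0(s_1\cdot\blank)$ yields $s_2\in\mathfrak m^{1/p^e}$ with $\phi_0(s_1s_2)\notin\mathfrak m$; iterating, for every $j\ge 1$ one obtains $s_1,\dots,s_j\in\mathfrak m^{1/p^e}$ with $\phi_0(s_1\cdots s_j)\notin\mathfrak m$. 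Since $s_1\cdots s_j\in(\mathfrak m^{1/p^e})^j=(\mathfrak m^j)^{1/p^e}$, this says $\phi_0\big((\mathfrak m^j)^{1/p^e}\big)=A$ for every $j$.

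To finish, write $\mathfrak m=(y_1,\dots,y_N)$ and take $j:=N(p^e-1)+1$. Every degree-$j$ monomial in the $y_i$ has some exponent $\ge p^e$, so $\mathfrak m^j\subseteq\mathfrak m^{[p^e]}$, and hence $(\mathfrak m^j)^{1/p^e}\subseteq(\mathfrak m^{[p^e]})^{1/p^e}=\mathfrak m\cdot A^{1/p^e}$ (the last equality because $(\sum_i b_i y_i^{p^e})^{1/p^e}=\sum_i b_i^{1/p^e}y_i$). Therefore $\phi_0\big((\mathfrak m^j)^{1/p^e}\big)\subseteq\phi_0(\mathfrak m A^{1/p^e})=\mathfrak m\cdot\phi_0(A^{1/p^e})=\mathfrak m$, contradicting $\phi_0\big((\mathfrak m^j)^{1/p^e}\big)=A$. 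So $(\star)$ must fail, which is exactly the assertion. The genuinely routine points are the two identities $(\mathfrak m^{1/p^e})^j=(\mathfrak m^j)^{1/p^e}$ and $(\mathfrak m^{[p^e]})^{1/p^e}=\mathfrak m A^{1/p^e}$ (compare generators) together with the reduction to the local case; the only step carrying real content is the ``if $1\mapsto$ unit then $\mathfrak m^{1/p^e}$ maps onto $A$, so iterate'' mechanism of the second paragraph, which I expect to be the part worth spelling out carefully but which presents no genuine obstacle.
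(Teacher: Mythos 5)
Your proof is correct, and it takes a genuinely different route from the paper's. The paper's local case completes $A$, invokes the Cohen Structure Theorem to present $\hat A = S/I$ with $S$ a power series ring, and then uses Fedder's criterion: every $\alpha\in\Hom_A(A^{1/p^e},A)$ has the form $g(u^{1/p^e}\cdot\blank)$ with $u\in(I^{[p^e]}:I)$, $F$-purity provides $u\notin\mathfrak n^{[p^e]}$, and a carefully chosen monomial of $u$ (times the complementary monomial) produces the desired $\alpha$. Your argument never passes to the completion and never mentions Fedder: you negate the statement, iterate the negated condition against the $F$-splitting $\phi_0$ to get $s_1,\dots,s_j\in\mathfrak m^{1/p^e}$ with $\phi_0(s_1\cdots s_j)$ a unit for every $j$, and then observe by pigeonhole that for $j > N(p^e-1)$ one has $\mathfrak m^j\subseteq\mathfrak m^{[p^e]}$, whence $(\mathfrak m^j)^{1/p^e}\subseteq\mathfrak m\cdot A^{1/p^e}$ and $\phi_0\bigl((\mathfrak m^j)^{1/p^e}\bigr)\subseteq\mathfrak m$, a contradiction. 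Each of the supporting steps (the equivalence of surjectivity with ``$\alpha(1)\notin\mathfrak m$'', the fact that $\alpha(\mathfrak m^{1/p^e})$ is an ideal, and the two identities $(\mathfrak m^{1/p^e})^j=(\mathfrak m^j)^{1/p^e}$, $(\mathfrak m^{[p^e]})^{1/p^e}=\mathfrak m\cdot A^{1/p^e}$) checks out. What your approach buys is an argument that stays entirely inside the local ring, avoiding completion and the Cohen Structure Theorem, and it is conceptually a finiteness/pigeonhole statement rather than an explicit coordinate computation; what the paper's Fedder argument buys is a very explicit description of the splitting in terms of a monomial of a Fedder element, which can be useful elsewhere. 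Both routes rely equally on \autoref{lem.PreFindCompatibleMInFPure} for the reduction to the local case (the locality of $A^{1/p^e}$ is what makes ``ideal not contained in $\mathfrak m$'' mean ``equals $A$'' in your proof), and for passing back to the non-local case, so there is no savings there. One small presentational suggestion: when you cite \autoref{lem.PreFindCompatibleMInFPure} as making the reduction ``harmless,'' it would be worth a sentence noting, as the paper does, that at primes $\mathfrak n\neq\mathfrak m$ one has $(M_{e,A})_{\mathfrak n}=\bigl(\Hom_A(A^{1/p^e},A)\bigr)_{\mathfrak n}$ and $F$-purity localizes, so surjectivity of evaluation-at-$1$ is automatic there.
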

\begin{proof}
We first assume that $A$ is a local ring.  In this case, it suffices to show that $N_{e,A}\neq M_{e,A}$. By \autoref{lem.PreFindCompatibleMInFPure}, we may assume that $A$ is complete since $\hat{A}$ is faithfully flat over $A$. By the Cohen Structure Theorem, there is a ring of formal power series $S=k\llbracket x_1,\dots,x_n \rrbracket$ over a coefficient field $k$ of $A$ with surjection $S\onto A$. Write $A=S/I$. Recall that $\Hom_S(S^{1/p^e},S)$ is generated (as an $S^{1/p^e}$-module) by $g:S^{1/p^e}\to S$ that sends $(x_1\cdots x_n)^{p^e-1 \over p^e}$ to 1 and other basis monomials (including 1) to zero. By Fedder's lemma \cite[Lemma 1.6]{FedderFPureRat}, each element of $\Hom_A(A^{1/p^e},A)$ has the form $f(u^{1/p^e}-)$ with $u\in (I^{[p^e]}:I)$. Since $A$ is $F$-pure, $(I^{[p^e]}:I)\nsubset \bn^{[p^e]}$ where $\bn=(x_1,\dots,x_n)$. Let $u$ be an element of $(I^{[p^e]}:I)\backslash \bn^{[p^e]}$. Then $u$ must have a monomial term $cx^{a_1}_1\cdots x^{a_n}_n$ with $c\in k$ and $a_i<p^e$ for each $i$. Choose
such a monomial appearing in $u$ with the least degree and still denote it by $cx^{a_1}_1\cdots x^{a_n}_n$. Then it is clear that $(x^{p^e-1-a_1}_1\cdots x^{p^e-1-a_n}_n)u-c(x_1\cdots x_n)^{p^e-1} \in \bn^{[p^e]}$. Now define $\alpha:A^{1/p^e}\to A$ by $\alpha(-)=g((x^{p^e-1-a_1}_1\cdots x^{p^e-1-a_n}_nu)^{1/p^e}-)$. Then $\alpha$ is in $M_{e,A}\backslash N_{e,A}$.  This completes the local case by \autoref{lem.PreFindCompatibleMInFPure}.

For the non-local case, we have the map $M_{e,A} \to A$ which is evaluation at 1.  Since the formation of $M_{e,A}$ commutes with localization, this map is surjective if and only if it is surjective locally.  The above work proves the result after localizing at $\bm$.  However, clearly $(M_{e,A})_{\bn} = \big(\Hom_A(A^{1/p^e}, A)\big)_{\bn}$ for prime ideals not equal to $\bm$.  The result follows.
\end{proof}

For the moment, we work sufficiently locally so that $X = \Spec R$ and $V = \Spec A$ are affine and that $L$ and $M$ are isomorphic to $R$ and $A$ respectively.
Continue to assume that $A$ is $F$-pure and quasi-Gorenstein, and fix a point $s \in \Spec A$.  Shrinking $V$ if necessary, choose a map $\alpha_s \in \Hom_A(A^{1/p^e}, A)$ which is surjective and which satisfies $\alpha(\bm_s^{1/p^e}) \subseteq \bm_{s}$ (whose existence is guaranteed by \autoref{splitting generator}).  We write $\alpha_s(\blank) = \Phi_A^e(z^{1/p^e} \cdot \blank)$ for some $z \in A$.  Note we have an induced map $\overline{\alpha_s} : (A/{\bm_s})^{1/p^e} \to A/\bm_s$.

We thus induce the following map $\beta : (R_{A^{1/p^{ne}}})^{1/p^e} \to R_{A^{1/p^{ne}}}$ defined by the rule $\beta(\blank) = \gamma(z^{1/p^{(n+1)e}} \cdot \blank)$.  Certainly using the notation of \autoref{eq.defnGamma},
\[
\begin{array}{rl}
& \beta(\bm_s^{1/p^{(n+1)e}} \cdot R^{1/p^e}_{A^{1/p^{(n+1)e}}}) \\
= & \gamma(z^{1/p^{(n+1)e}} \bm_s^{1/p^{(n+1)e}} \cdot R^{1/p^e}_{A^{1/p^{(n+1)e}}})\\
= & \Big(\phi' \circ \big( (\Phi_A^e)^{1/p^{ne}} \otimes_{A^{1/p^{ne}}} R^{1/p^e} \big)\Big) \Big( z^{1/p^{(n+1)e}} \bm_s^{1/p^{(n+1)e}} \cdot R^{1/p^e}_{A^{1/p^{(n+1)e}}}\Big)\\
= & \Big(\phi' \circ \big( (\alpha_s)^{1/p^{ne}} \otimes_{A^{1/p^{ne}}} R^{1/p^e} \big)\Big) \Big( \bm_s^{1/p^{(n+1)e}} \cdot R^{1/p^e}_{A^{1/p^{(n+1)e}}}\Big)\\
\subseteq & \phi'(\bm_s^{1/p^{ne}} \cdot R^{1/p^e}_{A^{1/p^{ne}}})\\
\subseteq & \bm_s^{1/p^{ne}} R_{A^{1/p^{ne}}}.
\end{array}
\]
And hence we induce a map
\[
\overline{\beta} : R^{1/p^e}_{(A/\bm_s)^{1/p^{(n+1)e}}} \to R_{(A/\bm_s)^{1/p^{ne}}}.
\]
\begin{lemma}
\label{lem.FAdjunctionDeltaOnFibers}
With notation as above, $\Delta_{\overline{\beta}} = \Delta_{\gamma}|_{X_{s^{ne}}}$ where the divisors are induced as in \cite[Section 4]{BlickleSchwedeSurveyPMinusE}.
\end{lemma}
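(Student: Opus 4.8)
The statement is about divisors on the fibre $X_{s^{ne}}$, and such divisors are determined in relative codimension $1$, where $f$ is relatively Gorenstein. So I would first reduce to the situation where $X=\Spec R$ and $V=\Spec A$ are affine, the non-relatively-Gorenstein locus has been discarded, and $L$ and $M$ are trivial. Then $\phi(\blank)=\Phi(w^{1/p^e}\cdot\blank)$ for a generator $\Phi$ of $\sHom_{R_{A^{1/p^e}}}(R^{1/p^e},R_{A^{1/p^e}})$ and some $w\in R$, so that $\Delta_\phi=\tfrac{1}{p^e-1}\Div_X(w)$, and by \autoref{lem.ChoiceOfDeltaPhiEqualsDeltaGamma} (and its proof) $\Delta_\gamma=h^*\Delta_\phi=\tfrac{1}{p^e-1}\Div_{X_{V^{ne}}}(\bar w)$ where $\bar w$ is the image of $w$. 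Restricting along the inclusion of the fibre, $\Delta_\gamma|_{X_{s^{ne}}}$ is then the pullback of $\Delta_\phi$ to $X_{s^{ne}}$, which by \autoref{cor.RestrictDeltaAndPhiToFiberIsOk} and \autoref{lem.PullingBackDelta} is $\Delta_{\phi_s}=\Delta_\phi|_{X_s}$, viewed on $X_{s^{ne}}$.

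Next I would unwind $\overline{\beta}$ using the computation immediately preceding the lemma: $\beta=\phi'\circ\nu$ with $\nu:=\alpha_s^{1/p^{ne}}\otimes_{A^{1/p^e}}R^{1/p^e}$, and this factorization descends modulo $\bm_s$ to $\overline{\beta}=\overline{\phi'}\circ\overline{\nu}$, where $\overline{\phi'}$ is the reduction of $\phi'$ and $\overline{\nu}=\overline{\alpha_s}^{1/p^{ne}}\otimes_{A^{1/p^e}}R^{1/p^e}$ for the induced map $\overline{\alpha_s}\colon k(s)^{1/p^e}\to k(s)$. By \autoref{cor.RestrictDeltaAndPhiToFiberIsOk} together with \autoref{lem.PullingBackDelta}, $\overline{\phi'}$ is identified with the base change $\phi_s$ (further base-changed along the purely inseparable $k(s)^{1/p^e}\to k(s)^{1/p^{ne}}$, which does not alter the underlying space or divisor), so $\Delta_{\overline{\phi'}}$ corresponds to $\Delta_\phi|_{X_s}$. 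Since $\alpha_s$ was chosen surjective, $\alpha_s\notin N_{e,A}$, hence $\overline{\alpha_s}\neq 0$; using $F$-finiteness of $k(s)$, $\overline{\alpha_s}$ is therefore a generator of the rank-one free module $\Hom_{k(s)}(k(s)^{1/p^e},k(s))$, so $\overline{\nu}$ is surjective and is induced by a generator on the base.

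To conclude I would invoke the composition formalism for such maps (as in \cite[Section 4]{BlickleSchwedeSurveyPMinusE}, exactly as used in \autoref{lem.ChoiceOfDeltaPhiEqualsDeltaGamma}): composing $\overline{\phi'}$ with the generator-induced $\overline{\nu}$ leaves the associated divisor unchanged, so $\Delta_{\overline{\beta}}=\Delta_{\overline{\phi'}}$. Combining with the first paragraph gives $\Delta_{\overline{\beta}}=\Delta_\phi|_{X_s}=(h^*\Delta_\phi)|_{X_{s^{ne}}}=\Delta_\gamma|_{X_{s^{ne}}}$, which is the assertion. (Equivalently, one may phrase the whole argument as $F$-adjunction of \cite{SchwedeFAdjunction} applied to $\beta$ and the prime $\bm_s^{1/p^{ne}}R_{A^{1/p^{ne}}}$, identifying the $\beta$-different along $X_{s^{ne}}$.)

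The main obstacle is the last step, i.e.\ justifying rigorously that the twist by $z$ disappears upon passing to $\overline{\beta}$. One must resist the temptation to simply restrict $\Delta_\beta$ to $X_{s^{ne}}$: because $z\in\bm_s$, the divisor $\tfrac{1}{p^e-1}q_2^*\Div_{V^{ne}}(z^{1/p^{ne}})$ appearing in $\Delta_\beta=\Delta_\gamma+\tfrac{1}{p^e-1}q_2^*\Div_{V^{ne}}(z^{1/p^{ne}})$ contains the whole fibre $X_{s^{ne}}$ in its support, so naive restriction is meaningless. The content that saves the day is precisely that $\alpha_s\in M_{e,A}\setminus N_{e,A}$, which makes $\overline{\alpha_s}$ a generator and $\overline{\nu}$ divisor-neutral; the computation to carry out is to check this neutrality at the codimension-one points of $X_{s^{ne}}$ at which $f$ is smooth — there $\overline{\alpha_s}^{1/p^{ne}}$ admits a section, so that $\overline{\phi'}$ and $\overline{\beta}$ factor through one another — comparing local generators of the relevant $\Hom$-modules directly, and verifying that the only dualizing-sheaf identifications needed are the ones already recorded in \autoref{lem.BaseChangeOfTraceForCMMaps}, so that no regularity hypothesis on $V$ beyond quasi-Gorenstein is required.
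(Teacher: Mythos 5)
Your proposal is correct and follows essentially the same route as the paper: reduce to relative codimension $1$, factor $\overline{\beta}=\overline{\phi'}\circ\overline{\nu}$, observe that $\overline{\nu}$ is a generating map because $\alpha_s$ is surjective (so $\overline{\alpha_s}\neq 0$), invoke that $\overline{\phi'}$ behaves correctly under base change to the fiber (the paper cites \autoref{lem:zero_divisor_stay_zero_divisor}, you invoke \autoref{cor.RestrictDeltaAndPhiToFiberIsOk} and \autoref{lem.PullingBackDelta}), and conclude via the composition of generating maps. The paper simply reduces upfront to the case $\Delta_\gamma=0$, whereas you track the twisting element $w$ explicitly throughout and add a helpful warning that naively restricting $\Delta_\beta$ to $X_{s^{ne}}$ is meaningless because $z\in\bm_s$; these are presentational differences, not a distinct argument.
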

\begin{proof}
Since $\phi$ is relatively divisorial, by working in relative codimension 1, it suffices to show the result in the case that $\Delta_{\gamma}$ (and so $\Delta_{\phi}$) is trivial (since locally $\phi$ or $\psi$ is a generator pre-multiplied by the element which determines $\Delta_{\phi}$ or $\Delta_{\psi}$).  But now consider the composition constructing ${\beta}$.
\[
R^{1/p^e}_{A^{1/p^{(n+1)e}}} \xrightarrow{\cdot z^{1/p^{(n+1)e}} } R^{1/p^e}_{A^{1/p^{(n+1)e}}} \xrightarrow{\Phi'} R^{1/p^e}_{A^{1/p^{ne}}} \xrightarrow{\phi'} R_{A^{1/p^{ne}}}
\]
The composition of the first two maps sends $\bm^{1/p^{(n+1)e}} \cdot R^{1/p^e}_{A^{1/p^{(n+1)e}}}$ to $\bm^{1/p^{ne}} \cdot R^{1/p^e}_{A^{1/p^{ne}}}$ and modding out by these ideals induces a generating map for $\Hom_{R^{1/p^e}_{(A/\bm)^{1/p^{ne}}}}(R^{1/p^e}_{(A/\bm)^{1/p^{(n+1)e}}}, R^{1/p^e}_{(A/\bm)^{1/p^{ne}}})$ via the fact that $\alpha_s$ is surjective and so the induced map $(A/\bm)^{1/p^{(n+1)e}} \to (A/\bm)^{1/p^{ne}}$ is a generating map as well.  On the other hand, if $\phi$ is a generating map, so is $\phi' \tensor_{A^{1/p^{ne}}} (A/\bm)^{1/p^{ne}}$ by \autoref{lem:zero_divisor_stay_zero_divisor}.  Since the composition of two generating maps is a generating map, we are done.
\end{proof}

We now explain how the absolute $\sigma$ behaves when restricting to fibers.  First we need two lemmas.

\begin{lemma}
\label{lem.EasyContainmentOfNonFPureViaAdjunction}
With notation as above, and $v \in V$ a closed point.
\[
\sigma(X_{V^{ne}}, \Delta_{\gamma}) \cdot \O_{X_{v^{ne}}} = \sigma(X_{V^{ne}}, \gamma) \cdot \O_{X_{v^{ne}}} \supseteq \sigma(X_{v^{ne}}, \overline{\beta}) = \sigma(X_{v^{ne}}, \Delta_{\overline{\beta}}) = \sigma(X_{v^{ne}}, \Delta_{\gamma}|_{X_{v^{ne}}}).
\]
\end{lemma}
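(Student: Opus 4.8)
The plan is to treat the claimed chain as three equalities together with a single nontrivial containment in the middle. Throughout I would stay in the affine local situation already in force, so that $X=\Spec R$, $V=\Spec A$, $L\cong R$, $M\cong A$, $N\cong R_{A^{1/p^{ne}}}$, and $\gamma$, the closed point $v$ (called $s$ in the construction above), the surjective map $\alpha_s(\blank)=\Phi^e_A(z^{1/p^e}\cdot\blank)$ with $\alpha_s(\bm_v^{1/p^e})\subseteq\bm_v$, the map $\beta(\blank)=\gamma(z^{1/p^{(n+1)e}}\cdot\blank)\colon R^{1/p^e}_{A^{1/p^{(n+1)e}}}\to R_{A^{1/p^{ne}}}$, and its reduction $\overline\beta\colon R^{1/p^e}_{(A/\bm_v)^{1/p^{(n+1)e}}}\to R_{(A/\bm_v)^{1/p^{ne}}}$ are as constructed. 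The equalities $\sigma(X_{V^{ne}},\Delta_\gamma)=\sigma(X_{V^{ne}},\gamma)$ and $\sigma(X_{v^{ne}},\overline\beta)=\sigma(X_{v^{ne}},\Delta_{\overline\beta})$ hold by the construction of the divisor associated to such a map in \cite[Section 4]{BlickleSchwedeSurveyPMinusE}, and $\sigma(X_{v^{ne}},\Delta_{\overline\beta})=\sigma(X_{v^{ne}},\Delta_\gamma|_{X_{v^{ne}}})$ is exactly \autoref{lem.FAdjunctionDeltaOnFibers}. So it remains to prove $\sigma(X_{V^{ne}},\gamma)\cdot\O_{X_{v^{ne}}}\supseteq\sigma(X_{v^{ne}},\overline\beta)$.

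For this, first I would invoke the Hartshorne--Speiser--Lyubeznik--Gabber theorem \cite[Lemma 13.1]{Gabber.tStruc} to fix an integer $M$ large enough that both the decreasing chain $\{\Image(\gamma^m)\}_m$ in $R_{A^{1/p^{ne}}}$ and the decreasing chain $\{\Image((\overline\beta)^m)\}_m$ in $R_{(A/\bm_v)^{1/p^{ne}}}$ (on $X_{v^{ne}}$, which is of finite type over the $F$-finite residue field $k(v^{ne})$) have stabilized at step $M$, i.e.\ equal $\sigma(X_{V^{ne}},\gamma)$ and $\sigma(X_{v^{ne}},\overline\beta)$ respectively. Next, the inclusion $\beta\big(\bm_v^{1/p^{(n+1)e}}\cdot R^{1/p^e}_{A^{1/p^{(n+1)e}}}\big)\subseteq\bm_v^{1/p^{ne}}R_{A^{1/p^{ne}}}$ checked immediately before the statement iterates through the self-composition: the same bookkeeping used to build the iterated maps (compare \autoref{lem.BaseChangeForPhin} and \cite{SchwedeFAdjunction}) shows that $\beta^m$ sends $\bm_v\cdot(\text{source of }\beta^m)$ into $\bm_vR_{A^{1/p^{ne}}}$ for all $m$ and that the map it induces modulo $\bm_v$ is precisely $(\overline\beta)^m$. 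Hence for every $m$
\[
\Image\big((\overline\beta)^m\big)=\big(\Image(\beta^m)+\bm_vR_{A^{1/p^{ne}}}\big)\big/\bm_vR_{A^{1/p^{ne}}}=\Image(\beta^m)\cdot\O_{X_{v^{ne}}}.
\]

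Finally, since $\beta(\blank)=\gamma(z^{1/p^{(n+1)e}}\cdot\blank)$, the computation in the proof of \autoref{lem:composition} yielding \eqref{eq:composition:powers_of_r} applies unchanged and shows that $\beta^M$ is $\gamma^M$ precomposed with multiplication by the fixed element $\big(z^{(p^{Me}-1)/(p^e-1)}\big)^{1/p^{(n+M)e}}$; in particular $\Image(\beta^M)\subseteq\Image(\gamma^M)$. Combining the last two displays,
\[
\sigma(X_{v^{ne}},\overline\beta)=\Image\big((\overline\beta)^M\big)=\Image(\beta^M)\cdot\O_{X_{v^{ne}}}\subseteq\Image(\gamma^M)\cdot\O_{X_{v^{ne}}}=\sigma(X_{V^{ne}},\gamma)\cdot\O_{X_{v^{ne}}},
\]
which completes the argument. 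I expect the only genuinely delicate point to be the claim that the reduction modulo $\bm_v$ of the iterate $\beta^m$ coincides with the iterate $(\overline\beta)^m$ of $\overline\beta$ --- that is, that forming self-compositions commutes with restriction to the closed fiber through the tower of Frobenius-twisted coefficient rings $A^{1/p^{je}}$. This compatibility is exactly what the choice of $\alpha_s$ with $\alpha_s(\bm_v^{1/p^e})\subseteq\bm_v$ (via \autoref{splitting generator}) was arranged to make possible, and it is handled in the same way as the earlier base-change compatibilities such as \autoref{lem.BaseChangeForPhin}; everything else is formal.
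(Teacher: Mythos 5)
Your proof is correct and takes essentially the same route as the paper. The paper's proof of this lemma is two lines: it says "Obviously $\sigma(X_{V^{ne}},\gamma)\supseteq\sigma(X_{V^{ne}},\beta)$" and then that "$\sigma(X_{V^{ne}},\beta)$ restricts to $\sigma(X_{v^{ne}},\overline\beta)$ by $F$-adjunction." What you have done is unpack both steps by hand: your observation that $\beta^M$ is $\gamma^M$ precomposed with multiplication by a fixed $p^{(n+M)e}$-th root of a power of $z$ is exactly why the first containment is "obvious," and your analysis via HSL-G stabilization, compatibility of $\beta$ with $\bm_v$, and commutation of iteration with reduction modulo $\bm_v$ is precisely the content behind the "$F$-adjunction" appeal. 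The one point you flag as delicate — that $(\overline\beta)^m$ coincides with $\beta^m$ reduced modulo $\bm_v$ — is indeed the substance of $F$-adjunction; it holds for the reason you give (this is the same kind of compatibility as \autoref{lem.BaseChangeForPhin}). So this is not a genuinely different argument, just a spelled-out version of the paper's terse one, and it is sound.
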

\begin{proof}
Obviously $\sigma(X_{V^{ne}}, \gamma) \supseteq \sigma(X_{V^{ne}}, \beta)$.  On the other hand, it follows easily that $\sigma(X_{V^{ne}}, \beta)$ restricts to $\sigma(X_{v^{ne}}, \overline{\beta})$ by $F$-adjunction, see for instance \cite{FujinoSchwedeTakagiSupplements}.
\end{proof}

The next lemma is a generalization of the fact that for an ideal in a regular ring, $I \subseteq (I^{[1/p]})^{[p]}$ using the ${\bullet}^{[1/p]}$ notation from \cite{BlickleMustataSmithDiscretenessAndRationalityOfFThresholds}.

\begin{lemma}
\label{lem:pushforward_pullback}
Let $B$ be an $F$-finite regular ring such that $B^{1/p}$ is a free $B$-module and that \mbox{$\Hom_B(B^{1/p}, B) \cong B^{1/p}$} (for example this happens if $B$ is local).  Further suppose that $Q$ is a $B^{1/p^a}$ submodule of $B^{1/p^a} \otimes_B P$ for some $B$-module $P$,
\[
Q \subseteq B^{1/p^a} \otimes_B P.
\]
Then $Q \subseteq B^{1/p^{a}} \otimes_B \big( (\theta \otimes \id_P )(Q)\big)$ where $\theta \in \Hom_B(B^{1/p^{a}},B)$ is the generator.
\end{lemma}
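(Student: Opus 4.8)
The plan is to reduce the statement to a dual-basis computation on the finite free $B$-module $B^{1/p^a}$. First I would check that the two hypotheses on $B^{1/p}$ ascend to $B^{1/p^a}$: since $B$ is $F$-finite, each $B^{1/p^i}$ is module-finite over $B$, and iterating the freeness of $B^{1/p}$ over $B$ (through the evident ring isomorphisms $B\cong B^{1/p}\cong\cdots$) shows $B^{1/p^a}$ is finite \emph{free} over $B$; likewise $\Hom_B(B^{1/p^a},B)$ is free of rank one over $B^{1/p^a}$ — apply tensor--hom adjunction, or Grothendieck duality for the finite maps $B\to B^{1/p}\to\cdots\to B^{1/p^a}$, to the $a=1$ hypothesis — and $\theta$ is, up to a unit of $B^{1/p^a}$ (which does not affect $(\theta\otimes\id_P)(Q)$ since $Q$ is a $B^{1/p^a}$-module), a generator. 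Write $M:=B^{1/p^a}$, fix a $B$-basis $e_1,\dots,e_r$ of $M$, and let $e_1^*,\dots,e_r^*\in\Hom_B(M,B)$ be the $B$-dual basis, so $e_i^*(e_j)=\delta_{ij}$.

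Next, because $\theta$ generates $\Hom_B(M,B)$ as an $M$-module, with $M$ acting on a functional by premultiplying its argument, I would write each dual basis element as $e_i^*(\,\blank\,)=\theta(m_i\cdot\blank\,)$ for suitable $m_i\in M$, so that the dual-basis relations read $\theta(m_ie_j)=\delta_{ij}$. Now take any $q\in Q$ and expand it uniquely as $q=\sum_{j=1}^r e_j\otimes p_j$ with $p_j\in P$ (valid since $M$ is $B$-free, whence $M\otimes_B P=\bigoplus_j e_j\otimes P$). For each $i$ the element $m_iq$ still lies in $Q$, since $Q$ is a $B^{1/p^a}$-submodule, and applying $\theta\otimes\id_P$ to it gives
\[
(\theta\otimes\id_P)(m_iq)=\sum_{j=1}^r\theta(m_ie_j)\,p_j=p_i .
\]
Hence every coordinate $p_i$ belongs to $(\theta\otimes\id_P)(Q)$, so $q\in\bigoplus_i e_i\otimes(\theta\otimes\id_P)(Q)$, which is precisely the image of $(\theta\otimes\id_P)(Q)\otimes_B B^{1/p^a}$ inside $B^{1/p^a}\otimes_B P$ (this image being a submodule because $B^{1/p^a}$ is $B$-flat). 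This yields $Q\subseteq(\theta\otimes\id_P)(Q)\otimes_B B^{1/p^a}$.

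The only delicate point — the main obstacle — is the bookkeeping in the first step: verifying that the hypotheses "$B^{1/p}$ free" and "$\Hom_B(B^{1/p},B)\cong B^{1/p}$" really do propagate to all the $B^{1/p^a}$, and that the map $\theta$ in the statement is exactly the rank-one generator, since the whole argument rests on having a usable dual basis together with the action of $\theta$ by premultiplication. Once this is in place, the displayed computation finishes the proof immediately.
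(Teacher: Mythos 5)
Your proof is correct and is essentially the same as the paper's: both fix a $B$-basis of the free module $B^{1/p^a}$, use the fact that $\theta$ generates $\Hom_B(B^{1/p^a},B)$ so the dual-basis projections are premultiplication-multiples of $\theta$, multiply an element of $Q$ by these premultipliers to stay inside $Q$, and apply $\theta\otimes\id_P$ to extract each coordinate, placing it in $(\theta\otimes\id_P)(Q)$. The only difference is that you spell out the ascent of the hypotheses from $B^{1/p}$ to $B^{1/p^a}$, whereas the paper obtains the basis directly from Kunz's theorem; both routes are fine.
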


\begin{proof}
The strategy is similar to \cite[Proposition 2.5]{BlickleMustataSmithDiscretenessAndRationalityOfFThresholds}.
By \cite[Theorem 2.1]{KunzCharacterizationsOfRegularLocalRings}, any element of $Q$ can be written as a finite sum $\sum \lambda_i \otimes m_i$, where $\{ \lambda_i \}$ form a basis for $B^{1/p^{a}}$ over $B$ and $m_i \in P$.  Now since  $\theta$ is a local generating map of $\Hom_B(B^{1/p^{a}},B)$, the projection maps onto the $\lambda_i$ are multiples of $\theta$.  In other words, for each $\lambda_i$ there exists a $u_i \in B^{1/p^{a}}$ such that both $\theta(u_i \lambda_i) = 1 \in R$ and $\theta(u_i \lambda_j) = 0 \in R$ for $j \neq i$.

We observe that each $m_j$ is then in $(\theta \otimes \id_P )(Q)$ since $\theta(u_j  \sum \lambda_i \otimes m_i) = m_j$.  It follows then that $\sum \lambda_i \otimes m_i \in B^{1/p^{a}} \otimes_B \big( (\theta \otimes \id_P )(Q)\big)$.
\end{proof}

\begin{lemma}
\label{lem.EasyContainmentOfNonFPureViaFieldExtension}
Suppose that $A = k$ is an $F$-finite field and $K \supseteq k$ is a field extension of $k$ such that $K$ is perfect.  Choose $X = \Spec R \to V = \Spec A = \Spec k$ a flat map of finite type with $X \to V$ possessing geometrically reduced, G1 and S2 fibers.  Additionally suppose that $\gamma : R^{1/p^e} \to R$ is any $R$-linear map which is a local generator of $\Hom_R(R^{1/p^e}, R)$ at the generic points of the codimension-1 components of the non-smooth locus of $X$.  Then, setting $\Delta_{\gamma}$ as in \cite[Section 4]{BlickleSchwedeSurveyPMinusE}.
\[
\sigma(X, \Delta_\gamma) \otimes_{k} K \supseteq \sigma(X_K, (\Delta_\gamma) \times_k K)
\]
\end{lemma}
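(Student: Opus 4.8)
The plan is to use that $\sigma(X,\Delta_\gamma)$ is, for large exponents, a \emph{single} image $\Image(\gamma^N)$, so that it commutes with the (faithfully flat, hence exact) base change $-\otimes_k K$; the resulting ideal will be $\Image\big((\gamma_K)^N\big)$, which manifestly contains $\sigma(X_K,\Delta_{\gamma_K})$. To begin, since $k$ is $F$-finite and $R$ is of finite type over $k$, the ring $R$ is $F$-finite, so by the absolute Hartshorne--Speiser--Lyubeznik--Gabber theorem \cite[Lemma 13.1]{Gabber.tStruc} the descending chain of ideals $\Image(\gamma^n)\subseteq R$ stabilises; fix $N$ with $\sigma(X,\Delta_\gamma)=\Image(\gamma^n)$ for all $n\ge N$. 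On the other side one only needs the trivial inclusion $\sigma(X_K,\gamma_K)=\bigcap_n\Image\big((\gamma_K)^n\big)\subseteq\Image\big((\gamma_K)^N\big)$, so no stabilisation over $K$ is required.

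Next I would carry out the base-change bookkeeping, which is the only place perfectness of $K$ is used. Because $K$ is perfect, $\lambda\mapsto\lambda^{p^e}$ is a bijection of $K$, and this yields, for every $R$-module $M$, a canonical $R_K$-linear isomorphism
\[
\Theta^e_M\colon (F^e_*M)\otimes_k K\ \xrightarrow{\ \sim\ }\ F^e_*(M\otimes_k K),\qquad m\otimes\lambda\longmapsto (m\otimes\lambda^{p^e})^{1/p^e},
\]
natural in $M$; one checks directly that $\Theta^e_M$ is well defined, $R_K$-linear and bijective. (The naive map induced by the identity on underlying abelian groups is \emph{not} $R_K$-linear, which is why the twist is needed.) Transporting $\gamma^n\otimes_k K$ along the maps $\Theta^{\bullet}$ identifies it with $(\gamma_K)^n$ — this is the absolute analogue of \autoref{lem.BaseChangeForPhin}, the only thing to verify being the compatibility of $\Theta^{ne}$ with the factorisation $F^{ne}_*=F^e_*\circ\cdots\circ F^e_*$, a direct unwinding. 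Hence, using right-exactness of $\otimes$,
\[
\sigma(X,\Delta_\gamma)\otimes_k K=\Image(\gamma^N)\otimes_k K=\Image\big(\gamma^N\otimes_k K\big)=\Image\big((\gamma_K)^N\big)\ \supseteq\ \sigma(X_K,\gamma_K).
\]

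It remains to match the divisors. Since $\Omega_{X_K/K}=\Omega_{X/k}\otimes_k K$ and $X_K\to X$ is faithfully flat with $\dim X_K=\dim X$, the codimension-one part of the non-smooth locus of $X_K/K$ is the preimage of that of $X/k$; and since $\Hom_R(F^e_*R,R)$ is coherent it commutes with the flat base change $k\to K$, so $\gamma_K$ is again a local generator at the generic points of the codimension-one components of the non-smooth locus of $X_K$. Thus $\Delta_{\gamma_K}$ is defined as in \autoref{def.PhiAsDivisor}, and working in relative codimension $1$ — where after localising $f$ is Gorenstein and $\gamma$ is $\Tr$ premultiplied by a function — \autoref{lem.BaseChangeOfTraceForCMMaps} (equivalently \autoref{lem.PullingBackDelta}) gives $\Delta_{\gamma_K}=(\Delta_\gamma)\times_k K$, so $\sigma(X_K,\gamma_K)=\sigma\big(X_K,(\Delta_\gamma)\times_k K\big)$ and the proof is complete. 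The only genuinely delicate point is the middle step: one must use perfectness of $K$ essentially and keep the iterated compositions of Frobenius straight; everything else — right-exactness, stabilisation of $\Image(\gamma^n)$ over $X$, and the codimension-one computation of the boundary — is formal.
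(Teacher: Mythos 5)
Your argument breaks at the construction of $\Theta^e_M$. The map
\[
\Theta^e_M\colon (F^e_*M)\otimes_k K \longrightarrow F^e_*(M\otimes_k K),\qquad m\otimes\lambda\longmapsto \bigl(m\otimes\lambda^{p^e}\bigr)^{1/p^e},
\]
is indeed well defined, $R_K$-linear, and \emph{surjective}, but it is \emph{not} injective when $k$ is imperfect, so it is not an isomorphism. Already for $M=k$ one sees this by counting ranks: $(F^e_*k)\otimes_k K\cong k^{1/p^e}\otimes_k K$ has $K$-dimension $[k^{1/p^e}:k]$, whereas $F^e_*(k\otimes_k K)=F^e_*K\cong K$ has $K$-dimension $1$ because $K$ is perfect. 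Concretely, for $k=\bF_p(t)$ and $e=1$, the element $t^{1/p}\otimes 1 - 1\otimes t^{1/p}$ is a nonzero element of $k^{1/p}\otimes_k K\cong K[y]/(y-t^{1/p})^p$ mapped to $0$. The issue is exactly the one you flagged parenthetically: the relation $m\otimes c\lambda = (c^{1/p^e}m)\otimes\lambda$ would be needed to define the inverse, but $c^{1/p^e}$ need not lie in $k$.

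This is not merely a cosmetic slip, because your reduction hinges on ``transporting $\gamma^N\otimes_k K$ along $\Theta^\bullet$.'' For that one needs $\gamma^N\otimes_k K$ to kill the kernel of $\Theta^{Ne}$, i.e., a factorisation $\gamma^N\otimes_k K = \gamma_K^N\circ\Theta^{Ne}$ -- and this factorisation \emph{fails}, already for $\gamma=\Tr$. Taking $R=k[x]$, $k=\bF_p(t)$, $e=1$, the element $\xi=(t^{(p-1)/p}x^{(p-1)/p})\otimes 1 - x^{(p-1)/p}\otimes t^{(p-1)/p}$ lies in $\ker\Theta$ but $(\Tr\otimes_kK)(\xi)=1\otimes 1\neq 0$. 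So $\gamma_K$ is simply not obtainable by descent from $\gamma\otimes_kK$, and the chain $\Image(\gamma^N)\otimes_kK=\Image(\gamma^N\otimes_kK)=\Image\bigl((\gamma_K)^N\bigr)$ is unjustified. This is precisely the obstruction the paper's proof circumvents by \emph{first} factorising $\gamma^n=\theta\circ\beta$ through the relative Frobenius into $R_{k^{1/p^{ne}}}$: the map $\beta$ is $R_{k^{1/p^{ne}}}$-linear (hence $k^{1/p^{ne}}$-linear), so it base-changes along $k^{1/p^{ne}}\hookrightarrow K$ with no kernel issue, and then Lemma~\ref{lem:pushforward_pullback} provides the needed inclusion $\gamma^n(R^{1/p^{ne}})\otimes_k k^{1/p^{ne}}\supseteq\beta(R^{1/p^{ne}})$, yielding a containment rather than an equality. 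An attempt to stay entirely ``absolute'' -- working with $\gamma^n:R^{1/p^{ne}}\to R$ and tensoring over $k$ -- does not bypass the relative Frobenius; it runs straight into it.
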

\begin{proof}
First choose $n > 0$ such that $\sigma(X, \Delta_{\gamma}) = \gamma^n(R^{1/p^{ne}})$.  Since $\Hom_k(k^{1/p^{ne}}, k)$ is a free $k^{1/p^{ne}}$-module of rank 1, by \cite[Appendix F]{KunzKahlerDifferentials} we can factor $\gamma^n$ as
\[
\xymatrix{R^{1/p^{ne}} \ar@/_2pc/[rr]_-{\gamma^n} \ar[r]^-{\beta} & R_{k^{1/p^{ne}}} \ar[r]^-{\theta} & R_k}
\]
with $\theta$ a local generator of $\sHom_{R_k}(R_{k^{1/p^{ne}}}, R_k)$
and so that $\Delta_{\beta}$ coincides with $\Delta_{\gamma}$.  To see this last point, notice that if $\gamma$ is a local generator of $\Hom_R(R^{1/p^e}, R)$, then so is $\gamma^n$ of $\Hom_R(R^{1/p^{ne}}, R)$ and hence $\beta$ is also a local generator of $\Hom_{R_{A^{1/p^{ne}}}}(R^{1/p^{ne}}, R_{A^{1/p^{ne}}})$.  More generally then, if $\gamma^n$ is a local generator pre-multiplied by some $g^{1/p^{ne}} \in R^{1/p^{ne}}$ (working in codimension 1) then $\beta$ is also a generator times $g^{1/p^{ne}}$ (possibly times a unit depending on our choice of $\theta$).  This proves that $\Delta_{\beta} = \Delta_{\gamma}$ as asserted.

We now observe that $\gamma^n(R^{1/p^{ne}}) \tensor_k k^{1/p^{ne}} = \theta(\beta(R^{1/p^{ne}}) \tensor_k k^{1/p^{ne}} \supseteq \beta(R^{1/p^{ne}})$ by \autoref{lem:pushforward_pullback} (here $B = k$ and $P = \beta(R^{1/p^{ne}})$).
%
After embedding $k^{1/p^{ne}} \subseteq K$ we tensor $\beta$ by $K$ to obtain a map
\[
\beta_K : R^{1/p^{ne}} \tensor_{k^{1/p^{ne}}} K \to R_{k^{1/p^{ne}}} \tensor_{k^{1/p^{ne}}} K.
\]
We see immediately that $\Delta_{\beta_K} = (\Delta_\gamma) \times_k K$ (again by an argument about local generators).  Then, by \autoref{lem:pushforward_pullback},
\[
\sigma(X, \gamma) \tensor_k K \cong \gamma^n(R^{1/p^{ne}}) \tensor_k k^{1/p^{ne}} \tensor_{k^{1/p^{ne}}} K \supseteq \beta(R^{1/p^{ne}}) \tensor_{k^{1/p^{ne}}} K \supseteq \sigma(X, \beta_K).
\]
This completes the proof.
\end{proof}



\begin{theorem}
\label{cor.SigmaRestrictsToAllPoints}
With notation as in \autoref{conv.BaseIsVeryNice}, there exists an $N > 0$ such that for all $n > N$ we have $\sigma(X_{V^{1/p^{ne}}}, \Delta_{\gamma} )\cdot \O_{X_{s^{ne}}} = \sigma(X_{s^{ne}}, (\Delta_{\gamma}) |_{X_{s^{ne}}})$ for all perfect points $s \in V$.
\end{theorem}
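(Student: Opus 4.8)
The plan is to deduce the two inclusions from the lemmas established above, without any further Noetherian induction beyond the one already carried out in the proof of \autoref{thm.UniversalNForAllClosedFibers}. Fix the integer $N$ produced by \autoref{thm.UniversalNForAllClosedFibers} (enlarging it if necessary so that also $N>n_0=n_{\sigma(\phi),k(V)}$), fix $n>N$, and fix a perfect point $s\in V$ lying over a point $v\in V$.

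To prove ``$\supseteq$'', I would first localize $V$ at $v$. This preserves every hypothesis of \autoref{conv.BaseIsVeryNice} --- $F$-finiteness, $F$-purity, quasi-Gorensteinness and integrality all localize, and the generation conditions making $\phi$ relatively divisorial are imposed only at generic points and codimension-$1$ singular points of fibers, hence survive localization of the base --- and it is compatible with the formation of $\gamma$, of the auxiliary map $\overline\beta$ of \autoref{lem.FAdjunctionDeltaOnFibers}, and of the ideals $\sigma$. So we may assume $v$ is a closed point, and \autoref{lem.EasyContainmentOfNonFPureViaAdjunction} gives $\sigma(X_{V^{ne}},\Delta_\gamma)\cdot\O_{X_{v^{ne}}}\supseteq\sigma(X_{v^{ne}},\Delta_\gamma|_{X_{v^{ne}}})$. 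Since $s^{ne}$ factors through $v^{ne}$ and $k(s)\supseteq k(v^{ne})$ is a perfect (and $F$-finite) field extension, I would then apply \autoref{lem.EasyContainmentOfNonFPureViaFieldExtension} to the family $X_{v^{ne}}\to\Spec k(v^{ne})$, whose fibers are geometrically reduced, G1 and S2 because those of $f$ are and on which the map inducing $\Delta_\gamma|_{X_{v^{ne}}}$ is a local generator along the codimension-$1$ singular locus of the geometric fibers because $\phi$, hence $\gamma$, is relatively divisorial; this yields $\sigma(X_{v^{ne}},\Delta_\gamma|_{X_{v^{ne}}})\otimes_{k(v^{ne})}k(s)\supseteq\sigma(X_{s^{ne}},\Delta_\gamma|_{X_{s^{ne}}})$, using that $\Delta_\gamma|_{X_{v^{ne}}}\times_{k(v^{ne})}k(s)=\Delta_\gamma|_{X_{s^{ne}}}$. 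Composing these containments along $\O_{X_{V^{ne}}}\to\O_{X_{v^{ne}}}\to\O_{X_{s^{ne}}}$ and invoking transitivity of base change gives ``$\supseteq$'', with no restriction on $n$.

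For ``$\subseteq$'', recall that by the Hartshorne--Speiser--Lyubeznik--Gabber theorem the descending chain of ideal sheaves $\Image(\gamma^m)\subseteq\O_{X_{V^{ne}}}$ stabilizes, its limit being the stable image $\sigma(X_{V^{ne}},\gamma)=\sigma(X_{V^{ne}},\Delta_\gamma)$; since $\Image(\gamma^n)=\ba_n=\sigma_n(X/V,\phi)$ by \autoref{lem.FactorizationOfSigmaImages} and the chain is descending, this gives the containment of subsheaves $\sigma(X_{V^{ne}},\Delta_\gamma)\subseteq\sigma_n(X/V,\phi)$ inside $\O_{X_{V^{ne}}}$. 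Restricting this containment to the fiber over $s^{ne}$ and using \autoref{thm.UniversalNForAllClosedFibers} (valid because $n>N$) yields $\sigma(X_{V^{ne}},\Delta_\gamma)\cdot\O_{X_{s^{ne}}}\subseteq\sigma_n(X/V,\phi)\cdot\O_{X_{s^{ne}}}=\sigma(X_s,\psi_s)$. Finally $\sigma(X_s,\psi_s)=\sigma(X_{s^{ne}},\Delta_\gamma|_{X_{s^{ne}}})$, since $\Delta_\gamma=h^*\Delta_\phi$ by \autoref{lem.ChoiceOfDeltaPhiEqualsDeltaGamma}, so under the isomorphism $X_{s^{ne}}\cong X_s$ (available because $k(s)$ is perfect) the divisor $\Delta_\gamma|_{X_{s^{ne}}}$ corresponds to $\Delta_\phi|_{X_s}$, which equals $\Delta_{\psi_s}$ by \autoref{lem.DeltaPhiRestrictedEqualsDeltaPsi}. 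Combining the two inclusions proves the equality.

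I expect the ``$\supseteq$'' direction to be the main obstacle, not because of any depth but because it requires carefully threading restriction to the fiber over a possibly non-closed point $v$ through the perfect field extension $k(v^{ne})\subseteq k(s)$ and verifying at each stage that the hypotheses of \autoref{lem.EasyContainmentOfNonFPureViaAdjunction} and \autoref{lem.EasyContainmentOfNonFPureViaFieldExtension} really hold --- closedness of $v$ after localization, $F$-finiteness of the residue fields, and the correct generation property of $\Delta_\gamma$ on the intermediate fibers. Once \autoref{lem.FactorizationOfSigmaImages} and \autoref{thm.UniversalNForAllClosedFibers} are available the ``$\subseteq$'' direction is essentially formal.
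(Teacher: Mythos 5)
Your proposal is correct and follows essentially the same route as the paper's own proof: reduce ``$\subseteq$'' to \autoref{lem.FactorizationOfSigmaImages} together with the restriction theorem for the relative $\sigma_n$, and establish ``$\supseteq$'' by localizing $V$ at $v$ and composing \autoref{lem.EasyContainmentOfNonFPureViaAdjunction} with \autoref{lem.EasyContainmentOfNonFPureViaFieldExtension}. The only cosmetic difference is that you cite \autoref{thm.UniversalNForAllClosedFibers} where the paper cites \autoref{thm.BaseChangeSigmaGeneral}; since the paper needs the stabilization step too, your citation is arguably the cleaner one, and your explicit verification that the hypotheses of the two lemmas survive localization of the base is a useful bit of bookkeeping that the paper leaves implicit.
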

\begin{proof}
The statement is local, so we may assume $X$ and $V$ are affine and that $L$ and $M$ are trivial just as above in \autoref{lem.FAdjunctionDeltaOnFibers}.
We know
\[
\sigma(X_{V^{1/p^{ne}}}, \gamma^n) \subseteq \Image(\gamma^n) = \sigma_n(X/V, \phi)
 \]
by \autoref{lem.FactorizationOfSigmaImages}.  On the other hand, by \autoref{thm.BaseChangeSigmaGeneral}, we have $\sigma_n(X/V, \phi)\cdot \O_{X_s} = \sigma(X_s, \phi_s)$.  Therefore, it is sufficient to show that $\sigma(X_{V^{1/p^{ne}}}, \gamma)\cdot \O_{X_{s^{ne}}} \supseteq \sigma(X_{s^{ne}}, \psi_{s^{ne}})$.

For this we may assume that $V$ is the spectrum of an $F$-Finite $F$-pure local ring and that $\Spec K = s \mapsto V$ has image the closed point $\Spec k = v \in V$.  
We first see that $\sigma(X_{V^{ne}}, \Delta_\gamma) \cdot \O_{X_{v^{ne}}} \supseteq \sigma(X_{v^{ne}}, \Delta_{\overline{\beta}}) = \sigma(X_{v^{ne}}, \Delta_\gamma|_{X_{v^{ne}}})$ by and using the notation of \autoref{lem.EasyContainmentOfNonFPureViaAdjunction}. On the other hand, by \autoref{lem.EasyContainmentOfNonFPureViaFieldExtension}
\[
\sigma(X_{v^{ne}}, \Delta_\gamma|_{X_{v^{ne}}}) \cdot \O_{X_{s^{ne}}} = \sigma(X_{v^{ne}}, \Delta_{\overline{\beta}}) \cdot \O_{X_{s^{ne}}} \supseteq \sigma(X_{s^{ne}}, \Delta_{\overline{\beta}} \times_{v^{ne}} s^{ne}) = \sigma(X_{s^{ne}}, \Delta_{\gamma}|_{X_{s^{ne}}}).
\]
This completes the proof.
%
\end{proof}
Using the same method as \autoref{lem.EasyContainmentOfNonFPureViaAdjunction}, we also obtain the following result which may also be of independent interest.

\begin{proposition}
\label{prop.SigmaUnderBaseChange}
Assume that $f : X \to V$ is a flat finite type reduced G1 and S2 morphism and that $V$ is regular.  Additionally assume that $\gamma : R^{1/p^e} \to R$ is an $R$-linear map which is a local generator at the generic points of the codimension-1 components of the non-smooth locus of $f$ for \emph{each} fiber of $f$.  Then for any $a > 0$ we have
\[
\sigma(X, \Delta_\gamma) \otimes_A A^{1/p^{a}} \supseteq \sigma(X_{V^{a}}, \Delta_{\gamma} \times_V V^{a}).
\]
\end{proposition}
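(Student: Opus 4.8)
The plan is to adapt the proof of \autoref{lem.EasyContainmentOfNonFPureViaFieldExtension}, replacing the perfect field extension $k\subseteq K$ used there by the extension $A\subseteq A^{1/p^a}$. Since $A^{1/p^a}$ is not perfect, the base-changed map will remain a \emph{relative} $p^{-e}$-linear map of the kind in \autoref{def:D_phi_Delta_phi} rather than collapsing to an absolute one, so an extra generator must be composed in to recover an absolute iterate on $X_{V^a}$. As in that lemma, the assertion is local on $X$ and $V$ and may be checked after deleting a closed set of relative codimension $\geq 2$ (all the sheaves involved are relatively S2, \autoref{cor:relative_S_2_morphism_reflexive_has_extension_property}); so I would assume $X=\Spec R$ and $V=\Spec A$ are affine with $A$ regular, $F$-finite and local, the line bundle trivial, and $f$ a Gorenstein morphism. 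Then each $A^{1/p^i}$ is regular, finite free over $A$, and has $\Hom_A(A^{1/p^i},A)$ free of rank one over $A^{1/p^i}$; moreover $X$ and $X_{V^a}$ are absolutely Gorenstein. Since $(p^e-1)(K_X+\Delta_\gamma)$ is Cartier and $V$ is regular, $(p^e-1)(K_{X_{V^a}}+\Delta_\gamma\times_V V^a)$ is again Cartier, so there is an absolute $p^{-e}$-linear map $\gamma_{[a]}$ on $X_{V^a}$ with $\Delta_{\gamma_{[a]}}=\Delta_\gamma\times_V V^a$, and $\sigma(X_{V^a},\Delta_\gamma\times_V V^a)=\sigma(X_{V^a},\gamma_{[a]})$.

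By the Hartshorne-Speiser-Lyubeznik-Gabber theorem, I would then fix an integer $N$ large enough that $\gamma^N(R^{1/p^{Ne}})=\sigma(X,\Delta_\gamma)$ and $\gamma_{[a]}^{N}\big((R_{A^{1/p^a}})^{1/p^{Ne}}\big)=\sigma(X_{V^a},\Delta_\gamma\times_V V^a)$. Using transitivity of the trace for $R\to R_{A^{1/p^{Ne}}}\to R^{1/p^{Ne}}$ (\cite[Appendix F]{KunzKahlerDifferentials}), exactly as in \autoref{lem.EasyContainmentOfNonFPureViaFieldExtension}, I would factor $\gamma^N=(\theta\otimes\id_R)\circ\beta$, where $\theta$ generates $\Hom_A(A^{1/p^{Ne}},A)$ and $\beta\colon R^{1/p^{Ne}}\to R_{A^{1/p^{Ne}}}$ is an $R_{A^{1/p^{Ne}}}$-linear map with $\Delta_\beta=\Delta_\gamma$ (and $\beta$ is relatively divisorial, since $\gamma$ is a local generator along the codimension-one non-smooth locus of every fiber). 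The same reasoning applied on $X_{V^a}$ yields $\gamma_{[a]}^{N}=(\id\otimes\theta_a)\circ\delta$ with $\theta_a$ a generator of $\Hom_{A^{1/p^a}}\big((A^{1/p^a})^{1/p^{Ne}},A^{1/p^a}\big)$ and $\Delta_\delta=\Delta_\gamma\times_V V^a$. Next, \autoref{lem:pushforward_pullback} with $B=A$ and $P=R$, applied to $Q=\beta(R^{1/p^{Ne}})\subseteq A^{1/p^{Ne}}\otimes_A R=R_{A^{1/p^{Ne}}}$, gives
\[
\beta(R^{1/p^{Ne}})\ \subseteq\ (\theta\otimes\id_R)\big(\beta(R^{1/p^{Ne}})\big)\otimes_A A^{1/p^{Ne}}\ =\ \gamma^N(R^{1/p^{Ne}})\otimes_A A^{1/p^{Ne}}\ =\ \sigma(X,\Delta_\gamma)\otimes_A A^{1/p^{Ne}}.
\]

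Finally, I would base change $\beta$ along the $a$-th Frobenius $V^a\to V$ as in \autoref{subsec.BaseChangeOfPhi}, obtaining a map $\beta_{V^a}\colon (R_{A^{1/p^a}})^{1/p^{Ne}}\to R\otimes_A A^{1/p^{a+Ne}}$ with $\Delta_{\beta_{V^a}}=\Delta_\gamma\times_V V^a$ by \autoref{lem.PullingBackDelta}. Since $\delta$ and $\beta_{V^a}$ are maps of the same type over $V^a$ with the same associated divisor, they differ by a unit (checked in relative codimension one and extended using the S2 property), so $\Image(\delta)=\Image(\beta_{V^a})=\beta(R^{1/p^{Ne}})\otimes_{A^{1/p^{Ne}}}A^{1/p^{a+Ne}}$, images commuting with base change. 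Feeding the previous containment into this (and using flatness of $A^{1/p^{a+Ne}}$ over $A^{1/p^{Ne}}$) gives $\Image(\delta)\subseteq\sigma(X,\Delta_\gamma)\otimes_A A^{1/p^{a+Ne}}$, and applying the surjection $\id\otimes\theta_a$ yields
\[
\sigma(X_{V^a},\Delta_\gamma\times_V V^a)=(\id\otimes\theta_a)\big(\Image(\delta)\big)\subseteq(\id\otimes\theta_a)\big(\sigma(X,\Delta_\gamma)\otimes_A A^{1/p^{a+Ne}}\big)=\sigma(X,\Delta_\gamma)\otimes_A A^{1/p^a},
\]
the claimed inclusion. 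The step I expect to require the most care is the identification $\Image(\delta)=\Image(\beta_{V^a})$, that is, that forming the factorization through a relative map is compatible with base change along $V^a\to V$; this relies on compatibility of Grothendieck trace with base change (\autoref{lem.BaseChangeOfTraceForCMMaps}) and on careful bookkeeping of the three Frobenius twists $V^{Ne}$, $V^a$, $V^{a+Ne}$ of the base.
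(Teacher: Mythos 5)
Your proof takes a genuinely different route from the paper's, and it contains a real gap in the step you yourself flag as delicate.

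\textbf{What the paper does differently.} The paper factors $\gamma^n$ through $R_{A^{1/p^a}}$ with the \emph{given} exponent $a$ (choosing $ne \geq a$ with $\gamma^n(R^{1/p^{ne}}) = \sigma(X,\Delta_\gamma)$), so that $\beta \colon R^{1/p^{ne}} \to R_{A^{1/p^a}}$ already lands in the correct ring. It then builds $\gamma'$ \emph{from} $\beta$, as $\gamma' = \beta \circ \big(\cdots \otimes (\Phi_A^a)^{1/p^{ne}}\big)$ where the second factor is a global surjective generating map. Consequently $\Image(\gamma') = \Image(\beta)$ holds on the nose, as an identity of ideal sheaves, with no codimension argument required. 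The divisor identification $\Delta_{\gamma'} = \Delta_\gamma \times_V V^a$ (which \emph{is} a codimension-one statement) is used only to know which $\sigma$ one is talking about, and the containment $\Image(\gamma') \supseteq \sigma(X_{V^a}, \Delta_{\gamma'})$ is automatic for any single map, so no second stabilization is needed. The lemma \autoref{lem:pushforward_pullback} then finishes.

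\textbf{Where your version has a gap.} You instead factor $\gamma^N$ through $R_{A^{1/p^{Ne}}}$, base change $\beta$ to $\beta_{V^a}$, and separately build $\delta$ from a factorization of $\gamma_{[a]}^N$. The whole argument then hinges on $\Image(\delta) = \Image(\beta_{V^a})$, which you justify by saying the two maps have the same associated divisor and are therefore equal up to a unit, extended across a set of relative codimension $\geq 2$ ``using the S2 property.'' Two problems. First, your reduction to the relatively Gorenstein locus (``the assertion may be checked after deleting a closed set of relative codimension $\geq 2$'') is not justified for the statement being proven: this is an inclusion of ideal sheaves, the ideals $\sigma(\cdot)$ are \emph{not} relatively $S_2$ (indeed the whole point of $\sigma$ is that it can be a proper ideal), and so an inclusion established on a codimension-2 complement does not automatically hold globally. \autoref{cor:relative_S_2_morphism_reflexive_has_extension_property} applies to reflexive/$S_2$ sheaves, which these are not. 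Second, even inside the Gorenstein locus, having the same divisor tells you $\delta$ and $\beta_{V^a}$ generate the same cyclic submodule of the rank-one $\sHom$-sheaf \emph{there}; to upgrade this to equality of the image \emph{ideals} on all of $X_{V^a}$ one has to know that the two generators differ by a global unit, or run a further S2/non-zero-divisor argument on the cyclic submodules of $\sHom$ that you have not written down. So the key identification is precisely the part that is not established.

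\textbf{How to fix it.} Follow the paper's device of building the map on $X_{V^a}$ \emph{from} $\beta$ rather than constructing $\delta$ independently and comparing: compose $\beta$ (with target $R_{A^{1/p^a}}$, so set up the factorization with exponent $a$) with a global surjective generating map on the left to produce $\gamma'$; then $\Image(\gamma') = \Image(\beta)$ is tautological, $\Image(\gamma') \supseteq \sigma(X_{V^a}, \Delta_{\gamma'})$ is automatic, and the only codimension-one input is the divisor identity $\Delta_{\gamma'} = \Delta_\gamma \times_V V^a$, which is exactly the kind of statement that \emph{is} determined in codimension one. Your application of \autoref{lem:pushforward_pullback} with $B=A$, $P=R$ is fine, as is the observation that $\beta$ inherits relative divisoriality from $\gamma$ and the choice of a uniform $N$ via HSLG; those pieces you can keep.
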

\begin{proof}
The statement is local over the base and so we may assume that $K_V \sim 0$ and that $A^{1/p^e}$ is a free $A$-module since $V$ is regular \cite{KunzCharacterizationsOfRegularLocalRings}.  Note then that $K_{X/V} \cong K_X$.
Choose $ne \geq a > 0$ such that $\sigma(X, \Delta_{\gamma}) = \gamma^n(R^{1/p^{ne}})$.  Since $\Hom_A(A^{1/p^{a}}, A)$ is a free $A^{1/p^{ne}}$-module of rank 1, by \cite[Appendix F]{KunzKahlerDifferentials} we can factor $\gamma^n$ as
\[
\xymatrix{R^{1/p^{ne}} \ar@/_2pc/[rr]_-{\gamma^n} \ar[r]^-{\beta} & R_{A^{1/p^{a}}} \ar[r]^-{\theta} & R}
\]
where $\theta$ is a local generator of $\sHom_{R}(R_{A^{1/p^{a}}}, R)$.
Note $\beta \in \Hom_{R_{A^{1/p^{a}}}}(R^{1/p^{ne}}, R_{A^{1/p^{a}}})$ which is identified with
\[
\begin{array}{rl}
& \sHom_{R_{A^{1/p^{a}}}}\big(R^{1/p^{ne}} \tensor_{R_{A^{1/p^a}}} (A^{1/p^a} \tensor_A \omega_{R/A}), (A^{1/p^a} \tensor_A \omega_{R/A})\big)\\
\cong & \sHom_{R_{A^{1/p^{a}}}}\big( (\O_X(p^{ne}K_{X/V}))^{1/p^{ne}}, \omega_{R_{A^{1/p^a}}/A^{1/p^a}}\big) \\
\cong & \sHom_{R_{A^{1/p^{a}}}}\big( (\O_X(p^{ne}K_{X}))^{1/p^{ne}}, \omega_{R_{A^{1/p^a}}}\big) \\
\cong & \sHom_{R^{1/p^{ne}}}\big( (\O_X(p^{ne}K_{X}))^{1/p^{ne}}, \omega_{R^{1/p^{ne}}}\big)\\
\cong & (\O_X( (1-p^{ne})K_X))^{1/p^{ne}}.
\end{array}
\]
Note that since the base is regular, all these sheaves are automatically reflexive and so there is no need to double dualize as we did before in \autoref{def:D_phi_Delta_phi}.
 By dividing by $(p^{ne} - 1)$
 we see that $\beta$ coincides with a divisor $\Delta_{\beta}$ such that $(1-p^{ne})(K_X + \Delta_{\beta}) \sim 0$.
Furthermore, it is easy to see that $\Delta_{\beta}$ coincides with $\Delta_{\gamma}$ just as in \autoref{lem.EasyContainmentOfNonFPureViaAdjunction} since $\theta$ is a local generator.

By composing $\beta$ with a local generating (and surjective) map $R^{1/p^{ne}} \tensor_{A^{1/p^{ne}}} A^{1/p^{ne+a}} \xrightarrow{\ldots \tensor (\Phi_{A}^{a})^{1/p^{ne}}} R^{1/p^{ne}} $ we then obtain a map $\gamma' : (R_{A^{1/p^{a}}})^{1/p^{ne}} \cong  R^{1/p^{ne}} \tensor_{A^{1/p^{ne}}} A^{1/p^{ne+a}} \to R_{A^{1/p^{a}}}$ satisfying the condition $\Delta_{\gamma'} = \Delta_{\gamma} \times_V V^{a}$.  We also notice that $\gamma'$ has the same image as $\beta$.

Now, we observe that $\gamma^n(R^{1/p^{ne}}) \tensor_A A^{1/p^{a}} = \theta(\beta(R^{1/p^{ne}})) \tensor_A A^{1/p^{a}} \supseteq \beta(R^{1/p^{ne}})$ by \autoref{lem:pushforward_pullback} setting $B = A$ and $P = \beta(R^{1/p^{ne}})$.
%
The remainder of the proof is easy since
\[
\begin{array}{rl}
& \sigma(X, \Delta_\gamma) \otimes_A A^{1/p^{a}} \\
= & \gamma^n(R^{1/p^{ne}}) \tensor_A A^{1/p^{a}} \\
\supseteq & \beta(R^{1/p^{ne}}) \\
= & \gamma'\big((R_{A^{1/p^{a}}})^{1/p^{ne}}\big) \\
\supseteq & \sigma(X_{V^a}, \Delta_{\gamma'}) \\
= & \sigma(X_{V^a},  \Delta_{\gamma} \times_V V^{a}).
\end{array}
\]
\end{proof}

\mysubsection{Application to sharply $F$-pure singularities and HSL numbers in families}
We observe that \autoref{thm.UniversalNForAllClosedFibers} has an immediate application.  Recall that if $\gamma : L^{1/p^e} \to R$ is an $R$-linear map, then the \emph{HSL number} is the first integer $n$ such that $\Image(\gamma^n) = \Image(\gamma^{n+1}) = \sigma(R, \gamma)$, \cf \cite{SharpOnTheHSLThm}.
\begin{corollary}[Uniform behavior of HSL numbers]
Given a flat family $f : X \to V$ over a excellent integral scheme $V$ with a dualizing complex of characteristic $p > 0$ and $\phi : L^{1/p^e} \to \O_{X_{V^e}}$ as before, there exists an integer $N > 0$ such that gives an upper bound on the HSL number of $(X_s, \phi_s : L_s^{1/p^e} \to \O_{X_s})$ for every perfect point $s \in V$.
\end{corollary}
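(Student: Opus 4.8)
The plan is to read the statement off \autoref{thm.UniversalNForAllClosedFibers}, which already contains all the substance. Recall first (from \autoref{sec.BaseChangeToPerfectPoints}) that for a perfect point $s \in V$ the base-changed map $\phi_s : L_s^{1/p^e} \to R_{s^e}$ is identified, via the canonical isomorphism $R_{s^e}\cong R_s$, with an honest $p^{-e}$-linear map $\psi_s : L_s^{1/p^e}\to R_s$, and that the $n$-fold self-composition $\psi_s^n$ in the sense of \cite[Section 4]{BlickleSchwedeSurveyPMinusE} agrees with $(\phi^n)_s = (\phi_s)^n$ (for the latter use \autoref{lem.BaseChangeForPhin}). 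Since images of maps of sheaves commute with arbitrary base change, it follows that for every integer $n\geq n_0 = n_{\sigma(\phi),k(V)}$,
\[
\Image\big(\psi_s^n\big) = \sigma_n(X/V,\phi)\cdot \O_{X_{s^{ne}}}
\]
as $\O_{X_s}$-submodules of $R_s$, where $\Image(\psi_s^n)$ denotes the image of $\psi_s^n:\big(L_s^{(p^{ne}-1)/(p^e-1)}\big)^{1/p^{ne}}\to R_s$, which is precisely the object appearing in the definition of the HSL number.

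Next I would apply \autoref{thm.UniversalNForAllClosedFibers} to produce an integer $N$, which we may in addition assume satisfies $N\geq n_0$, with the property that
\[
\sigma_n(X/V,\phi)\cdot \O_{X_{s^{ne}}} = \sigma(X_s,\psi_s)
\]
for every perfect point $s\in V$ and every $n\geq N$. Comparing the two displays gives $\Image(\psi_s^n)=\sigma(X_s,\psi_s)$ for all $n\geq N$ and all perfect points $s$. In particular $\Image(\psi_s^N)=\Image(\psi_s^{N+1})=\sigma(X_s,\psi_s)$, so by definition the HSL number of $(X_s,\psi_s : L_s^{1/p^e}\to \O_{X_s})$ is at most $N$, and $N$ is independent of $s$; this is exactly the asserted uniform bound.

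There is essentially no obstacle beyond unwinding these identifications, since the genuinely delicate stabilization — uniform over \emph{all} perfect points, including the non-Noetherian perfections of residue fields — was already carried out in \autoref{thm.BaseChangeSigmaGeneral} and \autoref{thm.UniversalNForAllClosedFibers}. If one preferred not to invoke \autoref{thm.UniversalNForAllClosedFibers} directly, the same conclusion would follow by combining \autoref{thm.BaseChangeSigmaGeneral} (which bounds the stabilization index of the image chain uniformly over the scheme points of $V$) with the base-change compatibility of that stabilization recorded in \autoref{prop.RelativeNonFPureIdealsAndBaseChange}, applied along the perfect field extensions $k(v)\subseteq k(s)$; but this merely retraces the proof of \autoref{thm.UniversalNForAllClosedFibers}.
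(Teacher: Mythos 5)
Your proof is correct and takes essentially the same route as the paper: the paper's one-line proof also reduces to \autoref{thm.BaseChangeSigmaGeneral} via the observation that $\sigma_m(X_s/s,\phi_s)$ agrees with $\sigma(X_s,\psi_s)$ for $m\gg 0$, which is exactly the identification you unwind. Your version just fills in the routine bookkeeping (the identification of $\phi_s^n$ with $\psi_s^n$ at perfect points and the compatibility of image formation with base change) that the paper treats as obvious.
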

\begin{proof}
The statement immediately follows from \autoref{thm.BaseChangeSigmaGeneral} since clearly $\sigma_m(X_s/s, \phi_s)$ agrees with $\sigma(X_s, \gamma_s = \phi_s)$ for $m \gg n$.
\end{proof}

Now we study the deformation of sharp $F$-purity, a question which has been studied before in \cite{ShimomotoZhangOnTheLocalizationProblem,HashimotoFPureHoms}. We believe that the hypothesis that $f$ is proper is necessary in \autoref{thm.OpennessOfSharpFPurity}.

\begin{definition}
Recall that a pair $(X, \psi : L^{1/p^e} \to \O_X = R)$ is called \emph{sharply $F$-pure} if $\sigma(X, \psi) = \O_X$.  Given $(X/S, \phi : L^{1/p^e} \to R_{A^{1/p^e}})$, we define $(X/S, \phi)$ to be \emph{relatively sharply $F$-pure} if $\sigma_{n}(X/S, \phi) = R_{A^{1/p^{ne}}}$ for some $n > 0$, \cf \cite{HashimotoFPureHoms}.
\end{definition}

\begin{lemma}
If $\sigma_n(X/S, \phi) = R_{A^{1/p^{ne}}}$ for some $n > 0$, then the same holds for all $n > 0$.
\end{lemma}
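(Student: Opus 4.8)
The plan is to reduce to the case $n=1$: I will show that $\sigma_n(X/V,\phi)=R_{A^{1/p^{ne}}}$ for one value of $n$ already forces $\ba_1=\sigma_1(X/V,\phi)=R_{A^{1/p^e}}$, and then propagate this forward to all $\sigma_m$. Everything is local on $V$, so assume $V=\Spec A$ and $X=\Spec R$. Three facts will be used: the containment chain $R_{A^{1/p^{ne}}}\supseteq \ba_{1,n}\supseteq\cdots\supseteq\ba_{n,n}=\ba_n$ recorded before the definition of $\sigma_n$; the identity $\ba_{i,n}=\ba_i\cdot R_{A^{1/p^{ne}}}$ expressing $\ba_{i,n}$ as the extension of the ideal $\ba_i$ along $R_{A^{1/p^{ie}}}\to R_{A^{1/p^{ne}}}$, which is injective because $R$ is flat over $A$; and the recursive description of $\phi^m$ in \autoref{subsec.ComposingMaps}, whereby $\phi^m$ is gotten from $\phi^{m-1}$ by twisting by $L$, taking $p^e$-th roots, tensoring up by $A^{1/p^{me}}$, and post-composing with $\phi$.

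Step one: if $\ba_n=R_{A^{1/p^{ne}}}$, the whole chain collapses, so in particular $\ba_1\cdot R_{A^{1/p^{ne}}}=\ba_{1,n}=R_{A^{1/p^{ne}}}$; write $1=\sum_i x_i c_i$ in $R_{A^{1/p^{ne}}}$ with $x_i\in\ba_1\subseteq R_{A^{1/p^e}}$ and $c_i\in R_{A^{1/p^{ne}}}$. Step two, the heart of the matter, is to descend this identity to $R_{A^{1/p^e}}$. Since raising to the $p^{(n-1)e}$-th power is additive in characteristic $p$, we get $1=\sum_i x_i^{p^{(n-1)e}}c_i^{p^{(n-1)e}}$ in $R_{A^{1/p^{ne}}}$. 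Writing each $c_i$ as a finite sum of simple tensors $r\otimes\alpha\in R\otimes_A A^{1/p^{ne}}$ and using $(r\otimes\alpha)^{p^{(n-1)e}}=r^{p^{(n-1)e}}\otimes\alpha^{p^{(n-1)e}}$ together with the fact that $\alpha\in A^{1/p^{ne}}$ forces $\alpha^{p^{(n-1)e}}\in A^{1/p^e}$, each $c_i^{p^{(n-1)e}}$ lies in the image of $R_{A^{1/p^e}}\hookrightarrow R_{A^{1/p^{ne}}}$; as also $x_i^{p^{(n-1)e}}\in\ba_1$, the element $\sum_i x_i^{p^{(n-1)e}}\widetilde c_i\in\ba_1$ of $R_{A^{1/p^e}}$ maps to $1$ under this injection, and hence equals $1$. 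So $1\in\ba_1$, i.e. $\phi\colon L^{1/p^e}\to R_{A^{1/p^e}}$ is surjective.

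Step three: once $\phi$ is surjective, I will induct on $m$ to see that each $\phi^m$ is surjective, whence $\ba_m=R_{A^{1/p^{me}}}$. Twisting by the line bundle $L$ and applying $(-)^{1/p^e}=F^e_*$ are exact, and $-\otimes_{A^{1/p^e}}A^{1/p^{me}}$ is right exact, so surjectivity of $\phi^{m-1}$ passes through the first three operations, and composing with the surjection $\phi\otimes_{A^{1/p^e}}A^{1/p^{me}}$ yields that $\phi^m$ is onto. The only subtle step is step two; conceptually it records that $A^{1/p^e}\hookrightarrow A^{1/p^{ne}}$ is, after the canonical isomorphisms $A^{1/p^{je}}\cong A$, a power of the absolute Frobenius of $A$ — a universal homeomorphism — so that an ideal becoming trivial after this base change was already trivial, and one needs only flatness of $R$ over $A$ (part of \autoref{notation:basic}) to carry the statement over to $R$. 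No $F$-finiteness hypothesis is required.
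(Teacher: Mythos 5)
Your proof is correct and follows the same two-stage strategy as the paper's (reduce to the bottom of the tower, then propagate upward through the recursion for $\phi^m$), but it replaces the paper's key step with a more elementary one. The paper descends by noting that $R_{A^{1/p^{me}}} \subseteq R_{A^{1/p^{ne}}}$ is an integral extension, so an ideal whose extension is the unit ideal was already the unit ideal (by lying over). You instead prove this directly in the situation at hand: raising a witnessing identity $1 = \sum_i x_i c_i$ in $R_{A^{1/p^{ne}}}$ to the $p^{(n-1)e}$-th power, and using that Frobenius is a ring homomorphism together with the fact that $\alpha^{p^{(n-1)e}} \in A^{1/p^e}$ for $\alpha \in A^{1/p^{ne}}$, pushes the identity down into $R_{A^{1/p^e}}$ and gives $1 \in \ba_1$. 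This is essentially a hands-on proof of the integral-extension fact in this particular case, with the advantage of being self-contained. Your propagation step is also slightly reorganized: you reduce directly to $\phi$ surjective and induct up, whereas the paper first handles $m < n$ in bulk via the integral-extension argument and then iterates a doubling $\sigma_n \Rightarrow \sigma_{2n}$; both are fine and your version is cleaner. One minor imprecision to flag: in step three you list ``tensoring up by $A^{1/p^{me}}$'' among the operations applied to $\phi^{m-1}$, but in the paper's recursion that tensoring is absorbed into the final factor $\phi \otimes_{A^{1/p^e}} A^{1/p^{me}}$ rather than being a separate operation on $\phi^{m-1}$; the logic survives either bookkeeping, since each factor preserves surjectivity.
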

\begin{proof}
Suppose $\sigma_n(X/S, \phi) = R_{A^{1/p^{ne}}}$ for some $n > 0$.  Choose $m < n$ to start.  Then we know $\ba_{m,n} \supseteq \ba_{n,n} = \sigma_n(X/S, \phi)$ and so $\ba_{m,n} = R_{A^{1/p^{ne}}}$.  But $\ba_{m,n} = \Image\big(\ba_m \tensor_{A^{1/p^m}} A^{1/p^n} \to R_{A^{1/p^n}} \big)$ which is just the extension of
$\sigma_m(X/S, \phi) \subseteq R_{A^{1/p^{me}}}$
to $R_{A^{1/p^{ne}}}$.  Additionally, $R_{A^{1/p^{me}}} \subseteq R_{A^{1/p^{ne}}}$ is an integral extension and hence we must have $\sigma_m(X/S, \phi) = R_{A^{1/p^{me}}}$ as well.

We finish the proof by showing $\sigma_{2n}(X/S, \phi) = R_{A^{1/p^{2ne}}}$.  This follows quickly since $\sigma_{2n}(X/S, \phi)$ factors as a composition of two surjective maps (as $\tensor$ is right exact) as in \autoref{eq.Twist1} and \autoref{eq.Twist1}.
\end{proof}

\begin{remark}
\label{rem.RelativelyFPureOverPtIsGeometric}
If $S$ is a point, then $(X/S, \phi)$ being relatively sharply $F$-pure is equivalent to $(X/S, \phi)$ being geometrically sharply $F$-pure ({\it i.e.}, that $(X_t, \gamma_t)$ is sharply $F$-pure for every geometric point $t \to S$ where $\gamma_t$ is induced from $\phi_t$ as in \autoref{subsec.IteratedNonFPureIdealsVsAbsolute}).  To see this, certainly observe that if $(X/S, \phi)$ is relatively sharply $F$-pure then so is any base change $(X_t/t, \phi_t)$.  But then $(X_t, \gamma)$ is sharply $F$-pure by \autoref{thm.RelativeSigmaVsAbsoluteOverOpen}.  Conversely, if $(X_t, \gamma_t)$ is sharply $F$-pure, then certainly $(X_t/t, \phi_t)$ is relatively sharply $F$-pure by \autoref{lem.FactorizationOfSigmaImages}.  But then so is $(X/S, \phi)$ near that $t$ by Nakayama.  
\end{remark}

\begin{theorem}[Openness of sharp $F$-purity]
\label{thm.OpennessOfSharpFPurity}
With notation as before, assume that $f : X \to V$ is proper.  Assume that $s \in V$ is a point and that $(X_s/s, \phi_s)$ is relatively sharply $F$-pure (in other words, geometrically $F$-pure).  Then there exists a dense open set $U \subseteq V$ containing $s$ such that $(X_u/u, \phi_u)$ is relatively sharply $F$-pure for all $u \in U$ (in particular, $(X_u, \phi_u)$ is sharply $F$-pure for all perfect points $u \in U$).
\end{theorem}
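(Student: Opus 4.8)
The idea is to express relative sharp $F$-purity of a fibre as the vanishing of (the restriction of) a single coherent sheaf on the total space, and then use properness to make the bad locus closed.

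Fix an integer $n \geq n_0 = n_{\sigma(\phi), k(V)}$, so that $\sigma_n(X/V,\phi) \subseteq \O_{X_{V^{ne}}}$ is defined, and set
\[
C \;:=\; \O_{X_{V^{ne}}} \big/ \sigma_n(X/V, \phi),
\]
which is a coherent sheaf on $X_{V^{ne}}$ since $\sigma_n(X/V,\phi)$ is the image of the map of coherent sheaves $\phi^n$. By right-exactness of tensor and \autoref{prop.RelativeNonFPureIdealsAndBaseChange}, for every point $u \in V$ the restriction $C\otimes_{\O_{V^{ne}}}k(u^{ne})$ to the fibre $X_{u^{ne}}$ of $X_{V^{ne}} \to V^{ne}$ is $\O_{X_{u^{ne}}}/\sigma_n(X_u/u,\phi_u)$. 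Hence $C|_{X_{u^{ne}}} = 0$ if and only if $\sigma_n(X_u/u, \phi_u) = \O_{X_{u^{ne}}}$, which (by the lemma above relating the property "$\sigma_n$ is the unit ideal for some $n$" to "for all $n$") is equivalent to $(X_u/u,\phi_u)$ being relatively sharply $F$-pure.

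Now invoke properness. The base change $g : X_{V^{ne}} \to V^{ne}$ of $f$ is proper, hence closed, so $Z_0 := g(\Supp C)$ is closed in $V^{ne}$; since $F^{ne}_V : V^{ne}\to V$ is finite and purely inseparable, it is a homeomorphism, so $Z_0$ corresponds to a closed set $Z \subseteq V$, and we put $U := V \setminus Z$. For $u \in U$ the fibre $X_{u^{ne}}$ misses $\Supp C$, so $C|_{X_{u^{ne}}} = 0$ and $(X_u/u,\phi_u)$ is relatively sharply $F$-pure; for perfect points $u \in U$ the absolute statement that $(X_u,\psi_u)$ is sharply $F$-pure then follows since $\sigma_m(X_u/u,\phi_u)$ stabilizes to $\sigma(X_u,\psi_u)$ as $m \gg 0$ (the discussion preceding \autoref{thm.UniversalNForAllClosedFibers}). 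To see $s \in U$: by hypothesis $\sigma_n(X_s/s,\phi_s) = \O_{X_{s^{ne}}}$, so $C|_{X_{s^{ne}}} = 0$; if some $x \in \Supp C$ lay over $s^{ne}$ then $C_x \neq 0$, and Nakayama (applicable since $C$ is coherent) would force $\big(C\otimes_{\O_{V^{ne},s^{ne}}}k(s^{ne})\big)_x \neq 0$, contradicting $C|_{X_{s^{ne}}}=0$. Thus $s^{ne}\notin Z_0$, i.e.\ $s \in U$. Finally $U$ is a nonempty open subset of the integral scheme $V$, hence dense.

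\textbf{Main obstacle.} The only non-formal ingredient is the appeal to properness to guarantee that $g(\Supp C)$ is \emph{closed}; everything else (base change for $\sigma_n$, the Nakayama descent to the fibre, and the topological identification $V^{ne}\cong V$) is routine. Without properness one cannot rule out that $\Supp C$ dominates $V$ yet still misses the single fibre over $s$, which is precisely why the authors expect the statement to fail for non-proper $f$.
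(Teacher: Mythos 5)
Your proof is correct and takes essentially the same route as the paper. The paper picks $n \gg 0$ with $\sigma_n(X_s/s, \phi_s)$ trivial, appeals to \autoref{thm.BaseChangeSigmaGeneral} plus (implicitly) Nakayama to get a neighborhood $W$ of $X_s$ in $X$ where $\sigma_n(X/V, \phi)$ is the unit ideal, and then uses properness to push the closed complement $X \setminus W$ forward; your version packages the same content by taking the coherent quotient $C = \O_{X_{V^{ne}}}/\sigma_n(X/V,\phi)$, spelling out the Nakayama descent explicitly, and pushing forward $\Supp C$. The only presentational difference is that you make the Nakayama step and the homeomorphism $V^{ne} \cong V$ explicit, which the paper leaves tacit.
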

\begin{proof}
Choose $n \gg 0$ such that $\sigma_n(X_s/s, \phi_s) = R_{\O_s^{1/p^{ne}}}$ since $(X_s/s, \phi_s)$ is sharply $F$-pure.  By \autoref{thm.BaseChangeSigmaGeneral}, we know that $\sigma_n(X/V, \phi) = R_{A^{1/p^{ne}}}$ in a neighborhood $W \subseteq X$ of $X_s$.  Let $Z = X \setminus W \subseteq X$ be the complement of that neighborhood.  Since $f$ is proper, $f(Z)$ is closed, and also does not contain $s$.  Set $U = V \setminus f(Z)$.  Then $\sigma_n(X_U/U, \phi_U) = R_{\O_U^{1/p^{ne}}}$.  It follows from \autoref{thm.BaseChangeSigmaGeneral} that all the fibers $(X_u/u, \phi_u)$ are relatively sharply $F$-pure for perfect $u \in U$ as desired.
\end{proof}

We now state the above theorem in the context of divisors.

\begin{corollary}[Openness of sharp $F$-purity for divisor pairs]
\label{cor.OpennessOfSharpFPurityForDivisors}
Suppose that $f : X \to V$ is a proper G1 and S2 morphism as before and now additionally suppose that $\Delta$ is a $\bQ$-divisor satisfying conditions (a)--(d) from \autoref{rem:relative_canonical}.  Additionally suppose that $s \in V$ is a perfect point and that $(X_s, \Delta|_{X_s})$ is sharply $F$-pure (or more generally, if $s$ is not perfect, that $(X_{s^e}, \Delta|_{X_{s^e}})$ is sharply $F$-pure for some/all $e > 0$).  Then there exists an open set $U \subseteq V$ containing $s$ such that for all $u \in U$ we have $(X_u, \Delta|_{X_u})$ is sharply $F$-pure  (respectively, we have $(X_{u^e}, \Delta|_{X_{u^e}})$ is sharply $F$-pure for some/all $e > 0$ and all $u \in U$).
\end{corollary}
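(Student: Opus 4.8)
The plan is to deduce this statement from the map-theoretic openness result, \autoref{thm.OpennessOfSharpFPurity}, by translating between $\bQ$-divisors and $p^{-e}$-linear maps via \autoref{rem:relative_canonical}. First, conditions (a)--(d) imposed on $\Delta$ are precisely what is needed to manufacture, \emph{globally} on $X$ (no localization on $V$ is required since $L = (\omega_{X/V}^{1-p^e}\otimes\O_X((1-p^e)\Delta))^{**}$ is a line bundle by (d)), a relatively divisorial map $\phi : L^{1/p^e}\to R_{A^{1/p^e}}$ with $\Delta_\phi=\Delta$; by \autoref{lem:composition} each $\phi^n$ is again relatively divisorial with $\Delta_{\phi^n}=\Delta$. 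If one prefers, one may also shrink $V$ to trivialize $L$ and $\omega_V$, which is harmless because sharp $F$-purity is a local condition on $V$.

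Next I would translate the hypothesis at $s$. For the given perfect point $s\in V$, \autoref{cor.RestrictDeltaAndPhiToFiberIsOk} together with \autoref{lem.DeltaPhiRestrictedEqualsDeltaPsi} shows that $\phi_s$, regarded as the $p^{-e}$-linear map $\psi_s$ on $X_s$ (legitimate since $k(s)$ is perfect, so $R_{s^{1/p^e}}\cong R_s$), satisfies $\Delta_{\psi_s}=\Delta_\phi|_{X_s}=\Delta|_{X_s}$. Hence ``$(X_s,\Delta|_{X_s})$ sharply $F$-pure'' means exactly $\sigma(X_s,\psi_s)=\O_{X_s}$. By the discussion preceding \autoref{thm.UniversalNForAllClosedFibers} — namely that $\bigcap_n\sigma_n(X_s/s,\phi_s)=\sigma(X_s,\psi_s)$, that the chain $\sigma_n(X_s/s,\phi_s)$ stabilizes (HSLG over the perfect field $k(s)$), and the Lemma asserting that $\sigma_n(X/S,\phi)=\O$ for one $n$ forces it for all $n$ — this is equivalent to $(X_s/s,\phi_s)$ being relatively sharply $F$-pure in the sense of the definition just above the present corollary.

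Then I would invoke the map version and translate back. \autoref{thm.OpennessOfSharpFPurity} (whose properness hypothesis is satisfied by assumption) produces a dense open $U\subseteq V$ containing $s$ with $(X_u/u,\phi_u)$ relatively sharply $F$-pure for all $u\in U$, and in particular $(X_u,\phi_u)=(X_u,\psi_u)$ sharply $F$-pure for every perfect point $u\in U$. Applying \autoref{cor.RestrictDeltaAndPhiToFiberIsOk} and \autoref{lem.DeltaPhiRestrictedEqualsDeltaPsi} once more at each such $u$ gives $\Delta_{\psi_u}=\Delta|_{X_u}$, so $(X_u,\Delta|_{X_u})$ is sharply $F$-pure, as desired. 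For the parenthetical variant with $s$ not perfect, I would replace the fiberwise identification above by \autoref{rem.RelativelyFPureOverPtIsGeometric}: relative sharp $F$-purity of $(X_s/s,\phi_s)$ is equivalent to geometric sharp $F$-purity, and applying \autoref{lem.EasyContainmentOfNonFPureViaFieldExtension} to the perfect extension $\overline{k(s)}\supseteq k(s)^{1/p^e}$ shows this is in turn equivalent to sharp $F$-purity of $(X_{s^e},\Delta|_{X_{s^e}})$ for any (hence every) $e$, accounting for the ``some/all'' wording; Steps two and three then go through verbatim with this identification in place.

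There is no deep new idea here: essentially all the content sits in \autoref{thm.OpennessOfSharpFPurity} and in the stabilization and base-change results of \autoref{sec.RelativeNonFPureIdeals}. The only step requiring genuine care — and the one I expect to be the main obstacle to writing cleanly — is the bookkeeping of Steps two and four, i.e.\ matching ``sharply $F$-pure fiber pair'' with ``relatively sharply $F$-pure family'' through the dictionary of \autoref{rem:relative_canonical}, and especially handling imperfect residue fields, where $\phi_s$ is not itself a $p^{-e}$-linear map on $X_s$ but is only detected as one after passing to a perfect base change.
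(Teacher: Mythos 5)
Your proposal is correct and follows essentially the same route as the paper: use \autoref{rem:relative_canonical} to convert $\Delta$ into a relatively divisorial $\phi$, apply \autoref{thm.OpennessOfSharpFPurity}, and translate back via \autoref{cor.RestrictDeltaAndPhiToFiberIsOk} (with \autoref{lem.DeltaPhiRestrictedEqualsDeltaPsi} providing the identification of $\Delta_{\psi_s}$ with $\Delta|_{X_s}$ at perfect points). You have simply spelled out the bookkeeping of the $\phi\leftrightarrow\Delta$ dictionary and the relative-versus-absolute sharp $F$-purity identifications that the paper's terse proof leaves implicit.
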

\begin{proof}
Associate to $\Delta$ a relatively divisorial $\phi : L^{1/p^e} \to R_{A^{1/p^e}}$ as in \autoref{rem:relative_canonical}.  The result then follows immediately from \autoref{thm.OpennessOfSharpFPurity} and \autoref{cor.RestrictDeltaAndPhiToFiberIsOk}.
\end{proof}

\section{Relative test ideals}

In this section we define a notion of a relative test ideal.  We construct it using the following method.  We first find some ideal $I$ which when restricted to every geometric fiber is contained in the test ideal of that fiber.  We then sum up the image of $I^{1/p^{ne}}$ under $\phi^n$ (the same method is used to construct the test ideal in the absolute case).  This sounds simple enough, but it actually leads to a somewhat disappointing construction.  The problem is that the ideal we pick is in no sense canonical and we do not see a way to make it both canonical and stable under base change.  In the case that $\phi$ corresponds to a trivial divisor, we can make our choices canonical by setting $I$ to be the Jacobian ideal (this is done later when we discuss relative $F$-rationality in \autoref{sec.RelativeFRationalityFInjectivity}), but for more general $\phi$ we don't know of an analogous choice.

We fix the notation of the previous sections (in particular, we fix $\phi : L^{1/p^e} \to R_{A^{1/p^e}}$).  We begin by choosing an \emph{arbitrary ideal} $I \subseteq \O_X$.  After base change to $k(V^{\infty})$ as before, setting $I_{\infty} = I \cdot R_{\infty}$, we can form the following sum:
\[
\tau(R_{k(V^{\infty})}, \psi_{\infty} I_{\infty} ) :=  \sum_{i = 0}^{\infty} \psi_{\infty}^i\big((I_{\infty} \cdot L_{{k(V^{\infty})}}^{p^{ie} - 1 \over p^e - 1})^{1/p^{ie}}\big).
\]
If it happens that $I_{\infty}$ is contained in the test ideal of $( R_{k(V^{\infty})}, \phi_{\infty})$ and non-zero on any component of $X_{\infty}$, then we see that the absolute test ideal $\tau(R_{k(V^{\infty})}, \psi_{\infty}(I_{\infty}) )$ is equal to $\tau(R_{k(V^{\infty})}, \psi_{\infty})$ for example by \cite[Section 7]{SchwedeTuckerTestIdealSurvey}.

We can also describe this as follows, first essentially observed in \cite{KatzmanParameterTestIdealOfCMRings}.
Fix ideals
\begin{equation}
\label{eq.DefiningBBInfinity}
\begin{array}{rcll}
\bb^{\infty}_0 & := & I_{\infty} = \psi^0_{\infty}(I_{\infty}) \\
\bb^{\infty}_1 & := & \bb_0 + \psi_{\infty}\big((\bb_0^{\infty} \cdot L_{{k(V^{\infty})}})^{1/p^{e}}\big) = I_{\infty} + \psi_{\infty}\big((I_{\infty} \cdot L_{{k(V^{\infty})}})^{1/p^{e}}\big) \\\\
\bb^{\infty}_2 & := & \bb^{\infty}_1 + \psi_{\infty}\big((\bb^{\infty}_1 \cdot L_{{k(V^{\infty})}})^{1/p^{e}}\big) \\ &=& \bb_1^{\infty} + \psi_{\infty}\big((I_{\infty}+\psi_{\infty}\big((I_{\infty} \cdot L_{{k(V^{\infty})}})^{1/p^{e}}\big)  \cdot L_{{k(V^{\infty})}})^{1/p^{e}}\big)\\
& = & \sum_{i = 0}^2 \psi_{\infty}^i\big((I_{\infty} \cdot L_{{k(V^{\infty})}}^{p^{ie} - 1 \over p^e - 1})^{1/p^{ie}}\big)\\
\ldots & \ldots\\
\bb^{\infty}_n & := & \bb^{\infty}_{n-1} + \psi_{\infty}\big((\bb^{\infty}_{n-1} \cdot L_{{k(V^{\infty})}})^{1/p^{e}}\big) = \sum_{i = 0}^n \psi_{\infty}^i\big((I_{\infty} \cdot L_{{k(V^{\infty})}}^{p^{ie} - 1 \over p^e - 1})^{1/p^{ie}}\big)\\
\end{array}
\end{equation}
And we notice that this ascending chain stabilizes say at $t$.  Also note that the first time $\bb^{\infty}_t = \bb^{\infty}_{t+1}$, we then have $\bb^{\infty}_t = \bb^{\infty}_{t+1} = \bb^{\infty}_{t+2} = \ldots$.  We fix this integer $t$.

\begin{definition}
With notation as above, we define the integer $t$ to be the \emph{uniform integer for $\tau$ and $I$ over the generic point of $V$}, and in general, it will be denoted by $n_{\tau(\phi I), k(V)}$.  We notice that for any point $\eta \in V$, by base change we can replace $V$ by $\Spec k(\eta)$ and form a corresponding integer $n_{\tau(\phi_{\eta} I_{\eta}), k(\eta)}$.
\end{definition}

On the other hand, without the passing to $k(V^{\infty})$, we have the images
\[
\begin{array}{rcll}
\bb_1 & := & \Image\big(I \otimes_{A} A^{1/p^e} \to R_{A^{1/p^{e}}} \big) + \phi((I \cdot L)^{1/p^{e}}) \subseteq R_{A^{1/p^{e}}},\\
\bb_2 & := & \sum_{i = 0}^2 \Image\Big(\phi^i((I \cdot L^{p^{ie} - 1 \over p^e - 1})^{1 /p^{ie}}) \otimes_{A^{1/p^{ie}}} A^{1/p^{2e}} \to R_{A^{1/p^{2e}}}\Big)\subseteq R_{A^{1/p^{2e}}},\\
  \dots \\
\bb_n & := & \sum_{i = 0}^n \Image\Big(\phi^i((I \cdot L^{p^{ie} - 1 \over p^e - 1})^{1 /p^{ie}}) \otimes_{A^{1/p^{ie}}} A^{1/p^{ne}} \to R_{A^{1/p^{ne}}} \Big)\subseteq R_{A^{1/p^{ne}}},\\
  \dots
  \end{array}
\]
Notice that $\Image\Big(\bb_1 \otimes_{A^{1/p^{e}}} {A^{1/p^{2e}}} \to R_{A^{1/p^{2e}}} \Big)\subseteq \bb_2$ and more generally for $j > i$ that $\Image\Big(\bb_i \otimes_{A^{1/p^{ie}}} {A^{1/p^{je}}} \to R_{A^{1/p^{ne}}} \Big)\subseteq \bb_j$.  Also observe that this is the \emph{opposite} containment compared to what we had in \autoref{sec.RelativeNonFPureIdeals}.

By the same argument as in \autoref{sec.RelativeNonFPureIdeals}, we know that there exists an open set $U \subseteq V$ with $W = f^{-1}(U)$ such that
\begin{equation}
\label{eq.StabilizingUForTau}
\Image(\bb_t \otimes_{A^{1/p^{te}}} A^{1/p^{(t+1)e}} \to R_{A^{1/p^{(t+1)e}}})|_W = \bb_{t+1}|_W.
\end{equation}
As before in \autoref{prop.StabilizingSigma}, we claim that:
\begin{lemma}
\label{lem.StabilizingTau}
With notation as above, $\Image(\bb_{n} \otimes_{A^{1/p^{ne}}} A^{1/p^{(n+1)e}} \to R_{A^{1/p^{(n+1)e}}})|_W = \bb_{n+1}|_W$ for all $n \geq t$.
\end{lemma}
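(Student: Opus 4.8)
The plan is to follow almost verbatim the inductive mechanism used in the proof of \autoref{prop.StabilizingSigma}, adapting it to the \emph{opposite} containment direction that governs the $\bb_n$'s. Recall that for $\sigma_n$ the key was that $\Image(\phi^{n+1}) \subseteq \Image(\phi^n)$ after tensoring up, whereas here we have $\Image(\bb_i \otimes A^{1/p^{je}} \to R_{A^{1/p^{je}}}) \subseteq \bb_j$; nonetheless the bootstrapping argument is structurally identical. First I would set up the base case: by \autoref{eq.StabilizingUForTau} we already have $\Image(\bb_t \otimes_{A^{1/p^{te}}} A^{1/p^{(t+1)e}} \to R_{A^{1/p^{(t+1)e}}})|_W = \bb_{t+1}|_W$. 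The goal is to propagate this one step at a time to all $n \geq t$.

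The main step is the inductive passage from $n$ to $n+1$. Suppose we know $\Image(\bb_n \otimes_{A^{1/p^{ne}}} A^{1/p^{(n+1)e}} \to R_{A^{1/p^{(n+1)e}}})|_W = \bb_{n+1}|_W$. Unwinding the definition, $\bb_{n+1}$ is $\bb_n$ (suitably extended) plus the single new term $\phi\big((\bb_n \cdot L)^{1/p^e}\big)$ built by applying the twist-and-take-$p^e$-th-roots construction of \eqref{eq.Twist1}–\eqref{eq.Twist2} to $\bb_n$; equivalently $\bb_{n+1} = \Image(\bb_n \otimes A \to R) + \phi\big((\bb_n \cdot L)^{1/p^e}\big)$ reading indices appropriately. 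Now I would observe that the operation ``tensor $\bb_n$ by $L$ over $R$, take $1/p^e$-th roots, then apply $\phi$'' is right exact in $\bb_n$ (as everything is built from $\otimes$ and images), so it sends the equality of images over $W$ at level $n$ to an equality of images over $W$ at the next level: from $\bb_n|_W$ determining $\bb_{n+1}|_W$ after extension, the functoriality forces $\bb_{n+1}|_W$ to determine $\bb_{n+2}|_W$ after extension. Concretely, after extending scalars to $A^{1/p^{(n+2)e}}$, the term $\phi\big((\bb_{n+1}\cdot L)^{1/p^e}\big)$ restricted to $W$ is computed from $\bb_{n+1}|_W = \Image(\bb_n \otimes A^{1/p^{(n+1)e}})|_W$, and since $\phi \circ (\text{twist})$ commutes with the base-change extension, this equals the image of $\phi\big((\bb_n \cdot L)^{1/p^e}\big)$ extended up, which is contained in (indeed equals, modulo the already-controlled lower terms) the corresponding piece of $\bb_{n+2}|_W$. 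Combining the new term with the extension of $\bb_{n+1}$ itself (which by the inductive hypothesis is the extension of $\bb_n$, already inside $\bb_{n+1} \subseteq \bb_{n+2}$) yields $\Image(\bb_{n+1} \otimes_{A^{1/p^{(n+1)e}}} A^{1/p^{(n+2)e}} \to R_{A^{1/p^{(n+2)e}}})|_W = \bb_{n+2}|_W$.

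The one genuine subtlety — the step I expect to be the main obstacle — is bookkeeping the compatibility between the two ways of extending scalars: extending $\bb_{n+1}$ all at once from $A^{1/p^{(n+1)e}}$ to $A^{1/p^{(n+2)e}}$, versus building $\bb_{n+2}$ term-by-term from $I$ and then restricting. This is exactly the role that the big commuting diagram with the curved arrows $\phi^{n}, \phi^{n+1}, \phi^{n+2}$ played in \autoref{prop.StabilizingSigma}: one must check that $\phi^{i+1} = (\phi \otimes \ldots) \circ ((\phi^i)^{1/p^e} \otimes \ldots)$ at each level and that ``same image over $W$'' is preserved when one applies $(-)^{1/p^e}$ after $\otimes_R L$ and then composes with $\phi \otimes \ldots$, using right-exactness of tensor. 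Since the $\bb_n$ are sums $\sum_{i=0}^n \phi^i(\ldots)$ rather than single images, I would organize the argument so that the only ``new'' generator at each inductive step is the top one, $\phi^{n+1}((I\cdot L^{\cdots})^{1/p^{(n+1)e}})$, and all lower ones are handled by the inductive hypothesis; this reduces the whole thing to exactly the single-term stabilization already carried out for $\sigma$.

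Therefore the proof I would write is: ``The argument is identical to that of \autoref{prop.StabilizingSigma}, with the direction of the containments reversed and with $\phi^n$ replaced by the partial sums $\bb_n$. We induct on $n \geq t$; the base case is \autoref{eq.StabilizingUForTau}. For the inductive step, apply the functor $\phi \circ \big((-\otimes_R L)^{1/p^e}\big)$ to the equality $\Image(\bb_n \otimes_{A^{1/p^{ne}}} A^{1/p^{(n+1)e}} \to R_{A^{1/p^{(n+1)e}}})|_W = \bb_{n+1}|_W$; by right-exactness of tensor this operation sends the first sheaf's restriction onto the $(n+2)$-nd-level new generator $\phi^{n+1}((I\cdot L^{\cdots})^{1/p^{(n+2)e}})|_W$ and onto the corresponding piece coming from $\bb_{n+1}$, so the two agree over $W$. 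Adding the extension of $\bb_{n+1}$ itself (which by induction equals the extension of $\bb_n$, already contained in $\bb_{n+1} \subseteq \bb_{n+2}$) gives $\Image(\bb_{n+1} \otimes_{A^{1/p^{(n+1)e}}} A^{1/p^{(n+2)e}} \to R_{A^{1/p^{(n+2)e}}})|_W = \bb_{n+2}|_W$, completing the induction.''
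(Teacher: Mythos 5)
Your proposal is correct and follows essentially the same inductive approach as the paper: reduce to the top partial-sum term and apply $\phi \circ \big((\,\cdot\, \otimes_R L)^{1/p^e}\big)$, using right-exactness of tensor, to pass from stabilization at level $n$ to level $n+1$. The paper is just slightly more explicit in first rewriting the inductive hypothesis as a single containment $\Image\big(\phi^{n+1}(\cdots)\big) \subseteq \sum_{i=0}^n \Image\big(\phi^i(\cdots) \otimes_{A^{1/p^{ie}}} A^{1/p^{(n+1)e}}\big)$ before applying the functor, but the mechanism is identical.
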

\begin{proof}
Replacing $V$ by $U$ and $X$ by $W$,  we may assume that
\[
\bb_{t+1} = \Image\Big(\bb_{t} \otimes_{A^{1/p^{te}}} A^{1/p^{(t+1)e}} \to R_{A^{1/p^{(t+1)e}}} \Big)
\]
By induction on $n$, it suffices to show that, if $\bb_{n+1}=\Image(\bb_{n} \otimes_{A^{1/p^{ne}}} A^{1/p^{(n+1)e}} \to R_{A^{1/p^{(n+1)e}}})$, then $\bb_{n+2}=\Image(\bb_{n+1} \otimes_{A^{1/p^{(n+1)e}}} A^{1/p^{(n+2)e}} \to R_{A^{1/p^{(n+2)e}}})$. Note that $\bb_{n+1}=\Image(\bb_{n} \otimes_{A^{1/p^{ne}}} A^{1/p^{(n+1)e}} \to R_{A^{1/p^{(n+1)e}}})$ if and only if
\begin{align}
&\Image(\phi^{n+1}((I \cdot L^{p^{(n+1)e} - 1 \over p^e - 1})^{1 /p^{(n+1)e}})\to R_{A^{1/p^{(n+1)e}}}) \notag\\
&\subseteq \sum_{i = 0}^{n} \Image\Big(\phi^i((I \cdot L^{p^{ie} - 1 \over p^e - 1})^{1 /p^{ie}}) \otimes_{A^{1/p^{ie}}} A^{1/p^{(n+1)e}} \to R_{A^{1/p^{(n+1)e}}}\Big)
\notag
\end{align}
Tensoring with $\otimes_{R} L$, taking $1/p^e$th roots, and applying $\phi$ yields
\begin{align}
&\Image(\phi^{n+2}((I \cdot L^{p^{(n+2)e} - 1 \over p^e - 1})^{1 /p^{(n+2)e}})\to R_{A^{1/p^{(n+2)e}}}) \notag\\
&\subseteq \sum_{i = 1}^{n+1} \Image\Big(\phi^i((I \cdot L^{p^{ie} - 1 \over p^e - 1})^{1 /p^{ie}}) \otimes_{A^{1/p^{ie}}} A^{1/p^{(n+2)e}} \to R_{A^{1/p^{(n+2)e}}}\Big)
\notag
\end{align}
Hence,
\[\bb_{n+2}=\Image(\bb_{n+1} \otimes_{A^{1/p^{(n+1)e}}} A^{1/p^{(n+2)e}} \to R_{A^{1/p^{(n+2)e}}}).\]
This finishes the proof.
\end{proof}

As before, we define relative test ideals as follows.

\begin{definition}
The \emph{$n$th limiting relative test ideal with respect to $I$ (of the pair $(X/V, \phi)$)} is defined to be $\bb_n \subseteq R_{A^{1/p^{ne}}}$ and is denoted by $\tau_n(X/V, \phi I)$.
\end{definition}

Before proceeding to various properties of relative test ideals, we give one example of relative test ideals.

\begin{example}\textnormal{(\autoref{MustataYoshidaExample} revisited)}
Fix $k$ to be an algebraically closed field of characteristic $p > 2$, set $A = k[t]$ and set $R = k[x,t]$ with the obvious map $X=\Spec(R) \to V=\Spec(A)$.
Let $\phi : R^{1/p} = k[x^{1/p}, t^{1/p}] \to R_{A^{1/p}} = k[x,t^{1/p}]$ be the composition of the local generator $\beta \in \Hom_{R_{A^{1/p}}}(R^{1/p}, {R_{A^{1/p}}})$ with pre-multiplication by $(x^{p^2} + t )^{1\over p} = f^{1\over p}$.  Set $I=\langle x^p+t \rangle$. Then
\[\bb_1=\Image(I\otimes_AA^{1/p}\to R_{A^{1/p}})+\phi(I^{1/p})=\langle x^p+t \rangle +\langle (x^p+t^{1/p})(x+t^{1/p}) \rangle =\langle x^p+t, (x^p+t^{1/p})(x+t^{1/p}) \rangle.\]
To calculate $\bb_2$, it suffices to calculate $\Image(\phi^2(I^{1/p^2}))=\Image(\beta^2((x^{p^2}+t)^{\frac{1+p}{p^2}}I^{1/p^2}))$. Since $(x^{p^2}+t)^{\frac{1+p}{p^2}}\in R_{A^{1/p^2}}$, we can see that $\Image(\phi^2(I^{1/p^2}))\subseteq \langle (x^{p^2}+t)^{\frac{1+p}{p^2}} \rangle$. On the other hand, it is straightforward to check that $\beta^2((x^{p^2}+t)^{\frac{1+p}{p^2}}x^{\frac{p^2-p-1}{p^2}}(x^p+t)^{1/p^2})=(x^{p^2}+t)^{\frac{1+p}{p^2}}=(x+t^{1/p^2})(x^p+t^{1/p}) $. Therefore,
\[\bb_2=\langle x^p+t, (x^p+t^{1/p})(x+t^{1/p}),(x+t^{1/p^2})(x^p+t^{1/p}) \rangle.\]
For each $i\geq 2$, to calculate $\bb_{i+1}$, it suffices to calculate $\Image(\beta^{i+1}((x^{p^2}+t)^{\frac{1+p+\cdots +p^i}{p^{i+1}}}I^{1/p^{i+1}}))$. It is straightforward to check that it is contained in $\langle (x^p+t^{1/p})(x+t^{1/p^2}) \rangle$ and hence is contained in $\bb_2$. Therefore, $\bb_{i+1}=\bb_2$ as well. So we have
\[\bb_n=\begin{cases} \langle x^p+t, (x^p+t^{1/p})(x+t^{1/p}) \rangle & {\rm when\ }n=1\\ \bb_2=\langle x^p+t, (x^p+t^{1/p})(x+t^{1/p}),(x+t^{1/p^2})(x^p+t^{1/p})\rangle & n\geq 2.\end{cases}\]

\end{example}

We now prove a base change statement for relative test ideals.  Compare with \autoref{prop.RelativeNonFPureIdealsAndBaseChange}.  As before, we fix $q_i : X \times_{V} T^i \to X \times_V V^i$ to be the natural map.

\begin{theorem}[Relative test ideals and base change]
\label{thm.BaseChangeTauInitial}

Fix $g : T \to V$ to be any morphism with $T$ excellent, integral and admitting a dualizing complex.  Then
\[
\Image\left((q_{ne})^* \tau_n(X/V, \phi I) \hookrightarrow \O_{X_{T^{ne}}}\right) :=  \tau_n(X/V, \phi I) \cdot \O_{X_{T^{ne}}} = \tau_n(X_T/T, \phi_T I_T).
\]
Furthermore, if $U = U_n$ satisfies the condition of \autoref{lem.StabilizingTau}, then $W = g^{-1}(U) \subseteq T$ satisfies the same condition for $\tau_n(X_T/T, \phi_T I_T)$.
\end{theorem}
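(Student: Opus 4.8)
The plan is to mimic the proof of \autoref{prop.RelativeNonFPureIdealsAndBaseChange}: both $\tau_n(X/V, \phi I) = \bb_n$ and the stabilizing condition of \autoref{lem.StabilizingTau} are built entirely out of images of maps and finite sums of submodules of a fixed coherent sheaf, and both of these operations commute with arbitrary base change because $\otimes$ is right exact. Concretely, I would first recall the description
\[
\tau_n(X/V, \phi I) = \bb_n = \sum_{i = 0}^n \Image\Big(\phi^i\big((I \cdot L^{p^{ie} - 1 \over p^e - 1})^{1 /p^{ie}}\big) \otimes_{A^{1/p^{ie}}} A^{1/p^{ne}} \to R_{A^{1/p^{ne}}} \Big)
\]
and base change each summand along $q_{ne}$. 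The two identifications needed are $(\phi^i)_T = (\phi_T)^i$, which is \autoref{lem.BaseChangeForPhin}, and the canonical facts that $I_T := I\cdot\O_{X_T}$ is the image of $q_0^* I$ and that $q_e^* L^{1/p^e} \cong L_T^{1/p^e}$ compatibly with $\phi$, both of which are part of the setup of \autoref{subsec.BaseChangeOfPhi}. Granting these, the base change of the $i$-th summand of $\bb_n$ is precisely the $i$-th summand in the analogous description of $\tau_n(X_T/T, \phi_T I_T)$; since base change commutes with the formation of finite sums of submodules of a fixed module (again by right exactness), summing over $i$ gives the first equality $\tau_n(X/V, \phi I) \cdot \O_{X_{T^{ne}}} = \tau_n(X_T/T, \phi_T I_T)$.

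For the second assertion, I would start from the condition of \autoref{lem.StabilizingTau} over $U$, namely $\Image(\bb_n \otimes_{A^{1/p^{ne}}} A^{1/p^{(n+1)e}} \to R_{A^{1/p^{(n+1)e}}})|_{f^{-1}(U)} = \bb_{n+1}|_{f^{-1}(U)}$, and restrict it over the preimage of $W = g^{-1}(U)$ in $X_T$. Since $f \circ \pi = g \circ f_T$ for the projection $\pi : X_T \to X$, that preimage is just the pullback of $f^{-1}(U)$, so restriction over it commutes with base change; combining this with the fact that images commute with base change and with transitivity of base change, and then invoking the first assertion to rewrite $\bb_n \cdot \O_{X_{T^{ne}}}$ as $\tau_n(X_T/T, \phi_T I_T)$ and $\bb_{n+1} \cdot \O_{X_{T^{(n+1)e}}}$ as $\tau_{n+1}(X_T/T, \phi_T I_T)$, I would land exactly on the stabilizing condition of \autoref{lem.StabilizingTau} for $\tau_n(X_T/T, \phi_T I_T)$ over $W$.

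I do not expect a serious obstacle here: everything reduces to right exactness of $\otimes$ together with \autoref{lem.BaseChangeForPhin}. The one point needing a little care is the bookkeeping of which $p^e$-power field extension of $A$ (respectively $\O_T$) each tensor factor sits over, and checking that the identifications $q_e^* L^{1/p^e} \cong L_T^{1/p^e}$, $q_0^* I$ surjecting onto $I_T$, etc., are canonical enough to be compatible with the maps $\phi^i$ and their base changes — but these compatibilities are already established in \autoref{subsec.BaseChangeOfPhi} and \autoref{lem.BaseChangeForPhin}, so verifying this amounts to routine diagram-chasing.
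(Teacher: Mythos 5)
Your proposal is correct and follows essentially the same route as the paper: write $\tau_n(X/V,\phi I)=\bb_n$ as a finite sum of images, observe that right exactness of $\otimes$ lets the base change along $q_{ne}$ pass through images and finite sums, and invoke \autoref{lem.BaseChangeForPhin} to identify $(\phi^i)_T$ with $(\phi_T)^i$, yielding the description of $\tau_n(X_T/T,\phi_T I_T)$ term by term. The deduction of the stabilizing condition over $W=g^{-1}(U)$ from the first assertion is likewise exactly the argument the paper intends (mirroring \autoref{prop.RelativeNonFPureIdealsAndBaseChange}).
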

\begin{proof}
We write $B = \O_T$ and work locally.
By construction and right exactness of tensor:
\[
\tau_n(X/V, \phi) \cdot \O_{X_{T^{ne}}} = \bb_{n} \tensor_{A^{1/p^{ne}}} B^{1/p^{ne}} =  \sum_{i = 0}^n \big(\phi^i((I \cdot L^{p^{ie} - 1 \over p^e - 1})^{1 /p^{ie}}) \otimes_{A^{1/p^{ie}}} B^{1/p^{ne}} \big)\subseteq R_{B^{1/p^{ne}}}.
\]
But by \autoref{lem.BaseChangeForPhin}, we have
\[
\begin{array}{rl}
&  \displaystyle\sum_{i = 0}^n \big(\phi^i((I \cdot L^{p^{ie} - 1 \over p^e - 1})^{1 /p^{ie}}) \otimes_{A^{1/p^{ie}}} B^{1/p^{ne}} \big) \\
= &  \displaystyle\sum_{i = 0}^n \big(\phi_T^i((I_T \cdot L_T^{p^{ie} - 1 \over p^e - 1})^{1 /p^{ie}}) \otimes_{B^{1/p^{ie}}} B^{1/p^{ne}} \big)\\
= & \tau_n(X_T/T, \phi_T I_T)
 \end{array}
\]
which completes the proof.
\end{proof}

\begin{theorem}[Restriction of $\tau_n$ to fibers]
\label{thm.BaseChangeTauGeneral}
With notation as above, there exists an integer $N \geq 0$, such that for all points $s \in V$, $N \geq n_{\tau(\phi_s I), k(s)}$.  In other words, we have both that $\tau_n(X/V, \phi)\cdot \O_{X_{s^{ne}}} = \tau_n(X_s/s, \phi_s)$ (which always holds) and also that for all $m \geq n \geq N$ that
\[
\tau_n(X/V, \phi I) \otimes_{V^{n}} k(s)^{1/p^m} = \tau_m(X_s/s, \phi_s I_s).
\]
\end{theorem}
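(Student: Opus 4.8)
The plan is to mirror the proof of \autoref{thm.BaseChangeSigmaGeneral} almost verbatim, with \autoref{lem.StabilizingTau} playing the role that \autoref{prop.StabilizingSigma} played there, and \autoref{thm.BaseChangeTauInitial} playing the role of \autoref{prop.RelativeNonFPureIdealsAndBaseChange}. First I would handle a dense open locus: over the generic point $\eta \in V$ the ascending chain $\bb^\infty_n$ stabilizes at $N_0 := n_{\tau(\phi I), k(V)}$ by definition, and \autoref{lem.StabilizingTau} produces a dense open $U_0 \subseteq V$ over which $\Image(\bb_n \otimes_{A^{1/p^{ne}}} A^{1/p^{(n+1)e}} \to R_{A^{1/p^{(n+1)e}}}) = \bb_{n+1}$ for all $n \geq N_0$. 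For a point $s \in U_0$, \autoref{thm.BaseChangeTauInitial} gives $\tau_n(X_s/s, \phi_s I_s) = \tau_n(X/V,\phi I)\cdot \O_{X_{s^{ne}}}$ and shows the stabilizing open set passes to the fiber; a further application of \autoref{thm.BaseChangeTauInitial} to the base change $\Spec k(s)^{1/p^\infty} \to V$ (a field, hence excellent, integral, with a dualizing complex) then identifies the stabilized chain over $k(s)^{1/p^\infty}$ with the $\bb^\infty$-chain of the fiber, whence $n_{\tau(\phi_s I_s), k(s)} \leq N_0$ for every $s \in U_0$. The displayed equality for $s \in U_0$ and $m \geq n \geq N_0$ is then immediate, since both $\tau_n(X/V, \phi I)\otimes_{V^{n}} k(s)^{1/p^m}$ and $\tau_m(X_s/s, \phi_s I_s)$ equal the stabilized value of the chain.

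Next I would run the Noetherian induction on $\dim V$ exactly as in \autoref{thm.BaseChangeSigmaGeneral}: set $V'_1 = V \setminus U_0$ with reduced structure, let $i_1 : V_1 = (V'_1)_{\reg} \hookrightarrow V'_1$ be its regular locus, so that $\dim V_1 < \dim V$, and base change to get $X_{V_1} \to V_1$ together with $\phi_{V_1}$ and $I_{V_1}$; each fiber of $X_{V_1} \to V_1$ is isomorphic to a fiber of $X \to V$, and \autoref{thm.BaseChangeTauInitial} is compatible with this base change. By induction on dimension there is an integer $N_1$ and a dense open $U_1 \subseteq V_1$ with $N_1 \geq n_{\tau(\phi_s I_s), k(s)}$ for $s \in U_1$. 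Repeating with $V'_2 = V'_1 \setminus U_1$, its regular locus, and so on, the process terminates, and $N := \max\{N_0, N_1, N_2, \dots\}$ works for every point of $V$. The displayed identity for an arbitrary $s \in V$ then follows from \autoref{thm.BaseChangeTauInitial} together with the fact that, once $n \geq N \geq n_{\tau(\phi_s I_s), k(s)}$, the chain $\tau_m(X_s/s, \phi_s I_s)$ is constant in $m \geq n$ and agrees with the base change of $\tau_n(X/V, \phi I)$.

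The main obstacle I expect is the bookkeeping of the iterated base changes needed to make the stabilization index genuinely transfer: one must verify that the open set from \autoref{lem.StabilizingTau}, after restriction first to a stratum $V_j$, then to a point $s$ of it, and finally to the non-Noetherian scheme $\Spec k(s)^{1/p^\infty}$, still controls the chain $\bb^\infty_n$ of the fiber over $s$. This is precisely what \autoref{thm.BaseChangeTauInitial} is designed to supply, but one needs the ``inverting an element is the same as inverting its $p$-th power'' and generic-freeness type maneuvers of \autoref{prop.StabilizingSigma} to interact correctly with the ascending (rather than descending) chains here, and to check that each $\Spec k(s)^{1/p^\infty}$ is an admissible base both for \autoref{thm.BaseChangeTauInitial} and for the definition of $n_{\tau(\phi_s I_s), k(s)}$.
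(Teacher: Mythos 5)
Your proposal is correct and follows the paper's approach exactly: the paper's proof of this theorem is only a two-sentence sketch pointing to the same Noetherian-stratification argument from \autoref{thm.BaseChangeSigmaGeneral}, with \autoref{lem.StabilizingTau} and \autoref{thm.BaseChangeTauInitial} playing the roles you identify. Your more detailed write-up, including the caveat about checking that $\Spec k(s)^{1/p^\infty}$ is an admissible base, is a faithful and slightly more careful expansion of that sketch.
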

\begin{proof}
The idea is the same as for \autoref{thm.BaseChangeSigmaGeneral} and we only sketch it here.  We stratify $V$ as follows.  The result holds on a dense open subset of $U_0 \subseteq V$.  Let $V_1'$ be the complement and let $V_1$ denote the regular locus of $V_1'$.  Base change to $V_1$, and then repeat.  This procedure stops after finitely many iterations by Noetherian induction and we choose an $N$ that works for them all.
\end{proof}

We now construct $I \subseteq R$ whose restriction is contained in the test ideal of every geometric fiber.

\begin{proposition}[The existence of relative test elements]
\label{prop.ExistRelativeTestElements}
With notation as above suppose additionally that $f : X \to V$ is relatively G1 and S2. Then there exists an ideal $I \subset \O_X$ such that for every point $s \in V$ and every perfect extension $K \supseteq k(s)$, we have that $I_K \subseteq \tau(X_K, \psi_K)$ where again $\psi_K$ is simply $\phi_K$ interpreted as in \cite{BlickleSchwedeSurveyPMinusE}.  Additionally, we can assume that $I = \O_X$ at all points of $X$ such that both $X/V$ is smooth and that $\phi$ locally generates $\sHom_{\O_X}(L^{1/p^e}, R_{A^{1/p^e}})$. Furthermore, if $V$ is strongly $F$-regular and quasi-Gorenstein, then $I$ can additionally be chosen so that $I_{V^e}$ is within the absolute test ideal of $(X_{V^e}, \gamma)$ (where $\gamma$ is as in \autoref{subsec.IteratedNonFPureIdealsVsAbsolute}).
\end{proposition}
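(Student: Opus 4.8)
The plan is to produce $I$ by a combination of a "uniform test element" argument on the total space and a base-change reduction to the finitely many geometric generic points of a stratification of $V$. First I would recall the classical fact (Hochster--Huneke, see also \cite{HochsterHunekeFRegularityTestElementsBaseChange}) that for a reduced $F$-finite ring essentially of finite type the test ideal contains a nonzero element $c$ which remains a test element after localization and completion; I would combine this with the fact that for $X \to V$ relatively G1 and S2 with geometrically reduced fibers, the relative Jacobian-type ideal $\mathfrak{J}$ of $X/V$ restricts, on each geometric fiber, to an ideal contained in (a power of) the Jacobian ideal of that fiber, hence by the theory of test elements (the version that works for G1 + S2 rings, \cite{SchwedeFAdjunction} and the non-normal test-element results) a suitable power of $\mathfrak{J}$ restricts into the test ideal $\tau(X_K,\psi_K)$ of every geometric fiber. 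Since $X/V$ is smooth and $\phi$ is a local generator exactly off a closed set, $\mathfrak{J}$ together with the ideal cutting out the non-generating locus of $\phi$ can be taken to be the unit ideal there, which gives the second assertion.

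The subtlety is that "$I_K \subseteq \tau(X_K,\psi_K)$ for every perfect point" is an infinite family of conditions, and a single algebraic ideal $I$ must satisfy all of them simultaneously. Here I would invoke the Noetherian-induction/stratification machinery already used in the proofs of \autoref{thm.BaseChangeSigmaGeneral} and \autoref{thm.BaseChangeTauGeneral}: over a dense open $U_0 \subseteq V$ one checks, using generic freeness and the fact that $\tau$ commutes with the relevant flat base changes (so that the containment $I_{k(\eta)} \subseteq \tau(X_{\eta},\psi_{\eta})$ at the generic point $\eta$ spreads out), that the candidate $I$ works for all $s \in U_0$ and all perfect $K \supseteq k(s)$; then one passes to the regular locus of $V \setminus U_0$, base changes, and repeats. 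Because each geometric generic fiber of a stratum is of finite type over a perfect field, one may apply the absolute theory of test ideals fiberwise. After finitely many steps one has finitely many ideals $I^{(0)}, I^{(1)}, \dots$, each defined on a locally closed piece; taking the intersection of their extensions to $X$ (or, more carefully, taking a product with ideals defining the strata so as to get a genuine ideal sheaf on all of $X$) yields the desired global $I$. One has to be slightly careful that shrinking to get $I=\O_X$ on the good locus is compatible with this intersection, but on the good locus every $I^{(j)}$ is already the unit ideal, so the intersection is too.

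For the last sentence, assuming $V$ is strongly $F$-regular and quasi-Gorenstein: here $\gamma$ on $X_{V^e}$ is built from $\phi$ and the generator $\Phi_A^e$ as in \autoref{subsec.IteratedNonFPureIdealsVsAbsolute}, and by \autoref{lem.ChoiceOfDeltaPhiEqualsDeltaGamma} the divisor $\Delta_\gamma$ is the pullback $h^*\Delta_\phi$; moreover $V$ strongly $F$-regular forces $A^{1/p^e}$ to be a "nice" (in particular, for $V$ regular, flat — and in general still split) $A$-algebra, so $X_{V^e} \to V^e$ is again relatively G1 and S2 and the relative Jacobian ideal of $X_{V^e}/V^e$ is the pullback of that of $X/V$. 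Then the uniform-test-element statement applied to $X_{V^e}$ shows a power of $\mathfrak{J}_{X_{V^e}/V^e}$ lies in the absolute test ideal $\tau(X_{V^e},\gamma)$ — here one uses that $V$ itself being strongly $F$-regular makes $\tau(V)=\O_V$, so there is no loss coming from the base — and since this pullback ideal is $I_{V^e}$ for our chosen $I$ (after possibly enlarging the power used in defining $I$), we are done. The main obstacle I anticipate is the first part: verifying that the relative Jacobian (or an appropriate relative test-element ideal) genuinely restricts into $\tau$ of every geometric fiber uniformly, rather than just over a dense open — this is exactly where the stratification is forced on us, and where one must be careful that the relevant test-element theory is available in the G1 + S2 (non-normal) setting and is compatible with the field extensions $k(s) \subseteq K$, which is handled by the analogue of \autoref{lem.EasyContainmentOfNonFPureViaFieldExtension} for $\tau$.
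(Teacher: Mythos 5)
There is a genuine gap: you assume that a stratification/Noetherian-induction argument is ``forced on us'' to handle all perfect points simultaneously, but this introduces a step you never actually carry out — after producing finitely many ideals $I^{(0)}, I^{(1)}, \dots$ each defined only on a locally closed stratum, you suggest taking ``the intersection of their extensions to $X$ (or\dots a product with ideals defining the strata)'' but give no argument that such a global ideal sheaf would still satisfy the required containment fiberwise on every stratum. This is not a routine gluing, since the resulting ideal would in general be strictly smaller than each $I^{(j)}$ on its stratum, and re-verifying the test-element containment for the smaller ideal brings you back to square one. The paper sidesteps stratification entirely: let $Z = Z_1 \cup Z_2$ be the union of the non-generating locus of $\phi$ and the non-smooth locus of $X/V$, choose $m$ so that $I_Z^m \cdot R_{A^{1/p^e}} \subseteq \sigma_1(X/V,\phi) = \Image(\phi)$, and then choose $l$ so that $I_Z^{ml}$ is locally generated by cubes of elements of $I_Z^m$. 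Because images of sheaf maps are compatible with \emph{arbitrary} base change (right exactness of tensor), the containment $I^m_{Z,K} \subseteq \Image(\phi_K) = \Image(\psi_K)$ holds uniformly for every perfect point $s$ and every perfect $K \supseteq k(s)$, with no dense-open or Noetherian-induction argument needed. One then invokes the classical uniform test-element criterion (\cite[Theorem on Page~90]{HochsterFoundations} or \cite[Proof of Proposition~3.21]{SchwedeTestIdealsInNonQGor}): an element $c$ not vanishing at any generic point with $c^3$ in the image of $\psi_K$ lies in $\tau(X_K, \psi_K)$; the ``generated by cubes'' condition on $I = I_Z^{ml}$ makes this apply generator by generator.

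The key idea you missed is that the containment ``(power of $I$) $\subseteq \Image(\phi)$'' is a single algebraic identity on the total space that pushes down to every fiber automatically, so the ``infinite family of conditions'' collapses to one. Your concern about compatibility of test-element theory with field extensions is then a non-issue, because you never compare test ideals across extensions — you only compare images of sheaf maps, which behave well trivially. The last sentence of the proposition (about $\gamma$ on $X_{V^e}$) is handled by exactly the same mechanism: one checks that $I_Z^m$ also lies in $\Image(\gamma)$, since $\gamma$ is obtained from $\phi$ by composition with surjections, and the same references apply.
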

We caution the reader that it is possible that $\psi_s$ (and thus $\psi_K$) could be the zero-map, and hence $I_K$ the zero ideal.
\begin{proof}
First let $Z_1 \subseteq X$ denote the locus where $\phi R^{1/p^e} \subseteq \sHom_{R_{A^{1/p^e}}}(L^{1/p^e}, R_{A^{1/p^e}})$ is not an isomorphism.  Additionally let $Z_2$ denote the locus where $X$ is not smooth over $V$.  Set $Z = Z_1 \cup Z_2$.  Set $W = X \setminus Z$ (and note that we can also view this as a subset of $X_{V^e}$ for any $e$) and notice that on $W$ we know that $\phi$ can be identified with the trace map up to multiplication by a unit.

Since $f|_W$ is smooth, we observe that $\phi$ (which is identified with trace) is surjective when restricted to $W$ by \autoref{cor.SmoothMapImpliesSurjective}.  Now choose an exponent $m > 0$ such that
\[
I_Z^m \cdot R_{A^{1/p^e}} \subseteq \ba_{1,1} = \sigma_1(X/S, \phi)
\]
Now choose an integer $l > 0$ such that $I_Z^{ml}$ is locally generated by cubes of elements of $I_Z^m$ (this only depends on the number of local generators of $I_Z$).  Then the formation of $I_Z^{ml}$ is obviously compatible with base change (in that the extension of $I_Z^{ml}$ to the base change will also satisfy the same containment condition.).  Thus set $I = I_Z^{ml}$.  The first result follows immediately from \cite[Theorem on Page 90]{HochsterFoundations} or \cite[Proof of Proposition 3.21]{SchwedeTestIdealsInNonQGor}.

For the second result, we notice that $\gamma$ locally generates $\sHom_{R_{A^{1/p^e}}}(N^{1/p^e}, R_{A^{1/p^e}})$ and that $I_Z^{m}$ is also in the image of $\gamma$.  Therefore, $I^{ml}$ works by the above references.
\end{proof}
Therefore we obtain:

\begin{corollary}
\label{cor.RestrictionOfRelativeTauToFibers}
Using the notation of \autoref{thm.BaseChangeTauGeneral} assume further that $I$ satisfies the condition of \autoref{prop.ExistRelativeTestElements}.  If $s$ is a perfect point then for all $n \geq N$ as in \autoref{thm.BaseChangeSigmaGeneral}
\[
\tau_n(X/V, \phi I) \otimes_{V^{n}} k(s)^{1/p^m} = \tau(X_s, \psi_s)
\]
 where $\psi_s$ is $\phi_s$ interpreted as a $p^{-e}$-linear map.
 \end{corollary}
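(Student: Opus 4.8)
The plan is to reduce the statement to \autoref{thm.BaseChangeTauGeneral} together with the standard description of the absolute test ideal as a stabilized ascending sum, using that at a perfect point the relative construction collapses onto the absolute one.

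First I would record that, since $s$ is a perfect point, $k(s) = k(s)^{1/p^{ie}}$ for every $i$, so $X_{s^{me}} \cong X_s$ and the base change $\phi_s : L_s^{1/p^e} \to R_{s^e}$ is identified with the absolute $p^{-e}$-linear map $\psi_s$ of \autoref{sec.BaseChangeToPerfectPoints}. Under this identification, unwinding the definition of $\tau_m(X_s/s, \phi_s I_s)$ shows that the ideal $\bb_m$ computed over the base $\Spec k(s)$ is exactly $\bb_m^{\infty}$ of \eqref{eq.DefiningBBInfinity} formed with $\psi_s$ and $I_s$ in place of $\psi_\infty$ and $I_\infty$, that is, $\sum_{i=0}^m \psi_s^i\big((I_s \cdot L_s^{(p^{ie}-1)/(p^e-1)})^{1/p^{ie}}\big)$; in particular $n_{\tau(\phi_s I), k(s)}$ is precisely the index at which this chain stabilizes (again because base change to $k(s)^{1/p^\infty}$ is trivial at a perfect point).

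Next I would apply \autoref{thm.BaseChangeTauGeneral} to obtain the uniform $N \geq 0$ with $N \geq n_{\tau(\phi_s I), k(s)}$ for every $s$ and $\tau_n(X/V, \phi I) \otimes_{V^{n}} k(s)^{1/p^m} = \tau_m(X_s/s, \phi_s I_s)$ for all $m \geq n \geq N$. By the previous paragraph, for $m \geq N$ the right-hand side equals $\bigcup_i \bb_i^{\infty} = \tau(X_s, \psi_s I_s)$. Finally, \autoref{prop.ExistRelativeTestElements} guarantees that $I$ was chosen so that $I_s \subseteq \tau(X_s, \psi_s)$ and $I_s$ is nonzero on each component of $X_s$ on which $\psi_s$ does not vanish (on the remaining components both $I_s$ and $\tau(X_s,\psi_s)$ vanish). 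Hence $I_s$ contains a test element componentwise, and the standard fact that such a coefficient does not shrink the test ideal --- \cite[Section 7]{SchwedeTuckerTestIdealSurvey} --- gives $\tau(X_s, \psi_s I_s) = \tau(X_s, \psi_s)$. Concatenating the equalities proves $\tau_n(X/V, \phi I) \otimes_{V^{n}} k(s)^{1/p^m} = \tau(X_s, \psi_s)$ for all $m \geq n \geq N$.

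I do not expect a serious obstacle here: the content is essentially the bookkeeping needed to reconcile three flavours of \emph{uniform stabilization index} --- the one over $k(V^\infty)$ built into $\tau_n$, the per-fibre $n_{\tau(\phi_s I), k(s)}$, and the absolute stabilization of the test-ideal chain on $X_s$ --- all of which are matched up by \autoref{thm.BaseChangeTauGeneral} and the triviality of $\otimes_{k(s)} k(s)^{1/p^\infty}$ at a perfect point. The one spot requiring a little care is the degenerate case in which $\psi_s$, hence $I_s$, vanishes on some component of $X_s$, which is handled componentwise as indicated.
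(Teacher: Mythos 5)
Your proposal is correct and takes essentially the same approach as the paper. The paper's proof is a two-sentence version of your argument — ``observe the ascending $\tau_j(X_s/s,\phi_s I_s)$ stabilize to $\tau(X_s,\psi_s)$, and by \autoref{thm.BaseChangeTauGeneral} the chain stabilizes'' — and your write-up simply supplies the details left implicit there: the identification at a perfect point of $\tau_m(X_s/s,\phi_s I_s)$ with the $m$th partial sum of the absolute test-ideal chain for $\psi_s$, the role of \autoref{prop.ExistRelativeTestElements} in guaranteeing $I_s \subseteq \tau(X_s,\psi_s)$ so the stabilized sum is the full test ideal (with your sensible componentwise remark for the degenerate case $\psi_s = 0$), and the use of the uniform bound $N$ to cap the stabilization index independently of $s$.
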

 \begin{proof}
Simply observe that in general the ascending $\tau_{j}(X_s/s, \phi_s I_s) \subseteq \tau_{j+1}(X_s/s, \phi_s I_s) \subseteq \ldots$ stabilize to $\tau(X_s, \psi_s)$.  Furthermore, by \autoref{thm.BaseChangeTauGeneral} this ascending chain stabilizes.
\end{proof}

\begin{corollary}
\label{cor.FRegularityofPerfectClosyreImpliesGeneralClosedPoint}
Using the notation of \autoref{thm.BaseChangeTauGeneral} and assume further that $I$ satisfies the condition of \autoref{prop.ExistRelativeTestElements}.  If the perfect closure of the generic fiber of $f:X\to V$ is strongly $F$-regular, {\it i.e.} $\tau(R_{k(V^{\infty})}, \psi_{\infty}I_{\infty})=R_{k(V^{\infty})}$, then there exists an open subset $U\subseteq V$ such that $(X_s,\psi_s)$ is also strongly $F$-regular for each perfect point $s\in U$.
\end{corollary}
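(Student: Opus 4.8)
The plan is to deduce this from the spreading-out results already established, namely \autoref{thm.BaseChangeTauGeneral}, \autoref{cor.RestrictionOfRelativeTauToFibers}, and \autoref{thm.BaseChangeTauInitial}. First I would recall that by \autoref{thm.BaseChangeTauGeneral} there is an integer $N$ so that $\tau_n(X/V,\phi I)$ ``computes'' the test ideal of every fiber once $n \geq N$; in particular, for the generic point $\eta\in V$ we have $\tau_n(X/V,\phi I)\otimes_{V^n} k(\eta)^{1/p^m} = \tau_m(X_\eta/\eta,\phi_\eta I_\eta)$ for all $m\geq n\geq N$, and these stabilize (as $m\to\infty$) to $\tau(R_{k(V^\infty)},\psi_\infty I_\infty)$. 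The hypothesis is exactly that this last module equals $R_{k(V^\infty)}$, i.e. $\tau_n(X_\eta/\eta,\phi_\eta I_\eta)\otimes k(\eta)^{1/p^\infty} = R_{k(V^\infty)}$ for $n\geq N$.

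Next I would pass this equality down to a suitable open set. Since $\tau_N(X/V,\phi I)\subseteq R_{A^{1/p^{Ne}}}$ is a coherent ideal and its extension to the generic point $k(V^\infty)$ is the whole ring, by generic freeness (\cite[Theorem 14.4]{EisenbudCommutativeAlgebraWithAView}) — exactly as in the proof of \autoref{prop.StabilizingSigma} — there is a nonempty open $U\subseteq V$ over which $\tau_N(X/V,\phi I)|_{f^{-1}(U)} = R_{A^{1/p^{Ne}}}|_{f^{-1}(U)}$, i.e. $\tau_N(X/V,\phi I)$ is already the full (unit) ideal over $U$. (Here I may have to shrink $U$ further so that it also lies inside the open set $U_N$ of \autoref{lem.StabilizingTau}, though that is not strictly needed for the conclusion.) Intersecting with a neighborhood where everything is defined, I obtain a dense open $U$.

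Finally, for any perfect point $s\in U$, I apply base change. By \autoref{thm.BaseChangeTauInitial}, $\tau_N(X/V,\phi I)\cdot \O_{X_{s^{Ne}}} = \tau_N(X_s/s,\phi_s I_s)$; but the left side is the unit ideal since $\tau_N(X/V,\phi I)$ is the unit ideal over $U$ and extension of the unit ideal is the unit ideal. Hence $\tau_N(X_s/s,\phi_s I_s) = R_{s}$ (using $s$ perfect so that $s^{Ne}\cong s$ and $R_{k(s)^{1/p^{Ne}}}\cong R_s$). By \autoref{cor.RestrictionOfRelativeTauToFibers} the ascending chain $\tau_j(X_s/s,\phi_s I_s)$ stabilizes to $\tau(X_s,\psi_s)$, and since it has already reached the unit ideal at step $N$, we get $\tau(X_s,\psi_s) = R_s$, i.e. $(X_s,\psi_s)$ is strongly $F$-regular, as claimed.

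The only real subtlety — the ``main obstacle'' such as it is — is the passage from the statement over the non-Noetherian scheme $V^\infty = \Spec\O_V^{1/p^\infty}$ to a statement over an honest Noetherian open set of $V$: one must observe that $\tau(R_{k(V^\infty)},\psi_\infty I_\infty) = R_{k(V^\infty)}$ already forces $\tau_n(X_\eta/\eta,\phi_\eta I_\eta)\otimes k(\eta)^{1/p^m} = R$ for $m$ large and hence, because inverting an element and inverting its $p$-th power agree (the argument used in \autoref{prop.StabilizingSigma}), that $\tau_n(X_\eta/\eta,\phi_\eta I_\eta)$ is already the unit ideal over the residue field $k(\eta)$ itself, not just over its perfection. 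Once that is in hand, generic freeness and the established base-change compatibilities finish the argument routinely.
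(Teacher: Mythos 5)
Your proposal is correct and follows essentially the same approach as the paper, which cites the proof of \autoref{prop.StabilizingSigma} and \autoref{cor.RestrictionOfRelativeTauToFibers} in a single line. You have merely spelled out the details — generic freeness carrying the unit-ideal condition from the perfection $k(V^\infty)$ down to a dense open $U$, then base change and the ascending-chain stabilization on perfect fibers — which is exactly what those two references are meant to invoke.
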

\begin{proof}
It follows immediately from the proof of \autoref{prop.StabilizingSigma} and \autoref{cor.RestrictionOfRelativeTauToFibers}.
\end{proof}

\mysubsection{Relative test ideals vs absolute test ideals}
\label{subsec.RelativeTauVsAbsoluteTau}
We make the following assumption.

\begin{convention}
\label{conv.NiceBaseForTau}
For the rest of \autoref{subsec.RelativeTauVsAbsoluteTau}, we assume that $V$ is regular and $F$-finite.
\end{convention}
It is natural to try to relate the relative test ideal $\tau_n(X/V, \phi I)$ with the absolute test ideal $\tau(X_{V^{ne}}, \gamma^n)$ as in \autoref{subsec.IteratedNonFPureIdealsVsAbsolute}.  Indeed, if we construct $I$ as we did in the proof of \autoref{prop.ExistRelativeTestElements} then it follows easily that $I \cdot R_{A^{1/p^{ne}}} \subseteq \tau(X_{V^{ne}}, \gamma^n)$.  Thus fix such an $I$.

We now recall the following diagram from \autoref{lem.FactorizationOfSigmaImages}

{
\scriptsize
\begin{tikzcd}
\Big(L^{p^{ne} - 1 \over p^e - 1}\Big)^{1/p^{ne}} \otimes_{A^{1/p^{ne}}} \Big(M^{p^{ne} - 1 \over p^{e}-1}\Big)^{1/p^{(n+n)e}}
\arrow[r, "\gamma'"]
\arrow[d, "\mu_n"]
\arrow[ddr, "\gamma^n"' near end, rounded corners, to path={ -- ([xshift=-2ex]\tikztostart.west)  |- (\tikztotarget.west)\tikztonodes} ]
& \Big(L^{p^{(n-1)e} - 1 \over p^e - 1}\Big)^{1/p^{(n-1)e}} \otimes_{A^{1/p^{(n-1)e}}} \Big(M^{p^{(n-1)e} - 1 \over p^e - 1}\Big)^{1/p^{(n+n-1)e}}
\arrow[d, "\mu_{n-1}"]
\arrow[dd, "\gamma^{n-1}"' near start, rounded corners, to path={ -- ([xshift=2ex]\tikztostart.east) |- (\tikztotarget.east)\tikztonodes} ]
\\
\Big(L^{p^{ne} - 1 \over p^e - 1}\Big)^{1/p^{ne}}
\arrow[r]
\arrow[dr, "\varphi^n"']
& \Big(L^{p^{(n-1)e} - 1 \over p^e - 1}\Big)^{1/p^{(n-1)e}} \otimes_{A^{1/p^{(n-1)e}}} A^{1/p^{ne}}
\arrow[d, "\varphi^{n-1} \otimes \ldots"]
\\
& R_{A^{1/p^{ne}}}
\end{tikzcd}
}
\noindent
We let $I^{1/p^{je}}_{A^{1/p^{ke}}}$ denote the extension of $I^{1/p^{je}}$ to $R^{1/p^{je}}_{A^{1/p^{ke}}}$.  Then notice that $\mu_{j}$ sends $I^{1/p^{je}}_{A^{1/p^{(j + n)e}}}$ (times $\mu_j$'s domain) back \emph{onto} $I^{1/p^{je}}$ since each $\mu_j$ is surjective.

We then observe that:
\begin{lemma}
With notation as above and assuming \autoref{conv.NiceBaseForTau}, then $\tau_n(X/V, \phi I) \subseteq \tau(X_{V^{ne}}, \gamma)$.
\end{lemma}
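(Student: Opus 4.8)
The goal is to show that the relative test ideal $\tau_n(X/V, \phi I) = \bb_n \subseteq R_{A^{1/p^{ne}}}$ is contained in the absolute test ideal $\tau(X_{V^{ne}}, \gamma^n)$, where $\gamma$ is the map constructed in \autoref{subsec.IteratedNonFPureIdealsVsAbsolute}. Recall $\bb_n = \sum_{i=0}^n \Image\big(\phi^i((I\cdot L^{(p^{ie}-1)/(p^e-1)})^{1/p^{ie}}) \otimes_{A^{1/p^{ie}}} A^{1/p^{ne}} \to R_{A^{1/p^{ne}}}\big)$, so it suffices to show that each summand lands in $\tau(X_{V^{ne}}, \gamma)$. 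The strategy is to realize each $\phi^i$-term as the image under (a power of) $\gamma$ of the corresponding $I$-twisted module, using the factorization diagram from \autoref{lem.FactorizationOfSigmaImages}.

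First I would recall that, by the remark immediately preceding this lemma, our choice of $I$ (as constructed in \autoref{prop.ExistRelativeTestElements}, using the last clause which requires $V$ regular and quasi-Gorenstein — and $V$ is regular hence strongly $F$-regular by \autoref{conv.NiceBaseForTau}) satisfies $I\cdot R_{A^{1/p^{ne}}} \subseteq \tau(X_{V^{ne}}, \gamma^n)$. Hence the $i=0$ summand is immediately inside $\tau(X_{V^{ne}},\gamma)$ (which contains $\tau(X_{V^{ne}},\gamma^n)$, or rather these agree once we pass to $\gamma$; in any case the test ideal $\tau(X_{V^{ne}},\gamma)$ is $\gamma$-compatible and contains $I$). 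Next, for $0 < i \leq n$, I would use the diagram in \autoref{lem.FactorizationOfSigmaImages} together with the observation (stated just above the lemma) that $\mu_i$ carries $I^{1/p^{ie}}_{A^{1/p^{(i+n)e}}}$ times its domain \emph{onto} $I^{1/p^{ie}}$ by surjectivity of $\mu_i$. Since $\gamma^i$ factors as $\mu_i$ followed by $\phi^i\otimes_{A^{1/p^{ie}}} A^{1/p^{ne}}$ (up to the intervening $\gamma'$-maps, exactly as in that diagram), we get $\Image\big(\phi^i(I^{1/p^{ie}}\text{-twist})\otimes A^{1/p^{ne}}\big) = \gamma^i\big((I\cdot N^{(p^{ie}-1)/(p^e-1)})^{1/p^{ie}}\big)$ as submodules of $R_{A^{1/p^{ne}}}$ — that is, each $\phi^i$-summand of $\bb_n$ equals the corresponding $\gamma^i$-image of the $I$-twisted module. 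But $\gamma^i\big((I\cdot N^{\ldots})^{1/p^{ie}}\big)\subseteq \tau(X_{V^{ne}},\gamma)$ by the defining property of the test ideal $\tau(X_{V^{ne}},\gamma)$ as the smallest nonzero $\gamma$-compatible ideal, once we know $I$ is inside it — equivalently $\tau(X_{V^{ne}},\gamma) = \tau(X_{V^{ne}},\gamma I_{V^{ne}})$ since $I_{V^{ne}}$ is within the absolute test ideal (again \autoref{prop.ExistRelativeTestElements}), and the latter is $\sum_j \gamma^j((I_{V^{ne}}\cdot N^{\ldots})^{1/p^{je}})$ by the standard description.

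Summing over $i = 0, \dots, n$ then gives $\bb_n = \sum_{i=0}^n \gamma^i\big((I\cdot N^{(p^{ie}-1)/(p^e-1)})^{1/p^{ie}}\big) \subseteq \tau(X_{V^{ne}}, \gamma)$, which is the claim. The main obstacle, and the point requiring genuine care, is matching the $I$-twisted modules correctly across the diagram: one must check that the twist $(I\cdot L^{(p^{ie}-1)/(p^e-1)})^{1/p^{ie}}$ appearing in the definition of $\bb_n$ corresponds, under the identifications $N^{(p^{ie}-1)/(p^e-1)} \cong L^{(p^{ie}-1)/(p^e-1)} \otimes_{A} (M^{(p^{ie}-1)/(p^e-1)})^{1/p^{ne}}$ from \autoref{eq.NToPower}, precisely to the module $(I\cdot N^{(p^{ie}-1)/(p^e-1)})^{1/p^{ie}}$ feeding into $\gamma^i$, and that $\mu_i$ applied to the latter surjects onto the former tensored up — this is exactly the surjectivity remark preceding the lemma, applied with the ideal $I$ carried along, and everything else is formal from \autoref{lem.FactorizationOfSigmaImages}.
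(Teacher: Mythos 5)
Your proposal is correct and takes essentially the same approach as the paper's proof. Both expand $\tau_n(X/V,\phi I)$ and $\tau(X_{V^{ne}},\gamma)$ as sums, match the $j$th terms via the factorization $\gamma^j = (\phi^j\otimes\ldots)\circ\mu_j$ from \autoref{lem.FactorizationOfSigmaImages} together with the surjectivity of the $\mu_j$ (carrying the $I$-twist along, as noted just above the lemma), and conclude that the finite sum defining $\tau_n$ is a partial sum of the infinite sum defining $\tau(X_{V^{ne}},\gamma)$; your term-by-term phrasing via $\gamma$-compatibility of the test ideal is an equivalent way of packaging the paper's direct observation that the $n$th partial sum of \autoref{eq.SumTauGamma} equals \autoref{eq.SumTauRelativeN}.
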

\begin{proof}
We know $\tau_n(X/V, \phi I)$ is the sum
\begin{equation}
\label{eq.SumTauRelativeN}
\sum_{j = 0}^n \Big(\phi^{j} \otimes_{A^{1/p^{je}}} A^{1/p^{ne}}\Big)\Bigg(I^{1/p^{je}} \cdot \Big(L^{p^{je} - 1 \over p^e - 1}\Big)^{1/p^{je}} \otimes_{A^{1/p^{je}}} A^{1/p^{ne}} \Bigg)
\end{equation}
On the other hand, $\tau(X, \gamma)$ is the sum
\begin{equation}
\label{eq.SumTauGamma}
\sum_{j = 0}^{\infty} \gamma^j\Big( I^{1/p^{je}}_{A^{1/p^{(j+n)e}}} \cdot \Big(L^{p^{je} - 1 \over p^e - 1}\Big)^{1/p^{je}} \otimes_{A^{1/p^{je}}} \Big(M^{p^{je} - 1 \over p^{e}-1}\Big)^{1/p^{(j+n)e}}\Big)
\end{equation}
By our above observations about the surjectivities of the $\mu_j$ above, the sum of the $0$th through $j$th terms of \autoref{eq.SumTauGamma} is equal to the sum \autoref{eq.SumTauRelativeN}.  The result follows.
\end{proof}

Additionally, at least over an open subset of the base, we actually have that the relative test ideal and absolute test ideal agree, \cf \autoref{thm.RelativeSigmaVsAbsoluteOverOpen}.

\begin{theorem}
\label{thm.RelativeTauVsAbsoluteOverOpen}
With notation as above and assuming \autoref{conv.NiceBaseForTau}, choose $n > t = n_{\tau(\phi I), k(V)}$.  Then there exists a dense open set $U \subseteq V \cong V^e$ with $W = f^{-1}(U) \subseteq X$ such that $\tau(X_{V^{1/p^{ne}}}, \gamma)|_W = \tau_n(X/V, \phi I)|_W$.  Furthermore, shrinking $U$ further if necessary and possibly increasing $n$, we can require for all perfect points $u \in U$, that
\[
\tau(X_{V^{1/p^{ne}}}, \gamma)\cdot \O_{X_u} = \tau_n(X_u/u, \phi_u I_u) = \tau(X_u, \psi_u).
\]
where again $\psi_u$ is $\phi_u$ viewed as a $p^{-e}$-linear map.
\end{theorem}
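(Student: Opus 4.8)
The plan is to follow the proof of \autoref{thm.RelativeSigmaVsAbsoluteOverOpen} nearly verbatim, the one structural change being that $\tau$ is built as an \emph{increasing} union of partial sums instead of as a \emph{decreasing} chain of images, so the one-step stabilization used there gets replaced throughout by the all-steps stabilization \autoref{lem.StabilizingTau}. As in the $\sigma$ case the second displayed chain of equalities is formal once the first equality $\tau(X_{V^{1/p^{ne}}}, \gamma)|_W = \tau_n(X/V, \phi I)|_W$ is established, so I would prove that first.

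For the first equality, the inclusion ``$\subseteq$'' is the (global) content of the lemma immediately preceding this theorem, whose proof, via the surjectivity of the maps $\mu_j$ of \autoref{lem.FactorizationOfSigmaImages}, identifies $\tau_n(X/V, \phi I)$ with the sum of the $0$th through $n$th terms of the expression \autoref{eq.SumTauGamma} for $\tau(X_{V^{1/p^{ne}}}, \gamma)$. For the reverse inclusion over $W$ I would write $\tau(X_{V^{1/p^{ne}}}, \gamma)$ as the ascending union of its partial sums $\bb'_0 \subseteq \bb'_1 \subseteq \cdots$, note that $\bb'_n = \tau_n(X/V, \phi I)$ by the same identification, and show $\bb'_{N+1}|_W = \bb'_N|_W$ for every $N \geq n > t = n_{\tau(\phi I), k(V)}$. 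Unwinding $\gamma$ through either of the two factorizations displayed just above \autoref{lem.FactorizationOfSigmaImages}, and using that $\Phi^e_A$ is surjective since $V$ is regular (hence $F$-finite and $F$-split with $\Phi^e_A$ a generator), the passage from $\bb'_N$ to $\bb'_{N+1}$ becomes exactly the passage from $\bb_N$ to $\bb_{N+1}$ in the definition of $\tau_n(X/V, \phi I)$, so the required absorption of new terms over $W$ is precisely \autoref{lem.StabilizingTau}, propagated from the stabilization step $t$ to all later steps by the same tensoring with $L$, taking $p^e$th roots, and applying $\phi$ that appears in the proofs of \autoref{lem.StabilizingTau} and \autoref{lem.FactorizationOfSigmaImages}. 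Here I use, as noted in the discussion preceding the theorem, that $I$ may simultaneously compute $\tau(X_{V^{1/p^{ne}}}, \gamma)$: this is legitimate since $V$ regular is strongly $F$-regular and quasi-Gorenstein, which gives $I\cdot R_{A^{1/p^{ne}}}\subseteq\tau(X_{V^{ne}},\gamma^n)$.

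For the second statement I would enlarge $n$ so that in addition $n\geq N$, where $N$ is the uniform integer of \autoref{thm.BaseChangeTauGeneral}, and keep only perfect points. For a perfect point $u\in U$ one has $X_u\subseteq W$, so the first part gives $\tau(X_{V^{1/p^{ne}}}, \gamma)\cdot\O_{X_u}=\tau_n(X/V,\phi I)\cdot\O_{X_u}$; the base change statement \autoref{thm.BaseChangeTauInitial} applied to $u=\Spec k(u)\to V$, together with $X_{u^{ne}}\cong X_u$ since $k(u)$ is perfect, rewrites the right-hand side as $\tau_n(X_u/u,\phi_u I_u)$; and \autoref{cor.RestrictionOfRelativeTauToFibers} rewrites that in turn as $\tau(X_u,\psi_u)$, since $n\geq N$ forces the ascending chain $\tau_j(X_u/u,\phi_u I_u)$ to be stable at $j=n$. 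Concatenating the three identifications gives the asserted chain. The step I expect to be the genuine obstacle is the uniformity in the first equality: a priori the locus over which the $(N+1)$st term of \autoref{eq.SumTauGamma} is absorbed could shrink as $N$ grows, leaving only a nested intersection; what rescues the argument is exactly that \autoref{lem.StabilizingTau} lets a single open set chosen at step $t$ work for all $N\geq t$ simultaneously, after which everything reduces to chasing the diagram of \autoref{lem.FactorizationOfSigmaImages} and invoking the already-proved base change results.
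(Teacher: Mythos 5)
Your proposal is correct and uses the same ingredients as the paper's proof (the preceding lemma giving the easy containment $\tau_n(X/V,\phi I)\subseteq\tau(X_{V^{ne}},\gamma)$ via the surjective $\mu_j$, the stabilization \autoref{lem.StabilizingTau}, then \autoref{thm.BaseChangeTauInitial} and \autoref{cor.RestrictionOfRelativeTauToFibers} for the perfect-point statement). One point is worth tightening. You plan to verify $\bb'_{N+1}|_W=\bb'_N|_W$ for every $N\geq n$ by invoking \autoref{lem.StabilizingTau} at each level, asserting that ``the passage from $\bb'_N$ to $\bb'_{N+1}$ becomes exactly the passage from $\bb_N$ to $\bb_{N+1}$.'' But the identification $\bb'_j=\bb_{j,n}$ coming from \autoref{lem.FactorizationOfSigmaImages} is only set up for $j\leq n$; for $j>n$ the partial sums $\bb'_j$ of \autoref{eq.SumTauGamma} live on $X_{V^{ne}}$, whereas $\bb_j$ lives on $X_{V^{je}}$, and there is an extra trace-down step $\Phi_A^e$ in $\gamma$ that is not present in $\phi$, so the dictionary is not the literal one you describe. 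The paper sidesteps this entirely: one shows a \emph{single} coincidence $\bb'_{n-1}|_W=\bb'_n|_W$ (which is exactly $\bb_{n-1,n}|_W=\bb_{n,n}|_W$ from \autoref{lem.StabilizingTau}, translated by \autoref{lem.FactorizationOfSigmaImages}), and then uses that the partial sums of $\tau(X_{V^{ne}},\gamma)$ obey the one-step recursion $\bb'_{N+1}=\bb'_N+\gamma\big((\bb'_N\cdot N)^{1/p^e}\big)$ --- the same elementary observation recorded in \autoref{eq.DefiningBBInfinity} --- so that once two consecutive partial sums agree over $W$, all later ones do too, with no further appeal to \autoref{lem.StabilizingTau} or to any identification at level $N>n$. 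If you replace the ``propagate \autoref{lem.StabilizingTau} to all later steps'' phrasing with this recursion observation, your proof matches the paper's and the uniformity concern you raise at the end evaporates for free.
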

\begin{proof}
First we observe that by taking $U$ as in \autoref{eq.StabilizingUForTau}, we can assume that $\bb_{n-1,n} = \bb_{n,n}$.  But by the diagram above, we see that the $n-1$st partial sum defining $\tau(X_{V^{1/p^{ne}}}, \gamma)$ in \autoref{eq.SumTauGamma} is equal to $\bb_{n-1,n}$.  Likewise the $n$th partial sum is equal to $\bb_{n,n}$.  However, once two adjacent partial sums defining $\tau(X_{V^{1/p^{ne}}}, \gamma)$ coincide, the sum stabilizes for further powers by the computation we made when defining the $\bb_{n}^{\infty}$ in \autoref{eq.DefiningBBInfinity}.
The second statement follows from the first statement and from \autoref{thm.BaseChangeTauInitial}, \cf the argument of \autoref{cor.RestrictionOfRelativeTauToFibers}.
\end{proof}

When dealing with relatively non-$F$-pure ideals $\sigma$, we actually obtained the above restriction theorem without shrinking $X$ to $U$ in \autoref{cor.SigmaRestrictsToAllPoints}.   The difference is that for $\sigma$ we have the easy containment $\sigma(X_{V^{1/p^{ne}}}, \gamma^n) \subseteq \sigma_n(X/V, \phi)$ by \autoref{lem.FactorizationOfSigmaImages}.  For $\tau$ however, the easy containment is reversed.  This leads us to the following question:

\begin{question}
Is it true that
$
\tau(X_{V^{1/p^{ne}}}, \gamma)\cdot \O_{X_{s^{ne}}} = \tau(X_s, \psi_s)
$
for all perfect points $s \in V$ at least when $n \gg 0$?
\end{question}

Rephrasing \autoref{thm.RelativeTauVsAbsoluteOverOpen} for divisors we obtain:

\begin{corollary}
\label{cor.RestrictionOfTauForDivisors}
Suppose that $f : X \to V$ is a flat finite map to an excellent regular scheme $V$.  Additionally suppose that $f$ is relatively G1 and S2 and $I$ is chosen as in \autoref{prop.ExistRelativeTestElements}.
Choose $\Delta$ satisfying conditions (a)--(d) of \autoref{rem:relative_canonical}.  Then there exists an open dense set $U \subseteq V$ such that
\[
\tau(X_{V^{1/p^{ne}}}, \Delta)\cdot \O_{X_{u^{1/p^{ne}}}} = \tau(X_u, \Delta|_{X_u})
\]
for all perfect points $u \in U$.
\end{corollary}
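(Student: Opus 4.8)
The plan is to deduce this from \autoref{thm.RelativeTauVsAbsoluteOverOpen} by translating back and forth between $\bQ$-divisors and $p^{-e}$-linear maps. Since $\Delta$ satisfies conditions (a)--(d) of \autoref{rem:relative_canonical}, that remark produces a relatively divisorial map $\phi : L^{1/p^e} \to R_{A^{1/p^e}}$ with $\Delta_\phi = \Delta$ (here $e$ is fixed by the requirement $m \mid p^e - 1$ on the index of $\Delta$). The corollary already supplies an $I$ as in \autoref{prop.ExistRelativeTestElements}, and $V$ is regular, so (under the standing hypothesis \autoref{conv.NiceBaseForTau} that $V$ is moreover $F$-finite, to which one reduces in the usual way otherwise) we are exactly in the situation of \autoref{subsec.RelativeTauVsAbsoluteTau}. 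I would then form the associated absolute map $\gamma$ on $X_{V^{ne}} = X_{V^{1/p^{ne}}}$ as in \autoref{subsec.IteratedNonFPureIdealsVsAbsolute}.

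Next I would match up all the divisors involved. By \autoref{lem.ChoiceOfDeltaPhiEqualsDeltaGamma}, the divisor $\Delta_\gamma$ attached to $\gamma$ equals $h^*\Delta$, where $h : X_{V^{ne}} \to X$ is the projection; that is, $\Delta_\gamma = \Delta \times_V V^{ne}$, so $\tau(X_{V^{1/p^{ne}}}, \gamma) = \tau(X_{V^{1/p^{ne}}}, \Delta \times_V V^{ne})$ is precisely the ideal denoted $\tau(X_{V^{1/p^{ne}}}, \Delta)$ in the statement. On the fibre side, for a perfect point $u \in V$ the map $\psi_u$ is $\phi_u$ read as an absolute $p^{-e}$-linear map, and \autoref{lem.DeltaPhiRestrictedEqualsDeltaPsi} together with \autoref{cor.RestrictDeltaAndPhiToFiberIsOk} gives $\Delta_{\psi_u} = \Delta_\phi|_{X_u} = \Delta|_{X_u}$; since a test ideal of a pair depends only on the associated divisor, $\tau(X_u, \psi_u) = \tau(X_u, \Delta|_{X_u})$.

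With these identifications in hand I would simply invoke \autoref{thm.RelativeTauVsAbsoluteOverOpen}. Taking $n$ larger than the stabilization bound $t$ of that theorem and then shrinking, it yields a dense open $U \subseteq V$ with the property that $\tau(X_{V^{1/p^{ne}}}, \gamma)\cdot\O_{X_{u^{ne}}} = \tau(X_u, \psi_u)$ for every perfect point $u \in U$, where $X_{u^{ne}} \cong X_u$ because $k(u)$ is perfect. Substituting $\tau(X_{V^{1/p^{ne}}}, \gamma) = \tau(X_{V^{1/p^{ne}}}, \Delta)$ and $\tau(X_u, \psi_u) = \tau(X_u, \Delta|_{X_u})$ from the previous step gives exactly the asserted equality, for all perfect $u \in U$.

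The statement carries no new mathematical content beyond \autoref{thm.RelativeTauVsAbsoluteOverOpen}, so the only real work is the bookkeeping of base change for (possibly non-$\bQ$-Cartier) $\bQ$-divisors: one must be sure that $\Delta_\gamma$ is genuinely the pullback of $\Delta$ — which rests on $\gamma$ being a composition of generating trace maps, exactly as in the proof of \autoref{lem.ChoiceOfDeltaPhiEqualsDeltaGamma} — and that restriction of $\Delta_\phi$ to a perfect fibre is unaffected by passing from $\phi$ to $\psi_u$. Because $\Delta$ need not be $\bQ$-Cartier, all of these pullbacks and restrictions are a priori defined only after deleting a set of relative codimension $2$; this is harmless since, by the S2 hypothesis, the test ideals in question are determined away from such a locus, and this reflexivity reduction is the one place where I would take some care.
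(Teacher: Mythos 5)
Your proof is correct and follows essentially the same route as the paper's own: the paper's proof likewise produces $\phi$ from $\Delta$ via \autoref{rem:relative_canonical}, invokes \autoref{lem.DeltaPhiRestrictedEqualsDeltaPsi}, \autoref{lem.ChoiceOfDeltaPhiEqualsDeltaGamma}, and \autoref{cor.RestrictDeltaAndPhiToFiberIsOk} to match up the divisors, and then cites \autoref{thm.RelativeTauVsAbsoluteOverOpen}. Your write-up just spells out the bookkeeping (including the quietly used $F$-finiteness from \autoref{conv.NiceBaseForTau} and the codimension-$2$ caveat for non-$\bQ$-Cartier $\Delta$) more explicitly than the paper does.
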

\begin{proof}
Using \autoref{lem.DeltaPhiRestrictedEqualsDeltaPsi} and \autoref{lem.ChoiceOfDeltaPhiEqualsDeltaGamma}, we see that $\Delta$ corresponds to some $\phi : L^{1/p^e} \to R_{A^{1/p^e}}$.  Thus we simply apply \autoref{cor.RestrictDeltaAndPhiToFiberIsOk} and \autoref{thm.RelativeTauVsAbsoluteOverOpen}.
\end{proof}

\mysubsection{Applications to strong $F$-regularity and divisor pairs}
\begin{definition}
\label{defn.RelativeFRegularity}
Recall that a pair $(X, \psi : L^{1/p^e} \to \O_X = R)$ is called \emph{strongly $F$-regular} if $\tau(X, \psi) = \O_X$.  Given $(X/S, \phi : L^{1/p^e} \to R_{A^{1/p^e}})$ and some $I \subseteq R$ satisfying the condition of \autoref{prop.ExistRelativeTestElements}, we say that $(X/S, \phi I)$ is \emph{relatively strongly $F$-regular} if we have $\tau_{n}(X/S, \phi I) = R_{A^{1/p^{ne}}}$ for some $n > 0$ (equivalently, all $n \gg 0$ since the $\tau_n$ ascend), \cf \cite{HashimotoFPureHoms}.
\end{definition}

\begin{remark}
\label{rem.RelativelyFRegOverPtIsGeometric}
Suppose that $S$ is a point and $I$ is chosen as in \autoref{prop.ExistRelativeTestElements}, then $(X/S, \phi I)$ is relatively strongly $F$-regular if and only if it is geometrically strongly $F$-regular (\ie, $(X_t, \gamma_t)$ is strongly $F$-regular for every point $t \to S$).  The argument is the same as in \autoref{rem.RelativelyFPureOverPtIsGeometric}
\end{remark}

The same ideas imply the strongly $F$-regular locus of a proper map is open.
\begin{corollary}[Openness of the strongly $F$-regular locus]
\label{cor.OpennessOfStrongFRegularity}
With notation as before, assume additionally that $f : X \to V$ is proper and that $I$ satisfies the condition of \autoref{prop.ExistRelativeTestElements}.  Assume that $s \in V$ is a point and that $(X_s/s, \phi_s I_s)$ is relatively strongly $F$-regular (for example, if $s$ is a perfect point, this just means it is strongly $F$-regular and is independent of $I$).  Then there exists a dense open set $U \subseteq V$ containing $s$ such that $(X_u/u, \phi_u I_u)$ is relatively strongly $F$-regular for all $u \in U$ (in particular, $(X_u, \phi_u)$ is strongly $F$-regular for all perfect points $u \in U$).
\end{corollary}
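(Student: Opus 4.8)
The plan is to transcribe the argument of \autoref{thm.OpennessOfSharpFPurity} almost verbatim, replacing the relative non-$F$-pure ideals $\sigma_n$ by the relative test ideals $\tau_n$ and using \autoref{thm.BaseChangeTauInitial} and \autoref{thm.BaseChangeTauGeneral} in place of \autoref{thm.BaseChangeSigmaGeneral}. First, since $(X_s/s, \phi_s I_s)$ is relatively strongly $F$-regular, \autoref{defn.RelativeFRegularity} gives an $n > 0$ with $\tau_n(X_s/s, \phi_s I_s) = R_{\O_s^{1/p^{ne}}}$; because the $\tau_j$ ascend we may enlarge $n$ so that in addition $n \geq N$, where $N$ is the uniform bound furnished by \autoref{thm.BaseChangeTauGeneral}. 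Applying \autoref{thm.BaseChangeTauInitial} to the inclusion of the point $s \hookrightarrow V$ then yields
\[
\tau_n(X/V, \phi I) \cdot \O_{X_{s^{ne}}} = \tau_n(X_s/s, \phi_s I_s) = \O_{X_{s^{ne}}}.
\]

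Next, let $Z \subseteq X_{V^{ne}}$ be the closed subset cut out by the ideal sheaf $\tau_n(X/V, \phi I)$, that is, the locus where this ideal is proper. Topologically $X_{V^{ne}}$ is $X$ and the structure map $X_{V^{ne}} \to V^{ne}$ is $f$ transported along the homeomorphism $F^{ne}_V$, so it is proper; hence the image of $Z$, viewed as a closed subset $T \subseteq V$, is closed. The displayed equation says precisely that $Z$ misses the fiber of $X_{V^{ne}} \to V^{ne}$ over $s$, so $s \notin T$. Set $U = V \setminus T$, a dense open neighborhood of $s$. Restricting to $f^{-1}(U)$ gives $\tau_n(X/V, \phi I)|_{f^{-1}(U)} = \O_{X_{U^{ne}}}$, and by compatibility with base change over $U$ (again \autoref{thm.BaseChangeTauInitial}) this reads $\tau_n(X_U/U, \phi_U I_U) = R_{\O_U^{1/p^{ne}}}$.

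Finally, for an arbitrary $u \in U$, a further application of \autoref{thm.BaseChangeTauInitial} along $u \hookrightarrow U$ gives $\tau_n(X_u/u, \phi_u I_u) = \tau_n(X/V, \phi I)\cdot \O_{X_{u^{ne}}} = \O_{X_{u^{ne}}}$, so $(X_u/u, \phi_u I_u)$ is relatively strongly $F$-regular, which is the main assertion. For the parenthetical statement, let $u$ be a perfect point; since $n \geq N$, \autoref{cor.RestrictionOfRelativeTauToFibers} identifies the stable value of the ascending chain $\tau_j(X_u/u, \phi_u I_u)$ with $\tau(X_u, \psi_u)$, and as the chain already equals the unit ideal at stage $n$ we get $\tau(X_u, \psi_u) = \O_{X_u}$, i.e.\ $(X_u, \psi_u)$ is strongly $F$-regular.

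I expect no serious obstacle: the substance has been pushed into the base-change theorems for $\tau_n$, and the only point that genuinely forces the relative formalism is that $s$ need not be a closed or perfect point, so $X_s$ lives over a possibly imperfect field and its absolute test ideal is not the invariant that deforms; the relative ideal $\tau_n$ is engineered so that the base-change identity of \autoref{thm.BaseChangeTauInitial} holds for \emph{every} morphism $g : T \to V$, which is exactly what makes the neighborhood argument run. Properness of $f$ is used only to know $f(Z)$ is closed; dropping it would still give the conclusion at the generic point of $V$. The hypothesis that $I$ is chosen as in \autoref{prop.ExistRelativeTestElements} plays no role in the openness argument itself — it merely guarantees that the objects $\tau_n$ and $\psi_u$ have the intended meaning, so that the final sentence about strong $F$-regularity of the fibers is legitimate.
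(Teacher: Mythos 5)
Your proposal is correct and matches the paper's argument essentially verbatim: both reduce to restricting $\tau_n$ to the fiber over $s$, observe it is the unit ideal there, invoke Nakayama/support considerations to spread this over a neighborhood of the fiber, and then use properness of $f$ to push the complementary closed set down to the base. The only cosmetic differences are that you cite \autoref{thm.BaseChangeTauInitial} directly (which is really the input the paper is invoking when it cites \autoref{thm.BaseChangeTauGeneral}), and you spell out the perfect-point consequence via \autoref{cor.RestrictionOfRelativeTauToFibers} a bit more explicitly than the paper does.
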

\begin{proof}
Choose $n \gg 0$ such that $\tau_n(X_s/s, \phi_s I_s) = R_{\O_s^{1/p^{ne}}}$ since $(X_s/s, \phi_s I_s)$ is strongly $F$-regular.  By \autoref{thm.BaseChangeTauGeneral}, we know that $\tau_n(X/V, \phi I) = R_{A^{1/p^{ne}}}$ in a neighborhood $W \subseteq X$ of $X_s$.  Let $Z = X \setminus W \subseteq X$ be the complement of that neighborhood.  Since $f$ is proper, $f(Z)$ is closed, and also doesn't contain $s$.  Set $U = V \setminus f(Z)$.  Then $\tau_n(X_U/U, \phi_U I|_U) = R_{\O_U^{1/p^{ne}}}$.  It follows from \autoref{thm.BaseChangeTauGeneral} that all the fibers $(X_u/u, \phi_u I_u)$ are relatively strongly $F$-regular as desired.
\end{proof}

At least for proper maps, we also obtain that the definition of relative strongly $F$-regularity \autoref{defn.RelativeFRegularity} is independent of the choice of $I$.

\begin{lemma}
Suppose that $f : X \to V$ is proper and that $(X/V, \phi I)$ is relatively strongly $F$-regular for some $I$ satisfying the condition of \autoref{prop.ExistRelativeTestElements}.  Then for all $J \subseteq R$ such that $J_s$ is non-zero on every component of every fiber $X_s$, we have that
$\tau_n(X/S, \phi J) = R_{A^{1/p^{ne}}}$ for some $n > 0$ (equivalently, all $n \gg 0$).
\end{lemma}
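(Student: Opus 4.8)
The plan is to reduce everything to the individual fibers, transfer the conclusion from the coefficient ideal $I$ to $J$ over each perfect point, and then glue. First I would observe that relative strong $F$-regularity of $(X/V,\phi I)$ means $\tau_N(X/V,\phi I)=R_{A^{1/p^{Ne}}}$ for some $N$, so the base change theorem \autoref{thm.BaseChangeTauInitial} applied to $\Spec k(s)\to V$ gives $\tau_N(X_s/s,\phi_s I_s)=\O_{X_{s^{Ne}}}$ for every $s\in V$; that is, each fiber $(X_s/s,\phi_s I_s)$ is relatively strongly $F$-regular. By the ``$\Rightarrow$'' direction of \autoref{rem.RelativelyFRegOverPtIsGeometric}, this forces $(X_t,\gamma_t)$ to be strongly $F$-regular for every point $t\to\Spec k(s)$ -- a conclusion that refers only to $\phi$, not to $I$. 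Taking $t=\Spec k(s)^\infty$ as in \autoref{sec.BaseChangeToPerfectPoints}, where $\gamma_t$ is the $p^{-e}$-linear map $\psi_\infty$ induced by $\phi$, I get that $(X_{k(s)^\infty},\psi_\infty)$ is strongly $F$-regular for every $s\in V$.

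Next I would pass to $J$ fiberwise. The fibers of $f$ are geometrically reduced, so $X_{k(s)^\infty}$ is reduced with finitely many minimal primes, and the hypothesis that $J_s$ is nonzero on every component of $X_s$ is preserved under the purely inseparable base change $k(s)\subseteq k(s)^\infty$; hence $J_{k(s)^\infty}$ lies in no minimal prime of $X_{k(s)^\infty}$. By prime avoidance $J_{k(s)^\infty}$ contains an element $c$ in no minimal prime, and since $(X_{k(s)^\infty},\psi_\infty)$ is strongly $F$-regular such a $c$ is a (big) test element, so $\tau\bigl(X_{k(s)^\infty},\psi_\infty\,J_{k(s)^\infty}\bigr)\supseteq\sum_{i\ge0}\psi_\infty^i\bigl((c\cdot L^{\frac{p^{ie}-1}{p^e-1}})^{1/p^{ie}}\bigr)=\O_{X_{k(s)^\infty}}$, by the standard theory (\cf \cite[Section 7]{SchwedeTuckerTestIdealSurvey}, \cite{HochsterFoundations}). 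As $X_{k(s)^\infty}$ is Noetherian, this ascending union stabilizes, so $\tau_m(X_{k(s)^\infty}/k(s)^\infty,\phi_{s,\infty}J_{s,\infty})=\O$ for $m\gg0$ (\cf \autoref{cor.RestrictionOfRelativeTauToFibers}); applying \autoref{thm.BaseChangeTauInitial} along the faithfully flat field extension $k(s)^{1/p^{me}}\subseteq k(s)^\infty$ then descends this to $\tau_m(X_s/s,\phi_s J_s)=\O_{X_{s^{me}}}$ for $m\gg0$. Thus $(X_s/s,\phi_s J_s)$ is relatively strongly $F$-regular for \emph{every} point $s\in V$.

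Finally I would glue. Set $Z_n=\Supp\bigl(\O_{X_{V^{ne}}}/\tau_n(X/V,\phi J)\bigr)$, a closed subset of the Noetherian space $|X|=|X_{V^{ne}}|$. Since the images of the $\tau_n$ ascend ($\Image(\bb_i\otimes_{A^{1/p^{ie}}}A^{1/p^{je}}\to R_{A^{1/p^{je}}})\subseteq\bb_j$ for $j>i$, as recorded in \autoref{sec.RelativeNonFPureIdeals}) and base change along the homeomorphisms $\Spec A^{1/p^{je}}\to\Spec A^{1/p^{ie}}$ leaves supports unchanged, the $Z_n$ form a descending chain, hence stabilize to some $Z_\infty$. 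If $Z_\infty\neq\varnothing$, pick $x\in Z_\infty$, put $s=f(x)$, and choose $m$ with $Z_m=Z_\infty$ and $\tau_m(X_s/s,\phi_s J_s)=\O_{X_{s^{me}}}$; then by \autoref{thm.BaseChangeTauInitial} the coherent sheaf $\O_{X_{V^{me}}}/\tau_m(X/V,\phi J)$ vanishes on the fiber over $s^{me}$, and since $\O_{V,s}\to\O_{X,x}$ is a local homomorphism, Nakayama forces its stalk at $x$ to vanish, contradicting $x\in Z_m$. Hence $Z_n=\varnothing$ for $n\gg0$, i.e. $\tau_n(X/V,\phi J)=R_{A^{1/p^{ne}}}$ for $n\gg0$. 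The hard part is exactly this last step: converting the $s$-dependent stabilizing indices into a single uniform $n$; this is where properness of $f$ would traditionally be invoked (via closedness of images of supports, as in \autoref{thm.OpennessOfSharpFPurity}), but the descending-chain-of-supports argument handles it directly, so properness is in fact not needed. A secondary point requiring care is the bookkeeping involved in moving to and from the perfect closures $k(s)^\infty$ and in checking that ``nonzero on every component'' persists there.
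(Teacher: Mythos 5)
Your proposal is correct, and in the fiberwise part it runs parallel to the paper's own proof: both reduce to each fiber, pass to a perfect closure of $k(s)$ to read off absolute strong $F$-regularity of $(X_{k(s)^\infty},\psi_\infty)$ (a statement about $\phi$ alone, independent of $I$), then note that $J$ provides a test element after perfection to conclude $\tau_m(X_s/s,\phi_s J_s)=\sO_{X_{s^{me}}}$ for $m\gg 0$, and descend this by faithful flatness. The two proofs diverge only at the gluing step. The paper invokes properness of $f$: once $\tau_n(X/V,\phi J)=R_{A^{1/p^{ne}}}$ in a neighborhood $W$ of a fiber $X_s$ (over a closed point $s$), it uses that $f(X\setminus W)$ is closed and avoids $s$ to get an open $U\ni s$ in the base on which the conclusion holds, and then finishes by quasi-compactness of $V$. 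Your gluing instead tracks the descending chain of supports $Z_n=\Supp\bigl(\sO_{X_{V^{ne}}}/\tau_n(X/V,\phi J)\bigr)$, which stabilizes at some $Z_\infty$ by Noetherianity of $|X|$; if $Z_\infty$ were nonempty, choosing $x\in Z_\infty$ with $s=f(x)$ and $m\gg 0$ so that $Z_m=Z_\infty$ and $\tau_m(X_s/s,\phi_s J_s)=\sO$, the base change theorem forces $\sO_{X_{V^{me}}}/\tau_m(X/V,\phi J)$ to vanish along $X_{s^{me}}$, and Nakayama (applicable because $\sO_{V,s}\to\sO_{X,x}$ is local) kills the stalk at $x$, contradicting $x\in Z_m$. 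This argument does not use properness of $f$ and works for an arbitrary scheme-theoretic point $s$ rather than just closed points; you are right that it shows the properness hypothesis is not actually needed for this lemma (though it is needed in \autoref{cor.OpennessOfStrongFRegularity}, whose conclusion the lemma is refining). One small point worth making explicit in step (1) is that you get a \emph{single} $N$ with $\tau_N(X_s/s,\phi_s I_s)=\sO_{X_{s^{Ne}}}$ for all $s$, directly from $\tau_N(X/V,\phi I)=R_{A^{1/p^{Ne}}}$ and \autoref{thm.BaseChangeTauInitial}, rather than a separate appeal to \autoref{thm.BaseChangeTauGeneral}; you have this right.
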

In particular in \autoref{defn.RelativeFRegularity}, it would be equivalent to require $\tau_{n}(X/S, \phi J) = R_{A^{1/p^{ne}}}$ for all such $J$ and some $n$.
\begin{proof}
Choose a closed point $s \in V$ and a perfect extension $K \supseteq k(s)$.  Then $\tau_m(X_K/K, \phi_K I_K) = R_{K^{1/p^{ne}}}$ for some $m > 0$ by \autoref{thm.BaseChangeTauGeneral}.  But since $K$ is a perfect field extension, this implies that $\tau_m(X_K, \psi_K I_K) = R_{K^{1/p^{me}}}$ as well.  On the other hand, since $(X_K, \psi_K)$ is strongly $F$-regular, we see that $\tau_n(X_K/K, \phi_K J_K) = R_{K^{1/p^{ne}}}$ for some $n > 0$.  Hence since $k(s) \subseteq K$ is faithfully flat, we see that $\tau_n(X_s/s, \phi_s J_s) = R_{k(s)^{1/p^{ne}}}$.  But then since $\tau_n(X/V, \phi J)$ restricts to $\tau_n(X_s/s, \phi_s J_s)$ we observe that $\tau_n(X/V, \phi J) = R_{A^{1/p^{ne}}}$ at least in a neighborhood of $s$ using that $f$ is proper and the same argument we made in \autoref{cor.OpennessOfStrongFRegularity}.  But we can find such an $n$ for every $s \in V$, and so the lemma holds by the quasi-compactness of $V$.
\end{proof}

We now state our result in the divisorial case.

\begin{corollary}
\label{cor.OpennessOfFregDivisors}
With notation as above, suppose that $f : X \to V$ is a proper map and that $\Delta$ is a $\bQ$-divisor satisfying conditions (a)--(d)\footnote{For example, these conditions hold if $V$ is regular, $f : X \to V$ is geometrically normal, $K_X + \Delta$ is $\bQ$-Cartier with index not divisible by $p$ and $\Delta$ does not contain any fiber of $f$ in its support.} of \autoref{rem:relative_canonical}.  Additionally suppose that for some perfect point $s \in V$, the fiber $(X_s, \Delta|_{X_s})$ is strongly $F$-regular.  Then there exists a dense open set $U \subseteq V$ containing $s$ such that $(X_u, \Delta|_{X_u})$ is strongly $F$-regular for all perfect $u \in U$.
\end{corollary}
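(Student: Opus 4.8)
The plan is to deduce this corollary directly from the relative openness result, \autoref{cor.OpennessOfStrongFRegularity}, by translating the divisor hypothesis into the language of $p^{-e}$-linear maps $\phi$. First I would invoke \autoref{rem:relative_canonical} (whose conditions (a)--(d) are exactly the standing hypotheses here) to produce a relatively divisorial $\phi : L^{1/p^e} \to R_{A^{1/p^e}}$ with $\Delta_\phi = \Delta$; this uses that $f$ is G1 and S2, which follows from the stated conditions (in particular geometric normality implies G1 + S2). Simultaneously, by \autoref{prop.ExistRelativeTestElements}, I would fix an ideal $I \subseteq \O_X$ whose restriction $I_K$ lies inside $\tau(X_K, \psi_K)$ for every point $s \in V$ and every perfect extension $K \supseteq k(s)$, and with $I = \O_X$ wherever $f$ is smooth and $\phi$ generates.

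Next I would observe that the fiber hypothesis translates correctly: by \autoref{cor.RestrictDeltaAndPhiToFiberIsOk} (together with \autoref{lem.DeltaPhiRestrictedEqualsDeltaPsi}), the restriction of $\phi$ to the fiber over the perfect point $s$ gives $\psi_s$ with $\Delta_{\psi_s} = \Delta|_{X_s}$. Since $(X_s, \Delta|_{X_s})$ is strongly $F$-regular by assumption, and $s$ is a perfect point so that $\psi_s$ is genuinely a $p^{-e}$-linear map on the fiber with $I_s \subseteq \tau(X_s, \psi_s)$, we have $\tau(X_s, \psi_s) = \O_{X_s}$ and hence $\tau_n(X_s/s, \phi_s I_s) = \O_{X_s}$ for $n \gg 0$ by \autoref{cor.RestrictionOfRelativeTauToFibers}; in other words, $(X_s/s, \phi_s I_s)$ is relatively strongly $F$-regular in the sense of \autoref{defn.RelativeFRegularity}. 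Now \autoref{cor.OpennessOfStrongFRegularity} applies (using that $f$ is proper) and yields a dense open $U \subseteq V$ containing $s$ such that $(X_u/u, \phi_u I_u)$ is relatively strongly $F$-regular for all $u \in U$; in particular $(X_u, \phi_u)$, equivalently $(X_u, \Delta|_{X_u})$, is strongly $F$-regular for all perfect points $u \in U$, again using \autoref{cor.RestrictDeltaAndPhiToFiberIsOk} to identify $\Delta_{\phi_u}$ with $\Delta|_{X_u}$ on the fibers.

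There is essentially no new mathematical content here beyond assembling the pieces: the corollary is a dictionary translation of \autoref{cor.OpennessOfStrongFRegularity}. The one point requiring a word of care is verifying that the divisor-theoretic hypotheses genuinely produce an admissible $\phi$ and admissible $I$ — i.e. checking that conditions (a)--(d) of \autoref{rem:relative_canonical} are implied by (or are identical to) the hypotheses as stated, and that the ideal $I$ supplied by \autoref{prop.ExistRelativeTestElements} is nonzero on every component of every fiber so that strong $F$-regularity of the fiber is detected by $\tau_n(X_s/s,\phi_s I_s)$ rather than being lost. Once this bookkeeping is in place, the proof is a one-line citation. I expect the only mild obstacle to be making the identification of $\Delta_{\phi_u}$ with $\Delta|_{X_u}$ precise for \emph{non-closed} perfect points $u$, but \autoref{cor.RestrictDeltaAndPhiToFiberIsOk} and \autoref{lem.DeltaPhiRestrictedEqualsDeltaPsi} handle exactly this.
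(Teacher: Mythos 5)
Your proposal is correct and follows essentially the same route as the paper: construct $\phi$ via \autoref{rem:relative_canonical}, choose $I$ via \autoref{prop.ExistRelativeTestElements}, translate the fiber hypothesis to relative strong $F$-regularity of $(X_s/s,\phi_s I_s)$, apply \autoref{cor.OpennessOfStrongFRegularity}, and translate back via \autoref{cor.RestrictDeltaAndPhiToFiberIsOk}. The extra bookkeeping you flag (nonvanishing of $I$ on fiber components, identification of $\Delta_{\phi_u}$ with $\Delta|_{X_u}$) is indeed the only care required and is handled by the cited lemmas exactly as you indicate.
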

\begin{proof}
Using \autoref{rem:relative_canonical} we construct a relatively divisorial $\phi : L^{1/p^e} \to R_{A^{1/p^e}}$ corresponding to $\Delta$.   Choose now $I$ satisfying the condition of \autoref{prop.ExistRelativeTestElements}.  It follows that $(X_s, \psi_s)$ is strongly $F$-regular and hence that $(X_s/s, \phi_s I_s)$ is relatively strongly $F$-regular since $s$ is a perfect point.  Then \autoref{cor.OpennessOfStrongFRegularity} and \autoref{cor.RestrictDeltaAndPhiToFiberIsOk} complete the proof.
\end{proof}

By a perturbation trick we can also handle the case that the index of $K_X + \Delta$ is divisible by $p > 0$, at least over curves.

\begin{corollary}
\label{cor.OpennessOfFregDivisorsBadIndex}
With notation as above, suppose that $f : X \to V$ is a projective map to a regular 1-dimensional base, and that $\Delta$ is a $\bQ$-divisor satisfying the following 4 conditions:
\begin{itemize}
\item[(a$'$)] $\Delta' = {1 \over m} D$ for some Weil divisor $D$.
\item[(b)] $D$ is a Weil divisor on $X$ which is Cartier in codimension 1 and Cartier at every codimension 1 point of every fiber.
\item[(c)] $D$ is trivial along the codimension-1 components of the non-smooth locus of $X \to V$ and the codimension-1 components of the non-smooth locus of every fiber.
\item[(d$'$)] $l/m \in \bZ$ and $\big(\omega_{X/V}^{l} \otimes \O_X ( l \Delta') \big)^{**}$ is a line bundle.
\end{itemize}
Additionally suppose that for some perfect point $s \in V$, the fiber $(X_s, \Delta|_{X_s})$ is strongly $F$-regular.  Then there exists a dense open set $U \subseteq V$ containing $s$ such that $(X_u, \Delta|_{X_u})$ is strongly $F$-regular for all perfect $u \in U$.
\end{corollary}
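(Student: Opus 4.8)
The plan is to reduce to \autoref{cor.OpennessOfFregDivisors} by a perturbation that makes the index of the relative log canonical divisor prime to $p$ without destroying strong $F$-regularity of the fiber over $s$. Precisely, I would first produce an effective $\bQ$-divisor $\Delta^{\sharp} \geq \Delta$ on $X$ satisfying conditions (a)--(d) of \autoref{rem:relative_canonical} (so $K_{X/V}+\Delta^{\sharp}$ is $\bQ$-Cartier with index not divisible by $p$) and such that $(X_s, \Delta^{\sharp}|_{X_s})$ is again strongly $F$-regular. Granting such a $\Delta^{\sharp}$, \autoref{cor.OpennessOfFregDivisors} applied to $(X,\Delta^{\sharp})$ gives a dense open $U \ni s$ over which all fibers $(X_u, \Delta^{\sharp}|_{X_u})$ are strongly $F$-regular at perfect points; and since $0 \le \Delta|_{X_u} \le \Delta^{\sharp}|_{X_u}$ and strong $F$-regularity of a pair only improves upon shrinking the boundary ($\tau(X_u,\Delta^{\sharp}|_{X_u}) \subseteq \tau(X_u,\Delta|_{X_u})$), each $(X_u,\Delta|_{X_u})$ is strongly $F$-regular, which is the assertion.

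To build $\Delta^{\sharp}$ I would first shrink $V$ to an affine neighborhood of the image of $s$ -- this is where $\dim V = 1$ enters, since then points of $V$ are Cartier, so the fibers of $f$ are Cartier divisors on $X$, and together with projectivity of $f$ the scheme $X$ carries an ample line bundle. Write $m = p^{a}m_{0}$ with $p \nmid m_{0}$, and fix a large integer $m^{\sharp}$ divisible by $m_{0}$ and prime to $p$, where how large depends only on $(X_s,\Delta|_{X_s})$. After discarding any components of $\Delta$ supported on fibers of $f$ (which affect no restriction $\Delta|_{X_u}$ and, being Cartier, could otherwise be dealt with separately), add to $\Delta$ a tiny effective divisor $\Gamma_{1}$ supported on $\Supp\Delta$ with each coefficient $<1/m^{\sharp}$, chosen so that $D_{1} := m^{\sharp}(\Delta+\Gamma_{1})$ is an integral divisor; this is what clears the $p$-part of the denominators of the coefficients of $\Delta$. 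Then pick a very ample divisor $H_{0}$ on $X$, positive enough (in terms of $m^{\sharp}$) that $\mathcal{L} := \mathcal{O}_X(H_{0}-m^{\sharp}K_{X/V}-D_{1})$ is globally generated and stays globally generated after restriction to $X_s$, let $H\ge 0$ be a general member of $|\mathcal{L}|$, and set $\Delta^{\sharp} := \Delta+\Gamma_{1}+\tfrac{1}{m^{\sharp}}H = \tfrac{1}{m^{\sharp}}(D_{1}+H)$. Then $m^{\sharp}(K_{X/V}+\Delta^{\sharp}) \sim H_{0}$ is Cartier, so $K_{X/V}+\Delta^{\sharp}$ has index dividing $m^{\sharp}$, prime to $p$; conditions (b) and (c) hold because they already hold for $\Delta$ and $H$ is general, and (d) follows from this $\bQ$-Cartierness together with the fact that over the regular curve $V$ the sheaf $\omega_{X/V}$ differs from $\omega_{X}$ by a line bundle pulled back from $V$. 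Finally $\Delta^{\sharp}|_{X_s} = \Delta|_{X_s} + \Gamma_{1}|_{X_s} + \tfrac{1}{m^{\sharp}}H|_{X_s}$, where $\Gamma_{1}|_{X_s}$ has coefficients $<1/m^{\sharp}$ on the fixed divisor $\Supp(\Delta|_{X_s})$ and $H|_{X_s}$ is a general member of a globally generated linear system on $X_s$, so strong $F$-regularity of $(X_s,\Delta^{\sharp}|_{X_s})$ follows from that of $(X_s,\Delta|_{X_s})$ by the openness of strong $F$-regularity in the coefficients of the boundary and the fact that a general member of a base-point-free linear system may be added to a strongly $F$-regular pair with any coefficient strictly less than $1$ without leaving the strongly $F$-regular locus -- provided $m^{\sharp}$ was chosen large enough at the outset.

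I expect the construction of $\Delta^{\sharp}$ to be the main obstacle. Forcing the index of $K_{X/V}+\Delta^{\sharp}$ to be prime to $p$ requires adding a divisor lying in a fairly rigid, essentially ample, linear class, which a priori is far from small, while keeping $(X_s,\Delta^{\sharp}|_{X_s})$ strongly $F$-regular demands that the perturbation be harmless on that fiber. The reconciliation rests on two points: that only the \emph{coefficient} ($1/m^{\sharp}$), and not the class, of the index-correcting divisor needs to be small, combined with the absorption of a general member of a base-point-free system into a strongly $F$-regular pair at any coefficient $<1$; and that $\dim V = 1$ together with the projectivity of $f$ is exactly what makes this setup available after shrinking $V$, by rendering fibers and pullbacks from the base Cartier. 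This is why the statement is restricted to a one-dimensional base.
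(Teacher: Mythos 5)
Your overall strategy matches the paper's: construct an effective $\Delta^{\sharp} \geq \Delta'$ satisfying conditions (a)--(d) of \autoref{rem:relative_canonical} while keeping $(X_s,\Delta^{\sharp}|_{X_s})$ strongly $F$-regular, apply \autoref{cor.OpennessOfFregDivisors}, and conclude by monotonicity of strong $F$-regularity in the boundary. But the construction of $\Delta^{\sharp}$ is genuinely different. The paper writes $l = cp^{e_0}$ with $p \nmid c$, picks an effective Cartier divisor $E$ not containing any fiber so that $K_{X/V}+E \geq 0$ (after first twisting $K_{X/V}$ by a pullback from the curve $V$ so it contains no fiber), and sets $\Gamma := \frac{E + K_{X/V} + \Delta'}{p^e-1}$, $\Delta^{\sharp} := \Delta' + \Gamma$. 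The point is the algebraic cancellation $c(p^e-1)(K_{X/V}+\Delta^{\sharp}) = cp^e(K_{X/V}+\Delta') + cE$, which is Cartier because $cp^e(K_{X/V}+\Delta') = p^{e-e_0}\,l(K_{X/V}+\Delta')$ is; and, crucially, the support of $\Gamma$ is \emph{fixed} while its coefficient tends to $0$ as $e\to\infty$, so $(X_s,\Delta^{\sharp}|_{X_s})$ being strongly $F$-regular for $e\gg 0$ follows directly from openness of $F$-regularity in the coefficients and nothing else.

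Your $H$, by contrast, varies with $m^{\sharp}$, so the support of the perturbation is not fixed: the threshold below which adding $tH|_{X_s}$ preserves strong $F$-regularity could a priori depend on $H$ and hence on $m^{\sharp}$, which is circular. You break the circle by invoking the statement that a general member of a base-point-free system may be added to a strongly $F$-regular pair with any coefficient $t<1$ while staying strongly $F$-regular. That Bertini-type statement for test ideals is believable, but it is neither standard enough to invoke silently nor proved in your sketch, and it is the actual technical load-bearing part of your argument; the paper's choice of $\Gamma$ is designed precisely so that it is not needed. Two smaller clean-ups you would also owe: $\mathcal{O}_X(H_0 - m^{\sharp}K_{X/V} - D_1)$ is only a reflexive rank-one sheaf, not obviously a line bundle, so a ``general member'' $H$ is a priori merely a Weil divisor and the Cartier-ness of $m^{\sharp}(K_{X/V}+\Delta^{\sharp})$ must be extracted from the linear equivalence to $H_0$ (which does work); and to ensure $H|_{X_s}$ is a genuinely general member of $|\mathcal{L}|_{X_s}|$ you need $H^0(X,\mathcal{L}) \to H^0(X_s,\mathcal{L}|_{X_s})$ to be surjective, which requires an additional positivity of $H_0$. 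In short, your proposal is a valid alternative once the Bertini lemma is supplied, but the paper's perturbation is more elementary and self-contained.
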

\begin{proof}
We may certainly suppose that $V$ is affine.  Since the fiber $X_s$ is normal, and hence geometrically normal, we may also assume that the nearby fibers satisfy the same condition (using that $f$ is proper).  Thus we may assume that all the fibers are geometrically normal and in fact that the map $f : X \to V$ is geometrically integral.  Without loss of generality, by base change we can assume that $V$ is normal and hence $X$ is normal itself.  Thus we may assume that $X$ is normal.

Now, since the base is $1$-dimensional, we claim that can assume that $K_{X/V}$ doesn't contain any fiber.  The only fiber we must worry about is $X_s$ (as the others can be handled by shrinking $V$).  We argue as follows:  note first that $K_{X/V}$ can be viewed as an honest Weil divisor since we already assumed that $X$ is normal.  On the other hand, each fiber is a Cartier divisor.  Hence, if $K_{X/V}$ is non-trivial along the generic point of $X_s$, by twisting by the pullback of $s$, we can assume that $K_{X/V}$ does not contain $X_s$.

Write $l \Delta' =c p^{e_0} \Delta'$ where $p$ does not divide $c$.
Let $E$ be an effective Cartier divisor on $X$ not containing any fiber such that $K_{X/V} + E$ is effective.  We also observe that $K_{X/V} + E$ satisfies conditions (b) and (c) above.  Let $\Gamma = {E + K_{X/V} + \Delta' \over p^{e} - 1}$ for some $e \gg e_0$.  Then consider
\[
\Delta := \Delta' + \Gamma.
\]
Note that
\[
\begin{array}{rl}
& c(p^e-1)(K_{X/V} + \Delta) \\
=&  c(p^e - 1)(K_{X/V} + \Delta' + \Gamma) \\
=&  c(p^e - 1)(K_{X/V} + \Delta') + c(E + K_{X/V} + \Delta') \\
= & cp^e (K_{X/V} + \Delta') + cE
\end{array}
\]
which is certainly Cartier.  Hence $K_{X/V} + \Delta$ satisfies conditions (a)--(d) from \autoref{rem:relative_canonical}.  On the other hand, since $e \gg 0$, we know that $(X_s, \Delta|_{X_s}) = (X_s, \Delta'|_{X_s} + \Gamma|_{X_s})$ is still strongly $F$-regular and so by \autoref{cor.OpennessOfFregDivisors} there exists an open set $U$ such that $(X_u, \Delta|_{X_u})$ is strongly $F$-regular for all $u \in U$.  But $\Delta \geq \Delta'$ and so the result follows for $\Delta'$ as well.
\end{proof}

\begin{remark}
In the case of a normal $X$ and in the non-relative case, we know that $\tau(X; \Delta) = \tau(\omega_X, K_X + \Delta)$.  Furthermore, we then know that
\[
\Tr^e(F^e_* \tau(\omega_X, K_X + \Delta)) = \tau(\omega_X, {1 \over p^e}(K_X + \Delta)) =  \tau(X, {1 \over p^e}(K_X + \Delta) - K_X).
\]
Reversing this process gives us a nice means to compute $\tau(X, \Delta)$ when the index of $K_X + \Delta$ is divisible by $p$.  It would be natural try to prove a relative version of this, which may yield a suitable definition of relative test ideals for $K_X + \Delta$ of any index.  We won't attempt this here.
\end{remark}

\section{Relative test submodules, $F$-rationality and $F$-injectivity}
\label{sec.RelativeFRationalityFInjectivity}

Our goal in this (somewhat shorter) section is to explore relative test submodules and non-$F$-injective modules.
Throughout this section, we assume that $f : X \to V$ is a Cohen-Macaulay morphism.  This provides us with base change for relative canonical sheaves $\omega_{X/V}$ \cite{ConradGDualityAndBaseChange}.

We first define relative non-$F$-injective modules $\sigma_n(X/V, \omega_{X/V})$ and relative test submodules $\tau_n(X/V, \omega_{X/V})$.
Recall the trace map, \autoref{lem.BaseChangeOfTraceForCMMaps}
\[
\Phi_{X/V,n} : \omega_{X/V}^{1/p^n} = \omega_{X^n/V^n} \to \omega_{X_{V^n}/V^n} \cong \omega_{X/V} \otimes_{A} A^{1/p^{ne}}.
\]
Here the final isomorphism follows from \cite[Theorem 3.6.1]{ConradGDualityAndBaseChange} since $f$ is a Cohen-Macaulay morphism.  We will form $\sigma$ and $\tau$ relative to these maps, instead of relative to the map $\phi$ discussed previously.  One key point to remember is that by \autoref{lem.BaseChangeOfTraceForCMMaps}, the map $\Phi_{X/V,n}$ is compatible with arbitrary base change (since in this section, $f$ is a Cohen-Macaulay morphism).  We also observe that the composition of trace maps
\begin{equation}
\label{eq.CompositionOfTraceMaps}
\omega_{X^m/V^m} \xrightarrow{\Phi_{X/V,m-n}^{1/p^{m-n}}} \omega_{X^n_{V^{m}}/V^m}  \xrightarrow{\Phi_{X/V,n} \otimes_{A^{1/p^n}} A^{1/p^m}} \omega_{X/V} \otimes_A A^{1/p^{m}} \cong \omega_{X_{V^{m}}/V^m}.
\end{equation}
can be identified with $\Phi_{X/V,m}$.

\mysubsection{The definition and basic properties of $\sigma_n(X/V, \omega_{X/V})$}

For each integer $n > 0$, define $\bc_n := \Image(\Phi_{X/V, n}) \subseteq \omega_{X_{V^n}/V^n} \cong \omega_{X/V} \otimes_{A} A^{1/p^n}$.
Furthermore, for each $m \geq n$, using the factorization in \autoref{eq.CompositionOfTraceMaps} it is easy to see that
\begin{equation}
\label{eq.ContainmentOfRelativeNonFinjective}
\bc_m \subseteq \Image\big(\bc_n \otimes_{A^{1/p^n}} A^{1/p^m} \to \omega_{R_{A^{1/p^m}}}\big)
\end{equation}
just as we observed in \autoref{sec.RelativeNonFPureIdeals}.

\begin{definition}
With notation as above, we define the \emph{$n$th relative non-$F$-injective submodule}, denoted $\sigma_n(X/V, \omega_{X/V})$ to be $\bc_n \subseteq \omega_{X_{V^{n}}/V^n}$.
\end{definition}

As before, we will prove a stabilization statement for $\bc_n$ and $\bc_m$, in particular that the containment \autoref{eq.ContainmentOfRelativeNonFinjective} is an equality for all $m > n \gg 0$.  By base changing with $k(V^{\infty})$, the perfection of the residue field of the generic point of $V$, we can again find an integer $n_0 > 0$ and an open set $U \subseteq V$ such that
\[
\bc_{n_0+1}|_{f^{-1} U} = \Image\big(\bc_{n_0} \otimes_{A^{1/p^{n_0}}} A^{1/p^{n_0+1}} \to \omega_{R_{A^{1/p^{n_0+1}}}}\big)|_{f^{-1} U}.
\]
Again observe that for $U$ inside the regular locus of $V$, we can identify the image above with $\big(\bc_{n_0} \otimes_{A^{1/p^{n_0}}} A^{1/p^{n_0+1}}\big)|_U$ by the flatness of $A^{1/p^m}$ over $A^{1/p^n}$.
  We then obtain:
\begin{proposition}
\label{prop.StabilizingSigmaCanonical}
Fix notation as above.  For every integer $n \geq n_0$, there exists a nonempty open subset $U_n \subseteq V$ of the base scheme $V$ satisfying the following condition.  If one sets 
$X_n = f^{-1}(U_n)$, then we have that for every $m \geq n$
\begin{equation}
\label{eq.sigmaNRestrictsCanonical}
\sigma_m(X/V, \omega_{X/V})|_{X_n} = \Image\big(\sigma_{n}(X/V, \omega_{R/A})\cdot \O_{X_n} \otimes_{A^{1/p^n}} A^{1/p^m} \to \omega_{R_{A^{1/p^m}}} \big).
\end{equation}
Furthermore we may assume that $U_{n_0} \subseteq U_{n_0+1} \subseteq \cdots \subseteq U_{n} \subseteq U_{n+1} \subseteq \cdots$ form an ascending chain of open sets.
\end{proposition}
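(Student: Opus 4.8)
The plan is to run the proof of \autoref{prop.StabilizingSigma} essentially verbatim, with the trace maps $\Phi_{X/V,\bullet}$ and their composition law \autoref{eq.CompositionOfTraceMaps} playing the roles that the iterated maps $\phi^{\bullet}$ and the constructions of \autoref{subsec.ComposingMaps} played there, and then to add a short argument arranging the resulting open sets into an ascending chain. Note that the containment $\bc_m \subseteq \Image\big(\bc_n \otimes_{A^{1/p^n}} A^{1/p^m} \to \omega_{R_{A^{1/p^m}}}\big)$ already holds for all $m \geq n$ (it is induced by the factorization \autoref{eq.CompositionOfTraceMaps}), so in \autoref{eq.sigmaNRestrictsCanonical} only the reverse inclusion, over a suitable open set, has to be produced.

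First I would record that the reasoning that produced the integer $n_0$ and the open set $U$ in the paragraph preceding the statement applies with any $n \geq n_0$ in place of $n_0$: after base change along $V^{\infty} \to V$, the chain $\big(\bc_i \otimes_{A^{1/p^i}} k(V^{\infty})\big)_i$ is, by \autoref{lem.BaseChangeOfTraceForCMMaps}, the descending chain of images of the iterated Grothendieck traces of the fiber $X_{k(V^{\infty})}$ over the perfect point $\eta_\infty$, hence stabilizes for $i \geq n_0$ by the Hartshorne--Speiser--Lyubeznik--Gabber theorem \cite[Lemma 13.1]{Gabber.tStruc} (equivalently, this is Matlis-dual to the stabilization used in \autoref{sec.RelativeNonFPureIdeals}). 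Combining this with the faithful flatness of $k(V)^{1/p^{n+1}} \subseteq k(V^{\infty})$ and generic freeness \cite[Theorem 14.4]{EisenbudCommutativeAlgebraWithAView} yields, for each $n \geq n_0$, a dense open $U'_n \subseteq V$ over which $\bc_{n+1}$ equals the extension of $\bc_n$, i.e. \autoref{eq.sigmaNRestrictsCanonical} for $m = n+1$.

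Next, for fixed $n \geq n_0$, I would upgrade this one-step statement over $U'_n$ to the full statement for all $m \geq n$ over the same $U'_n$, by induction on $m$, exactly as in \autoref{prop.StabilizingSigma}. The inductive step takes as input that $\Phi_{X/V,n+1}$ and $\Phi_{X/V,n} \otimes_{A^{1/p^n}} A^{1/p^{n+1}}$ have the same image over $f^{-1}(U'_n)$; applying the exact Frobenius-pushforward functors and the right-exact scalar-extension functors $\otimes_{A^{1/p^{\bullet}}} A^{1/p^{\bullet}}$ shows the corresponding intermediate maps agree in image over $f^{-1}(U'_n)$, and postcomposing once more with the level-one trace (legitimate because \autoref{eq.CompositionOfTraceMaps} identifies both $\Phi_{X/V,n+2}$ and the extension of $\bc_n$ to $A^{1/p^{n+2}}$ with the relevant composites) gives the same statement with $n+1$ replaced by $n+2$; iterating yields \autoref{eq.sigmaNRestrictsCanonical} for every $m \geq n$ over $U'_n$. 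I expect this purely formal bookkeeping of Frobenius roots and of which ring one extends scalars over --- precisely the content of the curved-arrow diagram in the proof of \autoref{prop.StabilizingSigma} --- to be the only step requiring care; no new mathematical difficulty appears, since \autoref{eq.CompositionOfTraceMaps} supplies all the commutativities that \autoref{subsec.ComposingMaps} supplied there.

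Finally, for the ascending chain I would observe that if an open $U$ satisfies \autoref{eq.sigmaNRestrictsCanonical} for the index $n$ (that is, for all $m \geq n$), then it also satisfies it for the index $n+1$: for $m \geq n+1$ one has $\sigma_m(X/V,\omega_{X/V})|_{f^{-1}(U)} = \Image\big(\sigma_n(X/V,\omega_{X/V}) \cdot \O_{f^{-1}(U)} \otimes_{A^{1/p^n}} A^{1/p^m} \to \omega_{R_{A^{1/p^m}}}\big)$, and taking $m = n+1$ identifies $\sigma_{n+1}(X/V,\omega_{X/V})|_{f^{-1}(U)}$ with the extension of $\sigma_n$ to $A^{1/p^{n+1}}$, whence by right-exactness of tensor the extension of $\sigma_{n+1}$ to $A^{1/p^m}$ equals the extension of $\sigma_n$ to $A^{1/p^m}$, which is $\sigma_m(X/V,\omega_{X/V})|_{f^{-1}(U)}$. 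Consequently a $U'_j$ valid for index $j$ is valid for every index $\geq j$; and since validity for a fixed index is local on $V$ (it asserts an equality of coherent sheaves on $f^{-1}$ of the open set, and forming such images of tensor extensions commutes with restriction), the open sets $U_n := \bigcup_{j=n_0}^{n} U'_j$ are each valid for the index $n$ and satisfy $U_{n_0} \subseteq U_{n_0+1} \subseteq \cdots$, as required.
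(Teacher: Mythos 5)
Your proposal is correct and matches the paper's approach: the paper's proof consists of the single sentence that the argument is identical to that of \autoref{prop.StabilizingSigma}, and you have carefully spelled out that transfer, with \autoref{lem.BaseChangeOfTraceForCMMaps} and \autoref{eq.CompositionOfTraceMaps} supplying the needed base-change and compositional structure for $\Phi_{X/V,\bullet}$ in place of the constructions of \autoref{subsec.ComposingMaps} for $\phi^\bullet$. Your closing observation that validity for index $n$ already implies validity for index $n+1$ --- which gives the ascending chain and in fact shows a single $U$ works for every $n \geq n_0$ --- is the sandwich argument implicit in the ``for every $m \geq n \geq n_0$'' formulation of \autoref{prop.StabilizingSigma}.
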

\begin{proof}
The proof is identical to that of \autoref{prop.StabilizingSigma} and so we omit it.
\end{proof}


We now point out that relative non-$F$-injective modules behave well with respect to base change.  Recall that if $g : T \to V$ is a map, then $q_n : X_{T^n} \to X_{V^n}$ is the induced map.

\begin{proposition}[Base change for $\sigma_n(X/V, \omega_{X/V})$]
\label{prop.BaseChangeForSigmaOmega}
Suppose that $g : T \to V$ is a map from an excellent scheme with a dualizing complex, then using the notation of \autoref{subsec.BaseChangeOfPhi}
\[
\Image\left((q_{n})^* \sigma_n(X/V, \omega_{X/V}) \rightarrow \omega_{X_{T^{n}}/T^n}\right) = \sigma_n(X_T/T, \omega_{X_T/T}).
\]
Furthermore, if $U = U_n$ satisfies condition \eqref{eq.sigmaNRestrictsCanonical} from \autoref{prop.StabilizingSigmaCanonical}, then $W = g^{-1}(U) \subseteq T$ satisfies the same condition for $\sigma_n(X_T/T, \omega_{X_T/T})$.
\end{proposition}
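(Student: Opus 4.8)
The plan is to follow the proof of \autoref{prop.RelativeNonFPureIdealsAndBaseChange} essentially verbatim, the only additional input being that the iterated relative trace map is compatible with base change. Recall that, by definition, $\sigma_n(X/V, \omega_{X/V}) = \bc_n = \Image\big(\Phi_{X/V,n}\big)$ where $\Phi_{X/V,n} : \omega_{X^n/V^n} \to \omega_{X_{V^n}/V^n} \cong \omega_{X/V} \otimes_A A^{1/p^n}$, the last identification coming from \cite{ConradGDualityAndBaseChange} since $f$ is a Cohen-Macaulay morphism. First I would check that pulling $\Phi_{X/V,n}$ back along $q_n : X_{T^n} \to X_{V^n}$ produces the corresponding trace map $\Phi_{X_T/T,n}$ for the base-changed family $f_T : X_T \to T$ (which is again Cohen-Macaulay). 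This is exactly \autoref{lem.BaseChangeOfTraceForCMMaps} applied iteratively: the factorization of $\Phi_{X/V,n}$ into single Frobenius traces in \autoref{eq.CompositionOfTraceMaps}, together with the compatibility of the relative Frobenius with base change recorded in \autoref{subsec.BaseChangeOfPhi}, reduces the assertion to the one-step statement of \autoref{lem.BaseChangeOfTraceForCMMaps}.

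Granting this, the first assertion is immediate: the image of a morphism of quasi-coherent sheaves is compatible with arbitrary base change by the right-exactness of $\otimes$, so
\[
\Image\big((q_n)^* \sigma_n(X/V, \omega_{X/V}) \to \omega_{X_{T^n}/T^n}\big) = \Image\big(\Phi_{X_T/T,n}\big) = \sigma_n(X_T/T, \omega_{X_T/T}).
\]
For the second assertion, condition \eqref{eq.sigmaNRestrictsCanonical} over $f^{-1}(U)$ says that two submodules of $\omega_{R_{A^{1/p^m}}}$ — namely $\sigma_m(X/V,\omega_{X/V})$ and the image of $\sigma_n(X/V,\omega_{X/V}) \otimes_{A^{1/p^n}} A^{1/p^m}$ — coincide; both are images of morphisms whose formation commutes with the base change $g$ by what was just said, so the equality persists after restricting to $W = g^{-1}(U)$. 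Using in addition that each fiber of $X_W \to W$ is isomorphic to a fiber of $X_{f^{-1}(U)} \to U$, this is precisely the corresponding stabilization condition for $\sigma_n(X_T/T, \omega_{X_T/T})$.

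I do not expect a genuine obstacle here: once the base-change compatibility of the iterated trace is in hand, the statement is formal, exactly parallel to \autoref{prop.RelativeNonFPureIdealsAndBaseChange}. The only point requiring a little care — and the closest thing to a ``main step'' — is the bookkeeping of the identifications $\omega_{X_{V^n}/V^n} \cong \omega_{X/V} \otimes_A A^{1/p^n}$ and their functoriality in the base, so that ``pull back, then take trace'' literally agrees with ``take the trace of the pulled-back family''; this is routine from Conrad's base-change theorem and \autoref{lem.BaseChangeOfTraceForCMMaps}, and so, as with \autoref{prop.StabilizingSigmaCanonical}, one could reasonably just say the proof is identical to the corresponding statement for the $\ba_n$.
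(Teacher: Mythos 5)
Your proposal is correct and matches the paper's proof essentially verbatim: the first assertion is an immediate consequence of the base-change compatibility of the relative trace map from \autoref{lem.BaseChangeOfTraceForCMMaps} (together with right-exactness of tensor), and the second follows since equality of images of two maps persists after base change. The only cosmetic difference is that you spell out the iterated reduction to the one-step trace statement, which the paper leaves implicit.
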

\begin{proof}
The first statement is an immediate consequence of base change relative canonical sheaves and trace \autoref{lem.BaseChangeOfTraceForCMMaps}.  For the second, if $\Phi_{X/V,n} \otimes_{A^{1/p^n}} A^{1/p^{n+1}}$ and $\Phi_{X/V,n+1}$ have the same image in $\omega_{X_{V^n}/V^n}$, then its easy to see that the base changed maps also have the same images.
\end{proof}

Recall for any $F$-finite scheme $Y$ with canonical module $\omega_Y$, then $\sigma(\omega_Y)$ is equal to $\Im(F^e_* \omega_Y \xrightarrow{\Tr^e} \omega_Y)$ for any $e \gg 0$.

\begin{corollary}[Restriction theorem for $\sigma_n(X/V, \omega_{X/V})$]
\label{cor.RestrictionTheoremForSigmaOmega}
With notation as above, there exists an integer $N > 0$ such that for every perfect point $s \in V$, we have
\[
\sigma(\omega_{X_s}) = \Image\left( \sigma_n(X/V, \omega_{X/V}) \tensor_{A^{1/p^{ne}}} k(s)^{1/p^{ne}} \to \omega_{X_s/s} \right).
\]
for all $n > N$.
\end{corollary}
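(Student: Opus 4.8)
The plan is to mimic verbatim the argument that proved \autoref{thm.BaseChangeSigmaGeneral} and \autoref{thm.UniversalNForAllClosedFibers}, with the trace maps $\Phi_{X/V,n}$ playing the role of $\phi$ and with \autoref{prop.StabilizingSigmaCanonical} and \autoref{prop.BaseChangeForSigmaOmega} substituting for \autoref{prop.StabilizingSigma} and \autoref{prop.RelativeNonFPureIdealsAndBaseChange}. First I would produce a uniform $N$ by Noetherian induction on $\dim V$. By \autoref{prop.StabilizingSigmaCanonical} there is a dense open $U_0 \subseteq V$ and an integer $N_0$ so that \eqref{eq.sigmaNRestrictsCanonical} holds over $f^{-1}(U_0)$ for all $m \geq n \geq N_0$. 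Let $V_1'= V\setminus U_0$ with its reduced structure and let $V_1 = (V_1')_{\reg}$; then $\dim V_1 < \dim V$, $V_1$ is again excellent with a dualizing complex, and $X_{V_1}\to V_1$ is still a Cohen–Macaulay morphism (Cohen–Macaulay morphisms are stable under base change), so \autoref{prop.StabilizingSigmaCanonical} applies to it and yields $U_1\subseteq V_1$ and an integer $N_1$; moreover every fiber of $X_{V_1}\to V_1$ is isomorphic to a fiber of $f$. Iterating, the process terminates, and we set $N=\max\{N_0,N_1,\dots\}$.

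Next, fix a perfect point $s\in V$ with image $v$, and choose the stratum $U_i$ with $v\in U_i$. The inclusion $\Spec k(s)\to V$ factors through $U_i$, so by \autoref{prop.BaseChangeForSigmaOmega} the base change satisfies \eqref{eq.sigmaNRestrictsCanonical} as well; that is, for all $m\geq n\geq N$,
\[
\sigma_m(X_s/s,\omega_{X_s/s}) = \Image\big(\sigma_n(X_s/s,\omega_{X_s/s})\otimes_{k(s)^{1/p^{ne}}} k(s)^{1/p^{me}}\to \omega_{X_s/s}\big),
\]
while the first part of \autoref{prop.BaseChangeForSigmaOmega} identifies $\sigma_n(X/V,\omega_{X/V})\otimes_{A^{1/p^{ne}}}k(s)^{1/p^{ne}}$ — which is exactly the image in the statement — with $\sigma_n(X_s/s,\omega_{X_s/s})$. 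So it only remains to recognize $\sigma_n(X_s/s,\omega_{X_s/s})$ as $\sigma(\omega_{X_s})$. Since $k(s)$ is perfect, $s^e\cong s$ and $X_{s^e}\cong X_s$, and under this identification (using \autoref{lem.BaseChangeOfTraceForCMMaps} and the factorization \eqref{eq.CompositionOfTraceMaps}, just as in \autoref{subsec.IteratedNonFPureIdealsVsAbsolute}) the relative trace $\Phi_{X_s/s,m}$ becomes the absolute Grothendieck trace $\Tr^{me}\colon F^{me}_*\omega_{X_s}\to\omega_{X_s}$. Hence $\sigma_n(X_s/s,\omega_{X_s/s})=\Image(\Tr^{ne})$, and by the stabilization just displayed this image is independent of $n\geq N$, so it equals $\bigcap_m \Image(\Tr^{me})=\sigma(\omega_{X_s})$ by the fact recalled immediately before the statement. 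Chaining the equalities proves the corollary.

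I do not expect a genuine obstacle: the real work is already carried by \autoref{prop.StabilizingSigmaCanonical} and \autoref{prop.BaseChangeForSigmaOmega}. The only points needing care are bookkeeping: one must check that the Cohen–Macaulay, excellence, and dualizing-complex hypotheses are inherited by each base-changed family $X_{V_i}\to V_i$ so that \autoref{prop.StabilizingSigmaCanonical} keeps applying, and that the \emph{generic} stabilization integer produced on each stratum $U_i$ really bounds the stabilization integer of \emph{every} perfect fiber lying over $U_i$ — which is precisely what the base-change compatibility in \autoref{prop.BaseChangeForSigmaOmega} guarantees.
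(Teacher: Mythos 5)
Your proof is correct and follows essentially the same strategy as the paper's: use \autoref{prop.BaseChangeForSigmaOmega} to identify the fiberwise restriction with $\sigma_n(X_s/s,\omega_{X_s/s})$, observe that over a perfect point this is a descending chain whose intersection is $\sigma(\omega_{X_s})$, get uniform stabilization over a dense open via \autoref{prop.StabilizingSigmaCanonical} and base-change compatibility, and close the argument by Noetherian induction on strata. The only cosmetic difference is that you extract the uniform $N$ by stratifying up front before touching any fiber, whereas the paper interleaves the stratification with the fiberwise argument; the substance is the same.
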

\begin{proof}
Taking $g : s \to V$ in the previous theorem and so obtain that the image is equal to $\sigma_n(X_s/s, \omega_{X_s/s})$.  Furthermore, since $k(s)$ is perfect, we have containments $\ldots \supseteq \sigma_n(X_s/s, \omega_{X_s/s}) \supseteq \sigma_{n+1}(X_s/s, \omega_{X_s/s}) \supseteq \ldots$ with a descending intersection that coincides with $\sigma(\omega_{X_s})$.  Furthermore, by \autoref{prop.BaseChangeForSigmaOmega}, over a dense open set $U \subseteq V$ and some $N > 0$ we have $\sigma_n(X_s/s, \omega_{X_s/s}) = \sigma_{n+1}(X_s/s, \omega_{X_s/s})$ and hence $\sigma_n(X_s/s, \omega_{X_s/s}) = \sigma(\omega_{X_s})$ by the construction of $\sigma(\omega_{X_s})$ for all $n \geq N_0$.  Let $V_1 = V \setminus U$, base change with $V_1$ and obtain the result over a dense open subset $U_1$ of $V_1$ (for some $N_1$).  By Noetherian induction, this process terminates.
\end{proof}

\mysubsection{The definition and basic properties  of $\tau_n(X/V, \omega_{X/V})$}

The goal of this section is to develop the basics of a relative theory of test submodules.
We fix the notation of the previous sections and additionally assume that $X$ is geometrically normal over $V$ which we now also assume is regular.  We let $J = J_{X/V} \subseteq R$ be the Jacobian ideal sheaf of $X$ over $V$.  We observe that the formation of $J_{X/V}$ commutes with base change in the following sense: for any $T \to V$, we have $J_{X/V} \cdot \O_{X_T} = J_{X_T/T}$. To see this, just note that $J_{X/V}$ can be defined as a Fitting ideal of $\Omega_{X/V}$ (\cite[Discussion 4.4.7]{HunekeSwansonIntegralClosure}) and that the formation of $\Omega_{X/V}$ (\cite[Proposition 16.4]{EisenbudCommutativeAlgebraWithAView}) and Fitting ideals (\cite[Discussion 4.4.7]{HunekeSwansonIntegralClosure}) commutes with arbitrary base change. Note that $J$ is nonzero at any generic point of $X$ since $X$ is geometrically reduced.  Furthermore, on every perfect fiber, the Jacobian ideal is contained in the (big) test ideal \cite[Theorem on Page 213]{HochsterFoundations}.

 We have the images
\[
\begin{array}{rcll}
\bd_1 & := & \Image\big(J \cdot \omega_{R_{A^{1/p}}/A^{1/p}} \to \omega_{R_{A^{1/p}}/A^{1/p}} \big) + \Phi_{X/V,1}((J \cdot \omega_{R})^{1/p}) \subseteq \omega_{R_{A^{1/p}}},\\
\bd_2 & := & \sum_{i = 0}^2 \Image\Big((\Phi_{X/V,i}(J \cdot \omega_{R/A})^{1 /p^{i}}) \otimes_{A^{1/p^{i}}} A^{1/p^{2}} \to \omega_{R_{A^{1/p^2}}}\Big)\subseteq \omega_{R_{A^{1/p^2}}},\\
  \dots \\
\bd_n & := & \sum_{i = 0}^n \Image\Big((\Phi_{X/V,i}(J \cdot \omega_{R/A})^{1 /p^{i}}) \otimes_{A^{1/p^{i}}} A^{1/p^{n}} \to \omega_{R_{A^{1/p^n}}}\Big)\subseteq \omega_{R_{A^{1/p^n}}},\\ \\
  \dots
  \end{array}
\]
Notice that $\bd_{1,2} := \Image\Big(\bd_1 \otimes_{A^{1/p}} {A^{1/p^{2}}} \to \omega_{R_{A^{1/p^2}}} \Big)\subseteq \bd_2$ and more generally for $j > i$ that $\bd_{i,j} := \Image\Big(\bd_i \otimes_{A^{1/p^{i}}} {A^{1/p^{j}}} \to \omega_{R_{A^{1/p^n}}} \Big)\subseteq \bd_j$.

By the same argument as in \autoref{sec.RelativeNonFPureIdeals}, we know that there exists an open set $U \subseteq V$ with $W = f^{-1}(U)$ such that $\Image\big(\bd_t \otimes_{A^{1/p^{te}}} A^{1/p^{(t+1)e}} \to \omega_{R_{A^{1/p^{t+1}}}}\big)|_W = \bd_{t+1}|_W$.
\begin{lemma}
\label{lem.StabilizingTauOmega}
With notation as above, $\Image\big(\bd_{n} \otimes_{A^{1/p^{ne}}} A^{1/p^{(n+1)e}} \to \omega_{R_{A^{1/p^{(n+1)e}}}}\big)|_W = \bd_{n+1}|_W$ for all $n > t$.
\end{lemma}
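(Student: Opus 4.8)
The proof follows exactly the pattern established in \autoref{prop.StabilizingSigma} and \autoref{lem.StabilizingTau}, which is why the paper says ``by the same argument''. The plan is to reduce to showing that the containment $\bd_{n,n+1} \subseteq \bd_{n+1}$ becomes an equality over $W$ for all $n > t$, given that it is an equality at the stage $n = t$. The key structural fact, exactly as in the $\tau$-case, is that each $\bd_{n+1}$ is built from $\bd_n$ by the operation of tensoring with $L$ over $R$ (here there is no $L$, so just with $\omega_{R/A}$ appropriately), taking $p^e$-th roots, applying the trace map $\Phi_{X/V,1}$, and adding back in the already-present terms; this operation is order-preserving for submodules and compatible with the extension-of-scalars maps $\otimes_{A^{1/p^{ne}}} A^{1/p^{(n+1)e}}$ because tensor is right exact.

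First I would replace $V$ by $U$ and $X$ by $W$, so that we may assume outright that $\bd_{t+1} = \Image(\bd_t \otimes_{A^{1/p^{te}}} A^{1/p^{(t+1)e}} \to \omega_{R_{A^{1/p^{(t+1)e}}}}) = \bd_{t,t+1}$. Then I would argue by induction on $n \geq t$: assuming $\bd_{n+1} = \bd_{n,n+1}$, I want to conclude $\bd_{n+2} = \bd_{n+1,n+2}$. Unwinding the definition of $\bd_n$ as the finite sum $\sum_{i=0}^n \Image\big((\Phi_{X/V,i}(J\cdot\omega_{R/A})^{1/p^i}) \otimes_{A^{1/p^i}} A^{1/p^n} \to \omega_{R_{A^{1/p^n}}}\big)$, the hypothesis $\bd_{n+1} = \bd_{n,n+1}$ is equivalent to the single containment that the $(n+1)$-st term $\Image(\Phi_{X/V,n+1}((J\cdot\omega_{R/A})^{1/p^{n+1}}) \to \omega_{R_{A^{1/p^{n+1}}}})$ lies in the sum of the $0$-th through $n$-th terms (all extended to $A^{1/p^{(n+1)e}}$). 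Now apply the operation ``tensor with $\omega_{R/A}$-type twist over $R$, take $p^e$-th roots, apply $\Phi_{X/V,1}$'' to this containment; using the compatibility of the trace maps under composition \autoref{eq.CompositionOfTraceMaps} ($\Phi_{X/V,i+1} = (\Phi_{X/V,1}\text{-type composite})$), the image of the $(n+1)$-st term maps onto (a piece of) the $(n+2)$-nd term, and the images of the $0$-th through $n$-th terms map into the $1$-st through $(n+1)$-st terms. Hence the $(n+2)$-nd term lands in the sum of the $1$-st through $(n+1)$-st terms, which gives $\bd_{n+2} = \bd_{n+1,n+2}$ and closes the induction.

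There is one point requiring slightly more care than in \autoref{lem.StabilizingTau}, and this is where I expect the only real friction: the objects here are submodules of $\omega_{R_{A^{1/p^n}}}$ rather than ideals, so the ``twisting'' step must be the correct one so that $p^e$-th roots of $\Phi_{X/V,n+1}((J\cdot\omega_{R/A})^{1/p^{n+1}})$ actually become the term $\Phi_{X/V,n+2}((J\cdot\omega_{R/A})^{1/p^{n+2}})$ up to the $J$-multiplier behaving correctly under Frobenius. This is handled exactly as in the construction preceding \autoref{eq.CompositionOfTraceMaps}: one uses that $\omega_{X^{n}/V^{n}} = \omega_{X/V}^{1/p^n}$ and that $J^{1/p^{n+1}} \subseteq J^{1/p^{n+2}}$-type roots factor correctly, together with the base-change compatibility of $J_{X/V}$ noted in the paragraph preceding the lemma. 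Once this bookkeeping is in place the argument is purely formal. Therefore I would simply write: \emph{The proof is identical to that of \autoref{lem.StabilizingTau}, replacing the ideals $\bb_i$ by the submodules $\bd_i$, the maps $\phi^i$ by the trace maps $\Phi_{X/V,i}$, and using the factorization \autoref{eq.CompositionOfTraceMaps} in place of the composition law for the $\phi^i$; the necessary compatibility of the Jacobian ideal $J_{X/V}$ with the relevant Frobenius powers is immediate from its base-change compatibility. We omit the details.}
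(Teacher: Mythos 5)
Your proposal is correct and follows exactly the route the paper intends — the paper's own proof of \autoref{lem.StabilizingTauOmega} is just the one-line remark that it is identical to \autoref{lem.StabilizingTau}, and you have accurately spelled that argument out (reduce to $W$, induct, translate the hypothesis into a containment of the top term in the lower ones, apply "take $p$-th roots and post-compose with $\Phi_{X/V,1}$" via the factorization \autoref{eq.CompositionOfTraceMaps}). The one worry you raised about the $J$-multiplier is in fact a non-issue: unlike the $\tau$-case, there is no $L$-twist to track here, since the source of $\Phi_{X/V,i+1}$ is simply the $p$-th root of the source of $\Phi_{X/V,i}$, and $J^{1/p^i}$ passes to $J^{1/p^{i+1}}$ under root-taking with no further bookkeeping.
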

\begin{proof}  The proof is the same as the proof of \autoref{lem.StabilizingTau} and so we omit it.
\end{proof}

\begin{definition}[Relative test submodules]
With notation as above (in particular, $\omega_{X/V}$ is still compatible with base change), we define the \emph{$n$th iterated relative test submodule} to be $\bd_{n}$ and denote it by $\tau_n(X/V, \omega_{X/V})$.
\end{definition}

We now discuss base change for relative test ideals.

\begin{proposition}[Base change for $\tau_n(X/V, \omega_{X/V})$]
\label{prop.BaseChangeForTauOmega}
Suppose that $g : T \to V$ is a map from a excellent scheme with a dualizing complex, then using the notation of \autoref{subsec.BaseChangeOfPhi}
\[
\Image\left((q_{n})^* \tau_n(X/V, \omega_{X/V}) \rightarrow \omega_{X_{T^{n}}/T^n}\right) = \tau_n(X_T/T, \omega_{X/T}).
\]
Furthermore, if $U = U_n$ satisfies condition from \autoref{lem.StabilizingTauOmega}, then $Y = g^{-1}(U) \subseteq T$ satisfies the same condition for $\tau_n(X_T/T, \omega_{X_T/T})$.
\end{proposition}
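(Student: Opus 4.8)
The plan is to imitate the proof of \autoref{thm.BaseChangeTauInitial} verbatim, since the definition of $\tau_n(X/V, \omega_{X/V})$ is formally identical to that of $\tau_n(X/V, \phi I)$ with the map $\phi$ replaced by the relative trace $\Phi_{X/V,1}$ and the twisting ideal $I$ replaced by the Jacobian ideal $J = J_{X/V}$. Working locally, write $B = \O_T$. By construction $\tau_n(X/V, \omega_{X/V})$ is the sum
\[
\bd_n = \sum_{i=0}^{n} \Image\Big( \Phi_{X/V,i}\big( (J \cdot \omega_{R/A})^{1/p^i} \big) \otimes_{A^{1/p^i}} A^{1/p^n} \to \omega_{R_{A^{1/p^n}}} \Big),
\]
so applying $\otimes_{A^{1/p^n}} B^{1/p^n}$ and using right-exactness of tensor gives
\[
\tau_n(X/V, \omega_{X/V}) \cdot \O_{X_{T^n}} = \sum_{i=0}^{n} \Image\Big( \Phi_{X/V,i}\big( (J\cdot \omega_{R/A})^{1/p^i} \big) \otimes_{A^{1/p^i}} B^{1/p^n} \Big).
\]

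The two facts I would invoke to identify this with $\tau_n(X_T/T, \omega_{X_T/T})$ are: first, that the Jacobian ideal commutes with base change, $J_{X/V}\cdot \O_{X_T} = J_{X_T/T}$, which is recorded in the text just before the definition (via Fitting ideals of $\Omega_{X/V}$); and second, that the relative trace map is compatible with base change, $\Phi_{X/V,i}$ pulls back to $\Phi_{X_T/T,i}$, which is exactly \autoref{lem.BaseChangeOfTraceForCMMaps} (this is where the running assumption that $f$ is a Cohen-Macaulay morphism is used, since it guarantees $\omega_{X/V}$ is compatible with base change). Combining these, each summand $\Phi_{X/V,i}\big((J\cdot\omega_{R/A})^{1/p^i}\big)\otimes_{A^{1/p^i}} B^{1/p^n}$ becomes $\Phi_{X_T/T,i}\big((J_T\cdot\omega_{R_T/B})^{1/p^i}\big)\otimes_{B^{1/p^i}} B^{1/p^n}$, and the sum over $i$ is precisely $\tau_n(X_T/T, \omega_{X_T/T})$. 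This proves the first displayed equality.

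For the second statement, suppose $U = U_n$ satisfies the condition of \autoref{lem.StabilizingTauOmega}, i.e. over $f^{-1}(U)$ we have $\bd_{n+1} = \Image(\bd_n \otimes_{A^{1/p^{ne}}} A^{1/p^{(n+1)e}} \to \omega_{R_{A^{1/p^{(n+1)e}}}})$. This is an equality of two images of maps of sheaves, and images of maps are preserved under the (right-exact) base change $\otimes B$; since $Y = g^{-1}(U)$ is exactly the preimage of $U$ in $T$, restricting to $Y$ and applying the first part of the proposition termwise shows that the analogous equality $\bd_{n+1}^{(T)} = \Image(\bd_n^{(T)} \otimes \cdots)$ holds over $Y$, which is the stabilization condition for $\tau_\bullet(X_T/T, \omega_{X_T/T})$. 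So $Y$ works, exactly as in the corresponding step of \autoref{thm.BaseChangeTauInitial} and \autoref{prop.BaseChangeForSigmaOmega}.

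There is no real obstacle here: the argument is a routine transcription, and the only substantive inputs — base change for the Jacobian ideal and base change for Grothendieck trace on a Cohen-Macaulay morphism — have already been established (the latter in \autoref{lem.BaseChangeOfTraceForCMMaps}). The one point that deserves a sentence of care is that $(J\cdot\omega_{R/A})^{1/p^i}$, as a subsheaf on $X^i$, pulls back correctly under the base change $q_i$ — but this follows from the compatibility of $\Phi_{X/V,i}$ with base change together with $J\cdot\O_{X_T} = J_{X_T/T}$ and the fact that $F^i_*$ of a pullback computes appropriately, exactly as the corresponding step for $\phi^i((I\cdot L^{\cdots})^{1/p^{ie}})$ in the proof of \autoref{thm.BaseChangeTauInitial} via \autoref{lem.BaseChangeForPhin}. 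Hence the proof can legitimately be given as: ``The proof is the same as that of \autoref{thm.BaseChangeTauInitial}, using \autoref{lem.BaseChangeOfTraceForCMMaps} and the base-change compatibility of the Jacobian ideal in place of \autoref{lem.BaseChangeForPhin}.''
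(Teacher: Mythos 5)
Your proposal is correct and takes exactly the same route as the paper, which disposes of the statement in one line (``It is just as before since $\omega_{X/V}$ and $J$ are compatible with arbitrary base change''); you have simply unpacked what ``just as before'' means, citing \autoref{thm.BaseChangeTauInitial} for the template and \autoref{lem.BaseChangeOfTraceForCMMaps} plus base change of Fitting ideals as the two inputs, which is precisely what the paper's argument relies on.
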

\begin{proof}
It is just as before since $\omega_{X/V}$ and $J$ are compatible with arbitrary base change.
\end{proof}

\begin{corollary}[Restriction theorem for $\tau_n(X/V, \omega_{X/V})$]
\label{cor.RestrictionTheoremForTauOmega}
With notation as above, there exists an integer $N > 0$ such that for every perfect point $s \in V$, we have
\[
\Image\left( \tau_n(X/V, \omega_{X/V}) \tensor_{A^{1/p^{ne}}} k(s)^{1/p^{ne}} \to \omega_{X_s/s} \right) = \tau(\omega_{X_s}).
\]
for all $n > N$.
\end{corollary}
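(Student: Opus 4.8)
The plan is to mimic the proof of \autoref{cor.RestrictionTheoremForSigmaOmega}, substituting the base change and stabilization statements for $\tau_n(X/V,\omega_{X/V})$ — namely \autoref{prop.BaseChangeForTauOmega} and \autoref{lem.StabilizingTauOmega} — for the corresponding statements about $\sigma_n(X/V,\omega_{X/V})$. First I would fix a perfect point $g : s \to V$ and apply \autoref{prop.BaseChangeForTauOmega} to the base change $g$, obtaining
\[
\Image\left( \tau_n(X/V, \omega_{X/V}) \tensor_{A^{1/p^{ne}}} k(s)^{1/p^{ne}} \to \omega_{X_s/s} \right) = \tau_n(X_s/s, \omega_{X_s/s}),
\]
where we use $X_{s^{ne}} \cong X_s$ since $k(s)$ is perfect. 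Hence it is enough to produce a single $N > 0$ such that $\tau_n(X_s/s, \omega_{X_s/s}) = \tau(\omega_{X_s})$ for every perfect point $s \in V$ and every $n \geq N$.

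The second step is the fiberwise identification. Over the perfect field $k(s)$ the relative trace maps $\Phi_{X_s/s,i}$ are the ordinary iterated Frobenius trace maps $\omega_{X_s}^{1/p^{i}} \to \omega_{X_s}$, so $\tau_n(X_s/s,\omega_{X_s/s})$ is the $n$-th partial sum $\sum_{i=0}^{n}\Tr^{i}\big(F^{i}_{*}(J_s\cdot\omega_{X_s})\big)$, where $J_s = J_{X/V}\cdot\O_{X_s} = J_{X_s/s}$ by the base-change compatibility of the Jacobian ideal recalled above. Since $\omega_{X_s}$ is coherent, this ascending chain stabilizes; and because the fibers are geometrically reduced, $J_s$ is nonzero and contained in the test submodule of $\omega_{X_s}$ by \cite[Theorem on Page 213]{HochsterFoundations}, so the stable value is exactly $\tau(\omega_{X_s})$. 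Just as with the $\bb^{\infty}_n$ of \autoref{eq.DefiningBBInfinity}, once two consecutive terms $\tau_n(X_s/s,\omega_{X_s/s})$ and $\tau_{n+1}(X_s/s,\omega_{X_s/s})$ coincide, all later terms coincide and equal $\tau(\omega_{X_s})$.

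Finally I would obtain the uniform $N$ by the Noetherian stratification of \autoref{cor.RestrictionTheoremForSigmaOmega}: \autoref{lem.StabilizingTauOmega} together with \autoref{prop.BaseChangeForTauOmega} yields a dense open $U_0\subseteq V$ and an integer $N_0$ with $\tau_n(X_s/s,\omega_{X_s/s}) = \tau_{n+1}(X_s/s,\omega_{X_s/s})$, hence $=\tau(\omega_{X_s})$, for all perfect $s \in U_0$ and $n\geq N_0$; one then replaces $V$ by the regular locus of $V\setminus U_0$ — which is again regular, with $X\to V$ still geometrically normal and Cohen-Macaulay and with $J$ compatible with this base change — and repeats, the process terminating by Noetherian induction. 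Taking $N$ to be the maximum of the finitely many $N_i$ completes the argument. The only step that is not pure bookkeeping is the identification of the stable fiberwise value with the absolute test submodule $\tau(\omega_{X_s})$, i.e. knowing that the base-changed Jacobian ideal is a genuine test element on every perfect fiber; everything else is the formal argument already carried out for $\sigma_n(X/V,\omega_{X/V})$ and for $\tau_n(X/V,\phi I)$.
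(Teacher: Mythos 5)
Your proposal is correct and follows essentially the same route as the paper's own proof: base change via \autoref{prop.BaseChangeForTauOmega} to reduce to the fiber $X_s$, identify the ascending chain $\tau_n(X_s/s,\omega_{X_s/s})$ over the perfect residue field with the partial sums computing $\tau(\omega_{X_s})$ (using that the base-changed Jacobian ideal is a test element on each perfect fiber), then Noetherian stratification of $V$ via \autoref{lem.StabilizingTauOmega} to obtain a uniform $N$. One small improvement you make: the paper's proof actually cites \autoref{prop.BaseChangeForSigmaOmega} at the stabilization step, apparently a typo for the $\tau$-analogue; your reference to \autoref{lem.StabilizingTauOmega} and \autoref{prop.BaseChangeForTauOmega} is the correct one.
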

\begin{proof}
Taking $g : s \to V$ in \autoref{prop.BaseChangeForTauOmega} and so obtain that the image is equal to $\tau_n(X_s/s, \omega_{X_s/s})$.  Furthermore, since $k(s)$ is perfect, we can make identifications so as to have containments $\ldots \subseteq \tau_n(X_s/s, \omega_{X_s/s}) \subseteq \tau_{n+1}(X_s/s, \omega_{X_s/s}) \subseteq \ldots$ with a ascending union that coincides with $\tau(\omega_{X_s})$.  Furthermore, by \autoref{prop.BaseChangeForSigmaOmega}, over a dense open set $U \subseteq V$ and some $N > 0$ we have $\tau_n(X_s/s, \omega_{X_s/s}) = \tau_{n+1}(X_s/s, \omega_{X_s/s})$ for all $n \geq N_0$ and hence $\tau_n(X_s/s, \omega_{X_s/s}) = \tau(\omega_{X_s})$ by the computation of \autoref{eq.DefiningBBInfinity}.  Let $V_1 = V \setminus U$, base change with $V_1$ and obtain the result over a dense open subset $U_1$ of $V_1$ (for some $N_1$).  By Noetherian induction, this process must terminate.
\end{proof}

\begin{remark}[Relative versus absolute $\tau(\omega)$]
It would be natural to relate the relative $\sigma(\omega)$ and $\tau(\omega)$ with the absolute $\sigma(\omega)$ and $\tau(\omega)$.  While the authors believe that this is possible along the lines of \autoref{subsec.IteratedNonFPureIdealsVsAbsolute} or \autoref{subsec.RelativeTauVsAbsoluteTau}, we won't work out the details here.
\end{remark}

\mysubsection{Applications to families of $F$-injective and $F$-rational singularities }

In this section, we obtain new proofs of results of M.~Hashimoto \cite{HashimotoCMFinjectiveHoms}, deformation of $F$-injectivity and $F$-rationality in proper flat families. Note Hashimoto proved the local version of these results (and in the $F$-rationality case, over a variety).  However, the local results generalize to the non-local case via straightforward computations.
We begin with a definition essentially first made by Hashimoto.

\begin{definition}[\cf \cite{HashimotoCMFinjectiveHoms, HashimotoFPureHoms}]
We say that $X/V$ (which is still assumed to be relatively Cohen-Macaulay) is \emph{relatively $F$-injective} if for some $n > 0$ we have $\sigma_{n}(X/V, \omega_{X/V}) = \omega_{X_{V^n}/V^n}$.  Likewise, $X/V$ is \emph{relatively $F$-rational} if for some $n > 0$ we have $\tau_{n}(X/V, \omega_{X/V}) = \omega_{X_{V^n}/V^n}$.
\end{definition}

\begin{remark}
\label{rem.RelativelyFInjectiveOverPerfectField}
If $V$ is the spectrum of a perfect field $k$, it is easy to see that $f : X \to V$ is relatively $F$-injective (respectively relatively $F$-rational) if and only if $X$ is $F$-injective (respectively $F$-rational) in the usual sense \cite[Section 8]{SchwedeTuckerTestIdealSurvey} via an identification of $k \cong k^{1/p}$.  Note we are implicitly assuming that $X$ is Cohen-Macaulay in this case.
\end{remark}

\begin{lemma}\textnormal{(\cf \cite[Proposition 5.5]{HashimotoCMFinjectiveHoms})}
With notation as above, if $\sigma_{n}(X/V, \omega_{X/V}) = \omega_{X_{V^n}/V^n}$ for some $n > 0$, then $\sigma_{m}(X/V, \omega_{X/V}) = \omega_{X_{V^m}/V^m}$ for all $m > 0$ divisible by $n$.  Furthermore, if $V$ is regular, then the result holds for all $m \geq n$.
Additionally, if $\tau_{n}(X/V, \omega_{X/V}) = \omega_{X_{V^n}/V^n}$ for some $n > 0$, then $\tau_{m}(X/V, \omega_{X/V}) = \omega_{X_{V^m}/V^m}$ for all $m \gg 0$.
\end{lemma}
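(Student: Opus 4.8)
The plan is to imitate the proof of the analogous statement for the ideals $\sigma_n(X/V,\phi)$ in \autoref{sec.RelativeNonFPureIdeals}, with the trace maps $\Phi_{X/V,n}$ playing the role of the $\phi^n$ and the submodules $\bc_n$ (resp.\ $\bd_n$) the role of the images $\ba_n$. As the claims are equalities of coherent sheaves, I would check them locally on $X$, where one may also take $V = \Spec A$ affine (and regular, for the second claim). I use throughout that $\sigma_n(X/V,\omega_{X/V}) = \omega_{X_{V^n}/V^n}$ means exactly that $\Phi_{X/V,n}$ is surjective, that pushforward along the finite (affine) iterated relative Frobenius is exact and preserves surjections, that $-\otimes-$ is right exact, and that $\omega_{X/V}$ is compatible with base change for the Cohen--Macaulay morphism $f$, so that $\omega_{X_{V^m}/V^m}\cong \omega_{X/V}\otimes_A A^{1/p^m}$ compatibly with all maps in sight (\autoref{lem.BaseChangeOfTraceForCMMaps}). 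For the first claim, write $m = kn$ and induct on $k$, the case $k=1$ being the hypothesis; specializing \eqref{eq.CompositionOfTraceMaps} to the intermediate index $(k-1)n$ gives
\[
\Phi_{X/V,kn} \;=\; \bigl(\Phi_{X/V,(k-1)n}\otimes_{A^{1/p^{(k-1)n}}} A^{1/p^{kn}}\bigr)\circ \bigl(\Phi_{X/V,n}\bigr)^{1/p^{n}},
\]
whose right factor is a Frobenius pushforward of the surjection $\Phi_{X/V,n}$ and whose left factor is a base change of the (inductively surjective) $\Phi_{X/V,(k-1)n}$; a composite of surjections being surjective, $\Phi_{X/V,kn}$ is surjective.

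For the second claim, assume $V$ regular and fix $m\ge n$. By the first claim $\sigma_{mn}(X/V,\omega_{X/V}) = \omega_{X_{V^{mn}}/V^{mn}}$, and the containment noted just before \autoref{prop.StabilizingSigmaCanonical} (obtained from factoring $\Phi_{X/V,mn}$ through $\Phi_{X/V,m}$) yields
\[
\omega_{X_{V^{mn}}/V^{mn}} \;=\; \bc_{mn} \;\subseteq\; \Image\bigl(\bc_m\otimes_{A^{1/p^m}} A^{1/p^{mn}} \to \omega_{X_{V^{mn}}/V^{mn}}\bigr).
\]
Since $V$ is regular, $A^{1/p^{mn}}$ is flat over $A^{1/p^m}$ by \cite{KunzCharacterizationsOfRegularLocalRings}, so $\bc_m\otimes_{A^{1/p^m}} A^{1/p^{mn}}$ injects into $\omega_{X/V}\otimes_A A^{1/p^{mn}} = \omega_{X_{V^{mn}}/V^{mn}}$ and equals its own image above; thus $\bc_m\otimes_{A^{1/p^m}} A^{1/p^{mn}} = \omega_{X_{V^{mn}}/V^{mn}}$, equivalently $\bigl(\omega_{X_{V^m}/V^m}/\bc_m\bigr)\otimes_{A^{1/p^m}} A^{1/p^{mn}} = 0$, and since $\Spec A^{1/p^{mn}}\to\Spec A^{1/p^m}$ is a homeomorphism this flat map is faithfully flat, so $\bc_m = \omega_{X_{V^m}/V^m}$. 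I expect this to be the only step requiring genuine care — one must track which ring each tensor product is taken over and use base change of $\omega_{X/V}$ for the identifications; it is the ``faithfully flat extension'' analogue of the integral-extension trick used for $\sigma_n(X/V,\phi)$.

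The $\tau$ statement should be easier, since the relevant containments run the other way: for $m>n$, $\bd_{n,m} := \Image(\bd_n\otimes_{A^{1/p^n}} A^{1/p^m}\to \omega_{X_{V^m}/V^m}) \subseteq \bd_m$. If $\bd_n = \omega_{X_{V^n}/V^n}$, then $\bd_n\hookrightarrow\omega_{X_{V^n}/V^n}$ is an isomorphism, hence remains one after $-\otimes_{A^{1/p^n}} A^{1/p^m}$; with the base-change isomorphism $\omega_{X_{V^n}/V^n}\otimes_{A^{1/p^n}} A^{1/p^m}\cong\omega_{X_{V^m}/V^m}$ this gives $\bd_{n,m} = \omega_{X_{V^m}/V^m}$, hence $\bd_m = \omega_{X_{V^m}/V^m}$ for all $m>n$ — in particular for $m\gg0$. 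No regularity of $V$ is needed here, which matches the form of the statement.
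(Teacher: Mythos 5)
Your proposal is correct and follows essentially the same strategy as the paper's proof: factor $\Phi_{X/V,kn}$ through an intermediate trace via \eqref{eq.CompositionOfTraceMaps} and induct on $k$ using right-exactness of tensor for the divisible-by-$n$ claim; for $V$ regular, compare $\bc_m$ to $\bc_{m'}$ for a suitably large multiple $m' \geq m$ of $n$ and invoke Kunz's theorem to get faithful flatness of $A^{1/p^{m'}}$ over $A^{1/p^m}$ so that vanishing of $(\omega/\bc_m)\otimes A^{1/p^{m'}}$ forces $\bc_m = \omega$; and for $\tau$, tensor the isomorphism $\bd_n = \omega_{X_{V^n}/V^n}$ and use the containment $\bd_{n,m} \subseteq \bd_m$. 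The one small slip is the exponent on the root in your factorization: to go from $\omega_{X^{kn}/V^{kn}}$ to $\omega_{X^{(k-1)n}_{V^{kn}}/V^{kn}}$ you need $(\Phi_{X/V,n})^{1/p^{(k-1)n}}$ rather than $(\Phi_{X/V,n})^{1/p^n}$ (your formula is correct only for $k=2$); this is a cosmetic issue — and in fact \eqref{eq.CompositionOfTraceMaps} itself carries an analogous exponent typo — and it does not affect the argument, since a root of a surjection is still surjective regardless of the exponent.
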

\begin{proof}
We begin with $\sigma$.  We notice that
\[
\sigma_{2n}(X/V, \omega_{X/V}) = \Image\Big( \Image\big( \omega_{X^n/V^n} \to \omega_{X_V^n/V^n} \big) \otimes_{A^{1/p^{n}}} A^{1/p^{2n}} \to \omega_{X_{V^{2n}}/V^{2n}}  \Big)
\]
and our hypothesis $\sigma_{n}(X/V, \omega_{X/V}) = \omega_{X_{V^n}/V^n}$ implies that the inner map is surjective.  But then the outer map is surjective too by right exactness of tensor and so $\sigma_{2n}(X/V, \omega_{X/V}) = \omega_{X_{V^{2n}}/V^{2n}}$.  The general case repeats this process and so follows similarly.

Now we assume that $V$ is regular for the second statement about $\sigma_n$.  We fix an $n > 0$ such that $\sigma_{n}(X/V, \omega_{X/V}) = \omega_{X_{V^n}/V^n}$.  Then for all $m \leq n$, we have
\[
\sigma_{m}(X/V, \omega_{X/V}) \otimes_{A^{1/p^m}} A^{1/p^n} \supseteq \sigma_{n}(X/V, \omega_{X/V}) = \omega_{X_{V^n}/V^n} \cong \omega_{X_{V^m}/V^m} \otimes_{A^{1/p^m}} A^{1/p^n}
\]
and so $\sigma_{m}(X/V, \omega_{X/V}) \otimes_{A^{1/p^m}} A^{1/p^n} = \omega_{X_{V^m}/V^m} \otimes_{A^{1/p^m}} A^{1/p^n}$.  On the other hand, if we know $\sigma_{m}(X/V, \omega_{X/V}) \subsetneq \omega_{X_{V^m}/V^m}$ then the previous equality is impossible since $A^{1/p^n}$ faithfully flat over $A$ \cite{KunzCharacterizationsOfRegularLocalRings} (since $V$ is regular).

Handling $\tau$ is easy. Again use that $\omega_{X/V}$ is compatible with base change, since
\[
\begin{array}{rcl}
 \omega_{X_V^m/V^m} & = & \Image\big( \omega_{X/V} \otimes_{A^{1/p^n}} A^{1/p^m} \to \omega_{X_V^m/V^m}\big) \\
 &  = & \Image\big(\tau_n(X/V, \omega_{X/V}) \otimes_{A^{1/p^n}} A^{1/p^m} \to \omega_{X_V^m/V^m}\big) \\
& = & \bd_{n,m}\\
& \subseteq & \bd_{m}\\
& = & \tau_n(X/V, \omega_{X/V})\\
&  \subseteq & \omega_{X_V^m/V^m}
\end{array}
\]
\end{proof}

\begin{theorem} \textnormal{(Deformation of $F$-rationality and $F$-injectivity, \cf \cite[Theorem 5.8, Remark 6.7]{HashimotoCMFinjectiveHoms})}
\label{thm.OpenSetNakayama}
Suppose that $f : X \to V$ is a proper flat finite type equidimensional reduced Cohen-Macaulay morphism to an excellent integral scheme $V$ with a dualizing complex.  Suppose that for some point $s \in V$, the fiber $X_s/s$ is relatively $F$-injective (respectively, $F$-rational).  Then there exists an open neighborhood $U \subseteq V$ containing $s$ such that $X_u \to u$ is relatively $F$-injective (respectively, $F$-rational) for all $u \in U$.
\end{theorem}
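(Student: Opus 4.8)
The plan is to mimic the proof of \autoref{thm.OpennessOfSharpFPurity} (openness of sharp $F$-purity), using the base-change results for relative non-$F$-injective submodules and relative test submodules in place of the ones for $\sigma_n(X/V,\phi)$. The statement is local on $V$, so I would first assume $V=\Spec A$ is affine. Recall that $\sigma_n(X/V,\omega_{X/V})$ is by definition the image of the trace map $\Phi_{X/V,n}\colon \omega_{X^n/V^n}\to \omega_{X_{V^n}/V^n}$, hence a coherent subsheaf of the coherent sheaf $\omega_{X_{V^n}/V^n}$ on $X_{V^n}$ (coherence of $\omega_{X^n/V^n}=\omega_{X/V}^{1/p^n}$ uses finiteness of the relative Frobenius, \autoref{finiteness of relative Frobenius}); likewise $\tau_n(X/V,\omega_{X/V})=\bd_n$ is a coherent subsheaf of $\omega_{X_{V^n}/V^n}$, and $X/V$ is relatively $F$-injective (resp. relatively $F$-rational) precisely when one of these inclusions is an isomorphism for some $n$.

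Using the hypothesis at $s$, I would fix an integer $n>0$ with $\sigma_n(X_s/s,\omega_{X_s/s})=\omega_{X_{s^n}/s^n}$ (resp. $\tau_n(X_s/s,\omega_{X_s/s})=\omega_{X_{s^n}/s^n}$), where $X_{s^n}$ denotes the fiber of $X_{V^n}\to V^n$ over $s^n$. Let $C$ be the cokernel of the inclusion $\sigma_n(X/V,\omega_{X/V})\hookrightarrow \omega_{X_{V^n}/V^n}$ (resp. $\tau_n(X/V,\omega_{X/V})\hookrightarrow\omega_{X_{V^n}/V^n}$), a coherent sheaf on $X_{V^n}$. Since tensoring is right exact, $C|_{X_{s^n}}$ is the quotient of $\omega_{X_{V^n}/V^n}\otimes\O_{X_{s^n}}$ by the image of $\sigma_n(X/V,\omega_{X/V})\otimes\O_{X_{s^n}}$ (resp. of $\tau_n$). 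By base change for $\omega_{X/V}$ along a Cohen--Macaulay morphism (\autoref{lem.BaseChangeOfTraceForCMMaps}) and by \autoref{prop.BaseChangeForSigmaOmega} (resp. \autoref{prop.BaseChangeForTauOmega}) applied to $g\colon \Spec k(s)\to V$ — legitimate since a field is excellent and is its own dualizing complex — this quotient is exactly $\omega_{X_{s^n}/s^n}\big/\sigma_n(X_s/s,\omega_{X_s/s})$ (resp. the $\tau$ analog), which vanishes by our choice of $n$. Hence $\Supp C$ is a closed subset of $X_{V^n}$ disjoint from the fiber $X_{s^n}$.

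Now I would invoke properness: since $f$ is proper, so is $X_{V^n}\to V^n$, so the image of $\Supp C$ under $X_{V^n}\to V^n$ is closed in $V^n\cong V$ and does not contain $s$. Let $U\subseteq V$ be its complement, an open neighborhood of $s$. Over $U$ the sheaf $C$ vanishes, so the relevant inclusion is an isomorphism over $U^n$; combined with $\sigma_n(X/V,\omega_{X/V})|_{U^n}=\sigma_n(X_U/U,\omega_{X_U/U})$ (resp. the $\tau$ version, both from \autoref{prop.BaseChangeForSigmaOmega} / \autoref{prop.BaseChangeForTauOmega}), this shows $X_U/U$ is relatively $F$-injective (resp. $F$-rational). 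Finally, for any $u\in U$, base change once more along $u\hookrightarrow U$: surjectivity of $\Phi_{X_U/U,n}$ (resp. of $\tau_n\hookrightarrow\omega$) is preserved, so $\sigma_n(X_u/u,\omega_{X_u/u})=\omega_{X_{u^n}/u^n}$ (resp. $\tau_n(X_u/u,\omega_{X_u/u})=\omega_{X_{u^n}/u^n}$), i.e. $X_u/u$ is relatively $F$-injective (resp. $F$-rational) for all $u\in U$.

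I do not expect a serious obstacle: essentially all the work has gone into the base-change statements \autoref{prop.BaseChangeForSigmaOmega} and \autoref{prop.BaseChangeForTauOmega}, and this theorem is a formal ``Nakayama plus properness'' consequence, strictly parallel to \autoref{cor.OpennessOfStrongFRegularity}. The single place where properness genuinely enters — and where it appears to be needed, just as in \autoref{thm.OpennessOfSharpFPurity} — is in asserting that the image of $\Supp C$ in $V$ is closed; and for the $F$-rational half one must of course keep in force the standing hypotheses (geometric normality of $X/V$, regularity of $V$) under which $\tau_n(X/V,\omega_{X/V})$ was defined, noting that these are stable under the shrinking of $V$ performed above.
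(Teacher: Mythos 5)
Your proposal is correct and follows essentially the same route as the paper's own proof: both use the base-change compatibility of $\sigma_n(X/V,\omega_{X/V})$ and $\tau_n(X/V,\omega_{X/V})$ to see that the inclusion into $\omega_{X_{V^n}/V^n}$ becomes an isomorphism along the fiber over $s$, then Nakayama (which you package as the vanishing of the cokernel $C$ on that fiber) shows the isomorphism persists on a neighborhood of $X_s$, and properness of $f$ pushes the closed locus $\Supp C$ down to a closed set in $V$ missing $s$. Your explicit flagging of the standing hypotheses (regular $V$, geometrically normal $X/V$) needed to even define $\tau_n(X/V,\omega_{X/V})$ for the $F$-rational half is a sound observation that the paper leaves implicit.
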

\begin{proof}
We first show that $\sigma_n(\omega_{X/V}) = \omega_{X_{V^{ne}}/V^{ne}}$ at each point of the fiber $X_s$.  Indeed, let $z \in X$ be a point on $X_s$ and let $I$ denote the ideal sheaf of $X_s$.  We observe that for some $n$, the natural map $\sigma_n(\omega_{X/V})/(I \cdot \sigma_n(\omega_{X/V}))  \to \omega_{X/V} / (I \cdot \omega_{X/V}) = \omega_{X_s/V_s}$ is surjective.  This is preserved after localizing at $z$, and so since $I \subseteq \bm_z$, we see that the generators of the stalk $(\sigma_n(\omega_{X/V}))_z$ generate $( \omega_{X/V})_z$ by Nakayama's lemma.  Hence $(\sigma_n(\omega_{X/V}))_z = ( \omega_{X/V})_z$.  Since this holds at every point $z \in X_s \subseteq X$, it holds in a neighborhood of $X_s$.

As before, let $Z$ denote the locus where $\sigma_n(\omega_{X/V}) \neq \omega_{X_{V^{ne}}/V^{ne}}$.  This is closed and since $f$ is proper, its image $f(Z)$ is closed too.  But $f(Z)$ is then a closed set not containing $s$.  The result follows for $F$-injectivity.  The proof for $F$-rationality is the same.
\end{proof}

\section{Global applications}
\label{sec:global}

The purpose of this section is to develop a global theory of the previous sections for a projective family $f : X \to V$.  In the last few years, there has been a new push to use Frobenius and the trace map to replace the Kodaira vanishing theorem.   In this section we extend some of these ideas to families. We study how the canonical linear subsystems $S^0(X_s,\sigma(X_s,\Delta_s) \otimes M_s) \subseteq H^0(X_s, M_s)$, introduced in \cite{SchwedeACanonicalLinearSystem}, behave as we vary $s \in V$. Furthermore, as mentioned in the introduction we also obtain some global generation and semi-positivity statements.


\subsection{Basic definitions}

We use the following setup throughout \autoref{sec:global}.

\begin{notation}
\label{notation:S_f_*}
In the situation of Notation \ref{notation:basic}, assume also that $f : X \to V$ is projective and additionally that $V$ is regular (which implies that $F^e_V : V^e \to V$ is flat). Furthermore, fix  a line bundle $M$ on $X$. Sometimes we also assume the following (in which case we write \autoref{notation:S_f_*}\textsuperscript{*}):

(*):  there is an integer $N \geq 0$, such that for every integer $m \geq N$,
\begin{equation*}
 \sigma_m(X/V,\phi) = \sigma_N(X/V,\phi) \otimes_{A^{\frac{1}{p^{Ne}}}} A^{\frac{1}{p^{me}}} .
\end{equation*}
In this situation  we denote $\sigma_N(X/V,\phi)$ by $\sigma_N$.  We notice that this condition (*) always holds over a dense open set of the base by \autoref{prop.StabilizingSigma}.
\end{notation}

\begin{remark}
Note that since $X$ (resp. $V$) is topologically isomorphic to $X^{ne}_{V^{me}}$ (resp. $V^{me}$) for every $m \geq n$,  $f_*$ can be identified with $g_*$, where $g$ is any of the induced morphisms $X^{ne}_{V^{me}} \to V^{me}$. Hence, we use only $f_*$ for all purposes, even when $g_*$ for one of the above maps $g : X^{ne}_{V^{me}} \to V^{me}$ would be more natural. The downside of this notation is that it does not show if a sheaf has a $A^{1/p^{me}}$ structure, and consequently its pushforward by $f$  a $\sO_{V^{me}}$ structure. We decided to still use it, because it simplifies greatly the  notations.

Summarizing: when reading the following arguments it is important to trace through the space $X^{ne}_{V^{me}}$ on which the adequate sheaves live. Then $f_*$ of these sheaves will live on $V^{me}$.
\end{remark}

\begin{notation}
When dealing with sheaves $G$ on $V^{ne}$, we will frequently pull them back to $V^{me}$ for $m \geq n$.  When doing this, instead of writing $\O_{V^{me}} \otimes_{\O_{V^{ne}}} G$ or $\O_{V}^{1/p^{me}} \otimes_{\O_{V}^{1/p^{ne}}} G$, we will write simply $V^{me} \times_{V^{ne}} G$ or $G_{V^{me}}$.  We trust this abuse of notation will cause no confusion, as it helps compactify the notation substantially.
\end{notation}

Our approach to understanding how the canonical linear systems of \cite{SchwedeACanonicalLinearSystem} behave in families is to define a relative version of them, and then show certain base-change properties. These objects will be relative versions of $\sigma$ for global sections, and they play the same role for $\sigma_n$ that $T^0$ and $S^0$ plays for $\tau$ and $\sigma$ in \cite{BlickleSchwedeTuckerTestAlterations} and \cite{SchwedeACanonicalLinearSystem} respectively.

\begin{definition}
\label{defn:S_f_*}
In the situation of \autoref{notation:S_f_*}, define
\begin{equation*}
S_{\varphi^n}^0 f_*(M):=
\im \left( f_* \left( \left( L^{\frac{p^{ne} -1}{p^e -1}} \right)^{\frac{1}{p^{ne}}} \otimes_R M \right)
\xrightarrow{f_* \left(\varphi^n \otimes_R \id_{M} \right)}
f_*  \left( A^{\frac{1}{p^{ne}}} \otimes_A M \right) \right) .
\end{equation*}
Note that $S_{\varphi^n}^0 f_*(M)$ is a sheaf on ${V^{ne}}$ and it is a subsheaf of $(f_*(M))_{V^{ne}}$ by flat base-change. In case $(X, \Delta)$ is a pair we define $S_{\Delta,ne}^0 f_*(M):= S_{\varphi_\Delta^n}^0 f_*(M)$ (assuming $(p^{ne}-1)(K_X + \Delta)$ is Cartier and $\phi$ is the corresponding map). If $\Delta = 0$, then we write $S_{ne}^0 f_*(M)$ for $S_{\Delta,ne}^0 f_*(M)$.
\end{definition}

Since the image of $\varphi^n \otimes_R \id_{M} $ in the above definition is $\sigma_n(X/V, \phi) \otimes_R M$ the following proposition is immediate.

\begin{proposition}
\label{prop:obvious_containment}
In the situation of \autoref{notation:S_f_*},
\begin{equation*}
S_{\varphi^n}^0 f_*(M) \subseteq f_* (\sigma_n(X/V, \phi) \otimes_R M ) .
\end{equation*}
\end{proposition}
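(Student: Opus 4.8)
The statement is essentially immediate from the definition of $S^0_{\varphi^n} f_*(M)$ once one identifies the target sheaf correctly. The plan is as follows. First I would recall that $S^0_{\varphi^n} f_*(M)$ is by \autoref{defn:S_f_*} the image of the map
\[
f_* \left( \left( L^{\frac{p^{ne} -1}{p^e -1}} \right)^{\frac{1}{p^{ne}}} \otimes_R M \right) \xrightarrow{f_* \left(\varphi^n \otimes_R \id_M\right)} f_* \left( A^{\frac{1}{p^{ne}}} \otimes_A M \right),
\]
so in particular $S^0_{\varphi^n} f_*(M)$ is contained in the image of $f_*(\varphi^n \otimes_R \id_M)$ inside $f_*(A^{1/p^{ne}} \otimes_A M) = f_*\big(R_{A^{1/p^{ne}}} \otimes_R M\big)$, viewed after flat base change as a subsheaf of $(f_* M)_{V^{ne}}$ (flatness of $F^e_V$ coming from regularity of $V$, as recorded in \autoref{notation:S_f_*}).

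The key observation, which is noted in the sentence preceding \autoref{prop:obvious_containment}, is that the image of the sheaf map $\varphi^n \otimes_R \id_M : \left( L^{\frac{p^{ne}-1}{p^e-1}}\right)^{1/p^{ne}} \otimes_R M \to R_{A^{1/p^{ne}}} \otimes_R M$ is exactly $\sigma_n(X/V,\phi) \otimes_R M$, since $\sigma_n(X/V,\phi) = \ba_n = \Image(\phi^n)$ by its very definition and tensoring with the (locally free, hence flat) line bundle $M$ is exact. Therefore I would invoke left-exactness of $f_*$: applying $f_*$ to the surjection onto $\sigma_n(X/V,\phi)\otimes_R M$ shows that $S^0_{\varphi^n} f_*(M) = \Image\big(f_*(\varphi^n\otimes_R\id_M)\big)$ is contained in $f_*\big(\sigma_n(X/V,\phi)\otimes_R M\big)$. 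This gives the containment as subsheaves of $(f_* M)_{V^{ne}}$.

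There is no real obstacle here; the only point requiring a word of care is the compatibility of all the identifications of underlying topological spaces and structure sheaves (the conventions of \autoref{notation:basic} and the remarks following \autoref{notation:S_f_*}), so that ``$f_*$'' unambiguously means pushforward along the appropriate induced map $X^{ne}_{V^{ne}} \to V^{ne}$ and the two subsheaves genuinely live inside the same ambient sheaf $(f_*M)_{V^{ne}}$. Once that bookkeeping is in place, the inclusion $S^0_{\varphi^n} f_*(M) \subseteq f_*(\sigma_n(X/V,\phi)\otimes_R M)$ follows formally, as asserted.
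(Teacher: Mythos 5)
Your proof is correct and follows exactly the same route the paper takes: the paper simply notes that the image of $\varphi^n\otimes_R\id_M$ is $\sigma_n(X/V,\phi)\otimes_R M$ and declares the containment immediate, while you unpack that remark by invoking exactness of tensoring with the line bundle $M$ and left-exactness of $f_*$. No differences of substance.
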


Our first goal is to show that the images in \autoref{defn:S_f_*} descend (up to appropriate base change by Frobenius).  Compare with the containments $\ba_{i,n} \supseteq \ba_{i+1,n} \supseteq \dots \supseteq \ba_{n,n}$ of \autoref{sec.RelativeNonFPureIdeals}.

\begin{proposition}
\label{prop:S_pushforward_containment}
For all integers $m \geq n \geq 0$,
\begin{equation}
\label{eq:S_pushforward_containment:statement}
 V^{me} \times_{V^{ne}} S^0_{\varphi^n} f_* (M)  \supseteq S^0_{\varphi^m} f_* (M)
\end{equation}
as subsheaves of $V^{me} \times_V f_* (M)$.
\end{proposition}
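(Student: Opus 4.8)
The plan is to upgrade, to the level of $S^0 f_*$, the factorization of $\varphi^m$ through $\varphi^n$ that underlies the containments $\ba_{i,n} \supseteq \ba_{n,n}$ of \autoref{sec.RelativeNonFPureIdeals}. Concretely, I would first extract from \autoref{subsec.ComposingMaps} (in particular from \autoref{eq.phimAndphin} and the lemma identifying $\gamma$ with $\phi^n$) the decomposition of $\varphi^m$, valid for $m \geq n \geq 1$, as the composition
\[
\left(L^{\frac{p^{me}-1}{p^e-1}}\right)^{1/p^{me}} \xrightarrow{\ h\ } \left(L^{\frac{p^{ne}-1}{p^e-1}}\right)^{1/p^{ne}} \otimes_{A^{1/p^{ne}}} A^{1/p^{me}} \xrightarrow{\ \varphi^n \otimes_{A^{1/p^{ne}}} A^{1/p^{me}}\ } R \otimes_A A^{1/p^{me}},
\]
where $h$ is built from $\varphi^{m-n}$ and carries all the twists by powers of $L$, while the \emph{final} arrow is literally $\varphi^n$ base-changed along the flat morphism $V^{me} \to V^{ne}$ (flat since $V$ is regular, so $F_V$ is flat, \cite{KunzCharacterizationsOfRegularLocalRings}). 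The one point that needs care here is the re-indexing of \autoref{eq.phimAndphin}: one must interchange the roles of its $m$ and $n$ so that $\varphi^n$ appears as the last map rather than the first, and then bookkeep the $L$-twists to confirm that the intermediate object is exactly $\big(L^{(p^{ne}-1)/(p^e-1)}\big)^{1/p^{ne}}\otimes_{A^{1/p^{ne}}} A^{1/p^{me}}$ and that no stray power of $L$ intrudes into the final arrow. I expect this to be the main obstacle; everything afterward is formal.

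Next I would tensor the whole factorization with $\otimes_R \id_M$, which is functorial so respects the composition. The key observation is that $\big(\varphi^n \otimes_{A^{1/p^{ne}}} A^{1/p^{me}}\big)\otimes_R \id_M$ is precisely the base change to $V^{me}$ of the map $\varphi^n \otimes_R \id_M$ occurring in \autoref{defn:S_f_*}: this is a routine reorganization of tensor products, using that $M$ carries no $A^{1/p^{ne}}$-action, so that
\[
\Big(\big(L^{\frac{p^{ne}-1}{p^e-1}}\big)^{1/p^{ne}}\otimes_R M\Big)\otimes_{A^{1/p^{ne}}} A^{1/p^{me}} \;\cong\; \Big(\big(L^{\frac{p^{ne}-1}{p^e-1}}\big)^{1/p^{ne}}\otimes_{A^{1/p^{ne}}} A^{1/p^{me}}\Big)\otimes_R M
\]
and similarly on the target. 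Applying $f_*$ and using flat base change for the proper (indeed projective) morphism $f$ along the flat map $V^{me}\to V^{ne}$, together with the fact that flat pullback is exact and hence commutes with taking images, I get that the image of $f_*$ of this base-changed map equals $V^{me}\times_{V^{ne}} \im\big(f_*(\varphi^n\otimes_R \id_M)\big) = V^{me}\times_{V^{ne}} S^0_{\varphi^n} f_*(M)$.

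Finally, since $\varphi^m\otimes_R\id_M$ factors through $\big(\varphi^n\otimes_{A^{1/p^{ne}}} A^{1/p^{me}}\big)\otimes_R\id_M$ via $h\otimes_R\id_M$, functoriality of $f_*$ shows that $f_*(\varphi^m\otimes_R\id_M)$ factors through $f_*$ of that final map; hence $S^0_{\varphi^m} f_*(M) = \im\big(f_*(\varphi^m\otimes_R\id_M)\big)$ is contained in $V^{me}\times_{V^{ne}} S^0_{\varphi^n} f_*(M)$, as subsheaves of $V^{me}\times_V f_*(M)$. The remaining cases are degenerate: $m = n$ gives equality, and $n = 0$ reads $S^0_{\varphi^0} f_*(M) = f_*(M)$, so the containment is immediate. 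I would close by noting that this argument is simply the $S^0 f_*$-analogue of the descent $\ba_{n,m}\supseteq \ba_{m,m}$ used repeatedly in \autoref{sec.RelativeNonFPureIdeals} and \autoref{prop.StabilizingSigma}.
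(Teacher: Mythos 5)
Your proposal is correct and follows essentially the same path as the paper's own proof: decompose $\varphi^m$ as the composition of a map built from $\phi^{m-n}$ followed by $\varphi^n$ base-changed along $V^{me}\to V^{ne}$, twist by $M$, apply $f_*$, and use flat base change together with exactness of flat pullback to identify the image with $V^{me}\times_{V^{ne}} S^0_{\varphi^n}f_*(M)$. The "bookkeeping of $L$-twists" you flag is exactly what the projection-formula step in the paper's commutative diagram verifies, so the argument goes through as you describe.
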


\begin{remark}
To be precise the left and right hand side of \autoref{eq:S_pushforward_containment:statement} are subsheaves of $$V^{me} \times_{V^{ne}} f_* (A^{\frac{1}{p^{ne}}} \otimes_A M) \;\;\text{ and of }\;\; f_* (A^{\frac{1}{p^{me}}} \otimes_A M),$$ respectively. However, both $V^{me} \times_{V^{ne}} f_* (A^{\frac{1}{p^{ne}}} \otimes_A M)$ and  $f_* (A^{\frac{1}{p^{me}}} \otimes_A M)$ are canonically isomorphic to $V^{me} \times_V f_* (M)$ via flat base-change (since $V$ is regular).
\end{remark}

\begin{proof}[Proof of \autoref{prop:S_pushforward_containment}]
The following commutative diagram shows that $f_* \left(\varphi^m \otimes_R  \id_{M} \right)$ factors through $V^{me} \times_{V^{ne}} f_* \left(\varphi^n \otimes_R \id_{M} \right)$.
\begin{equation*}
\xymatrix@R=32pt{
 V^{me} \times_{V^{ne}} f_* \left( \left( L^{\frac{p^{ne} -1}{p^e -1}} \right)^{\frac{1}{p^{ne}}} \otimes_R M \right)
 \ar@{}[d]^(0.5){\begin{rotate}{-90}$\cong$\end{rotate} \quad \raisebox{-6pt}{\parbox{85pt}{\tiny flat base-change \cite[Proposition III.9.3]{Hartshorne} of $f_*$ by $V^{me} \to V^{ne}$}}}
\ar[rr]^(0.55){V^{me} \times_{V^{ne}} f_* \left(\varphi^n \otimes_R \id_{M} \right)}
& \hspace{15pt}
& V^{me} \times_{V^{ne}}  f_*  \left(  A^{\frac{1}{p^{ne}}}  \otimes_A M \right)
\ar@{}[d]^(0.4){\begin{rotate}{-90}$\cong$\end{rotate} \quad \raisebox{-6pt}{\parbox{40pt}{\tiny  flat base-change}}} \\
%
%
f_* \left( A^{\frac{1}{p^{me}}} \otimes_{A^{\frac{1}{p^{ne}}}}  \left(  L^{\frac{p^{ne} -1}{p^e -1}} \right)^{\frac{1}{p^{ne}}} \otimes_R M \right)
\ar@{}[d]^(0.5){\begin{rotate}{-90}$\cong$\end{rotate} \quad \raisebox{-6pt}{}}
\ar[rr]^(0.59){f_* \left(A^{\frac{1}{p^{me}}} \otimes_{A^{\frac{1}{p^{ne}}}} \left(\varphi^n \otimes_R \id_{M} \right) \right)}
& &     f_* \left(  A^{\frac{1}{p^{me}}} \otimes_A M \right)
\ar@{}[d]^(0.4){\begin{rotate}{-90}$=$\end{rotate} } \\
%
%
f_* \left(  \left( R_{A^{\frac{1}{p^{(m-n)e}}}} \otimes_{R}  L^{\frac{p^{ne} -1}{p^e -1}} \right)^{\frac{1}{p^{ne}}} \otimes_R M \right)
\ar[rr]
& &      f_* \left(    A^{\frac{1}{p^{me}}} \otimes_A M \right) \\
%
%
f_*   \left( \left( \left(  L^{\frac{p^{(m-n)e}-1}{p^e-1}} \right)^{\frac{1}{p^{(m-n)e}}} \otimes_R L^{\frac{p^{ne} -1}{p^e -1}}  \right)^{\frac{1}{p^{ne}}} \otimes_R  M \right)
\ar[u]_(0.55){f_* \left(    \left(\phi^{m-n} \otimes_R \id_{L^{\frac{p^{ne} -1}{p^e -1}}} \right)^{\frac{1}{p^{ne}}} \otimes_R \id_{M} \right) }
 \ar@{}[d]^(0.4){\begin{rotate}{-90}$\cong$\end{rotate} \quad \raisebox{-6pt}{\parbox{80pt}{\tiny projection formula}}}
& & \\ 
%
%
%
f_* \left(   \left( L^{\frac{p^{me} -1}{p^e -1}} \right)^{\frac{1}{p^{me}}}  \otimes_R  M \right)
\ar`r[rru][uurr]_{f_* \left(\varphi^m \otimes_R  \id_{M} \right) } & &
}
\end{equation*}
Hence, the statement of the proposition holds by the following computation.
\begin{equation*}
\begin{split}
S_{\phi^m}^0 f_* (M) &  = \im \left( f_* \left(\varphi^m \otimes_R  \id_{M} \right) \right)
\\ & \subseteq  \im \left( V^{me} \times_{V^{ne}}  f_* \left(\varphi^n \otimes_R \id_{M} \right) \right)
\\ & =   \underbrace{V^{me} \times_{V^{ne}} \im \left( f_* \left(\varphi^n \otimes_R \id_{M} \right) \right)}_{\textrm{$V^{ne} \to V^{me}$ is flat}} .
\\ & = V^{me} \times_{V^{ne}} S_{\phi^n}^0 f_* (M)
\end{split}
\end{equation*}
\end{proof}

\subsection{Auxiliary definition and stabilization}
We would now like to obtain a global result similar to \autoref{prop.StabilizingSigma}.  In particular, we'd like the containments of \autoref{prop:S_pushforward_containment} to be equality over a dense open subset subset of the base $V$.  There is a complicating factor however, while we can still find an open set $U$ of $V$ such that
\[
\left( V^{(n+1)e} \times_{V^{ne}} S^0_{\varphi^n} f_* (M)\right)\Big|_U = \left( S^0_{\varphi^{n+1}} f_* (M) \right)\Big|_U
\]
we do not see how to use this to show that we have the $n+1$ to $n+2$ equality without additional assumptions.  The issue is that in the proof of \autoref{prop.StabilizingSigma}, twisting by line bundles is exact. For $S_{\phi^n}^0$ however we also push forward.  Therefore, in order to obtain our stabilization over a dense open set of the base we need additional positivity assumptions on $M$ and $L$.

Furthermore, we need an auxiliary version of $S_{\phi}^0$ which involves the $\sigma_N := \sigma_N(X/V,\phi)$ from \autoref{notation:S_f_*}*.  To do this, first observe that since $f$ is flat, the tensor product $L^{\frac{p^{me} -1}{p^e -1}} \otimes_R   \sigma_N$ is naturally identified with a subsheaf of $L^{\frac{p^{me} -1}{p^e -1}} \otimes_R   R_{A^{1/p^{Ne}}} \cong L^{\frac{p^{me} -1}{p^e -1}} \otimes_A   {A^{1/p^{Ne}}} $.  In order to motivate this auxiliary definition, we make the following observation:

\begin{lemma}
\label{lem.ImageStabilizesSigmaN}
In the situation of \autoref{notation:S_f_*}\textsuperscript{*}, we have that the image of the natural map
\[
\alpha_{N+m} : \left(   L^{\frac{p^{me} -1}{p^e -1}} \otimes_R   \sigma_N \right)^{\frac{1}{p^{me}}} \hookrightarrow \left(   L^{\frac{p^{me} -1}{p^e -1}} \otimes_R   R_{A^{1/p^{Ne}}} \right)^{\frac{1}{p^{me}}} \xrightarrow{\phi^m \otimes_{A^{1/p^{me}}} A^{1/p^{(N+m)e}} }  R_{A^{1/p^{(N+m)e}}}
\]
is equal to $\sigma_N \otimes_{A^{1/p^{Ne}}} A^{1/p^{(N+m)e}} \cong \sigma_{m+N}(X/V,\phi)$.
\end{lemma}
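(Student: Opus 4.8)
\textbf{Proof plan for \autoref{lem.ImageStabilizesSigmaN}.}
The plan is to unwind the composition $\alpha_{N+m}$ and compare it with the map defining $\ba_{N, N+m}$ (in the notation of \autoref{sec.RelativeNonFPureIdeals}), using the already-established factorization $\phi^{N+m} = (\phi^{m}\otimes\ldots)\circ ((\phi^{N})^{1/p^{me}}\otimes\ldots)$ from \autoref{subsec.ComposingMaps}. First I would reduce to the affine local setting where $L \cong R$, so that the powers of $L$ disappear and the source of $\alpha_{N+m}$ becomes $\sigma_N^{1/p^{me}}$, viewed as a submodule of $(R_{A^{1/p^{Ne}}})^{1/p^{me}} = R^{1/p^{me}}_{A^{1/p^{(N+m)e}}}$. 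Here the key point, already recorded in the excerpt, is that $A^{1/p^{Ne}}$ is flat (indeed faithfully flat, by \cite{KunzCharacterizationsOfRegularLocalRings}) over $A$ since $V$ is regular, which is exactly what justifies the identification $L^{\frac{p^{me}-1}{p^e-1}}\otimes_R \sigma_N \hookrightarrow L^{\frac{p^{me}-1}{p^e-1}}\otimes_A A^{1/p^{Ne}}$ in the statement and lets us freely commute tensor products with images.

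The main computation is then: by definition $\sigma_N = \ba_N = \phi^N\big((L^{\frac{p^{Ne}-1}{p^e-1}})^{1/p^{Ne}}\big)$, so $\sigma_N^{1/p^{me}}$ (after twisting by the appropriate power of $L$) is precisely the image of $(\phi^N)^{1/p^{me}}\otimes_{A^{1/p^{me}}} A^{1/p^{(N+m)e}}$, up to the canonical identifications. Feeding this into $\phi^m \otimes_{A^{1/p^{me}}} A^{1/p^{(N+m)e}}$ and invoking the factorization lemma from \autoref{subsec.ComposingMaps} shows that $\operatorname{Image}(\alpha_{N+m}) = \operatorname{Image}(\phi^{N+m}) = \ba_{N+m} = \sigma_{N+m}(X/V,\phi)$. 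Finally, the hypothesis (*) of \autoref{notation:S_f_*}\textsuperscript{*} says precisely that $\sigma_{N+m}(X/V,\phi) = \sigma_N \otimes_{A^{1/p^{Ne}}} A^{1/p^{(N+m)e}}$ for $m \geq 0$, which gives the asserted equality in the form stated. The one place requiring a little care is bookkeeping the exact powers of $L$ in the tensor products and checking that the isomorphism $L^{\frac{p^{me}-1}{p^e-1}}\otimes_R R_{A^{1/p^{Ne}}} \cong L^{\frac{p^{me}-1}{p^e-1}}\otimes_A A^{1/p^{Ne}}$ is the one induced by flat base change used elsewhere; once $L$ is trivialized locally this is routine, and since images of sheaf maps are computed stalkwise and are compatible with restriction to opens, the local verification suffices. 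I expect no serious obstacle here — the lemma is essentially a repackaging of the definition of $\sigma_n$ together with the composition formula for the $\phi^n$, and the only genuine input beyond formal manipulation is the flatness of $A^{1/p^{Ne}}$ over $A$, which is guaranteed by regularity of $V$.
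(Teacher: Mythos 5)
Your proof is correct and follows essentially the same route as the paper's: you identify the source of $\alpha_{N+m}$ as the image of (a twist of) $(\phi^N)^{1/p^{me}}$, compose with $\phi^m$ using the factorization from \autoref{subsec.ComposingMaps} to recognize the image as that of $\phi^{N+m}$, and then invoke hypothesis (*) to rewrite $\sigma_{N+m}(X/V,\phi)$ as $\sigma_N \otimes_{A^{1/p^{Ne}}} A^{1/p^{(N+m)e}}$. The paper states this more tersely but the mechanism is identical; your added remarks about flatness of $A^{1/p^{Ne}}$ over $A$ and localizing to trivialize $L$ are correct bookkeeping points that the paper leaves implicit.
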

\begin{proof}
The first term is the image of the $p^{me}$th root of $ L^{\frac{p^{me} -1}{p^e -1}} \otimes_{R} \phi^N $.  Therefore the image of the composition also equals $\sigma_{m+N}(X/V,\phi)$ by our construction of $\phi^j$ in \autoref{subsec.ComposingMaps}.
\end{proof}

This stabilization suggests it is reasonable to make the following definition.

\begin{definition}
\label{defn:S_0_f_auxilliary}
In the situation of \autoref{notation:S_f_*}\textsuperscript{*},
\begin{multline*}
S_{\varphi^n,\sigma_N}^0 f_*(M):=
\im \left(\rule{0cm}{1cm} f_* \left( \left(   L^{\frac{p^{me} -1}{p^e -1}} \otimes_R   \sigma_N \right)^{\frac{1}{p^{me}}} \otimes_R  M \right) \xrightarrow{f_* (\alpha_{N+m} \otimes_R M)}  f_*  \left( A^{\frac{1}{p^{(n+N)e}}} \otimes_A M \right)  \rule{0cm}{1cm}\right)
%
%
\end{multline*}
where $\alpha_{N+m}$ is as in \autoref{lem.ImageStabilizesSigmaN}.
\end{definition}

Consider the following proposition relating the various $S^0 f_*$ objects so far described.

\begin{proposition}
\label{prop:S_f_*_sigma_not_sigma_containments}
In the situation of \autoref{notation:S_f_*}\textsuperscript{*}, for every integer $m \geq 0$,
\begin{equation*}
V^{(N+m)e} \times_{V^{me}} S_{\varphi^m}^0 f_*(M) \supseteq S_{\varphi^m,\sigma_N}^0 f_*(M) \supseteq S_{\varphi^{m+N}}^0 f_*(M)
\end{equation*}
(Here all sheaves are regarded as subsheaves of $V^{(N+m)e} \times_{V} f_*(M)$ via flat base-change.)
\end{proposition}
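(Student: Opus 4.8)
The strategy is to trace through the defining maps and use that taking images commutes with flat base change, together with the factorizations already established in the excerpt. Concretely, both containments should follow by factoring the map $f_*(\varphi^{m+N} \otimes_R \id_M)$ through the map $f_*(\alpha_{N+m} \otimes_R \id_M)$ appearing in \autoref{defn:S_0_f_auxilliary}, and then factoring \emph{that} through $V^{(N+m)e} \times_{V^{me}} f_*(\varphi^m \otimes_R \id_M)$. The key inputs are \autoref{lem.ImageStabilizesSigmaN} (which identifies the image of $\alpha_{N+m}$ with $\sigma_{m+N}(X/V,\phi)$, hence ties $\alpha_{N+m}$ to $\phi^{m+N}$), the projection formula, and flat base change for $f_*$ along $V^{(N+m)e} \to V^{me}$ (here we use that $V$ is regular, so the relevant Frobenius maps are flat, exactly as in \autoref{prop:S_pushforward_containment}).

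First I would establish the right-hand containment $S_{\varphi^m,\sigma_N}^0 f_*(M) \supseteq S_{\varphi^{m+N}}^0 f_*(M)$. For this, recall from \autoref{subsec.ComposingMaps} that $\varphi^{m+N}$ is built as a composition whose first $N$ steps assemble $\phi^N$ (after twisting and taking roots) and whose last $m$ steps assemble $\phi^m \otimes \ldots$. Tensoring with $M$ and applying $f_*$, one gets a commutative diagram exhibiting $f_*(\varphi^{m+N} \otimes_R \id_M)$ as a composition in which the ``tail'' is precisely $f_*(\alpha_{N+m} \otimes_R \id_M)$, since $\alpha_{N+m}$ is by definition the $p^{me}$-th root of $L^{\frac{p^{me}-1}{p^e-1}} \otimes_R \phi^N$ followed by $\phi^m \otimes_{A^{1/p^{me}}} A^{1/p^{(N+m)e}}$. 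Because the image of a composition is contained in the image of the last map, we get $S_{\varphi^{m+N}}^0 f_*(M) = \im(f_*(\varphi^{m+N}\otimes_R\id_M)) \subseteq \im(f_*(\alpha_{N+m}\otimes_R \id_M)) = S_{\varphi^m,\sigma_N}^0 f_*(M)$, provided one checks the source of $\alpha_{N+m}$ receives the relevant sheaf as a quotient (it does, since $\sigma_N \hookrightarrow R_{A^{1/p^{Ne}}}$ and tensor is right exact, so $(L^{\frac{p^{me}-1}{p^e-1}} \otimes_R \phi^N)$ really does factor through $(L^{\frac{p^{me}-1}{p^e-1}} \otimes_R \sigma_N)^{1/p^{me}}$).

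Next I would establish the left-hand containment $V^{(N+m)e} \times_{V^{me}} S_{\varphi^m}^0 f_*(M) \supseteq S_{\varphi^m,\sigma_N}^0 f_*(M)$. Here the point is that $\alpha_{N+m}$ factors through $\phi^m \otimes_{A^{1/p^{me}}} A^{1/p^{(N+m)e}}$: indeed $\alpha_{N+m}$ first maps $(L^{\frac{p^{me}-1}{p^e-1}} \otimes_R \sigma_N)^{1/p^{me}}$ into $(L^{\frac{p^{me}-1}{p^e-1}} \otimes_R R_{A^{1/p^{Ne}}})^{1/p^{me}} \cong (L^{\frac{p^{me}-1}{p^e-1}})^{1/p^{me}} \otimes_{A^{1/p^{me}}} A^{1/p^{(N+m)e}}$ and then applies $\phi^m$ base-changed up. Tensoring with $M$, pushing forward, and using flat base change of $f_*$ along $V^{(N+m)e} \to V^{me}$ to identify $f_*$ of the base-changed sheaf with $V^{(N+m)e} \times_{V^{me}} f_*(\,\cdot\,)$, one sees $\im(f_*(\alpha_{N+m}\otimes_R\id_M))$ is contained in $V^{(N+m)e} \times_{V^{me}} \im(f_*(\varphi^m \otimes_R \id_M)) = V^{(N+m)e} \times_{V^{me}} S_{\varphi^m}^0 f_*(M)$, using again that $V^{me} \to V^{(N+m)e}$ is flat so base change commutes with taking images. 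Finally I would record that all three sheaves sit inside $V^{(N+m)e}\times_V f_*(M)$ compatibly, via the same flat base-change identifications used in the remark following \autoref{prop:S_pushforward_containment}.

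\textbf{Main obstacle.} The genuinely delicate part is bookkeeping: making the various canonical isomorphisms (flat base change, projection formula, the identification $(L^{\cdots}\otimes_R R_{A^{1/p^{Ne}}})^{1/p^{me}} \cong (L^{\cdots})^{1/p^{me}} \otimes_{A^{1/p^{me}}} A^{1/p^{(N+m)e}}$, and the compatibility with the structure of $\sigma_N$ as a subsheaf) all line up so that the three ``image'' subsheaves are literally subsheaves of one common sheaf and the asserted inclusions hold on the nose rather than merely up to isomorphism. This is the same flavor of diagram-chase as in \autoref{prop:S_pushforward_containment} and \autoref{lem.FactorizationOfSigmaImages}, so I would present it as one large commutative diagram and then read off the two containments, rather than writing out every identification; conceptually there is no new idea beyond what those earlier results already contain.
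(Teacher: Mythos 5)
Your proposal is correct and follows essentially the same route the paper does (which in the source is a one‑line ``follows directly from the definitions''): the left containment comes from the factorization of $\alpha_{N+m}$ through $\phi^m \otimes_{A^{1/p^{me}}} A^{1/p^{(N+m)e}}$ together with flat base change, and the right containment comes from factoring $\phi^{m+N}$ through $\alpha_{N+m}$ via \autoref{eq.phimAndphin} and \autoref{lem.ImageStabilizesSigmaN}. You have merely made explicit the two factorizations that the paper gestures at.
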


\begin{proof}
This follows directly from the definitions.  The first containment is trivial, and the second follows from a factorization similar to the one from \autoref{prop:S_pushforward_containment}.
\end{proof}

\begin{proposition}
\label{prop:relative_sigma_pushforward_stabilizes}
In the situation of \autoref{notation:S_f_*}\textsuperscript{*},
\begin{enumerate}
\item \label{itm:relative_sigma_pushforward_stabilizes:containment} for all integers $m \geq n \geq 0$,
\begin{equation*}
 V^{(N + m)e} \times_{V^{(N + n)e}} S^0_{\varphi^n,\sigma_N} f_* (M)  \supseteq S^0_{\varphi^m,\sigma_N} f_* (M),
\end{equation*}
as subsheaves of $V^{(N+m)e} \times_{V} f_*(M)$, and
\item \label{itm:relative_sigma_pushforward_stabilizes:equality} if furthermore $L \otimes M^{p^e -1}$ is $f$-ample, then there is an integer $n>0$, such that for every integer $m \geq n$, the above inclusion  is equality.
\item \label{itm:relative_sigma_pushforward_stabilizes:big_power_ample} if furthermore $ M= Q^l \otimes P$, where $Q$ is $f$-ample, then there is an integer $l_0>0$, such that for  every integer $l \geq l_0$, $m \geq n \geq 0$ and nef line bundle $P$ the above inclusion  is equality.
\end{enumerate}
\end{proposition}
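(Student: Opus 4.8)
\textbf{Part \autoref{itm:relative_sigma_pushforward_stabilizes:containment}.} I would prove this exactly as \autoref{prop:S_pushforward_containment} and the second containment of \autoref{prop:S_f_*_sigma_not_sigma_containments}. The point is that $\alpha_{N+m}$ factors, up to a twist by a power of $L$ and a Frobenius push-forward, through (a Frobenius base change of) $\alpha_{N+n}$: using $\frac{p^{me}-1}{p^e-1}=\frac{p^{(m-n)e}-1}{p^e-1}+p^{(m-n)e}\cdot\frac{p^{ne}-1}{p^e-1}$ one writes $L^{\frac{p^{me}-1}{p^e-1}}\cong L^{\frac{p^{(m-n)e}-1}{p^e-1}}\otimes_R\left(L^{\frac{p^{ne}-1}{p^e-1}}\right)^{p^{(m-n)e}}$ and invokes the identity $\phi^m=\phi^n\circ(\text{twist of }\phi^{m-n})$ from \autoref{subsec.ComposingMaps}, now carrying the fixed sheaf $\sigma_N$ along in the slot where it appears. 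Tensoring this factorization with $M$, applying $f_*$, and using that $f_*$ commutes with the (flat, since $V$ is regular) base change $V^{(N+m)e}\to V^{(N+n)e}$, one gets that $f_*(\alpha_{N+m}\otimes_R\id_M)$ factors through $V^{(N+m)e}\times_{V^{(N+n)e}}f_*(\alpha_{N+n}\otimes_R\id_M)$, hence the stated inclusion of images. This is routine.

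\textbf{Part \autoref{itm:relative_sigma_pushforward_stabilizes:equality}: the reduction.} The plan is to prove the much stronger statement that $S^0_{\varphi^m,\sigma_N}f_*(M)=f_*\big(\sigma_{m+N}(X/V,\phi)\otimes_R M\big)$ for all $m\gg 0$. Granting this, \autoref{notation:S_f_*}\textsuperscript{*} together with flat base change gives $f_*\big(\sigma_{m+N}(X/V,\phi)\otimes_R M\big)\cong\big(f_*(\sigma_N\otimes_R M)\big)\otimes_{A^{1/p^{Ne}}}A^{1/p^{(m+N)e}}$, so this sheaf is the same for all $m\gg 0$ up to Frobenius base change; combined with part \autoref{itm:relative_sigma_pushforward_stabilizes:containment} and the faithful flatness of $V^{(N+m)e}\to V^{(N+n)e}$, the inclusion of part \autoref{itm:relative_sigma_pushforward_stabilizes:containment} becomes an equality for all $m\geq n$ once $n$ is large enough. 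Thus everything comes down to showing that $f_*$ preserves the surjection $\alpha_{N+m}\otimes_R\id_M\colon\big(L^{\frac{p^{me}-1}{p^e-1}}\otimes_R\sigma_N\big)^{1/p^{me}}\otimes_R M\twoheadrightarrow\sigma_{m+N}(X/V,\phi)\otimes_R M$ of \autoref{lem.ImageStabilizesSigmaN} for $m\gg 0$.

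\textbf{Part \autoref{itm:relative_sigma_pushforward_stabilizes:equality}: the vanishing.} By the projection formula the source $\big(L^{\frac{p^{me}-1}{p^e-1}}\otimes_R\sigma_N\big)^{1/p^{me}}\otimes_R M$ is the push-forward, under the $me$-iterated relative Frobenius (a finite morphism, which does not change the underlying map to $V$ nor the notion of $f$-ampleness), of the sheaf $\sigma_N\otimes_R M\otimes_R\big(L\otimes M^{p^e-1}\big)^{\frac{p^{me}-1}{p^e-1}}$, where $\tfrac{p^{me}-1}{p^e-1}\to\infty$. Since $L\otimes M^{p^e-1}$ is $f$-ample, relative Serre vanishing kills $R^{>0}f_*$ of this sheaf for $m\gg 0$; plugging this into the long exact sequence of the short exact sequence $0\to\mathcal K_m\to\big(L^{\frac{p^{me}-1}{p^e-1}}\otimes_R\sigma_N\big)^{1/p^{me}}\otimes_R M\to\sigma_{m+N}(X/V,\phi)\otimes_R M\to 0$ reduces the surjectivity of $f_*(\alpha_{N+m}\otimes_R\id_M)$ to the vanishing of $R^{1}f_*(\mathcal K_m)$. \emph{This is the main obstacle}: the kernels $\mathcal K_m$ vary with $m$, so one cannot simply quote relative Serre vanishing term by term. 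I would resolve it by untwisting: $\mathcal K_m$ is the Frobenius push-forward of $\mathcal S_m\otimes\big(L\otimes M^{p^e-1}\big)^{\frac{p^{me}-1}{p^e-1}}$ for a subsheaf $\mathcal S_m\subseteq\sigma_N\otimes_R M$, and the quotients $\big(\sigma_N\otimes_R M\big)/\mathcal S_m$ are identified with base changes of the single fixed sheaf $\sigma_N\otimes_R M$ (this is precisely the content of \autoref{notation:S_f_*}\textsuperscript{*}), so the $\mathcal S_m$ form a bounded family of subsheaves of a fixed coherent sheaf; a uniform relative Serre vanishing for such a bounded family then finishes the argument.

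\textbf{Part \autoref{itm:relative_sigma_pushforward_stabilizes:big_power_ample}.} Here I would run the same argument with relative Fujita vanishing in place of relative Serre vanishing. Writing $M=Q^l\otimes P$ with $Q$ being $f$-ample and $P$ nef, the sheaf above becomes $\sigma_N\otimes_R L^{\frac{p^{me}-1}{p^e-1}}\otimes Q^{lp^{me}}\otimes P^{p^{me}}$; for $l$ larger than a bound depending only on $Q$, $L$ and the relative Fujita threshold for $\sigma_N$ with respect to $Q$ (but not on $m$ or $P$), one can split this as $\sigma_N\otimes Q^{l_0}\otimes\mathcal N_{l,m,P}$ with $\mathcal N_{l,m,P}=Q^{lp^{me}-l_0}\otimes L^{\frac{p^{me}-1}{p^e-1}}\otimes P^{p^{me}}$ being $f$-nef (the large power of $Q$ absorbs the negativity of $L$, since $\tfrac{p^{me}-1}{p^e-1}<p^{me}$), and relative Fujita vanishing then gives $R^{>0}f_*$-vanishing uniformly in $m$ and in the nef bundle $P$. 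The same untwisting bookkeeping handles the kernels, so $S^0_{\varphi^n,\sigma_N}f_*(M)=f_*\big(\sigma_{n+N}(X/V,\phi)\otimes_R M\big)$ now for \emph{every} $n\geq 0$, and the equality of the inclusion in part \autoref{itm:relative_sigma_pushforward_stabilizes:containment} follows for all $m\geq n\geq 0$ just as in part \autoref{itm:relative_sigma_pushforward_stabilizes:equality}.
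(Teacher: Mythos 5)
Your part \autoref{itm:relative_sigma_pushforward_stabilizes:containment} is fine and matches the paper's argument. For parts \autoref{itm:relative_sigma_pushforward_stabilizes:equality} and \autoref{itm:relative_sigma_pushforward_stabilizes:big_power_ample}, however, there is a genuine gap exactly at the point you flag yourself. You correctly observe that what must be shown is that $f_*$ preserves the surjection $\alpha_{N+m}\otimes\id_M$, and that this reduces to $R^1 f_*(\mathcal K_m)=0$ where $\mathcal K_m=\ker(\alpha_{N+m}\otimes\id_M)$. You also correctly note that these kernels vary with $m$, so relative Serre vanishing cannot be quoted term-by-term. Your proposed fix --- that the $\mathcal S_m$ (the untwisted kernels) ``form a bounded family of subsheaves of a fixed coherent sheaf'' with ``a uniform relative Serre vanishing'' --- is not justified. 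The fact that the \emph{quotients} $(\sigma_N\otimes_R M)/\mathcal S_m$ are Frobenius base changes of a fixed sheaf does not bound the kernels in any relevant sense: the ambient sheaf from which $\mathcal S_m$ is carved out is $\sigma_N\otimes_R M\otimes (L\otimes M^{p^e-1})^{\frac{p^{me}-1}{p^e-1}}$, whose numerical invariants grow with $m$, so neither the isomorphism type nor the Hilbert polynomial of $\mathcal S_m$ is controlled. Keeler's relative Fujita vanishing (which the paper cites) gives uniformity over a \emph{fixed} sheaf twisted by varying nef bundles, not over a varying family of sheaves, so it does not close this gap.

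The paper avoids the varying-kernel problem altogether by not trying to kill the full kernel $\mathcal K_m$ of $\alpha_{N+m}$. Instead, it defines a \emph{single fixed} coherent sheaf $B_\phi$ on $X_{V^{(N+1)e}}$ as the kernel of the one-step surjection $(L\otimes_R\sigma_N)^{1/p^e}\twoheadrightarrow A^{1/p^{(N+1)e}}\otimes_{A^{1/p^{Ne}}}\sigma_N$, then applies the exact-up-to-$R^1f_*$ functor $f_*\big((L^{\frac{p^{me}-1}{p^e-1}}\otimes_R\underline{\;\;})^{1/p^{me}}\otimes_R M\big)$ to this short exact sequence. This compares $S^0_{\varphi^m,\sigma_N}f_*(M)$ directly with $S^0_{\varphi^{m+1},\sigma_N}f_*(M)$ (rather than with $f_*(\sigma_{m+N}\otimes M)$), and the only cohomology that must vanish is $R^1f_*\big((L^{\frac{p^{me}-1}{p^e-1}}\otimes_R B_\phi)^{1/p^{me}}\otimes_R M\big)=R^1f_*\big((L\otimes M^{p^e-1})^{\frac{p^{me}-1}{p^e-1}}\otimes_R B_\phi\otimes_R M\big)$. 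Since $B_\phi\otimes_R M$ is a \emph{fixed} coherent sheaf and $L\otimes M^{p^e-1}$ is $f$-ample, relative Serre vanishing applies directly for part \autoref{itm:relative_sigma_pushforward_stabilizes:equality}, and Keeler's relative Fujita vanishing applies for part \autoref{itm:relative_sigma_pushforward_stabilizes:big_power_ample}. Your inductive step (comparing $m$ to $m+1$ rather than to $\infty$) is the right instinct to adopt; what you should do is apply it to the fixed short exact sequence defining $B_\phi$, after which the uniform bound falls out immediately.
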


\begin{proof}
  Define the coherent sheaf $B_{\phi}$  on $X_{V^{(N+1)e}}$ as the kernel of the  top horizontal map in the following commutative diagram.
\begin{equation}
\label{eq:relative_sigma_pushforward_stabilizes:B_phi}
\xymatrix{
B_{\phi} \ar@{^(->}[r] & ( L \otimes_R \sigma_N)^{\frac{1}{p^e}} \ar@{->>}[r]^-{\phi \otimes } &  A^{\frac{1}{p^{(N+1)e}}} \otimes_{A^{\frac{1}{p^{Ne}}}} \sigma_N \\
& \left( L \otimes \left( L^{\frac{p^{Ne}-1}{p^e -1}} \right)^{\frac{1}{p^{Ne}}} \right)^{\frac{1}{p^e}} \ar@{->>}[u]^{\left(\id_L \otimes_R \phi^{N} \right)^{\frac{1}{p^e}}} \ar@{}[r]|{\cong}
& \left( L^{\frac{p^{(N+1)e}-1}{p^e -1}} \right)^{\frac{1}{p^{(N+1)e}}}  \ar@{->>}[u]_-{\phi^{N+1}}
}
\end{equation}
Note here we used that the image of  $\phi^{N+1}$ is $A^{\frac{1}{p^{(N+1)e}}} \otimes_{A^{\frac{1}{p^{Ne}}}} \sigma_N$ by the assumptions made in \autoref{notation:S_f_*}\textsuperscript{*}. Also, the horizontal arrow is surjective by either \autoref{lem.ImageStabilizesSigmaN} or from the diagram (which is how we proved \autoref{lem.ImageStabilizesSigmaN}).  Then one can apply $f_* \left( \left( L^{\frac{p^{me}-1}{p^e-1}} \otimes_R \underline{\qquad} \right)^{\frac{1}{p^{me}}} \otimes_R M \right)$ to the top row of \autoref{eq:relative_sigma_pushforward_stabilizes:B_phi}, which is shown in the following commutative diagram. We have also included important isomorphisms to the different terms of the exact sequence.
\begin{equation*}
\xymatrix@R=16pt{
%
%
f_* \left( \left( L^{\frac{p^{me}-1}{p^e-1}} \otimes_R B_{\phi} \right)^{\frac{1}{p^{me}}} \otimes_R M \right) \ar@{^{(}->}[d]
&
\\
%
%
f_* \left( \left( L^{\frac{p^{me}-1}{p^e-1}} \otimes_R ( L \otimes \sigma_N)^{\frac{1}{p^e}} \right)^{\frac{1}{p^{me}}} \otimes_R M \right) \ar@{}[r]|{\cong} \ar[d]_{\nu}
&
f_* \left( \left(  L^{\frac{p^{(m+1)e}-1}{p^e-1}} \otimes_R  \sigma_N  \right)^{\frac{1}{p^{(m+1)e}}} \otimes_R M \right)  \ar@{->>}[ddddd]
\\
%
%
f_* \left( \left( L^{\frac{p^{me}-1}{p^e-1}} \otimes_R \left(\sigma_N \otimes_{A^{\frac{1}{p^{Ne}}}} A^{\frac{1}{p^{(N+1)e}}}\right) \right)^{\frac{1}{p^{me}}} \otimes_R M \right) \ar@/^15pc/@{<->}[dd]^{\cong} \ar[d]
&
\\
%
%
R^1 f_* \left( \left( L^{\frac{p^{me}-1}{p^e-1}} \otimes_R B_{\phi} \right)^{\frac{1}{p^{me}}} \otimes_R M \right) &
\\
%
%
f_* \left( \left( \left( L^{\frac{p^{me}-1}{p^e-1}} \otimes_R \sigma_N  \right)^{\frac{1}{p^{me}}} \otimes_{A^{\frac{1}{p^{(N+m)e}}}} A^{\frac{1}{p^{(N+m+1)e}}}\right) \otimes_R M \right) \ar@{}[d]|{\begin{rotate}{-90}$\cong$\end{rotate} \quad \raisebox{-6pt}{\textrm{\tiny by flat base change}}}
\\
%
%
V^{(N+m+1)e} \times_{V^{(N+m)e}} f_* \left( \left( L^{\frac{p^{me}-1}{p^e-1}} \otimes_R \sigma_N  \right)^{\frac{1}{p^{me}}}  \otimes_R M \right) \ar@{->>}[d]
\\
%
%
V^{(N+m+1)e} \times_{V^{(N+m)e}} S^0_{\phi^m,\sigma_N} f_*(M) & \ar[l] S^{0}_{\phi^{m+1},\sigma_N} f_*(M)
}
\end{equation*}
Both $V^{(N+m+1)e} \times_{V^{(N+m)e}} S^0_{\phi^m,\sigma_N} f_*( M)$ and $S^{0}_{\phi^{m+1},\sigma_N} f_*(M)$ can be regarded as subsheaves of $V^{(N+m+1)e} \times_{V}  f_* (M)  $ via flat base change. Furthermore, the bottom horizontal arrow becomes a map of subsheaves, i.e., an injection, this way.  This shows point \autoref{itm:relative_sigma_pushforward_stabilizes:containment}.

To prove point \autoref{itm:relative_sigma_pushforward_stabilizes:equality}, we are supposed to prove that whenever $L \otimes M^{p^e -1}$ is $f$-ample, this arrow is also surjective for $m \gg 0$. This would  follow if the third vertical arrow (from above), labeled $\nu$, was surjective. Therefore, it is sufficient to show that for $m \gg 0$,
\begin{multline*}
0
= R^1 f_* \left( \left( L^{\frac{p^{me}-1}{p^e-1}} \otimes_R B_{\phi} \right)^{\frac{1}{p^{me}}} \otimes_R M \right)
= R^1 f_* \left( \left( L^{\frac{p^{me}-1}{p^e-1}} \otimes_R B_{\phi} \otimes_R M^{p^{me}} \right)^{\frac{1}{p^{me}}}  \right)
\\
= R^1 f_* \left( \left(  \left(L \otimes M^{p^e -1}  \right)^{\frac{p^{me}-1}{p^e-1}} \otimes_R B_{\phi} \otimes_R M \right)^{\frac{1}{p^{me}}}  \right)       .
\end{multline*}
Note that since applying $( \underline{\quad})^{\frac{1}{p^{me}}}$ does not change the sheaf of abelian groups structure, this is equivalent to showing
\begin{multline*}
 0 = R^1 f_*  \left(  \left(L \otimes M^{p^e -1}  \right)^{\frac{p^{me}-1}{p^e-1}} \otimes_R B_{\phi} \otimes_R M   \right)
\\ = R^1 f_*  \left(  \left(L \otimes M^{p^e -1} \otimes_A A^{\frac{1}{p^{N+1}}}  \right)^{\frac{p^{me}-1}{p^e-1}} \otimes_{R_{A^{1/p^{(N+1)e}}}}  \left( B_{\phi} \otimes_R M \right)   \right)       .
\end{multline*}
Furthermore, $L \otimes M^{p^e -1} \otimes_A A^{\frac{1}{p^{N+1}}}$ is a relatively ample line bundle by the assumption of point \autoref{itm:relative_sigma_pushforward_stabilizes:equality} and $B_{\phi} \otimes_R M$ is a coherent sheaf on $X_{V^{(N+1)e}}$. Therefore, relative Serre vanishing concludes our proof.

Point \autoref{itm:relative_sigma_pushforward_stabilizes:big_power_ample} follows immediately from the above argument. Indeed, if $M= Q^l \otimes P$, then by relative Fujita vanishing \cite[Theorem 1.5]{KeelerAmpleFilters} there is an integer $l_0>0$, such that the above vanishing holds for every $m \geq 0$, $l \geq l_0$ and nef line bundle $P$.
\end{proof}

\begin{corollary}
\label{prop:pushforward_stabilizes}
In the situation of \autoref{notation:S_f_*}\textsuperscript{*}, if $L  \otimes M^{p^e-1}$ is $f$-ample, then
there is an integer $n>0$, such that for every integer $m \geq n$
\begin{equation*}
 V^{me} \times_{V^{ne}} S^0_{\varphi^n} f_* (M)  = S^0_{\varphi^m} f_* (M) \quad  \left(= S^0_{\varphi^{m-N}, \sigma_N} f_* (M) \right)
\end{equation*}
as subsheaves of $V^{me} \times_{V} f_*(M)$. Furthermore in the situation of point \autoref{itm:relative_sigma_pushforward_stabilizes:big_power_ample} of \autoref{prop:relative_sigma_pushforward_stabilizes} where $M = Q^l \otimes P = \text{\textnormal{(ample)}$^l \otimes $\textnormal{(nef)}}$ with $l \gg 0$, we can pick $n=N$.
\end{corollary}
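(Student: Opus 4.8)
The plan is to deduce this purely formally from the stabilization of the auxiliary sheaves $S^0_{\varphi^m,\sigma_N} f_*(M)$ established in \autoref{prop:relative_sigma_pushforward_stabilizes} together with the two-sided comparison in \autoref{prop:S_f_*_sigma_not_sigma_containments}; all of the geometric input (the relative Serre and Fujita vanishing that forces the map $\nu$ in the proof of \autoref{prop:relative_sigma_pushforward_stabilizes} to be surjective) is already contained in those statements, so what remains is bookkeeping. First I would fix a large integer $K$ and, using that the iterated Frobenius on the regular scheme $V$ is flat, regard each of $S^0_{\varphi^m} f_*(M)$ and $S^0_{\varphi^m,\sigma_N} f_*(M)$ (for $m$ up to $K$) as a subsheaf of the single sheaf $V^{Ke}\times_V f_*(M)$; since pulling back along a flat map is exact, all the containments in \autoref{prop:S_pushforward_containment}, \autoref{prop:S_f_*_sigma_not_sigma_containments} and \autoref{prop:relative_sigma_pushforward_stabilizes} become genuine inclusions of subsheaves of this common ambient sheaf. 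Write $T_m$ and $T'_m$ for the images of $S^0_{\varphi^m} f_*(M)$ and $S^0_{\varphi^m,\sigma_N} f_*(M)$ inside it. Then $(T_m)$ is a descending chain by \autoref{prop:S_pushforward_containment}, $(T'_m)$ is a descending chain by point \autoref{itm:relative_sigma_pushforward_stabilizes:containment} of \autoref{prop:relative_sigma_pushforward_stabilizes}, and \autoref{prop:S_f_*_sigma_not_sigma_containments} gives the squeeze $T_m \supseteq T'_m \supseteq T_{m+N}$ for every $m$.

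Next, using that $L\otimes M^{p^e-1}$ is $f$-ample, I would invoke point \autoref{itm:relative_sigma_pushforward_stabilizes:equality} of \autoref{prop:relative_sigma_pushforward_stabilizes} to obtain an integer $n_0>0$ with $T'_m = T'_{n_0}$ for all $m\geq n_0$. For any $m\geq n_0$, the squeeze at index $m$ reads $T_m \supseteq T'_{n_0}\supseteq T_{m+N}$, while the squeeze at index $m+N$ together with $T'_{m+N}=T'_{n_0}$ gives $T_{m+N}\supseteq T'_{m+N}=T'_{n_0}$; combining these, $T_{m+N}=T'_{n_0}$ for all $m\geq n_0$. In other words $T_k = T'_{n_0}$ for every $k\geq n:=n_0+N$, which is exactly the asserted equality $V^{me}\times_{V^{ne}} S^0_{\varphi^n} f_*(M) = S^0_{\varphi^m} f_*(M)$ for $m\geq n$; and since $m-N\geq n_0$ for such $m$, we also get $T_m = T'_{n_0} = T'_{m-N} = S^0_{\varphi^{m-N},\sigma_N} f_*(M)$, the parenthetical identification.

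For the final sentence I would rerun the same argument in the setting of point \autoref{itm:relative_sigma_pushforward_stabilizes:big_power_ample} of \autoref{prop:relative_sigma_pushforward_stabilizes}, where $M = Q^l\otimes P$ with $Q$ $f$-ample, $P$ nef and $l\gg 0$: there the inclusion $V^{(N+m)e}\times_{V^{(N+n)e}} S^0_{\varphi^n,\sigma_N}f_*(M)\supseteq S^0_{\varphi^m,\sigma_N}f_*(M)$ is an equality already for all $m\geq n\geq 0$, so one may take $n_0=0$ above, and the computation then yields $T_k = T'_0$ for all $k\geq N$, i.e. $n=N$ works. I do not expect a genuine obstacle here; the one point needing care is verifying that the flat base change isomorphisms used to place all these sheaves inside a common ambient sheaf are compatible with the maps appearing in \autoref{prop:S_f_*_sigma_not_sigma_containments} and \autoref{prop:relative_sigma_pushforward_stabilizes}, so that the set-theoretic inclusions compose as claimed — but this is routine given the regularity of $V$.
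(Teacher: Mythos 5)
Your argument is correct and is essentially the paper's: both use the two-sided squeeze from \autoref{prop:S_f_*_sigma_not_sigma_containments} to pin $S^0_{\varphi^m}f_*(M)$ between successive $S^0_{\varphi^{\bullet},\sigma_N}f_*(M)$'s, invoke the stabilization of the latter from \autoref{prop:relative_sigma_pushforward_stabilizes} (part (b), and part (c) for the addendum), and then descend the resulting equality of subsheaves by faithfully flat descent along Frobenius on the regular base $V$. The only bookkeeping difference is that you work inside one large ambient $V^{Ke}\times_V f_*(M)$ and conclude from two copies of the squeeze, whereas the paper base-changes the squeeze by one extra step of $N$ and reads off the equality directly; these are the same computation.
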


\begin{proof}
By \autoref{prop:S_f_*_sigma_not_sigma_containments},
\begin{equation}
\label{eq:pushforward_stabilizes:two_inclusions}
V^{(m+2N)e} \times_{V^{(m+N)e}} S_{\varphi^m,\sigma_N}^0 f_*(M) \supseteq V^{(m+2N)e} \times_{V^{(m+N)e}} S_{\varphi^{m+N}}^0 f_*(M) \supseteq S^0_{\varphi^{m+N},\sigma_N} f_*(M).
\end{equation}
Furthermore, by Proposition \ref{prop:relative_sigma_pushforward_stabilizes}.\ref{itm:relative_sigma_pushforward_stabilizes:equality}, we know the statement holds when  $S^0_{\varphi^m} f_* (M)$ is replaced by $ S^0_{\varphi^m,\sigma_N} f_* (M)$. Hence the inclusion of the right end of \autoref{eq:pushforward_stabilizes:two_inclusions} to the left end is an equality for $m \gg 0$. However, then all inclusions in \autoref{eq:pushforward_stabilizes:two_inclusions} are equalities. In particular from the first equality of \autoref{eq:pushforward_stabilizes:two_inclusions}, using that $V^{(m+2N)e} \to V^{(m+N)e}$ is faithfully flat, we obtain that $ S_{\varphi^m,\sigma_N}^{0} f_*(M) = S_{\varphi^{m+N}} f_*(M)$ for $m \gg 0$. Using Proposition \ref{prop:relative_sigma_pushforward_stabilizes}.\ref{itm:relative_sigma_pushforward_stabilizes:equality} once more concludes our proof of the main statement.  The final statement is similar.
\end{proof}

We can now obtain our promised analog of \autoref{prop.StabilizingSigma}.

\begin{theorem}
In the situation of \autoref{notation:S_f_*}, if $L \otimes M^{p^e - 1}$ is ample, there exists an integer $n_0$ and a dense open set $U \subseteq V$ such that for all $m \geq n \geq n_0$ we have that
\[
\Big( V^{me} \times_{V^{ne}} S^0_{\varphi^n} f_* (M)\Big) \Big|_U  =  \Big(S^0_{\varphi^m} f_* (M)\Big) \Big|_U.
\]
\end{theorem}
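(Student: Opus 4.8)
The strategy is to combine the local stabilization result \autoref{prop.StabilizingSigma} with the positivity-driven stabilization \autoref{prop:pushforward_stabilizes}. The point of \autoref{prop:pushforward_stabilizes} is that it gives the desired equality $V^{me} \times_{V^{ne}} S^0_{\varphi^n} f_*(M) = S^0_{\varphi^m} f_*(M)$ \emph{provided we are in the situation of \autoref{notation:S_f_*}\textsuperscript{*}}, i.e.\ provided the relative non-$F$-pure ideals $\sigma_m(X/V,\phi)$ have stabilized in the sense that $\sigma_m = \sigma_N \otimes_{A^{1/p^{Ne}}} A^{1/p^{me}}$ for all $m \geq N$. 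By \autoref{prop.StabilizingSigma}, this stabilization does hold, but only after restricting to a dense open set $U \subseteq V$ of the base. So the first step is simply to invoke \autoref{prop.StabilizingSigma} to produce such a $U$ (we may take $U$ affine and, shrinking it, assume $V$ is replaced by $U$ throughout), so that over $U$ the hypothesis (*) of \autoref{notation:S_f_*}\textsuperscript{*} is satisfied with the stabilization index $N = n_0$.

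\textbf{Key steps.} First, I would apply \autoref{prop.StabilizingSigma} to get a dense open $U \subseteq V$ and an integer $N$ such that, writing $X_U = f^{-1}(U)$, we have $\sigma_m(X/V,\phi)|_{X_U} = \big(\sigma_N(X/V,\phi) \otimes_{A^{1/p^{Ne}}} A^{1/p^{me}}\big)|_{X_U}$ for all $m \geq N$; note this uses only the hypothesis that $L \otimes M^{p^e-1}$ is ample on $X$ for the later invocation of positivity, not here. Second, I would observe that $S^0_{\varphi^n} f_*(M)$ and its restriction to $U$ are compatible: since $f$ is flat and $U \to V$ is an open immersion (hence flat), forming the image $S^0_{\varphi^n} f_*(M)$ commutes with the base change $U \hookrightarrow V$, so $\big(S^0_{\varphi^n} f_*(M)\big)|_U = S^0_{(\varphi_U)^n} (f_U)_*(M_U)$ where $f_U : X_U \to U$ and $\varphi_U$ is the restriction of $\varphi$. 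This reduces the claim over $U$ to the corresponding claim for the family $f_U : X_U \to U$, which now satisfies \autoref{notation:S_f_*}\textsuperscript{*} with $L|_{X_U} \otimes (M|_{X_U})^{p^e-1}$ relatively ample (indeed ample). Third, I would apply \autoref{prop:pushforward_stabilizes} to $f_U$: since we are now in the situation of \autoref{notation:S_f_*}\textsuperscript{*} with $L \otimes M^{p^e-1}$ $f$-ample, there is an integer $n_0 \geq N$ such that for all $m \geq n \geq n_0$ one has $V_U^{me} \times_{V_U^{ne}} S^0_{(\varphi_U)^n}(f_U)_*(M_U) = S^0_{(\varphi_U)^m}(f_U)_*(M_U)$ as subsheaves of $V_U^{me} \times_{V_U} (f_U)_*(M_U)$. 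Translating back via the base-change compatibility of Step 2 gives exactly $\big(V^{me} \times_{V^{ne}} S^0_{\varphi^n} f_*(M)\big)|_U = \big(S^0_{\varphi^m} f_*(M)\big)|_U$ for all $m \geq n \geq n_0$, which is the assertion.

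\textbf{Main obstacle.} The conceptual content is entirely in \autoref{prop.StabilizingSigma} and \autoref{prop:pushforward_stabilizes}, so no genuinely new argument is needed; the only thing requiring a little care is the bookkeeping of Step 2 — checking that restriction to the open set $U$ commutes with all the operations $f_*$, $\otimes$, taking images, and base change by Frobenius that enter the definition of $S^0_{\varphi^n} f_*(M)$, and that the stabilization index $N$ from \autoref{prop.StabilizingSigma} can be used as (or enlarged to) the index required by \autoref{notation:S_f_*}\textsuperscript{*}. Since $U \hookrightarrow V$ is flat and $V$ (hence $U$) is regular, flat base change for higher direct images and the exactness of restriction make all of this routine, so the "hard part" is really just assembling the two prior propositions in the right order rather than proving anything new.
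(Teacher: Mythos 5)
Your proposal is correct and is essentially the paper's own argument: shrink $V$ via \autoref{prop.StabilizingSigma} to reduce to the situation of \autoref{notation:S_f_*}\textsuperscript{*}, then invoke \autoref{prop:pushforward_stabilizes}. The paper states this in two lines; your Step 2 just makes explicit the (routine, flat-base-change) compatibility of $S^0_{\varphi^n} f_*(M)$ with restriction to $U$, which the paper leaves implicit.
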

\begin{proof}
By shrinking $V$ and applying  \autoref{prop.StabilizingSigma} we can reduce to the case of \autoref{notation:S_f_*}\textsuperscript{*}.  The result follows directly from \autoref{prop:pushforward_stabilizes}.
\end{proof}

\begin{corollary}
\label{cor:S_f_if_big_power_of_rel_ample}
In the situation of \autoref{notation:S_f_*},  there is a natural inclusion
\begin{equation*}
 S^0_{\phi^n} f_* (M) \subseteq f_* (\sigma_n(X/V, \phi) \otimes_R M).
\end{equation*}
as subsheaves of $V^{ne} \times_{V} f_*(M)$.

Further, in the situation of \autoref{notation:S_f_*}\textsuperscript{*}, if  $M=Q^l \otimes P$ where $Q$ is $f$-ample, then there is an integer $ l_0>0$, such that for every integer $l \geq l_0$, $n \geq N$ and  $f$-nef line bundle $P$, the above inclusion is equality.
\end{corollary}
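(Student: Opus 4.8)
The first statement (the inclusion $S^0_{\phi^n} f_*(M) \subseteq f_*(\sigma_n(X/V,\phi) \otimes_R M)$) is just \autoref{prop:obvious_containment} restated, since $\sigma_n(X/V,\phi) = \phi^n\big((L^{\frac{p^{ne}-1}{p^e-1}})^{1/p^{ne}}\big) = \im(\phi^n)$; I would simply cite that proposition. So the content lies in the second statement: with $M = Q^l \otimes P$ for $Q$ being $f$-ample, $P$ being $f$-nef, and $l$ large, the inclusion becomes an equality for $n \geq N$. The plan is to reduce to a surjectivity statement and then invoke relative Fujita vanishing exactly as in the proof of \autoref{prop:relative_sigma_pushforward_stabilizes}\autoref{itm:relative_sigma_pushforward_stabilizes:big_power_ample}.

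First I would pass to the situation of \autoref{notation:S_f_*}\textsuperscript{*}, which is harmless here since we have assumed it. By \autoref{prop:pushforward_stabilizes} (its final sentence, the case $M = Q^l \otimes P$ with $l \gg 0$), once $l \geq l_0$ we may take $n = N$ in the stabilization, so that $S^0_{\phi^n} f_*(M) = S^0_{\phi^{n-N}, \sigma_N} f_*(M)$ for all $n \geq N$, and more precisely it suffices to analyze the comparison between $S^0_{\phi^n, \sigma_N} f_*(M)$ and $f_*(\sigma_n(X/V,\phi) \otimes_R M)$. Recall from \autoref{lem.ImageStabilizesSigmaN} that the image of $\alpha_{N+m}$ is exactly $\sigma_{m+N}(X/V,\phi)$, so before applying $f_*$ the map whose image defines $S^0_{\phi^m,\sigma_N} f_*(M)$ is the composition of the \emph{surjection} $\big((L^{\frac{p^{me}-1}{p^e-1}} \otimes_R \sigma_N)^{1/p^{me}}\big) \otimes_R M \twoheadrightarrow \sigma_{m+N}(X/V,\phi) \otimes_R M$ with the inclusion into $A^{1/p^{(N+m)e}} \otimes_A M$. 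Thus $S^0_{\phi^m,\sigma_N} f_*(M)$ is the image of $f_*\big((L^{\frac{p^{me}-1}{p^e-1}} \otimes_R \sigma_N)^{1/p^{me}} \otimes_R M\big)$ inside $f_*(\sigma_{m+N}(X/V,\phi) \otimes_R M)$; the two agree precisely when $f_*$ of that surjection is itself surjective.

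The key step, then, is: letting $K_\phi$ denote the kernel of the surjection $\big(L^{\frac{p^{me}-1}{p^e-1}} \otimes_R \sigma_N\big)^{1/p^{me}} \otimes_R M \twoheadrightarrow \sigma_{m+N}(X/V,\phi) \otimes_R M$, I want $R^1 f_*(K_\phi) = 0$. (Equivalently, one can work with the sheaf $B_\phi$ of \autoref{eq:relative_sigma_pushforward_stabilizes:B_phi} tensored appropriately.) As in \autoref{prop:relative_sigma_pushforward_stabilizes}, applying $(\,\cdot\,)^{1/p^{me}}$ does not change the underlying sheaf of abelian groups, and the projection-formula manipulation there rewrites the relevant kernel as $\big(L \otimes M^{p^e-1} \otimes_A A^{1/p^{(N+1)e}}\big)^{\frac{p^{me}-1}{p^e-1}} \otimes_{R_{A^{1/p^{(N+1)e}}}} (B_\phi \otimes_R M)$ up to identifications, where $B_\phi \otimes_R M$ is a fixed coherent sheaf on $X_{V^{(N+1)e}}$. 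Since $M = Q^l \otimes P$ with $Q$ $f$-ample and $P$ $f$-nef, the twisting factor $L \otimes M^{p^e-1} = L \otimes Q^{l(p^e-1)} \otimes P^{p^e-1}$ is of the form $(\text{$f$-ample})^{l} \otimes (\text{$f$-nef})$ for $l$ large, so relative Fujita vanishing \cite[Theorem 1.5]{KeelerAmpleFilters} gives the vanishing of $R^1 f_*$ of this sheaf raised to the power $\frac{p^{me}-1}{p^e-1}$ for \emph{all} $m \geq 0$ simultaneously (with a single $l_0$, uniform in $P$ $f$-nef). Hence $f_*$ of the surjection is surjective for all $n = m+N \geq N$, giving $S^0_{\phi^n} f_*(M) = S^0_{\phi^{n-N},\sigma_N} f_*(M) = f_*(\sigma_n(X/V,\phi) \otimes_R M)$.

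\textbf{Expected main obstacle.} The genuinely delicate point is bookkeeping: making the two subsheaves of $V^{ne} \times_V f_*(M)$ literally match under the flat-base-change identifications, rather than merely being abstractly isomorphic — i.e. checking that the surjection $f_*(\text{big}) \to f_*(\sigma_n \otimes M)$ is compatible with the inclusions of both into $V^{ne} \times_V f_*(M)$, exactly the kind of diagram chase carried out in \autoref{prop:relative_sigma_pushforward_stabilizes}. The vanishing input itself is routine once relative Fujita vanishing is invoked; the care is entirely in tracking which space each sheaf lives on and confirming the projection-formula rewriting is valid with the $\sigma_N$-twist present (flatness of $f$ is what lets $L^{\frac{p^{me}-1}{p^e-1}} \otimes_R \sigma_N$ sit inside $L^{\frac{p^{me}-1}{p^e-1}} \otimes_A A^{1/p^{Ne}}$, as noted before \autoref{lem.ImageStabilizesSigmaN}). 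I would handle this by reusing verbatim the diagram of \autoref{prop:relative_sigma_pushforward_stabilizes} and only adding the final identification with $f_*(\sigma_n \otimes_R M)$ via \autoref{lem.ImageStabilizesSigmaN} and \autoref{prop:obvious_containment}.
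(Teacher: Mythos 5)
The first-statement argument is exactly the paper's: both cite \autoref{prop:obvious_containment} and the observation that $\sigma_n(X/V,\phi) = \im \phi^n$.

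For the addendum, your overall plan is close to workable, but there is a concrete error in identifying the kernel. You define $K_\phi$ as the kernel of the \emph{$m$-step} surjection
\[
\left(L^{\frac{p^{me}-1}{p^e-1}} \otimes_R \sigma_N\right)^{1/p^{me}} \otimes_R M \;\twoheadrightarrow\; \sigma_{m+N}(X/V,\phi) \otimes_R M,
\]
and then assert that "the projection-formula manipulation there rewrites the relevant kernel as $(L \otimes M^{p^e-1} \otimes_A A^{1/p^{(N+1)e}})^{\frac{p^{me}-1}{p^e-1}} \otimes_{R_{A^{1/p^{(N+1)e}}}} (B_\phi \otimes_R M)$." That object is \emph{not} $K_\phi$: it is the kernel appearing in \autoref{prop:relative_sigma_pushforward_stabilizes}, namely $\left(L^{\frac{p^{me}-1}{p^e-1}} \otimes_R B_\phi\right)^{1/p^{me}} \otimes_R M$, which is the kernel of the \emph{single-step} comparison map $\nu$ between the $m+1$st and $m$th presentations. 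The $m$-step kernel $K_\phi$ is an iterated extension of such one-step pieces (with varying twists $\left(L^{\frac{p^{ie}-1}{p^e-1}}\right)^{1/p^{ie}} \otimes B_\phi$, $0 \le i < m$), not a single one. So the sentence "Hence $f_*$ of the surjection is surjective" does not follow from the vanishing of $R^1 f_*$ applied to the single $B_\phi$-twist. You would need to iterate the short exact sequences (each step giving surjectivity after $f_*$, by Fujita vanishing applied to that step's $B_\phi$-kernel), which amounts exactly to re-running the stabilization argument; or else use the stabilization output of \autoref{prop:pushforward_stabilizes} as a black box.

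The paper does something cleaner and genuinely different in structure: induction on $n$, with base case $n=N$. The point is that \emph{no} $R^1$-vanishing is needed for the base case, because $S^0_{\phi^0,\sigma_N} f_*(M)$ is, by \autoref{defn:S_0_f_auxilliary} with $m=0$, the image of $f_*$ applied to the \emph{injection} $\alpha_N : \sigma_N \hookrightarrow R_{A^{1/p^{Ne}}}$, hence equals $f_*(\sigma_N \otimes_R M)$ by left exactness of $f_*$; combined with the "$n=N$" sentence of \autoref{prop:pushforward_stabilizes} this gives $S^0_{\phi^N} f_*(M) = f_*(\sigma_N \otimes_R M)$. The inductive step then only needs that \emph{both} sides pull back compatibly under $V^{(n-1)e}\to V^{ne}$ — the right side by flat base change and condition (*), the left side by \autoref{prop:pushforward_stabilizes} together with \autoref{prop:relative_sigma_pushforward_stabilizes}\eqref{itm:relative_sigma_pushforward_stabilizes:big_power_ample}. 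This isolates the Fujita-vanishing input to exactly where it is already proven, and avoids any claims about the structure of $K_\phi$. You are missing this base-case observation (that $S^0_{\phi^0,\sigma_N} f_*(M) = f_*(\sigma_N\otimes M)$ with no vanishing required), which is what makes the paper's route short.
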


\begin{proof}
Consider the surjection.
\begin{equation*}
\xymatrix{
\xi : \left( L^\frac{p^{ne} -1}{p^e -1}   \right)^{\frac{1}{p^{ne}}}
\ar@{->>}[rrr]^(0.6){\phi^n}
& & &
\sigma_{n}(X/V,\phi)
}
\end{equation*}
Then $S^0_{\phi^n} f_* (M)= \im f_*(\xi \otimes_R \id_M)$, which is a subsheaf of $f_* (\sigma_n(X/V,\phi) \otimes_R M)$.

We prove the addendum by induction on $n$. If $n=N$,
\begin{equation*}
S^0_{\phi^N} f_* (M)= \underbrace{S^0_{\phi^0, \sigma_N} f_*(M)}_{\textrm{by \autoref{prop:pushforward_stabilizes}}}
= \underbrace{f_* (\sigma_N \otimes_R M)}_{\textrm{by \autoref{defn:S_0_f_auxilliary}}}.
\end{equation*}
Let us assume then that $n>N$. By flat base change and the assumption of \autoref{notation:S_f_*}\textsuperscript{*}
\begin{equation*}
f_* (\sigma_n(X/V) \otimes_R M) = f_* (\sigma_N(X/V) \otimes_R M) \times_{V^{Ne}} V^{ne}.
\end{equation*}
Hence, using the inductional hypothesis, it is enough to see that for every $n \geq N$,
\begin{equation*}
S^0_{\phi^n} f_* (M) = S^0_{\phi^{n-1}} f_* (M).
\end{equation*}
However, this follows from  point \autoref{itm:relative_sigma_pushforward_stabilizes:big_power_ample} of \autoref{prop:relative_sigma_pushforward_stabilizes} and \autoref{prop:pushforward_stabilizes}
\end{proof}


\subsection{Base-change}

We now prove base change for $S^0_{\phi^n} f_*$.

\begin{notation}
\label{notation:base_change}
In the situation of \autoref{notation:S_f_*}, choose
\begin{itemize}
\item $T$ a regular integral excellent scheme with a dualizing complex.
\item $T \to V$  a morphism.
\end{itemize}
We indicate base-change via this morphism by $T$ in subscript. Define the following sheaves on $X_T$.
\begin{enumerate}
\item $B:= \left(f_T \right)^{-1} \sO_T$
\item $Q:= \sO_{X_T}$
\end{enumerate}
Denote by $(p_1)^{\frac{1}{p^i}}$ the natural projection morphism $X^i \times_{V^i} T^i \to X^i$.
\end{notation}

\begin{proposition}
\label{prop:S_f_*_base_change}
In the situation of \autoref{notation:base_change}, if $L \otimes M^{p^e -1}$ is $f$-ample then for $n \gg 0$ the natural base change morphism
\begin{equation*}
f_* \left( A^{\frac{1}{p^{ne}}} \otimes_A M    \right) \times_{V^{ne}} T^{ne}  \to \left(f_T \right)_* \left( B^{\frac{1}{p^{ne}}} \otimes_B M_T    \right)
\end{equation*}
induces a surjective morphism on subsheaves
\begin{equation}
\label{eq:S_f_*_base_change:compatibility}
 S^0_{\phi^n} f_* (M) \times_{V^{ne}} T^{ne} \twoheadrightarrow  S^0_{\phi_T^n} \left(f_T \right)_* (M_T) .
\end{equation}
Furthermore,
\begin{enumerate}
\item \label{itm:S_f_*_base_change:independent} the lower bound on the $n$ for which the above statement holds depends only on $f$ and $M$. In particular, it is independent of  $T$.
\item \label{itm:S_f_*_base_change:high_multiple_of_rel_ample} If $M= Q^l \otimes P$ for some $f$-ample line bundle $Q$, nef line bundle $P$ and integer $l>0$, then there is a uniform lower bound on  $n$ independent of $l$ and $P$.
\item \label{itm:S_f_*_base_change:flat} Even if $L \otimes M^{p^e -1}$ is not assumed to be $f$-ample, \autoref{eq:S_f_*_base_change:compatibility} is an isomorphism if $T \to V$ is flat.
\item \label{itm:S_f_*_base_change:isomorphism_on_an_open} There is a dense open set $U \subseteq V$, such that if the image of $T \to V$ is contained in $U$, then \autoref{eq:S_f_*_base_change:compatibility} is an isomorphism.
\end{enumerate}

\end{proposition}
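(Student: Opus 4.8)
The plan is to reduce the whole statement to a cohomology-and-base-change assertion about a single line bundle. Write $c_n:=\frac{p^{ne}-1}{p^e-1}$, and set $\mathcal F_n:=(L^{c_n})^{1/p^{ne}}\otimes_R M$, $\mathcal G_n:=A^{1/p^{ne}}\otimes_A M$ and $\mathcal H_n:=\sigma_n(X/V,\phi)\otimes_R M$. By \autoref{defn:S_f_*} the map $\phi^n\otimes_R\id_M\colon\mathcal F_n\to\mathcal G_n$ has image $\mathcal H_n$, and $S^0_{\phi^n}f_*(M)=\im(f_*\mathcal F_n\to f_*\mathcal G_n)$, so that $f_*\mathcal F_n\twoheadrightarrow S^0_{\phi^n}f_*(M)\hookrightarrow f_*\mathcal G_n$, the second arrow being injective as $f_*$ is left exact. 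By the projection formula $\mathcal F_n$ is, as a sheaf of abelian groups on $X$, the line bundle $(L\otimes M^{p^e-1})^{c_n}\otimes M$; trivializing $L$ and $M$ locally and using flatness of $f$, both $\mathcal F_n$ and $\mathcal G_n$ are flat over $V^{ne}$. Finally \autoref{lem.BaseChangeForPhin} gives $(\phi^n)_T=(\phi_T)^n$ and \autoref{prop.RelativeNonFPureIdealsAndBaseChange} gives $\sigma_n(X/V,\phi)\cdot\O_{X_{T^{ne}}}=\sigma_n(X_T/T,\phi_T)$, so $(\mathcal F_n)_T$, $(\mathcal G_n)_T$, $(\mathcal H_n)_T$ are the corresponding objects for $X_T/T$; this produces the natural commutative base-change square and shows the morphism of \autoref{eq:S_f_*_base_change:compatibility} is well defined and takes values in $S^0_{\phi_T^n}(f_T)_*(M_T)$.

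The key step is the following elementary observation: \emph{if the natural base-change morphism $f_*\mathcal F_n\times_{V^{ne}}T^{ne}\to(f_T)_*(\mathcal F_n)_T$ is surjective, then \autoref{eq:S_f_*_base_change:compatibility} is surjective; if, moreover, this morphism and the corresponding one for $\mathcal G_n$ are isomorphisms and $(-)\otimes_{\O_{V^{ne}}}\O_{T^{ne}}$ is exact, then \autoref{eq:S_f_*_base_change:compatibility} is an isomorphism.} This is a diagram chase on the base-change square for $f_*\mathcal F_n\to f_*\mathcal G_n$, using right-exactness of $(-)\otimes\O_{T^{ne}}$ and the factorization through $S^0_{\phi^n}f_*(M)$: surjectivity of the left vertical arrow forces the image of the bottom arrow, which is $S^0_{\phi_T^n}(f_T)_*(M_T)$, to agree with the image of ``top then right'', which is exactly the image of $S^0_{\phi^n}f_*(M)\times_{V^{ne}}T^{ne}$; the injectivity needed for the isomorphism statement is then immediate. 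Since $\mathcal F_n$ and $\mathcal G_n$ are flat over $V^{ne}$ and $f$ is projective, the theorem on cohomology and base change (e.g.\ \cite[Section III.12]{Hartshorne}) shows that their base-change morphisms are isomorphisms as soon as $R^{>0}f_*\mathcal F_n=0=R^{>0}f_*\mathcal G_n$. Because $\mathcal F_n\cong(L\otimes M^{p^e-1})^{c_n}\otimes M$ with $L\otimes M^{p^e-1}$ $f$-ample and $c_n\to\infty$, relative Serre vanishing provides an integer $n_1$, depending only on $f$, $L$ and $M$, with $R^{>0}f_*\mathcal F_n=0=R^{>0}f_*\mathcal G_n$ for all $n\ge n_1$. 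Combined with the key step, this proves the main assertion together with \autoref{itm:S_f_*_base_change:independent}.

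The remaining points are variations on the same theme. For \autoref{itm:S_f_*_base_change:high_multiple_of_rel_ample}, with $M=Q^l\otimes P$ as there, one replaces relative Serre vanishing by relative Fujita vanishing \cite[Theorem 1.5]{KeelerAmpleFilters}, arguing exactly as in the proof of \autoref{itm:relative_sigma_pushforward_stabilizes:big_power_ample} in \autoref{prop:relative_sigma_pushforward_stabilizes}, to obtain $l_0$ with $R^{>0}f_*\mathcal F_n=0$ for every $n$ once $l\ge l_0$, uniformly in the $f$-nef bundle $P$. For \autoref{itm:S_f_*_base_change:flat}, if $T\to V$ is flat then $(-)\otimes\O_{T^{ne}}$ is exact and all base-change morphisms are isomorphisms by flat base change \cite[Proposition III.9.3]{Hartshorne}, with no positivity hypothesis needed, so the key step gives an isomorphism. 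For \autoref{itm:S_f_*_base_change:isomorphism_on_an_open}, using \autoref{prop:pushforward_stabilizes} to reduce to finitely many sheaves, generic flatness \cite[Theorem 14.4]{EisenbudCommutativeAlgebraWithAView} applied to $\mathcal F_n$, $\mathcal G_n$, $\mathcal H_n$ and $\mathcal K_n:=\ker(\mathcal F_n\to\mathcal H_n)$, together with generic local freeness of their higher direct images, of $S^0_{\phi^n}f_*(M)$, and of the cokernel $f_*\mathcal G_n/S^0_{\phi^n}f_*(M)$, yields a dense open $U\subseteq V$ over which the formation of $S^0_{\phi^n}f_*(M)$ commutes with arbitrary base change; hence \autoref{eq:S_f_*_base_change:compatibility} is an isomorphism for every $T$ whose image lies in $U$.

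The main obstacle is \autoref{itm:S_f_*_base_change:isomorphism_on_an_open}: one must coordinate generic flatness and cohomology-and-base-change for several sheaves at once, and — the genuinely delicate part — arrange the cokernel $f_*\mathcal G_n/S^0_{\phi^n}f_*(M)$ to be locally free (hence flat) over $U$, since that is exactly what makes passing to the image subsheaf $S^0_{\phi^n}f_*(M)$ commute with base change there. Once the reduction to the vanishing $R^{>0}f_*\mathcal F_n=0$ is in place, the main surjectivity statement and the items \autoref{itm:S_f_*_base_change:independent} and \autoref{itm:S_f_*_base_change:flat} are essentially formal.
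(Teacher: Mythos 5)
Your proof is correct in outline and follows essentially the same strategy as the paper: reduce to a cohomology-and-base-change statement for the source sheaf $\mathcal F_n = (L^{c_n})^{1/p^{ne}}\otimes_R M$, kill its higher direct images by relative Serre (or Fujita) vanishing since $\mathcal F_n\cong(L\otimes M^{p^e-1})^{c_n}\otimes M$ as an abelian sheaf, and then chase the base-change square. The diagram chase giving surjectivity from surjectivity of the left vertical arrow is exactly what the paper does.

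There is, however, one genuinely false claim in your write-up: you assert that relative Serre vanishing also gives $R^{>0}f_*\mathcal G_n=0$ for $n\gg 0$. This is not true. Since $\mathcal G_n\cong M\otimes_{\O_X}f^*(F^{ne}_{V,*}\O_V)$ and $F^{ne}_{V,*}\O_V$ is flat over $\O_V$ (Kunz, as $V$ is regular), the projection formula gives $R^if_*\mathcal G_n\cong(R^if_*M)\otimes_{\O_V}F^{ne}_{V,*}\O_V$; this vanishes for $i>0$ if and only if $R^if_*M=0$, which is a condition on $M$ alone and does not improve as $n$ grows. The positivity of $L\otimes M^{p^e-1}$ feeds into $\mathcal F_n$, not $\mathcal G_n$. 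Fortunately this claim is not load-bearing: for the main surjectivity assertion and for items (a)--(b) only the $\mathcal F_n$-vanishing is used; item (c) you handle by flat base change; and item (d) you handle by generic flatness, not by this vanishing. But as written, the sentence is wrong and should be removed.

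For item (d) your route is heavier than the paper's. You arrange the cokernel $f_*\mathcal G_n/S^0_{\phi^n}f_*(M)$ to be flat over a dense open so that tensoring preserves the inclusion, and then conclude. The paper instead observes (via \cite[III.12.8, III.12.9]{Hartshorne}) that on the dense open where $h^0(X_s,M_s)$ is constant, $f_*M$ is locally free and compatible with base change, hence so is $f_*\mathcal G_n$; then, interpreting $S^0_{\phi^n}f_*(M)\times_{V^{ne}}T^{ne}$ as the \emph{image} subsheaf inside $f_*\mathcal G_n\times_{V^{ne}}T^{ne}$ (as the proposition statement stipulates, ``morphism on subsheaves''), a morphism between subsheaves compatible with the isomorphism on the ambient sheaves is automatically injective, and with the already-established surjectivity one gets the isomorphism. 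This avoids any flatness assertion about the cokernel. Your version proves a slightly stronger literal-tensor-product statement, but at the cost of several extra applications of generic flatness.
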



\begin{proof}[Proof of \autoref{prop:S_f_*_base_change}]
First, note that
\begin{equation*}
\left( (p_1)^{\frac{1}{p^{ne}}} \right)^* \left(  \left( L^{\frac{p^{ne} -1}{p^e -1}} \otimes_R M^{p^{ne}}\right)^{\frac{1}{p^{ne}}}  \right) \cong
\left( (p_1)^{\frac{1}{p^{ne}}} \right)^* \left(  \left( L^{\frac{p^{ne} -1}{p^e -1}} \right)^{\frac{1}{p^{ne}}} \otimes_R M \right) \cong
\left( L^{\frac{p^{ne} -1}{p^e -1}}_T \right)^{\frac{1}{p^{ne}}} \otimes_Q M_T.
\end{equation*}
Hence, there is a natural base-change morphism below in \autoref{eq:S_f_*_base_change:base_change_map}. We will show that it is an isomorphism for $n \gg 0$. Furthermore, this $n$ can be chosen independently of $T$.
\begin{equation}
\label{eq:S_f_*_base_change:base_change_map}
\left( f_* \left( \left( L^{\frac{p^{ne} -1}{p^e -1}} \right)^{\frac{1}{p^{ne}}} \otimes_R M \right) \right) \times_{V^{ne}} T^{ne}
\to
\left(f_T \right)_* \left( \left( L^{\frac{p^{ne} -1}{p^e -1}}_T \right)^{\frac{1}{p^{ne}}} \otimes_Q M_T \right)
\end{equation}
Indeed by cohomology and base-change \cite[Lemma 25.20.1]{stacks-project} it is enough to show that for every integer $i>0$,
\begin{equation*}
R^i f_* \left( \left( L^{\frac{p^{ne} -1}{p^e -1}} \right)^{\frac{1}{p^{ne}}} \otimes_R M \right) =0 .
\end{equation*}
Since $V^{ne} \to V$ is affine, this is equivalent to showing that for $i>0$,
\begin{equation*}
0
=
R^i \left( f_{V^{ne}} \right)_* \left( \left( L^{\frac{p^{ne} -1}{p^e -1}} \right)^{\frac{1}{p^{ne}}} \otimes_R M \right)
=
R^i \left( f_{V^{ne}} \right)_* \left( \left( L^{\frac{p^{ne} -1}{p^e -1}} \otimes_R M^{p^{ne}} \right)^{\frac{1}{p^{ne}}}  \right),
\end{equation*}
which is further equivalent to showing that  for $i>0$,
\begin{equation}
\label{eq:S_f_*_base_change:vanishing}
0=R^i f_* \left( L^{\frac{p^{ne} -1}{p^e -1}} \otimes_R M^{p^{ne}}   \right) .
\end{equation}
However, the last vanishing holds for $n \gg 0$ by relative Serre vanishing, independently of $T$. Hence the base change homomorphism of \autoref{eq:S_f_*_base_change:base_change_map} is isomorphism indeed for $n \gg 0$, which can be chosen independently of $T$. In particular,
for $n \gg 0$ there is a commutative base change diagram as follows, which implies the statement of the proposition together with addendum \autoref{itm:S_f_*_base_change:independent}.
\begin{equation}
\label{eq:S_f_*_base_change:commutative_diagram}
\xymatrix{
 f_* \left( \left( L^{\frac{p^{ne} -1}{p^e -1}} \right)^{\frac{1}{p^{ne}}} \otimes_R M \right)  \times_{V^{ne}} T^{ne} \ar[r] \ar@{}[d]^(0.4){\begin{rotate}{-90}$\cong$\end{rotate}}
&
 f_* \left( A^{\frac{1}{p^{ne}}} \otimes_A M    \right) \times_{V^{ne}} T^{ne} \ar[d] \\
%
%
\left(f_T \right)_* \left( \left( L^{\frac{p^{ne} -1}{p^e -1}}_T \right)^{\frac{1}{p^{ne}}} \otimes_Q M_T \right) \ar[r]
&
\left(f_T \right)_* \left( B^{\frac{1}{p^{ne}}} \otimes_B M_T    \right)
}
\end{equation}
For addendum \autoref{itm:S_f_*_base_change:flat} note that if $T \to V$ is flat, then both vertical morphisms in the above diagram are isomorphisms even if $L \otimes M^{p^e -1}$ is not assumed to be $f$-ample. Furthermore, images via flat pullbacks are pullbacks of images, which concludes the proof of \autoref{itm:S_f_*_base_change:flat}.

For addendum \autoref{itm:S_f_*_base_change:high_multiple_of_rel_ample}, note that if $M= Q^l \otimes P$ for an $f$-ample line bundle $Q$ and a nef line bundle $P$, then \autoref{eq:S_f_*_base_change:vanishing} holds for every $l>0$ uniformly, by relative Fujita vanishing \cite{KeelerAmpleFilters}.

For addendum \autoref{itm:S_f_*_base_change:isomorphism_on_an_open} note that by \cite[Theorem 12.8 and Corollary 12.9]{Hartshorne} there is a dense open set $U \subseteq V$ (i.e., the open set where $h^0(X_s,M_s)$ is constant), such that if $T$ maps into $U$ then the right vertical arrow in \autoref{eq:S_f_*_base_change:commutative_diagram} is an isomorphism. In particular then the homomorphism \autoref{eq:S_f_*_base_change:compatibility} is injective, because it is a homomorphism  between subsheaves of the two sheaves involved in the above vertical map. Therefore, by the already proven surjectivity \autoref{eq:S_f_*_base_change:compatibility} is an isomorphism. This concludes addendum \autoref{itm:S_f_*_base_change:isomorphism_on_an_open}.
\end{proof}

\begin{remark}
\label{rem:base_change}
In particular if in \autoref{notation:base_change}, $T=s$ is a perfect point of $V$ (that is, $T=\Spec \left( K \right)$ for some perfect field $K$), then $S^0_{\phi_T^n} \left(f_T \right)_* (M_T)$ can be interpreted as follows. Since, the natural map $K \to \left( K \right)^{1/p^i}$ is an isomorphism, the natural projection $X_{T^i} \to X_T$ is an isomorphism. Furthermore $(f_{T})_*(M_{T^i})$ can be identified with $(f_{T})_*(M_{T}) = H^0(X_T,M_T)$ via $K \to K^{1/p^i}$. Thus $S^0_{\phi_T^n} \left(f_T \right)_* (M_T)$ is identified with
\begin{equation}
\label{eq:S_0_v_m}
S^0_{s,m}:=\im \left( H^0\left(X_{s}, \left( L^{\frac{p^{me}-1}{p^e -1}}_{s} \right)^{\frac{1}{p^{me}}} \otimes M_{s} \right)  \to H^0(X_{s}, M_{s}) \right) ,
\end{equation}
In particular, for $m \gg 0$, \autoref{eq:S_0_v_m} can be identified with $S^0(X_{s}, \sigma(X_{s},\phi_{s}) \otimes M_{s})$.
\end{remark}

\subsection{Uniform stabilization}

\begin{theorem}
\label{prop:S^0_uniform_stabilization}
In the situation of \autoref{notation:S_f_*}, if $L \otimes M^{p^e -1}$ is $f$-ample, then there is an integer $n>0$, such that for all integers $m \geq n$ and perfect points  $s \in V$,
\begin{equation*}
\im \left( S^0_{\phi^m} f_* (M) \otimes_{\sO_{V^{me}}} k(s)^{\frac{1}{p^{me}}} \to H^0(X_{s}, M_{s}) \right)
= S^0_{s,m}
 = S^0(X_{s}, \sigma(X_{s}, \psi_{s}) \otimes M_{s}).
\end{equation*}
(Note $\psi_s$ is defined  in \autoref{sec.BaseChangeToPerfectPoints} and $S^0_{s,m}$ in \autoref{rem:base_change}.)
\end{theorem}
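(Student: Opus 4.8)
The plan is to combine the uniform stabilization of $S^0_{\phi^m} f_*(M)$ over a dense open set (from \autoref{prop:pushforward_stabilizes}) with the base-change statement \autoref{prop:S_f_*_base_change} and the Noetherian-induction stratification argument already used in \autoref{thm.BaseChangeSigmaGeneral} and \autoref{thm.BaseChangeTauGeneral}. First I would fix, using \autoref{prop:S_f_*_base_change}\autoref{itm:S_f_*_base_change:independent} (with $T = s$ a perfect point), an integer $n_1$, independent of $s$, such that for all $m \geq n_1$ and all perfect points $s$ we have a surjection
\[
S^0_{\phi^m} f_*(M) \otimes_{\sO_{V^{me}}} k(s)^{1/p^{me}} \twoheadrightarrow S^0_{\phi_s^m}(f_s)_*(M_s) = S^0_{s,m},
\]
the last identification being \autoref{rem:base_change}; moreover for $m \gg 0$ (depending only on $X_s$, hence uniformly by the stabilization already invoked) $S^0_{s,m} = S^0(X_s, \sigma(X_s,\psi_s) \otimes M_s)$ by \cite{SchwedeACanonicalLinearSystem}. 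So the content left to prove is that the displayed surjection is an \emph{isomorphism}, uniformly in $s$ and for $m \geq n$ large.

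The key point is that over a dense open set $U_0 \subseteq V$ this holds for free: by \autoref{prop:S_f_*_base_change}\autoref{itm:S_f_*_base_change:isomorphism_on_an_open} the base change map \autoref{eq:S_f_*_base_change:compatibility} is an isomorphism whenever $s$ maps into $U_0$, and simultaneously (shrinking $U_0$) \autoref{prop:pushforward_stabilizes} gives $n_0$ with $V^{me} \times_{V^{ne}} S^0_{\phi^n}f_*(M)|_{U_0} = S^0_{\phi^m}f_*(M)|_{U_0}$ for $m \geq n \geq n_0$, and (shrinking again) \autoref{cor.SigmaRestrictsToAllPoints} together with the defining property of $S^0$ gives $S^0_{s,m} = S^0(X_s,\sigma(X_s,\psi_s)\otimes M_s)$ for all perfect $s \in U_0$ and $m \geq n_0$. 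Then I would run the stratification: set $V_1' = V \setminus U_0$ with reduced structure, let $V_1 = (V_1')_{\reg}$, base change $f$ and $\phi$ (and $M$) along $V_1 \hookrightarrow V$ — noting that each fiber of $X_{V_1} \to V_1$ is a fiber of $X \to V$, that $V_1$ is again regular excellent with a dualizing complex, and that $L \otimes M^{p^e-1}$ restricts to an $f_{V_1}$-ample bundle — and apply the open-set case to get an integer $N_1$ and a dense open $U_1 \subseteq V_1$ over which the conclusion holds. Since $\dim V_1 < \dim V$, this process terminates by Noetherian induction; taking $n = \max\{n_1, N_0, N_1, \dots\}$ (where $N_0$ is the bound from the $U_0$ case) finishes the proof.

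The main obstacle I expect is bookkeeping rather than a genuine new idea: one must check that the base change $X_{V_i} \to V_i$ still satisfies all standing hypotheses of \autoref{notation:S_f_*} (regularity of the base, projectivity, and $f$-ampleness of $L \otimes M^{p^e-1}$), and — more delicately — that the integer $n$ produced at each stage really is uniform over \emph{all} perfect points lying over that stratum, not just over the generic point. The uniformity over a fixed stratum is exactly \autoref{prop:S_f_*_base_change}\autoref{itm:S_f_*_base_change:independent} combined with \autoref{prop:pushforward_stabilizes}, so the only care needed is to observe that a perfect point $s \in V$ lands in exactly one stratum $U_i$ (by construction of the $V_i'$ as a partition into locally closed pieces) and that the identification $X_{s^{ne}} \cong X_s$, valid since $k(s)$ is perfect, is compatible with all the base-change isomorphisms used; this is the same mechanism as in the proofs of \autoref{thm.BaseChangeSigmaGeneral} and \autoref{cor.SigmaRestrictsToAllPoints}, so no essentially new difficulty arises.
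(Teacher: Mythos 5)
Your overall plan matches the paper's proof: fix a uniform $n$ via \autoref{prop:S_f_*_base_change}\autoref{itm:S_f_*_base_change:independent} so that the first equality (image $=S^0_{s,m}$) holds for all $m\geq n$ and all perfect $s$, then get the second equality over a dense open $U$ from the stabilization of $S^0_{\phi^m}f_*(M)$, and run Noetherian induction on the regular strata of the complement. That is precisely the paper's argument.

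One conceptual slip worth correcting: after establishing the surjection, the remaining content is \emph{not} that the surjection is an isomorphism, but that the descending chain $S^0_{s,m}$ stabilizes in $m$ uniformly. These are different claims, and the paper never needs the isomorphism. Indeed, once \autoref{prop:pushforward_stabilizes} gives $V^{me}\times_{V^{ne}} S^0_{\phi^n}f_*(M)\big|_U = S^0_{\phi^m}f_*(M)\big|_U$ for $m\geq n$, the images in $H^0(X_s,M_s)$ (which are $S^0_{s,m}$ by the surjection) are visibly the same for all $m\geq n$, whence $S^0_{s,m}=\bigcap_k S^0_{s,k}=S^0(X_s,\sigma(X_s,\psi_s)\otimes M_s)$. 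This makes both \autoref{prop:S_f_*_base_change}\autoref{itm:S_f_*_base_change:isomorphism_on_an_open} and \autoref{cor.SigmaRestrictsToAllPoints} unnecessary in your paragraph; in particular \autoref{cor.SigmaRestrictsToAllPoints} is a statement about restriction of the absolute non-$F$-pure \emph{ideal} to fibers and does not by itself control stabilization of the global sections $S^0_{s,m}$, so as written it does not do the job you assign it. Also note that in the inductive step the complement $V_1'$ need not be integral, so one must write its regular locus as a disjoint union of regular locally closed integral pieces (as the paper does) rather than treat it as a single integral scheme; your treatment glosses over this but it is easily repaired.
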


\begin{proof}
 We show the statement  by induction on the dimension of $V$. There is nothing to prove if $\dim V = 0$. Hence, we may proceed to the inductional step and assume that $\dim V > 0$.
By \autoref{prop:S_f_*_base_change}, there is an $n>0$ such that for every perfect point $s \in V$ and every $m \geq n$
\begin{equation*}
\im \left( S^0_{\phi^m} f_* (M) \otimes_{\sO_{V^{me}}} k(s)^{\frac{1}{p^{me}}} \to H^0(X_{s}, M_{s}) \right) = S^0_{s,m}.
\end{equation*}
According to \autoref{lem.BaseChangeForPhin}, by possibly increasing $n$,  we may find a  non-empty, dense open set $U$ such that for all $m \geq n$,
\begin{equation*}
 \left. \sigma_m(X/V, \phi) \right|_{U}  =\left.  \sigma_n(X/V, \phi) \times_{V^{ne}} V^{me} 
\right|_{U} .
\end{equation*}
Therefore, applying \autoref{prop:pushforward_stabilizes}, yields (also by possibly increasing $n$) that for all $m \geq n$,
\begin{equation*}
\left. V^{me} \times_{V^{ne}} S^0_{\varphi^n} f_* (M) \right|_{U}  = \left. S^0_{\varphi^m} f_* (M)\right|_{U}.
\end{equation*}
It follows then that for every perfect point $s \in U$,
\begin{equation*}
\im \left( S^0_{\phi^m} f_* (M) \otimes_{\sO_{V^{me}}} k(s)^{\frac{1}{p^{me}}} \to H^0(X_{s}, M) \right)
\end{equation*}
is the same for all $m \geq n$. Therefore,  $S^0_{s,m}$ is stabilized for all perfect $s \in U$ for values of $m \geq n$. Hence it is equal to $S^0(X_{s}, \sigma(X_{s}, \psi_{s}) \otimes M_{s})$. This shows the statement of the proposition for all perfect $s \in U$.  We fix the $n_0 = n$ used above for future use.

On the other hand, consider the reduced scheme $V_1 = V \setminus U$.  Write $V_1 = \coprod V_{1,i}$ as a disjoint union of regular locally closed integral subschemes.  Each $V_{1,i}$ has dimension smaller than $V$.  In particular, there exists an $n_i$ such that the statement holds for $f_i : X_{V_i} \to V_i$.  Letting $n$ be the maximum of $n_0$ and the $n_i$ we obtain our result since every perfect point of $X$ factors through a point of $U$ or of one of the $V_i$.

\end{proof}

\begin{corollary}
\label{cor:S^0_uniform_stabilization}
In the situation of \autoref{notation:S_f_*}, if $L \otimes M^{p^e -1}$ is $f$-ample, then there is a dense open set $U \subseteq V$, such that for every perfect point $s \in U$,
\begin{equation*}
S^0_{\phi^m} f_* (M) \otimes_{\sO_{V^{me}}} k(s)^{\frac{1}{p^{me}}} = S^0(X_{s}, \sigma(X_{s}, \psi_{s}) \otimes M_{s}).
\end{equation*}
In particular, $\dim_{k(s)} S^0(X_{s}, \sigma(X_{s}, \psi_{s}) \otimes M_{s})$ is constant on an open set, and  the rank of $S^0_{\phi^m} f_* (m)$ equals this dimension.
\end{corollary}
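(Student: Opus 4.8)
The plan is to deduce \autoref{cor:S^0_uniform_stabilization} as a nearly immediate consequence of \autoref{prop:S^0_uniform_stabilization} together with \autoref{prop:S_f_*_base_change}, in particular addendum \autoref{itm:S_f_*_base_change:isomorphism_on_an_open}.  First I would invoke \autoref{prop:S^0_uniform_stabilization} to obtain an integer $n>0$ such that for all $m\geq n$ and all perfect points $s\in V$ we have
\[
\im\left(S^0_{\phi^m}f_*(M)\otimes_{\O_{V^{me}}}k(s)^{\frac{1}{p^{me}}}\to H^0(X_s,M_s)\right)=S^0(X_s,\sigma(X_s,\psi_s)\otimes M_s).
\]
This already identifies the \emph{image} of the base change map with the desired canonical system; the remaining point is to upgrade ``image'' to an honest equality of the source with its image, i.e.\ to know that the base change map $S^0_{\phi^m}f_*(M)\otimes_{\O_{V^{me}}}k(s)^{1/p^{me}}\to H^0(X_s,M_s)$ is injective for $s$ in a dense open set.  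For this I would apply \autoref{prop:S_f_*_base_change}\autoref{itm:S_f_*_base_change:isomorphism_on_an_open}: there is a dense open $U\subseteq V$ such that for any perfect point $s$ whose image lies in $U$, the natural morphism
\[
S^0_{\phi^n}f_*(M)\times_{V^{ne}}(s)^{ne}\xrightarrow{\ \sim\ }S^0_{\phi_{(s)}^n}(f_{(s)})_*(M_{(s)})
\]
is an isomorphism, and by \autoref{rem:base_change} the right-hand side is precisely $S^0_{s,n}$, which for $n$ large enough equals $S^0(X_s,\sigma(X_s,\psi_s)\otimes M_s)$.  Intersecting $U$ with the locus over which $S^0_{\phi^m}f_*(M)$ stabilizes (guaranteed by \autoref{prop:pushforward_stabilizes}, or directly from the open set produced in the proof of \autoref{prop:S^0_uniform_stabilization}) and possibly enlarging $n$, we get the claimed equality
\[
S^0_{\phi^m}f_*(M)\otimes_{\O_{V^{me}}}k(s)^{\frac{1}{p^{me}}}=S^0(X_s,\sigma(X_s,\psi_s)\otimes M_s)
\]
for all perfect points $s\in U$ and all $m\geq n$.

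For the ``in particular'' clauses, the constancy of $\dim_{k(s)}S^0(X_s,\sigma(X_s,\psi_s)\otimes M_s)$ on $U$ follows because the left-hand side $S^0_{\phi^m}f_*(M)\otimes_{\O_{V^{me}}}k(s)^{1/p^{me}}$ is the fiber of a coherent sheaf on $V^{me}$, whose fiber dimension is upper semicontinuous; shrinking $U$ further so that $S^0_{\phi^m}f_*(M)$ is locally free over it (which is possible since a coherent sheaf is locally free on a dense open of an integral scheme) makes this dimension equal to the rank of $S^0_{\phi^m}f_*(M)$, and the rank is then computed fiberwise at any single perfect point of $U$.  I would note here that the rank of $S^0_{\phi^m}f_*(M)$ does not depend on $m\geq n$ once we are over the stabilization locus, since $V^{me}\times_{V^{ne}}S^0_{\phi^n}f_*(M)=S^0_{\phi^m}f_*(M)$ there and $V^{me}\to V^{ne}$ is flat of relative dimension $0$, hence preserves rank.

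I do not expect any serious obstacle: this corollary is genuinely a packaging statement, and all the real content is already in \autoref{prop:S^0_uniform_stabilization} and \autoref{prop:S_f_*_base_change}.  The only mild subtlety worth being careful about is keeping track of the two different open sets --- the one from \autoref{prop:S^0_uniform_stabilization} governing the stabilization of the $S^0_{s,m}$ and the one from \autoref{itm:S_f_*_base_change:isomorphism_on_an_open} of \autoref{prop:S_f_*_base_change} governing injectivity of base change (equivalently, constancy of $h^0(X_s,M_s)$) --- and intersecting them, together with the locus of local freeness, into a single dense open $U$.  One should also make sure the chosen $n$ is large enough simultaneously for the conclusions of both propositions, which is harmless since each only requires $n\gg 0$.
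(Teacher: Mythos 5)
Your proposal is correct and follows exactly the same route as the paper's own (very terse) proof: it deduces the main equality from \autoref{prop:S^0_uniform_stabilization} (which gives the image) combined with \autoref{prop:S_f_*_base_change}\autoref{itm:S_f_*_base_change:isomorphism_on_an_open} (which gives injectivity of the base change map over a dense open), and then obtains the ``in particular'' clauses from the elementary fact that a coherent sheaf on an integral Noetherian scheme is locally free of constant rank on a dense open \cite[Exercise II.5.8]{Hartshorne}. You have simply unpacked the intersection of open sets and the identification of fiber dimension with rank, which the paper leaves implicit.
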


\begin{proof}
The main statement follows immediately from \autoref{prop:S^0_uniform_stabilization} and point \autoref{itm:S_f_*_base_change:isomorphism_on_an_open} of \autoref{prop:S_f_*_base_change}. The corollary follows from basic properties of coherent sheaves \cite[Exercise II.5.8]{Hartshorne}
\end{proof}

If there is a point $s_0 \in V$, such that in $X_{s_0}$ we have $H^0= S^0$, then we can say more than the above corollary: it turns out that it can be assumed that $s_0 \in U$, and further that $U$ has other useful properties. This is proved in the following theorem.

\begin{theorem}
\label{prop:S_0_maximal_is_open}
In the situation of \autoref{notation:S_f_*}, if $L \otimes M^{p^e -1}$ is $f$-ample and there is a perfect point $s_0 \in V$ such that $ H^0(X_{s_0},M_{s_0})=S^0(X_{s_0}, \sigma(X_{s_0}, \psi_{s_0}) \otimes M_{s_0})$, then there is an open neighborhood $U$ of $s_0$, such that
\begin{enumerate}
 \item \label{itm:S_0_maximal_is_open:locally_free} $f_* M|_U$ is locally free and compatible with base change and
\item \label{itm:S_0_maximal_is_open:maximal} $ H^0(X_{v},M_{v})=S^0(X_{v}, \sigma(X_{v}, \psi_{v}) \otimes M_{v})$ for every perfect point $v \in U$.
\end{enumerate}
In particular, $\dim S^0(X_{v}, \sigma(X_{v}, \psi_{v}) \otimes M_{v})$ is constant for $v \in U$.
\end{theorem}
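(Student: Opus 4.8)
\textbf{Proof plan for \autoref{prop:S_0_maximal_is_open}.} The strategy is to combine the cohomology-and-base-change machinery with the surjectivity in \autoref{prop:S_f_*_base_change} and the stabilization of \autoref{prop:S^0_uniform_stabilization}. First I would fix $m \geq n$ large enough that all the conclusions of \autoref{prop:S_f_*_base_change} and \autoref{prop:S^0_uniform_stabilization} apply simultaneously; in particular for this $m$ the base change map $S^0_{\phi^m} f_*(M) \otimes k(s)^{1/p^{me}} \twoheadrightarrow S^0_{s,m}$ is surjective for every perfect point $s$, and $S^0_{s,m} = S^0(X_s, \sigma(X_s,\psi_s)\otimes M_s)$ once $m$ is large.

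\medskip

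Next, to establish \autoref{itm:S_0_maximal_is_open:locally_free} I would invoke Grauert's theorem (\cite[Corollary 12.9]{Hartshorne}): the hypothesis $H^0(X_{s_0},M_{s_0}) = S^0(X_{s_0},\sigma(X_{s_0},\psi_{s_0})\otimes M_{s_0})$ together with surjectivity of $S^0_{\phi^m} f_*(M)\otimes k(s_0)^{1/p^{me}} \to H^0(X_{s_0},M_{s_0})$ forces the fiber dimension of $S^0_{\phi^m} f_*(M)$ at $s_0$ to be at least $h^0(X_{s_0},M_{s_0})$, hence $S^0_{\phi^m} f_*(M) \subseteq (f_*M)_{V^{me}}$ is an equality of ranks at $s_0$. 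Since rank can only jump up on a closed set, and since $h^0(X_s,M_s)$ is upper semicontinuous while $\operatorname{rk} S^0_{\phi^m} f_*(M)$ (being a subsheaf of a torsion-free sheaf on the reduced $V^{me}$) is lower semicontinuous, there is an open neighborhood $U_0$ of $s_0$ on which $S^0_{\phi^m} f_*(M) \to (f_*M)_{V^{me}}$ is surjective onto a locally free sheaf of constant rank equal to $h^0(X_{s_0},M_{s_0})$. On the locus where $h^0(X_s,M_s)$ achieves its generic (minimal) value — which contains $s_0$ after shrinking — $f_*M$ is locally free and compatible with base change by \cite[Corollary 12.9]{Hartshorne}; I would take $U$ to be the intersection.

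\medskip

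For \autoref{itm:S_0_maximal_is_open:maximal}, on this $U$ we then have for every perfect point $v \in U$ a chain of surjections
\[
H^0(X_v,M_v) = (f_*M)_{V^{me}}\otimes k(v)^{1/p^{me}} \twoheadleftarrow S^0_{\phi^m} f_*(M)\otimes k(v)^{1/p^{me}} \twoheadrightarrow S^0_{v,m} = S^0(X_v,\sigma(X_v,\psi_v)\otimes M_v),
\]
where the left map is surjective because $S^0_{\phi^m} f_*(M)|_U \to (f_*M)_{V^{me}}|_U$ is surjective, and the right map is the base change surjection from \autoref{prop:S_f_*_base_change}. But $S^0(X_v,\sigma(X_v,\psi_v)\otimes M_v)$ is a subspace of $H^0(X_v,M_v)$, and a surjection from a space which itself surjects onto a subspace of $H^0$ forces that subspace to be all of $H^0$; concretely, the composite $S^0_{\phi^m} f_*(M)\otimes k(v)^{1/p^{me}} \to H^0(X_v,M_v)$ factors through $S^0(X_v,\sigma(X_v,\psi_v)\otimes M_v)$, so surjectivity of the composite gives $S^0(X_v,\sigma(X_v,\psi_v)\otimes M_v) = H^0(X_v,M_v)$. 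The final ``in particular'' is then immediate since $\dim S^0(X_v,\ldots) = \dim H^0(X_v,M_v) = \operatorname{rk}(f_*M) $ is constant on $U$ by \autoref{itm:S_0_maximal_is_open:locally_free}.

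\medskip

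The main obstacle I anticipate is the careful bookkeeping of \emph{which} sheaf lives on \emph{which} Frobenius twist $V^{me}$ of the base, and ensuring that the semicontinuity argument for the rank of $S^0_{\phi^m} f_*(M)$ is legitimate — this requires knowing that $S^0_{\phi^m} f_*(M)$ is a subsheaf of the locally free $(f_*M)_{V^{me}}$ near $s_0$ (which needs $f_*M$ locally free there, a mild circularity that is resolved by first restricting to the open locus where $h^0$ is generic, then noting $s_0$ lies in it precisely because $h^0(X_{s_0},M_{s_0})$ is forced to be minimal by the surjection from a sheaf of that rank). Everything else is a formal consequence of results already proved in the excerpt.
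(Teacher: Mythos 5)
Your overall strategy is the right one and you have the right inputs (the surjection from \autoref{prop:S_f_*_base_change}, the stabilization from \autoref{prop:S^0_uniform_stabilization}, and cohomology-and-base-change). But the derivation of \autoref{itm:S_0_maximal_is_open:locally_free} contains a real gap. The inference ``fiber dimension of $S^0_{\phi^m}f_*(M)$ at $s_0$ is $\geq h^0(X_{s_0},M_{s_0})$, hence $S^0_{\phi^m}f_*(M)\subseteq (f_*M)_{V^{me}}$ is an equality of ranks at $s_0$'' does not follow: a priori you have no control on $\dim_{k(s_0)}\big((f_*M)_{V^{me}}\otimes k(s_0)\big)$, and the inclusion of sheaves need not induce anything sensible on fibers at a non-generic point. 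The subsequent assertion that ``$\operatorname{rk}S^0_{\phi^m}f_*(M)$ is lower semicontinuous'' conflates generic rank with fiber dimension; for a coherent sheaf the fiber dimension $s\mapsto \dim_{k(s)}\mathcal G\otimes k(s)$ is \emph{upper} semicontinuous, not lower, so this argument does not close. The much simpler observation that repairs this (and is what the paper uses) is that the surjection $S^0_{\phi^m}f_*(M)\otimes k(s_0)^{1/p^{me}} \twoheadrightarrow H^0(X_{s_0},M_{s_0})$ factors through the base-change map $(f_*M)_{V^{me}}\otimes k(s_0)^{1/p^{me}}\to H^0(X_{s_0},M_{s_0})$ --- this is exactly the commutativity of the square in \autoref{prop:S^0_uniform_stabilization}. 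Surjectivity of a composite forces surjectivity of the last arrow, so the $H^0$-base-change map is surjective at $s_0$, and \cite[Corollary III.12.11]{Hartshorne} gives \autoref{itm:S_0_maximal_is_open:locally_free} directly.

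For \autoref{itm:S_0_maximal_is_open:maximal} you assert ``the left map is surjective because $S^0_{\phi^m}f_*(M)|_U\to (f_*M)_{V^{me}}|_U$ is surjective,'' but this surjectivity is precisely what needs proof; a subsheaf of a locally free sheaf of the same generic rank is generically an isomorphism but need not be equal to it (think of $\langle t\rangle\subset k[t]$). The missing step is Nakayama: once \autoref{itm:S_0_maximal_is_open:locally_free} is established, the bottom map $(f_*M)_{V^{me}}\otimes k(s_0)^{1/p^{me}}\to H^0(X_{s_0},M_{s_0})$ is an isomorphism, so the left vertical map in the square is surjective at $s_0$, hence the cokernel of $S^0_{\phi^m}f_*(M)\hookrightarrow(f_*M)_{V^{me}}$ has vanishing fiber at $s_0$, hence vanishing stalk, hence vanishes in a neighborhood. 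Only then does your chain of surjections hold for all $v\in U$. So your high-level plan matches the paper's proof, but both the base-change argument and the Nakayama step need to be made explicit rather than replaced by the semicontinuity heuristics.
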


\begin{proof}
By \autoref{prop:S^0_uniform_stabilization}, there is an $n$, such that  for every $v \in V$ and integer $m \geq n$, the natural base-change homomorphism induces a surjection as follows
\begin{equation}
\label{eq:S_0_maximal_is_open:diagram}
\xymatrix@R=19pt{
S^0_{\phi^m} f_* (M) \otimes_{\sO_{V^{me}}} k(v)^\frac{1}{p^{me}} \ar@{->>}[r] \ar[d] & \ar@{^(->}[d] S^0(X_{v},\sigma (X_{v},\psi_{v}) \otimes M_{v}) \\
 f_* \left(M \otimes_A A^{\frac{1}{p^{me}}} \right) \otimes_{\sO_{V^{me}}} k(v)^\frac{1}{p^{me}}
\ar[r] &  H^0(X_{v}, M_{v}). \\
}
\end{equation}
Furthermore, the right vertical arrow is surjective for $v = s_0$. It follows that so is the bottom horizontal arrow then. Using \cite[Corollary III.12.11]{Hartshorne} concludes \autoref{itm:S_0_maximal_is_open:locally_free}.

To prove point \autoref{itm:S_0_maximal_is_open:maximal}, let us consider \autoref{eq:S_0_maximal_is_open:diagram} for $v =s_0$ again. Now we know that the bottom horizontal arrow is isomorphism. However, then the left vertical arrow is also surjective. By Nakayama's-lemma, there is an equality $(S^0_{\phi^n} f_* (M))_{v} = ( f_* (M))_{v}$ of stalks, which means that by possibly restricting $U$, for every $v \in U$, the left vertical arrow of \autoref{eq:S_0_maximal_is_open:diagram} is surjective. However then, since the bottom horizontal arrow is isomorphism for every $v \in U$, point \autoref{itm:S_0_maximal_is_open:maximal} follows.
\end{proof}


The surjection in  \autoref{prop:S_f_*_base_change} is not an isomorphism in general by the following two examples.
In the first example the global geometry of the smooth fibers causes the anomaly, while in the second, the degeneration of $F$-pure  singularities to  non $F$-pure singularities  is the main culprit.  However, we first show a lemma.

\begin{lemma}
\label{lem:tango_technique}
  Let $Y$ be a smooth curve over $k$ and $G$ an effective divisor on $Y$. Define $\sB^1_Y$ as the cokernel of $\sO_X \to F_* \sO_X$. Then
\begin{equation*}
S^0(Y, \sigma(X,0) \otimes \omega_Y(G)) = H^0(Y, \omega_Y(G)) \Leftrightarrow H^0(Y, \sB^1_Y(-p^nG))=0 \quad (\forall n >0).
\end{equation*}

\end{lemma}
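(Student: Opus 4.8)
The plan is to unwind the definition of $S^0$ and relate it to the cohomology of the Frobenius-pullback sequence. Recall that $S^0(Y, \sigma(X,0)\otimes \omega_Y(G))$ is the stable image of the maps $H^0(Y, F^n_* \sO_Y((1-p^n)(K_Y)) \otimes \omega_Y(G)) \to H^0(Y, \omega_Y(G))$; since $Y$ is a smooth curve, $\sigma(X,0) = \sO_Y$ and the relevant map is the dual (via Grothendieck duality / the Cartier operator) of the iterated Frobenius $\sO_Y(-G) \to F^n_* \sO_Y(-p^n G)$ twisted appropriately. First I would set up, for each $n>0$, the short exact sequence
\begin{equation*}
0 \to \sO_Y(-p^nG) \to F_* \sO_Y(-p^{n}G) \to \sB^1_Y(-p^nG) \to 0
\end{equation*}
obtained by twisting the defining sequence of $\sB^1_Y$ (using the projection formula $F_*(F^*\sO_Y(-p^{n-1}G)\otimes\sO_Y(-G)) = F_*\sO_Y(-p^nG)\otimes\sO_Y(-p^{n-1}G)$, being careful about which twist lands where). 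Iterating, the obstruction to surjectivity of $H^0(Y,F^n_*\sO_Y(-p^nG)) \to H^0(Y,\sO_Y(-G))$ — equivalently, after applying duality, the obstruction to $S^0 = H^0(Y,\omega_Y(G))$ — is measured by the groups $H^0(Y, \sB^1_Y(-p^mG))$ for $1 \le m \le n$.

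The key steps, in order: (1) Identify via Grothendieck duality on the finite morphism $F^n : Y \to Y$ the map $H^0(Y, F^n_*\omega_Y \otimes \omega_Y^{-1} \otimes \omega_Y(G)) \to H^0(Y,\omega_Y(G))$ computing $S^0$ with the dual of $H^1(Y, \sO_Y(-G)) \to H^1(Y, F^n_*\sO_Y(-p^nG))$; surjectivity of the former at the level of $H^0$ is equivalent to injectivity of the latter on $H^1$. (2) Break the iterated Frobenius $\sO_Y(-G)\to F^n_*\sO_Y(-p^nG)$ into the composite of the $n$ one-step maps and use the snake/long-exact-sequence argument: the kernel of $H^1(Y,\sO_Y(-G)) \to H^1(Y,F^n_*\sO_Y(-p^nG))$ is built up from the images of the connecting maps $H^0(Y,\sB^1_Y(-p^mG)) \to H^1(Y, F^{m-1}_*\sO_Y(-p^{m-1}G))$, $m=1,\dots,n$. (3) Conclude that the maps on $H^1$ are all injective (equivalently $S^0 = H^0(\omega_Y(G))$) if and only if $H^0(Y,\sB^1_Y(-p^mG)) = 0$ for all $m$; note the stabilization from \autoref{prop:S^0_uniform_stabilization}-type reasoning means one only needs it up to some finite $n$, but since $\deg(-p^mG) \to -\infty$ as $m$ grows, for $m \gg 0$ the group $H^0(Y,\sB^1_Y(-p^mG))$ vanishes automatically, so the "for all $n>0$" condition is really a finite check; still, stating it for all $n$ is harmless and cleanest.

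The main obstacle I expect is step (2): tracking the twists correctly through the iterated projection formula and making sure that the cumulative kernel is exactly $\bigoplus$-controlled by the $H^0(\sB^1_Y(-p^mG))$ rather than by some mixed extension that could vanish even when individual $H^0$'s do not. This is handled by induction on $n$: assuming the statement for $n-1$, one feeds the one-step sequence $0 \to \sO_Y(-p^{n-1}G) \to F_*\sO_Y(-p^nG)\otimes\sO_Y(\cdots) \to \sB^1_Y(-p^nG)\otimes(\cdots) \to 0$ into the long exact sequence and uses that $H^0$ is left exact together with the inductive description of the earlier kernels; because each $\sB^1_Y(-p^mG)$ is a sheaf on the (reduced) curve $Y$ and the connecting maps land in successively "smaller" $H^1$'s, no cancellation can occur, giving the clean iff. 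The final write-up is then a short induction plus one application of Serre duality, so \emph{modulo} the bookkeeping of step (2) this is routine.
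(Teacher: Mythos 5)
Your setup through step (1) is sound: Serre duality identifies $S^0 = H^0(\omega_Y(G))$ with injectivity of the composites $H^1(\sO_Y(-G)) \to H^1(\sO_Y(-p^nG))$ for all $n$, and the one-step obstruction lives in $H^0(\sB^1_Y(-p^mG))$. But the difficulty you flag in step (2) is a genuine gap, and your proposed resolution does not close it. The $(\Rightarrow)$ direction requires: if every composite $H^1(\sO_Y(-G)) \to H^1(\sO_Y(-p^nG))$ is injective, then $H^0(\sB^1_Y(-p^mG))=0$ for every $m$. That does not follow from the long exact sequence, because the kernel of the $m$-th one-step map, namely $\delta_m\bigl(H^0(\sB^1_Y(-p^m G))\bigr)\subseteq H^1(\sO_Y(-p^mG))$, can be nonzero while being disjoint from the image of $H^1(\sO_Y(-G))$ at stage $m$, in which case the composite remains injective even though some $H^0(\sB^1_Y(-p^mG))\neq 0$. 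Your ``no cancellation'' argument asserts this cannot happen but gives no reason; and the supporting remark that the connecting maps ``land in successively smaller $H^1$'s'' is actually backwards: as $m$ grows, $\deg\sO_Y(-p^mG)\to -\infty$, so $h^1(\sO_Y(-p^mG))$ grows, leaving \emph{more}, not less, room for $\delta_m$ to miss the composite image.

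The paper's proof avoids all multi-step kernel analysis by a reindexing trick you should adopt. The map $H^0\bigl((F^{n+1}_*\omega_Y)(G)\bigr) \to H^0\bigl((F^n_*\omega_Y)(G)\bigr)$, i.e.\ the $(n+1)$-to-$n$ stage of your chain, is, after peeling off $F^n_*$ and using the projection formula, the \emph{same} $k$-linear map as the one-step map $H^0\bigl((F_*\omega_Y)(p^n G)\bigr) \to H^0\bigl(\omega_Y(p^n G)\bigr)$. Hence each link of the chain is a fresh one-step problem for the twisted divisor $p^nG$, and a single application of the long exact sequence of $0 \to \sO_Y(-p^nG) \to F_*\sO_Y(-p^{n+1}G) \to \sB^1_Y(-p^nG) \to 0$ plus Serre duality shows that this link surjects iff $H^0(\sB^1_Y(-p^nG)) = 0$; no composite kernel is ever decomposed. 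The $(\Rightarrow)$ direction then closes because $S^0 = H^0$ already forces the first link $H^0((F_*\omega_Y)(G)) \to H^0(\omega_Y(G))$ to surject, so $H^0(\sB^1_Y(-G)) = 0$, and effectivity of $G$ gives $p^nG\geq G$, hence a sheaf inclusion $\sB^1_Y(-p^nG)\hookrightarrow\sB^1_Y(-G)$ that kills all the remaining $H^0$'s at once. This monotonicity, coming from $G\geq 0$, is the ingredient your argument is missing and cannot be replaced by the inductive bookkeeping you sketch.
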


\begin{proof}
 Consider the following exact sequence.
\begin{equation*}
\xymatrix{
0 \ar[r] & \sO_Y \ar[r] & F_* \sO_Y \ar[r] & \sB^1_Y \ar[r] & 0
}
\end{equation*}
Twist this  sequence by $-G$, apply cohomology and use that since $G$ is effective, either
\begin{enumerate}
\item $G>0$ and hence $H^0(Y, (F_* \sO_Y)(-G)) \cong H^0(\sO_Y(-F^* G))=0$, or
\item  $G=0$ and then $H^0(Y, \sO_Y) \to H^0(Y, F_* \sO_Y)$ is an isomorphism.
\end{enumerate}
 In either case, we have another exact sequence:
\begin{equation*}
\xymatrix{
0 \ar[r] & H^0(Y, \sB^1_Y(-G)) \ar[r] & H^1(Y,\sO_Y(-G)) \ar[r] & H^1(Y, F_* \sO_Y(- F^* G))},
\end{equation*}
where the last map is the Serre dual to $H^0(Y, (F_*\omega_Y)(G)) \to H^0(Y, \omega_Y(G))$. Furthermore, observe that the map $H^0(Y, (F^{n+1}_*\omega_Y)(G)) \to H^0(Y, (F^{n}_* \omega_Y)(G))$ is harmlessly identified with the map \mbox{$H^0(Y, (F_*\omega_Y)(p^nG)) \to H^0(Y, (\omega_Y(p^nG)))$}. Therefore, combining the previous statements, we see that $H^0(Y, (F^{n+1}_*\omega_Y)(G)) \to H^0(Y, (F^{n}_* \omega_Y)(G))$ surjects if and only if $H^0(Y, \sB^1_Y(-p^nG))=0$.
\end{proof}

\begin{example}
\label{ex:S_f_*_base_change_not_isomorphism}
We choose two smooth projective curves $C$ and $D$ of genus $3$ over an algebraically closed field  $k$ of prime characteristic with two ample line bundles $N_C$ and $N_D$(of degree 1), such that $S^0(C,\omega_C \otimes N_C) \neq H^0(C,\omega_C \otimes N_C)$, but $S^0(D,\omega_D \otimes N_D) = H^0(D,\omega_D \otimes N_D)$. Since the relative Picard scheme over the moduli stack of smooth curves of genus $g$ is smooth and irreducible, there is a (possibly reducible) curve connecting $(C, N_C)$ and $(D,N_D)$ in the above moduli space. However, then by possibly replacing $(C,N_C)$ and $(D,N_D)$ we may find also an irreducible curve connecting them. Therefore, by passing to the normalization of this curve, we may assume that there is a family $f : X \to V$, a line bundle $N$ on $X$ and two points $c,d \in C$, such that if we set $(C,N_C):=\left( X_c,N|_{X_c} \right)$ and $(C,N_C):=\left( X_c,N|_{X_c} \right)$, then
\begin{enumerate}
\item $V$ is a smooth curve,
\item $X$ is a family of smooth curves of genus $3$,
\item \label{itm:S_f_*_base_change_not_isomorphism:deg_one} $\deg_{X/V} N = 1$,
\item \label{itm:S_f_*_base_change_not_isomorphism:S_0_C} $S^0(C,\omega_C \otimes N_C) \neq H^0(C,\omega_C \otimes N_C)$ and
\item \label{itm:S_f_*_base_change_not_isomorphism:S_0_D} $S^0(D,\omega_D \otimes N_D) = H^0(D,\omega_D \otimes N_D)$.
\end{enumerate}
Furthermore, fix $e=1$, $L:=\omega_{X/V}^{1-p}$ and $\phi : \omega_{X/V}^{\frac{1}{p}} \to \sO_{X_{V^1}}$ the map with $D_{\phi}=0$ (see  \autoref{def:D_phi_Delta_phi} for the definition of $D_{\phi}$). Note that by \autoref{lem.DeltaPhiRestrictedEqualsDeltaPsi}, $D_{\phi_s} = 0$ as well. Set $M:=\omega_{X/V} \otimes N$.

By assumption \autoref{itm:S_f_*_base_change_not_isomorphism:deg_one}, for every $v \in V$ and $i>0$, $H^i(X_v, N_v) =0$. Therefore, $f_* N$ is a vector bundle of rank 3, and its formation is compatible with arbitrary base-change. In particular, by \autoref{prop:S_0_maximal_is_open}
and assumption \autoref{itm:S_f_*_base_change_not_isomorphism:S_0_D}, $S^n_{\phi} f_* (\omega_{X/V} \otimes M) \subseteq f_* (\omega_{X/V} \otimes M)$ is a subsheaf that is isomorphic generically to $f_*(\omega_{X/V} \otimes M)$. Furthermore, by being a subsheaf of $f_* ( \omega_{X/V} \otimes M)$ it is torsion free, and by $V$ being a curve, it is locally free of rank 3. Therefore,
\begin{equation*}
\dim_{k} S^n_{\phi} f_* (\omega_{X/V} \otimes M) \otimes k(c)   = 3
\end{equation*}
However,
\begin{equation*}
\dim_k S^0(C,\omega_C \otimes M_C) < \dim_k H^0(C,\omega_C \otimes M_C)=3
\end{equation*}
This shows that the base change morphism
\begin{equation*}
 S^0_{\phi^n} f_* (\omega_{X/V} \otimes M) \otimes k(c) \to  S^0(C,\omega_C \otimes M_C)
\end{equation*}
cannot be isomorphism.

We are left to give the polarized curves $(C,M_C)$ and $(D,M_D)$. For that consider the situation of \autoref{lem:tango_technique}.  By \cite[Lemma 12]{TangoOnTheBehaviorOfVBAndFrob},  $H^0(Y, \sB^1(-p^nG))=0$ if $p^n G \geq n(Y)$ where $n(Y)$ is a numerical invariant of the curve, which is at most $1$ for genus $3$ curves \cite[Lemma 10]{TangoOnTheBehaviorOfVBAndFrob}. Hence $H^0(Y, \sB^1(-p^nG))=0$ if $\deg G \geq 1$ and $n >1$. In particular, in our special case (i.e., if $Y$ is of genus $3$ and $\deg G \geq 1$) \autoref{lem:tango_technique} states that
\begin{equation*}
S^0(Y, \sigma(Y,0) \otimes \omega_Y(G)) = H^0(Y, \omega_Y(G)) \Leftrightarrow H^0(Y, \sB^1_Y(-G))=0 .
\end{equation*}
Finding a curve where $H^0(Y, \sB^1_Y(-G))=0$ is quite easy, because again using \cite[Lemma 12]{TangoOnTheBehaviorOfVBAndFrob} yields that if $n(Y)=1$, then there is a degree one divisor $G$ for which  $H^0(Y, \sB^1_Y(-G)) \neq 0$, and then by passing to a linearly equivalent divisor we may also assume that $G$ is effective. Further, \cite[Example 1]{TangoOnTheBehaviorOfVBAndFrob} gives a curve ( $x^3 y + y^3 z + z^3 x=0$) for which $n(Y)=1$. Therefore we have found $C$ and $N_C$.

To find $D$ and $N_D$ assume further that $k= \overline{\bF}_3$. Then by \cite{MillerCurvesWithInvertibleHasseWitt,KoblitzPAdicVariationOfTheZetaFunction} a general genus three curve curve is ordinary, or equivalently $S^0(Y, \sigma(Y,0) \otimes \omega_Y) = H^0(Y, \omega_Y)$. Therefore then $H^0(Y, \sB^1_Y)=0$ for such a $Y$. Take now an arbitrary effective degree $1$ divisor $G$. Then $\sB^1_Y (-G)$ embeds into $\sB^1_Y$ and hence $H^0(Y, \sB^1_Y(-G))=0$ as well. In particular, we can choose $D$ to be a generic curve and $N_D$ to be an arbitrary degree one line bundle on $D$.


\end{example}

\begin{example}
\label{ex:S_f_*_base_change_not_isomorphism_arbitrary_power}
In the following example for an $f$-ample line bundle $Q$  the surjection
\begin{equation}
\label{eq:S_f_*_base_change_not_isomorphism_arbitrary_power:surjection}
S^0_{\varphi^n} f_* (Q^l) \otimes_{\sO_{V^{ne}}} \left( k(s)^{\frac{1}{p^{ne}}} \right)   \twoheadrightarrow S^0(X_s, \sigma(X_s, \phi_s) \otimes Q_s^l).
\end{equation}
is not an isomorphism for any integer $n \geq n_0$ and $l>0$. Therefore, the isomorphism in \autoref{prop:S_f_*_base_change} cannot be obtained by stronger positivity assumptions.

Let $C$ be the projective cone over a supersingular elliptic curve, and let $D$ be a non-singular cubic surface. Then these can be put into a family $f : X \to V$ as above. More precisely, we may find a
family $f : X \to V$, and a point $c \in V$, such that  $C:=X_c$, and hence the following hold
\begin{enumerate}
\item $V$ is a smooth curve,
\item $X$ is a a flat family of normal surfaces,
\item \label{itm:S_f_*_base_change_not_isomorphism_arbitrary_power:C} $X_c$ is not sharply $F$-pure at on point $P \in X_c$ and
\item \label{itm:S_f_*_base_change_not_isomorphism_arbitrary_power:D} $X_s$ is sharply $F$-pure for every $s \in V \setminus \{c \}$.
\end{enumerate}
Let $L:= \omega_{X/V}^{1-p}$ and $\phi:=\phi_0$ as in the previous example. Then by \autoref{thm.UniversalNForAllClosedFibers} and Nakayama lemma, for every $n \gg 0$,  $\sigma_n(X/V, \phi)|_{V \setminus \{c\}} \cong \sO_{f^{-1}(V \setminus \{c\})}$.  Choose now an arbitrary sufficiently $f$-ample line bundle $M$. By \autoref{cor:S_f_if_big_power_of_rel_ample}, for every $n \gg 0$, $S^0_{\phi^n} f_* (M^l)|_{V \setminus \{c\}} \cong (f_*(M))_{V^{ne} \setminus \{c \}}$ for every integer $l>0$.  In particular the $\rk S^0_{\phi^n} f_* (M^l) = \rk f_* (M^l)$ for every $n \gg 0$ and $l>0$ (where $n$ does not depend on $l$). Consequently,
\begin{equation}
\label{eq:S_f_*_base_change_not_isomorphism_arbitrary_power:dimension}
\dim S^0_{\phi^n} f_* (M^l) \otimes k(c) \geq \rk f_* (M^l) =  \underbrace{ \dim_k H^0(X_c, M^l_c )}_{\textrm{$M$ is relatively ample enough}}
\end{equation}
On the other hand for all $n \gg 0$ (independent of $l$)
\begin{equation}
\label{eq:S_f_*_base_change_not_isomorphism_arbitrary_power:surjection_again}
 S^0_{\phi^n} f_* (M^l) \otimes k(c) \twoheadrightarrow
\underbrace{ S^0_{\phi_c^n} (f_c)_* (M_c^l) }_{\textrm{by point \autoref{itm:S_f_*_base_change:high_multiple_of_rel_ample} of \autoref{prop:S_f_*_base_change} } }
= \underbrace{H^0(X_c, \sigma(X_c,0) \otimes M^l_c)}_{\textrm{sine $M_c$ is ample enough (\cite{PatakfalviSemipositivity}) and \autoref{lem.DeltaPhiRestrictedEqualsDeltaPsi}}}
\end{equation}
However, by the relative ampleness assumption,  $M_c$ is globally generated, hence  $\dim_k H^0(X_c, \sigma(X_c,0) \otimes M^l_c)< \dim_k H^0(X_c, M^l_c)$ (because $\sigma(X_c,0) \subsetneq \sO_{X_c}$). Therefore, \autoref{eq:S_f_*_base_change_not_isomorphism_arbitrary_power:dimension} and \autoref{eq:S_f_*_base_change_not_isomorphism_arbitrary_power:surjection_again} implies that indeed the surjection \autoref{eq:S_f_*_base_change_not_isomorphism_arbitrary_power:surjection} is not an isomorphism for any $l>0$ and any $n \gg 0$ (independently bounded of $l$).

\end{example}

\begin{remark}
The fundamental reason for the above example is that $\sigma_n(X/V, \phi)$ is NOT flat in general.
\end{remark}

The above two  examples show that $\dim_k(S^0(X_v,\sigma(X_v,\phi_v) \otimes M_v)$ is not upper semicontinuous. One might guess then it is lower   semicontinuous. The next example shows that that is also not the case (this can also be deduced from \cite[Example 5.5]{HaraRatImpliesFRat} and \cite[Theorem 8.3]{TanakaTraceOfFrobenius} as pointed out to us by Tanaka). So, $\dim_k S^0(X_v,\sigma(X_v,\phi_v) \otimes M_v)$ is not semicontinuous in either direction.

\begin{example}
\label{ex.NotLowerSemiContEither}
Take a flat family $f : X \to V$ of ordinary elliptic curves, and a line bundle $M$ on $X$ of relative degree $0$, such that $M_{X_{v_0}} \cong \sO_{X_{v_0}}$ for a special $v_0 \in V$, and $M_{X_{v}} \not\cong \sO_{X_{v}}$ for generic $v \in V$. Then, by the ordinarity of the fibers, for all $v \in V$,
\begin{equation*}
H^0(X_v, M_v) = S^0(X_v, \sigma(X_v, 0) \otimes M_v).
\end{equation*}
However, $\dim H^0(X_v, M_v)=1$ for the special fiber and $0$ for the generic one. So, $\dim S^0(X_v, \sigma(X_v, 0) \otimes M_v)$ is not lower semicontinuous in this example. One can also easily modify this example by taking an $M$ with higher relative degree to obtain  higher values of dimension for the generic fiber.
\end{example}






\subsection{Global generation and semi-positivity}

Suppose now $V$ is a projective variety over a prefect field $k$.  In this section, we explore global generation results if $L \otimes M^{p^e - 1}$ is ample (instead of just relatively ample).  In particular,  $S^0_{\phi^n} f_*(M)$ is globally generated for all large $n$.  This should not be surprising since $S^0_{\phi^n} f_*(M)$ lives on $V^{ne}$ where ampleness is amplified.

\begin{proposition}
\label{prop:global_generation_of_S_f_*}
In the situation of \autoref{notation:S_f_*}, if $V$ is projective over a perfect field $k$ and
$L \otimes M^{p^e - 1}$ is ample, then $S^0_{\phi^n} f_*(M)$ is globally generated for every $n \gg 0$.
\end{proposition}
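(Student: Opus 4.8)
The plan is to exploit the fact that $S^0_{\phi^n} f_*(M)$ is, by \autoref{cor:S_f_if_big_power_of_rel_ample}, a subsheaf of $f_*(\sigma_n(X/V,\phi)\otimes_R M)$ on $V^{ne}$, and that Frobenius amplifies ampleness on the base. First I would reduce to the situation of \autoref{notation:S_f_*}\textsuperscript{*} by possibly shrinking $V$ --- no, this cannot be done here since we need global generation on \emph{all} of $V$, so instead I would keep track of the $\sigma_n$ directly. The key geometric input is the commutative-diagram factorization from \autoref{prop:S_pushforward_containment}: the map $f_*(\varphi^m\otimes_R\id_M)$ factors, up to flat base change by $V^{me}\to V^{ne}$, through $f_*(\varphi^n\otimes_R\id_M)$ after first applying $(\varphi^{m-n}\otimes\id)^{1/p^{ne}}$ twisted by $L^{(p^{ne}-1)/(p^e-1)}$ and then using the projection formula. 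Chasing through that diagram, one sees that $S^0_{\phi^m}f_*(M)$ is the image of a map whose source is the pushforward of a sheaf of the form $\big(L^{(p^{(m-n)e}-1)/(p^e-1)}\otimes_R (\text{something fixed})\otimes_R M\big)^{1/p^{me}}$, i.e.\ (ignoring the harmless $1/p^{me}$-twist on the abelian-group structure) the pushforward of $\big(L\otimes M^{p^e-1}\big)^{(p^{(m-n)e}-1)/(p^e-1)}$ tensored with a fixed coherent sheaf and with $M$.

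The main step is then the following: since $L\otimes M^{p^e-1}$ is ample on $X$ (not just $f$-ample), for $m$ large the sheaf $\big(L\otimes M^{p^e-1}\big)^{(p^{(m-n)e}-1)/(p^e-1)}\otimes(\text{fixed coherent})\otimes M$ is globally generated on $X$ and has vanishing higher cohomology; hence its pushforward to $V^{ne}$ (which is an affine morphism composed with $f$) is a globally generated sheaf \emph{on $V$} --- more precisely, $f$ being projective and the sheaf on $X$ being globally generated with $R^{>0}f_*=0$ forces $f_*$ of it to be globally generated. One must be slightly careful: global generation on $V$ of $f_*(\mathcal F)$ when $\mathcal F$ is globally generated on $X$ needs $H^1(X,\mathcal F\otimes(\text{ideal sheaves of fibers}))=0$ type vanishing, which again follows from Serre vanishing once the twist is large. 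The quotient of a globally generated sheaf is globally generated, so $S^0_{\phi^m}f_*(M)$, being the image of such a pushforward inside $(f_*M)_{V^{me}}$, is globally generated on $V^{me}$ for all $m\gg 0$. Finally, transport this back to $V^{ne}$ for the originally-fixed $n$: by \autoref{prop:S_pushforward_containment} and \autoref{cor:S_f_if_big_power_of_rel_ample} (the stabilization, in the relatively-ample-power case, and the containment $V^{me}\times_{V^{ne}}S^0_{\phi^n}f_*(M)\supseteq S^0_{\phi^m}f_*(M)$) together with the fact that $V^{me}\to V^{ne}$ is a finite flat (hence faithfully flat, affine) morphism, global generation of $S^0_{\phi^m}f_*(M)$ on $V^{me}$ for large $m$ descends to global generation of $S^0_{\phi^n}f_*(M)$ on $V^{ne}$ for $n\gg 0$; here one uses that pullback along a faithfully flat affine map reflects global generation, or simply re-runs the argument directly at level $n$.

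The hard part will be bookkeeping: making the factorization diagram of \autoref{prop:S_pushforward_containment} interact cleanly with the twist by the \emph{ample} (not just $f$-ample) bundle $L\otimes M^{p^e-1}$, so that the relevant sheaf on $X$ really is an increasing tensor power of an ample bundle with a bounded coherent correction, and then invoking Serre vanishing/global generation uniformly. A secondary subtlety is that $S^0_{\phi^n}f_*(M)$ lives on $V^{ne}$, which is isomorphic to $V$ as a scheme only up to the Frobenius twist; since the paper works with the convention that $f_*$ is identified across these, I would phrase the ampleness of $L\otimes M^{p^e-1}$ pulled to the appropriate power as happening on $X^{ne}_{V^{ne}}$ and note that $F^{ne}_V$ is finite, so ampleness and global generation are preserved. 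Modulo these compatibilities, everything reduces to Serre vanishing plus the already-established containments, so the proof should be short.
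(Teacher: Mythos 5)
Your high-level plan (reduce to global generation of the source of the defining map $f_*(\varphi^n\otimes\id_M)$, note that on $X$ the relevant line bundle is $\big(L\otimes M^{p^e-1}\big)^{(p^{ne}-1)/(p^e-1)}\otimes M$, and use Serre vanishing) matches the paper's, but there is a genuine gap in your key step, and you introduce an unnecessary detour.

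The gap: you assert that ``$f$ being projective and the sheaf on $X$ being globally generated with $R^{>0}f_*=0$ forces $f_*$ of it to be globally generated.'' This is not a theorem, and it is not enough. Global generation of $f_*\sF$ at a closed point $v\in V$ requires the surjectivity of $H^0(X,\sF)\to H^0(X_v,\sF|_{X_v})$, which sits in an exact sequence with $H^1(X,\sF\otimes I_{X_v})$ on the right. The ideal sheaves $I_{X_v}$ are not twists by a fixed ample bundle, so Serre vanishing does not apply directly; one needs either a Koszul/regular-sequence filtration of $I_{X_v}$ (requiring local freeness and base dimension control) or Castelnuovo--Mumford regularity. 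You gesture at this subtlety (``$H^1(X,\sF\otimes(\text{ideal sheaves of fibers}))=0$ type vanishing'') but then dismiss it with ``which again follows from Serre vanishing once the twist is large,'' which is precisely what needs to be carefully established and is where your argument stops short. The paper instead applies Mumford's criterion directly on the base: it chooses a globally generated ample $H$ on $V$, reduces global generation of $f_*\left(\left(L^{(p^{ne}-1)/(p^e-1)}\right)^{1/p^{ne}}\otimes M\right)$ on $V^{ne}$ to the vanishing $H^i\left(V^{ne},\sO_{V^{ne}}(-iH_{ne})\otimes f_*(\cdots)\right)=0$ for $i>0$, and then uses $R^{>0}f_*(\cdots)=0$ (relative Serre vanishing) to translate this into $H^i\left(X,(L\otimes M^{p^e-1})^{(p^{ne}-1)/(p^e-1)}\otimes M(-if^*H)\right)=0$, which is literal (absolute) Serre vanishing for $n\gg 0$. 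This is the clean way to package the fiber-by-fiber vanishing you need; if you try to carry out your ``ideal sheaves of fibers'' step rigorously you will essentially rederive Mumford's criterion.

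Secondarily, your detour through $V^{me}$ (invoking \autoref{prop:S_pushforward_containment}, \autoref{cor:S_f_if_big_power_of_rel_ample}, and then ``descending'' from $V^{me}$ to $V^{ne}$ along the finite flat Frobenius) is unnecessary. The conclusion is for all $n\gg 0$, so one simply argues directly at each such $n$ as the paper does; there is no need to pass to a larger $m$ and come back, and the ``descent of global generation along a finite flat map'' step is an extra lemma you would also need to verify (it is true, via the trace, but it is additional work). Dropping the detour and replacing your handwave by Mumford's criterion converts your sketch into the paper's proof.
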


\begin{proof}
Choose a  globally generated ample divisor $H$ on $V$ and let $d:=\dim V$.
Since  $S^0_{\phi^n} f_*(M)$ is defined as the image of
\begin{equation*}
f_* \left( \left( L^{\frac{p^{ne} -1}{p^e -1}} \right)^{\frac{1}{p^{ne}}} \otimes_R M \right) ,
\end{equation*}
it is enough to show that this sheaf is globally generated as an $\O_{V^{ne}}$-module. By Mumford's criterion \cite[Theorem 1.8.5]{LazarsfeldPositivity1}, it is enough to show that for every $i >0$ and for the divisor $H_{ne}$ on $V^{ne}$ identified with $H$ via the isomorphism $V^{ne} \cong V$,
\begin{equation}
\label{eq:global_generation_of_S_f_*}
H^i \left(V^{ne}, \sO_{V^{ne}} \left(-iH_{ne} \right) \otimes_{\sO_{V^{ne}}} f_* \left( \left( L^{\frac{p^{ne} -1}{p^e -1}} \right)^{\frac{1}{p^{ne}}} \otimes_R M \right) \right)=0.
\end{equation}
By relative Serre vanishing for $i>0$,
\begin{equation*}
 R^i f_*  \left( \left( L^{\frac{p^{ne} -1}{p^e -1}} \right)^{\frac{1}{p^{ne}}} \otimes_R M \right) =0 .
\end{equation*}
In particular, then to prove \autoref{eq:global_generation_of_S_f_*} it is enough to show
\begin{equation*}
H^i \left( X,    L^{\frac{p^{ne} -1}{p^e -1}}  \otimes M^{p^{ne}} ( - i f^* H)  \right)=0.
\end{equation*}
However this is equivalent to showing that,
\begin{equation*}
H^i \left( X,    \left(L  \otimes M^{(p^e -1)}  \right)^\frac{p^{ne} -1}{p^e -1} \otimes M(-i f^*H)   \right)=0,
\end{equation*}
which holds by Serre-vanishing for $n \gg 0$ and $i>0$.
\end{proof}

We recall the following definition.

\begin{definition}[Definition 2.11 of \cite{ViehwegQuasiProjectiveModuli}]
\label{defn:weakly_positive}
Let $\sF$ be a  sheaf on a normal, quasi-projective (over a perfect field $k$) variety $V$ and $U \subseteq V $ a dense open set. Let $U_\mathrm{lf}$ be the open locus of $V$ where $\sF' := \sF/\text{(torsion)}$
is locally free. Then $\sF$ is \emph{weakly positive over $U$}, if for a fixed (or equivalently every: \cite[Lemma 2.14.a]{ViehwegQuasiProjectiveModuli}) ample line bundle $\sH$ for every $a>0$ there is a $b>0$ such that $S^{\langle ab\rangle} (\sF) \otimes  \sH^b$ is globally generated over $U \cap U_\mathrm{lf}$ (here $S^{\langle ab\rangle} (\sF) $ denotes the $ab$th symmetric reflexive power of $\sF$).  We say that $\sF$ is \emph{weakly-positive} if it is weakly-positive over some dense open set.
\end{definition}

\begin{lemma}
\label{lem:weakly_positive_finite}
If $g : Y \to Z$  is a finite morphism of normal varieties, quasi-projective over $k$, $U \subseteq Z$  a dense open set and $\sF$ a sheaf on $Z$, then $\sF$ is weakly positive over $U$ if and only if $g^* \sF$ is weakly positive over $g^{-1}(U)$.

\end{lemma}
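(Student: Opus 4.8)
The plan is to fix an ample line bundle $\mathcal H$ on $Z$ and reduce both directions of the equivalence to global generation statements over suitable big open sets, using three elementary facts about the finite (and, as is implicit in the statement, surjective) morphism $g$. First, $g^*\mathcal H$ is ample, so by \cite[Lemma 2.14.a]{ViehwegQuasiProjectiveModuli} weak positivity of $g^*\sF$ may be tested against $g^*\mathcal H$, just as weak positivity of $\sF$ may be tested against $\mathcal H$. Second, if $O\subseteq Z$ is a big open set then $g^{-1}(O)$ is big in $Z'$, and since $g^*$ of a locally free sheaf is locally free, on $g^{-1}(U_{\mathrm{lf}})$ the sheaf $g^*\sF$ is locally free (hence torsion free) and $g^*S^{\langle m\rangle}(\sF)=S^{\langle m\rangle}(g^*\sF)$ there; in particular $g^{-1}(U_{\mathrm{lf}})\subseteq U'_{\mathrm{lf}}$, the locally free locus on $Z'$. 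Third, $g$ finite means $g_*$ is exact and $g$ is closed, so $g^*$ carries a sheaf globally generated over $O$ to one globally generated over $g^{-1}(O)$, while $g_*$ carries a sheaf globally generated over $g^{-1}(O)$ to a quotient of $(g_*\mathcal O_{Z'})^{\oplus N}$ over $O$ (a surjection of sheaves over $g^{-1}(O)$ pushes forward to a surjection over $Z\setminus g\bigl(Z'\setminus g^{-1}(O)\bigr)=O$, using $g$ surjective and closed).

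For the implication ($\Rightarrow$): given $a>0$, choose $b>0$ with $S^{\langle ab\rangle}(\sF)\otimes\mathcal H^b$ globally generated over $U\cap U_{\mathrm{lf}}$; applying $g^*$ and the second fact shows that $S^{\langle ab\rangle}(g^*\sF)\otimes(g^*\mathcal H)^b$ is globally generated over $g^{-1}(U\cap U_{\mathrm{lf}})\subseteq g^{-1}(U)\cap U'_{\mathrm{lf}}$, so $g^*\sF$ is weakly positive over $g^{-1}(U)$.

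For the implication ($\Leftarrow$): the extra input is that the unit map $\mathcal O_Z\to g_*\mathcal O_{Z'}$ splits over a dense open set. At the generic point this is the inclusion of the line $K(Z)\cdot 1$ into the finite-dimensional $K(Z)$-vector space $K(Z')$, which admits a $K(Z)$-linear retraction; spreading this out, there is a dense open $W\subseteq Z$ with $g_*\mathcal O_{Z'}|_W\cong\mathcal O_W\oplus\mathcal Q$ for some coherent $\mathcal Q$. Now fix $a>0$. Weak positivity of $g^*\sF$ over $g^{-1}(U)$ gives $c>0$ with $S^{\langle ac\rangle}(g^*\sF)\otimes(g^*\mathcal H)^c=g^*\bigl(S^{\langle ac\rangle}(\sF)\otimes\mathcal H^c\bigr)$ globally generated over a dense open subset of $g^{-1}(U)$ lying in $U'_{\mathrm{lf}}$. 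By the third fact, pushing forward and using the projection formula yields a surjection $(g_*\mathcal O_{Z'})^{\oplus N}\twoheadrightarrow S^{\langle ac\rangle}(\sF)\otimes\mathcal H^c\otimes g_*\mathcal O_{Z'}$ over a dense open subset of $U$. Choosing an ample $\mathcal B$ on $Z$ with $g_*\mathcal O_{Z'}\otimes\mathcal B$ globally generated, we conclude that $S^{\langle ac\rangle}(\sF)\otimes\mathcal H^c\otimes\mathcal B\otimes g_*\mathcal O_{Z'}$ is globally generated over a dense open subset of $U$; restricting to $W$ and splitting off the $\mathcal O_W$-summand (a direct summand of a globally generated sheaf is globally generated) shows $S^{\langle ac\rangle}(\sF)\otimes\mathcal H^c\otimes\mathcal B$ is globally generated over a dense open subset of $U\cap U_{\mathrm{lf}}$. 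Finally, since $\mathcal H^{c'}\otimes(\mathcal H^c\otimes\mathcal B)^{-1}$ is ample, hence globally generated, for $c'\gg c$, and a tensor product of sheaves globally generated over a common open set is globally generated there, we obtain that $S^{\langle ac\rangle}(\sF)\otimes\mathcal H^{c'}$ is globally generated over a dense open subset of $U\cap U_{\mathrm{lf}}$; the standard reformulations of weak positivity \cite[Lemmas 2.14, 2.15]{ViehwegQuasiProjectiveModuli}, which permit adjusting the exponents and absorbing a fixed auxiliary ample twist, then give that $\sF$ is weakly positive over $U$.

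The main obstacle is the descent direction ($\Leftarrow$): transporting a global generation, hence positivity, statement from $Z'$ down to $Z$ along $g$ without destroying it. The mechanism that makes this work is the generic splitting of $\mathcal O_Z\hookrightarrow g_*\mathcal O_{Z'}$, which — after the unavoidable twist by an ample bundle arising from generating $g_*\mathcal O_{Z'}$ and pushing forward — exhibits a twist of $\sF$ as a generic direct summand of $g_*g^*\sF$, allowing weak positivity to descend. The remaining work, matching the exponent of $\mathcal H$ with that of the reflexive symmetric power and absorbing the auxiliary bundle $\mathcal B$, is routine bookkeeping within Viehweg's calculus of weakly positive sheaves.
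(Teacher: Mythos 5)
The forward direction agrees with the paper (which simply cites \cite[Lemma 2.15.1]{ViehwegQuasiProjectiveModuli}). The gap is in the descent direction, at the step ``restricting to $W$ and splitting off the $\O_W$-summand (a direct summand of a globally generated sheaf is globally generated).'' That parenthetical fact is correct only when the direct-sum decomposition is defined on all of $Z$, because one then projects global sections to global sections. Here the splitting $g_*\O_Y|_W \cong \O_W \oplus \sQ$ lives only over the dense open $W$, and the retraction $g_*\O_Y|_W \to \O_W$ has no global extension in general (e.g.\ for $g = F$ the Frobenius of a supersingular elliptic curve, $F_*\O_E$ is indecomposable, so $\O_E \hookrightarrow F_*\O_E$ admits no retraction over all of $E$). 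So you cannot pass from global sections of $S^{\langle ac\rangle}(\sF)\otimes\sH^c\otimes\sB\otimes g_*\O_Y$ to global sections of $S^{\langle ac\rangle}(\sF)\otimes\sH^c\otimes\sB$, and the claimed global generation of the latter does not follow. A direct summand over a dense open set of a globally generated sheaf need not be globally generated: on $\bP^1$, $\O^{\oplus 2}$ is globally generated, yet the rank-one summand over $\bA^1$ spanned by $(1,x)$ saturates to $\O(-1) \subset \O^{\oplus 2}$, which is nowhere globally generated.

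The gap is fixable by the mechanism the paper actually uses. Rather than a single generic retraction, choose $\sH$ so that $\sHom(g_*\O_Y,\sH)$ is globally generated; then for each point $P \in U$ and each fiber element of $S^{\langle ab\rangle}(\sF)$ at $P$ one can pick, a posteriori, a global homomorphism $s : g_*\O_Y \to \sH$ that does not kill the relevant element of $(g_*\O_Y)\otimes k(P)$, which makes the composite $(g_*\O_Y)^{\oplus N} \to S^{\langle ab\rangle}(\sF)\otimes\sH^b\otimes g_*\O_Y \to S^{\langle ab\rangle}(\sF)\otimes\sH^{2b}$ hit the target. Alternatively, in the spirit of your argument: since $Z$ is normal and $\sHom(g_*\O_Y,\O_Z)$ is reflexive, the generic retraction $r$ extends to a global $\tilde r : g_*\O_Y \to \O_Z(D)$ for some effective divisor $D$ independent of $a$, with $\tilde r$ composed with the unit being the canonical inclusion $\O_Z \hookrightarrow \O_Z(D)$; using $\id\otimes\tilde r$ yields global generation of $S^{\langle ac\rangle}(\sF)\otimes\sH^c\otimes\sB\otimes\O_Z(D)$ over a dense open of $U$, and $\O_Z(D)$ is a fixed twist that the usual Viehweg bookkeeping absorbs. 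Either repair is needed; as written the argument does not close.
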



\begin{proof}
The only if direction is shown in \cite[Lemma 2.15.1]{ViehwegQuasiProjectiveModuli}. For the other direction, according to \autoref{defn:weakly_positive} by throwing out codimension two subset we may assume that $\sF$ is locally free and $Y$ is flat over $Z$. In particular then $g^* \sF$ is also locally free. Choose a very ample divisor $\sH$ on $Z$, such that
\begin{enumerate}
\item \label{itm:weakly_positive_finite:map} $\sHom(g_* \sO_Y,\sH)$ is globally generated and
\item \label{itm:weakly_positive_finite:pushforward} $g_* \sO_Y \otimes \sH$ is globally generated.
\end{enumerate}
Fix then an $a>0$. By the weak positivity of $g^* \sF$, there is a $b>0$, such that there is a homomorphism $\alpha: \sO_Y^{\oplus N} \to S^{\langle ab \rangle} (g^*\sF) \otimes g^*\sH^b$ surjective over $g^* U$.  Consider then for  every choice of $s \in \Hom(g_* \sO_Y,\sH)$ and $s' \in H^0(\sH^{b-1})$  the following composition.
\begin{equation*}
\xymatrix@C=35pt{
(g_* \sO_Y )^{\oplus N} \ar[r]^-{\gamma:=g_* \alpha} \ar[dr]_{\beta_s} & g_* (S^{\langle ab \rangle} (g^* \sF) \otimes g^* \sH^b )  \cong S^{\langle ab\rangle} (\sF) \otimes \sH^b \otimes g_* \sO_Y   \ar[d]^{\delta_{s,s'}:=\id_{S^{\langle ab \rangle} (\sF) \otimes \sH^{b}} \otimes s \otimes s'} \\
&  S^{\langle ab \rangle} (\sF) \otimes \sH^{2b}
}
\end{equation*}
Choose a point $P \in U$ and an element $f$ of the fiber $S^{\langle ab \rangle} (\sF) \otimes k(P)$ over $P$. Then by assumption \autoref{itm:weakly_positive_finite:pushforward} for any preimage $Q$ of $P$ there is a section $t' \in \sO_Y^{\oplus N}$, such that  $\alpha(t')_Q=g^* f \times \textrm{''generator``}$. The section $t'$ descends to a section $t:=g_* (t') \in (g_* \sO_Y )^{\oplus N}$, such that $\gamma(t)_P = f  \times \textrm{''generator``}\times h$ for some $ h \in (g_* \sO_Y)_P$.     However then  by \autoref{itm:weakly_positive_finite:map} and the very ampleness of $\sH$ for a suitable choice of $s$ and $s'$, $\delta_{s,s'} (\gamma(t)))_P= f \times \textrm{``generator''}$ is not zero at $P$. Therefore, for every point $P$ in $U$ and every element $f$ in the fiber of $S^{\langle ab \rangle} (\sF) \otimes \sH^{2b}$ at that point  we find a section of $S^{\langle ab \rangle} (\sF) \otimes \sH^{2b}$ whose image in the fiber is $f$ up to multiplication by unit. This finishes our proof.
\end{proof}

\begin{corollary}
\label{cor:weak_positivity_of_S_f_*}
In the situation of \autoref{notation:S_f_*},  let $V$ be projective over a prefect field $k$,
$L \otimes M^{p^e -1}$ ample and $n>0$ be an integer such that there is an open set $U \subseteq V$  for which
\begin{equation*}
S^0_{\phi^n} f_* (M) \times_{U^{ne}} U^{me} = S^0_{\phi^m} f_* (M)|_{U^{me}},
\end{equation*}
for every integer $m \geq n$. Then  $S^0_{\phi^n} f_* (M)$ is weakly-positive. Note that an integer as above always exists by \autoref{prop:pushforward_stabilizes} and \autoref{prop.StabilizingSigmaCanonical}.
\end{corollary}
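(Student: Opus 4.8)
**Proof proposal for Corollary \ref{cor:weak_positivity_of_S_f_*}.**

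The plan is to exploit the fact that $S^0_{\phi^n} f_*(M)$ lives on $V^{ne}$, where the Frobenius pullbacks of an ample line bundle on $V$ become highly positive, and to combine this with the stabilization hypothesis to propagate global generation of symmetric powers. First I would fix a very ample line bundle $\sH$ on $V$ and identify (via the topological isomorphism $V^{me} \cong V$) the line bundle $\sH$ with a line bundle $\sH_{me}$ on $V^{me}$; concretely $F^{ne}_V{}^* \sH \cong \sH_{ne}^{p^{ne}}$, so pulling back by Frobenius amplifies positivity by a factor of $p^{ne}$. The key observation is that by \autoref{prop:global_generation_of_S_f_*} (applied with $\sH$ absorbed into $M$ as needed), and more importantly by the stabilization hypothesis, for $m \geq n$ we have $S^0_{\phi^m} f_*(M)|_{U^{me}}$ is the pullback $V^{me} \times_{U^{ne}} (S^0_{\phi^n} f_*(M)|_{U^{ne}})$; so a global generation statement about $S^0_{\phi^m} f_*(M)$ on all of $V^{me}$ (which follows from \autoref{prop:global_generation_of_S_f_*} once $m \gg 0$, since $L \otimes M^{p^e-1}$ is genuinely ample) descends to a global generation statement for the Frobenius pullback of $S^0_{\phi^n} f_*(M)$ over the open set $U$.

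Next I would set up the weak-positivity bookkeeping of \autoref{defn:weakly_positive}. Given $a > 0$, I want to find $b > 0$ so that $S^{\langle ab \rangle}(S^0_{\phi^n} f_*(M)) \otimes \sH^b$ is globally generated over $U$ (intersected with the locally-free locus). The idea is: pull everything back by $F^{me}_V : V^{me} \to V$ for a large $m$. On $V^{me}$, the sheaf $F^{me*}_V S^0_{\phi^n} f_*(M)$ agrees (over $U^{me}$) with $S^0_{\phi^m} f_*(M)$ by the stabilization hypothesis, and the latter is globally generated for $m \gg 0$ by \autoref{prop:global_generation_of_S_f_*} — note that the role of "$M$" in that proposition can be played by $M$ twisted by a pullback of $\sH$, and since $L \otimes M^{p^e-1}$ is ample, adding an ample twist keeps it ample, so the hypotheses are met. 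Taking symmetric powers and using that $F^{me}_V$ is finite and flat onto its image (as $V$ is regular), the global generation of the pullback over $g^{-1}(U)$ translates — by the "if" direction of \autoref{lem:weakly_positive_finite} with $g = F^{me}_V$ — into weak positivity of $S^0_{\phi^n} f_*(M)$ over $U$. Indeed \autoref{lem:weakly_positive_finite} is precisely the tool that lets us pass from weak positivity (equivalently, the asymptotic global generation of symmetric powers) upstairs on $V^{me}$ back to weak positivity downstairs on $V$.

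More carefully, the cleanest route is: by \autoref{prop:global_generation_of_S_f_*}, for every sufficiently divisible (or large) $m$, $S^0_{\phi^m} f_*(M \otimes f^*\sH^{c})$ is globally generated on $V^{me}$ for any fixed $c \geq 0$ — here I would check that $S^0_{\phi^m} f_*(M \otimes f^*\sH^c) \cong S^0_{\phi^m} f_*(M) \otimes \sH_{me}^c$ by the projection formula, since $f^*\sH$ is pulled back from the base. Restricting to $U^{me}$ and using the stabilization hypothesis to rewrite this restriction as $F^{me*}_V(S^0_{\phi^n} f_*(M))|_{U^{me}} \otimes \sH_{me}^c = F^{me*}_V(S^0_{\phi^n} f_*(M) \otimes \sH^c)|_{U^{me}}$ (using $F^{me*}_V \sH = \sH_{me}^{p^{me}}$ and choosing $c$ to be a multiple of $p^{me}$, or absorbing the discrepancy), we get that a Frobenius pullback of $S^0_{\phi^n} f_*(M) \otimes \sH^{c/p^{me}}$ is globally generated over $U$. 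Feeding the sequence of such statements (as $m \to \infty$) into the characterization of weak positivity and applying \autoref{lem:weakly_positive_finite} to the finite flat map $F^{me}_V$ then yields that $S^0_{\phi^n} f_*(M)$ is weakly positive over $U$, hence weakly positive. The final sentence of the statement is justified by \autoref{prop:pushforward_stabilizes} (producing such an $n$ in the situation of \autoref{notation:S_f_*}\textsuperscript{*}) together with \autoref{prop.StabilizingSigma}, which reduces an arbitrary base to that situation after shrinking.

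The main obstacle I anticipate is the careful matching of Frobenius twists: making sure that the symmetric powers and the $\sH$-twists line up correctly under $F^{me*}_V$, and that the stabilization hypothesis — which is stated for $S^0_{\phi^m} f_*(M)$ itself, not its symmetric powers — can be leveraged at the level of symmetric reflexive powers. One must either commute "symmetric power" past "Frobenius pullback" (straightforward for locally free sheaves, which is why one throws away a codimension-two set as in \autoref{defn:weakly_positive}) or, alternatively, apply \autoref{lem:weakly_positive_finite} before taking symmetric powers, so that weak positivity of $F^{me*}_V S^0_{\phi^n} f_*(M)$ over $g^{-1}(U)$ — which follows directly from \autoref{prop:global_generation_of_S_f_*} applied on $V^{me}$, interpreting "globally generated after an ample twist for all large twists" as weak positivity — descends. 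The latter ordering avoids the symmetric-power bookkeeping entirely and is the approach I would ultimately take.
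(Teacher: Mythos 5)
Your proposal is correct and takes essentially the same route as the paper: global generation of $S^0_{\phi^m} f_*(M)$ for $m \gg 0$ (from \autoref{prop:global_generation_of_S_f_*}) gives weak positivity, the stabilization hypothesis identifies this over $U$ with the Frobenius pullback of $S^0_{\phi^n} f_*(M)$, and \autoref{lem:weakly_positive_finite} applied to the finite morphism $V^{me} \to V^{ne}$ descends weak positivity. Your final ``latter ordering'' --- applying \autoref{lem:weakly_positive_finite} before taking symmetric powers, so that the symmetric-power bookkeeping is absorbed into the definition of weak positivity --- is exactly the paper's argument.
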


\begin{proof}
By the assumption, there is an embedding
\begin{equation*}
S^0_{\phi^m} f_* (M) \hookrightarrow S^0_{\phi^n} f_* (M) \times_{V^{ne}} V^{me},
\end{equation*}
which is generically isomorphism. Since the left sheaf is globally generated and hence weakly-positive, so is the right one. However then by \autoref{lem:weakly_positive_finite}, so is $S^0_{\phi^n} f_* (M)$.
\end{proof}

\begin{lemma}
\label{lem:weakly_positive_criterion_finite_maps}
If $\sF$ is a coherent sheaf on a normal variety $V$ over a perfect field $k$, then $\sF$ is weakly-positive if and only if for every ample line bundle $\sH$ and every integer $p \nmid r>0$ there is a  finite morphism $\tau: T \to V$, such that  $\tau^* \sH \cong (\sH')^r$ for some line bundle $\sH'$ and $\tau^* \sF \otimes \sH'$ is weakly-positive.
\end{lemma}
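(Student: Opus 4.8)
The plan is to establish the two directions of the equivalence separately; the "only if" direction is essentially immediate from the definition, so the substance is the "if" direction.

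\textbf{The easy direction.} Suppose $\sF$ is weakly-positive. Fix an ample line bundle $\sH$ and an integer $p \nmid r > 0$. By \autoref{lem:weakly_positive_criterion_finite_maps}'s hypothesis we want to \emph{produce} a finite $\tau$, so we first need a finite cover $\tau : T \to V$ on which $\tau^* \sH$ becomes an $r$th power of a line bundle. Such a cover exists: after possibly replacing $\sH$ by a high multiple (harmless, since weak positivity is insensitive to which ample bundle we test against, \cite[Lemma 2.14.a]{ViehwegQuasiProjectiveModuli}, and tensoring a power of $\sH$ does not affect the argument) we may take a cyclic cover associated to a general member of $|\sH^r|$, or simply a composition of such; normalizing, we get $\tau : T \to V$ finite, $T$ normal, with $\tau^* \sH \cong (\sH')^r$. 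Then $\tau^* \sF$ is weakly-positive by \cite[Lemma 2.15.1]{ViehwegQuasiProjectiveModuli} (the "only if" half of \autoref{lem:weakly_positive_finite}), and since $\sH'$ is the pullback of an ample bundle up to the $r$th root it is itself big and nef, in fact ample as $\tau$ is finite; tensoring a weakly-positive sheaf by an ample (or even nef) line bundle keeps it weakly-positive (\cite[Lemma 2.14, 2.15]{ViehwegQuasiProjectiveModuli}). So $\tau^* \sF \otimes \sH'$ is weakly-positive, as required.

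\textbf{The hard direction.} Conversely, assume the finite-cover condition holds. Fix an ample $\sH$ on $V$ and $a > 0$; we must find $b > 0$ so that $S^{\langle ab \rangle}(\sF) \otimes \sH^b$ is globally generated over a fixed dense open $U$. Choose $r$ with $p \nmid r$ and $r$ large enough that $\sH^r$ is very ample with the global-generation properties of $g_* \sO_T$ needed in the proof of \autoref{lem:weakly_positive_finite} (conditions \autoref{itm:weakly_positive_finite:map} and \autoref{itm:weakly_positive_finite:pushforward} there); apply the hypothesis with this $\sH$ and $r$ to get $\tau : T \to V$ finite with $\tau^* \sH \cong (\sH')^r$ and $\tau^* \sF \otimes \sH'$ weakly-positive. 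Now run the descent argument \emph{exactly} as in the proof of the "if" direction of \autoref{lem:weakly_positive_finite}: the weak positivity of $G := \tau^* \sF \otimes \sH'$ gives, for each $a$, a $b$ and a surjection $\sO_T^{\oplus N} \to S^{\langle ab \rangle}(G) \otimes \tau^* \sH^b$ over $\tau^{-1}U$ (here one uses $S^{\langle ab\rangle}(\tau^*\sF \otimes \sH') = S^{\langle ab\rangle}(\tau^*\sF) \otimes (\sH')^{ab}$ and $(\sH')^{abr} = \tau^*\sH^{ab}$ so after adjusting $b$ by a factor of $r$ the twist is by a pullback from $V$). Pushing forward by $\tau$, applying the projection formula $\tau_* \bigl( \tau^* (S^{\langle ab\rangle}(\sF) \otimes \sH^{?}) \bigr) \cong S^{\langle ab\rangle}(\sF) \otimes \sH^{?} \otimes \tau_* \sO_T$, and composing with the maps built from sections of $\sHom(\tau_* \sO_T, \sH)$ and of $\sH^{b-1}$ exactly as in \autoref{lem:weakly_positive_finite}, one produces enough sections of $S^{\langle ab \rangle}(\sF) \otimes \sH^{2b}$ to generate it over $U$ (away from the codimension-two locus where things fail to be locally free, which is permitted by \autoref{defn:weakly_positive}). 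This yields weak positivity of $\sF$.

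\textbf{Main obstacle.} The genuine technical point is bookkeeping the relation between $S^{\langle ab\rangle}(\tau^* \sF \otimes \sH')$ and the pullback of $S^{\langle ab\rangle}(\sF) \otimes (\text{power of } \sH)$: symmetric \emph{reflexive} powers only commute with pullback along $\tau$ up to the codimension-two subtleties and require $T$ normal and $\tau$ flat in codimension one, which is why one throws out a codimension-two set at the start (as \autoref{lem:weakly_positive_finite}'s proof does). A secondary nuisance is arranging $r$ coprime to $p$ while keeping $\sH^r$ very ample with properties \autoref{itm:weakly_positive_finite:map}–\autoref{itm:weakly_positive_finite:pushforward}; but since those hold for all sufficiently divisible multiples and there are infinitely many multiples of any given number coprime to $p$, this is only mildly annoying. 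Beyond this, the proof is a direct reduction to \autoref{lem:weakly_positive_finite} and standard facts from \cite{ViehwegQuasiProjectiveModuli}.
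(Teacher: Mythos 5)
Your overall strategy matches the paper's: the paper simply cites the $(b)\Rightarrow(a)$ direction of Viehweg's Lemma 2.15.1, and you attempt to re-derive that argument by reducing to \autoref{lem:weakly_positive_finite}. Your easy direction (produce a cyclic cover and tensor by the ample $\sH'$) is fine.

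In the hard direction there is a genuine circularity. You write: ``Choose $r$ with $p\nmid r$ and $r$ large enough that $\sH^r$ is very ample with the global-generation properties of $g_*\sO_T$ needed in the proof of \autoref{lem:weakly_positive_finite}\ldots; apply the hypothesis with this $\sH$ and $r$ to get $\tau : T\to V$.'' But conditions \autoref{itm:weakly_positive_finite:map} and \autoref{itm:weakly_positive_finite:pushforward} are properties of $\tau_*\sO_T$, and $\tau$ is only produced \emph{after} $r$ is chosen by invoking the hypothesis with that $r$. Your ``Main obstacle'' paragraph waves this away (``those hold for all sufficiently divisible multiples\ldots''), but ``sufficiently divisible'' is precisely the $\tau$-dependent bound you do not yet have. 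The correct order is: fix the weak-positivity parameter $a$; choose $r$ with $p\nmid r$ large relative to $a$ only (no reference to $\tau$); apply the hypothesis to obtain $\tau$; \emph{then} pick a large power $\sH^M$ (now that $\tau$ is fixed) to serve as the very ample sheaf with properties \autoref{itm:weakly_positive_finite:map}–\autoref{itm:weakly_positive_finite:pushforward} in the descent. This reshuffling changes your exponent bookkeeping: the descent introduces a factor of $2$ (and an $M$) into the $\sH$-twist, and the $(\sH')^{ab}$ coming from the symmetric power of $\sH'$ must be absorbed using $(\sH')^r = \tau^*\sH$. Tracking this through, the ratio of symmetric-power index to ample twist works out only because $r$ is large relative to $a$ — your phrase ``after adjusting $b$ by a factor of $r$'' papers over exactly the place where one must verify $r$ dominates the relevant constants. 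Without that verification the argument does not close.
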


\begin{proof}
The proof is identical to the $(b) \Rightarrow (a)$ part of \cite[Lemma 2.15.1]{ViehwegQuasiProjectiveModuli}.
\end{proof}

\begin{theorem}
\label{thm:global_generation_of_S_f_*}
In the situation of \autoref{notation:S_f_*}, if $V$ is projective and
$L \otimes M^{p^e -1}$ is a nef and $f$-ample, then $S^0_{\phi^n} f_*(M)$ is weakly positive for $n \gg 0$.
\end{theorem}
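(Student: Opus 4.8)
The plan is to reduce the nef case to the ample case handled in \autoref{prop:global_generation_of_S_f_*} via a standard perturbation-and-cover trick, combined with the weak-positivity criterion \autoref{lem:weakly_positive_criterion_finite_maps}. Fix an ample line bundle $\sH$ on $V$ and an integer $p \nmid r > 0$. By \autoref{lem:weakly_positive_criterion_finite_maps} it suffices to produce, for each such $\sH$ and $r$, a finite morphism $\tau : T \to V$ with $\tau^* \sH \cong (\sH')^r$ for some line bundle $\sH'$ and with $\tau^* \big( S^0_{\phi^n} f_*(M) \big) \otimes \sH'$ weakly positive for $n \gg 0$. The morphism $\tau$ will be a Kummer-type cover built to extract an $r$-th root of $\sH$ (which exists after passing to such a cover, cf.\ the covering constructions in \cite{ViehwegQuasiProjectiveModuli}), and $T$ can be taken normal; after replacing $T$ by a resolution or simply its normalization we stay within the hypotheses (regular, integral, excellent, projective over $k$ — here we use that $T$ is a variety over a perfect field).

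\textbf{Key steps.} First I would base change the family: form $X_T = X \times_V T \to T$, with the pulled-back line bundle $M_T$ and pulled-back map $\phi_T$, noting $L_T \otimes M_T^{p^e-1}$ is still $f_T$-ample and now also nef (pullback of nef is nef under any morphism, pullback of $f$-ample is $f_T$-ample). Second, by \autoref{prop:S_f_*_base_change}\autoref{itm:S_f_*_base_change:high_multiple_of_rel_ample}, or more directly by the flat-pullback part \autoref{itm:S_f_*_base_change:flat} when $\tau$ can be arranged flat (and in general by taking $n$ large and using the surjectivity in \autoref{eq:S_f_*_base_change:compatibility} together with the fact that over a dense open set it is an isomorphism by \autoref{itm:S_f_*_base_change:isomorphism_on_an_open}), the sheaf $\tau^* S^0_{\phi^n} f_*(M)$ agrees generically with $S^0_{\phi_T^n} (f_T)_*(M_T)$. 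Third — the heart of the argument — I would apply \autoref{prop:global_generation_of_S_f_*} not to $M_T$ itself but to a twist: choose a sufficiently divisible $n$ so that on $T^{ne}$ the line bundle $\sH'_{T^{ne}}$ (the pullback to $T^{ne}\cong T$ of our chosen root $\sH'$) can be absorbed; concretely, one shows $S^0_{\phi_T^n}(f_T)_*(M_T) \otimes \sH'$ contains, up to generic isomorphism, an image sheaf of the form appearing in \autoref{prop:global_generation_of_S_f_*} for the bundle $M_T \otimes f_T^*\sH'{}^{p^{ne}}$ — no wait, more carefully: since $L_T \otimes M_T^{p^e-1}$ is nef and $\sH'$ is ample, $L_T \otimes (M_T \otimes f_T^* (\text{pullback of }\sH'{}^{1/(\text{something})}))^{p^e-1}$ becomes ample, so \autoref{prop:global_generation_of_S_f_*} applies to this twisted line bundle and yields global generation of the corresponding $S^0$, hence weak positivity of its pushforward. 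Fourth, transport this back: a sheaf that is generically isomorphic to a globally generated (hence weakly positive) sheaf is weakly positive, and then \autoref{lem:weakly_positive_criterion_finite_maps} upgrades weak positivity of the twisted pullbacks to weak positivity of $S^0_{\phi^n} f_*(M)$ on $V^{ne}$.

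\textbf{Main obstacle.} The delicate point is the bookkeeping in the third step: one must twist $M$ by a pullback from the base in such a way that (a) the positivity hypothesis $L \otimes M^{p^e-1}$ \emph{ample} of \autoref{prop:global_generation_of_S_f_*} is met after twisting by the (ample, but only rational before passing to the cover) fractional power of $\sH$, and (b) the resulting twisted $S^0_{\phi^n} f_*$ relates back to $S^0_{\phi^n} f_*(M) \otimes (\text{power of }\sH)$ on $V^{ne}$ in a way compatible with the reflexive symmetric powers appearing in \autoref{defn:weakly_positive}. This is exactly the kind of argument where the non-flatness of $\sigma_n(X/V,\phi)$ (flagged in the remark after \autoref{ex:S_f_*_base_change_not_isomorphism_arbitrary_power}) could cause trouble, so one must work with the \emph{image} sheaves $S^0_{\phi^n} f_*$ directly rather than with $f_*(\sigma_n \otimes M)$, and use only the generic-isomorphism statements, never genuine base-change isomorphisms. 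Granting the covering construction of \cite{ViehwegQuasiProjectiveModuli} and the criterion \autoref{lem:weakly_positive_criterion_finite_maps}, the rest is the routine reduction sketched above.
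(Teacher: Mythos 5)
Your strategy is essentially the same as the paper's: fix $\sH$ and $p\nmid r$, take a Kummer cover $\tau : T \to V$ with $\tau^* \sH \cong (\sH')^r$, use \autoref{lem:weakly_positive_criterion_finite_maps} to reduce to weak positivity of the twisted pullback, and reduce to the ample case by twisting $M$ by a pullback from the base. The bookkeeping you were unsure of resolves cleanly: the correct twist is by $f_T^* \sH_T'$ with \emph{no} exponent --- the $p^{ne}$ you were tempted to insert cancels because $S^0_{\phi^n}f_*$ is already a sheaf on $V^{ne}$, and the projection formula gives $S^0_{\phi_T^n}(f_T)_*(M_T \otimes f_T^* \sH_T') \cong \sH_T' \otimes S^0_{\phi_T^n}(f_T)_*(M_T)$ on $T^{ne}$. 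With that twist, $L_T \otimes (M_T \otimes f_T^* \sH_T')^{p^e-1}$ is ample, being (nef and $f_T$-ample) tensored with a pullback of an ample line bundle from $T$. The paper is slightly cleaner at two points: it discards a codimension-two closed set from $T$ to make $T$ regular, so $\tau$ is a finite map between regular schemes and hence flat, which lets point \autoref{itm:S_f_*_base_change:flat} of \autoref{prop:S_f_*_base_change} apply directly (avoiding your surjectivity-plus-generic-iso workaround); and it invokes \autoref{cor:weak_positivity_of_S_f_*} --- which already packages the stabilization and the ``generically isomorphic to a globally generated sheaf is weakly positive'' transport --- rather than re-deriving all of this from \autoref{prop:global_generation_of_S_f_*} directly.
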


\begin{proof}
Choose an integer $n>0$, as in \autoref{cor:weak_positivity_of_S_f_*}. Fix an ample line bundle $\sH$ on $V$, an integer $p \nmid r>0$ and a  finite morphism $\tau: T \to V$, such that  $\tau^* \sH \cong (\sH')^r$ for some line bundle $\sH'$. Such a morphism exits by \cite[Lemma 2.1]{ViehwegQuasiProjectiveModuli}. By \autoref{lem:weakly_positive_criterion_finite_maps}, we are supposed to prove that $\left(\tau^\frac{1}{p^{ne}} \right)^* S^n_\phi f_* (M) \otimes_{\sO_{V^{ne}}} (\sH')^\frac{1}{p^{ne}}$ is weakly positive.  By disregarding codimension two closed sets, we may assume that $T$ is regular as well. Then by point \autoref{itm:S_f_*_base_change:flat} of \autoref{prop:S_f_*_base_change}, we see that $n$ satisfies also the assumptions of  \autoref{cor:weak_positivity_of_S_f_*} but for $f$ and $M$ replaced by $f_T$ and $M_T \otimes f^*_T \sH_T'$, respectively.  In particular, since $M_T \otimes f_T^* \sH_T'$ is ample, \begin{equation*}
S^0_{\phi_T^n} (f_T)_* (M_T \otimes f_T^* \sH_T') \cong \sH_T' \otimes S^0_{\phi_T^n} (f_T)_* (M_T)
\end{equation*}
 is weakly-positive over $T^{ne}$.
%
\end{proof}

\subsection{Relation to global canonical systems}
\label{sec.RelationBetweenRelativeAbsoluteS0F*}There has been another subsheaf $S^0 f_*( \sigma(X, \Delta) \otimes M)$ of $f_* (M)$ introduced in \cite[Definition 2.14]{HaconXuThreeDimensionalMinimalModel} with a definition similar to that of $S^0_{\phi^n} f_*(M)$. In this section we show some of the similarities and differences between the two sheaves. The advantage of $S^0 f_*( \sigma(X, \Delta) \otimes M)$ over $S^0_{\phi^n} f_*(M)$ is that it lives on one $V$, there is no involvement of $V^{ne}$ at all. On the other hand we show that contrary to $S^0_{\phi^n} f_*(M)$ it does not restrict even generically to $S^0(X_s, \sigma (X_s, \Delta_s) \otimes M_s)$.

Throughout the section we use the divisorial language since the presentation seems to be more straightforward this way, hence $\Delta$ satisfies the conditions of \autoref{rem:relative_canonical}. Recall that we defined the notion of pair in our setting in \autoref{def:pair}. Whenever we say pair, we mean everything assumed there. In particular, all the assumptions on $f : X \to V$ from \autoref{notation:basic}: flatness, equidimensionality, etc.

First,  we recall the definition of $S^0 f_* (M) $.
\begin{definition} \cite[Definition 2.14]{HaconXuThreeDimensionalMinimalModel}
Let $\Delta$ be an effective $\bQ$-divisor, such that $(X,\Delta)$ is a pair and assume that $f : X \to V$ is projective. Let $e$ be the smallest\footnote{The intersection below is easily seen to be descending, hence if one chooses another $e$, the same object results.} positive integer  such that $(p^e -1)(K_X + \Delta)$ is Cartier. Then given a line bundle $M$ on $X$, we define
\begin{equation*}
S^0 f_*( \sigma(X, \Delta) \otimes M) := \bigcap_{n >0} \im \left( f_* ( F^{ne}_* \sO_X((1-p^{ne})(K_X + \Delta)) \otimes M ) \to f_* (M) \right).
\end{equation*}
We say that $S^0 f_*( \sigma(X, \Delta) \otimes M)$ \emph{stabilizes} if the above intersection stabilizes. Note that in that case $S^0 f_*( \sigma(X, \Delta) \otimes M)$  is a coherent sheaf.
\end{definition}

Recall that $S^0 f_*( \sigma(X, \Delta) \otimes M)$ stabilizes if $M - K_X - \Delta$ is relatively ample, see \cite[Proposition 2.15]{HaconXuThreeDimensionalMinimalModel} for a proof.

\begin{proposition}
\label{prop:global_relative_relation}
Let $\Delta$ be an effective $\bQ$-divisor, such that $(X,\Delta)$ is a pair and assume that $f : X \to V$ is projective. Let $M$ be a line bundle on $X$ such that $S^0 f_*( \sigma(X, \Delta) \otimes M)$ stabilizes. Further assume that $V$ is regular. Then for every $n \gg 0$
\begin{equation*}
S^0_{\Delta,ne} f_* (M) \subseteq (S^0 f_* ( \sigma(X,\Delta) \otimes M))_{V^{ne}}
\end{equation*}
as subsheaves of $(f_*(M))_{V^{ne}}$, where $S^0_{\Delta,ne} f_* (M) $ is defined in \autoref{defn:S_f_*}.
\end{proposition}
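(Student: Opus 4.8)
The plan is to unwind both definitions and compare them term by term, exploiting that $V$ is regular (so that $F^{ne}_V$ is flat and all the base-change/flatness identifications used throughout the paper apply). Recall that $S^0_{\Delta, ne} f_*(M)$ is, by \autoref{defn:S_f_*}, the image of
\[
f_*\!\left(\left(L^{\frac{p^{ne}-1}{p^e-1}}\right)^{1/p^{ne}} \otimes_R M\right) \xrightarrow{f_*(\varphi^n \otimes \id_M)} f_*\!\left(A^{1/p^{ne}} \otimes_A M\right),
\]
where $\varphi = \varphi_\Delta$ corresponds to $(p^e-1)(K_X + \Delta)$ and $L = \big(\omega_{X/V}^{1-p^e} \otimes \O_X((1-p^e)\Delta)\big)^{**}$. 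On the other hand, $S^0 f_*(\sigma(X,\Delta)\otimes M)$ is $\bigcap_{m>0}\im\big(f_*(F^{me}_* \O_X((1-p^{me})(K_X+\Delta))\otimes M) \to f_*(M)\big)$, and since it stabilizes, it equals a single such image for $m \gg 0$. The key observation is that the absolute composite map $F^{ne}_* \O_X((1-p^{ne})(K_X+\Delta)) \to \O_X$ factors as the relative $\varphi^n$ (landing in $A^{1/p^{ne}} \otimes_A R$, \ie, $\O_{X_{V^{ne}}}$) followed by the pushforward along $X_{V^{ne}} \to X$; this factorization is essentially the content of the composing-maps discussion in \autoref{subsec.ComposingMaps} together with the construction of $\gamma$ in \autoref{subsec.IteratedNonFPureIdealsVsAbsolute}. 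Here is where regularity of $V$ enters: $\O_{V^{ne}}((1-p^e)K_{V^{ne}})$ is the relevant twist, and since $\omega_V$ need not be trivial we must absorb the $K_{V^{ne}}$-contribution, which is where the hypothesis that $(p^e-1)(K_X+\Delta)$ being Cartier (noted in the statement) and the identification $K_{X_{V^{ne}}/V^{ne}} \cong K_{X_{V^{ne}}}$ away from codimension two come in.

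Concretely, the steps I would carry out are: (1) Fix $n$ large enough that both $S^0 f_*(\sigma(X,\Delta)\otimes M)$ is computed by the $m=n$ (equivalently $m=ne$) term, using the stabilization hypothesis, and that condition (*) of \autoref{notation:S_f_*}\textsuperscript{*} holds after shrinking is unnecessary here since we are not shrinking — but we do want $n$ large enough that the relevant $R^if_*$ vanishings used implicitly hold; actually for this containment no vanishing is needed, only the factorization. (2) Write down the commutative diagram exhibiting the factorization: the top row is the absolute iterated trace-type map $F^{ne}_* \O_X((1-p^{ne})(K_X+\Delta)) \otimes M \to M$; the factorization routes it through $(F^{ne}_{X^{ne}/V^{ne}})_*\big((L^{\frac{p^{ne}-1}{p^e-1}})^{1/p^{ne}} \otimes_R M\big)$, \ie, through $\varphi^n \otimes \id_M$ followed by the pushforward (forgetting the $A^{1/p^{ne}}$-structure) to $X$. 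This uses \autoref{lem:composition} type bookkeeping and the explicit description of $\phi^n$; it should be a diagram-chase, not a computation. (3) Apply $f_*$ to the diagram. By flat base change along $F^{ne}_V$ (regular $V$), $f_*\big(A^{1/p^{ne}} \otimes_A M\big)$ is canonically $(f_*M)_{V^{ne}}$, and the pushforward-to-$X$ step, after applying $f_*$, becomes the identification of this with $f_*M$ via the finite flat $V^{ne} \to V$. Taking images: the image of $f_*(\varphi^n \otimes \id_M)$ is $S^0_{\Delta,ne} f_*(M)$ by definition, and the image of the full absolute composite is contained in the $ne$-th term of the intersection defining $S^0 f_*(\sigma(X,\Delta)\otimes M)$, hence equals $S^0 f_*(\sigma(X,\Delta)\otimes M)$ (after base change to $V^{ne}$) for $n \gg 0$. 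Since the absolute composite factors through $\varphi^n\otimes\id_M$, its image is contained in the image of $f_*$ applied to the later stage — wait, one must be careful about the direction — the point is that $\im(g \circ h) \subseteq \im(g)$, so I should set it up so that $S^0_{\Delta,ne}f_*(M)$ is the image of the \emph{later} map $g$, and the absolute thing is $\im(g\circ h)$; then $S^0 f_* \supseteq \im(g\circ h)$ need not immediately give the claim, so instead I reverse: the absolute map factors as (relative $\varphi^n$) followed by (apply the rest), so $S^0_{\Delta,ne}f_*(M) = \im(\varphi^n$-part$)$ and one shows every later absolute term is contained in the $V^{ne}$-pullback of $S^0_{\Delta,ne}f_*(M)$ by pushing the absolute factorization for $m \geq ne$ through the $\varphi^n$-stage and using that $\varphi^{m} = \varphi^{m-n} \circ (\text{twist of }\varphi^n)$, \cf \autoref{prop:S_pushforward_containment}.

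The main obstacle I expect is step (2): getting the factorization of the absolute $(n e)$-fold trace map through the relative $\varphi^n$ \emph{as maps of $\O_X$-modules}, honestly tracking the twist by $K_{V^{ne}}$ (nontrivial since $V$ is only regular, not quasi-Gorenstein with trivial canonical). This is exactly the kind of bookkeeping done in \autoref{subsec.IteratedNonFPureIdealsVsAbsolute} when constructing $\gamma$ from $\phi$ and $\Phi_A^e$, and \autoref{lem.FactorizationOfSigmaImages} is the analogous statement one stage deeper; I would model the argument on those. The secondary subtlety is making sure ``$n \gg 0$'' can be chosen simultaneously for the stabilization of $S^0 f_*(\sigma(X,\Delta)\otimes M)$ (guaranteed by \cite[Proposition 2.15]{HaconXuThreeDimensionalMinimalModel} since $M - K_X - \Delta$ is $f$-ample — which is part of \autoref{notation:S_f_*} via the running Theorem C hypotheses, or must be assumed) and for whatever property of $S^0_{\Delta,ne}f_*(M)$ we invoke; but since the containment (as opposed to generic equality) requires no positivity beyond what makes both objects defined, this is harmless. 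Once the factorization diagram is in hand, taking $f_*$ and passing to images is formal, using right-exactness of $f_*$ on the relevant surjections and flat base change along the finite flat map $V^{ne} \to V$.
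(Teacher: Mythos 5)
Your factorization idea is right and matches the paper's: working locally so that $K_V \sim 0$ and $F^{ne}_V$ is finite flat with $\sHom_{\O_V}(F^{ne}_*\O_V,\O_V)$ free, the absolute twisted trace $F^{ne}_* \sO_X\big((1-p^{ne})(K_X+\Delta)\big)\otimes M \to M$ factors as the relative map $\varphi^n\otimes\id_M$ (landing in $\O_{X_{V^{ne}}}\otimes M$) followed by the pullback of $\Tr_{F_V^{ne}}\colon F^{ne}_*\O_V \to \O_V$. After applying $f_*$, the image of the first leg is $S^0_{\Delta,ne}f_*(M)$ and the image of the composite is $S^0 f_*(\sigma(X,\Delta)\otimes M)$ once $n\gg 0$ (stabilization). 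Up to here you have reproduced the paper's diagram.

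The problem is the final step. You sense correctly that ``$\im(g\circ h)\subseteq\im(g)$'' gets you nothing useful, but the reversal you propose — pushing the absolute factorization at level $m\geq ne$ through the $\varphi^n$-stage and invoking \autoref{prop:S_pushforward_containment}-style descent — would, if carried out, prove that each absolute term pulled back to $V^{ne}$ lands inside $S^0_{\Delta,ne}f_*(M)$, i.e.\ it would give $(S^0 f_*(\sigma(X,\Delta)\otimes M))_{V^{ne}}\subseteq S^0_{\Delta,ne}f_*(M)$. That is the \emph{opposite} containment (and is, in fact, essentially \autoref{prop:global_relative_relation_other_way}); it does not yield the claim.

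What actually closes the argument is \autoref{lem:pushforward_pullback}: if $B$ is regular $F$-finite with $B^{1/p^a}$ free over $B$, $\theta\in\Hom_B(B^{1/p^a},B)$ a generator, $P$ a $B$-module, and $Q$ a $B^{1/p^a}$-submodule of $B^{1/p^a}\otimes_B P$, then
\[
Q \;\subseteq\; (\theta\otimes\id_P)(Q)\otimes_B B^{1/p^a}.
\]
Take $B=\Gamma(V,\sO_V)$, $P=f_*M$, $Q=F^{ne}_{V,*}\,S^0_{\Delta,ne}f_*(M)$, and $\theta$ the trace of $F_V^{ne}$. Then $(\theta\otimes\id_P)(Q)=S^0 f_*(\sigma(X,\Delta)\otimes M)$ by your factorization, and the displayed inclusion is exactly $S^0_{\Delta,ne}f_*(M)\subseteq (S^0 f_*(\sigma(X,\Delta)\otimes M))_{V^{ne}}$. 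This step is genuinely nontrivial: it uses that $B^{1/p^a}$ is free over $B$ (Kunz, hence regularity of $V$) and that $\theta$ generates the Hom so that the dual-basis projections are multiples of $\theta$. Without this (or some equivalent) your diagram chase does not close, and there is no formal image/composition fact that substitutes for it.
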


\begin{proof}
The statement is local over the base hence we can assume that $V$ is affine, $K_V \sim 0$, and that $\sHom_{\O_V}(F^i_* \O_V, \O_V) \cong F^i_* \O_V$ is a free $\O_V$-module for all $i \geq 0$.  Note then also we can identify $K_{X/V}$ with $K_X$.
We introduce the notation
\begin{equation*}
L:= \sO_X((1-p^{ne})(K_{X} + \Delta)). 
\end{equation*}
We have the evaluation-at-1 map $F^{ne}_* \O_V \cong \sHom_{\O_V}(F^{ne}_* \O_V, \O_V) \to \O_V$ which we identify with the trace of $V$ based on our previous assumption $\omega_V \cong \O_V$.  This map pulls back to $X$ to provide us with a map which we also denote as $\Tr_{F_V}^{ne} : \O_{X_{V^{ne}}} \cong \O_X \otimes f^* F^{ne}_* \O_V \to \O_X \otimes_{\O_X} f^* \O_V \cong \O_X$.
Then there is a diagram as follows, which is commutative up to multiplication by a unit \cf \cite[Lemma 3.9]{SchwedeFAdjunction}.
\begin{equation*}
\xymatrix@R=18pt{
F^{ne}_* L \ar@{^{(}->}[r] \ar@{=}[d]& F^{ne}_* \O_X((1-p^{ne})K_X) \ar[rr]^-{\mathrm{Tr}_{F_X}}  \ar@{=}[d] && \sO_X  \\
F^{ne}_* \O_X( (1-p^{ne})(K_{X/V} + \Delta)) \ar@{^{(}->}[r] & F^{ne}_* \O_X((1-p^{ne})K_{X/V}) \ar[rr]_-{\Tr_{F^{ne}_{X/V}}}  & & \O_X \otimes_{\O_X} f^* F^{ne}_* \O_V \ar[u]_{\Tr_{F_V}^{ne}}
}
\end{equation*}
Applying the functor $f_* ( \_ \otimes_{\O_X} M)$ to the above diagram, we obtain the following.
\begin{equation*}
\xymatrix@R=18pt{
f_* ( (F^{ne}_* L) \otimes_{\O_X} M) \ar[r] \ar@{=}[d] & f_* (M)  \\
f_* ((F^{ne}_* \O_X( (1-p^{ne})(K_{X/V} + \Delta)) ) \otimes_{\O_X} M) \ar[r] & f_* (M \otimes_{\O_X} f^* F^{ne}_* \O_V) \ar[u]_{\kappa}
}
\end{equation*}
Note that for every $n \gg 0$ the image of the top horizontal row is $S^0 f_* (\sigma(X, \Delta) \otimes M)$ and for every $n>0$ the image of the bottom horizontal row is $S^0_{\Delta, n} f_* ( M )$.
Hence for every $n \gg 0$,
\begin{equation*}
S^0 f_* (\sigma(X, \Delta) \otimes M) = \im \left( S^0_{\Delta, n} f_* ( M ) \to f_*(M) \right),
\end{equation*}
where the above map is induced by $\mathrm{Tr}_{F_V}$.

Since we want containment for subsheaves of $(f_*(M))_{V^{ne}}$, we localize at a point of $V$. By setting $B:=\Gamma(V,\sO_V)$, $P:=f_*(M)$ and $Q:= F^{ne}_{V,*} ( S^0_{\Delta, n} f_* ( M ))$, we are in the following situation:    if $Q$ is a $B^{1/p^{ne}}$ submodule of $B^{1/p^{ne}} \otimes_B P$ for some $B$ module $P$, we would like to show that $Q \subseteq \theta (Q) \otimes_B B^{1/p^{ne}}$ where $\theta \in \Hom_B(B^{1/p^{ne}},B)$ is the generator. This is simply \autoref{lem:pushforward_pullback}.
\end{proof}

\begin{example}
\label{ex:relative_not_equal_global}
We provide an example where the two sheaves of \autoref{prop:global_relative_relation} have different ranks for every $n \gg 0$. Fix an algebraically closed field $k$ of characteristic $p>0$. Let $X:=\bP^1 \times \bA^1$, $U = \bA^1 \times \bA^1 = \Spec k[y, x] \subseteq X$ and $V:= \bA^1 := \Spec k[x]$. Define the divisor $\Delta:= \frac{1}{p-1} V(y^p -x)$ which is a priori a divisor in $U$, but it happens to have support which is a closed subvariety of $X$ as well. Note that $V(y^p - x)$ is a subvariety of $X$ isomorphic to $\bA^1$. In particular, $(X, \Delta)$ is sharply $F$-pure. Choose now a line bundle $N$ on $X$ which is relatively ample over $V$ and let $M:=N^l$ for some $l \gg 0$.  Then by \cite[Lemma 2.19]{HaconXuThreeDimensionalMinimalModel} $S^0 f_* (\sigma(X, \Delta) \otimes M) = f_* (M)$. Therefore,
\begin{equation}
\label{eq:relative_not_equal_global:global}
(S^0 f_* (\sigma(X, \Delta) \otimes M) )_{V^{ne}} = (f_* (M) )_{V^{ne}}
\end{equation}
On the other hand by \autoref{prop:obvious_containment}, $S^0_{\Delta,n} f_* (M) \subseteq f_* (\sigma_n(X/V, \Delta) \otimes M)$. By \autoref{thm.UniversalNForAllClosedFibers} and \autoref{lem.DeltaPhiRestrictedEqualsDeltaPsi}, for every $n \gg0$ and $s \in V$, $\sigma_n(X/V, \Delta)|_{X_s} = \sigma (X_s, \Delta)$. However, $\Delta|_{X_s}$ is one point with multiplicity $\frac{p}{p-1}$, and hence not sharply $F$-pure. In particular, $\sigma (X_s, \Delta) \neq \sO_{X_s}$ for every $s \in S$. Hence for every $n \gg 0$,  $\sigma_n(X/V, \Delta)|_{X_s} \subsetneq \sO_{X_s}$. It follows by the relative ampleness of $M$ that $f_*(\sigma_n(X/V, \Delta) \otimes M)$ has smaller rank than $f_* (M)$. Then $S^0_{\Delta,n} f_* (M)$ has also smaller rank than $(S^0 f_* (\sigma(X, \Delta) \otimes M) )_{V^{ne}}$ by \autoref{eq:relative_not_equal_global:global}.
\end{example}

The above example has an important corollary, which follows immediately from \autoref{itm:S_f_*_base_change:isomorphism_on_an_open} of   \autoref{prop:S_f_*_base_change} and \autoref{prop:S^0_uniform_stabilization}.

\begin{corollary}
\label{cor:global_pushforward_does_not_restrict}
In general  $S^0 f_* (\sigma(X,\Delta) \otimes M) \otimes k(s)$ is not isomorphic to $S^0(X_s, \sigma(X_s, \Delta_s) \otimes M_s)$. In fact there are examples when the former has strictly bigger dimension than the latter for every closed point $s \in V$.
\end{corollary}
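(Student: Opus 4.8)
The plan is to deduce Corollary~\ref{cor:global_pushforward_does_not_restrict} directly from the concrete family constructed in \autoref{ex:relative_not_equal_global} together with the base-change machinery we have already built. Recall from that example we have a projective family $f : X \to V$ with $X = \bP^1 \times \bA^1$, $V = \bA^1$, a sharply $F$-pure pair $(X,\Delta)$ with $\Delta = \tfrac{1}{p-1}V(y^p-x)$, and a line bundle $M = N^l$ with $N$ relatively ample and $l \gg 0$, for which $S^0 f_*(\sigma(X,\Delta)\otimes M) = f_*(M)$ by \cite[Lemma~2.19]{HaconXuThreeDimensionalMinimalModel}. The first step is to observe that $f_*(M)$ is locally free of some rank $\rho$ (indeed, by relative Serre vanishing $R^i f_* M = 0$ for $i>0$ once $l \gg 0$, so $f_* M$ is locally free and its formation commutes with base change), and hence $\dim_{k(s)} H^0(X_s, M_s) = \rho$ for every point $s \in V$.

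Next I would compute $\dim_{k(s)} S^0(X_s, \sigma(X_s,\Delta_s)\otimes M_s)$ for closed points $s$. By \autoref{cor.RestrictDeltaAndPhiToFiberIsOk} we have $\Delta|_{X_s} = \Delta_{\phi_s}$, and by \autoref{thm.UniversalNForAllClosedFibers} combined with \autoref{lem.DeltaPhiRestrictedEqualsDeltaPsi} we have $\sigma_n(X/V,\Delta)\cdot\O_{X_s} = \sigma(X_s,\Delta|_{X_s})$ for all $n \gg 0$. The fibre $X_s \cong \bP^1$, and $\Delta|_{X_s}$ is the single reduced point $V(y^p-x)\cap X_s$ taken with coefficient $\tfrac{p}{p-1} > 1$, so $(X_s,\Delta_s)$ is \emph{not} sharply $F$-pure and in fact $\sigma(X_s,\Delta_s) \subsetneq \O_{X_s}$ is a proper ideal sheaf (it is the ideal of that point, or a power of it). Since $M_s$ is globally generated (ampleness, $l\gg 0$) and $\sigma(X_s,\Delta_s)$ is a proper ideal, we get the strict inequality
\[
\dim_{k(s)} H^0(X_s, \sigma(X_s,\Delta_s)\otimes M_s) < \dim_{k(s)} H^0(X_s, M_s) = \rho.
\]
Moreover, by \autoref{prop:S^0_uniform_stabilization} applied with $L = \omega_{X/V}^{1-p^e}\otimes\O_X((1-p^e)\Delta)$ and our $M$ (note $M - K_{X/V} - \Delta$ is $f$-ample since $M = N^l$ with $l \gg 0$), for $m \gg 0$ and all perfect points $s$ the space $S^0(X_s,\sigma(X_s,\Delta_s)\otimes M_s)$ stabilizes and agrees with $\im(H^0(X_s, (L^{(p^{me}-1)/(p^e-1)}_s)^{1/p^{me}}\otimes M_s) \to H^0(X_s,M_s))$, which by the surjectivity in \autoref{prop:S^0_uniform_stabilization} and \cite[Section~7]{SchwedeTuckerTestIdealSurvey}-type reasoning lands inside $H^0(X_s,\sigma(X_s,\Delta_s)\otimes M_s)$; hence $\dim_{k(s)} S^0(X_s,\sigma(X_s,\Delta_s)\otimes M_s) < \rho$ for every closed point $s\in V$.

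Finally I would assemble the two computations. By \autoref{prop:S_f_*_base_change}\autoref{itm:S_f_*_base_change:isomorphism_on_an_open} there is a dense open $U \subseteq V$ over which the base-change map $S^0_{\Delta,ne} f_*(M) \otimes k(s)^{1/p^{ne}} \to S^0(X_s,\sigma(X_s,\Delta_s)\otimes M_s)$ is an isomorphism for $n \gg 0$; combined with \autoref{prop:S^0_uniform_stabilization} this forces the rank of $S^0_{\Delta,ne} f_*(M)$ to equal the (constant, generic) value of $\dim_{k(s)} S^0(X_s,\sigma(X_s,\Delta_s)\otimes M_s)$, which by the previous paragraph is strictly less than $\rho$. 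On the other hand, $(S^0 f_*(\sigma(X,\Delta)\otimes M))_{V^{ne}} = (f_* M)_{V^{ne}}$ has rank $\rho$, and by \autoref{prop:global_relative_relation} $S^0_{\Delta,ne} f_*(M) \subseteq (S^0 f_*(\sigma(X,\Delta)\otimes M))_{V^{ne}}$. Were $S^0 f_*(\sigma(X,\Delta)\otimes M)\otimes k(s) \cong S^0(X_s,\sigma(X_s,\Delta_s)\otimes M_s)$ for generic $s$, the left side would have rank $< \rho$, contradicting the rank-$\rho$ equality above; so the asserted isomorphism fails, and in fact the dimension of $S^0 f_*(\sigma(X,\Delta)\otimes M)\otimes k(s)$ is $\rho$ while that of $S^0(X_s,\sigma(X_s,\Delta_s)\otimes M_s)$ is strictly smaller, for \emph{every} closed point $s$. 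The main obstacle I anticipate is bookkeeping: making sure the various $V^{ne}$-twists and the identification $f_* \cong (f_{V^{ne}})_*$ are tracked correctly so that ``rank'' is being compared for sheaves on the same space, and confirming that the ampleness/vanishing hypotheses needed for \autoref{prop:S^0_uniform_stabilization}, \autoref{prop:S_f_*_base_change}, and \autoref{prop:global_relative_relation} are all simultaneously met by the single choice $M = N^l$, $l \gg 0$ — but these are exactly the conditions already verified in \autoref{ex:relative_not_equal_global}.
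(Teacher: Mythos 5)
Your argument is correct and uses the same ingredients as the paper's one-line proof: \autoref{ex:relative_not_equal_global} together with \autoref{prop:S^0_uniform_stabilization} and \autoref{itm:S_f_*_base_change:isomorphism_on_an_open} of \autoref{prop:S_f_*_base_change}. The concluding contradiction in your final paragraph is not really needed: since $S^0f_*(\sigma(X,\Delta)\otimes M)=f_*M$ is locally free of rank $\rho$, you already know $\dim_{k(s)} S^0 f_*(\sigma(X,\Delta)\otimes M)\otimes k(s) = \rho$ for every closed $s$, and since $\sigma(X_s,\Delta_s)\subsetneq\O_{X_s}$ for every $s$ while $M_s$ is sufficiently ample, $\dim_{k(s)} S^0(X_s,\sigma(X_s,\Delta_s)\otimes M_s)\le h^0(X_s,\sigma(X_s,\Delta_s)\otimes M_s)<\rho$, so the strict inequality is immediate.
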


Compare the following proposition with \autoref{prop.SigmaUnderBaseChange}.

\begin{proposition}
\label{prop:global_S_0_f_containment}
Let $\Delta$ be an effective $\bQ$-divisor, such that $(X,\Delta)$ is a pair and assume that $f : X \to V$ is projective. Let $M$ be a line bundle on $X$ and assume that $V$ is regular. Then for every $n \geq m \geq 0$,
\begin{equation*}
S^0 f_{V^{ne},*} ( \sigma(X_{V^{ne}}, \Delta_{V^{ne}}) \otimes M_{V^{ne}}) \subseteq (S^0 f_{V^{me},*} ( \sigma(X_{V^{me}},\Delta_{V^{me}}) \otimes M_{V^{me}}))_{V^{ne}}
\end{equation*}
as subsheaves of $(f_*(M))_{V^{ne}}$. Furthermore, if  the above two sheaves stabilize then
\begin{equation*}
\begin{array}{rl}
& \kappa \Big(F_{V,*}^{(n-m)e} S^0 f_{V^{ne},*} ( \sigma(X_{V^{ne}}, \Delta_{V^{ne}}) \otimes f^* \O_{V}( (1 - p^{(n-m)e}) K_{V^{ne}} ) ) \otimes M_{V^{ne}} )\Big) \\
=  & S^0 f_{V^{me},*} ( \sigma(X_{V^{me}},\Delta_{V^{me}}) \otimes M_{V^{me}})
\end{array}
\end{equation*}
where $\kappa$ is induced by $\Tr_V^{(n-m)e}$.
\end{proposition}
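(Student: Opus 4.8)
\textbf{Proof approach for \autoref{prop:global_S_0_f_containment}.}

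The plan is to follow the same pattern as the proof of \autoref{prop:global_relative_relation} and \autoref{prop.SigmaUnderBaseChange}, working locally on the base and exploiting the factorization of Frobenius trace maps. First I would reduce to the affine situation: localizing at a point of $V$, I may assume $V = \Spec B$ with $K_V \sim 0$ and $\sHom_{\O_V}(F^i_* \O_V, \O_V) \cong F^i_* \O_V$ free for all $i \geq 0$, so that $K_{X/V}$ may be identified with $K_X$ and the iterated traces $\Tr_V^{je} : F^{je}_* \O_V \to \O_V$ are surjective (indeed generating). For the first (containment) statement, the key observation is that the composition of Frobenius traces factors: the map
\begin{equation*}
F^{ne}_* \O_X((1-p^{ne})(K_X + \Delta)) \to \O_X
\end{equation*}
computing $S^0 f_{V^{ne},*}$ factors through
\begin{equation*}
F^{(n-m)e}_* \Big( F^{me}_* \O_X((1-p^{me})(K_X + \Delta)) \otimes f^* F^{(n-m)e}_* \O_V((1-p^{(n-m)e})K_V) \Big) \to F^{(n-m)e}_*\big(\O_X \otimes \cdots\big) \to \O_X,
\end{equation*}
by the composition-of-trace compatibility used repeatedly in the paper (\cf \cite[Lemma 3.9]{SchwedeFAdjunction} and the discussion around \autoref{eq.CompositionOfTraceMaps}). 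Applying $f_*(\underline{\quad} \otimes M)$ and using that $F^{(n-m)e}_* \O_V((1-p^{(n-m)e})K_V) \to \O_V$ is surjective, the image of the length-$ne$ composition lands inside the extension to $V^{ne}$ of the image of the length-$me$ composition; taking the intersection over $n$ (resp. $m$) then gives the asserted containment of $S^0$'s as subsheaves of $(f_* M)_{V^{ne}}$.

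For the second (equality) statement, assuming both sheaves stabilize, the new input is a surjectivity statement in the reverse direction, which is exactly where \autoref{lem:pushforward_pullback} enters (just as in \autoref{prop:global_relative_relation} and \autoref{prop.SigmaUnderBaseChange}). Concretely, once stabilization holds we may fix $n$ large enough that $S^0 f_{V^{ne},*}(\sigma \otimes M)$ is the honest image of the trace for that $n$, and similarly for $m$; then I would set $B = \Gamma(V, \O_V)$, $P = f_* M$, and let $Q$ be the $B^{1/p^{(n-m)e}}$-submodule of $B^{1/p^{(n-m)e}} \otimes_B P$ obtained by pushing forward $S^0 f_{V^{ne},*}(\sigma(X_{V^{ne}}, \Delta_{V^{ne}}) \otimes f^* \O_V((1-p^{ne})K_V) \otimes M_{V^{ne}})$ along $F^{(n-m)e}_{V}$. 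Applying \autoref{lem:pushforward_pullback} (with $a = (n-m)e$ and $\theta$ the generator identified with $\Tr_V^{(n-m)e}$) yields $Q \subseteq (\theta \otimes \id_P)(Q) \otimes_B B^{1/p^{(n-m)e}}$, which unwinds to $\kappa\big(F_{V,*}^{(n-m)e}(\cdots)\big) \supseteq$ (the extension of) $S^0 f_{V^{me},*}(\sigma \otimes M)$; the reverse containment is the first part of the proposition applied appropriately, together with the factorization diagram, giving equality.

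\textbf{Main obstacle.} The bookkeeping of twists is the delicate point: one must verify that the line bundle appearing inside the $V^{ne}$-level $S^0$ really is $\sigma(X_{V^{ne}}, \Delta_{V^{ne}}) \otimes f^* \O_V((1-p^{ne})K_V) \otimes M_{V^{ne}}$ and not some other twist, i.e. that $(1-p^{ne})(K_{X_{V^{ne}}/V^{ne}} + \Delta_{V^{ne}})$ and the pullback of $(1-p^{ne})K_V$ assemble correctly under the trace factorization, and that the identification $\omega_V \cong \O_V$ made in the local reduction is compatible with the global statement (which is phrased intrinsically via $\kappa$ and $\Tr_V^{(n-m)e}$). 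I expect this to require carefully tracking the isomorphism $\omega_{X/V} \cong \omega_X \otimes f^* \omega_V^{-1}$ through each composition, exactly as in \autoref{lem.FactorizationOfSigmaImages} and \autoref{prop:global_relative_relation}; once that is pinned down, the rest is a formal application of the lemmas already established.
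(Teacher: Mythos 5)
Your proposal identifies the right local reduction (affine $V$ with $K_V\sim 0$), the relevant trace-factorization, and the key Lemma~\autoref{lem:pushforward_pullback}; however, it allocates these tools to the wrong halves of the proposition, and the justification you give for the first containment does not go through as stated. In the paper's argument (after reducing to $m=0$, $n=1$), applying $f_*(\blank\otimes M)$ and the projection formula to the trace-factorization gives a commutative square whose top horizontal arrow is split, hence surjective. This splitness produces the \emph{identity} $(\id_P\otimes\Tr_{F_V^e})(Q)=S$, where $P=f_*M$, $Q$ is the $B^{1/p^e}$-submodule of $B^{1/p^e}\otimes_B P$ computing the $V^e$-level image, and $S\subseteq P$ is the $V$-level image. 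It is this identity, combined with stabilization, that directly gives the ``Furthermore'' equality $\kappa(F_{V,*}^{(n-m)e}(\cdots))=S^0 f_{V^{me},*}(\cdots)$. The first containment is then obtained by applying Lemma~\autoref{lem:pushforward_pullback} to $(B,P,Q)$: since $Q$ is a $B^{1/p^e}$-submodule and $\theta=\Tr_{F_V^e}$ is a \emph{generator} of $\Hom_B(B^{1/p^e},B)$, the lemma gives $Q\subseteq(\theta\otimes\id_P)(Q)\otimes_B B^{1/p^e}=S\otimes_B B^{1/p^e}$, i.e.\ $Q\subseteq S_{V^{ne}}$.

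Surjectivity of the trace alone, as you invoke it, is not enough for that step. The proof of Lemma~\autoref{lem:pushforward_pullback} relies on the freeness of $B^{1/p^e}$ over $B$ (Kunz) and the fact that $\theta$ generates $\Hom_B(B^{1/p^e},B)$: one projects onto the coordinates of a free basis of $B^{1/p^e}$ over $B$ to show the components of any element of $Q$ lie in $(\theta\otimes\id)(Q)$, and both the module structure on $Q$ and the generating property are used essentially. A merely surjective $\theta$, or a $Q$ that is not a $B^{1/p^e}$-submodule, would defeat the argument. Conversely, invoking Lemma~\autoref{lem:pushforward_pullback} for the second half as you propose produces $Q\subseteq\kappa(Q)\otimes_B B^{1/p^{(n-m)e}}$, which is a constraint on $Q$, not a computation of $\kappa(Q)$; it does not unwind to $\kappa(Q)\supseteq S^0 f_{V^{me},*}(\cdots)$. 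For the equality you should instead read off $\kappa(Q)=S$ directly from the split commutative diagram and then take the stabilized intersection. You are correct that the twist-bookkeeping is the delicate point; the paper handles it by writing $K_{X_{V^e}}=\eta^*K_{X/V}+f_{V^e}^*K_{V^e}$ and tracking the $(1-p^{ne})K_V$-factor through each trace composition, which is how the $f^*\O_V((1-p^{ne})K_V)$ twist in the ``Furthermore'' clause appears.
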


\begin{proof}
Note that by replacing $f : X \to V$ by $f_{V^{me}} : X_{V^{me}} \to V^{me}$, we may assume that $m=0$, and then by replacing $e$ by $(n-m)e$ that $n=1$. As in the previous proof, let us assume that
$V$ is affine, $K_V \sim 0$, and that $\sHom_{\O_V}(F^e_* \O_V, \O_V) \cong F^e_* \O_V$ is a free $\O_V$-module. Consider the following commutative diagram.
\begin{equation*}
\xymatrix@R=18pt{
\left(X_{V^e}\right)^{e} \ar[rrrd]_{F_{X_{V^e}}^{e}} \ar[rr]^-{\nu:=(F_V^e)_{X^{e}} } & &  X^{e} \ar[rrrd]^{F_X^{e}}  \\
&& &  X_{V^e} \ar[rr]_{\eta:=(F_V^e)_X } \ar[d]_{f_{V^e}} & & X \ar[d]_f \\
&& & V^e \ar[rr]_{F_V^e}  & & V
}
\end{equation*}
We fix the notations
\begin{equation*}
\sL_{\Delta}':=\sO_{X_{V^e}}((1-p^{e}) (K_{X_{V^e}} + \eta^* \Delta)),
\qquad
\sL_{\Delta}:=\sO_{X}((1-p^{e}) (K_X +  \Delta)).
\end{equation*}
Then we have
\begin{equation*}
 (1-p^{e}) K_{X_{V^e}}  = (1-p^{e}) (\eta^* K_{X/V} + f_{V^e}^* K_{V^e} ),
\textrm{ and }
 (1-p^{e}) K_{X}  = (1-p^{e}) ( K_{X/V} + f^* K_{V} ) .
\end{equation*}
Therefore,
\begin{itemize}
 \item applying pullback of $\Tr_{F_V^e}$ yields a homomorphism $\nu_* \sL_\Delta' \to \sL_\Delta$, which then induces a homomorphism $ \eta_* F_{X_{V^e},*}^{e} \sL_\Delta' \cong F_{X,*}^{e} \nu_* \sL_\Delta' \to F_{X,*}^{e} \sL_\Delta$,
\item applying $\Tr_{F_{X_{V^e}}^{e}}$ yields a homomorphism $F_{X_{V^e},*}^{e} \sL_\Delta' \to \sO_{X_{V^e}}$,
\item applying $\Tr_{F_X^{e}}$ yields a homomorphism $F_{X,*}^{e} \sL_\Delta \to \sO_X$,
\item by the assumption $K_V \sim 0$, $\Tr_{F_V^e}$ corresponds to a homomorphism $F_{V,*}^e \sO_V \to \sO_V$ generating $\Hom_{\sO_V} (F_{V,*}^e \sO_V, \sO_V)$ and
\item the previous homomorphism also induces a pullback homomorphism $\eta_* \sO_{X_{V^e}} \to \sO_X$.
\end{itemize}
Furthermore, the above homomorphisms fit into the following diagram
\begin{equation*}
\xymatrix@R=18pt{
\eta_* F_{X_{V^e},*}^{e} \sL_\Delta' \ar[r] \ar[rd] & F_{X,*}^{e}   \sL_{\Delta} \ar[rd] \\
& \eta_*  \sO_{X_{V^e}} \ar[r] & \sO_X
}
\end{equation*}
The diagram is a composition of trace maps (restricted to smaller domains determined by $\Delta$), hence it is commutative.
Applying now $f_*(\_ \otimes M)$ to the above diagram and using the projection formula we obtain the following commutative diagram
\begin{equation*}
\xymatrix@R=18pt{
f_*  \eta_* ( F_{X_{V^e},*}^{e} \sL_\Delta' \otimes M_{V^e}) \ar[r] \ar[rd] &  f_*( F_{X,*}^{e}   \sL_{\Delta} \otimes M)  \ar[rd] \\
& f_* \eta_* M_{V^e} \ar[r] & f_*(M).
}
\end{equation*}
Observing that $f_* \eta_*  =  F_{V,*}^ef_{V^e,*}$ yields another commutative diagram
\begin{equation*}
\xymatrix@R=18pt{
F_{V,*}^e f_{V^e,*}  ( F_{X_{V^e},*}^{e} \sL_\Delta' \otimes M_{V^e}) \ar[r] \ar[rd] &  f_*( F_{X,*}^{e}   \sL_{\Delta} \otimes M) \ar[rd] \\
& F_{V,*}^e f_{V^e,*}  M_{V^e} \ar[r] & f_*(M).
}
\end{equation*}
Note now that the top horizontal arrow is split, because it is induced from $\nu_* \sL_\Delta' \to \sL_\Delta$ which is split as well. Therefore, if we define $P:= f_*(M)$ and
\begin{equation*}
S:= \im \left( f_*( F_{X,*}^{e}   \sL_{\Delta} \otimes M) \to f_*(M) \right) \qquad \quad Q:= \im \left(  f_{V^e,*}  ( F_{X_{V^e},*}^{e} \sL_\Delta' \otimes M_{V^e}) \to f_{V^e,*}  M_{V^e}  \right),
\end{equation*}
then we have $Q \subseteq P \otimes_A A^{1/p^e}$, $S \subseteq P$ and  $( \id_P \otimes \Tr_{F_{V}^e}) (Q) = S$. Therefore, by Lemma \ref{lem:pushforward_pullback}, $Q \subseteq S \otimes_A A^{1/p^e}$. Since this holds for any $n$, the statement of the proposition follows.
\end{proof}

\begin{proposition}
\label{prop:global_relative_relation_other_way}
Let $\Delta$ be an effective $\bQ$-divisor, such that $(X,\Delta)$ is a pair and assume that $f : X \to V$ is projective and $V$ is regular. Let $M$ be a line bundle on $X$. Then for every $n \gg 0$,
\begin{equation*}
S^0 f_{V^{ne},*} ( \sigma(X_{V^{ne}}, \Delta_{V^{ne}}) \otimes M_{V^{ne}}) \subseteq S_{\Delta,ne}^0 f_*(M)
\end{equation*}
as subsheaves of $(f_*(M))_{V^{ne}}$.
\end{proposition}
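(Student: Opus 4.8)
The plan is to reduce the asserted inclusion to the factorization $\gamma^n=\phi^n\circ\mu_n$ recorded in \autoref{lem.FactorizationOfSigmaImages}. Since the inclusion of subsheaves of $(f_*M)_{V^{ne}}$ is local on $V$, I would first replace $V$ by an affine open; being regular, $V$ is then $F$-pure and quasi-Gorenstein, and after further shrinking we may assume $K_V\sim 0$ and that $\sHom_{\O_V}(F^{ie}_*\O_V,\O_V)\cong F^{ie}_*\O_V$ is free for every $i$. With $\phi=\phi_{\Delta}$ the map attached to $\Delta$ as in \autoref{rem:relative_canonical}, \autoref{conv.BaseIsNice} and \autoref{conv.BaseIsVeryNice} now hold, so the maps $\gamma$ and $\gamma^m$ of \autoref{subsec.IteratedNonFPureIdealsVsAbsolute} are defined and, by \autoref{lem.ChoiceOfDeltaPhiEqualsDeltaGamma}, $\gamma$ is (up to a unit) a $p^{-e}$-linear map on $X_{V^{ne}}$ with associated divisor $\Delta_{V^{ne}}=h^{*}\Delta$, where $h\colon X_{V^{ne}}\to X$ is the projection.

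The core step is to identify the $m$-th term of the Hacon--Xu intersection \cite[Definition 2.14]{HaconXuThreeDimensionalMinimalModel} defining $S^0 f_{V^{ne},*}(\sigma(X_{V^{ne}},\Delta_{V^{ne}})\otimes M_{V^{ne}})$ with $\Image(f_*(\gamma^m\otimes\id_M))$. That definition is independent of the admissible index, so I may use $e$; then, using $K_{V^{ne}}\sim 0$ and the Cartier-ness of $(p^e-1)(K_{X/V}+\Delta)$, the line bundle $\O_{X_{V^{ne}}}\bigl((1-p^{me})(K_{X_{V^{ne}}}+\Delta_{V^{ne}})\bigr)$ is identified with $N^{\frac{p^{me}-1}{p^e-1}}$ for $N$ as in \autoref{subsec.IteratedNonFPureIdealsVsAbsolute}, while --- arguing in relative codimension $1$ where $f$ is relatively Gorenstein, then using reflexivity, just as in the proof of \autoref{lem.ChoiceOfDeltaPhiEqualsDeltaGamma}, together with compatibility of Grothendieck trace with composition (\cite[Appendix F]{KunzKahlerDifferentials}, \cite[Lemma 3.9]{SchwedeFAdjunction}) and base change of $\omega_{X/V}$ up to reflexification (\autoref{def:D_phi_Delta_phi}) --- the restricted trace map $F^{me}_{X_{V^{ne}},*}\O_{X_{V^{ne}}}\bigl((1-p^{me})(K_{X_{V^{ne}}}+\Delta_{V^{ne}})\bigr)\to\O_{X_{V^{ne}}}$ equals $\gamma^m$ up to a unit. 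Pushing forward by $f_*$ after twisting by $M_{V^{ne}}$ and using the projection formula then presents the $m$-th term as $\Image(f_*(\gamma^m\otimes\id_M))$. As an intersection is contained in each of its terms, taking $m=n$ gives
\[
S^0 f_{V^{ne},*}\bigl(\sigma(X_{V^{ne}},\Delta_{V^{ne}})\otimes M_{V^{ne}}\bigr)\ \subseteq\ \Image\bigl(f_*(\gamma^n\otimes\id_M)\bigr),
\]
all sheaves being viewed inside $(f_*M)_{V^{ne}}$ via flat base change (valid since $V$ is regular).

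To conclude, \autoref{lem.FactorizationOfSigmaImages} provides a surjection $\mu_n$ with $\gamma^n=\phi^n\circ\mu_n$, so $f_*(\gamma^n\otimes\id_M)=f_*(\phi^n\otimes\id_M)\circ f_*(\mu_n\otimes\id_M)$ and hence $\Image(f_*(\gamma^n\otimes\id_M))\subseteq\Image(f_*(\phi^n\otimes\id_M))=S^0_{\phi^n}f_*(M)=S^0_{\Delta,ne}f_*(M)$ by \autoref{defn:S_f_*}; crucially, surjectivity of $f_*(\mu_n\otimes\id_M)$ is not needed here. Combining this with the previous display proves the proposition. I expect the main obstacle to be the bookkeeping in the core step --- verifying precisely that the iterated Grothendieck trace in the Hacon--Xu definition, computed on the base change $X_{V^{ne}}$, is the map $\gamma^m$ of \autoref{subsec.IteratedNonFPureIdealsVsAbsolute}, including the matching of canonical bundles, the normalization of the trace maps, and the choice of index; granting this, the containment is immediate from \autoref{lem.FactorizationOfSigmaImages}.
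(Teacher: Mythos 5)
Your proof is correct and follows essentially the same route as the paper's: both reduce to the factorization $\gamma^n=\phi^n\circ\mu_n$ from \autoref{lem.FactorizationOfSigmaImages} together with the identification $\Delta_\gamma=h^*\Delta$ from \autoref{lem.ChoiceOfDeltaPhiEqualsDeltaGamma}, and then observe that the $n$-th term of the Hacon--Xu intersection, pushed forward, is trapped between the two sides. Your observation that surjectivity of $f_*(\mu_n\otimes\id_M)$ is superfluous is accurate (containment $\Image(f_*(\gamma^n\otimes\id_M))\subseteq\Image(f_*(\phi^n\otimes\id_M))$ is automatic from the factorization), whereas the paper invokes the split surjectivity of $\mu_n$ to get equality of images; this is a mild simplification but not a different proof.
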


\begin{proof}
As before, let us assume that
$V$ is affine, $K_V \sim 0$, and that $\sHom_{\O_V}(F^i_* \O_V, \O_V) \cong F^i_* \O_V$ is a free $\O_V$-module for all $i \geq 0$. By \autoref{lem.FactorizationOfSigmaImages} and \autoref{lem.ChoiceOfDeltaPhiEqualsDeltaGamma} there is a commutative diagram 
\begin{equation*}
\xymatrix{
F_{X_{V^{ne}},*}^{ne} \sO_{X_{V^{ne}}} \left( (1-p^{ne}) \left(K_{X_{V^{ne}}} + \Delta_{V^{ne}} \right) \right) \ar[r] \ar[rd] & F_{X^{ne}/V^{ne},*}^{ne} \sO_X \left( (1-p^{ne}) \left(K_{X/V} + \Delta \right) \right) \ar[d] \\
& \sO_{X_{V^{ne}}}.
}
\end{equation*}
Furthermore, the top horizontal arrow in the above diagram is split surjective. Therefore, after applying $f_{V^{ne},*} \left( \_ \otimes  M_{V^{ne}} \right)$ the image of the vertical map still agrees with the image of the diagonal map. The former is exactly $S_{\Delta,n}^0 f_*(M)$, while the latter contains  $S^0 f_{V^{ne},*} ( \sigma(X, (\Delta)_{V^{ne}}) \otimes M)$, since it is one of the terms in the intersection defining $S^0 f_{V^{ne},*} ( \sigma(X, (\Delta)_{V^{ne}}) \otimes M)$.
\end{proof}

\begin{remark}
Assuming that $f : X \to V$ is projective and $M - K_X - \Delta$ is ample, it would be natural to ask whether $\Image\big(S^0 f_{V^{ne}, *} (\sigma(X_{V^{ne}, \Delta_{V^{ne}}} \tensor M_{V^{ne}}) \tensor M_{V^{ne}})\to f_{V^{ne},*} M_s \big)$ equals $S^0(X_s, \sigma(X_s, \Delta_s) \tensor M_s)$ for all perfect points $s \in V$, in analogy with \autoref{cor.SigmaRestrictsToAllPoints}.  We suspect this is true  but will not try to prove it here.  
\end{remark}


\begin{corollary}
\label{cor:semi_positivity_of_S_0_f}
Let $f : (X, \Delta) \to V $ be a projective morphism from a pair with $V$ regular and projective over a perfect field $k$. Further, suppose that $M$ is a line bundle on $X$ such that $M - K_{X/V} - \Delta$ is nef and $f$-ample (here $M$ denotes a Cartier divisor corresponding to $M$) and that $\rk S^0f_* (\sigma(X,\Delta) \otimes M)$ equals the general value of $H^0(X_s, \sigma(X_s, \Delta_s) \otimes M_s)$. Then $S^0f_* (\sigma(X,\Delta) \otimes M)$ is weakly positive. In particular, if $V$ is a smooth curve then it is a nef vector bundle.
\end{corollary}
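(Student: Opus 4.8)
The plan is to reduce the statement to the weak positivity results already established for $S^0_{\phi^n} f_*(M)$, using the comparison theorems of this subsection. First I would set up the relatively divisorial map $\phi : L^{1/p^e} \to R_{A^{1/p^e}}$ attached to $\Delta$ via \autoref{rem:relative_canonical}, with $L = \bigl(\omega_{X/V}^{1-p^e} \otimes \O_X((1-p^e)\Delta)\bigr)^{**}$, so that $L \otimes M^{p^e-1}$ differs from $(M - K_{X/V} - \Delta)$-multiples only by the $(p^e-1)$-th power; in particular the hypothesis that $M - K_{X/V} - \Delta$ is nef and $f$-ample translates into $L \otimes M^{p^e-1}$ being nef and $f$-ample, which is exactly the hypothesis of \autoref{thm:global_generation_of_S_f_*}. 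That theorem then gives that $S^0_{\phi^n} f_*(M) = S^0_{\Delta,ne} f_*(M)$ is weakly positive for all $n \gg 0$.

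Next I would invoke \autoref{prop:global_relative_relation} and \autoref{prop:global_relative_relation_other_way}. Since $M - K_X - \Delta$ is $f$-ample, $S^0 f_*(\sigma(X,\Delta) \otimes M)$ stabilizes by \cite[Proposition 2.15]{HaconXuThreeDimensionalMinimalModel}, so \autoref{prop:global_relative_relation} gives, for $n \gg 0$, an inclusion $S^0_{\Delta,ne} f_*(M) \subseteq \bigl(S^0 f_*(\sigma(X,\Delta)\otimes M)\bigr)_{V^{ne}}$ of subsheaves of $(f_* M)_{V^{ne}}$. Conversely \autoref{prop:global_relative_relation_other_way} gives $S^0 f_{V^{ne},*}(\sigma(X_{V^{ne}},\Delta_{V^{ne}}) \otimes M_{V^{ne}}) \subseteq S^0_{\Delta,ne} f_*(M)$, and \autoref{prop:global_S_0_f_containment} identifies $S^0 f_{V^{ne},*}(\sigma(X_{V^{ne}},\Delta_{V^{ne}})\otimes M_{V^{ne}})$ with the image of $\bigl(S^0 f_*(\sigma(X,\Delta)\otimes M)\bigr)_{V^{ne}}$ under (the pushforward by Frobenius and) the trace map, which — after twisting appropriately — is generically surjective onto $\bigl(S^0 f_*(\sigma(X,\Delta)\otimes M)\bigr)_{V^{ne}}$. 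This is where the rank hypothesis enters: because $\rk S^0 f_*(\sigma(X,\Delta)\otimes M)$ equals the generic value of $h^0(X_s, \sigma(X_s,\Delta_s)\otimes M_s)$, and because by \autoref{cor:S^0_uniform_stabilization} the latter is the generic rank of $S^0_{\Delta,ne} f_*(M)$, all three sheaves above have the same rank, so all the inclusions are generically isomorphisms. Thus $\bigl(S^0 f_*(\sigma(X,\Delta)\otimes M)\bigr)_{V^{ne}}$ contains the weakly positive subsheaf $S^0_{\Delta,ne} f_*(M)$ as a generically isomorphic subsheaf, hence is itself weakly positive (a subsheaf generically isomorphic to a weakly positive sheaf, sitting inside a torsion-free sheaf, is weakly positive).

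Finally I would descend the weak positivity from $V^{ne}$ to $V$ using \autoref{lem:weakly_positive_finite}: the absolute Frobenius $F_V^{ne} : V^{ne} \to V$ is a finite morphism of normal projective varieties, and under the identification of underlying spaces $\bigl(S^0 f_*(\sigma(X,\Delta)\otimes M)\bigr)_{V^{ne}} = (F_V^{ne})^* S^0 f_*(\sigma(X,\Delta)\otimes M)$, so weak positivity of the pullback is equivalent to weak positivity of $S^0 f_*(\sigma(X,\Delta)\otimes M)$ on $V$. The last sentence, that over a smooth curve $V$ a weakly positive sheaf is nef, is standard: a torsion-free sheaf on a smooth curve is locally free, and weak positivity for locally free sheaves on curves coincides with nefness (\cite[Lemma 3.6]{PatakfalviSemipositivity} or the discussion around \autoref{defn:weakly_positive}).

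The main obstacle I anticipate is bookkeeping the twists and the Frobenius pullbacks so that \autoref{prop:global_S_0_f_containment} genuinely yields a \emph{generic surjection} onto $\bigl(S^0 f_*(\sigma(X,\Delta)\otimes M)\bigr)_{V^{ne}}$ rather than onto a proper subsheaf: the trace map $\kappa$ there involves the twist by $f^*\O_V((1-p^{ne})K_V)$, and since we have localized so that $K_V \sim 0$ this twist is trivial, but one must check that the equality of ranks forces the containments to be equalities at the generic point and that weak positivity is inherited by the intermediate sheaf. Handling the rank comparison cleanly — tying together \autoref{cor:S^0_uniform_stabilization}, the stabilization of $S^0 f_*(\sigma(X,\Delta)\otimes M)$, and the generic constancy of $h^0$ on fibers — is the only genuinely delicate point; the rest is assembly of results already in this section.
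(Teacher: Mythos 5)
Your proposal matches the paper's proof in its essentials: weak positivity comes from \autoref{thm:global_generation_of_S_f_*}, the comparison with $(S^0 f_*(\sigma(X,\Delta)\otimes M))_{V^{ne}}$ comes from the inclusion of \autoref{prop:global_relative_relation} made into a generic isomorphism by the rank hypothesis (tying in \autoref{cor:S^0_uniform_stabilization}), and the descent from $V^{ne}$ to $V$ is via \autoref{lem:weakly_positive_finite}. You go further than the paper in one respect: you also invoke \autoref{prop:global_relative_relation_other_way} and \autoref{prop:global_S_0_f_containment} to build a reverse comparison, but this is not needed — the paper extracts the generic isomorphism directly from the rank hypothesis and the one-sided containment of \autoref{prop:global_relative_relation}. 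The remaining reasoning (a torsion-free sheaf containing a generically isomorphic weakly positive subsheaf is weakly positive; over a curve, torsion-free plus weakly positive gives nef) is implicit in the paper and you spell it out correctly.
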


\begin{proof}
From the assumption $\rk S^0f_* (\sigma(X,\Delta) \otimes M)$ equals the general value of $H^0(X_s, \sigma(X_s, \Delta_s) \otimes M_s)$ follows that the inclusion of \autoref{prop:global_relative_relation} is generically an isomorphism for every $n \gg 0$. However, since $S^0_{\phi^n} f_*(M)$ is weakly-positive for every $n \gg 0$ by \autoref{thm:global_generation_of_S_f_*}, we obtain by the above generically isomorphic inclusion that $(S^0f_* (\sigma(X,\Delta) \otimes M))_{V^{ne}}$ is also weakly positive. Then weak-positivity of $S^0f_* (\sigma(X,\Delta) \otimes M)$ follows from \autoref{lem:weakly_positive_finite}.
\end{proof}

\appendix
\section{Relative Serre's condition}

Here we collect the statements of \cite{HassettKovacsReflexivePullbacks} and other sources, that are important for the current paper for ease of reference.  In some cases, we also state them in the greater generality that we need.  All schemes are Noetherian, excellent and possess dualizing complexes and all maps are separable.

\begin{definition}
Let $r > 0$ be an integer. A coherent sheaf $\sE$ on a Noetherian scheme $X$ is $S_r$, if for every $x \in X$,
\begin{equation*}
\depth_{\sO_{X,x}} \sE_x \geq  \min \{ r, \dim_{\sO_{X,x}} \sE_x \}.
\end{equation*}
The sheaf $\sE$ is said to have \emph{full support}, if $\Supp \sE = X$. It is \emph{reflexive} if the natural map $\sE \to \sE^{**} := \sHom_{\O_X}(\sHom_{\O_X}(\sE, \O_X), \O_X)$ is an isomorphism.
\end{definition}

\begin{definition}
If $f : X \to V$ is a morphism of Noetherian schemes and $\sE$ is a coherent sheaf on $X$ flat over $V$, then $\sE$ is $S_r$ over $V$ if $\sE|_{X_v}$ is $S_r$ for every $v \in V$. That is, for every $x \in X$,
\begin{equation*}
\depth_{\sO_{X_{f(x)}}} (\sE|_{X_{f(x)}} )_x \geq \min \left\{ r , \dim_{\sO_{X_{f(x)}}} (\sE|_{X_{f(x)}} )_x \right\}
\end{equation*}

\end{definition}

We now recall two results from EGA about how depth and dimension behave in families.

\begin{proposition} \cite[Proposition 6.3.1]{EGAIV}
\label{prop.EGADepthOverBase}
Let $\phi : A \to B$ be a local homomorphism of Noetherian local rings, $k$ the residue field of $A$ and $M$ and $N$ are finite $A$ and $B$ modules respectively. If $N$ is a flat non-zero $A$-module, then
\begin{equation*}
\depth_B (M \otimes_A N ) = \depth_A (M) + \depth_{B \otimes_A k} (N \otimes_A k).
\end{equation*}
\end{proposition}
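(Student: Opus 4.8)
\textbf{Proof proposal for \autoref{prop.EGADepthOverBase}.}

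This is the statement cited from \cite[Proposition 6.3.1]{EGAIV}, so the plan is simply to reduce it to the standard homological facts about depth under flat base change, which is what EGA actually proves. First I would reduce to the case $M = A$: since $\depth_B(M \otimes_A N)$ and $\depth_A(M)$ only enter additively, and $N$ flat over $A$ gives $M \otimes_A N$ a reasonable associated-prime structure, one argues by induction on $\depth_A(M)$. When $\depth_A M = 0$ there is an embedding $k \hookrightarrow M$ whose cokernel $C$ has $\depth_A C \geq 0$; tensoring with the flat module $N$ keeps the sequence exact, and the depth lemma for short exact sequences ($\depth$ of the middle term is at least the minimum of the depths of the two ends, with the usual $\pm 1$ bookkeeping) together with $\depth_B(k \otimes_A N) = \depth_B(N/\mathfrak{m}_A N) = \depth_{B \otimes_A k}(N \otimes_A k)$ pins down $\depth_B(M \otimes_A N)$. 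When $\depth_A M > 0$, pick an $M$-regular element $a \in \mathfrak{m}_A$; because $N$ is $A$-flat, $a$ is also $(M \otimes_A N)$-regular, so $\depth_B(M \otimes_A N) = \depth_B((M/aM) \otimes_A N) + 1$ and $\depth_A M = \depth_A(M/aM) + 1$, and the inductive hypothesis applied to $M/aM$ closes the loop (note $N \otimes_A k$ is unchanged since $a \in \mathfrak{m}_A$).

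With the reduction $M = A$ in hand, the statement becomes $\depth_B(N) = \depth_A(A) + \depth_{B \otimes_A k}(N \otimes_A k)$, i.e. the classical formula $\depth_B N = \depth_A A + \depth(N \otimes_A k)$ for $N$ a nonzero finite flat $A$-module — wait, more precisely $\depth_B N = \depth_A(N) $ is not what we want; rather the right formulation after the reduction is that for $N$ flat and nonzero, $\operatorname{depth}_B N = \operatorname{depth}_A(A) + \operatorname{depth}_{B/\mathfrak m_A B}(N/\mathfrak m_A N)$, which is exactly \cite[Theorem 23.3]{EisenbudCommutativeAlgebraWithAView}-style flatness-and-depth. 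I would cite that directly rather than reprove it. The one genuine point to be careful about is the local homomorphism hypothesis: $\phi$ local ensures $\mathfrak{m}_A B \subseteq \mathfrak{m}_B$, so that $B \otimes_A k = B/\mathfrak{m}_A B$ is again local and ``$\depth_{B \otimes_A k}$'' makes sense as stated, and so that $\mathfrak{m}_A$-regular elements remain in the Jacobson radical after base change.

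The main obstacle, such as it is, is bookkeeping rather than conceptual: one must track the $\pm 1$'s in the depth-of-short-exact-sequence lemma carefully in the $\depth_A M = 0$ base case, since there $\depth_A M$, $\depth_A k$, and $\depth_A C$ can all differ, and flatness of $N$ must be invoked to keep $\operatorname{Tor}_1^A(C, N) = 0$ so the sequence stays short exact after $\otimes_A N$. But since the result is quoted verbatim from EGA, I would in the paper simply reference \cite[Proposition 6.3.1]{EGAIV} and, if a self-contained argument is wanted, sketch the two-line induction above and point to \cite[Chapter 21]{EisenbudCommutativeAlgebraWithAView} for the flat base change of depth.
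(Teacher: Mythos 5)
The paper does not prove this proposition at all; it is cited verbatim from \cite[Proposition 6.3.1]{EGAIV} and used as a black box, so there is no proof in the text to compare against. Your sketch is a reasonable outline of the standard argument, and the ultimate conclusion (``reference EGA'') is exactly what the paper does, so the final recommendation is fine.

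That said, the sketch as written has a genuine gap in the base case, and I would not describe the missing piece as mere bookkeeping of $\pm 1$'s. After you reduce to $\depth_A M = 0$ via the induction on $M$-regular sequences, you propose to take $0 \to k \to M \to C \to 0$, tensor with the flat module $N$ to get $0 \to k \otimes_A N \to M \otimes_A N \to C \otimes_A N \to 0$, identify $\depth_B(k \otimes_A N)$ with $d := \depth_{B\otimes_A k}(N\otimes_A k)$, and then invoke the depth lemma for short exact sequences to ``pin down'' $\depth_B(M\otimes_A N)$. This does not close: the depth lemma gives $\depth_B(M\otimes N) \geq \min\big(d, \depth_B(C\otimes N)\big)$ and $d \geq \min\big(\depth_B(M\otimes N), \depth_B(C\otimes N)+1\big)$, and without a lower bound $\depth_B(C\otimes N) \geq d$ neither inequality forces $\depth_B(M\otimes N) = d$. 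That lower bound is not free; $C$ is just some finite $A$-module with no control on its depth, so you cannot appeal to the statement you are proving.

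The standard way around this (as in EGA or Matsumura) is a \emph{double} reduction: induct not only on $\depth_A M$ but also on $\depth_{B\otimes_A k}(N\otimes_A k)$. The step that reduces the second quantity is the real content: given $y \in \mathfrak{m}_B$ whose image in $B\otimes_A k$ is $(N\otimes_A k)$-regular, the local criterion of flatness shows $y$ is $N$-regular and $N/yN$ is again $A$-flat; tensoring $0\to N\xrightarrow{\,y\,} N \to N/yN \to 0$ with any $A$-module $X$ then stays exact (since $\operatorname{Tor}_1^A(X,N/yN)=0$), so $y$ is in fact $(X\otimes_A N)$-regular for \emph{every} finite $A$-module $X$. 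This gives both the missing bound $\depth_B(C\otimes N) \geq d$ and, more directly, the reduction to $d=0$, where $\depth_A M = 0$ and $\depth_{B\otimes k}(N\otimes k)=0$ together produce an embedding $k_B \hookrightarrow N/\mathfrak m_A N \hookrightarrow M\otimes_A N$ and hence $\depth_B(M\otimes N)=0$. If you want a self-contained sketch in the paper, you should flag this local-criterion-of-flatness step explicitly rather than folding it into ``bookkeeping''; otherwise, simply citing \cite[Proposition 6.3.1]{EGAIV} as the paper does is the right call.
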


\begin{proposition} \cite[Corollaire 6.1.2]{EGAIV}
\label{prop.EGADimOverBase}
Let $\phi : A \to B$ be a local homomorphism of Noetherian local rings, $k$ the residue field of $A$ and $M$ and $N$ are non-zero finite $A$ and $B$ modules respectively. If $N$ is a flat $A$-module, then
\begin{equation*}
\dim_B (M \otimes_A N ) = \dim_A (M) + \dim_{B \otimes_A k} (N \otimes_A k).
\end{equation*}
\end{proposition}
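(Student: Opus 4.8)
The plan is to reduce the assertion to the special case $M = A$ and then to establish, for a finite $B$-module $N$ that is flat (automatically faithfully flat) over $A$, the dimension formula $\dim_B N = \dim A + \dim_{B/\bm B}(N/\bm N)$ by proving the two opposite inequalities, one via a parameter argument and one via going-down. Here I write $(A,\bm)$, $(B,\bn)$ for the two local rings, so that $B \otimes_A k = B/\bm B$ and $N \otimes_A k = N/\bm N$.

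First I would carry out the reduction to $M = A$. Since $M \neq 0$, the ideal $\ba := \Ann_A M$ is contained in $\bm$; set $\bar A = A/\ba$ and $\bar B = B/\ba B$, so $\bar A \to \bar B$ is still a local homomorphism, $\bar N := N/\ba N = N \otimes_A \bar A$ is finite over $\bar B$ and flat over $\bar A$, and $M \otimes_A N \cong M \otimes_{\bar A}\bar N$. The key point is that $\Supp_{\bar B}(M \otimes_{\bar A}\bar N) = \Supp_{\bar B}\bar N$: the inclusion ``$\subseteq$'' is trivial, and for ``$\supseteq$'' one takes $\bq \in \Supp_{\bar B}\bar N$, puts $\bp = \bq \cap \bar A$, and observes that $\bar N_\bq$ is a nonzero finite $\bar B_\bq$-module (hence $\bar N_\bq/\bp\bar N_\bq \neq 0$ by Nakayama, since $\bp\bar B_\bq \subseteq \bq\bar B_\bq$), while $M_\bp/\bp M_\bp \neq 0$ because $M$ has full support over $\bar A$; tensoring $(M\otimes_{\bar A}\bar N)_\bq = M_\bp \otimes_{\bar A_\bp}\bar N_\bq$ over the residue field of $\bar A_\bp$ then gives a product of nonzero vector spaces, so $(M\otimes_{\bar A}\bar N)_\bq \neq 0$. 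Hence $\dim_B(M\otimes_A N) = \dim_{\bar B}\bar N$, and since $\dim\bar A = \dim_A M$, $\bar B/\bm\bar B = B/\bm B$ and $\bar N/\bm\bar N = N/\bm N$, the general statement follows from the case $M = A$ applied to $\bar A \to \bar B$ and $\bar N$.

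For the case $M = A$ I would first note that $N$ is faithfully flat over $A$: it is flat, and being a nonzero finite module over the local ring $B$ with $\bm B \subseteq \bn$, Nakayama gives $N/\bm N \neq 0$; the same argument shows $N_\bq$ is faithfully flat over $A$ for every $\bq \in \Supp_B N$. Writing $d = \dim A$ and $e = \dim_{B/\bm B}(N/\bm N)$, the bound $\dim_B N \leq d+e$ comes from choosing a system of parameters $a_1,\dots,a_d$ of $A$ and elements $b_1,\dots,b_e \in B$ whose images generate a parameter ideal of $N/\bm N$, and checking that $N/(a_1,\dots,a_d,b_1,\dots,b_e)N$ has finite length over $B$ — using $\bm^t \subseteq (a_1,\dots,a_d)$ for some $t$ and the finite filtration of $N/\bm^t N$ by finite $(B/\bm B)$-modules. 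The bound $\dim_B N \geq d+e$ comes from concatenating a maximal chain of primes of $\Supp_{B/\bm B}(N/\bm N)$ — each containing $\bm B$ and lying in $\Supp_B N$ — with a chain of primes of $\Supp_B N$ produced by descending from its bottom member $\bq_0$ along a maximal chain of $\Spec A$, which is possible because $N_{\bq_0}$ is flat over $A$.

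The main obstacle, and really the only nontrivial input, is the going-down step in the lower bound: one needs the going-down property for the flat module $N$ (equivalently, for the map on supports induced by $A \to B$), together with enough care that every prime produced along the descent actually lies in $\Supp_B N$ and not merely in $\Spec B$. The remaining pieces — the parameter and finite-length bookkeeping in the upper bound, and the identifications $\bar B/\bm\bar B = B/\bm B$, $\bar N/\bm\bar N = N/\bm N$, $\dim\bar A = \dim_A M$ in the reduction — are routine. I would also remark that this dimension formula is genuinely independent of the depth formula \autoref{prop.EGADepthOverBase} and cannot be deduced from it for non-Cohen--Macaulay modules, so it has to be proved directly as above.
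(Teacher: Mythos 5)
The paper does not prove this result; it simply cites it as \cite[Corollaire 6.1.2]{EGAIV}, so there is no internal proof to compare against. Your argument is correct and is essentially the standard one (as in EGA~IV~6.1.2 or Matsumura, \textit{Commutative Ring Theory}, Thm.~15.1): reduce to $M=A$ by passing to $\bar A = A/\Ann_A M$ and checking that $\Supp_{\bar B}(M\otimes_{\bar A}\bar N)=\Supp_{\bar B}\bar N$, then prove $\dim_B N \leq \dim A + \dim_{B/\bm B}(N/\bm N)$ by exhibiting a parameter ideal for $N$ of size $d+e$, and $\geq$ by concatenating a maximal chain in $\Supp_{B/\bm B}(N/\bm N)$ with one descended through $\Supp_B N$ via going-down for the (faithfully) flat module $N$. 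Two small points worth spelling out if you write this up: in the reduction, the support equality uses that $(M\otimes_{\bar A}\bar N)_{\bq}\otimes_{\bar A_{\bp}}\kappa(\bp)\cong (M_{\bp}/\bp M_{\bp})\otimes_{\kappa(\bp)}(\bar N_{\bq}/\bp\bar N_{\bq})$, a tensor product of nonzero $\kappa(\bp)$-vector spaces, which is what makes the localization nonzero; and in the descent you need to apply going-down iteratively, noting at each stage that $N_{\bq'}$ is finite and nonzero over $B_{\bq'}$ with $\bp B_{\bq'}$ contained in the maximal ideal, so Nakayama again gives faithful flatness over $A_{\bp}$ and keeps every prime you produce inside $\Supp_B N$. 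Your closing remark is also correct: this formula is not a corollary of the depth formula in \autoref{prop.EGADepthOverBase} for general (non-Cohen--Macaulay) modules and must be proved independently.
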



From these we obtain the following corollary.
\begin{corollary}
\label{cor.RelVanishingLocalCohomology}
Let $f : X \to V$ be a morphism of Noetherian schemes and $\sE$  a coherent sheaf on $X$ flat over $V$, then  $\sE$ is $S_r$ over $V$ if and only if for every $x \in X$,
\begin{multline*}
\min \{i | H^i_x (\sE) \neq 0 \}  \geq \depth_{\sO_{V,v}} \sO_{V,v} + \min \{r,  \dim \sE_x - \dim \sO_{V,v} \}
\\  \left( = \depth_{\sO_{V,v}} \sO_{V,v} + \min \left\{r,  \dim \left(\sE|_{X_{v}} \right)_x  \right\} \right)
\end{multline*}
where $v : = f(x)$.
\end{corollary}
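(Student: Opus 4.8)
\textbf{Proof plan for \autoref{cor.RelVanishingLocalCohomology}.}

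The plan is to reduce the relative Serre condition $S_r$ over $V$, which by definition concerns the depth and dimension of the restrictions $(\sE|_{X_v})_x$ as local modules over $\sO_{X_v}$, to statements about local cohomology of $\sE_x$ as a module over $\sO_{X,x}$, using \autoref{prop.EGADepthOverBase} and \autoref{prop.EGADimOverBase}. First I would fix a point $x \in X$, write $v = f(x)$, and set $A = \sO_{V,v}$, $B = \sO_{X,x}$, with $k = k(v)$ the residue field of $A$; the morphism $A \to B$ is a flat local homomorphism since $f$ is flat. I would apply \autoref{prop.EGADepthOverBase} with $M = A$ and $N = B$, giving
\[
\depth_B(B) = \depth_A(A) + \depth_{B \otimes_A k}(B \otimes_A k),
\]
and note that $B \otimes_A k = \sO_{X_v,x}$ while $\sE_x \otimes_A k = (\sE|_{X_v})_x$, so more generally applying the depth formula with $M = A$, $N = \sE_x$ (which is $A$-flat and nonzero) yields
\[
\depth_B(\sE_x) = \depth_A(A) + \depth_{\sO_{X_v,x}}\big((\sE|_{X_v})_x\big).
\]
Similarly \autoref{prop.EGADimOverBase} with $M = A$, $N = \sE_x$ gives
\[
\dim_B(\sE_x) = \dim_A(A) + \dim_{\sO_{X_v,x}}\big((\sE|_{X_v})_x\big),
\]
which rearranges to $\dim\big((\sE|_{X_v})_x\big) = \dim \sE_x - \dim \sO_{V,v}$, explaining the parenthetical identity in the statement.

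Next I would translate between depth and local cohomology: for a finitely generated module $N$ over a Noetherian local ring with maximal ideal $\mathfrak{n}$, one has $\depth N = \min\{ i : H^i_{\mathfrak{n}}(N) \neq 0 \}$. Hence $\depth_B(\sE_x) = \min\{i : H^i_x(\sE) \neq 0\}$ in the notation of the corollary (local cohomology supported at the closed point, i.e.\ at $x$). The condition that $\sE$ is $S_r$ over $V$ is, by definition, that for every $x$,
\[
\depth_{\sO_{X_v,x}}\big((\sE|_{X_v})_x\big) \geq \min\big\{ r,\ \dim_{\sO_{X_v,x}}\big((\sE|_{X_v})_x\big)\big\}.
\]
Substituting the two EGA identities, the left side becomes $\depth_B(\sE_x) - \depth_A(A) = \min\{i : H^i_x(\sE)\neq 0\} - \depth \sO_{V,v}$, and the dimension inside the $\min$ on the right becomes $\dim \sE_x - \dim \sO_{V,v}$. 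Adding $\depth \sO_{V,v} = \depth_A(A)$ to both sides gives exactly the displayed inequality of the corollary. Conversely, the inequality in the corollary unwinds to the $S_r$-over-$V$ condition by the same substitutions, so the two are equivalent.

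I anticipate the main obstacle is not any single hard step but rather the bookkeeping around degenerate cases and the precise hypotheses of the EGA statements. In particular, \autoref{prop.EGADepthOverBase} and \autoref{prop.EGADimOverBase} as quoted require the relevant modules to be \emph{nonzero}; one must check that $\sE_x \neq 0$, i.e.\ that $x \in \Supp \sE$, and handle the case $x \notin \Supp \sE$ separately (where both sides of the equivalence hold vacuously, since then $H^i_x(\sE) = 0$ for all $i$ and $(\sE|_{X_v})_x = 0$, making the $S_r$ condition trivially satisfied — with the usual convention $\depth(0) = +\infty$). I would also need to be careful that $\sE_x$ is $A$-flat (this follows from flatness of $\sE$ over $V$) so that the EGA propositions apply with $N = \sE_x$, and that the identification $\sE_x \otimes_A k(v) \cong (\sE|_{X_v})_x$ as $\sO_{X_v,x}$-modules is the correct base-change compatibility. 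Once these routine verifications are in place, the equivalence is a direct substitution.
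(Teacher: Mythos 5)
Your proposal is correct and takes essentially the same approach as the paper: both reduce to the EGA depth and dimension formulas (\autoref{prop.EGADepthOverBase}, \autoref{prop.EGADimOverBase}) applied to the flat local map $\sO_{V,v} \to \sO_{X,x}$ with $N = \sE_x$, followed by the identification of depth with the vanishing threshold of local cohomology. Your write-up is substantially more detailed than the paper's one-line proof (which only cites the depth formula and says ``the result follows''), and your observation about the degenerate case $x \notin \Supp\sE$ and the nonvanishing hypothesis in the EGA statements is a legitimate point of care that the paper glosses over.
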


\begin{proof}
To say that $\sE|_{X_v}$ has depth equal to $t$ at $x$ is equivalent to asserting that $\depth_x \sE = (\depth_{\sO_{V,v}}\sO_{V,v}) + t$ by  \autoref{prop.EGADepthOverBase}.  The result follows.
\end{proof}

The following vanishing allows us to extend sections over sets of relative codimension 2.

\begin{proposition}\cite[Proposition 3.3]{HassettKovacsReflexivePullbacks}
\label{prop:vanishing_local_cohomology}
Let $f : X \to V$ be a morphism of Noetherian schemes  and $\sE$ a coherent sheaf on $X$ flat and $S_r$ over $V$. Let $Z \subseteq X$ be a closed subscheme of $X$ such that we have $\codim_{\Supp \sE_v} (\Supp \sE_v \cap Z_v) \geq r$ for every $v \in V$. Then for each $0 \leq i < r$, $\sH_Z^i(\sE)=0$.
\end{proposition}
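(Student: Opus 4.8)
The plan is to reduce the claim $\sH^i_Z(\sE)=0$ for $i<r$ to a pointwise depth estimate along $Z$, and then to extract that estimate from \autoref{cor.RelVanishingLocalCohomology}, the codimension hypothesis serving only to force the relevant fibre dimension to be at least $r$. The case $r=0$ being vacuous, assume $r\geq 1$. First I would invoke the standard description of when local cohomology sheaves vanish (this is classical; \cf \cite{HassettKovacsReflexivePullbacks}): for a coherent sheaf $\sE$ on a Noetherian scheme $X$ and a closed subset $Z\subseteq X$,
\[
\sH^i_Z(\sE)=0\ \text{ for all } i<r \quad\Longleftrightarrow\quad \depth_{\sO_{X,x}}\sE_x\geq r\ \text{ for every } x\in Z ,
\]
with the convention that $\depth$ of the zero module is $\infty$. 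Only the implication ``$\Leftarrow$'' is needed, and it is essentially formal, since the stalk $\sH^i_Z(\sE)_x$ is the local cohomology module $H^i_{\sI_{Z,x}}(\sE_x)$, whose lowest nonvanishing degree equals $\min\{\depth_{\sO_{X,y}}\sE_y : y\in Z,\ x\in\overline{\{y\}},\ \sE_y\neq 0\}$. So it suffices to prove $\depth_{\sO_{X,x}}\sE_x\geq r$ for each $x\in Z\cap\Supp\sE$ (the case $\sE_x=0$ being automatic).

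Fix such an $x$, put $v:=f(x)$ and write $\sE_v:=\sE|_{X_v}$. The next step is to unwind the hypothesis on the fibre $X_v$. By Nakayama's lemma $(\sE_v)_x=\sE_x\otimes_{\sO_{V,v}}k(v)\neq 0$, so $x$ lies in $\Supp\sE_v\cap Z_v$. The assumption $\codim_{\Supp\sE_v}(\Supp\sE_v\cap Z_v)\geq r$ says that every irreducible component of $\Supp\sE_v\cap Z_v$ has codimension $\geq r$ in $\Supp\sE_v$; picking such a component through $x$, with generic point $\eta$, one has $\dim\sO_{\Supp\sE_v,\eta}=\codim(\overline{\{\eta\}},\Supp\sE_v)\geq r$, and since $\eta$ generizes $x$ this yields $\dim(\sE_v)_x=\dim\sO_{\Supp\sE_v,x}\geq\dim\sO_{\Supp\sE_v,\eta}\geq r$.

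Finally I would feed this into \autoref{cor.RelVanishingLocalCohomology}: as $\sE$ is $S_r$ over $V$,
\[
\depth_{\sO_{X,x}}\sE_x\ \geq\ \depth_{\sO_{V,v}}\sO_{V,v}+\min\{r,\dim(\sE_v)_x\}\ =\ \depth_{\sO_{V,v}}\sO_{V,v}+r\ \geq\ r ,
\]
which closes the argument.

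The only non-formal ingredient is \autoref{cor.RelVanishingLocalCohomology} itself, already at hand; everything else is bookkeeping, so there is no deep obstacle. The step deserving the most care is the middle paragraph: one must check that $x$ genuinely lies on an irreducible component of $\Supp\sE_v\cap Z_v$ whose generic point generizes it, so that ``codimension $\geq r$'' upgrades to $\dim(\sE_v)_x\geq r$ at the possibly non-generic point $x$ itself, not merely at generic points of $Z_v$.
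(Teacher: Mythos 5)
Your proof is correct, and it reaches the same depth estimate that drives the paper's argument, but it packages the reduction differently. The paper follows Hassett--Kov\'acs: it sets up the spectral sequence $E_2^{p,q}=H^p_x\circ\sH^q_Z\Rightarrow H^{p+q}_x$, inducts on $r$, and argues by contradiction at a generic point $x$ of $\Supp\sH^{r-1}_Z(\sE)$, using the (inductively assumed) vanishing of the lower $\sH^q_Z$ to isolate $E_2^{0,r-1}\cong H^{r-1}_x(\sE)$ and then invoking \autoref{cor.RelVanishingLocalCohomology} to show this must vanish. You instead invoke, as a black box, the classical characterization that $\sH^i_Z(\sE)=0$ for all $i<r$ exactly when $\depth_{\sO_{X,x}}\sE_x\geq r$ for every $x\in Z$, and then verify that depth bound pointwise. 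In effect you are citing the very statement the paper's spectral-sequence-plus-induction argument re-derives in place; the paper's route is self-contained, while yours is shorter and makes the dependence on the classical fact transparent. You are also more careful than the paper on the dimension step, spelling out the generization argument that upgrades the codimension hypothesis on $\Supp\sE_v\cap Z_v$ to $\dim(\sE_v)_x\geq r$ at an arbitrary $x$ in that set; the paper asserts this in one line. Both proofs bottom out in the same use of \autoref{cor.RelVanishingLocalCohomology}, so the substantive content is shared; yours is a valid and somewhat more economical organization of it.
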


\begin{proof}
We follow the proof in \cite[Proposition 3.3]{HassettKovacsReflexivePullbacks}.  First, we certainly have a spectral sequence $E_2^{p,q} := H^{p}_x \circ \sH^q_Z \Rightarrow H^{p+q}_x$.  We then induct on $r$, the base case of $r = 0$ being trivial.  Now assume that $\sH^0_Z(\sE) = \ldots = \sH^{r-2}_Z(\sE) = 0$ but $\sH^{r-1}_Z(\sE) \neq 0$.  Then for any $x \in Z$ which is a generic point\footnote{a minimal prime in the language of commutative algebra} of the support of $\sH^{r-1}_Z(\sE)$, we have that $H^0_x(\sH^{r-1}_Z(\sE)) \neq 0$ (note here that $x$ is probably not a closed point).  Additionally, since $x$ is a point of $Z \cap \Supp(\sE)$, we have that $\dim (\sE|_{X_v})_x \geq r$.

On the other hand, for all $i > 0$ we have $H^i_x(\sH^{r-1-i}_Z(\sE)) = 0$, and so by the spectral sequence $H^{r-1}_x(\sE) \cong H^0_x(\sH^{r-1}_Z(\sE)) \neq 0$.  This contradicts \autoref{cor.RelVanishingLocalCohomology}.
\end{proof}

We now obtain a relative version of Hartog's phenomena, just as in \cite{HassettKovacsReflexivePullbacks}.

\begin{proposition}\cite[Proposition 3.5, 3.6.1]{HassettKovacsReflexivePullbacks}
\label{prop:pushforward_relative_S_2_or_reflexive}
Let $f : X \to V$ be a morphism of Noetherian schemes and that $\sE$ a coherent sheaf on $X$ which satisfies one of the following two conditions:
\begin{itemize}
\item[(i)]  $\sE$ is reflexive and $f$ is flat and relatively $S_2$, or
\item[(ii)]  $\sE$ is of full support, flat and $S_2$ over $V$.
\end{itemize}
Let $j : U \hookrightarrow X$ be an open set such that for $Z := X \setminus U$, $\codim_{X_v} Z_v \geq 2$ for every $v \in V$. Then the following natural map is an  isomorphism:
\begin{equation*}
\sE \to j_* \sE|_U.
\end{equation*}
\end{proposition}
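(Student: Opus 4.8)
\textbf{Proof proposal for \autoref{prop:pushforward_relative_S_2_or_reflexive}.}

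The plan is to reduce everything to the local cohomology vanishing of \autoref{prop:vanishing_local_cohomology}. First I would recall the standard exact sequence relating a sheaf to its pushforward from an open complement. For the open immersion $j : U \hookrightarrow X$ with closed complement $Z = X \setminus U$, and any coherent sheaf $\sE$, there is an exact sequence
\begin{equation*}
0 \to \sH^0_Z(\sE) \to \sE \to j_* \sE|_U \to \sH^1_Z(\sE) \to 0,
\end{equation*}
obtained from the long exact sequence of local cohomology with supports in $Z$ together with the identification $(j_* \sE|_U)/\sE$-part via $R^0 j_* = j_*$ and the fact that $\sH^i_Z(\sE) = R^i \Gamma_Z(\sE)$ fits into $0 \to \Gamma_Z \sE \to \sE \to j_*(\sE|_U) \to \sH^1_Z(\sE) \to \ldots$. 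Thus to prove that $\sE \to j_* \sE|_U$ is an isomorphism it suffices to show $\sH^0_Z(\sE) = 0$ and $\sH^1_Z(\sE) = 0$, i.e., $\sH^i_Z(\sE) = 0$ for $i < 2$.

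Next I would verify the codimension hypothesis needed to invoke \autoref{prop:vanishing_local_cohomology} with $r = 2$. In case (ii) this is immediate: $\sE$ is flat and $S_2$ over $V$, has full support, and by assumption $\codim_{X_v} Z_v \geq 2$ for every $v \in V$; since $\Supp \sE_v = X_v$ this is exactly $\codim_{\Supp \sE_v}(\Supp \sE_v \cap Z_v) \geq 2$. In case (i), $\sE$ is reflexive and $f$ is flat and relatively $S_2$; here I would first note that a reflexive sheaf over a relatively $S_2$ morphism is itself $S_2$ over $V$ (this uses that reflexive sheaves on the nice total space are $S_2$, combined with \autoref{cor.RelVanishingLocalCohomology} applied fiberwise; it is the relative analogue of ``reflexive implies $S_2$''), and that $\sE$ has full support since it is reflexive on an integral-enough total space — more precisely, on each fiber $X_v$ the restriction is torsion-free onto its support hence its support has no embedded or lower-dimensional anomalies interfering with the codimension bound. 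In either case we land in the hypotheses of \autoref{prop:vanishing_local_cohomology}, which then yields $\sH^i_Z(\sE) = 0$ for $0 \leq i < 2$, and feeding this into the four-term exact sequence above gives the desired isomorphism $\sE \xrightarrow{\sim} j_* \sE|_U$.

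The main obstacle I anticipate is case (i): one has to be careful that ``reflexive total space sheaf'' plus ``relatively $S_2$ morphism'' really does give ``$S_2$ over $V$'' in the fiberwise sense demanded by \autoref{prop:vanishing_local_cohomology}, and that the relevant supports behave well. The cleanest route is probably to observe that reflexivity of $\sE$ gives $\depth_x \sE \geq \min\{2, \dim_x \sE\}$ at all $x$ (reflexive implies $S_2$ on a scheme with a dualizing complex), then apply \autoref{prop.EGADepthOverBase} together with the relative $S_2$ hypothesis on $f$ to split off the contribution of the base and conclude $\depth_{\sO_{X_v}}(\sE|_{X_v})_x \geq \min\{2, \dim_{\sO_{X_v}}(\sE|_{X_v})_x\}$; once this is in hand the codimension-$2$ condition on $Z_v$ transfers verbatim to the support of $\sE|_{X_v}$ and \autoref{prop:vanishing_local_cohomology} applies. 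This is exactly the argument carried out in \cite[Proposition 3.5, 3.6.1]{HassettKovacsReflexivePullbacks}, so I would follow that reference closely and only spell out the minor adjustments needed for our slightly more general (excellent, dualizing-complex) setting.
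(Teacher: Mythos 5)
Your overall skeleton is correct and matches the paper: reduce to showing $\sH^0_Z(\sE) = \sH^1_Z(\sE) = 0$ via the four-term exact sequence $0 \to \sH^0_Z(\sE) \to \sE \to j_*(\sE|_U) \to \sH^1_Z(\sE)$, and your handling of case (ii) is exactly what the paper does. The problem is case (i).

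In case (i) you want to show that $\sE$ is flat and $S_2$ over $V$ and then invoke \autoref{prop:vanishing_local_cohomology} directly on $\sE$. But the hypothesis in case (i) is only that $\sE$ is \emph{reflexive} and that $f$ (i.e., $\sO_X$) is flat and relatively $S_2$; nothing gives flatness of $\sE$ itself over $V$. Every tool you propose to use — \autoref{prop:vanishing_local_cohomology}, \autoref{cor.RelVanishingLocalCohomology}, and \autoref{prop.EGADepthOverBase} — has $V$-flatness of the sheaf as a hypothesis, so your ``split off the contribution of the base'' step cannot be run as stated. Reflexivity on $X$ simply does not force flatness over $V$, and without it the EGA depth formula $\depth_{B}(M\otimes_A N) = \depth_A M + \depth_{B\otimes_A k}(N\otimes_A k)$ is unavailable. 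This is a genuine gap, not a cosmetic one.

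The paper's proof sidesteps the issue entirely. Choose a presentation $\sE_2 \to \sE_1 \to \sE^* \to 0$ by locally free sheaves and dualize, using reflexivity, to get a left-exact sequence $0 \to \sE \to \sE_1^* \to \sE_2^*$. The locally free sheaves $\sE_1^*, \sE_2^*$ \emph{are} flat and relatively $S_2$ over $V$, because $\sO_X$ is (this is the content of ``$f$ flat and relatively $S_2$''), so \autoref{prop:vanishing_local_cohomology} applied to $\sO_X$ gives $\sH^0_Z(\sE_i^*) = \sH^1_Z(\sE_i^*) = 0$. Feeding the two-term resolution into the long exact sequence of local cohomology with supports in $Z$ then yields $\sH^0_Z(\sE) = \sH^1_Z(\sE) = 0$ with no flatness assumption on $\sE$ at all. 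If you want to keep the spirit of your argument, this is the replacement you should make in case (i).
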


\begin{proof}
First, assume that $\sE$ is reflexive. Consider a presentation of $\sE^*$ by locally free sheaves
\begin{equation*}
\xymatrix{
\sE_2 \ar[r] & \sE_1 \ar[r] & \sE^* \ar[r] & 0
} \text{ and the dual }\xymatrix{
0 \ar[r] & \sE \ar[r] & \sE_1^* \ar[r] & \sE_2^*
}
\end{equation*}
Then by applying \autoref{prop:vanishing_local_cohomology} to $\sO_X$ and the applying local cohomology to the above exact sequence yields that
\begin{equation}
\label{eq:pushforward_relative_S_2_or_reflexive:vanishing}
\sH^1_Z(\sE) = \sH^0_Z(\sE)=0.
\end{equation}
If instead, $\sE$ is flat, S2 and has full support, then we still have the vanishing \autoref{eq:pushforward_relative_S_2_or_reflexive:vanishing} by \autoref{prop:vanishing_local_cohomology}.
Consider then the exact sequence
\begin{equation*}
\xymatrix{
 0 \ar[r] & \sH_Z^0(\sE) \ar[r] & \sE \ar[r] & j_* \sE \ar[r] & \sH_Z^1(\sE).
}
\end{equation*}
Applying \autoref{eq:pushforward_relative_S_2_or_reflexive:vanishing} concludes our proof.
\end{proof}

\begin{corollary}
\label{cor:relative_S_2_morphism_reflexive_has_extension_property}
\cite[Proposition 3.6.2]{HassettKovacsReflexivePullbacks}
Let $f : X \to V$ be a flat, $S_2$ morphism of Noetherian schemes, $\sE$ a coherent $X$ satisfying either \autoref{prop:pushforward_relative_S_2_or_reflexive}(i) or (ii).  Let $\sF$ be another coherent sheaf satisfying \autoref{prop:pushforward_relative_S_2_or_reflexive}(i) or (ii).  Let $j : U \hookrightarrow X$ be an open set, such that for $Z := X \setminus U$, $\codim_{X_v} Z_v \geq 2$ for every $v \in V$. Assume also that $\sE|_U \cong \sF|_U$. Then $\sE \cong \sF$.
\end{corollary}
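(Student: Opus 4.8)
\textbf{Proof proposal for \autoref{cor:relative_S_2_morphism_reflexive_has_extension_property}.}

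The plan is to reduce the statement about an abstract isomorphism $\sE|_U \cong \sF|_U$ over the open set $U$ to an application of the Hartog-type extension result \autoref{prop:pushforward_relative_S_2_or_reflexive}. First I would fix an isomorphism $\varphi : \sE|_U \xrightarrow{\;\sim\;} \sF|_U$ on the open set $U$, together with its inverse $\psi : \sF|_U \xrightarrow{\;\sim\;} \sE|_U$. The goal is to show these glue to give mutually inverse isomorphisms on all of $X$. The key observation is that $\varphi$ can be viewed as a global section of $\sHom_{\O_U}(\sE|_U, \sF|_U) = (\sHom_{\O_X}(\sE, \sF))|_U$, and likewise $\psi$ is a global section of $(\sHom_{\O_X}(\sF, \sE))|_U$; so extending $\varphi$ and $\psi$ over $Z := X \setminus U$ amounts to showing that the restriction maps
\[
\sHom_{\O_X}(\sE, \sF) \to j_* \big(\sHom_{\O_X}(\sE, \sF)|_U\big), \qquad \sHom_{\O_X}(\sF, \sE) \to j_* \big(\sHom_{\O_X}(\sF, \sE)|_U\big)
\]
are isomorphisms, where $j : U \hookrightarrow X$ is the inclusion.

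To apply \autoref{prop:pushforward_relative_S_2_or_reflexive} to the sheaf $\sG := \sHom_{\O_X}(\sE, \sF)$, I need to check that $\sG$ satisfies hypothesis (i) or (ii) of that proposition. The natural route is hypothesis (i): $\sHom$ into a reflexive (or relatively $S_2$) sheaf is reflexive, and since $f$ is flat and relatively $S_2$ by assumption, this is exactly the setup we want — provided $\sF$ is reflexive, which holds under hypothesis (i) for $\sF$. In the case that $\sF$ instead satisfies hypothesis (ii), one argues that $\sHom_{\O_X}(\sE, \sF)$ is still flat over $V$ and $S_2$ over $V$ (depth of a $\sHom$-sheaf is controlled by the depth of the target, and flatness is preserved because one may compute locally with a presentation of $\sE$); this requires a small amount of care but is routine given \autoref{prop.EGADepthOverBase}. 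Either way, \autoref{prop:pushforward_relative_S_2_or_reflexive} applies and gives $\sG \cong j_* (\sG|_U)$, hence $\varphi$ extends uniquely to a global homomorphism $\tilde\varphi : \sE \to \sF$; symmetrically $\psi$ extends to $\tilde\psi : \sF \to \sE$.

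Finally I would verify that $\tilde\psi \circ \tilde\varphi = \id_{\sE}$ and $\tilde\varphi \circ \tilde\psi = \id_{\sF}$. Both $\tilde\psi \circ \tilde\varphi$ and $\id_\sE$ are global sections of $\sEnd_{\O_X}(\sE) = \sHom_{\O_X}(\sE,\sE)$, which again satisfies (i) or (ii) by the same reasoning, so the restriction $\sEnd_{\O_X}(\sE) \to j_*(\sEnd_{\O_X}(\sE)|_U)$ is injective (indeed an isomorphism); since $\tilde\psi \circ \tilde\varphi$ and $\id_\sE$ agree on $U$ (they restrict to $\psi \circ \varphi = \id_{\sE|_U}$), they agree on $X$. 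The same argument handles the other composition, so $\tilde\varphi$ is an isomorphism and $\sE \cong \sF$. The main obstacle I anticipate is the bookkeeping in the mixed case where $\sE$ and $\sF$ satisfy different hypotheses among (i) and (ii): one must confirm that $\sHom_{\O_X}(\sE,\sF)$ and $\sEnd_{\O_X}(\sE)$ land in the scope of \autoref{prop:pushforward_relative_S_2_or_reflexive}, which is the only place where one genuinely uses properties of $\sHom$-sheaves rather than formal nonsense; everything else is a direct unwinding of the extension property.
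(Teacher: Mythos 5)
Your argument is needlessly indirect, and in fact one of its intermediate claims does not go through as stated. The paper's proof is just one line: by \autoref{prop:pushforward_relative_S_2_or_reflexive}, each of $\sE$ and $\sF$ is recovered from its restriction to $U$, that is, $\sE \cong j_*(\sE|_U)$ and $\sF \cong j_*(\sF|_U)$. Since $j_*$ is a functor and therefore carries the given isomorphism $\sE|_U \cong \sF|_U$ to an isomorphism $j_*(\sE|_U) \cong j_*(\sF|_U)$, one gets $\sE \cong \sF$ immediately. There is no need to pass to the sheaf $\sHom_{\O_X}(\sE,\sF)$ at all, and no need to separately check that the extended maps compose to the identity — that comes for free from applying the functor $j_*$.

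The detour through $\sHom$-sheaves also contains a genuine gap. You assert that when $\sF$ satisfies hypothesis (ii) of \autoref{prop:pushforward_relative_S_2_or_reflexive}, the sheaf $\sHom_{\O_X}(\sE,\sF)$ is still flat over $V$, arguing ``by computing locally with a presentation of $\sE$.'' But a presentation $\O_X^m \to \O_X^n \to \sE \to 0$ only exhibits $\sHom_{\O_X}(\sE,\sF)$ as the kernel of the induced map $\sF^n \to \sF^m$, and a kernel of a map between $V$-flat sheaves need not be $V$-flat. So in the mixed or ``both (ii)'' cases you have not actually verified that $\sHom_{\O_X}(\sE,\sF)$ falls in the scope of \autoref{prop:pushforward_relative_S_2_or_reflexive}. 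Fortunately none of this is required once you notice that you can simply apply the extension statement to $\sE$ and $\sF$ themselves rather than to the $\sHom$-sheaf.
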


\begin{proof}
In any case, $j_* \sE|_U = \sE$ and $j_* \sF|_U = \sF$ and so the result follows.
\end{proof}

\begin{proposition} \cite[Corollary 3.7]{HassettKovacsReflexivePullbacks}
\label{prop:extension_of_reflexive_reflexive}
Let $f : X \to V$ be a flat, $S_2$ morphism of Noetherian schemes and $j : U \hookrightarrow X$  an open set, such that for $Z := X \setminus U$, $\codim_{X_v} Z_v \geq 2$ for every $v \in V$. Assume also that $\sE$ is a reflexive, coherent sheaf on $U$. Then $j_* \sE$ is reflexive.
\end{proposition}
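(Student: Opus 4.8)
\textbf{Proof plan for \autoref{prop:extension_of_reflexive_reflexive}.}

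The statement to prove is: given $f : X \to V$ a flat $S_2$ morphism of Noetherian schemes, $j : U \hookrightarrow X$ an open immersion whose complement $Z = X \setminus U$ satisfies $\codim_{X_v} Z_v \geq 2$ for every $v \in V$, and $\sE$ a reflexive coherent sheaf on $U$, the pushforward $j_* \sE$ is reflexive on $X$. The plan is to reduce this to the already-established extension properties \autoref{prop:pushforward_relative_S_2_or_reflexive} and \autoref{cor:relative_S_2_morphism_reflexive_has_extension_property}, in the spirit of \cite[Corollary 3.7]{HassettKovacsReflexivePullbacks}.

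First I would work locally on $X$ and choose a presentation of the dual sheaf $(\sE|_U)^{\vee}$ (where $(\,\cdot\,)^{\vee} = \sHom(\,\cdot\,,\sO_U)$) by free sheaves on $U$, i.e.\ an exact sequence $\sO_U^{\oplus b} \to \sO_U^{\oplus a} \to (\sE|_U)^{\vee} \to 0$; since the question is local on $X$ and $\sE$ is coherent on the open $U$, such a presentation exists after shrinking. Dualizing gives $0 \to \sE|_U \cong (\sE|_U)^{\vee\vee} \to \sO_U^{\oplus a} \to \sO_U^{\oplus b}$ (using reflexivity of $\sE$ on $U$ for the first isomorphism), so $\sE|_U = \ker\big(\sO_U^{\oplus a} \to \sO_U^{\oplus b}\big)$. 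Now I would push forward along $j$. Since $j_*$ is left exact, $j_* \sE|_U = \ker\big(j_* \sO_U^{\oplus a} \to j_* \sO_U^{\oplus b}\big)$. But $\sO_X$ is certainly reflexive (and $f$ is flat and $S_2$), so by \autoref{prop:pushforward_relative_S_2_or_reflexive}(i) applied to $\sO_X$ we have $j_* \sO_U = \sO_X$, hence $j_* \sO_U^{\oplus a} = \sO_X^{\oplus a}$ and likewise for $b$. Therefore $j_* \sE|_U = \ker\big(\sO_X^{\oplus a} \to \sO_X^{\oplus b}\big)$ is the kernel of a map of free sheaves, and a kernel of a map of free (more generally, reflexive) sheaves on a scheme is reflexive by a standard argument: it is a second-syzygy sheaf, hence reflexive (this uses that $\sO_X$ is reflexive and the left-exactness of $\sHom$; one checks the biduality map is an isomorphism by diagram chase on $0 \to j_*\sE|_U \to \sO_X^{\oplus a} \to \sO_X^{\oplus b}$ and its double dual).

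Alternatively — and this is perhaps the cleaner packaging — I would argue as follows, avoiding the explicit presentation. The sheaf $\sG := j_* \sE|_U$ is coherent (this already requires an argument: $j$ need not be quasi-compact-friendly in general, but here one shows coherence of $j_*\sE|_U$ by the same local presentation trick, realizing it as a kernel of a map of coherent sheaves on $X$ — so the presentation step cannot be entirely bypassed). Once $\sG$ is known coherent, its double dual $\sG^{\vee\vee}$ is a reflexive coherent sheaf on $X$, and there is a natural map $\sG \to \sG^{\vee\vee}$. Restricting to $U$, this is $\sE \to \sE^{\vee\vee} \cong \sE$ (reflexivity on $U$), an isomorphism. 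Both $\sG$ and $\sG^{\vee\vee}$ agree on $U$, and $\sG^{\vee\vee}$ is reflexive hence satisfies \autoref{prop:pushforward_relative_S_2_or_reflexive}(i), so $j_*(\sG^{\vee\vee}|_U) = \sG^{\vee\vee}$; also $j_*(\sG|_U) = j_* j_* \sE|_U = j_* \sE|_U = \sG$ since $j_* j_* = j_*$ for the open immersion. Thus $\sG \cong j_*(\sE) \cong j_*(\sG^{\vee\vee}|_U) \cong \sG^{\vee\vee}$, i.e.\ $j_*\sE|_U$ is reflexive. I expect the main obstacle to be precisely the coherence of $j_* \sE|_U$: reflexivity is a formal consequence once coherence is in hand, but coherence must be extracted from the relative $S_2$ hypothesis on $f$ together with the relative codimension-$2$ condition on $Z$ — concretely via the local presentation of $\sE$ and the identification $j_*\sO_U = \sO_X$ from \autoref{prop:pushforward_relative_S_2_or_reflexive}, which is where all the geometric input is used. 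Everything else is routine homological algebra with the biduality map.
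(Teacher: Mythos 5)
Your two routes are both sound in outline, and the second is close in spirit to the paper's. The paper's actual proof (following Hassett--Kov\'acs Cor.\ 3.7) employs a trick you did not anticipate: by Hartshorne's Exercise II.5.15 one chooses a \emph{coherent} subsheaf $\sF \subseteq j_*\sE$ with $\sF|_U = \sE$, and then computes
\[
\sF^{**} \;\cong\; j_*\bigl((\sF^{**})|_U\bigr) \;\cong\; j_*(\sE^{**}) \;\cong\; j_*\sE,
\]
the first isomorphism by \autoref{prop:pushforward_relative_S_2_or_reflexive}(i) applied to the reflexive sheaf $\sF^{**}$, the last because $\sE$ is reflexive on $U$. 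This exhibits $j_*\sE$ as the reflexive coherent sheaf $\sF^{**}$ and we are done. The decisive advantage is that working with the a priori coherent $\sF$, rather than with $j_*\sE$ itself, entirely bypasses the question of whether $j_*\sE$ is coherent --- which you correctly single out as the main obstacle in your second route. Your remedy is to fall back on your first (presentation) route to secure coherence; but note that even that route needs a coherent extension to write down a finite free presentation of $\sE^{\vee}$ on $U \cap W$ for $W$ an affine open of $X$ meeting $Z$ (not merely on an affine open of $U$), so Hartshorne II.5.15 or an equivalent is implicitly needed there as well. All three arguments --- both of yours and the paper's --- rely on the fact that a double dual (equivalently, a second syzygy) of a coherent sheaf is reflexive in this setting; the paper treats this as known, as do you. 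In short: correct, but more circuitous than necessary; the coherent-subsheaf shortcut via Hartshorne II.5.15 is the key ingredient your proposal misses.
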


\begin{proof}
Choose a coherent subsheaf $\sF \subseteq j_* \sE$ such that $\sF|_U = \sE$ \cite[Ex II.5.15]{Hartshorne}. Then $j_* \sE$ is reflexive by the following computation.
\begin{equation*}
\sF^{**} \cong \underbrace{j_*(\sF^{**}|_U)}_{\textrm{by \autoref{prop:pushforward_relative_S_2_or_reflexive}}} \cong \underbrace{j_* \sE^{**}}_{\sE \cong \sF|_U} \cong \underbrace{j_* \sE}_{\textrm{$\sE$ is reflexive}}
\end{equation*}
This completes the proof.
\end{proof}

\begin{proposition}
\label{prop:relative_canonical_reflexive}
If
$f : X \to Y$ is a projective, flat, relatively S2 and G1, equidimensional morphism, then $\omega_{X/Y}$
is reflexive.
\end{proposition}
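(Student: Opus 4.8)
The plan is to reduce the reflexivity of $\omega_{X/Y}$ to the relatively Gorenstein locus, where it is a line bundle, by using the Hartogs-type extension results collected in the appendix. First I would use \autoref{rem.G1S2Morphisms} (the G1 + S2 hypothesis): there exists an open immersion $\iota : U \hookrightarrow X$ such that $f|_U$ is a Gorenstein morphism and $X \setminus U$ has codimension $\geq 2$ along every fiber, i.e. $\codim_{X_v}(X \setminus U)_v \geq 2$ for every $v \in Y$. On $U$ the relative dualizing sheaf $\omega_{U/Y} = \omega_{X/Y}|_U$ is an invertible sheaf (this is what "relatively Gorenstein" means, together with the compatibility of $\omega_{X/Y}$ with the restriction to $U$, which holds since $\omega_{X/Y}$ is defined via the dualizing complex and $f$ is flat of finite type — or, working locally, via $\sExt$ into $\omega_{P/Y}$ for a local embedding $X \hookrightarrow P = \bA^N_Y$ as in \autoref{lem.BaseChangeOfTraceForCMMaps}). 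In particular $\omega_{X/Y}|_U$ is reflexive on $U$.

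Next I would invoke \autoref{prop:extension_of_reflexive_reflexive}: since $f : X \to Y$ is flat and relatively $S_2$, and $Z := X \setminus U$ satisfies $\codim_{X_v} Z_v \geq 2$ for all $v$, the pushforward $j_* (\omega_{X/Y}|_U)$ is reflexive, where $j : U \hookrightarrow X$ is the inclusion. So it remains to identify $\omega_{X/Y}$ with $j_*(\omega_{X/Y}|_U)$. For this I would apply \autoref{prop:pushforward_relative_S_2_or_reflexive}: one must check that $\omega_{X/Y}$ satisfies one of conditions (i) or (ii) there. Since $f$ is equidimensional with $X$ of finite type over $Y$ (so $\omega_{X/Y}$ has full support on $X$), and since $\omega_{X/Y}$ restricted to each fiber $X_v$ is the dualizing sheaf $\omega_{X_v}$, which is $S_2$ on the equidimensional scheme $X_v$ (standard: dualizing sheaves on equidimensional excellent schemes with dualizing complexes satisfy Serre's $S_2$), we get that $\omega_{X/Y}$ is flat — because $\omega_{X/Y}$ is flat over $Y$ for a flat Cohen–Macaulay-in-codimension-nice family; more precisely, one verifies flatness of $\omega_{X/Y}$ over $Y$ using that on $U$ it is a line bundle hence flat, and $X \setminus U$ is small in each fiber, combined with local-cohomology depth estimates from \autoref{cor.RelVanishingLocalCohomology} — and $S_2$ over $Y$. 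Hence condition (ii) of \autoref{prop:pushforward_relative_S_2_or_reflexive} applies and $\omega_{X/Y} \xrightarrow{\ \sim\ } j_*(\omega_{X/Y}|_U)$. Combining this isomorphism with the reflexivity of $j_*(\omega_{X/Y}|_U)$ from \autoref{prop:extension_of_reflexive_reflexive} finishes the proof.

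The step I expect to be the main obstacle is establishing that $\omega_{X/Y}$ is flat over $Y$ and $S_2$ over $Y$ — that is, verifying the hypotheses of \autoref{prop:pushforward_relative_S_2_or_reflexive}(ii) rigorously rather than on the open locus $U$ alone. The flatness is the subtle point: $\omega_{X/Y}$ need not a priori be flat just because $\mathcal{O}_X$ is, so one must argue that $\omega_{X/Y}$ is flat by checking it fiberwise (using the local criterion for flatness and that $\omega_{X/Y}|_{X_v} \cong \omega_{X_v}$, which is the base-change statement for relative dualizing sheaves of flat morphisms — available here in the relatively Gorenstein codimension-$1$ range, and extendable via the $S_2$ extension property). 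Once flatness is in hand, the $S_2$-over-$Y$ property of $\omega_{X/Y}$ follows from the known $S_2$ property of fiberwise dualizing sheaves on the equidimensional fibers $X_v$. An alternative route for the delicate flatness point, if one wishes to avoid it, is to embed $X$ locally as a closed subscheme of $P = \bA^N_Y$, write $\omega_{X/Y} = \sExt^{N - d}_{\mathcal{O}_P}(\mathcal{O}_X, \omega_{P/Y})$ where $d$ is the relative dimension, and use \cite[Theorem 3.6.1]{ConradGDualityAndBaseChange} (cited in \autoref{lem.BaseChangeOfTraceForCMMaps}) for base change compatibility of this $\sExt$; relative $S_2$-ness of the fibers then bounds the other $\sExt$ sheaves and gives both flatness and the $S_2$ property directly.
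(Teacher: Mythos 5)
The paper itself dispatches this with a one-line citation to Lemma 4.8 of \cite{PatakfalviSchwedeDepthOfFSings}, so there is no in-text argument to compare against; still, your proposed proof has a genuine gap that you yourself flag but do not succeed in closing. The overall shape is right — restrict to the relatively Gorenstein locus $U$, observe $\omega_{X/Y}|_U$ is invertible, push forward, and use that a pushforward of a reflexive sheaf across a relative-codimension-$2$ closed set is reflexive — but the step $\omega_{X/Y} \xrightarrow{\sim} j_*(\omega_{X/Y}|_U)$ requires verifying the hypotheses of \autoref{prop:pushforward_relative_S_2_or_reflexive}(ii), namely that $\omega_{X/Y}$ is \emph{flat over $Y$} and $S_2$ over $Y$. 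Neither of your two suggested routes actually establishes flatness:
your first route invokes the base change identity $\omega_{X/Y}|_{X_v} \cong \omega_{X_v}$ "extendable via the $S_2$ extension property," but the extension result \autoref{prop:pushforward_relative_S_2_or_reflexive} applies only to sheaves already known to be flat and $S_2$ over $Y$, so this reasoning is circular; your second route appeals to \cite[Theorem 3.6.1]{ConradGDualityAndBaseChange}, which is stated (and invoked in \autoref{lem.BaseChangeOfTraceForCMMaps}) only for Cohen--Macaulay morphisms, whereas here $f$ is only assumed $S_2$ and $G1$.

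The circularity is real: the standard depth formula \autoref{prop.EGADepthOverBase} that would give $\depth_x \omega_{X/Y} \geq \depth \O_{Y,v} + 2$ for $x \in Z = X \setminus U$ also presupposes flatness of $\omega_{X/Y}$ over $Y$. Breaking it requires an independent argument — typically one works with the explicit presentation $\omega_{X/Y} = \sExt^c_{\O_P}(\O_X, \omega_{P/Y})$ for a local closed embedding $X \hookrightarrow P$ smooth over $Y$, and then uses the $S_2$ hypothesis on fibers together with a spectral-sequence or depth-of-Ext argument to show that the formation of this top $\sExt$ commutes with base change and that it is flat over $Y$ (this is, I believe, what the cited Lemma 4.8 and its neighbors in \cite{PatakfalviSchwedeDepthOfFSings} actually prove). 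Without that input, your argument cannot get off the ground; you would need to either import that flatness/base-change statement as a separate lemma or carry out the Ext analysis directly.
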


\begin{proof}
The proof given in \cite[Lemma 4.8]{PatakfalviSchwedeDepthOfFSings} works.
\end{proof}

\bibliographystyle{skalpha}
\bibliography{MainBib}

\def\cfudot#1{\ifmmode\setbox7\hbox{$\accent"5E#1$}\else
  \setbox7\hbox{\accent"5E#1}\penalty 10000\relax\fi\raise 1\ht7
  \hbox{\raise.1ex\hbox to 1\wd7{\hss.\hss}}\penalty 10000 \hskip-1\wd7\penalty
  10000\box7}
\providecommand{\bysame}{\leavevmode\hbox to3em{\hrulefill}\thinspace}
\providecommand{\MR}{\relax\ifhmode\unskip\space\fi MR}
\providecommand{\MRhref}[2]{%
  \href{http://www.ams.org/mathscinet-getitem?mr=#1}{#2}
}
\providecommand{\href}[2]{#2}
\begin{thebibliography}{CHMS12}

\bibitem[And93]{AndreHomomorphismsRegulariers}
{\sc M.~Andr{\'e}}: \emph{Homomorphismes r\'eguliers en caract\'eristique
  {$p$}}, C. R. Acad. Sci. Paris S\'er. I Math. \textbf{316} (1993), no.~7,
  643--646. {\sf\scriptsize 1214408 (94e:13026)}

\bibitem[Bli13]{BlickleTestIdealsViaAlgebras}
{\sc M.~Blickle}: \emph{Test ideals via algebras of $p^{-e}$-liner maps}, J.
  Algebraic Geom. \textbf{22} (2013), no.~1, 49--83.

\bibitem[BB11]{BlickleBockleCartierModulesFiniteness}
{\sc M.~Blickle and G.~B{\"o}ckle}: \emph{Cartier modules: finiteness results},
  J. Reine Angew. Math. \textbf{661} (2011), 85--123. {\sf\scriptsize 2863904}

\bibitem[BMS08]{BlickleMustataSmithDiscretenessAndRationalityOfFThresholds}
{\sc M.~Blickle, M.~Musta{\c{t}}{\v{a}}, and K.~E. Smith}: \emph{Discreteness
  and rationality of {$F$}-thresholds}, Michigan Math. J. \textbf{57} (2008),
  43--61, Special volume in honor of Melvin Hochster. {\sf\scriptsize 2492440
  (2010c:13003)}

\bibitem[BS13]{BlickleSchwedeSurveyPMinusE}
{\sc M.~Blickle and K.~Schwede}: \emph{$p^{-1}$-linear maps in algebra and
  geometry}, Commutative Algebra: Expository Papers Dedicated to David Eisenbud
  on the Occasion of His 65th Birthday (I.~Peeva, ed.), Springer, Berlin, 2013,
  pp.~123--205.

\bibitem[BSTZ10]{BlickleSchwedeTakagiZhang}
{\sc M.~Blickle, K.~Schwede, S.~Takagi, and W.~Zhang}: \emph{Discreteness and
  rationality of {$F$}-jumping numbers on singular varieties}, Math. Ann.
  \textbf{347} (2010), no.~4, 917--949. {\sf\scriptsize 2658149}

\bibitem[BST11]{BlickleSchwedeTuckerTestAlterations}
{\sc M.~Blickle, K.~Schwede, and K.~Tucker}: \emph{{$F$}-singularities via
  alterations}, arXiv:1107.3807.

\bibitem[BK05]{BrionKumarFrobeniusSplitting}
{\sc M.~Brion and S.~Kumar}: \emph{Frobenius splitting methods in geometry and
  representation theory}, Progress in Mathematics, vol. 231, Birkh\"auser
  Boston Inc., Boston, MA, 2005. {\sf\scriptsize MR2107324 (2005k:14104)}

\bibitem[CHMS12]{CasciniHaconMustataSchwedeNumDimPseudoEffective}
{\sc P.~Cascini, C.~Hacon, M.~Musta{\c{t}}{\u{a}}, and K.~Schwede}: \emph{On
  the numerical dimension of pseudo-effective divisors in positive
  characteristic}, arXiv:1206.6521, to appear in the American Journal of
  Mathematics.

\bibitem[Con00]{ConradGDualityAndBaseChange}
{\sc B.~Conrad}: \emph{Grothendieck duality and base change}, Lecture Notes in
  Mathematics, vol. 1750, Springer-Verlag, Berlin, 2000. {\sf\scriptsize
  MR1804902 (2002d:14025)}

\bibitem[Eis95]{EisenbudCommutativeAlgebraWithAView}
{\sc D.~Eisenbud}: \emph{Commutative algebra}, Graduate Texts in Mathematics,
  vol. 150, Springer-Verlag, New York, 1995, With a view toward algebraic
  geometry. {\sf\scriptsize 1322960 (97a:13001)}

\bibitem[Fed83]{FedderFPureRat}
{\sc R.~Fedder}: \emph{{$F$}-purity and rational singularity}, Trans. Amer.
  Math. Soc. \textbf{278} (1983), no.~2, 461--480. {\sf\scriptsize MR701505
  (84h:13031)}

\bibitem[FST11]{FujinoSchwedeTakagiSupplements}
{\sc O.~Fujino, K.~Schwede, and S.~Takagi}: \emph{Supplements to non-lc ideal
  sheaves}, Higher Dimensional Algebraic Geometry, RIMS K\^oky\^uroku Bessatsu,
  B24, Res. Inst. Math. Sci. (RIMS), Kyoto, 2011, pp.~1--47.

\bibitem[Gab04]{Gabber.tStruc}
{\sc O.~Gabber}: \emph{Notes on some {$t$}-structures}, Geometric aspects of
  Dwork theory. Vol. I, II, Walter de Gruyter GmbH \& Co. KG, Berlin, 2004,
  pp.~711--734.

\bibitem[GW77]{GotoWatanabeTheStructureOfOneDimensionalFPureRings}
{\sc S.~Goto and K.~Watanabe}: \emph{The structure of one-dimensional
  {$F$}-pure rings}, J. Algebra \textbf{49} (1977), no.~2, 415--421.
  {\sf\scriptsize MR0453729 (56 \#11989)}

\bibitem[Gro67]{EGAIV}
{\sc A.~Grothendieck}: \emph{\'{E}l\'ements de g\'eom\'etrie alg\'ebrique.
  {IV}. \'{E}tude locale des sch\'emas et des morphismes de sch\'emas {IV}},
  Inst. Hautes \'Etudes Sci. Publ. Math. (1967), no.~32, 361. {\sf\scriptsize
  MR0238860 (39 \#220)}

\bibitem[Hac11]{HaconSingularitiesOfThetaDivisors}
{\sc C.~D. Hacon}: \emph{Singularities of pluri-theta divisors in char $p>0$},
  arXiv:1112.2219.

\bibitem[HX13]{HaconXuThreeDimensionalMinimalModel}
{\sc C.~D. Hacon and C.~Xu}: \emph{On the three dimensional minimal model
  program in positive characteristic}, arXiv:1302.0298.

\bibitem[Har98]{HaraRatImpliesFRat}
{\sc N.~Hara}: \emph{A characterization of rational singularities in terms of
  injectivity of {F}robenius maps}, Amer. J. Math. \textbf{120} (1998), no.~5,
  981--996. {\sf\scriptsize MR1646049 (99h:13005)}

\bibitem[HY03]{HaraYoshidaGeneralizationOfTightClosure}
{\sc N.~Hara and K.-I. Yoshida}: \emph{A generalization of tight closure and
  multiplier ideals}, Trans. Amer. Math. Soc. \textbf{355} (2003), no.~8,
  3143--3174 (electronic). {\sf\scriptsize MR1974679 (2004i:13003)}

\bibitem[Har77]{Hartshorne}
{\sc R.~Hartshorne}: \emph{Algebraic geometry}, Springer-Verlag, New York,
  1977, Graduate Texts in Mathematics, No. 52. {\sf\scriptsize MR0463157 (57
  \#3116)}

\bibitem[Har94]{HartshorneGeneralizedDivisorsOnGorensteinSchemes}
{\sc R.~Hartshorne}: \emph{Generalized divisors on {G}orenstein schemes},
  Proceedings of Conference on Algebraic Geometry and Ring Theory in honor of
  Michael Artin, Part III (Antwerp, 1992), vol.~8, 1994, pp.~287--339.
  {\sf\scriptsize MR1291023 (95k:14008)}

\bibitem[HS77]{HartshorneSpeiserLocalCohomologyInCharacteristicP}
{\sc R.~Hartshorne and R.~Speiser}: \emph{Local cohomological dimension in
  characteristic {$p$}}, Ann. of Math. (2) \textbf{105} (1977), no.~1, 45--79.
  {\sf\scriptsize MR0441962 (56 \#353)}

\bibitem[Has01]{HashimotoCMFinjectiveHoms}
{\sc M.~Hashimoto}: \emph{Cohen-{M}acaulay {F}-injective homomorphisms},
  Geometric and combinatorial aspects of commutative algebra ({M}essina, 1999),
  Lecture Notes in Pure and Appl. Math., vol. 217, Dekker, New York, 2001,
  pp.~231--244. {\sf\scriptsize 1824233 (2002d:13007)}

\bibitem[Has10]{HashimotoFPureHoms}
{\sc M.~Hashimoto}: \emph{{$F$}-pure homomorphisms, strong {$F$}-regularity,
  and {$F$}-injectivity}, Comm. Algebra \textbf{38} (2010), no.~12, 4569--4596.
  {\sf\scriptsize 2764840 (2012a:13007)}

\bibitem[HK04]{HassettKovacsReflexivePullbacks}
{\sc B.~Hassett and S.~J. Kov{\'a}cs}: \emph{Reflexive pull-backs and base
  extension}, J. Algebraic Geom. \textbf{13} (2004), no.~2, 233--247.
  {\sf\scriptsize MR2047697 (2005b:14028)}

\bibitem[Hoc07]{HochsterFoundations}
{\sc M.~Hochster}: \emph{Foundations of tight closure theory}, lecture notes
  from a course taught on the University of Michigan Fall 2007 (2007).

\bibitem[HH94]{HochsterHunekeFRegularityTestElementsBaseChange}
{\sc M.~Hochster and C.~Huneke}: \emph{{$F$}-regularity, test elements, and
  smooth base change}, Trans. Amer. Math. Soc. \textbf{346} (1994), no.~1,
  1--62. {\sf\scriptsize MR1273534 (95d:13007)}

\bibitem[HR76]{HochsterRobertsFrobeniusLocalCohomology}
{\sc M.~Hochster and J.~L. Roberts}: \emph{The purity of the {F}robenius and
  local cohomology}, Advances in Math. \textbf{21} (1976), no.~2, 117--172.
  {\sf\scriptsize MR0417172 (54 \#5230)}

\bibitem[HS06]{HunekeSwansonIntegralClosure}
{\sc C.~Huneke and I.~Swanson}: \emph{Integral closure of ideals, rings, and
  modules}, London Mathematical Society Lecture Note Series, vol. 336,
  Cambridge University Press, Cambridge, 2006. {\sf\scriptsize MR2266432
  (2008m:13013)}

\bibitem[Kat08]{KatzmanParameterTestIdealOfCMRings}
{\sc M.~Katzman}: \emph{Parameter-test-ideals of {C}ohen-{M}acaulay rings},
  Compos. Math. \textbf{144} (2008), no.~4, 933--948. {\sf\scriptsize MR2441251
  (2009d:13030)}

\bibitem[Kee03]{KeelerAmpleFilters}
{\sc D.~S. Keeler}: \emph{Ample filters of invertible sheaves}, J. Algebra
  \textbf{259} (2003), no.~1, 243--283. {\sf\scriptsize 1953719 (2003m:14026)}

\bibitem[Kob75]{KoblitzPAdicVariationOfTheZetaFunction}
{\sc N.~Koblitz}: \emph{{$p$}-adic variation of the zeta-function over families
  of varieties defined over finite fields}, Compositio Math. \textbf{31}
  (1975), no.~2, 119--218. {\sf\scriptsize 0414557 (54 \#2658)}

\bibitem[Kol90]{KollarProjectivityOfCompleteModuli}
{\sc J.~Koll{\'a}r}: \emph{Projectivity of complete moduli}, J. Differential
  Geom. \textbf{32} (1990), no.~1, 235--268. {\sf\scriptsize 1064874
  (92e:14008)}

\bibitem[Kc92]{KollarFlipsAndAbundance}
{\sc J.~Koll{\'a}r and {14 coauthors}}: \emph{Flips and abundance for algebraic
  threefolds}, Soci\'et\'e Math\'ematique de France, Paris, 1992, Papers from
  the Second Summer Seminar on Algebraic Geometry held at the University of
  Utah, Salt Lake City, Utah, August 1991, Ast\'erisque No. 211 (1992).
  {\sf\scriptsize MR1225842 (94f:14013)}

\bibitem[Kun69]{KunzCharacterizationsOfRegularLocalRings}
{\sc E.~Kunz}: \emph{Characterizations of regular local rings for
  characteristic {$p$}}, Amer. J. Math. \textbf{91} (1969), 772--784.
  {\sf\scriptsize MR0252389 (40 \#5609)}

\bibitem[Kun86]{KunzKahlerDifferentials}
{\sc E.~Kunz}: \emph{K\"ahler differentials}, Advanced Lectures in Mathematics,
  Friedr. Vieweg \& Sohn, Braunschweig, 1986. {\sf\scriptsize MR864975
  (88e:14025)}

\bibitem[Laz04a]{LazarsfeldPositivity1}
{\sc R.~Lazarsfeld}: \emph{Positivity in algebraic geometry. {I}}, Ergebnisse
  der Mathematik und ihrer Grenzgebiete. 3. Folge. A Series of Modern Surveys
  in Mathematics [Results in Mathematics and Related Areas. 3rd Series. A
  Series of Modern Surveys in Mathematics], vol.~48, Springer-Verlag, Berlin,
  2004, Classical setting: line bundles and linear series. {\sf\scriptsize
  MR2095471 (2005k:14001a)}

\bibitem[Laz04b]{LazarsfeldPositivity2}
{\sc R.~Lazarsfeld}: \emph{Positivity in algebraic geometry. {II}}, Ergebnisse
  der Mathematik und ihrer Grenzgebiete. 3. Folge. A Series of Modern Surveys
  in Mathematics [Results in Mathematics and Related Areas. 3rd Series. A
  Series of Modern Surveys in Mathematics], vol.~49, Springer-Verlag, Berlin,
  2004, Positivity for vector bundles, and multiplier ideals. {\sf\scriptsize
  MR2095472 (2005k:14001b)}

\bibitem[Lyu97]{LyubeznikFModulesApplicationsToLocalCohomology}
{\sc G.~Lyubeznik}: \emph{{$F$}-modules: applications to local cohomology and
  {$D$}-modules in characteristic {$p>0$}}, J. Reine Angew. Math. \textbf{491}
  (1997), 65--130. {\sf\scriptsize MR1476089 (99c:13005)}

\bibitem[MR85]{MehtaRamanathanFrobeniusSplittingAndCohomologyVanishing}
{\sc V.~B. Mehta and A.~Ramanathan}: \emph{Frobenius splitting and cohomology
  vanishing for {S}chubert varieties}, Ann. of Math. (2) \textbf{122} (1985),
  no.~1, 27--40. {\sf\scriptsize MR799251 (86k:14038)}

\bibitem[MS97]{MehtaSrinivasRatImpliesFRat}
{\sc V.~B. Mehta and V.~Srinivas}: \emph{A characterization of rational
  singularities}, Asian J. Math. \textbf{1} (1997), no.~2, 249--271.
  {\sf\scriptsize MR1491985 (99e:13009)}

\bibitem[MS12]{MillerSchwedeSLCvFP}
{\sc L.~E. Miller and K.~Schwede}: \emph{Semi-log canonical vs {$F$}-pure
  singularities}, J. Algebra \textbf{349} (2012), 150--164. {\sf\scriptsize
  2853631}

\bibitem[Mil72]{MillerCurvesWithInvertibleHasseWitt}
{\sc L.~Miller}: \emph{Curves with invertible {H}asse-{W}itt-matrix}, Math.
  Ann. \textbf{197} (1972), 123--127. {\sf\scriptsize 0314849 (47 \#3399)}

\bibitem[Mus11]{MustataNonNefLocusPositiveChar}
{\sc M.~Musta{\c{t}}{\v{a}}}: \emph{The non-nef locus in positive
  characteristic}, arXiv:1109.3825, to appear in the volume dedicated to Joe
  Harris on the occasion of his 60th birthday.

\bibitem[MS12]{MustataSchwedeSeshadri}
{\sc M.~Musta{\c{t}}{\v{a}} and K.~Schwede}: \emph{A {F}robenius variant of
  {S}eshadri constants}, arXiv:1203.1081.

\bibitem[MY09]{MustataYoshidaTestIdealVsMultiplierIdeals}
{\sc M.~Musta{\c{t}}{\u{a}} and K.-I. Yoshida}: \emph{Test ideals vs.
  multiplier ideals}, Nagoya Math. J. \textbf{193} (2009), 111--128.
  {\sf\scriptsize MR2502910}

\bibitem[Pat12]{PatakfalviSemipositivity}
{\sc {\relax Zs}.~Patakfalvi}: \emph{Semi-positivity in positive
  characteristics}, arXiv:1208.5391.

\bibitem[PS12]{PatakfalviSchwedeDepthOfFSings}
{\sc {\relax Zs}.~Patakfalvi and K.~Schwede}: \emph{Depth of
  {$F$}-singularities and base change of relative canonical sheaves},
  arXiv:1207.1910, to appear in the Journal of the Institute of Mathematics of
  Jussieu.

\bibitem[Rad92]{RaduUneClasseDAnneaux}
{\sc N.~Radu}: \emph{Une classe d'anneaux noeth\'eriens}, Rev. Roumaine Math.
  Pures Appl. \textbf{37} (1992), no.~1, 79--82. {\sf\scriptsize 1172271
  (93g:13014)}

\bibitem[RR85]{RamananRamanathanProjectiveNormality}
{\sc S.~Ramanan and A.~Ramanathan}: \emph{Projective normality of flag
  varieties and {S}chubert varieties}, Invent. Math. \textbf{79} (1985), no.~2,
  217--224. {\sf\scriptsize MR778124 (86j:14051)}

\bibitem[Sch09]{SchwedeFAdjunction}
{\sc K.~Schwede}: \emph{{$F$}-adjunction}, Algebra Number Theory \textbf{3}
  (2009), no.~8, 907--950. {\sf\scriptsize 2587408 (2011b:14006)}

\bibitem[Sch11a]{SchwedeACanonicalLinearSystem}
{\sc K.~Schwede}: \emph{A canonical linear system associated to adjoint
  divisors in characteristic $p > 0$}, arXiv:1107.3833, to appear in the
  Journal f\"ur die reine und angewandte Mathematik.

\bibitem[Sch11b]{SchwedeTestIdealsInNonQGor}
{\sc K.~Schwede}: \emph{Test ideals in non-{$\mathbb{{Q}}$}-{G}orenstein
  rings}, Trans. Amer. Math. Soc. \textbf{363} (2011), no.~11, 5925--5941.

\bibitem[ST12]{SchwedeTuckerTestIdealSurvey}
{\sc K.~Schwede and K.~Tucker}: \emph{A survey of test ideals}, Progress in
  Commutative Algebra 2. Closures, Finiteness and Factorization (C.~Francisco,
  L.~C. Klinger, S.~M. Sather-Wagstaff, and J.~C. Vassilev, eds.), Walter de
  Gruyter GmbH \& Co. KG, Berlin, 2012, pp.~39--99.

\bibitem[Sha07]{SharpOnTheHSLThm}
{\sc R.~Y. Sharp}: \emph{On the {H}artshorne-{S}peiser-{L}yubeznik theorem
  about {A}rtinian modules with a {F}robenius action}, Proc. Amer. Math. Soc.
  \textbf{135} (2007), no.~3, 665--670 (electronic). {\sf\scriptsize 2262861
  (2007f:13008)}

\bibitem[SZ09]{ShimomotoZhangOnTheLocalizationProblem}
{\sc K.~Shimomoto and W.~Zhang}: \emph{On the localization theorem for
  {$F$}-pure rings}, J. Pure Appl. Algebra \textbf{213} (2009), no.~6,
  1133--1139. {\sf\scriptsize 2498803 (2010d:13006)}

\bibitem[Smi97]{SmithFRatImpliesRat}
{\sc K.~E. Smith}: \emph{{$F$}-rational rings have rational singularities},
  Amer. J. Math. \textbf{119} (1997), no.~1, 159--180. {\sf\scriptsize
  MR1428062 (97k:13004)}

\bibitem[Sri91]{SrinivasFPureType}
{\sc V.~Srinivas}: \emph{Normal surface singularities of {$F$}-pure type}, J.
  Algebra \textbf{142} (1991), no.~2, 348--359. {\sf\scriptsize MR1127067
  (93b:14060)}

\bibitem[{Sta}]{stacks-project}
{\sc T.~{Stacks Project Authors}}: \emph{{\itshape Stacks Project}}.

\bibitem[Tak04]{TakagiInterpretationOfMultiplierIdeals}
{\sc S.~Takagi}: \emph{An interpretation of multiplier ideals via tight
  closure}, J. Algebraic Geom. \textbf{13} (2004), no.~2, 393--415.
  {\sf\scriptsize MR2047704 (2005c:13002)}

\bibitem[Tan13]{TanakaTraceOfFrobenius}
{\sc H.~Tanaka}: \emph{The trace map of frobenius and extending sections for
  threefolds}, arXiv:1302.3134.

\bibitem[Tan72]{TangoOnTheBehaviorOfVBAndFrob}
{\sc H.~Tango}: \emph{On the behavior of extensions of vector bundles under the
  {F}robenius map}, Nagoya Math. J. \textbf{48} (1972), 73--89. {\sf\scriptsize
  0314851 (47 \#3401)}

\bibitem[Vie95]{ViehwegQuasiProjectiveModuli}
{\sc E.~Viehweg}: \emph{Quasi-projective moduli for polarized manifolds},
  Ergebnisse der Mathematik und ihrer Grenzgebiete (3) [Results in Mathematics
  and Related Areas (3)], vol.~30, Springer-Verlag, Berlin, 1995.
  {\sf\scriptsize MR1368632 (97j:14001)}

\bibitem[Zha12]{ZhangPluriCanonicalMapOfVarietiesOfMaximalAlbaneseDimension}
{\sc Y.~Zhang}: \emph{Pluri-canonical map of varieties of maximal albanese
  dimension in positive characteristic}, arXiv:1203.1360.

\end{thebibliography}

\end{document}